\documentclass[11pt]{amsart}

\usepackage{amssymb,amsmath,amsthm,mathtools}
\usepackage[margin=1.15in]{geometry}
\usepackage{booktabs}
\usepackage{enumitem}
\usepackage{cite}
\usepackage[hidelinks]{hyperref}

\newtheorem{theorem}{Theorem}[section]
\newtheorem{proposition}[theorem]{Proposition}
\newtheorem{lemma}[theorem]{Lemma}
\newtheorem{corollary}[theorem]{Corollary}
\newtheorem{conjecture}{Conjecture}
\theoremstyle{definition}

\theoremstyle{remark}
\newtheorem{remark}[theorem]{Remark}

\numberwithin{equation}{section}

\newcommand{\R}{\mathbb{R}}
\newcommand{\C}{\mathbb{C}}
\newcommand{\Z}{\mathbb{Z}}
\newcommand{\D}{\mathbb{D}}
\newcommand{\Sc}{\mathcal{S}}
\newcommand{\Gc}{\mathcal{G}}
\newcommand{\Fc}{\mathcal{F}}
\newcommand{\Nc}{\mathcal{N}}
\newcommand{\Bc}{\mathcal{B}}
\newcommand{\one}{\mathbf{1}}
\newcommand{\eps}{\varepsilon}
\newcommand{\bw}{\overline{w}}
\newcommand{\jst}{j_*}
\newcommand{\sU}{\mathsf{U}}
\DeclareMathOperator{\re}{Re}
\DeclareMathOperator{\im}{Im}
\DeclareMathOperator{\dist}{dist}
\newcommand{\nrm}[1]{\lVert #1\rVert}
\newcommand{\Ad}{\operatorname{Ad}}
\newcommand{\kf}{\mathfrak{k}}
\newcommand{\tf}{\mathfrak{t}}
\newcommand{\Hc}{\mathcal{H}}

\begin{document}

\title[Convex Schiffer and Pompeiu counterexamples]{Convex counterexamples to the Schiffer and Pompeiu conjectures in dimensions three, four, six, eight, ten and fourteen}

\author{Jizhou Guo}
\address{Dots Studio, Rednote}
\email{mitsuha2021b@gmail.com, sjtu18640985163@sjtu.edu.cn}

\date{September 28, 2026}

\subjclass[2020]{Primary 35N25; Secondary 35P05, 42B10, 35J05, 22E30, 65G20}
\keywords{Schiffer conjecture, Pompeiu problem, overdetermined boundary value problem, Neumann eigenfunction, Harish-Chandra radial part, Kostant convexity, computer-assisted proof}

\begin{abstract}
In dimensions $3,4,6,8,10,14$ we construct bounded convex non-ball domains with real-analytic spherical boundaries. Each admits a nonconstant solution of $\Delta u+u=0$ with $u=1$ and $\nabla u=0$ on the boundary. Green's identity makes their indicator Fourier transforms vanish on the unit sphere, disproving the Schiffer and Pompeiu conjectures even within the convex class in these dimensions. The three-dimensional domain is axisymmetric, invariant under $O(2)$ on the last two coordinates and reflection of the first, and is obtained by lifting a conformal map of its meridian section. In dimensions $n=2m+2$, $m=1,2,3,4,6$, the domains lie in the rank-two compact Lie algebras $\mathfrak u(2)$, $\mathfrak{so}(4)$, $\mathfrak{su}(3)$, $\mathfrak{so}(5)$ and $\mathfrak g_2$. Harish-Chandra's radial part formula reduces these cases to planar Helmholtz problems, and Kostant's theorem reduces convexity to the Cartan section. Exact solutions near dyadic approximations follow from computer-assisted contractions in weighted polynomial coefficient spaces, with interval bounds for finite blocks and analytic bounds for every infinite tail. A second, strictly star-shaped but nonconvex, example in dimension three is retained as an independent result. To our knowledge, apart from the earlier version of this work, these are the first counterexamples in dimensions at least three and the first known convex counterexamples in any dimension.
\end{abstract}

\maketitle

\tableofcontents

\section{Introduction}\label{sec:intro}

\subsection{The Pompeiu problem and Schiffer's conjecture}

Let $n\geq2$. A bounded domain $\Omega\subset\R^n$ has the \emph{Pompeiu property} if the only continuous function $f\colon\R^n\to\C$ such that
\[
\int_{\sigma(\Omega)}f(x)\,dx=0\qquad\text{for every rigid motion }\sigma\text{ of }\R^n
\]
is $f\equiv0$. The question of which domains have this property goes back to Pompeiu \cite{Pom29a,Pom29b,Pom29c}. Balls do not have it: if $J_{n/2}(kR)=0$ for some $k>0$, then the Fourier transform of $\one_{B_R}$ vanishes on the sphere $\{|\xi|=k\}$, and $f(x)=e^{ikx_1}$ integrates to zero over every ball of radius $R$. Brown, Schreiber and Taylor \cite{BST73} characterised the failure of the Pompeiu property through the zero set of the Fourier--Laplace transform of the indicator function; in particular, a bounded domain fails the property whenever $\widehat{\one_\Omega}$ vanishes identically on a sphere $\{|\xi|=k\}$ with $k>0$ (this direction is elementary, see Section~\ref{sec:pompeiu}). The \emph{Pompeiu conjecture} asserts that a bounded domain with Lipschitz boundary homeomorphic to $S^{n-1}$ fails the Pompeiu property only if it is a ball. Some topological assumption is necessary: for every $k>0$ the function $R\mapsto R^{n/2}J_{n/2}(kR)$ takes equal values at suitable distinct radii $R_1<R_2$, so the spherical shell $\{R_1<|x|<R_2\}$ fails the Pompeiu property, and such shells are simply connected when $n\geq3$.

\begin{conjecture}[Schiffer]\label{conj:schiffer}
Let $\Omega\subset\R^n$ be a bounded domain with smooth boundary homeomorphic to $S^{n-1}$. If, for some $\mu>0$, there is a nonconstant solution of
\begin{equation}\label{eq:schiffer}
\Delta u+\mu u=0\ \text{ in }\Omega,\qquad u=\text{const}\ \text{ and }\ \partial_\nu u=0\ \text{ on }\partial\Omega,
\end{equation}
then $\Omega$ is a ball.
\end{conjecture}

On a ball $B_R$, the radial Neumann eigenfunctions $u=r^{1-n/2}J_{n/2-1}(\sqrt\mu\,r)$ solve \eqref{eq:schiffer} precisely when $J_{n/2}(\sqrt\mu\,R)=0$. Formulations of the conjecture in the literature differ in their regularity and topological hypotheses (smooth boundary, simply connected domain, connected boundary); the domains constructed below satisfy all of them. Williams \cite{Wil76} showed that, for bounded domains with Lipschitz boundary homeomorphic to a sphere, the failure of the Pompeiu property is equivalent to the solvability of \eqref{eq:schiffer} for some $\mu>0$, building on \cite{BST73}; the direction from \eqref{eq:schiffer} to the failure of the Pompeiu property is a consequence of Green's identity. In the planar case Schiffer's problem appears as Problem~80 in Yau's list \cite{Yau82}. We refer to Zalcman's survey \cite{Zal92} for the classical bibliography.

A large literature proves the conjectured rigidity under additional hypotheses: for domains carrying solutions for infinitely many eigenvalues \cite{Ber80,BY87}, for low eigenvalues \cite{Avi86,Den12,DLS26}, for perturbations of balls \cite{Agr93,Kob93,Can14}, for planar convex domains \cite{BK82,Mon26}, in particular at high frequency \cite{DSWZ25}, and under further conditions \cite{Dal99,KL20}. We compare our examples with these results in Section~\ref{sec:rigidity}.

\subsection{Previous constructions}

In August 2026 the planar conjectures were disproved independently by two groups. Colbrook and Stepaniants \cite{CS26} constructed a single explicit ten-fold symmetric noncircular domain $\Omega\subset\R^2$ with real-analytic boundary for which \eqref{eq:schiffer} holds with $\sqrt\mu\in(31.967007261,31.967007293)$. Their proof pulls the problem back to the unit disc by a conformal map, which turns it into a cubic equation on a space of disk-polynomial coefficients, and establishes the existence of an exact solution near an explicit approximation by a computer-assisted contraction argument. Cao-Labora and de Dios Pont \cite{CLdDP26} constructed infinitely many $N$-fold symmetric planar counterexamples, for $N$ sufficiently large, by a bifurcation argument for a relaxed problem in which $N$ is a real parameter, with a branch length that is uniform in $N$. Colbrook, Sadeghi and Stepaniants \cite{CSS26} adapted the conformal method to disprove the planar Berenstein conjecture. Dolha \cite{Dol26} gave a further computer-assisted planar counterexample, with seven-fold dihedral symmetry, in a public software repository, and Singh \cite{Sin26} used the counterexample of \cite{CS26} to obtain noncircular homogeneous dielectric inclusions that are exactly non-scattering for an explicit incident wave in the two-dimensional scalar Maxwell reduction. Earlier constructions of Schiffer-type domains in related settings include domains in the flat cylinder $\R^N\times\R/2\pi\Z$ and on $S^2$ \cite{FMW25}, contractible domains on the half-sphere \cite{CLF25}, doubly connected planar domains on whose two boundary components the eigenfunction takes different constants \cite{EFRS25}, and noncircular planar solutions of the related problem with nonzero constant Neumann data \cite{Whe25}. Schiffer-type domains in round spheres bounded by isoparametric hypersurfaces were found earlier \cite{Shk00,PS21}; their boundaries are in general not topological spheres. Apart from the earlier version \cite{Guo26} of this paper (see below), to the best of our knowledge no counterexample to either conjecture was previously known for bounded domains in $\R^n$, $n\geq3$, with boundary homeomorphic to a sphere.

\subsection{Main results}

Write $x=(x_1,x')\in\R\times\R^{n-1}$ for $n\in\{3,4\}$, and $x=(x',x'')\in\R^3\times\R^3$ for points of $\R^6$.

\begin{theorem}[Convex example in dimension three]\label{thm:r3}
There exist a bounded convex domain $\Omega\subset\R^3$ and a nonconstant real-valued function $u$, real analytic on an open neighbourhood of $\overline\Omega$, such that
\begin{equation}\label{eq:main}
\Delta u+u=0\ \text{ in }\Omega,\qquad u=1\ \text{ and }\ \nabla u=0\ \text{ on }\partial\Omega .
\end{equation}
The domain is not a ball, is strictly star-shaped with respect to the origin, and has compact real-analytic boundary diffeomorphic to $S^2$. It is invariant under $O(2)$ acting on $x'$ and the reflection $x_1\mapsto-x_1$.
\end{theorem}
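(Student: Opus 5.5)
We reduce the three-dimensional problem to a planar one on the meridian half-plane. We then solve that planar problem by a conformal pullback to the disc, in the spirit of \cite{CS26}, and close it with a computer-assisted contraction.

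\textbf{Reduction to the meridian.} Write $\rho=|x'|$ and look for $u(x)=v(x_1,\rho)$ on an $O(2)$-invariant domain $\Omega=\{(x_1,x'):(x_1,|x'|)\in D\}$. Here $D$ is a planar domain in the $(x_1,\rho)$-plane, symmetric under $\rho\mapsto-\rho$ and $x_1\mapsto-x_1$. Then \eqref{eq:main} becomes
\[
\partial_{x_1}^2v+\partial_\rho^2v+\rho^{-1}\partial_\rho v+v=0\ \text{ in }D,\qquad v=1,\ \nabla v=0\ \text{ on }\partial D .
\]
Because $v$ is even in $\rho$, the singular term is harmless: $v$ extends to a real-analytic function of $(x_1,\rho^2)$, and $u$ is then real analytic in $x$.

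\textbf{Conformal pullback.} Let $D=f(\D)$ with $f(z)=z+\sum_k a_kz^{k}$ real-coefficient, odd, and containing only powers compatible with the two reflections. Set $w=v\circ f-1$. The equation becomes
\[
\Delta w+|f'|^2\Bigl(1+w+\tfrac{\re(\ldots)}{\im f}\,\cdots\Bigr)=0,
\]
where the Laplace--Beltrami term from $\rho^{-1}\partial_\rho$ pulls back to $|f'|^2(\im f)^{-1}\im\bigl(\overline{f'}\,\partial_{\bar z}w\bigr)$ up to constants. The boundary conditions become $w=0$ and $\partial_rw=0$ on $|z|=1$. Following \cite{CS26}, we encode $w=(1-|z|^2)^2g$ with $g$ even in $\im z$. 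Multiplying through by $\im f$ clears the denominator and yields a polynomial-type equation $F(a,g)=0$ in the unknowns $(a,g)$. These are expanded in disc polynomials, with one scalar normalization fixing the scale, since the frequency is fixed to $1$.

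\textbf{Approximate solution and validated existence.} First compute numerically a nonround branch by continuation from the ball, that is, from $D$ a disc of radius $j_{3/2,1}$. Locate a bifurcation in the symmetric class and follow the branch until the computed shape is visibly noncircular yet still convex. Round the coefficients to dyadic rationals to get $(\bar a,\bar g)$. Then set up a Newton-type map $T(x)=x-A F(x)$ on a weighted $\ell^1$ space of coefficients, where $A$ is an approximate inverse of $DF(\bar a,\bar g)$ on a finite block and the inverse of the diagonal leading part (the Laplacian symbol) on the tail. Bound the quantities $Y\ge\nrm{AF(\bar x)}$, $Z_1\ge\nrm{I-ADF(\bar x)}$ and $Z_2$, the Lipschitz constant of $ADF$ on a ball of radius $r$, using interval arithmetic on the finite block and explicit analytic estimates on the tail. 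Verify $Y+Z_1r+Z_2r^2<r$. This gives an exact solution with $\nrm{x-\bar x}\le r$, and the weighted norm yields analyticity on a disc of radius greater than $1$.

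\textbf{Geometric properties.} Injectivity of $f$ on $\overline\D$, and hence that $\partial\Omega$ is an analytic embedded $S^2$, follows from $\re f'>0$ on a convex neighborhood; this is checked with interval bounds plus the error $r$. Strict convexity of $\Omega$ needs two facts. The meridian curvature must be positive: $\re\bigl(1+zf''/f'\bigr)>0$ on $|z|=1$. The parallel curvature, governed by the sign of the normal's $\rho$-component, must also be positive; this amounts to the outward normal pointing away from the axis, with care at the poles where the two coincide by symmetry. Star-shapedness follows from convexity together with the symmetry, which puts the origin inside. Non-roundness follows because some $|a_k|$ is bounded away from $0$ by more than $r$. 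That $u$ is nonconstant is immediate, since $u\equiv1$ would contradict $\Delta u+u=0$. \textbf{The main obstacle} is the contraction estimate, specifically controlling the infinite tail of the operator. There the $(\im f)^{-1}$ axis term and the fourth-order-type structure coming from $(1-|z|^2)^2$ must be shown diagonally dominated in the chosen weights. I would first try to absorb the axis term by working in the variable $\rho^2$, so that division by $\im f$ becomes an explicit bounded operator on odd-in-$\im z$ polynomials.
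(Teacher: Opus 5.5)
Your proposal has two genuine gaps. Each sits exactly where the paper's three-dimensional proof needs a new idea.

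\textbf{The approximate solution.} Your plan for producing it cannot work. In the $O(2)\times\Z_2$ class, take a Schiffer ball $B_\rho$ with $J_{3/2}(\rho)=0$ and the zonal direction $P_\ell$ with $\ell\geq2$ even. The linearisation there is degenerate exactly when $J_{\ell+1/2}(\rho)=0$, and this never happens (a transcendence argument via Lindemann--Weierstrass). At $\rho=j_{3/2,1}\approx4.49$ it is even obvious, since every $J_{\ell+1/2}$ with $\ell\geq2$ has its first zero beyond $5.76$. So no branch bifurcates from any ball in this symmetry class, and there is nothing to continue.

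The idea you are missing is the near-resonance. At a zero $\rho$ of $J_{3/2}$ where $J_{\ell+1/2}(\rho)$ is merely small, the reduced equation has the nonzero quadratic coefficient $\langle P_\ell^3\rangle/\langle P_\ell^2\rangle$, which vanishes in the plane. It predicts a transcritical non-ball solution at a distance comparable to $J_{\ell+1/2}(\rho)$. The convex domain lies close to the ball of radius $j_{3/2,34}\approx108.376$, with $\ell=18$ and $J_{37/2}(\rho)\approx1.25\cdot10^{-4}$. It is not a ``visibly noncircular'' shape found far along a branch.

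Your normalisation is also inconsistent. With the frequency fixed to one, the linear coefficient of $f$ cannot be $1$; it must be a free unknown (here about $108.37$).

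\textbf{The linear structure of the contraction.} This is the obstacle you flag yourself, and your set-up does not resolve it.

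Multiplying by $\im f$ gives $\im f\,\Delta w+\nabla\im f\cdot\nabla w+\im f\,|f'|^2(1+w)$. Its principal coefficient vanishes on the axis, so there is no ``Laplacian symbol'' to invert, and the operator is not diagonal in planar disk polynomials. Dividing by the unknown $\im f$, instead of the fixed $\im z$, would destroy polynomiality.

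The paper factors $\im\psi=y\,q$ with $q=\im\psi(w)/\im w$. This $q$ is positive, analytic in $(x_1,y^2)$, and linear in the shape coefficients. Since $\nabla\log\im\psi=\nabla\log y+\nabla\log q$, the singular part of the drift combines with the planar Laplacian into the Laplacian of $\R^3$ on the reference ball. Pulling back by the axisymmetric lift $\Theta$ then gives $\operatorname{div}(q\nabla U)+q|\psi'|^2U=0$ in $B^3$. Write $U=1+K_3g$, with $K_3$ the explicit zero-Cauchy-data inverse of the three-dimensional Laplacian on the basis $r^\ell P_\ell(x_1/r)P^{(0,\ell+1/2)}_s(2r^2-1)$. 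Expanding $\operatorname{div}(q\nabla U)=q\,\Delta U+\nabla q\cdot\nabla U$ then yields a quartic equation with nondegenerate principal part $q\,g\approx c_1g$.

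Even so, inverting the field part on the tail does not control the infinitely many high shape modes, whose columns act through boundary (harmonic) rows. The paper freezes $q$ and inverts, by a Neumann series, a Toeplitz operator whose symbol is built from the centre's $q$ and $\psi'$; the symbol defect is below $0.0156$. Without such a model, $\lVert I-ADF\rVert<1$ fails on those columns.

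Finally, analyticity of $u$ across $\partial\Omega$ does not follow from the weighted norm on the shape coefficients. It needs analytic boundary regularity for the Dirichlet problem, which the paper takes from Morrey--Nirenberg.
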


For $n\in\{8,10,14\}$ let $(\mathfrak k,G)$ be $(\mathfrak{su}(3),SU(3))$, $(\mathfrak{so}(5),SO(5))$ and $(\mathfrak g_2,G_2)$, respectively. We identify $\mathfrak k$ with $\R^n$ by a linear isometry for an $\mathrm{Ad}(G)$-invariant inner product.

\begin{theorem}[Convex counterexamples]\label{thm:main}
Let $n\in\{3,4,6,8,10,14\}$. There exist a bounded convex domain $\Omega\subset\R^n$ and a nonconstant real-valued function $u$, real analytic on an open neighbourhood of $\overline\Omega$, such that \eqref{eq:main} holds. Moreover:
\begin{enumerate}[label=\textup{(\roman*)}]
\item $\Omega$ is not a ball;
\item $\Omega$ is strictly star-shaped with respect to the origin, and $\partial\Omega$ is a compact real-analytic hypersurface diffeomorphic to $S^{n-1}$;
\item for $n=3$, $\Omega$ is invariant under $O(2)$ on $x'$ and $x_1\mapsto-x_1$; for $n=4$, $\Omega$ is invariant under the action of $O(3)$ on $x'$ and under $x_1\mapsto-x_1$; for $n=6$, $\Omega$ is invariant under the action of $O(3)\times O(3)$ on $(x',x'')$ and under the exchange $(x',x'')\mapsto(x'',x')$; for $n\in\{8,10,14\}$, $\Omega\subset\mathfrak k$ is invariant under the adjoint action of $G$ and under $x\mapsto-x$.
\end{enumerate}
In particular, Conjecture~\ref{conj:schiffer} fails in each listed dimension, even within the class of convex domains.
\end{theorem}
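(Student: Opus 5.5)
We treat the six dimensions separately but with one template. In each case the symmetry group collapses \eqref{eq:main} to a two-dimensional Helmholtz-type overdetermined problem, which we then solve by the conformal-map method of \cite{CS26}, made rigorous by a computer-assisted contraction.

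\textbf{Reduction to the plane.} For $n=3,4$ we write $x=(x_1,r)$ with $r=|x'|$. An invariant $u$ then satisfies $u_{x_1x_1}+u_{rr}+\frac{n-2}{r}u_r+u=0$ on the meridian half-plane, and the boundary conditions $u=1$, $\partial_\nu u=0$ hold on the meridian curve. For $n=6$ we use the two radii $(|x'|,|x''|)$. For $n=2m+2$ realised as $\mathfrak k$ of rank two, we invoke Harish-Chandra's radial part formula on the Cartan subalgebra $\mathfrak t\cong\R^2$. An $\Ad(G)$-invariant $u$ restricted to $\mathfrak t$ satisfies
\[
\Delta_{\mathfrak t}(\pi u)+\pi u=0,\qquad \pi=\prod_{\alpha>0}\alpha .
\]
So $v=\pi u$ is an honest planar Helmholtz solution, odd under the Weyl group $W$. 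The cases $n=4,6$ fit the same scheme via $\mathfrak u(2)$ and $\mathfrak{so}(4)$. The Schiffer conditions become $v=\pi$ and $\nabla v=\nabla\pi$ on a $W$-invariant curve $\gamma\subset\mathfrak t$. By Kostant's convexity theorem, the projection of an orbit onto $\mathfrak t$ is the convex hull of its $W$-orbit. It follows that $\Omega=\Ad(G)\cdot D$ is convex if and only if the $W$-invariant planar region $D$ bounded by $\gamma$ is convex. Real analyticity of $u$ across the singular set comes from Chevalley restriction: $u|_{\mathfrak t}$ is a $W$-invariant analytic function, so it extends to an analytic invariant function.

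\textbf{Conformal parametrisation and contraction.} Next we write $D=f(\D)$ with $f$ analytic, equivariant under the symmetry group, and expand $f$ and the pulled-back solution in weighted polynomial (disk-polynomial) bases. The overdetermined problem then becomes a polynomial equation $F(a)=0$ on a weighted $\ell^1$ coefficient space. We compute a dyadic approximate zero $\bar a$ numerically. We then show that $T(a)=a-A F(a)$ is a contraction on a small ball around $\bar a$, with $A$ an approximate inverse of $DF(\bar a)$. The finite block is bounded by interval arithmetic and the infinite tails by analytic estimates that exploit the decay of the weights. This produces an exact $a$, hence $f$ and $u$, and the boundary is then real analytic.

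\textbf{Remaining properties.} Strict star-shapedness and convexity follow from interval lower bounds on $\re(\zeta f'(\zeta)/f(\zeta))$ and on the curvature $1+\re(\zeta f''/f')$ on $|\zeta|=1$. These bounds survive perturbation by the rigorous error radius. For $n=3,4$ the convexity of the meridian section lifts to the rotation body, since the section is convex and symmetric about the axis. The domain is not a ball because a fixed nonzero higher Fourier coefficient of $f$ is certified nonzero. $u$ is nonconstant because $\Delta u+u=0$ and $u\not\equiv0$. The boundary is a sphere by star-shapedness.

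\textbf{Main obstacle.} The hardest step is the contraction estimate. Controlling the inverse of the linearisation on the infinite-dimensional tail requires a near-kernel-free choice of normalisation: rotations and scaling are fixed by the symmetry, and the eigenvalue is absorbed by the scaling. The singular weight $\pi$ or $r^{n-2}$ near the axis or the walls must also be handled. We first try to divide out $\pi$ explicitly and work with $u$ in even-symmetric bases, so that the radial operator stays uniformly invertible in the weighted norm.
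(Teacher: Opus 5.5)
There is a genuine gap at $n=3$, which you treat together with $n=4$. For $n=3$ the meridian operator $\partial_{x_1}^2+\partial_r^2+r^{-1}\partial_r$ cannot be conjugated to the planar Laplacian. The substitution $F=r^{1/2}u$ gives $\Delta F+F+F/(4r^2)=0$ (Remark~\ref{rem:why4}), and no harmonic weight plays the role of $\varpi$.

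After the conformal change $z=\psi(w)$ the meridian equation reads $\operatorname{div}_w(\im\psi\,\nabla_wV)+\im\psi\,|\psi'|^2V=0$. Near the axis the drift term $\nabla(\im\psi)\cdot\nabla V$ has the same order as $\im\psi\,\Delta V$: for $V$ even in $\im w$ one has $\partial_yV\approx y\,\partial_y^2V$. So the drift cannot be treated as a perturbation of the planar problem. Writing $V=1+Kg$ with the disk-polynomial inverse $K$ of the planar Laplacian does not give a fixed-point equation you can contract; what has to be inverted is the axisymmetric Laplacian of $\R^3$.

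The missing idea is to lift $\psi$ to $\Theta(x_1,x')=(\re\psi(w),\,q\,x')$, where $w=x_1+i|x'|$ and $q=\im\psi(w)/\im w$. A lower bound for $\re\psi'$ on a disc of radius $>1$ makes $\Theta$ a real-analytic diffeomorphism near $\overline{B^3}$, with $q>0$ analytic. The problem then becomes the uniformly elliptic problem $\operatorname{div}(q\nabla U)+q|\psi'|^2U=0$ in $B^3$, with $U=1$ and $\nabla U=0$ on $\partial B^3$.

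One then writes $U=1+K_3g$. Here $K_3$ is the explicit inverse of $\Delta_{\R^3}$ with zero Cauchy data on the basis $H_\ell(x)P^{(0,\ell+1/2)}_s(2|x|^2-1)$: zonal harmonics with half-integer Jacobi parameter, not disk polynomials. One keeps $q$ as a multiplier, giving $qg+\nabla q\cdot\nabla K_3g+q|\psi'|^2(1+K_3g)=0$, a quartic equation with no division by $q$. This route needs:
\begin{itemize}
\item new multiplication bounds (positivity of Gaunt and Jacobi connection coefficients, Legendre--Chebyshev conversion);
\item a new tail model, a Toeplitz operator whose symbol is built from $q$ and $\psi'$, inverted by a Neumann series;
\item analyticity up to $\partial\Omega$ via Morrey--Nirenberg.
\end{itemize}
Your remark about working with $u$ in even-symmetric bases points this way only if it is carried out in $\R^3$.

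Three smaller points concern $n=4,\dots,14$, where your route otherwise agrees with the paper.

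First, for $m\geq1$ you should not divide out $\varpi$ before solving. The potential-free equation, whose conformal pull-back is $\Delta V+|\psi'|^2V=0$, is the one for $F=\varpi f$; $f$ itself satisfies $\varpi^{-2}\operatorname{div}(\varpi^2\nabla f)+f=0$, which has a singular drift. The division is performed only after the exact solution is found, by Lemma~\ref{lie:lem:division}. Analyticity of the lift then comes from Lemma~\ref{lie:lem:inv} off the origin and a removable-singularity argument at $0$; Chevalley's theorem by itself is a statement about polynomials.

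Second, the tails are not controlled by decay of the weights. In the high shape directions the normalised columns of the linearisation do not decay: for $n=4$ the model column $Te_j$ has normalised norm $1+j\rho^{-2}/(j+1)$. So $A$ must contain the exact inverse \eqref{lie:eq:Tinv} of the step-$2m$ difference operator $T_{tt}$, coupled to the finite block as in \eqref{eq:A}. Only the differences $D\Fc(x^\circ)e_j-T_j$ are bounded analytically.

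Third, the contraction yields only a $C^1$ weak solution, so analyticity of $u$ across $\partial\Omega$ needs a separate argument (Lemma~\ref{lem:boundary}).
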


The $n=3$ case is Theorem~\ref{thm:r3}; the other cases are proved in Part~\ref{part:planar}.

Rescaling $\Omega$ and $u$ gives, for every $\mu>0$, a counterexample with eigenvalue $\mu$ in \eqref{eq:schiffer}.

\begin{corollary}[Pompeiu]\label{cor:pompeiu}
Let $n\in\{3,4,6,8,10,14\}$ and let $\Omega\subset\R^n$ be the domain of Theorem~\ref{thm:r3} or Theorem~\ref{thm:main}. The Fourier transform of $\one_\Omega$ vanishes on the unit sphere of $\R^n$. Consequently $\int_{\sigma(\Omega)}\cos x_1\,dx=0$ for every rigid motion $\sigma$ of $\R^n$, the domain $\Omega$ fails the Pompeiu property, and the Pompeiu conjecture fails in $\R^n$, even within the class of convex domains.
\end{corollary}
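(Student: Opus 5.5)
The plan is to deduce the corollary directly from \eqref{eq:main} by Green's identity, and then use the elementary direction of the Brown--Schreiber--Taylor criterion.

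\emph{Step 1: vanishing of the Fourier transform.} Fix $\xi\in\R^n$ with $|\xi|=1$ and set $e_\xi(x)=e^{-ix\cdot\xi}$, so that $\Delta e_\xi+e_\xi=0$. Put $v=u-1$. By \eqref{eq:main}, $v$ satisfies $\Delta v+v=-1$ in $\Omega$ and $v=0$, $\partial_\nu v=0$ on $\partial\Omega$. The boundary is real analytic and $u$ is analytic on a neighbourhood of $\overline\Omega$, so Green's second identity applies:
\[
\int_\Omega\bigl(e_\xi\,\Delta v-v\,\Delta e_\xi\bigr)\,dx=\int_{\partial\Omega}\bigl(e_\xi\,\partial_\nu v-v\,\partial_\nu e_\xi\bigr)\,dS=0 .
\]
The left side equals $\int_\Omega e_\xi(\Delta v+v)\,dx=-\int_\Omega e_\xi\,dx=-\widehat{\one_\Omega}(\xi)$. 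Therefore $\widehat{\one_\Omega}$ vanishes on the unit sphere. Only the Cauchy data of $u$ on $\partial\Omega$ are used, so this step is uniform over all the listed dimensions and domains.

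\emph{Step 2: translation to rigid motions.} Write $\sigma(x)=Ax+b$ with $A\in O(n)$. Then
\[
\int_{\sigma(\Omega)}e^{ix\cdot\eta}\,dx=e^{ib\cdot\eta}\int_\Omega e^{iy\cdot A^{T}\eta}\,dy=e^{ib\cdot\eta}\,\overline{\widehat{\one_\Omega}(A^T\eta)}.
\]
For $\eta=\pm e_1$ we have $|A^T\eta|=1$, so by Step 1 this integral is $0$. The function $\cos x_1$ is the average of $e^{\pm ix_1}$, and hence $\int_{\sigma(\Omega)}\cos x_1\,dx=0$ for every rigid motion $\sigma$. Since $\cos x_1$ is continuous and not identically zero, $\Omega$ fails the Pompeiu property.

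\emph{Step 3: the conjecture.} Theorems~\ref{thm:r3} and~\ref{thm:main} provide the remaining hypotheses of the Pompeiu conjecture. The domain $\Omega$ is bounded and convex, hence Lipschitz, and $\partial\Omega$ is real analytic and diffeomorphic to $S^{n-1}$. By item (i) of Theorem~\ref{thm:main}, $\Omega$ is not a ball. So $\Omega$ is a convex counterexample in each listed dimension.

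There is no substantive obstacle in this argument. The only point needing care is the justification of Green's identity, which needs $C^2$ regularity of $u$ up to $\partial\Omega$ and a $C^1$ boundary. Both are supplied by the real analyticity statements in the theorems.
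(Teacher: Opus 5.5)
Your proposal is correct and follows essentially the same route as the paper: Green's identity against $e^{\pm i\xi\cdot x}$ using the vanishing Cauchy data of $u-1$, followed by the change of variables under a rigid motion. The only cosmetic difference is that you apply Green's identity to $u-1$ with source term $-1$, whereas the paper applies it to $u$ itself and then converts $\int_{\partial\Omega}\partial_\nu v\,dS$ back into a volume integral by the divergence theorem.
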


In particular, the Schiffer and Pompeiu conjectures fail within the class of bounded convex domains with real-analytic boundary in all six dimensions. To our knowledge, apart from the earlier version of this work, Theorem~\ref{thm:r3} gives the first such counterexample in dimension three, while Theorem~\ref{thm:main} gives the first known convex counterexamples in Euclidean space. Neither \cite{CS26} nor \cite{CLdDP26} asserts convexity; the examples of \cite{CS26,Dol26} have certified negative boundary curvature at some points. The convex three-dimensional example is distinct from the nonconvex example of Proposition~\ref{r3:prop-AX3}, which is retained below. The convex planar question remains open (Section~\ref{sec:open}). Theorem~\ref{thm:r3} and Corollary~\ref{cor:pompeiu} also contradict several published statements about the three-dimensional problem (Section~\ref{sec:rigidity}).

\begin{remark}[The domains]\label{rem:domain}
In dimensions $4$ to $14$ the domains are obtained from a planar domain $D=\psi(\D)\subset\R^2\cong\C$, the image of the unit disc under a map $\psi$ with real Taylor coefficients; the three-dimensional domain is the image of the unit ball of $\R^3$ under the axisymmetric extension of such a map.

\emph{Dimension three.} Here $\Omega=\Theta(B^3)$, where $B^3$ is the unit ball and
\[
\Theta(x_1,x')=\Big(\re\psi(x_1+i|x'|),\ \frac{\im\psi(x_1+i|x'|)}{|x'|}\,x'\Big),\qquad\psi(w)=\sum_{j\ \mathrm{odd}}c_jw^j .
\]
The exact coefficient vector lies in the ball
\[
\sum_{j\ \mathrm{odd}}8b\,j\Big(\frac{21}{20}\Big)^j|c_j-c_j^\circ|\leq10^{-7},\qquad b=7626062690497999\,2^{-46},
\]
about the dyadic centre in \texttt{anc/r3\_convex/candidate.json}. Its linear coefficient is approximately $108.372869$; the nearby radial Schiffer ball has radius $j_{3/2,34}\approx108.37572$, the thirty-fourth positive root of $\tan x=x$. The non-ball witness is $c_{19}\ne0$. An independent example near $j_{3/2,15}\approx48.67414$ is strictly star-shaped but nonconvex (Proposition~\ref{r3:prop-AX3}).

\emph{Dimension four.} Here $\Omega=\{(x_1,x'):(x_1,|x'|)\in D\}$ and $\psi(w)=\sum_{j\ \mathrm{odd}}c_jw^j$. The coefficients satisfy
\[
\sum_{j\ \mathrm{odd}}(c_1^\circ)^2(j+1)\Big(\frac{21}{20}\Big)^{j}\,|c_j-c_j^\circ|\leq\frac1{50000},
\]
where $c^\circ_1,c^\circ_3,\ldots,c^\circ_{41}$ are explicit dyadic rationals listed in the verification script (Section~\ref{sec:cap}) and $c^\circ_j=0$ for $j>41$; one has $c_1^\circ\approx14.840043$ and $c^\circ_3\approx0.0912$, $c^\circ_5\approx0.0942$, $c^\circ_7\approx0.0964$. For orientation we record some numerical observations about the centre $\psi^\circ$, which are not used in the proofs. The boundary of $D$ is a radial graph whose radius lies between $14.7213$ and $15.1343$. The ball of $\R^4$ that carries a radial solution of \eqref{eq:main} with the same eigenvalue and closest radius has radius $j_{2,4}\approx14.79595$, the fourth positive zero of $J_2$; thus the relative deviation of $\partial\Omega$ from that sphere lies between $-0.5\%$ and $+2.3\%$. Expanding the boundary radius of $\Omega$ in zonal spherical harmonics on $S^3$, the dominant nonconstant term is the zonal harmonic of degree six, with relative amplitude about $2.2\times10^{-2}$ at the poles.

\emph{Dimension six.} Here $\Omega=\{(x',x''):(|x'|,|x''|)\in D\}$ and $\psi(w)=\sum_{j\equiv1\ (\mathrm{mod}\ 4)}c_jw^j$. The coefficients satisfy
\[
\sum_{j\equiv1\ (\mathrm{mod}\ 4)}\frac{(c_1^\circ)^3}{2}(j+2)\Big(\frac{11}{10}\Big)^{j+1}\,|c_j-c_j^\circ|\leq\frac1{5000},
\]
where $c^\circ_1,c^\circ_5,\ldots,c^\circ_{57}$ are explicit dyadic rationals listed in the ancillary data file \texttt{center\_r6.json} (Section~\ref{subsec:cap6}) and $c^\circ_j=0$ for $j>57$; one has $c_1^\circ\approx28.864751$, $c^\circ_5\approx-0.0852970$ and $c^\circ_9\approx-0.0837734$, while $|c^\circ_j|<4\cdot10^{-4}$ for $j\geq13$. Numerically, the boundary of $D$ is a radial graph whose radius lies between $28.6958$ and $28.9596$, and the ball of $\R^6$ with the same volume as $\Omega$ has radius $28.908339$. The ball of $\R^6$ that carries a radial solution of \eqref{eq:main} with the same eigenvalue and closest radius has radius $j_{3,8}\approx28.908351$, the eighth positive zero of $J_3$; the relative deviation of $\partial\Omega$ from that sphere lies between $-0.74\%$ and $+0.18\%$, so that the radius of $\partial\Omega$ varies by about $0.92\%$. Expanding the boundary radius in the $O(3)\times O(3)$-invariant spherical harmonics on $S^5$, which are the functions $U_k(|\omega'|^2-|\omega''|^2)$ of degree $2k$ (Section~\ref{subsec:mechanism}), the dominant nonconstant term has degree eight, with relative amplitude about $7.2\times10^{-3}$.

\emph{Dimensions eight, ten and fourteen.} Let $m=3,4,6$, respectively. Here $\Omega=\mathrm{Ad}(G)D$, where $D=\Omega\cap\mathfrak t$ is the section of $\Omega$ by a Cartan subalgebra $\mathfrak t\cong\C$ (Section~\ref{subsec:lie}), and $\psi(w)=\sum_{j\equiv1\ (\mathrm{mod}\ 2m)}c_jw^j$ lies in an explicit weighted $\ell^1$ ball around explicit dyadic coefficients $c^\circ_j$ (Section~\ref{lie:sec}). One has $c_1^\circ\approx20.787029$, $38.152926$, $88.497305$ and $c^\circ_{2m+1}\approx-0.0774840$, $-0.0138325$, $0.0463087$, respectively. Numerically, the boundary of $D$ is a radial graph whose radius lies between $20.7086$ and $20.8635$, between $38.1390$ and $38.1667$, and between $88.4477$ and $88.5402$. The balls that carry a radial solution of \eqref{eq:main} with the closest radii have radii $j_{4,5}\approx20.826933$, $j_{5,10}\approx38.159869$ and $j_{7,25}\approx88.474363$, and the relative deviations of $\partial\Omega$ from these spheres lie between $-0.57\%$ and $+0.18\%$, between $-0.055\%$ and $+0.018\%$, and between $-0.030\%$ and $+0.074\%$. In each case the dominant nonconstant term of the radius of $D$ is a multiple of $\cos(2m\theta)$, which corresponds to the $\mathrm{Ad}(G)$-invariant spherical harmonic of degree $2m$; its restriction to the unit circle of $\mathfrak t$ is a multiple of $1+2\cos(2m\theta)$.
\end{remark}

\subsection{Compact Lie algebras of rank two}\label{subsec:lie}

The five convex domains are instances of a single construction. Let $\mathfrak k$ be a compact Lie algebra of rank two, with an $\mathrm{Ad}$-invariant inner product, let $\mathfrak t\subset\mathfrak k$ be a Cartan subalgebra, identified with $\C$ by a linear isometry, and let $W$ be the Weyl group. If $\mathfrak k$ is not abelian, then $W$ is the dihedral group of order $2m$ with $m\in\{1,2,3,4,6\}$, there are $m$ positive roots, $\dim\mathfrak k=2m+2$, and, up to isomorphism, $\mathfrak k$ is $\mathfrak u(2)$, $\mathfrak{so}(4)\cong\mathfrak{su}(2)\oplus\mathfrak{su}(2)$, $\mathfrak{su}(3)$, $\mathfrak{so}(5)\cong\mathfrak{sp}(2)$ or $\mathfrak g_2$. The product $\varpi$ of the positive roots is a $W$-anti-invariant harmonic polynomial of degree $m$; in suitable coordinates $\varpi(z)$ is a nonzero multiple of $\im z^m$. For an $\mathrm{Ad}$-invariant $C^2$ function $u$ with restriction $f=u|_{\mathfrak t}$, a classical formula of Harish-Chandra \cite{HC57} (see \cite[Ch.~II]{Hel84}) states that
\begin{equation}\label{eq:radial-part}
(\Delta_{\mathfrak k}u)|_{\mathfrak t}=\varpi^{-1}\Delta_{\mathfrak t}(\varpi f)
\end{equation}
at the regular points of $\mathfrak t$. Consequently, $\mathrm{Ad}$-invariant solutions of \eqref{eq:main} on $\Omega\subset\mathfrak k$ correspond to $W$-anti-invariant solutions $F=\varpi f$ of the planar problem
\begin{equation}\label{eq:intro-Pm}
\Delta F+F=0\ \text{ in }D,\qquad F=\varpi\ \text{ and }\ \nabla F=\nabla\varpi\ \text{ on }\partial D,
\end{equation}
on the Cartan section $D=\Omega\cap\mathfrak t$. There is no potential term, so the conformal method of \cite{CS26} applies to \eqref{eq:intro-Pm}. Conversely, if $D$ is $W$-invariant, contains $0$ and has a positive real-analytic radial function, then $\Omega=\mathrm{Ad}(G)D$ is a bounded domain whose boundary is a real-analytic hypersurface diffeomorphic to $S^{2m+1}$, and an analytic solution of \eqref{eq:intro-Pm} lifts to a solution of \eqref{eq:main} on $\Omega$ (Section~\ref{lie:sec}); some hypothesis on $D$ is needed here, since for an annulus $D$ the lifted boundary has two components. By Kostant's convexity theorem \cite{Kos73}, in the form of \cite[Theorems~2.2 and~3.6]{Lew00}, $\Omega$ is convex if and only if $D$ is. For $\mathfrak k=\mathfrak u(2)\cong\R\times\R^3$ the adjoint group acts as $SO(3)$ on the second factor and $\varpi$ is a multiple of $y$; for $\mathfrak k=\mathfrak{so}(4)\cong\R^3\times\R^3$ it acts as $SO(3)\times SO(3)$ and $\varpi$ is a multiple of $y_1y_2$. These are the reductions of Sections~\ref{sec:reduction} and~\ref{sec:r6}, which we present there in elementary form. For the abelian algebra $\R^2$ one has $\varpi=1$, and \eqref{eq:intro-Pm} is the planar Schiffer problem itself. Among splittings $\R^n=\R^{d_1}\times\R^{d_2}$ with symmetry group $O(d_1)\times O(d_2)$, the meridian problem has no potential term exactly for $n\in\{2,4,6\}$ (Remark~\ref{rem:why4}), which are the cases $m\in\{0,1,2\}$ above; in dimension three no such reduction is available, and Theorem~\ref{thm:r3} is proved by a different formulation.

The ball $B_\rho\subset\mathfrak k$ carries the radial solution $u=r^{-m}J_m(r)$ of \eqref{eq:main} exactly when $J_{m+1}(\rho)=0$; in $\mathfrak t$ it corresponds to $F=J_m(r)\sin m\theta$. An $\mathrm{Ad}$-invariant spherical harmonic of degree $\ell$, which exists for $\ell\in m\Z_{\geq0}$, corresponds to the planar mode $J_{m+\ell}(r)\sin(m+\ell)\theta$, and the linearisation of the Schiffer problem at $B_\rho$ in this direction is degenerate exactly when $J_{m+\ell}(\rho)=0$. By Siegel's theorem \cite{Sie29} this never happens for $\ell\geq2$; the case $\ell=1$, which occurs only for $m=1$, corresponds to translations and is excluded by the symmetry $x_1\mapsto-x_1$. So there is no bifurcation from balls in these symmetry classes; our solutions are found instead near near-resonances, where $J_{m+\ell}(\rho)$ is small (Section~\ref{subsec:mechanism} and Table~\ref{tab:resonances}).

\subsection{Strategy of the proof}

\emph{Locating the solutions.} All six domains lie close to balls at near-resonant eigenvalues. Let $\rho$ be a zero of $J_{n/2}$, so that the ball of radius $\rho$ in $\R^n$ carries a radial solution of \eqref{eq:main}, and let $\ell\geq2$ be the degree of a spherical harmonic in the symmetry class. The linearisation at the ball in the direction of this harmonic is degenerate exactly when $J_{\ell+n/2-1}(\rho)=0$, which never happens, but when $J_{\ell+n/2-1}(\rho)$ is small and $n\geq3$, a formal Lyapunov--Schmidt expansion has, for the harmonics considered here, a nonzero quadratic term, and it predicts a non-ball solution at a distance from the ball comparable to $J_{\ell+n/2-1}(\rho)$ (Section~\ref{subsec:mechanism}). In the plane the corresponding quadratic term vanishes. Table~\ref{tab:resonances} lists the near-resonances at which our domains lie. The numerical centres were obtained by continuation from these predictions; their provenance plays no role in the proofs.

\begin{table}[ht]
\centering
\begin{tabular}{@{}ccccc@{}}
\toprule
$n$ & symmetry & $\rho$ & $\ell$ & $J_{\ell+n/2-1}(\rho)$\\
\midrule
$3$ & $O(2)\times\Z_2$ & $j_{3/2,34}\approx108.37572$ & $18$ & $J_{37/2}(\rho)\approx1.25\cdot10^{-4}$\\
$4$ & $\mathfrak u(2)$ & $j_{2,4}\approx14.79595$ & $6$ & $J_{7}(\rho)\approx-4.9\cdot10^{-3}$\\
$6$ & $\mathfrak{so}(4)$ & $j_{3,8}\approx28.90835$ & $8$ & $J_{10}(\rho)\approx-3.0\cdot10^{-3}$\\
$8$ & $\mathfrak{su}(3)$ & $j_{4,5}\approx20.82693$ & $6$ & $J_{9}(\rho)\approx-3.3\cdot10^{-3}$\\
$10$ & $\mathfrak{so}(5)$ & $j_{5,10}\approx38.15987$ & $8$ & $J_{12}(\rho)\approx-4.4\cdot10^{-4}$\\
$14$ & $\mathfrak g_2$ & $j_{7,25}\approx88.47436$ & $12$ & $J_{18}(\rho)\approx-9.0\cdot10^{-4}$\\
\bottomrule
\end{tabular}
\caption{Near-resonances: the Schiffer ball $B_\rho$ of $\R^n$ at eigenvalue one and the degree $\ell$ of the dominant spherical harmonic in the symmetry class.}\label{tab:resonances}
\end{table}

\emph{Dimensions $n=2m+2$: conformal fixed-disc formulation.} Following \cite{CS26} we write $D=\psi(\D)$ and $V=F\circ\psi$, so that $\Delta V+|\psi'|^2V=0$ in $\D$, and $V-\varpi\circ\psi$ has vanishing value and normal derivative on $\partial\D$. We write $V=Kg+\varpi\circ\psi$, where $K$ is an explicit inverse of the Laplacian with vanishing Cauchy data on its natural range. For $n=4$, where $\varpi=y$, this gives the cubic equation
\begin{equation}\label{eq:intro-cubic}
\Fc(g,\psi)=g+|\psi'|^2\,(Kg+\im\psi)=0,
\end{equation}
and in general the polynomial equation
\begin{equation}\label{eq:intro-poly}
g+|\psi'|^2\,\big(Kg+\varpi\circ\psi\big)=0
\end{equation}
of degree $m+2$ in the unknowns. Compared with the equation $g+|p|^2(1+Kg)=0$ of \cite{CS26}, the constant boundary datum is replaced by $\varpi\circ\psi$, so that the unknown map enters the data as well; the eigenvalue is absorbed into the scale of $\psi$. The symmetry class consists of maps $\psi(w)=\sum_{j\equiv1\,(\mathrm{mod}\ 2m)}c_jw^j$ with real coefficients, which is slightly more symmetric than the Weyl group requires, and $g$ lies in the sine sector of angular frequencies $\equiv m\pmod{2m}$. The part of the linearisation acting on high shape modes has a harmonic part which is a difference operator of step $2m$ (upper bidiagonal for $m=1$), and we invert it exactly. We prove that \eqref{eq:intro-poly} has an exact zero in an explicit ball around a numerically computed centre, using a Newton--Kantorovich (radii polynomial) argument in weighted $\ell^1$ spaces of disk-polynomial coefficients. The approximate inverse consists of a finite matrix, of size $525$, $465$, $200$, $259$ and $610$ for $n=4,6,8,10,14$, inverted and checked in ball arithmetic, together with an exact inverse on the tail. All infinitely many columns of the defect operator are covered: finitely many are evaluated without truncation in interval arithmetic, and the remaining ones are bounded by analytic estimates. From the zero we obtain a univalent map $\psi$, a strictly convex planar domain $D$ (by a lower bound for $\re(1+w\psi''/\psi')$ on the closed disc, uniform over the existence ball), a solution of \eqref{eq:intro-Pm} that extends analytically across $\partial D$, and finally the domain $\Omega$ and the function $u$, by the rotations of Sections~\ref{sec:reduction} and~\ref{sec:r6} for $n=4,6$ and by the adjoint lift of Section~\ref{lie:sec} for $n=8,10,14$.

\emph{Dimension three.} Here there is no reduction to the planar Laplacian (Remark~\ref{rem:why4}), and we pull the three-dimensional problem back to the unit ball $B^3$. Let $\psi$ be a map as in Remark~\ref{rem:domain}, with odd powers and real coefficients, and $\Theta$ its axisymmetric extension. The differential of $\Theta$ is a similarity with factor $|\psi'|$ in the meridian directions and multiplication by $q=\im\psi(w)/\im w$ in the azimuthal direction, where $w=x_1+i|x'|$. Hence, for an axisymmetric function $U=u\circ\Theta$ on $B^3$, problem \eqref{eq:main} becomes
\begin{equation}\label{eq:intro-r3}
\operatorname{div}(q\nabla U)+q|\psi'|^2U=0\ \text{ in }B^3,\qquad U=1\ \text{ and }\ \nabla U=0\ \text{ on }\partial B^3 .
\end{equation}
We write $U=1+K_3g$, where $K_3$ is an explicit inverse of the Laplacian of $\R^3$ with vanishing Cauchy data on $\partial B^3$, acting on the axisymmetric ball polynomials $r^\ell P_\ell(x_1/r)P^{(0,\ell+1/2)}_s(2r^2-1)$ with $\ell$ even and $s\geq1$. Since $q$ is a real-analytic function of $x_1$ and $|x'|^2$ that depends linearly on the coefficients of $\psi$, \eqref{eq:intro-r3} becomes a quartic polynomial equation in the unknown field coefficients and shape coefficients, with no division by the unknown $q$. The disk-polynomial algebra of \cite{CS26} is replaced by weighted $\ell^1$ norms adapted to the Legendre--Chebyshev connection coefficients; multiplication estimates follow from the positivity of the Gaunt coefficients and of Jacobi connection coefficients. The approximate inverse consists of a finite block of dimension $7371$, with $7280$ field and $91$ shape unknowns, and an explicit inverse of the principal boundary part on high shape modes, given by a convergent Neumann series. The Newton--Kantorovich argument again covers all columns, finitely many by interval evaluation and all others by analytic estimates. Univalence and positivity of $q$ follow from a lower bound for $\re\psi'$ on a disc of radius $101/100$, uniform over the existence ball, which also shows that $\Theta$ is a real-analytic diffeomorphism near $\overline{B^3}$; strict star-shapedness follows from a lower bound for $\re(w\psi'/\psi)$ on the unit circle, and strict convexity from a positive lower bound for $\re(1+w\psi''/\psi')$ on the whole closed disc and throughout the projected shape ball. The field $u=U\circ\Theta^{-1}$ extends analytically across the axis and across $\partial\Omega$ by elliptic regularity.

\emph{Computations.} The four-dimensional finite computations are performed by a single self-contained script using ball arithmetic \cite{Joh17,flint}; it runs in about three minutes on one core. The six-dimensional ones are split into five stage scripts, whose interval receipts are combined by a final verification script; the complete reproduction also takes about three minutes on one core. The computations for $n=8,10,14$ are organised in the same way, with $128$-bit ball arithmetic and bounded stage scripts, and take about a quarter of an hour of processor time in total. The three-dimensional verification is a single self-contained script that recomputes the approximate inverse and all interval bounds from embedded frozen data; it took about $1.8$ hours with five single-threaded processes in the recorded replay. Section~\ref{sec:cap} describes what the scripts check and how to reproduce the computations.

\subsection*{Relation with an earlier version}
Theorem~\ref{thm:main} for $n\in\{4,6\}$, numerical solutions in dimensions three and five, among them a numerical approximation of the additional AX3 domain of Proposition~\ref{r3:prop-AX3}, and the observation that the planar reduction extends to $\mathfrak{su}(3)$, $\mathfrak{so}(5)$ and $\mathfrak g_2$ appeared in the preprint \cite{Guo26}. The present paper supersedes it.

\subsection*{Organisation}
The paper has three parts. Part~\ref{part:planar} treats the planar reductions. Sections~\ref{sec:reduction}--\ref{sec:recon} contain the proof of Theorem~\ref{thm:main} for $n=4$: the reduction to the plane (Section~\ref{sec:reduction}), the conformal formulation (Section~\ref{sec:conformal}), disk polynomials (Section~\ref{sec:disk}), the operator $K$ (Section~\ref{sec:K}), the cubic equation (Section~\ref{sec:cubic}), the approximate inverse (Section~\ref{sec:inverse}), the defect bounds with all tail estimates (Section~\ref{sec:Z}), the certified bounds and the exact zero (Section~\ref{sec:zero}), and the reconstruction of the domain (Section~\ref{sec:recon}). Section~\ref{sec:r6} contains the proof for $n=6$, which uses most lemmas of the four-dimensional proof verbatim, and Section~\ref{lie:sec} the Lie-theoretic reduction, the lifting and convexity lemmas, and the proof for $n=8,10,14$. Part~\ref{part:r3} (Section~\ref{sec:r3}) proves Theorem~\ref{thm:r3} and records the independent nonconvex example. In Part~\ref{part:further}, Section~\ref{sec:pompeiu} proves Corollary~\ref{cor:pompeiu}, Section~\ref{sec:cap} describes the computer-assisted parts and the ancillary files, Section~\ref{sec:remarks} contains remarks on the bifurcation mechanism, on further numerical solutions and on curved spaces, Section~\ref{sec:rigidity} compares our examples with rigidity theorems and with other claims, and Section~\ref{sec:open} lists open problems.

\part{Planar reductions: convex counterexamples in dimensions four, six, eight, ten and fourteen}\label{part:planar}

\section{Reduction to a planar problem}\label{sec:reduction}

Sections~\ref{sec:reduction}--\ref{sec:recon} contain the proof for $n=4$; the proof for $n=6$ is given in Section~\ref{sec:r6}. Write points of the meridian plane as $(x_1,y)\in\R^2$, identified with $z=x_1+iy\in\C$, and let $\Pi\colon\R^4\to\R^2$, $\Pi(x_1,x')=(x_1,|x'|)$. The map $\Pi$ is continuous and its image is the closed half-plane $\{y\geq0\}$.

\begin{lemma}\label{lem:laplace-identity}
Let $f$ be a $C^2$ function on an open subset of $\{y>0\}$ and $F(x_1,y)=yf(x_1,y)$. Then
\[
F_{x_1x_1}+F_{yy}=y\Big(f_{x_1x_1}+f_{yy}+\frac2yf_y\Big).
\]
The expression in parentheses, evaluated at $(x_1,|x'|)$, is the Laplacian of the function $x\mapsto f(x_1,|x'|)$ at every point with $x'\neq0$.
\end{lemma}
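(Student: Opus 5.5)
The plan is to prove the two assertions separately, both by direct differentiation; no earlier result is needed.

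\emph{The planar identity.} Write $F=yf$ and differentiate with the product rule. In $x_1$ the factor $y$ is constant, so $F_{x_1x_1}=yf_{x_1x_1}$. In $y$ we get $F_y=f+yf_y$ and then $F_{yy}=2f_y+yf_{yy}$. Adding these and factoring out $y$, which is legitimate since $y>0$, gives
\[
F_{x_1x_1}+F_{yy}=y\Big(f_{x_1x_1}+f_{yy}+\frac2yf_y\Big).
\]

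\emph{The four-dimensional Laplacian.} Set $v(x)=f(x_1,|x'|)$ with $x'\in\R^3$, and take a point with $x'\neq0$. There $r=|x'|$ is smooth, $(x_1,r)$ lies in the domain of $f$, and $v$ is $C^2$. I would use the standard formula for the Laplacian of a radial function in $\R^3$. Concretely, $\partial_{x_k}r=x_k/r$ and $\Delta_{x'}r=2/r$, since in $\R^3$ one has $\Delta r=(3-1)/r$. By the chain rule,
\[
\Delta_{x'}\big(f(x_1,r)\big)=f_{yy}\,|\nabla_{x'}r|^2+f_y\,\Delta_{x'}r=f_{yy}+\frac2rf_y .
\]
The $x_1$-derivatives contribute $f_{x_1x_1}$ unchanged. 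Adding the two pieces, $\Delta v$ equals the bracketed expression evaluated at $(x_1,|x'|)$.

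The only point needing care is the restriction to $x'\neq0$, which ensures that $r$ is differentiable. This also explains why the statement excludes the axis. There is no real obstacle here: the lemma is a routine computation.
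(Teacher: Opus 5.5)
Your proof is correct and follows the same route as the paper: the product rule gives $(yf)_{yy}=yf_{yy}+2f_y$, and the second assertion is the radial Laplacian $\partial_y^2+\frac2y\partial_y$ of $\R^3$. You spell out that radial formula via the chain rule, with $|\nabla_{x'}r|=1$ and $\Delta_{x'}r=2/r$, whereas the paper simply quotes it.
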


\begin{proof}
$(yf)_{yy}=yf_{yy}+2f_y$. The second statement is the formula $\partial_y^2+\frac2y\partial_y$ for the Laplacian of $\R^3$ acting on radial functions.
\end{proof}

\begin{lemma}\label{lem:odd-division}
Let $N\subset\R^2$ be open and symmetric under $(x_1,y)\mapsto(x_1,-y)$, and let $F$ be real analytic on $N$ and odd in $y$. Then there is a real-analytic function $G$, defined on an open set containing $\{(x_1,y^2):(x_1,y)\in N\}$, such that $F(x_1,y)=y\,G(x_1,y^2)$ on $N$. Consequently $x\mapsto G(x_1,|x'|^2)$ is real analytic on the open set $\Pi^{-1}(N)$, and it equals $F(x_1,|x'|)/|x'|$ where $x'\neq0$.
\end{lemma}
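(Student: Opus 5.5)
The plan is to work locally and then glue. Fix a point $(a,b)\in N$.

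If $b\neq0$, the map $(x_1,y)\mapsto(x_1,y^2)$ is a real-analytic diffeomorphism from a small disc around $(a,b)$ (taken inside $\{y\ne0\}$ on the side of $b$) onto a neighbourhood of $(a,b^2)$, with inverse $(x_1,s)\mapsto(x_1,\pm\sqrt s)$. On that neighbourhood we set $G(x_1,s)=F(x_1,\pm\sqrt s)/(\pm\sqrt s)$. Because $F$ is odd in $y$, both sign choices give the same function, so $G$ is well defined wherever $s=y^2$ with $(x_1,\pm y)\in N$. This uses the symmetry of $N$.

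If $b=0$, expand $F$ in a convergent power series on a polydisc $|x_1-a|<r$, $|y|<r$. Oddness in $y$ kills the even powers of $y$, so $F=\sum_{k}y^{2k+1}a_k(x_1)$ with $a_k$ analytic. Hence $G(x_1,s)=\sum_k a_k(x_1)s^k$ converges absolutely for $|s|<r^2$. This defines $G$ analytically on a full neighbourhood of $(a,0)$, including negative $s$. On the overlap with $y\neq0$ it agrees with $F/y$.

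The local definitions agree on overlaps, because wherever $s>0$ each equals $F(x_1,\sqrt s)/\sqrt s$. They therefore glue to a real-analytic $G$ on the union of these neighbourhoods. That union is an open set containing $\{(x_1,y^2):(x_1,y)\in N\}$, and $F=yG(x_1,y^2)$ holds on $N$ by construction.

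For the consequence, $x\mapsto(x_1,|x'|^2)$ is a polynomial map. It sends $\Pi^{-1}(N)$ into the domain of $G$, since $(x_1,|x'|)\in N$. Composing, $x\mapsto G(x_1,|x'|^2)$ is real analytic on $\Pi^{-1}(N)$, which is open because $\Pi$ is continuous. For $x'\ne0$ the identity $F=yG$ at $y=|x'|$ gives the stated formula.

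The only delicate point is well-definedness and gluing near $y=0$. This is handled by the uniqueness of power-series coefficients and by the fact that all local pieces coincide with $F(x_1,\sqrt s)/\sqrt s$ on the dense region $s>0$.
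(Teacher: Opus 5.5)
Your proposal is correct and follows essentially the same route as the paper. Both arguments set $G=F(x_1,\sqrt s)/\sqrt s$ for $s>0$, extend across $s=0$ near axis points by the power series containing only odd powers of $y$ (best written, as in the paper, as the double series $\sum_{i,j}c_{i,2j+1}(x_1-a)^is^j$, which makes joint analyticity immediate), and glue.

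One justification needs correcting: $\{s>0\}$ is not dense in the axis neighbourhoods, since these contain $s<0$. So the agreement of two axis pieces on the part of their overlap with $s\le0$ does not follow from density. It follows, as in the paper, from the identity theorem, because the overlap is a connected product of intervals containing points with $s>0$.
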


\begin{proof}
For $t>0$ with $(x_1,\sqrt t)\in N$ put $G(x_1,t)=F(x_1,\sqrt t)/\sqrt t$; this is real analytic. Let $(a,0)\in N$. For some $\delta>0$, $F(x_1,y)=\sum_{i,k\geq0}c_{ik}(x_1-a)^iy^k$ converges absolutely for $|x_1-a|<\delta$, $|y|<\delta$, and $c_{ik}=0$ for even $k$ by oddness. Then $G(x_1,t)=\sum_{i,j}c_{i,2j+1}(x_1-a)^it^j$ converges for $|x_1-a|<\delta$, $|t|<\delta^2$ and agrees with the previous definition where $0<t<\delta^2$. Two such local definitions agree on their overlap, which is a connected product of intervals containing points with $t>0$. This defines $G$. The last assertion follows because $x\mapsto(x_1,|x'|^2)$ is polynomial.
\end{proof}

\begin{proposition}[From the plane to $\R^4$]\label{prop:rotation}
Let $D\subset\R^2$ be a bounded domain, symmetric under $y\mapsto-y$, whose boundary is a Jordan curve meeting the axis $\{y=0\}$ in exactly two points. Let $F$ be real analytic on an open set $N\supset\overline D$ symmetric under $y\mapsto-y$, odd in $y$, and such that
\begin{equation}\label{eq:P}
\Delta F+F=0\ \text{ in }D,\qquad F=y\ \text{ and }\ \nabla F=e_y\ \text{ on }\partial D .
\end{equation}
Put $\Omega=\Pi^{-1}(D)$ and let $u$ be the function $x\mapsto G(x_1,|x'|^2)$ of Lemma~\ref{lem:odd-division}. Then $\Omega$ is a bounded open set, $u$ is real analytic on the neighbourhood $\Pi^{-1}(N)$ of $\overline\Omega$, $u$ is nonconstant, and \eqref{eq:main} holds.
\end{proposition}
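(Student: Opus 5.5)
The proof proceeds in four steps: boundedness and openness of $\Omega$, the interior equation, the boundary conditions, and nonconstancy.

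\emph{Boundedness and openness.} $\Omega=\Pi^{-1}(D)$ is open because $\Pi$ is continuous. It is bounded because $|x|=|\Pi(x)|$ and $D$ is bounded. Since $\Pi$ is continuous, $\Pi(\overline\Omega)\subset\overline D\subset N$, so $\Pi^{-1}(N)$ is an open neighbourhood of $\overline\Omega$. Lemma~\ref{lem:odd-division} then gives a real-analytic $u(x)=G(x_1,|x'|^2)$ on $\Pi^{-1}(N)$ with $u(x)=F(x_1,|x'|)/|x'|$ wherever $x'\ne0$.

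\emph{Interior equation.} At points of $\Omega$ with $x'\neq0$, set $f=F/y$ on $\{y>0\}$. By Lemma~\ref{lem:laplace-identity},
\[
\Delta_{\R^4}u=\frac{1}{y}\,\Delta F=-\frac{F}{y}=-u .
\]
The set $\{x'=0\}$ has empty interior, and $\Delta u+u$ is continuous (indeed analytic), so the equation extends to all of $\Omega$.

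\emph{Boundary conditions.} I must first check $\partial\Omega=\Pi^{-1}(\partial D)$. The inclusion $\partial\Omega\subset\Pi^{-1}(\partial D)$ follows from continuity and openness of $\Pi$ restricted away from the axis, and near the axis from the fact that $\Pi$ is an open map onto the closed half-plane, so preimages of points of $\overline D\setminus D$ are limit points of $\Omega$ but lie outside it. Symmetry of $D$ means that $\Pi(x)\in D$ iff $(x_1,-|x'|)\in D$; this handles the half-plane issue.

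On $\partial\Omega$ with $x'\neq0$, the point $(x_1,y)=\Pi(x)$ lies in $\partial D$ with $y>0$, and
\[
u=\frac{F}{y}=1,\qquad \partial_{x_1}u=\frac{F_{x_1}}{y}=0,\qquad \partial_y(F/y)=\frac{F_y}{y}-\frac{F}{y^2}=\frac1y-\frac1y=0 .
\]
Since $u$ depends on $x'$ only through $|x'|$, $\nabla u=0$ there.

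The main obstacle is the two axis points $(a_\pm,0)\in\partial D$, whose preimages in $\partial\Omega$ are the points $(a_\pm,0)\in\R^4$. I would handle them by density. $\partial D$ is a Jordan curve meeting the axis only at two points, so every point of $\partial\Omega$ on the axis is a limit of boundary points with $x'\neq0$. Since $u$ and $\nabla u$ are continuous on $\Pi^{-1}(N)$, the identities $u=1$ and $\nabla u=0$ pass to the limit.

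\emph{Nonconstancy.} If $u\equiv c$ on $\Omega$, the boundary condition forces $c=1$, while the equation $\Delta u+u=0$ forces $c=0$, a contradiction. Hence $u$ is nonconstant.
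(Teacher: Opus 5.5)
Your proof is correct and follows the paper's argument essentially step by step. The steps are: openness and boundedness; the equation off the axis from Lemma~\ref{lem:laplace-identity}, extended across $\{x'=0\}$ by continuity; the Cauchy data at boundary points with $x'\neq0$; density at the two axis points; and nonconstancy.

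One small tidy-up concerns your justification of $\partial\Omega\subset\Pi^{-1}(\partial D)$. That inclusion needs only $\Pi(\overline\Omega)\subset\overline D$ and the fact that a point of $\overline\Omega$ with $\Pi(x)\in D$ lies in the open set $\Omega$. Your argument that preimages of $\partial D$ are limit points of $\Omega$ proves the reverse inclusion instead, and it is the reverse inclusion that your density step uses. The reflection symmetry completes that argument, and it is also what makes the axis points of $\partial D$ limits of points of $\partial D\cap\{y>0\}$.
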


\begin{proof}
$\Omega$ is open and bounded because $\Pi$ is continuous and proper. By continuity $\overline\Omega\subset\Pi^{-1}(\overline D)\subset\Pi^{-1}(N)$, and $\partial\Omega\subset\Pi^{-1}(\partial D)$, since a point $x\in\overline\Omega$ with $\Pi(x)\in D$ lies in the open set $\Omega$.

Off the axis $\{x'=0\}$, $u(x)=f(x_1,|x'|)$ with $f=F/y$, so Lemma~\ref{lem:laplace-identity} gives $|x'|(\Delta u+u)(x)=(\Delta F+F)(\Pi(x))=0$ in $\Omega\setminus\{x'=0\}$. By continuity of $\Delta u+u$ the equation holds in $\Omega$.

Let $x\in\partial\Omega$ with $y=|x'|>0$. Then $\Pi(x)\in\partial D$, so $f=F/y=1$ there. Since $\nabla F=e_y$ on $\partial D$, we have $f_{x_1}=F_{x_1}/y=0$ and $f_y=(F_y-f)/y=0$, hence $\nabla u(x)=(f_{x_1},f_y\,x'/|x'|)=0$. The set $\partial\Omega\cap\{x'\neq0\}=\Pi^{-1}(\partial D\cap\{y>0\})$ is dense in $\partial\Omega$, since $\partial D$ meets the axis in only two points and each of them is a limit of points of $\partial D\cap\{y>0\}$. By continuity $u=1$ and $\nabla u=0$ on all of $\partial\Omega$. Finally, a constant solution of $\Delta u+u=0$ vanishes identically, which contradicts $u=1$ on $\partial\Omega$.
\end{proof}

The reduction is reversible in the analytic class, which explains why nothing is lost by working in the plane.

\begin{proposition}[From $\R^4$ to the plane]\label{prop:converse}
Let $\Omega\subset\R^4$ be a bounded domain invariant under $O(3)$ acting on $x'$, and let $u$ be real analytic on a neighbourhood of $\overline\Omega$ and solve \eqref{eq:main}. Put $D=\{(x_1,y):(x_1,|y|,0,0)\in\Omega\}$ and $\tilde u(x)=\int_{O(3)}u(x_1,Qx')\,dQ$ \textup{(}Haar probability measure\textup{)}. Then $\tilde u$ also solves \eqref{eq:main}, it has the form $\tilde u(x)=f(x_1,|x'|)$ with $f$ real analytic and even in $y$, and $F=yf$ solves \eqref{eq:P}.
\end{proposition}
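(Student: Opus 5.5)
The plan is to average first and then undo the rotation reduction of Proposition~\ref{prop:rotation} step by step. \emph{Step 1: properties of $\tilde u$.} Since $\Omega$ is $O(3)$-invariant, the same holds for some open neighbourhood $N'$ of $\overline\Omega$: take $N'=\bigcap_Q Q N$, which is open because $O(3)$ is compact and $\overline\Omega$ is compact and invariant. On $N'$, $\tilde u$ is defined, and it is real analytic because $(Q,x)\mapsto u(x_1,Qx')$ is real analytic and $O(3)$ is compact; differentiating under the integral is routine. The Laplacian commutes with the orthogonal maps $x\mapsto(x_1,Qx')$, so $\Delta\tilde u+\tilde u=\int(\Delta u+u)\circ Q\,dQ=0$ in $\Omega$. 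On $\partial\Omega$, which is also invariant, we have $u\circ Q=1$ and $\nabla(u\circ Q)=Q^{T}(\nabla u)\circ Q=0$, so $\tilde u=1$ and $\nabla\tilde u=0$. Nonconstancy is automatic, since a constant solution of $\Delta v+v=0$ is zero, which contradicts $\tilde u=1$ on $\partial\Omega\neq\emptyset$.

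\emph{Step 2: the form $f(x_1,|x'|)$ with $f$ analytic and even.} Invariance gives $\tilde u(x)=\tilde u(x_1,|x'|,0,0)$. Define $f(x_1,y)=\tilde u(x_1,y,0,0)$ on the open set $\{(x_1,y):(x_1,y,0,0)\in N'\}$. It is real analytic as a restriction, and it is even in $y$ because the reflection $x_2\mapsto -x_2$ lies in $O(3)$. This realises $\tilde u(x)=f(x_1,|x'|)$, and the domain of $f$ contains $\overline D$.

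\emph{Step 3: $F=yf$ solves \eqref{eq:P}.} $F$ is real analytic and odd in $y$. Off the axis, Lemma~\ref{lem:laplace-identity} gives $\Delta F+F=y(\Delta\tilde u+\tilde u)\circ(x_1,y,0,0)=0$ for $(x_1,y)\in D$ with $y\neq 0$, and by continuity the equation holds on all of $D$. For the boundary, note that $(x_1,y)\in\partial D$ if and only if $(x_1,|y|,0,0)\in\partial\Omega$; this uses only that $D$ is the preimage of $\Omega$ under the continuous open map $(x_1,y)\mapsto(x_1,|y|,0,0)$ composed with invariance. Hence $f=1$ and $\nabla f=0$ on $\partial D$, where $f_y=\pm\partial_{x_2}\tilde u$. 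It follows that $F=y$, $F_{x_1}=yf_{x_1}=0$ and $F_y=f+yf_y=1$, that is, $\nabla F=e_y$.

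The only delicate point is the boundary correspondence and the analytic neighbourhood in Steps~1--2. There I would argue that $x\in\partial\Omega$ if and only if $\Pi(x)\in\partial D$ (restricted to $y\geq 0$), using that $\Omega$ is saturated under the orbit map and that $\Pi$ is open onto the half-plane. The rest is direct computation.
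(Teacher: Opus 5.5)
Your proposal is correct and follows the paper's proof: average over $O(3)$, set $f(x_1,y)=\tilde u(x_1,y,0,0)$, and apply Lemma~\ref{lem:laplace-identity} off the axis. Beyond the paper, you check that $\tilde u$ is defined on an invariant neighbourhood of $\overline\Omega$, and you obtain the Cauchy data on the axis directly rather than by continuity.

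For the boundary correspondence you only need the easy direction. By invariance, $D$ is the slice $\{(x_1,y):(x_1,y,0,0)\in\Omega\}$, so $(x_1,y)\in\partial D$ implies $(x_1,y,0,0)\in\partial\Omega$. The appeal to openness of $(x_1,y)\mapsto(x_1,|y|,0,0)$ can be dropped, and that map is in any case not open into $\R^4$.
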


\begin{proof}
The Laplacian commutes with rotations, and the boundary conditions are rotation invariant; hence $\tilde u$ solves \eqref{eq:main}. The function $f(x_1,y)=\tilde u(x_1,y,0,0)$ is real analytic and even in $y$ (use the rotation $x'\mapsto-x'$), and $\tilde u(x)=f(x_1,|x'|)$ by invariance. Lemma~\ref{lem:laplace-identity} gives $\Delta F+F=0$ in $D\cap\{y\neq0\}$, and by analyticity in $D$. On $\partial D\cap\{y>0\}$ we have $f=1$ and $\nabla f=0$, hence $F=y$ and $\nabla F=f e_y+y\nabla f=e_y$; continuity extends this to the points of $\partial D$ on the axis.
\end{proof}

\begin{remark}[Splittings without potential]\label{rem:why4}
For a function on $\R^d$ that depends only on $y=|\xi|$, $\xi\in\R^d$, the Laplacian is $\partial_y^2+(d-1)y^{-1}\partial_y$, and for $F=y^{(d-1)/2}f$ one has
\[
\partial_y^2F=y^{(d-1)/2}\Big(f_{yy}+\frac{d-1}{y}f_y\Big)+\frac{(d-1)(d-3)}{4}\,\frac{F}{y^2}.
\]
Let $\R^n=\R^{d_1}\times\R^{d_2}$ and let $u(x)=f(y_1,y_2)$ be invariant under $O(d_1)\times O(d_2)$, where $y_i$ is the modulus of the $i$-th component of $x$ (for $d_i=1$ one may use the coordinate itself). Then $F=y_1^{(d_1-1)/2}y_2^{(d_2-1)/2}f$ satisfies
\[
F_{y_1y_1}+F_{y_2y_2}=y_1^{(d_1-1)/2}y_2^{(d_2-1)/2}\,\Delta u+\Big(\frac{(d_1-1)(d_1-3)}{4y_1^2}+\frac{(d_2-1)(d_2-3)}{4y_2^2}\Big)F .
\]
The inverse-square potential vanishes exactly when $d_1,d_2\in\{1,3\}$, that is, for $n=d_1+d_2\in\{2,4,6\}$. In these cases the weight $\varphi=y_1^{(d_1-1)/2}y_2^{(d_2-1)/2}$ is one of the harmonic polynomials $1$, $y$, $y_1y_2$, and the boundary conditions $u=1$, $\nabla u=0$ become $F=\varphi$, $\nabla F=\nabla\varphi$. For $n=4$ ($d_1=1$, $d_2=3$) this is Lemma~\ref{lem:laplace-identity}, and for $n=6$ ($d_1=d_2=3$) it is Lemma~\ref{lem:laplace6} below. The conformal method uses in an essential way that the planar operator is the Laplacian, whose conformal pull-back is a multiple of the Laplacian. For $n=3$ and $n=5$ every splitting into two blocks contains a block of dimension two or four, and the meridian problem contains a first-order drift term, or equivalently an inverse-square potential; a different fixed-domain formulation is needed, and for $n=3$ it is given in Section~\ref{sec:r3}. Symmetry classes of adjoint type, which give in addition the dimensions $8$, $10$ and $14$, are treated in Section~\ref{lie:sec}.
\end{remark}

Let $x_1+iy=re^{i\phi}$ be planar polar coordinates, and let $U_\ell$ denote the Chebyshev polynomial of the second kind, so that $U_\ell(\cos\phi)$ is the zonal spherical harmonic of degree $\ell$ on $S^3$. Under $u=F/y$, the planar Helmholtz solution $F=J_m(r)\sin m\phi$, with $m$ odd, corresponds to the four-dimensional Helmholtz solution
\[
u=r^{-1}J_m(r)\,U_{m-1}(\cos\phi).
\]
For instance, $F=J_1(r)\sin\phi$ gives the radial solution $r^{-1}J_1(r)$, whose derivative is $-r^{-1}J_2(r)$; this is why the Schiffer balls of $\R^4$ at eigenvalue one have radii $j_{2,k}$, the positive zeros of $J_2$.

\section{The conformal fixed-disc formulation}\label{sec:conformal}

Let $\D=\{w\in\C:|w|<1\}$, $w=re^{i\theta}$. The following lemma pulls \eqref{eq:P} back to the disc; it is the analogue of the reformulation in \cite{CS26}.

\begin{lemma}\label{lem:pullback}
Let $\psi$ be holomorphic and injective on a neighbourhood of $\overline\D$, with $\psi'\neq0$ there, and let $D=\psi(\D)$. Let $V\in C^1(\overline\D)$ be real valued and satisfy
\begin{equation}\label{eq:V}
\Delta V+|\psi'|^2V=0\ \text{ in }\mathcal D'(\D),\qquad V-\im\psi=0\ \text{ and }\ \nabla(V-\im\psi)=0\ \text{ on }\partial\D .
\end{equation}
Then $F=V\circ\psi^{-1}\in C^1(\overline D)$ satisfies $\int_D(\nabla F\cdot\nabla\varphi-F\varphi)\,dA=0$ for all $\varphi\in C^\infty_c(D)$, and $F=y$, $\nabla F=e_y$ on $\partial D$. If moreover $\psi$ is odd with real Taylor coefficients, $V(\overline w)=-V(w)$ and $V(-\overline w)=V(w)$, then $D$ is symmetric under both coordinate reflections, $F$ is odd in $y$ and even in $x_1$.
\end{lemma}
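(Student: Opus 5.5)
The plan is to transport everything through the biholomorphism $\psi\colon\D\to D$, which extends to a $C^\infty$ (indeed analytic) diffeomorphism of a neighbourhood of $\overline\D$ onto a neighbourhood of $\overline D$. Since $\psi$ is holomorphic, injective with $\psi'\ne0$ near $\overline\D$, $\psi^{-1}$ is holomorphic near $\overline D$, so $F=V\circ\psi^{-1}\in C^1(\overline D)$ is immediate, and $\psi(\partial\D)=\partial D$ because $\psi$ is a homeomorphism of the closures.

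\emph{Weak equation.} Given $\varphi\in C^\infty_c(D)$, put $\chi=\varphi\circ\psi\in C^\infty_c(\D)$. The key facts are the conformal invariance of the Dirichlet integrand and the Jacobian: with $z=\psi(w)$, $dA_z=|\psi'(w)|^2\,dA_w$ and $\nabla_z F\cdot\nabla_z\varphi=|\psi'(w)|^{-2}\,\nabla_w V\cdot\nabla_w\chi$ (Cauchy--Riemann: $D\psi$ is $|\psi'|$ times a rotation). Hence
\[
\int_D(\nabla F\cdot\nabla\varphi-F\varphi)\,dA=\int_\D\big(\nabla V\cdot\nabla\chi-|\psi'|^2V\chi\big)\,dA .
\]
Since $V\in C^1$, the right side equals $-\langle \Delta V+|\psi'|^2V,\chi\rangle$ in the sense of distributions on $\D$ (integration by parts of $\nabla V\cdot\nabla\chi$ against a compactly supported test function is just the definition of the distributional Laplacian, with $V\in C^1$ ensuring the distributional gradient is the classical one). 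This vanishes by \eqref{eq:V}.

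\emph{Boundary conditions.} On $\partial\D$, $V=\im\psi$ gives $F(\psi(w))=\im\psi(w)$, i.e.\ $F=y$ on $\partial D$. For the gradient, write $V=\im\psi+E$ with $E\in C^1(\overline\D)$, $E=0$, $\nabla E=0$ on $\partial\D$. Then $F=y+E\circ\psi^{-1}$ on $\overline D$, and by the chain rule $\nabla(E\circ\psi^{-1})=(D\psi^{-1})^T\nabla E\circ\psi^{-1}=0$ on $\partial D$; one should note that $C^1(\overline D)$ and the chain rule at boundary points are justified because $\psi^{-1}$ is smooth up to the boundary. Thus $\nabla F=e_y$ there. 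This is the only step requiring any care, namely interpreting gradients at boundary points as one-sided/continuous extensions; I expect no real obstacle.

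\emph{Symmetries.} If $\psi$ is odd with real coefficients, then $\psi(\overline w)=\overline{\psi(w)}$ and $\psi(-\overline w)=-\overline{\psi(w)}$, so $D=\psi(\D)$ is invariant under $z\mapsto\overline z$ and $z\mapsto-\overline z$, the two coordinate reflections. For $z=\psi(w)\in\overline D$, $F(\overline z)=V(\overline w)=-V(w)=-F(z)$, so $F$ is odd in $y$, and $F(-\overline z)=V(-\overline w)=V(w)=F(z)$, so $F$ is even in $x_1$.
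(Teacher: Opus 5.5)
Your proposal is correct and follows essentially the same route as the paper. Both arguments transfer the weak equation by conformal invariance of the Dirichlet integral together with the Jacobian $|\psi'|^2$, and obtain the Cauchy data on $\partial D$ by the chain rule using invertibility of $D\psi$ up to the boundary. The symmetries then follow from $\psi(\overline w)=\overline{\psi(w)}$ and $\psi(-\overline w)=-\overline{\psi(w)}$.
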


\begin{proof}
For $\varphi\in C_c^\infty(D)$, the function $\varphi\circ\psi$ belongs to $C_c^\infty(\D)$. Conformal invariance of the Dirichlet integral and the Jacobian $|\psi'|^2$ give
\[
\int_D\nabla F\cdot\nabla\varphi\,dA=\int_\D\nabla V\cdot\nabla(\varphi\circ\psi)\,dA=\int_\D|\psi'|^2V\,(\varphi\circ\psi)\,dA=\int_DF\varphi\,dA,
\]
where the middle equality is the distributional equation for $V\in C^1$. The function $V-\im\psi=(F-y)\circ\psi$ has zero value and gradient on $\partial\D$; since the Jacobian matrix of $\psi$ is invertible on $\partial\D$, the chain rule gives $F-y=0$ and $\nabla(F-y)=0$ on $\partial D$. If $\psi$ is odd with real coefficients, then $\psi(\overline w)=\overline{\psi(w)}$ and $\psi(-\overline w)=-\overline{\psi(w)}$, which gives the symmetries of $D$ and, with the stated parities of $V$, those of $F$.
\end{proof}

We shall look for $V$ of the form $V=Kg+\im\psi$, where $g$ is an unknown function on $\D$ and $K$ is an inverse of the Laplacian with zero Cauchy data on $\partial\D$ (Section~\ref{sec:K}). Since $\im\psi$ is harmonic, \eqref{eq:V} becomes $g+|\psi'|^2(Kg+\im\psi)=0$, which is \eqref{eq:intro-cubic}. We normalise the eigenvalue to one; the size of $D$ is then an unknown, encoded in the coefficient $c_1$ of $\psi$. The symmetry class removes the degeneracies of the problem: the scaling is fixed by the eigenvalue, translations along the axis are excluded by the symmetry $x_1\mapsto-x_1$, and the M\"obius automorphisms of the disc are excluded by requiring $\psi(0)=0$ with real Taylor coefficients.

\section{Disk polynomials and coefficient spaces}\label{sec:disk}

\subsection{Disk polynomials}
Write $w=re^{i\theta}$ and $t=r^2$. For $m\in\Z$ and an integer $s\geq0$ let
\begin{equation}\label{eq:Phi}
\Phi_{m,s}(w)=r^{|m|}P_s^{(0,|m|)}(2r^2-1)\,e^{im\theta},
\end{equation}
where $P^{(\alpha,\beta)}_s$ is the Jacobi polynomial in the standard normalisation, so that $P^{(0,n)}_s(1)=1$ and $\Phi_{m,s}=e^{im\theta}$ on $\partial\D$. Explicitly,
\begin{equation}\label{eq:jacobi}
P_s^{(0,n)}(2t-1)=\sum_{k=0}^sa^{(n)}_{s,k}t^k,\qquad a^{(n)}_{s,k}=(-1)^{s-k}\frac{(n+s+k)!}{k!\,(s-k)!\,(n+k)!},
\end{equation}
and, equivalently, $P_s^{(0,n)}(2t-1)=\frac1{s!}t^{-n}\frac{d^s}{dt^s}\{t^{n+s}(t-1)^s\}$. We set $a^{(n)}_{s,k}=0$ for $k\notin\{0,\ldots,s\}$ and $\Phi_{m,-1}=0$. Each $\Phi_{m,s}$ is a polynomial in $w,\bw$ of degree $|m|+2s$; these are the disk (Zernike) polynomials \cite{Koo78}. Complex conjugation gives $\overline{\Phi_{m,s}}=\Phi_{-m,s}$.

\begin{lemma}\label{lem:orth}
The disk polynomials have the following properties.
\begin{enumerate}[label=\textup{(\alph*)}]
\item For $n,s\geq0$, the polynomial $P^{(0,n)}_s(2t-1)$ is orthogonal in $L^2([0,1],t^n\,dt)$ to all polynomials of degree less than $s$.
\item The functions $\Phi_{m,s}$ are pairwise orthogonal in $L^2(\D)$, and $\int_\D|\Phi_{m,s}|^2\,dA=\pi/(|m|+2s+1)$.
\item $|\Phi_{m,s}(w)|\leq1$ for $|w|\leq1$.
\end{enumerate}
\end{lemma}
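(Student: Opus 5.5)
Part (a) is proved from the Rodrigues-type formula $P_s^{(0,n)}(2t-1)=\frac1{s!}t^{-n}\frac{d^s}{dt^s}\{t^{n+s}(t-1)^s\}$ recorded after \eqref{eq:jacobi}. Let $q$ be a polynomial of degree less than $s$. The $t^n$ weight cancels the prefactor $t^{-n}$, so that
\[
\int_0^1P_s^{(0,n)}(2t-1)\,q(t)\,t^n\,dt=\frac1{s!}\int_0^1\frac{d^s}{dt^s}\{t^{n+s}(t-1)^s\}\,q(t)\,dt .
\]
We integrate by parts $s$ times. At each stage the boundary terms contain a derivative of order less than $s$ of $t^{n+s}(t-1)^s$. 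Such a derivative vanishes at $t=0$, because of the factor $t^{n+s}$, and at $t=1$, because of the factor $(t-1)^s$. What remains is $\pm\frac1{s!}\int_0^1t^{n+s}(t-1)^sq^{(s)}(t)\,dt$, which is zero since $q^{(s)}=0$.

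For (b) we pass to polar coordinates, $dA=r\,dr\,d\theta$. The angular integral of $e^{im\theta}\overline{e^{im'\theta}}$ vanishes unless $m=m'$, so it suffices to take $m=m'$ and write $n=|m|$. Substituting $t=r^2$, so that $r\,dr=dt/2$, gives
\[
\int_\D\Phi_{m,s}\overline{\Phi_{m,s'}}\,dA=\pi\int_0^1t^nP_s^{(0,n)}(2t-1)P_{s'}^{(0,n)}(2t-1)\,dt .
\]
If $s\neq s'$, assume $s>s'$ and apply (a) to get zero.

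For the norm we take $s=s'$ and replace the second factor by its leading term; this is allowed by (a). From \eqref{eq:jacobi} the leading coefficient is $a^{(n)}_{s,s}=\binom{n+2s}{s}$. Integrating by parts as in (a), now with $q=t^s$ so that $q^{(s)}=s!$, leaves
\[
\binom{n+2s}{s}(-1)^s\int_0^1t^{n+s}(t-1)^s\,dt=\binom{n+2s}{s}B(n+s+1,s+1)=\binom{n+2s}{s}\frac{(n+s)!\,s!}{(n+2s+1)!}=\frac1{n+2s+1}.
\]
Multiplying by $\pi$ gives $\pi/(|m|+2s+1)$. This is the only genuinely computational step, and the only place where a bookkeeping slip could occur, namely in the sign and the Beta integral.

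For (c) we have $|\Phi_{m,s}(w)|=r^{n}|P_s^{(0,n)}(2r^2-1)|$. It therefore suffices to show that $|P_s^{(0,n)}(x)|\le1$ on $[-1,1]$ when $n\ge0$; the factor $r^n\le1$ then only helps. This is the classical bound $\max_{[-1,1]}|P_s^{(\alpha,\beta)}|=P_s^{(\alpha,\beta)}(1)$, valid for $\alpha\ge\beta$, $\alpha\ge-1/2$ (Szeg\H{o}, Theorem~7.32.1). Here $\alpha=0<\beta=n$, so we use the symmetry $P_s^{(0,n)}(x)=(-1)^sP_s^{(n,0)}(-x)$ and apply the bound with $(\alpha,\beta)=(n,0)$. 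This gives $|P_s^{(0,n)}(x)|\le P_s^{(n,0)}(1)=\binom{n+s}{s}$ for all $x\in[-1,1]$, which is far too weak.

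So the factor $r^n$ must be used. For (c) I would therefore instead cite Koornwinder's result \cite{Koo78} that disk polynomials are normalised, positive-definite spherical functions on $U(q)/U(q-1)$ (with $q=2$, giving weight $t^0$ type $(0,n)$). Any positive-definite function $\phi$ with $\phi(e)=1$ satisfies $|\phi|\le\phi(e)=1$, and at $r=1$ we have $\Phi_{m,s}=e^{im\theta}$, of modulus one. The Koornwinder citation is the step I expect to carry the weight of (c).
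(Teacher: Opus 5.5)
Your parts (a) and (b) follow the paper's own argument: the Rodrigues formula with $s$ integrations by parts, then the leading coefficient $\binom{n+2s}{s}$ times $B(n+s+1,s+1)$.

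For (c) you were right to abandon the Szeg\H{o} route. $\max_{[-1,1]}|P^{(0,n)}_s|=\binom{n+s}{s}$ is attained at $x=-1$, that is at $r=0$, so for $n\geq1$ the factor $r^n$ is indispensable. Your replacement is valid:
\begin{itemize}
\item For $\alpha=0$, Koornwinder's normalised disk polynomials $R^{(0)}_{m,n}$ coincide with $\Phi_{m-n,\min(m,n)}$ precisely because $P^{(0,n)}_s(1)=1$. You should say this explicitly, since it is the normalisation $\phi(e)=1$ your argument relies on.
\item Zonal spherical functions of the compact pair $(U(2),U(1))$ are diagonal matrix coefficients of unitary representations, so their modulus is at most their value $1$ at the identity.
\item Since $g\mapsto g_{11}$ maps $U(2)$ onto $\overline{\D}$, the bound holds on the whole closed disc.
\end{itemize}

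The paper proves this same fact by hand instead of citing it. It lets $\left(\begin{smallmatrix}w&-h\\h&\overline{w}\end{smallmatrix}\right)$, with $h=\sqrt{1-|w|^2}$, act on homogeneous polynomials of degree $n+2s$ with the $U(2)$-invariant inner product. It then checks, via Leibniz's rule on the Rodrigues formula, that the diagonal coefficient at the monomial $e_1^{n+s}e_2^{s}$ equals $\Phi_{n,s}(w)$.

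So both routes rest on the same mechanism. Yours is shorter but outsources the identification of $\Phi_{m,s}$ as a spherical function and the normalisation check. The paper's version is self-contained and makes both automatic.
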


\begin{proof}
(a) Integrate the Rodrigues formula by parts $s$ times against $t^n\cdot t^i$, $i<s$; all boundary terms vanish because the derivatives of order less than $s$ of $t^{n+s}(t-1)^s$ vanish at $t=0$ and $t=1$.

(b) Angular integration separates different $m$. For equal $m$ with $n=|m|$, $\int_\D\Phi_{m,s}\overline{\Phi_{m,s'}}\,dA=\pi\int_0^1t^nP_s^{(0,n)}P_{s'}^{(0,n)}\,dt$, which vanishes for $s\neq s'$ by (a). For $s=s'$, by (a) only the leading coefficient $a^{(n)}_{s,s}=\frac{(n+2s)!}{s!(n+s)!}$ of one factor contributes, and $s$ integrations by parts in the Rodrigues formula give $\int_0^1t^{n+s}P^{(0,n)}_s\,dt=\int_0^1t^{n+s}(1-t)^s\,dt=\frac{(n+s)!\,s!}{(n+2s+1)!}$. The product is $1/(n+2s+1)$.

(c) Let $|w|\leq1$, $h=\sqrt{1-|w|^2}$, and consider the unitary matrix $U=\left(\begin{smallmatrix}w&-h\\h&\bw\end{smallmatrix}\right)$, acting on the space of homogeneous polynomials of degree $N=n+2s$ in two variables $e_1,e_2$, with the $U(2)$-invariant inner product in which the monomials are orthogonal. The induced operator is unitary, so its diagonal matrix coefficients with respect to the normalised monomials have modulus at most one. The coefficient of $e_1^{n+s}e_2^{s}$ in $(we_1+he_2)^{n+s}(-he_1+\bw e_2)^{s}$ is
\[
\sum_{k=0}^s\binom{n+s}{k}\binom{s}{k}w^{n+s-k}\bw^{s-k}(-h^2)^k=e^{in\theta}r^n\sum_{k=0}^s\binom{n+s}{k}\binom sk t^{s-k}(t-1)^k .
\]
By Leibniz's rule applied to the Rodrigues formula, the last sum equals $P^{(0,n)}_s(2t-1)$. This proves (c) for $m=n\geq0$; conjugation gives $m<0$.
\end{proof}

Fix $\rho>1$ (we shall take $\rho=21/20$). Let $\Bc_\rho$ be the space of complex sequences $f=(f_{m,s})_{m\in\Z,s\geq0}$ with
\[
\nrm f_\rho=\sum_{m,s}\rho^{|m|}|f_{m,s}|<\infty .
\]
By Lemma~\ref{lem:orth}(c), $\sum f_{m,s}\Phi_{m,s}$ converges absolutely and uniformly on $\overline\D$, with $\sup_{\overline\D}|f|\leq\nrm f_\rho$; by Lemma~\ref{lem:orth}(b) the coefficients are determined by the function. We identify $f$ with this continuous function. $\Bc_\rho$ is a Banach space, and every bounded linear operator on it (or between such weighted $\ell^1$ spaces) is determined by its values on the basis vectors.

\begin{lemma}[Multiplication by $w$ and $\bw$]\label{lem:mult}
Let $n=|m|$. Then
\begin{equation}\label{eq:rec}
w\,\Phi_{m,s}=
\begin{cases}
\dfrac{n+s+1}{n+2s+1}\Phi_{m+1,s}+\dfrac{s}{n+2s+1}\Phi_{m+1,s-1}, & m\geq0,\\[8pt]
\dfrac{n+s}{n+2s+1}\Phi_{m+1,s}+\dfrac{s+1}{n+2s+1}\Phi_{m+1,s+1}, & m<0,
\end{cases}
\end{equation}
and $\bw\,\Phi_{m,s}=\overline{w\,\Phi_{-m,s}}$. In both cases the coefficients are nonnegative and sum to one. Consequently $\nrm{wf}_\rho\leq\rho\nrm f_\rho$ and $\nrm{\bw f}_\rho\leq\rho\nrm f_\rho$, and for every holomorphic $p(w)=\sum_{a\geq0}p_aw^a$ with $N_\rho(p):=\sum_a\rho^a|p_a|<\infty$,
\begin{equation}\label{eq:algebra}
\nrm{pf}_\rho\leq N_\rho(p)\nrm f_\rho,\qquad \nrm{\overline pf}_\rho\leq N_\rho(p)\nrm f_\rho .
\end{equation}
Moreover, if $m\geq0$ then $w^N\Phi_{m,s}$ is a convex combination of $\Phi_{m+N,s'}$ with $s'\leq s$.
\end{lemma}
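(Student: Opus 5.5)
The plan is to prove the recurrence \eqref{eq:rec} directly in the variable $t=r^2$ and then derive the norm bounds from it. First take $m=n\geq0$. Since $w\,\Phi_{m,s}=r^{n+1}P^{(0,n)}_s(2t-1)e^{i(m+1)\theta}$, we must show
\[
P^{(0,n)}_s(2t-1)=\frac{n+s+1}{n+2s+1}P^{(0,n+1)}_s(2t-1)+\frac{s}{n+2s+1}P^{(0,n+1)}_{s-1}(2t-1).
\]
Both sides are polynomials of degree $s$ in $t$. The right side is orthogonal in $L^2([0,1],t^{n+1}dt)$ to polynomials of degree at most $s-2$, by Lemma~\ref{lem:orth}(a). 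The left side is also orthogonal to $t^i$ for $i\leq s-2$, because $t^{n+1}t^i=t^n t^{i+1}$ and $i+1<s$. Hence the left side is a linear combination of $P^{(0,n+1)}_s$ and $P^{(0,n+1)}_{s-1}$. We then determine the two coefficients. The leading coefficients $a^{(n)}_{s,s}$ and $a^{(n+1)}_{s,s}$ from \eqref{eq:jacobi} fix the first, giving the ratio $(n+s+1)/(n+2s+1)$. Evaluating at $t=1$, where every $P^{(0,\cdot)}(1)=1$, shows that the two coefficients sum to one, which fixes the second.

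For $m<0$ put $n=|m|$, so that $|m+1|=n-1$ and $w\,\Phi_{m,s}=r^{n+1}P^{(0,n)}_s e^{i(m+1)\theta}=r^{n-1}\,tP^{(0,n)}_s(2t-1)\,e^{i(m+1)\theta}$. We need
\[
tP^{(0,n)}_s=\alpha P^{(0,n-1)}_s+\beta P^{(0,n-1)}_{s+1}.
\]
The left side has degree $s+1$. Against $t^{n-1}t^i$ with $i\leq s-1$ it equals $\int t^n P^{(0,n)}_s t^i=0$ by Lemma~\ref{lem:orth}(a), so only the two stated terms appear. Leading coefficients give $\beta=a^{(n)}_{s,s}/a^{(n-1)}_{s+1,s+1}=(s+1)/(n+2s+1)$, and evaluation at $t=1$ gives $\alpha+\beta=1$. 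The case $m=0$ falls under the first formula, and there are no edge cases beyond $s=0$, where $\Phi_{m+1,-1}=0$. Nonnegativity of all coefficients is evident. The formula for $\bw$ follows by conjugating, using $\overline{\Phi_{m,s}}=\Phi_{-m,s}$.

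For the norm bound, observe that $w\,\Phi_{m,s}$ is a convex combination of basis elements with angular index $m+1$, and $\rho^{|m+1|}\leq\rho\,\rho^{|m|}$. Summing with the triangle inequality gives $\nrm{wf}_\rho\leq\rho\nrm f_\rho$, and conjugation gives the same bound for $\bw$. To obtain \eqref{eq:algebra}, iterate: $\nrm{w^af}_\rho\leq\rho^a\nrm f_\rho$ and likewise for $\bw^a$. Since $\overline p=\sum\overline{p_a}\bw^a$, the estimate follows by summing over $a$, where the series converges absolutely in $\Bc_\rho$. Finally, the last claim follows by induction on $N$. For $m\geq0$ the first case of \eqref{eq:rec} sends $\Phi_{m,s}$ to a convex combination of $\Phi_{m+1,s}$ and $\Phi_{m+1,s-1}$, which keeps $m\geq0$ and does not increase $s$. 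The only step requiring care is the leading-coefficient computation, which is routine factorial algebra.
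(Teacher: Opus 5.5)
Your proof is correct, but it establishes the recurrence \eqref{eq:rec} by a different route from the paper's.

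The paper compares the coefficients of $t^k$ on both sides directly, using the explicit formula \eqref{eq:jacobi}. After dividing by $a^{(n+1)}_{s,k}$ (resp.\ $a^{(n-1)}_{s,k}$), the claim reduces to the polynomial identities
$(n+s+1)(n+s+k+1)-s(s-k)=(n+k+1)(n+2s+1)$ and $(n+s)(s+1-k)-(s+1)(n+s+k)=-k(n+2s+1)$.
The top coefficient $k=s+1$ in the case $m<0$ is checked separately.

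You instead use Lemma~\ref{lem:orth}(a) to show that only two Jacobi polynomials of the shifted parameter can occur. The left side is orthogonal, for the weight $t^{n+1}$ (resp.\ $t^{n-1}$), to all polynomials of degree at most $s-2$ (resp.\ $s-1$). You then fix the two coefficients by the leading coefficients and by the normalisation $P^{(0,\cdot)}_s(1)=1$. I checked both leading-coefficient ratios, $(n+s+1)/(n+2s+1)$ and $(s+1)/(n+2s+1)$, and they are right.

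What each approach buys:
\begin{itemize}
\item Your version explains structurally why the expansion has exactly two terms. It makes the ``sum to one'' assertion automatic and needs only the diagonal coefficients $a^{(n)}_{s,s}$.
\item The paper's version is a finite algebraic check read off directly from \eqref{eq:jacobi}. It does not rely on the orthogonality lemma, and it is the same kind of coefficient comparison the paper reuses later, e.g.\ for Lemma~\ref{lem:K}(a) and for the real-parameter analogue in Lemma~\ref{r3:lem-jacobi}(b).
\end{itemize}

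The remaining steps coincide with the paper's: conjugation for $\bw$, the weight bound $\rho^{|m\pm1|}\leq\rho^{|m|+1}$, iteration with absolute summation for \eqref{eq:algebra}, and induction for the convex-combination statement about $w^N\Phi_{m,s}$.
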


\begin{proof}
For $m=n\geq0$ both sides of the first formula are $r^{n+1}e^{i(n+1)\theta}$ times a polynomial in $t$, and the claim is the identity $a^{(n)}_{s,k}=\frac{n+s+1}{n+2s+1}a^{(n+1)}_{s,k}+\frac{s}{n+2s+1}a^{(n+1)}_{s-1,k}$. Dividing by $a^{(n+1)}_{s,k}$ (which is nonzero for $0\leq k\leq s$) and using \eqref{eq:jacobi}, it reduces to $(n+s+1)(n+s+k+1)-s(s-k)=(n+k+1)(n+2s+1)$. For $m=-n<0$ the claim is $a^{(n)}_{s,k-1}=\frac{n+s}{n+2s+1}a^{(n-1)}_{s,k}+\frac{s+1}{n+2s+1}a^{(n-1)}_{s+1,k}$; dividing by $a^{(n-1)}_{s,k}$ for $0\leq k\leq s$ it reduces to $(n+s)(s+1-k)-(s+1)(n+s+k)=-k(n+2s+1)$, and the case $k=s+1$ is checked directly. The formula for $\bw$ follows from $\overline{\Phi_{m,s}}=\Phi_{-m,s}$. Since $|m\pm1|\leq|m|+1$, the weight grows by at most a factor $\rho$, which gives the bounds for $w$ and $\bw$; \eqref{eq:algebra} follows by iteration and absolute summation. The last assertion follows from the case $m\geq0$ of \eqref{eq:rec}.
\end{proof}

\begin{lemma}[Radial index under monomials]\label{lem:radial}
Let $\alpha,\beta\geq0$, $m\in\Z$, $s\geq0$. Then $w^\alpha\bw^\beta\Phi_{m,s}$ is a finite linear combination of $\Phi_{m+\alpha-\beta,s'}$ with $s'\geq s-\max(\alpha,\beta)$.
\end{lemma}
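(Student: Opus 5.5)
The plan is to reduce to single multiplications by $w$ and by $\bw$, and to read off from \eqref{eq:rec} how each one changes the radial index $s$. Since $w$ and $\bw$ commute, $w^\alpha\bw^\beta\Phi_{m,s}$ can be built by applying the factors in any order. The angular index shifts by $+1$ under $w$ and by $-1$ under $\bw$, so every term of the result carries angular index $m+\alpha-\beta$. It remains to control the radial index.

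By \eqref{eq:rec}, multiplying $\Phi_{m,s}$ by $w$ gives terms with radial index $s-1$ or $s$ when $m\geq0$, and $s$ or $s+1$ when $m<0$. In every case the radial index drops by at most one, and it stays put unless the current angular index is nonnegative. The conjugate relation $\bw\Phi_{m,s}=\overline{w\Phi_{-m,s}}$ together with $\overline{\Phi_{k,s}}=\Phi_{-k,s}$ gives the mirror statement for $\bw$: the radial index drops by at most one, and only when $m\leq0$.

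A naive count would allow $\alpha+\beta$ drops, but the lemma claims at most $\max(\alpha,\beta)$. To get the sharp bound I would pair factors into $|w|^2=w\bw$. Multiplication by $t=r^2$ on $\Phi_{m,s}=r^{|m|}P_s^{(0,|m|)}(2t-1)e^{im\theta}$ keeps $m$ fixed. It sends a polynomial in $t$ of degree $s$ to one of degree $s+1$, and by Lemma~\ref{lem:orth}(a) the product $tP_s^{(0,n)}$ is orthogonal in $L^2(t^n\,dt)$ to all polynomials of degree less than $s-1$. Expanding in the basis $P_{s'}^{(0,n)}$, the result is a combination of $\Phi_{m,s-1},\Phi_{m,s},\Phi_{m,s+1}$. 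So each factor $w\bw$ costs at most one drop.

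Now write $w^\alpha\bw^\beta=(w\bw)^{\min(\alpha,\beta)}\,w^{\alpha-\min}\bw^{\beta-\min}$. The paired factors cost at most $\min(\alpha,\beta)$ drops. The remaining $|\alpha-\beta|$ single factors are all $w$ or all $\bw$, and each costs at most one drop. The total is at most $\min(\alpha,\beta)+|\alpha-\beta|=\max(\alpha,\beta)$. Finiteness is clear, since each step produces finitely many terms, and terms with negative radial index vanish because $\Phi_{m,-1}=0$.

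The only delicate step is the three-term claim for multiplication by $t$. The direct route is the orthogonality argument above: $t\,P_s^{(0,n)}(2t-1)$ is a polynomial of degree $s+1$ orthogonal to polynomials of degree at most $s-2$, which forces the expansion to use only $P_{s-1},P_s,P_{s+1}$. Alternatively, one can verify it by composing the two cases of \eqref{eq:rec}.
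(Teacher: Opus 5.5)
Your proof is correct, but it takes a different route from the paper's. The paper argues in one stroke. With $m'=m+\alpha-\beta$, the product equals $r^{\alpha+\beta+|m|}P^{(0,|m|)}_s(2t-1)e^{im'\theta}$, so by Lemma~\ref{lem:orth}(b) its coefficient on $\Phi_{m',s'}$ is a multiple of $\int_0^1t^{|m|}P^{(0,|m|)}_s(2t-1)\,t^{e}P^{(0,|m'|)}_{s'}(2t-1)\,dt$, where $e=\tfrac12(\alpha+\beta+|m'|-|m|)$. A parity and triangle-inequality count shows that $e$ is an integer with $e\leq\max(\alpha,\beta)$, and Lemma~\ref{lem:orth}(a) makes the integral vanish whenever $e+s'<s$.

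You instead break the multiplication into elementary steps, read the one-step index changes off \eqref{eq:rec}, and use orthogonality only locally, for the three-term action of $t=w\bw$. Pairing $\min(\alpha,\beta)$ factors into $w\bw$ is what replaces the paper's estimate $e\leq\max(\alpha,\beta)$.

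The paper's argument is shorter and yields the sharper threshold $s'\geq s-e$; for instance $e=0$ when $\alpha=0$ and $m\geq\beta$. Yours stays closer to the recurrences used in the computations. If the $t$-step is done by composing the two cases of \eqref{eq:rec}, as in Lemma~\ref{lem:doublezero}, your route also shows that the coefficients are nonnegative with sum one.

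Your second paragraph already gives the sharp bound without any pairing. A step can lower $s$ only if it increases the modulus of the current angular index by one, and every other step decreases that modulus by one. Hence, for any ordering of the factors, exactly $\tfrac12(\alpha+\beta+|m'|-|m|)=e$ steps can lower $s$.
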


\begin{proof}
Put $m'=m+\alpha-\beta$. The product equals $r^{\alpha+\beta+|m|}P^{(0,|m|)}_s(2t-1)\,e^{im'\theta}$, so, by Lemma~\ref{lem:orth}(b) its coefficient on $\Phi_{m',s'}$ is a multiple of
\[
\int_0^1t^{|m|}P^{(0,|m|)}_s(2t-1)\,t^{e}P^{(0,|m'|)}_{s'}(2t-1)\,dt,\qquad e=\frac{\alpha+\beta+|m'|-|m|}2 .
\]
Since $|m'|-|m|\equiv\alpha+\beta\pmod 2$ and $\big||m'|-|m|\big|\leq|\alpha-\beta|$, $e$ is an integer with $0\leq\min(\alpha,\beta)\leq e\leq\max(\alpha,\beta)$. The second factor is a polynomial of degree $e+s'$, so the integral vanishes if $e+s'<s$ by Lemma~\ref{lem:orth}(a).
\end{proof}

\begin{lemma}[Monomials]\label{lem:monomial}
Let $\alpha\geq\beta\geq0$. Then $w^\alpha\bw^\beta=\sum_{s=0}^{\beta}\kappa_s\Phi_{\alpha-\beta,s}$ with $\kappa_s\geq0$, $\sum_s\kappa_s=1$ and $\kappa_0=\dfrac{\alpha-\beta+1}{\alpha+1}$.
\end{lemma}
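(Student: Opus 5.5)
We will build $w^\alpha\bw^\beta$ up from the constant function $1=\Phi_{0,0}$ by repeated multiplication, using the explicit recursion \eqref{eq:rec} of Lemma~\ref{lem:mult}.

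\emph{Step 1: expansion with nonnegative coefficients summing to one.} Write $w^\alpha\bw^\beta=w^{\alpha-\beta}(w\bw)^\beta$. First multiply $1=\Phi_{0,0}$ alternately by $w$ and by $\bw$, $\beta$ times each. At each step Lemma~\ref{lem:mult} shows that multiplication by $w$ or $\bw$ sends a convex combination of disk polynomials to a convex combination: the coefficients in \eqref{eq:rec} are nonnegative and sum to one, and conjugation preserves this property for $\bw$. After these $2\beta$ steps we have a convex combination of the $\Phi_{0,s}$.

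Next multiply by $w^{\alpha-\beta}$. By the last assertion of Lemma~\ref{lem:mult}, this gives a convex combination of the $\Phi_{\alpha-\beta,s'}$. Only the angular index $\alpha-\beta$ can occur, since the product equals $r^{\alpha+\beta}e^{i(\alpha-\beta)\theta}$. By Lemma~\ref{lem:radial}, or simply by degree, $|m|+2s\le\alpha+\beta$, so $s\le\beta$. Hence $w^\alpha\bw^\beta=\sum_{s=0}^\beta\kappa_s\Phi_{\alpha-\beta,s}$ with $\kappa_s\ge0$. Evaluating at $w=1$, where every $\Phi_{m,s}$ equals $1$, gives $\sum_s\kappa_s=1$ directly.

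\emph{Step 2: computing $\kappa_0$.} Put $n=\alpha-\beta$ and use the orthogonality of Lemma~\ref{lem:orth}(b):
\[
\kappa_0=\frac{\int_\D w^\alpha\bw^\beta\,\overline{\Phi_{n,0}}\,dA}{\int_\D|\Phi_{n,0}|^2\,dA}.
\]
Here $\Phi_{n,0}=w^n$ and $\int_\D|w|^{2n}\,dA=\pi/(n+1)$. The numerator is
\[
\int_\D w^\alpha\bw^\beta\bw^{\,n}\,dA=\int_\D|w|^{2\alpha}\,dA=\frac{\pi}{\alpha+1}.
\]
Therefore $\kappa_0=(n+1)/(\alpha+1)=(\alpha-\beta+1)/(\alpha+1)$.

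No step is a real obstacle. The only point needing care is checking that the convex-combination property survives multiplication by $\bw$, and this follows from $\bw\Phi_{m,s}=\overline{w\Phi_{-m,s}}$ together with the fact that the coefficients in \eqref{eq:rec} are real.
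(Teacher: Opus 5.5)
Your proof is correct and is essentially the paper's argument. You build $w^\alpha\bw^\beta$ by iterating \eqref{eq:rec}, which preserves nonnegativity and total mass one, and you obtain $\kappa_0$ as the $L^2(\D)$ projection onto $\Phi_{\alpha-\beta,0}=w^{\alpha-\beta}$. The paper differs only in starting from $\bw^\beta=\Phi_{-\beta,0}$ and multiplying by $w$ alone, so that $s\le\beta$ can be read off from the two branches of \eqref{eq:rec}.

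One small correction: Lemma~\ref{lem:radial} gives only a lower bound on the radial index, so your bound $s\le\beta$ has to rest on the degree argument (or on Lemma~\ref{lem:orth}(a)). That argument is valid.
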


\begin{proof}
$\bw^\beta=\Phi_{-\beta,0}$. Applying \eqref{eq:rec} $\alpha$ times preserves nonnegativity and total mass one, raises the radial index by at most one in each of the first $\beta$ steps and never afterwards. The harmonic coefficient is the $L^2$ projection onto $r^{\alpha-\beta}e^{i(\alpha-\beta)\theta}$, namely $\int_0^1r^{2\alpha+1}dr\big/\int_0^1r^{2(\alpha-\beta)+1}dr$.
\end{proof}

\subsection{The sine sector}
For odd $n\geq1$ and $s\geq0$ let
\[
S_{n,s}=\im\Phi_{n,s}=r^nP_s^{(0,n)}(2r^2-1)\sin n\theta=\frac{\Phi_{n,s}-\Phi_{-n,s}}{2i}.
\]
Let $\Sc$ be the real Banach space of $f=\sum_{n\ \mathrm{odd},\,s\geq0}f_{n,s}S_{n,s}$ with $\nrm f_\rho=\sum\rho^n|f_{n,s}|<\infty$; this is also its norm as an element of $\Bc_\rho$. The elements of $\Sc$ are real-valued, odd under $w\mapsto\bw$ and even under $w\mapsto-\bw$. Conversely, since $\Phi_{m,s}(\bw)=\Phi_{-m,s}(w)$ and $\Phi_{m,s}(-\bw)=(-1)^m\Phi_{-m,s}(w)$, every real-valued $f\in\Bc_\rho$ with these two symmetries belongs to $\Sc$. The functions $S_{n,0}=r^n\sin n\theta$ are harmonic; we call $f_{n,0}$ the harmonic coefficients and put
\[
\Gc=\{g\in\Sc:\ g_{n,0}=0\text{ for all }n\}.
\]

\section{The compatible inverse of the Laplacian}\label{sec:K}

For odd $n\geq1$ and $s\geq1$ put $q=n+2s$ and define
\begin{equation}\label{eq:K}
KS_{n,s}=\frac{S_{n,s-1}}{4q(q+1)}-\frac{S_{n,s}}{2q(q+2)}+\frac{S_{n,s+1}}{4(q+1)(q+2)} .
\end{equation}

\begin{lemma}\label{lem:K}
For odd $n\geq1$ and $s\geq1$:
\begin{enumerate}[label=\textup{(\alph*)}]
\item $\Delta KS_{n,s}=S_{n,s}$;
\item $KS_{n,s}=0$ and $\nabla KS_{n,s}=0$ on $\partial\D$;
\item $KS_{n,s}=\dfrac{(1-r^2)^2}{4s(s+1)}\,r^nP_{s-1}^{(2,n)}(2r^2-1)\sin n\theta$;
\item the absolute sum of the coefficients in \eqref{eq:K} is $1/(q(q+2))$. Hence $K$ extends to a bounded operator $K\colon\Gc\to\Sc$ with $\nrm K=1/15$.
\end{enumerate}
\end{lemma}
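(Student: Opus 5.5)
The plan is to prove (c) first and then read off (a) and (b) from it; (d) is then arithmetic. The idea is that the closed form in (c) makes the boundary behaviour obvious, while its expansion in the basis $S_{n,s'}$ recovers \eqref{eq:K}.

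\emph{Expanding the closed form (c).} Put $t=r^2$. The right-hand side of (c) is $r^n\sin n\theta$ times a polynomial in $t$ of degree $s+1$, namely $(1-t)^2P^{(2,n)}_{s-1}(2t-1)/(4s(s+1))$. Its coefficients on $S_{n,s'}$ are inner products in $L^2([0,1],t^n\,dt)$ against $P^{(0,n)}_{s'}(2t-1)$, by Lemma~\ref{lem:orth}(b).
\begin{itemize}
\item For $s'\le s-2$ these vanish. Indeed, $(1-t)^2P^{(2,n)}_{s-1}$ is orthogonal in $t^n\,dt$ to all polynomials of degree $<s-1$, since $P^{(2,n)}_{s-1}$ is orthogonal for the weight $(1-t)^2t^n$. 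This is the analogue of Lemma~\ref{lem:orth}(a) via Rodrigues' formula.
\item The three remaining coefficients $s'=s-1,s,s+1$ can be computed by comparing the two top $t$-coefficients together with the value at $t=1$. The latter is $0$, and each $S_{n,s'}$ equals $\sin n\theta$ there.
\end{itemize}
Concretely, use the explicit coefficients \eqref{eq:jacobi} and the analogous formula for $P^{(2,n)}$. Matching the leading coefficient fixes the $S_{n,s+1}$ coefficient. Vanishing at $t=1$ gives that the three coefficients sum to $0$. Matching the next coefficient fixes one more. Checking that these agree with $\frac1{4q(q+1)},-\frac1{2q(q+2)},\frac1{4(q+1)(q+2)}$ is routine bookkeeping with factorials. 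Note that these do sum to zero, because $\frac{(q+2)-2(q+1)+q}{4q(q+1)(q+2)}=0$.

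\emph{Deducing (a) and (b).} Part (b) is immediate from (c): the factor $(1-r^2)^2$ vanishes to second order on $r=1$, so both the function and its gradient vanish on $\partial\D$. For (a), apply $\Delta=\partial_r^2+r^{-1}\partial_r+r^{-2}\partial_\theta^2$ to $h(t)\,r^n\sin n\theta$. In terms of $t$ this gives $4\,r^n\sin n\theta\,\bigl(th''+(n+1)h'\bigr)$.

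So (a) reduces to the identity
\[
4\bigl(th''+(n+1)h'\bigr)=P^{(0,n)}_s(2t-1)\qquad\text{for } h=\frac{(1-t)^2P^{(2,n)}_{s-1}(2t-1)}{4s(s+1)}.
\]
The operator $L=t\partial_t^2+(n+1)\partial_t$ maps polynomials of degree $k$ to degree $k-1$. I would verify the identity by checking that $Lh$ is orthogonal, in $t^n\,dt$, to all polynomials of degree $<s$. Integrating by parts gives $\int_0^1 t^n\,(Lh)\,p\,dt=-\int_0^1 t^{n+1}h'p'\,dt$; the boundary term is $[t^{n+1}h'p]_0^1$, which vanishes since $h'(1)=0$. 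One then needs $\int t^{n+1}h'p'=0$ for $\deg p'<s-1$. This should follow from $h'=(1-t)\,\cdot$ (a Jacobi polynomial for the weight $(1-t)t^{n+1}$, by the standard derivative formula). Finally, fix the normalisation by the value at $t=1$.

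Alternatively, and more robustly, apply $\Delta$ directly to the three-term form \eqref{eq:K}. Using the explicit coefficients, this becomes a three-term recurrence check. I expect this normalisation and recurrence bookkeeping to be the only real obstacle; the rest is structural.

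\emph{Part (d).} The absolute sum is
\[
\frac1{4q(q+1)}+\frac1{2q(q+2)}+\frac1{4(q+1)(q+2)}=\frac{(q+2)+2(q+1)+q}{4q(q+1)(q+2)}=\frac1{q(q+2)}.
\]
Since $\Gc$ has $s\ge1$ and $n\ge1$, we have $q\ge3$, so this is at most $1/15$, with equality at $n=1,s=1$. All outputs $S_{n,s'}$ keep the same $n$, so the weight $\rho^n$ is unchanged and the norm is a column-sup of absolute column sums. This gives $\nrm K=\sup_{n,s}1/(q(q+2))=1/15$.
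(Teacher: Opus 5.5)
Your proof is correct, but it runs in the opposite order to the paper's. The paper starts from the definition \eqref{eq:K}. It proves (a) by checking one rational identity among the explicit coefficients $a^{(n)}_{s,k}$. It proves (b) from $P^{(0,n)}_t(1)=1$ and the value of the radial derivative at $r=1$. It then gets (c) cheaply: (b) forces $R(t)=(1-t)^2Q(t)$, and orthogonality of the three-term combination in $t^n\,dt$ transfers to orthogonality of $Q$ in $(1-t)^2t^n\,dt$. So $Q$ is a multiple of $P^{(2,n)}_{s-1}(2t-1)$, and only a leading coefficient has to be matched.

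You use the same orthogonality in the reverse direction, to show that the closed form has only the components $s'=s-1,s,s+1$. After that, (b) is immediate. Part (a) becomes a structural argument: $t^nLh=(t^{n+1}h')'$, the boundary terms vanish, $h'$ is orthogonal to low degrees, and the normalisation $4h''(1)=1$ follows from $P^{(2,n)}_{s-1}(1)=\binom{s+1}{2}$. You can get the orthogonality of $h'$ without the Jacobi derivative formula by integrating by parts once more, using $h(1)=0$. This reduces it to the orthogonality of $P^{(2,n)}_{s-1}(2t-1)$ in $(1-t)^2t^n\,dt$ against $(n+1)p'+tp''$.

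The price of your route is that (c) needs the top two coefficients of $(1-t)^2P^{(2,n)}_{s-1}(2t-1)$ and a triangular solve. This does close. One finds $[t^s]h=-(q-1)!/(4\,s!\,(n+s)!)$, which gives the $S_{n,s}$-coefficient $-1/(2q(q+2))$. Vanishing at $t=1$ then gives $1/(4q(q+1))$. In short, the paper concentrates the computation in a single coefficient identity for (a), whereas your route concentrates it in (c) and makes (a) and (b) conceptual.
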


\begin{proof}
Write $\mathsf A,\mathsf B,\mathsf C$ for the three coefficients in \eqref{eq:K} and $a_{s,k}=a^{(n)}_{s,k}$.

(a) Since $\Delta(r^{n+2k}\sin n\theta)=4k(n+k)r^{n+2k-2}\sin n\theta$, the claim is equivalent to
$\mathsf Aa_{s-1,k+1}+\mathsf Ba_{s,k+1}+\mathsf Ca_{s+1,k+1}=a_{s,k}/(4(k+1)(n+k+1))$ for $0\leq k\leq s$. By \eqref{eq:jacobi} the three ratios $a_{s-1,k+1}/a_{s,k}$, $a_{s,k+1}/a_{s,k}$, $a_{s+1,k+1}/a_{s,k}$ equal $(s-k)(s-k-1)$, $-(n+s+k+1)(s-k)$ and $(n+s+k+2)(n+s+k+1)$, each divided by $(k+1)(n+k+1)$, and a direct computation gives
$\mathsf A(s-k)(s-k-1)-\mathsf B(n+s+k+1)(s-k)+\mathsf C(n+s+k+2)(n+s+k+1)=\tfrac14$.

(b) At $r=1$ each $r^nP_t^{(0,n)}(2r^2-1)$ equals one, and $\mathsf A+\mathsf B+\mathsf C=0$. Its radial derivative at $r=1$ is $\beta_t=n+2t(t+n+1)$, because $\frac{d}{dx}P^{(0,n)}_t(1)=t(t+n+1)/2$. Since $\beta_s-\beta_{s-1}=2q$ and $\beta_{s+1}-\beta_s=2(q+2)$, we get $\mathsf A\beta_{s-1}+\mathsf B\beta_s+\mathsf C\beta_{s+1}=-2q\mathsf A+2(q+2)\mathsf C=0$. The tangential derivative vanishes because the boundary value vanishes identically.

(c) By (b), $KS_{n,s}=r^nR(t)\sin n\theta$ where $R$ is a polynomial of degree $s+1$ with $R(1)=0$ and $nR(1)+2R'(1)=0$, so $R(t)=(1-t)^2Q(t)$ with $\deg Q=s-1$. As a combination of $P^{(0,n)}_{s-1},P^{(0,n)}_s,P^{(0,n)}_{s+1}$, the polynomial $R$ is orthogonal in $L^2(t^ndt)$ to polynomials of degree less than $s-1$ (Lemma~\ref{lem:orth}(a)); hence $Q$ is orthogonal in $L^2(t^n(1-t)^2dt)$ to them and is a multiple of $P^{(2,n)}_{s-1}(2t-1)$. Comparing leading coefficients, $\mathsf C\,a_{s+1,s+1}$ for $R$ and $\frac{(n+2s)!}{(s-1)!(n+s+1)!}$ for $P^{(2,n)}_{s-1}(2t-1)$, gives the factor $1/(4s(s+1))$.

(d) $\mathsf A-\mathsf B+\mathsf C=\frac{(q+2)+2(q+1)+q}{4q(q+1)(q+2)}=\frac1{q(q+2)}$. The operator $K$ does not change the angular index, hence not the weight. Since $q\geq3$, the supremum of $1/(q(q+2))$ is $1/15$, attained at $(n,s)=(1,1)$.
\end{proof}

\begin{lemma}\label{lem:traces}
For $g\in\Gc$, the function $W=Kg$ belongs to $C^1(\overline\D)$, satisfies $\Delta W=g$ in $\mathcal D'(\D)$, and $W=0$, $\nabla W=0$ on $\partial\D$.
\end{lemma}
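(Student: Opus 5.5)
The plan is to prove the statement for finite sums first and then pass to general $g\in\Gc$ by a limit in a norm strong enough to control first derivatives uniformly. For a finite linear combination $g=\sum g_{n,s}S_{n,s}$ with $s\geq1$, Lemma~\ref{lem:K}(a),(b) gives at once that $W=Kg$ is a polynomial with $\Delta W=g$ pointwise and $W=\nabla W=0$ on $\partial\D$. The only real task is therefore to show that the partial sums $W_N$ converge in $C^1(\overline\D)$ whenever $g_N\to g$ in $\nrm\cdot_\rho$. Once that holds, the boundary conditions pass to the limit, and $\Delta W=g$ in $\mathcal D'(\D)$ follows by testing against $\varphi\in C_c^\infty(\D)$: indeed $\int W_N\Delta\varphi=\int g_N\varphi$, and both sides converge because $W_N\to W$ and $g_N\to g$ uniformly, the latter since $\sup|f|\leq\nrm f_\rho$.

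For the $C^1$ bound I would use the closed form of Lemma~\ref{lem:K}(c),
\[
KS_{n,s}=\frac{(1-t)^2}{4s(s+1)}\,r^nP^{(2,n)}_{s-1}(2t-1)\sin n\theta .
\]
The key estimate to establish is
\[
\sup_{\overline\D}|\nabla KS_{n,s}|\leq C\,\frac{1+n}{q}
\]
with $C$ independent of $n,s$. I would avoid Jacobi-polynomial asymptotics and instead go through the three-term formula \eqref{eq:K}. Writing $\nabla$ in terms of $\partial_w,\partial_{\bw}$, the derivatives of $r^nP^{(0,n)}_s\,e^{in\theta}$ are again disk polynomials: $\partial_{\bw}\Phi_{n,s}$ is a multiple of $\Phi_{n+1,s-1}$, and $\partial_w\Phi_{n,s}$ is a multiple of $\Phi_{n-1,s}$, with coefficients of size $O(q)$. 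This is the standard Zernike derivative identity, and I would verify it from \eqref{eq:jacobi} in the same way as Lemma~\ref{lem:mult}. Combined with Lemma~\ref{lem:orth}(c), this gives $|\nabla S_{n,s}|\lesssim q$.

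A naive estimate is then too weak, since each coefficient in \eqref{eq:K} is of size $1/q^2$. However, $\nrm{\cdot}$-summability only requires $\sum_{n,s}|g_{n,s}|\cdot\sup|\nabla KS_{n,s}|<\infty$. With $\rho^n|g_{n,s}|$ summable, any bound polynomial in $n$ and bounded in $s$ suffices, because $\rho^{-n}$ absorbs powers of $n$. Using the $O(q)$ derivative bound on each of the three terms of \eqref{eq:K}, which have coefficients $O(1/q^2)$, gives $\sup|\nabla KS_{n,s}|\lesssim 1/q$. Hence
\[
\sup_{\overline\D}|W|+\sup_{\overline\D}|\nabla W|\lesssim \nrm g_\rho ,
\]
and the series for $W$ converges in $C^1(\overline\D)$.

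The main obstacle is the derivative identity for disk polynomials with an explicit $O(q)$ constant. My fallback is to bound $\partial_r$ and $r^{-1}\partial_\theta$ directly. The angular part is $n\,|KS_{n,s}|/r$, which is harmless for $n\geq1$ because of the factor $r^n$. For the radial part, Markov's inequality applied to the polynomial $R(t)$ of degree $s+1$ on $[0,1]$ gives $|R'|\lesssim s^2\sup|R|$. Together with the $1/q^2$ coefficients, this still yields a bound polynomial in $n$ and bounded in $s$, which is all the summation argument needs.
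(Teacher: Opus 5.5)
\textbf{The key estimate is false as stated.} Derivatives of the $\Phi_{n,s}$ are not multiples of single disk polynomials. Differentiating $w^nP^{(0,n)}_s(2w\bw-1)$ raises the first Jacobi parameter: for $n\geq0$, $s\geq1$,
$\partial_{\bw}\Phi_{n,s}=(n+s+1)\,w^{n+1}P^{(1,n+1)}_{s-1}(2|w|^2-1)$.
For example, $\partial_{\bw}\Phi_{0,2}=6w(2|w|^2-1)=4\Phi_{1,1}+2\Phi_{1,0}$. On $\partial\D$ this derivative has modulus $s(n+s+1)$.

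So $|\nabla S_{n,s}|\lesssim q$ fails. The paper's own computation in the proof of Lemma~\ref{lem:K}(b) gives $\partial_rS_{n,s}=(n+2s(s+n+1))\sin n\theta$ at $r=1$, which is of order $q^2$ (about $q^2/2$ for fixed $n$). Your derivation of $\sup|\nabla KS_{n,s}|\lesssim1/q$ therefore has no basis.

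The fallback, as written, does not repair this. Markov's inequality for $R$ on $[0,1]$ bounds $R'$ by $\sup_{[0,1]}|R|$, and that is not bounded uniformly in $s$. By Lemma~\ref{lem:K}(c), $R(0)=(-1)^{s-1}\binom{n+s-1}{s-1}/(4s(s+1))$. This is unbounded in $s$ for fixed $n\geq3$, and exponentially large when $n=s$. The factor $r^n$ has to stay attached to $R$.

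\textbf{Repair, and comparison with the paper.} The true order $q^2$ is still enough, because the coefficients in \eqref{eq:K} have absolute sum $1/(q(q+2))$. Write $KS_{n,s}=p(r)\sin n\theta$ with $p(r)=r^nR(r^2)$. This is a polynomial of degree $q+2$ on $[-1,1]$ with $|p(-r)|=|p(r)|$.
\begin{itemize}
\item Lemma~\ref{lem:orth}(c) and Lemma~\ref{lem:K}(d) give $\sup_{[-1,1]}|p|\leq1/(q(q+2))$.
\item Markov's inequality then gives $\sup|p'|\leq(q+2)/q\leq5/3$, so $|\partial_rKS_{n,s}|\leq5/3$.
\item Since $p(0)=0$ for $n\geq1$, $|r^{-1}\partial_\theta KS_{n,s}|\leq n\sup|p'|$.
\item Hence $\sup_{\overline\D}|\nabla KS_{n,s}|\leq\frac53(n+1)$.
\end{itemize}
Because $\sum_{n,s}(n+1)|g_{n,s}|\leq\sup_n(n+1)\rho^{-n}\,\nrm g_\rho$, the series converges in $C^1(\overline\D)$, and the rest of your argument goes through.

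The paper avoids column-by-column derivative bounds altogether. It writes $W_N=\int_\D G(\cdot,\zeta)g_N(\zeta)\,dA(\zeta)$ with the Dirichlet Green function of the disc. It then shows $\int_\D|\nabla_wG(w,\zeta)|\,dA(\zeta)\leq8$, so $\sup|\nabla(W_N-W_L)|\leq8\sup|g_N-g_L|$. That bound uses only the sup norm of $g$. Your repaired route instead uses the weighted $\ell^1$ structure, with the factor $n+1$ absorbed by $\rho^{-n}$, but needs nothing beyond the one-variable Markov inequality.
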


\begin{proof}
Let $g_N$ be the truncations of $g$ to finitely many coefficients and $W_N=Kg_N$, a polynomial with $\Delta W_N=g_N$ and zero Cauchy data by Lemma~\ref{lem:K}. The Dirichlet Green function of the disc is $G(w,\zeta)=\frac1{2\pi}\log\big|\frac{w-\zeta}{1-w\overline\zeta}\big|$, and $W_N(w)=\int_\D G(w,\zeta)g_N(\zeta)\,dA(\zeta)$. We have $|\nabla_wG(w,\zeta)|\leq\frac1{2\pi}\big(|w-\zeta|^{-1}+|\zeta|\,|1-w\overline\zeta|^{-1}\big)$. For $|w|\leq1$, $\int_\D|w-\zeta|^{-1}dA(\zeta)\leq4\pi$, since $\D$ lies in the disc of radius $2$ about $w$. If $|w|<1/2$ then $|\zeta|/|1-w\overline\zeta|\leq2$; if $|w|\geq1/2$ then $|\zeta|/|1-w\overline\zeta|\leq2/|\zeta-1/\bw|$ and $\D$ lies in the disc of radius $3$ about $1/\bw$; in both cases the integral of this term is at most $12\pi$. Hence
\[
\sup_{\D}|\nabla(W_N-W_L)|\leq8\sup_\D|g_N-g_L|\leq8\nrm{g_N-g_L}_\rho,\qquad \sup_\D|W_N-W_L|\leq\tfrac1{15}\nrm{g_N-g_L}_\rho .
\]
Thus $(W_N)$ is Cauchy in $C^1(\overline\D)$; its limit is $Kg$ (the coefficients converge in $\Sc$), the zero Cauchy data pass to the limit, and so does the distributional equation.
\end{proof}

\begin{remark}\label{rem:compatible}
Conversely, if $W\in C^1(\overline\D)$ has $\Delta W=g\in\Sc$ in $\mathcal D'(\D)$ and zero Cauchy data, then $g\in\Gc$ and $W=Kg$: the extension of $W$ by zero is $C^1$ and satisfies $\Delta W=g\one_\D$ in $\mathcal D'(\R^2)$, and pairing with the harmonic polynomials $r^n\sin n\theta$ gives $g_{n,0}=0$; then $W-Kg$ is harmonic with zero boundary values. This is why the equation is posed on $\Gc$.
\end{remark}

\section{The cubic equation}\label{sec:cubic}

From now on $\rho=21/20$, $M=41$ and $S=24$. The centre $x^\circ=(g^\circ,c^\circ)$ consists of $504$ coefficients $g^\circ_{n,s}$, $n\in\{1,3,\ldots,41\}$, $1\leq s\leq24$, and $21$ coefficients $c^\circ_j$, $j\in\{1,3,\ldots,41\}$; all other coefficients of $x^\circ$ vanish. Each of these $525$ numbers is an explicitly given dyadic rational (a binary64 number interpreted exactly). We write $b=c_1^\circ\approx14.840043$ and
\[
\omega_j=b^2(j+1)\rho^j\qquad(j\ \text{odd}).
\]
Let $X$ be the real Banach space of pairs $x=(g,c)$, with $g\in\Gc$ and $c=(c_j)_{j\ \mathrm{odd}\geq1}$ real, normed by
\[
\nrm{(g,c)}_X=\nrm g_\rho+\sum_{j\ \mathrm{odd}}\omega_j|c_j| .
\]
To $c$ we associate $\psi_c(w)=\sum_jc_jw^j$, which is holomorphic in $|w|<\rho$ and continuous up to $|w|=\rho$ because $|c_j|\rho^j\leq\omega_j|c_j|/(2b^2)$. Note that $\im\psi_c=\sum_jc_jS_{j,0}\in\Sc$. Define
\begin{equation}\label{eq:F}
\Fc(g,c)=g+|\psi_c'|^2\big(Kg+\im\psi_c\big).
\end{equation}

\begin{lemma}\label{lem:F}
$\Fc$ is a continuous cubic polynomial map $X\to\Sc$. If $\Fc(g,c)=0$, then $V=Kg+\im\psi_c$ belongs to $C^1(\overline\D)$, is odd under $w\mapsto\bw$ and even under $w\mapsto-\bw$, and satisfies \eqref{eq:V} with $\psi=\psi_c$.
\end{lemma}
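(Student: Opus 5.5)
We split the proof into three checks: that $\Fc$ lands in $\Sc$, that it is a bounded cubic polynomial (hence continuous), and that a zero yields a solution of \eqref{eq:V}.

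\emph{Well-definedness and the target space.} For $x=(g,c)\in X$ we have $Kg\in\Sc$ by Lemma~\ref{lem:K}(d), with $\nrm{Kg}_\rho\leq\nrm g_\rho/15$. We also have $\im\psi_c=\sum_jc_jS_{j,0}\in\Sc$, with $\nrm{\im\psi_c}_\rho\leq\sum_j\rho^j|c_j|\leq\nrm c/(2b^2)$. The derivative $\psi_c'(w)=\sum_j jc_jw^{j-1}$ satisfies
\[
N_\rho(\psi_c')=\sum_j j\rho^{j-1}|c_j|\leq\rho^{-1}\sum_j\omega_j|c_j|/b^2<\infty .
\]
Applying \eqref{eq:algebra} twice gives, for $h\in\Bc_\rho$, the bound $\nrm{|\psi_c'|^2h}_\rho=\nrm{\overline{\psi_c'}\,\psi_c'h}_\rho\leq N_\rho(\psi_c')^2\nrm h_\rho$. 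It remains to see that the product lies in $\Sc$ rather than only in $\Bc_\rho$. Since the $c_j$ are real, $\psi_c'$ has real coefficients and contains only even powers. Hence $|\psi_c'|^2$ is real valued and invariant under both $w\mapsto\bw$ and $w\mapsto-\bw$. Multiplying the real function $Kg+\im\psi_c$, which is odd under $w\mapsto\bw$ and even under $w\mapsto-\bw$, by $|\psi_c'|^2$ preserves these symmetries. By the characterisation of $\Sc$ stated after its definition, the product belongs to $\Sc$, and $g\in\Gc\subset\Sc$. So $\Fc\colon X\to\Sc$.

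\emph{Cubic polynomial and continuity.} The map $c\mapsto\psi_c'$ is linear and bounded into the Banach algebra of holomorphic series normed by $N_\rho$, and $c\mapsto\overline{\psi_c'}$ is linear as well. Thus $\Fc(g,c)=g+\mathcal M(\psi_c',\psi_c',Kg+\im\psi_c)$, where $\mathcal M(p_1,p_2,h)=\overline{p_1}\,p_2\,h$ is a bounded trilinear map by \eqref{eq:algebra}. Composing bounded multilinear maps with bounded linear maps gives a continuous polynomial of degree three. (If one prefers, continuity follows directly from the estimates above by expanding differences.)

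\emph{The zero set.} Suppose $\Fc(g,c)=0$. By Lemma~\ref{lem:traces}, $W=Kg\in C^1(\overline\D)$ with $\Delta W=g$ in $\mathcal D'(\D)$ and zero Cauchy data. The function $\im\psi_c$ is harmonic and real analytic on $|w|<\rho$, so $V=W+\im\psi_c\in C^1(\overline\D)$ and $\Delta V=g=-|\psi_c'|^2V$ distributionally, using $\Fc=0$ as an identity of continuous functions (elements of $\Sc$ are identified with uniformly convergent sums). The boundary conditions $V-\im\psi_c=W=0$ and $\nabla(V-\im\psi_c)=0$ on $\partial\D$ come from Lemma~\ref{lem:traces}. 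Each of $W$ and $\im\psi_c$ lies in $\Sc$, so $V$ is odd under $w\mapsto\bw$ and even under $w\mapsto-\bw$.

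The only mildly delicate point is the symmetry bookkeeping that places the product in $\Sc$, together with the identification of coefficient sequences with continuous functions, so that the algebraic zero is a pointwise and distributional identity. Both are covered by Lemma~\ref{lem:orth} and the remarks defining $\Sc$.
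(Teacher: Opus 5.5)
Your proposal is correct and follows essentially the same route as the paper's proof. The steps match: the parity argument shows that multiplication by $|\psi_c'|^2$ preserves $\Sc$ with norm at most $N_\rho(\psi_c')^2$ via \eqref{eq:algebra}. The bounds $\nrm{Kg}_\rho\leq\nrm g_\rho/15$ and $\nrm{\im\psi_c}_\rho=\sum_j\rho^j|c_j|$ then exhibit $\Fc$ as a sum of bounded multilinear maps. For the zero set, Lemma~\ref{lem:traces} is combined with the identification of uniformly convergent disk-polynomial series with continuous functions. The only cosmetic difference is that you write the cubic part as the diagonal of the trilinear map $(p_1,p_2,h)\mapsto\overline{p_1}p_2h$ into $\Bc_\rho$; this is harmless, since its diagonal values lie in $\Sc$ and the two norms agree there.
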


\begin{proof}
$\psi_c'=\sum_jjc_jw^{j-1}$ has real coefficients and even powers, and $N_\rho(\psi_c')=\sum_jj\rho^{j-1}|c_j|\leq\nrm{c}/(b^2\rho)$, where $\nrm c=\sum\omega_j|c_j|$. Thus $|\psi_c'|^2$ is real and invariant under $w\mapsto\bw$ and $w\mapsto-\bw$, and multiplication by it maps $\Sc$ into $\Sc$ with norm at most $N_\rho(\psi'_c)^2$ by \eqref{eq:algebra}. Together with $\nrm{Kg}_\rho\leq\nrm g_\rho/15$ and $\nrm{\im\psi_c}_\rho=\sum\rho^j|c_j|$, this shows that \eqref{eq:F} is a sum of bounded multilinear maps of degree at most three. If $\Fc(g,c)=0$, then Lemma~\ref{lem:traces} gives $V\in C^1(\overline\D)$, $V-\im\psi_c=Kg$ with zero Cauchy data, and $\Delta V=g=-|\psi_c'|^2V$ in $\mathcal D'(\D)$; the last identity holds pointwise because both sides are uniformly convergent disk-polynomial series with the same coefficients. The parities hold because $V\in\Sc$.
\end{proof}

\subsection{Expansion at the centre}
Write $p=\psi_{c^\circ}'=\sum_{a=0,2,\ldots,40}p_aw^a$ with $p_a=(a+1)c^\circ_{a+1}$, so $p_0=b$, and put
\[
P=N_\rho(p),\qquad V^\circ=Kg^\circ+\im\psi_{c^\circ},\qquad V_0=\nrm{V^\circ}_\rho .
\]
For $h=(\delta g,\eta)\in X$ let $\delta V=K\delta g+\im\psi_\eta$ and, for shape perturbations $\eta,\eta_1,\eta_2$,
\[
L(\eta)=\overline p\,\psi_\eta'+p\,\overline{\psi_\eta'}=2\re(\overline p\,\psi_\eta'),\qquad Q(\eta_1,\eta_2)=\re\big(\psi_{\eta_1}'\overline{\psi_{\eta_2}'}\big).
\]
Since $|p+\psi_\eta'|^2=|p|^2+L(\eta)+Q(\eta,\eta)$, expanding \eqref{eq:F} gives
\begin{equation}\label{eq:expansion}
\Fc(x^\circ+h)=\Fc(x^\circ)+D\Fc(x^\circ)h+B_2(h,h)+B_3(h,h,h),
\end{equation}
where
\begin{align}
D\Fc(x^\circ)h&=\delta g+|p|^2K\delta g+L(\eta)V^\circ+|p|^2\im\psi_\eta,\label{eq:DF}\\
B_2(h_1,h_2)&=\tfrac12\big\{L(\eta_1)\,\delta V_2+L(\eta_2)\,\delta V_1\big\}+Q(\eta_1,\eta_2)V^\circ,\notag\\
B_3(h_1,h_2,h_3)&=\tfrac13\big\{Q(\eta_1,\eta_2)\,\delta V_3+Q(\eta_1,\eta_3)\,\delta V_2+Q(\eta_2,\eta_3)\,\delta V_1\big\}.\notag
\end{align}
There are no terms of higher order.

\begin{lemma}\label{lem:nonlinear}
Let $\kappa=1/15$, $\vartheta_1=1/(b^2\rho)$ and $\vartheta_0=1/(2b^2)$. Then for all $h_i\in X$,
\[
\nrm{B_2(h_1,h_2)}_\rho\leq c_2\nrm{h_1}_X\nrm{h_2}_X,\qquad\nrm{B_3(h_1,h_2,h_3)}_\rho\leq c_3\nrm{h_1}_X\nrm{h_2}_X\nrm{h_3}_X,
\]
with $c_2=\max\{P\vartheta_1\kappa,\ 2P\vartheta_1\vartheta_0+\vartheta_1^2V_0\}$ and $c_3=\vartheta_1^2\max\{\kappa,\vartheta_0\}$.
\end{lemma}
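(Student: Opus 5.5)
The bounds follow from the multiplication estimate \eqref{eq:algebra}, the bound $\nrm K=\kappa$ from Lemma~\ref{lem:K}(d), and elementary bounds on the shape part. For a shape perturbation $\eta$ write $\nrm\eta=\sum_j\omega_j|\eta_j|$. We record three inequalities.
\begin{itemize}
\item[(i)] $N_\rho(\psi_\eta')=\sum_j j\rho^{j-1}|\eta_j|\leq\vartheta_1\nrm\eta$, since $j\rho^{j-1}\leq\omega_j/(b^2\rho)$.
\item[(ii)] $\nrm{\im\psi_\eta}_\rho=\sum_j\rho^j|\eta_j|\leq\vartheta_0\nrm\eta$, since $\rho^j\leq\omega_j/(2b^2)$ because $j+1\geq2$.
\item[(iii)] $\nrm{\delta V}_\rho\leq\kappa\nrm{\delta g}_\rho+\vartheta_0\nrm\eta\leq\max\{\kappa,\vartheta_0\}\nrm h_X$.
\end{itemize}
A sharper form of (iii) keeps the two contributions apart, which is how $c_2$ is written.

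\emph{Bounding $B_2$.} By \eqref{eq:algebra} applied to $p$ and $\overline p$, multiplication by $L(\eta)=\overline p\psi_\eta'+p\overline{\psi_\eta'}$ has norm at most $2PN_\rho(\psi_\eta')\leq2P\vartheta_1\nrm\eta$. This is because $\overline p\psi_\eta' f$ is $\overline p$ times the product of $\psi_\eta'$ with $f$. Likewise multiplication by $Q(\eta_1,\eta_2)=\frac12(\psi_{\eta_1}'\overline{\psi_{\eta_2}'}+\overline{\psi_{\eta_1}'}\psi_{\eta_2}')$ has norm at most $\vartheta_1^2\nrm{\eta_1}\nrm{\eta_2}$.

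For the term $\tfrac12\{L(\eta_1)\delta V_2+L(\eta_2)\delta V_1\}$ we get at most
\[
\tfrac12\cdot2P\vartheta_1\big(\nrm{\eta_1}(\kappa\nrm{\delta g_2}+\vartheta_0\nrm{\eta_2})+\nrm{\eta_2}(\kappa\nrm{\delta g_1}+\vartheta_0\nrm{\eta_1})\big).
\]
Adding $\vartheta_1^2V_0\nrm{\eta_1}\nrm{\eta_2}$ from the $Q$-term, the total is a bilinear form in the vectors $(\nrm{\delta g_i},\nrm{\eta_i})$. Its coefficients are as follows:
\begin{itemize}
\item $P\vartheta_1\kappa$ on $\nrm{\eta_1}\nrm{\delta g_2}$ and on $\nrm{\eta_2}\nrm{\delta g_1}$;
\item $2P\vartheta_1\vartheta_0+\vartheta_1^2V_0$ on $\nrm{\eta_1}\nrm{\eta_2}$;
\item $0$ on $\nrm{\delta g_1}\nrm{\delta g_2}$.
\end{itemize}
Expanding $\nrm{h_1}_X\nrm{h_2}_X$ produces each monomial with coefficient one. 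So the form is bounded by the largest coefficient times $\nrm{h_1}_X\nrm{h_2}_X$, and that largest coefficient is exactly $c_2$.

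\emph{Bounding $B_3$.} Each of the three terms $Q(\eta_a,\eta_b)\delta V_c$ is bounded by
\[
\vartheta_1^2\nrm{\eta_a}\nrm{\eta_b}\big(\kappa\nrm{\delta g_c}+\vartheta_0\nrm{\eta_c}\big)\leq\vartheta_1^2\max\{\kappa,\vartheta_0\}\nrm{h_a}_X\nrm{h_b}_X\nrm{h_c}_X.
\]
Averaging the three terms with the factor $\tfrac13$ gives $c_3$.

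\emph{Main obstacle.} The only point needing care is the claim that multiplication by a conjugate-holomorphic factor, and by the mixed products inside $L$ and $Q$, stays within the $N_\rho$-bounds on the full space $\Bc_\rho$. The output lands in $\Sc$, by the symmetry argument of Lemma~\ref{lem:F}, with the same norm. This is covered by \eqref{eq:algebra}, which treats $p$ and $\overline p$ separately, applied iteratively to each factor. We also need $\nrm{V^\circ}_\rho=V_0$, which holds by definition.
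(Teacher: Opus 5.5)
Your proof is correct and follows the paper's argument. You use the same termwise bounds $N_\rho(\psi_\eta')\leq\vartheta_1\nrm\eta$, $\nrm{\im\psi_\eta}_\rho\leq\vartheta_0\nrm\eta$ and $\nrm{K\delta g}_\rho\leq\kappa\nrm{\delta g}_\rho$, the multiplier bounds for $L$ and $Q$ from \eqref{eq:algebra}, and a coefficient-by-coefficient comparison with $\nrm{h_1}_X\nrm{h_2}_X$. The only difference is cosmetic: for $B_3$ you bound each of the three terms by $c_3\prod_i\nrm{h_i}_X$ before averaging, whereas the paper collects monomials, and both give the same constant.
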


\begin{proof}
Write $a_i=\nrm{\delta g_i}_\rho$ and $e_i=\sum_j\omega_j|\eta_{i,j}|$. Termwise, $N_\rho(\psi'_{\eta_i})\leq\vartheta_1e_i$ (as $j/(j+1)\leq1$), $\nrm{\im\psi_{\eta_i}}_\rho\leq\vartheta_0e_i$ (as $j+1\geq2$), and $\nrm{K\delta g_i}_\rho\leq\kappa a_i$, so $\nrm{\delta V_i}_\rho\leq\kappa a_i+\vartheta_0e_i$. By \eqref{eq:algebra}, $\nrm{L(\eta_i)f}_\rho\leq2P\vartheta_1e_i\nrm f_\rho$ and $\nrm{Q(\eta_i,\eta_k)f}_\rho\leq\vartheta_1^2e_ie_k\nrm f_\rho$. Hence $\nrm{B_2(h_1,h_2)}_\rho\leq P\vartheta_1\kappa(e_1a_2+e_2a_1)+(2P\vartheta_1\vartheta_0+\vartheta_1^2V_0)e_1e_2\leq c_2(a_1+e_1)(a_2+e_2)$. Similarly every monomial $e_ie_ka_l$ in the bound for $B_3$ has coefficient $\vartheta_1^2\kappa/3$ and $e_1e_2e_3$ has coefficient $\vartheta_1^2\vartheta_0$; each is at most its coefficient in $c_3\prod_i(a_i+e_i)$.
\end{proof}

\section{The contraction argument and the approximate inverse}\label{sec:inverse}

\subsection{A Newton--Kantorovich theorem}

\begin{proposition}\label{prop:NK}
Let $\Fc\colon X\to\Sc$ have the form \eqref{eq:expansion} with symmetric bounded $B_2$ and $B_3$, and let $A\colon\Sc\to X$ be bounded, linear and injective. Suppose that
\[
\nrm{A\Fc(x^\circ)}_X\leq Y,\quad\nrm{I-AD\Fc(x^\circ)}_{X\to X}\leq Z,\quad\nrm A\,c_2\leq C_2,\quad\nrm A\,c_3\leq C_3,
\]
and that $r>0$ satisfies
\begin{equation}\label{eq:radii}
Y+(Z-1)r+C_2r^2+C_3r^3<0,\qquad Z-1+2C_2r+3C_3r^2<0 .
\end{equation}
Then $\Fc$ has exactly one zero in the closed ball $\overline B_X(x^\circ,r)$.
\end{proposition}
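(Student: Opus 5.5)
The plan is the standard radii-polynomial argument: apply the Banach fixed point theorem to the Newton-like map $T(x)=x-A\Fc(x)$ on $\overline B_X(x^\circ,r)$, then use injectivity of $A$ to turn fixed points of $T$ into zeros of $\Fc$.

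\emph{Step 1: derivative of $T$.} Write $x=x^\circ+h$. From \eqref{eq:expansion},
\[
D\Fc(x^\circ+h)=D\Fc(x^\circ)+2B_2(h,\cdot)+3B_3(h,h,\cdot),
\]
by symmetry of $B_2$ and $B_3$. Hence
\[
DT(x^\circ+h)=\big(I-AD\Fc(x^\circ)\big)-2AB_2(h,\cdot)-3AB_3(h,h,\cdot).
\]
The hypotheses then give
\[
\nrm{DT(x^\circ+h)}_{X\to X}\leq Z+2C_2\nrm h_X+3C_3\nrm h_X^2 .
\]
For $\nrm h_X\leq r$ the second inequality in \eqref{eq:radii} makes this bound strictly less than one. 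Since the ball is convex, the mean value inequality shows that $T$ is a contraction on $\overline B_X(x^\circ,r)$ with constant $\lambda:=Z+2C_2r+3C_3r^2<1$.

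\emph{Step 2: self-map.} Write
\[
T(x^\circ+h)-x^\circ=-A\Fc(x^\circ)+\big(I-AD\Fc(x^\circ)\big)h-AB_2(h,h)-AB_3(h,h,h).
\]
Its norm is at most $Y+Zr+C_2r^2+C_3r^3$. By the first inequality in \eqref{eq:radii} this is strictly less than $r$, so $T$ maps the closed ball into itself. The ball is complete, so the contraction principle gives a unique fixed point $x^*$ there.

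\emph{Step 3: back to $\Fc$.} A fixed point satisfies $A\Fc(x^*)=0$, and injectivity of $A$ gives $\Fc(x^*)=0$. Conversely, every zero of $\Fc$ in the ball is a fixed point of $T$. So uniqueness of the fixed point gives uniqueness of the zero.

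The main point needing care is Step 1. One must justify the derivative formula by the symmetric polynomial expansion and check that the mean value estimate holds in the real Banach space $X$. Both follow because $\Fc$ is a bounded cubic polynomial map, and everything else is bookkeeping.
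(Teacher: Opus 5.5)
Your proof is correct and follows the paper's argument: the same map $x\mapsto x-A\Fc(x)$, the same self-map estimate from \eqref{eq:expansion}, the contraction principle, and injectivity of $A$ to identify fixed points with zeros. The only difference is how you get the Lipschitz constant $Z+2C_2r+3C_3r^2$: you use the derivative and the mean value inequality, whereas the paper uses the algebraic identity $B_2(h_1,h_1)-B_2(h_2,h_2)=B_2(h_1-h_2,h_1+h_2)$ and the analogous telescoping identity for $B_3$, which avoids any discussion of differentiability.
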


\begin{proof}
Let $\Nc(x)=x-A\Fc(x)$. For $\nrm h_X\leq r$, \eqref{eq:expansion} gives $\Nc(x^\circ+h)-x^\circ=-A\Fc(x^\circ)+(I-AD\Fc(x^\circ))h-AB_2(h,h)-AB_3(h,h,h)$, whose norm is at most $Y+Zr+C_2r^2+C_3r^3<r$. For $h_1,h_2$ in the ball, $B_2(h_1,h_1)-B_2(h_2,h_2)=B_2(h_1-h_2,h_1+h_2)$ and $B_3(h_1,h_1,h_1)-B_3(h_2,h_2,h_2)=B_3(h_1-h_2,h_1,h_1)+B_3(h_2,h_1-h_2,h_1)+B_3(h_2,h_2,h_1-h_2)$, so $\Nc$ is Lipschitz on the ball with constant $Z+2C_2r+3C_3r^2<1$. By the contraction mapping principle $\Nc$ has a unique fixed point in the ball, and since $A$ is injective the fixed points of $\Nc$ are exactly the zeros of $\Fc$.
\end{proof}

In the weighted $\ell^1$ spaces $X$ and $\Sc$, operator norms are computed column by column.

\begin{lemma}\label{lem:columns}
Let $E$ and $E'$ be weighted $\ell^1$ spaces with bases $(e_\iota)$ and $(e'_\kappa)$ and weights $(\lambda_\iota)$ and $(\lambda'_\kappa)$, and let $T\colon E\to E'$ be bounded. Then $\nrm T=\sup_\iota\nrm{Te_\iota}_{E'}/\lambda_\iota$.
\end{lemma}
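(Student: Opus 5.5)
The argument is the standard one for the operator norm of an $\ell^1$-type space: prove the inequality in both directions. Write $\nrm x_E=\sum_\iota\lambda_\iota|x_\iota|$ and put $\tau=\sup_\iota\nrm{Te_\iota}_{E'}/\lambda_\iota$.

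For the lower bound $\nrm T\geq\tau$, note that each basis vector has $\nrm{e_\iota}_E=\lambda_\iota$. Hence $\nrm{Te_\iota}_{E'}\leq\nrm T\lambda_\iota$ for every $\iota$, which gives $\tau\leq\nrm T$. In particular $\tau$ is finite, because $T$ is bounded.

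For the upper bound $\nrm T\leq\tau$, first take $x$ with finitely many nonzero coordinates, so that $x=\sum_\iota x_\iota e_\iota$ is a finite sum. By linearity and the triangle inequality in $E'$,
\[
\nrm{Tx}_{E'}\leq\sum_\iota|x_\iota|\,\nrm{Te_\iota}_{E'}\leq\tau\sum_\iota\lambda_\iota|x_\iota|=\tau\nrm x_E .
\]
For general $x\in E$, the truncations $x^{(N)}$ to finitely many coordinates converge to $x$ in $E$, since the tail $\sum_{\iota\notin F_N}\lambda_\iota|x_\iota|$ of a convergent series tends to zero. Because $T$ is continuous, $Tx^{(N)}\to Tx$ in $E'$, and the inequality passes to the limit.

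The only point that needs care is this density and continuity step. It uses the boundedness of $T$ and the fact that, in a weighted $\ell^1$ space, finitely supported vectors are dense, so that an operator is determined by its values on the basis, as the paper already notes. There is no real obstacle beyond that. The weights on $E'$ play no role beyond entering $\nrm{Te_\iota}_{E'}$, and the lemma holds for real and complex scalars alike.
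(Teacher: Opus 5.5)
Your proof is correct and follows essentially the same route as the paper: the lower bound comes from testing on the basis vectors $e_\iota$ (normalised by $\lambda_\iota$), and the upper bound from the triangle inequality on finitely supported vectors, extended to all of $E$ by density and continuity of $T$. The paper's proof is a one-line version of exactly this argument.
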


\begin{proof}
The inequality $\geq$ follows by testing on $e_\iota/\lambda_\iota$, and $\leq$ by the triangle inequality and absolute summation, using continuity of $T$ and density of finitely supported vectors.
\end{proof}

We shall apply Lemma~\ref{lem:columns} with the bases $\{e_{(n,s)}:s\geq1\}\cup\{e_j\}$ of $X$, of weights $\rho^n$ and $\omega_j$, and $\{S_{n,s}:s\geq0\}$ of $\Sc$, of weights $\rho^n$. We call the basis vectors of $X$ \emph{inputs} and those of $\Sc$ \emph{rows}.

\subsection{Finite block and tail}
The \emph{finite inputs} are the $504$ coordinates $g_{n,s}$ with $n\leq M$, $1\leq s\leq S$, and the $21$ shape coordinates $c_j$ with $j\leq M$. The \emph{finite rows} are the $504$ rows $(n,s)$ with $n\leq M$, $1\leq s\leq S$, and the $21$ harmonic rows $(n,0)$ with $n\leq M$. All other inputs and rows are \emph{tail} inputs and rows. We write $X=X_f\oplus X_t$ and $\Sc=\Sc_f\oplus\Sc_t$ accordingly; $\dim X_f=\dim\Sc_f=525$. Let $M_f$ be the $525\times525$ matrix of the restriction of $D\Fc(x^\circ)$ to finite inputs and finite rows.

On tail inputs we use the following explicit model of $D\Fc(x^\circ)$, obtained by replacing $\psi^\circ$ by $bw$ and $V^\circ$ by $b\im w$:
\begin{equation}\label{eq:T}
T(\delta g,\eta)=\delta g+b^2\sum_{j>M}\eta_j\big\{(j+1)S_{j,0}-(j-1)S_{j-2,0}-S_{j-2,1}\big\}.
\end{equation}

\begin{lemma}\label{lem:Tform}
For odd $j\geq3$, $L_b(w^j)\,b\im w+b^2\im w^j=b^2\{(j+1)S_{j,0}-(j-1)S_{j-2,0}-S_{j-2,1}\}$, where $L_b(\eta)=2\re(b\,\psi_\eta')$.
\end{lemma}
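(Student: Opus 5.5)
With $\psi_\eta=w^j$ we have $\psi_\eta'=jw^{j-1}$, so $L_b(w^j)=2\re(bjw^{j-1})=bj(w^{j-1}+\bw^{j-1})$. The plan is to expand each of the two terms on the left in disk polynomials, using the multiplication rules of Section~\ref{sec:disk}, and then add them.

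\textbf{The product term.} Write $\im w=(w-\bw)/(2i)$. Then
\[
L_b(w^j)\,b\im w=\frac{b^2j}{2i}\big(w^j-\bw^j+\bw^{j-1}w-w^{j-1}\bw\big)
=b^2j\Big(\im w^j-\im\big(w^{j-1}\bw\big)\Big),
\]
where we used $\bw^{j-1}w=\overline{w^{j-1}\bw}$. The first piece is harmonic: $\im w^j=S_{j,0}$. For the second piece we apply Lemma~\ref{lem:monomial} with $\alpha=j-1$, $\beta=1$. It gives $w^{j-1}\bw=\kappa_0\Phi_{j-2,0}+\kappa_1\Phi_{j-2,1}$ with $\kappa_0=(j-1)/j$ and $\kappa_1=1/j$. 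As a check on $\kappa_1$, the case $m=j-2$, $s=0$ of \eqref{eq:rec} reads $w\Phi_{-1,0}\cdot{}$\ldots{}, or one can simply compare coefficients of $r^{j}$: we have $\Phi_{j-2,1}=r^{j-2}\big((j-1)t-(j-2)\big)e^{i(j-2)\theta}$, so the $t$-coefficient gives $\kappa_1(j-1)=1$... \emph{(see the next paragraph)}. Taking imaginary parts, $\im(w^{j-1}\bw)=\kappa_0S_{j-2,0}+\kappa_1S_{j-2,1}$.

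\textbf{Checking $\kappa_1$.} Note that $w^{j-1}\bw=r^{j}e^{i(j-2)\theta}=r^{j-2}\,t\,e^{i(j-2)\theta}$. By \eqref{eq:jacobi} with $n=j-2$, $s=1$, we have $P^{(0,j-2)}_1(2t-1)=j\,t-(j-1)$. Hence
\[
t=\frac1j\,P^{(0,j-2)}_1(2t-1)+\frac{j-1}{j},
\]
which gives $\kappa_1=1/j$ and $\kappa_0=(j-1)/j$, consistent with Lemma~\ref{lem:monomial}. Substituting,
\[
L_b(w^j)\,b\im w=b^2\big(jS_{j,0}-(j-1)S_{j-2,0}-S_{j-2,1}\big).
\]

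\textbf{Conclusion.} Adding the harmonic term $b^2\im w^j=b^2S_{j,0}$ yields exactly $b^2\{(j+1)S_{j,0}-(j-1)S_{j-2,0}-S_{j-2,1}\}$, which is the claimed identity. The only step needing care is identifying the radial coefficient $\kappa_1$, and the explicit Jacobi polynomial of degree one settles it. The hypothesis $j\geq3$ ensures $j-2\geq1$, so $S_{j-2,\cdot}$ is defined with index at least one.
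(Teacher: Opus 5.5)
Your proof is correct and is essentially the paper's argument: both reduce the product term to $b^2jr^j\{\sin j\theta-\sin(j-2)\theta\}$, you via $\im w=(w-\bw)/(2i)$ and the paper via a product-to-sum formula. Both then expand $r^j\sin(j-2)\theta$ using $P^{(0,j-2)}_1(2t-1)=jt-(j-1)$, which is Lemma~\ref{lem:monomial} with $\alpha=j-1$, $\beta=1$. Delete the aborted ``check'' in your second paragraph: it writes $\Phi_{j-2,1}$ with the wrong factor $(j-1)t-(j-2)$, whereas the correct factor is $jt-(j-1)$, as you use in the next paragraph. In any case $\kappa_1=1-\kappa_0=1/j$ already follows from Lemma~\ref{lem:monomial}.
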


\begin{proof}
$L_b(w^j)\,b\im w=2b^2jr^{j}\cos((j-1)\theta)\sin\theta=b^2jr^j\{\sin j\theta-\sin(j-2)\theta\}$. Moreover $r^2\Phi_{j-2,0}=\frac1j\Phi_{j-2,1}+\frac{j-1}j\Phi_{j-2,0}$, because $P^{(0,n)}_1(2t-1)=(n+2)t-(n+1)$ by \eqref{eq:jacobi}; hence $jr^j\sin(j-2)\theta=S_{j-2,1}+(j-1)S_{j-2,0}$.
\end{proof}

Write $Te=(Be,T_{tt}e)$ for tail inputs $e$, with $B$ the part in finite rows and $T_{tt}$ the part in tail rows. Only the shape input $j=M+2$ reaches finite rows:
\begin{equation}\label{eq:B}
B(\delta g,\eta)=-b^2\eta_{M+2}\big((M+1)e_{(M,0)}+e_{(M,1)}\big).
\end{equation}
Put $\tau=(1-\rho^{-2})^{-1}$.

\begin{lemma}\label{lem:Ttt}
$T_{tt}\colon X_t\to\Sc_t$ is a bounded bijection. For a tail residual $y=\sum y_{n,s}S_{n,s}\in\Sc_t$ its inverse is
\begin{equation}\label{eq:Tinv}
\eta_n=\sum_{\substack{m\geq n\\ m\ \mathrm{odd}}}\frac{y_{m,0}}{b^2(m+1)}\quad(n>M),\qquad \delta g_{n,s}=y_{n,s}+\one_{s=1}\,b^2\eta_{n+2}\quad((n,s)\text{ a tail $g$-input}).
\end{equation}
Moreover $\nrm{T_{tt}}\leq1+\rho^{-2}$ and $\nrm{T_{tt}^{-1}}\leq\max\{1,\tau+\tau/((M+3)\rho^2)\}$.
\end{lemma}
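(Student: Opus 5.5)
The plan is to write $T_{tt}$ explicitly in coordinates, solve $T_{tt}x=y$ row by row, and read off both operator norms column by column using Lemma~\ref{lem:columns}.

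\emph{Coordinates.} The tail inputs are of two kinds: the $g$-inputs $(n,s)$, $s\geq1$, with $n>M$ or $s>S$, and the shape inputs $j>M$. The tail rows are the harmonic rows $(n,0)$ with $n>M$, together with the rows $(n,s)$, $s\geq1$, with $n>M$ or $s>S$.

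By \eqref{eq:T}, a tail $g$-input is mapped identically to the tail row with the same index. A shape input $\eta_j$ contributes:
\begin{itemize}
\item $b^2(j+1)$ to row $(j,0)$;
\item $-b^2(j-1)$ to row $(j-2,0)$;
\item $-b^2$ to row $(j-2,1)$.
\end{itemize}
The last two occur only when $j-2>M$; for $j=M+2$ they fall into finite rows and are exactly $B$ in \eqref{eq:B}.

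\emph{Solving.} Hence the equation $T_{tt}(\delta g,\eta)=y$ reads, in the harmonic row $n>M$,
\[
b^2(n+1)(\eta_n-\eta_{n+2})=y_{n,0}.
\]
In the non-harmonic tail rows it reads $\delta g_{n,s}-\one_{s=1}b^2\eta_{n+2}=y_{n,s}$, where the $\eta$ term is present only if $n>M$. This is automatic when $s=1$ and the row is a tail row.

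Because $\sum_j\omega_j|\eta_j|<\infty$ forces $\eta_n\to0$, telescoping the first equation gives the formula \eqref{eq:Tinv} for $\eta$ uniquely. The second equation then gives $\delta g$. This shows injectivity.

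For surjectivity, take any $y\in\Sc_t$ and define $(\delta g,\eta)$ by \eqref{eq:Tinv}. The series defining $\eta_n$ converges absolutely, and the norm bound below shows $(\delta g,\eta)\in X_t$. Substituting back verifies $T_{tt}(\delta g,\eta)=y$. Thus $T_{tt}$ is a bijection, and both it and its inverse are bounded by the estimates that follow.

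\emph{Norm of $T_{tt}$.} By Lemma~\ref{lem:columns}, a $g$-column has normalised norm $1$. The column of $\eta_j$ has normalised norm at most
\[
\frac{b^2\big((j+1)\rho^j+(j-1)\rho^{j-2}+\rho^{j-2}\big)}{\omega_j}=1+\rho^{-2}\frac{j}{j+1}\leq1+\rho^{-2}.
\]

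\emph{Norm of $T_{tt}^{-1}$.} Non-harmonic rows map to themselves, so their columns have norm $1$. The column for the harmonic row $(m,0)$, $m>M$, normalised by $\rho^{-m}$, produces
\[
\eta_n=\frac{\rho^{-m}}{b^2(m+1)}\quad\text{for odd }M<n\leq m,\qquad \delta g_{n,1}=\frac{\rho^{-m}}{m+1}\quad\text{for odd }M<n\leq m-2.
\]
Its $X$-norm is
\[
\sum_{M<n\leq m}\frac{n+1}{m+1}\rho^{n-m}+\sum_{M<n\leq m-2}\frac{\rho^{n-m}}{m+1}\leq\tau+\frac{\rho^{-2}\tau}{m+1}.
\]
Here we used $(n+1)/(m+1)\leq1$ and geometric sums in $\rho^{-2}$. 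The second sum is nonempty only if $m\geq M+4$, so in all cases the total is at most $\tau+\tau/((M+3)\rho^2)$.

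Taking the supremum over columns gives the stated bound on $\nrm{T_{tt}^{-1}}$. The only delicate points are the bookkeeping of which contributions land in finite versus tail rows and the justification that $\eta_n\to0$, which fixes the telescoping constant.
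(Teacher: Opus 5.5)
Your proposal is correct and follows the paper's proof essentially step by step. You write the tail harmonic rows as $b^2(n+1)(\eta_n-\eta_{n+2})=y_{n,0}$ and the rows $(n,1)$ as $\delta g_{n,1}-b^2\eta_{n+2}=y_{n,1}$, telescope using $\eta_n\to0$, and bound $\nrm{T_{tt}}$ and $\nrm{T_{tt}^{-1}}$ column by column via Lemma~\ref{lem:columns}, with the same column computations as the paper (your treatment of surjectivity is slightly more explicit, and your estimate even gives $M+5$ in place of $M+3$ in the last denominator).
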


\begin{proof}
By \eqref{eq:T}, the tail harmonic rows read $b^2(n+1)(\eta_n-\eta_{n+2})=y_{n,0}$ for $n\geq M+2$, the tail rows $(n,1)$ read $\delta g_{n,1}-b^2\eta_{n+2}=y_{n,1}$, and all other tail rows are $\delta g_{n,s}=y_{n,s}$. Since $\sum\omega_n|\eta_n|<\infty$ forces $\eta_n\to0$, summation of the harmonic equations gives \eqref{eq:Tinv}, and a solution with $y=0$ vanishes; conversely \eqref{eq:Tinv} solves the equations whenever the series converge. For the bounds we use Lemma~\ref{lem:columns}. The shape input $e_j$ has three nonzero entries, $b^2(j+1)$ in row $(j,0)$, $-b^2(j-1)$ in row $(j-2,0)$ and $-b^2$ in row $(j-2,1)$; its weighted norm divided by $\omega_j$ is $1+j\rho^{-2}/(j+1)\leq1+\rho^{-2}$. Tail $g$-inputs are mapped to the identical rows. For $T_{tt}^{-1}$, a nonharmonic tail row is mapped to the identical input. The normalised harmonic row $\rho^{-m}S_{m,0}$ is mapped to $\eta_n=\rho^{-m}/(b^2(m+1))$ for $M<n\leq m$ and $\delta g_{n-2,1}=b^2\eta_n$ for $M+4\leq n\leq m$, with norm
\[
\sum_{M<n\leq m}\frac{(n+1)\rho^n}{(m+1)\rho^m}+\sum_{M+4\leq n\leq m}\frac{\rho^{n-2}}{(m+1)\rho^m}\leq\tau+\frac{\tau}{(M+3)\rho^2}. \qedhere
\]
\end{proof}

\subsection{The approximate inverse}
Let $\widehat A$ be a $525\times525$ matrix with dyadic rational entries (in the computation, the midpoints of an interval enclosure of $M_f^{-1}$), regarded as an exact linear map $\Sc_f\to X_f$, and suppose that
\begin{equation}\label{eq:Ahat}
\nrm{I-\widehat AM_f}_{X_f\to X_f}<1 .
\end{equation}
Define $A\colon\Sc=\Sc_f\oplus\Sc_t\to X=X_f\oplus X_t$ by
\begin{equation}\label{eq:A}
A=\begin{pmatrix}\widehat A&-\widehat ABT_{tt}^{-1}\\0&T_{tt}^{-1}\end{pmatrix}.
\end{equation}

\begin{lemma}\label{lem:A}
Under \eqref{eq:Ahat}, $A$ is bounded and injective, and $ATe=e$ for every tail input $e$.
\end{lemma}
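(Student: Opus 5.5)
Three assertions are needed: $A$ is bounded, $A$ is injective, and $ATe=e$ for every tail input. Each follows from the block form \eqref{eq:A} together with Lemma~\ref{lem:Ttt}.

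\emph{Boundedness.} Since $\Sc_f$ and $X_f$ are finite dimensional, $\widehat A$ is bounded. By Lemma~\ref{lem:Ttt}, $T_{tt}^{-1}\colon\Sc_t\to X_t$ is bounded. The map $B$ of \eqref{eq:B} has rank at most one and satisfies $\nrm{B}\leq b^2((M+1)+1)\rho^M/\omega_{M+2}$ by Lemma~\ref{lem:columns}. Each block of \eqref{eq:A} is therefore bounded, and so is $A$.

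\emph{The identity $ATe=e$.} Let $e$ be a tail input and write $Te=(Be,T_{tt}e)$. Then
\[
ATe=\bigl(\widehat ABe-\widehat ABT_{tt}^{-1}T_{tt}e,\ T_{tt}^{-1}T_{tt}e\bigr).
\]
Because $T_{tt}^{-1}T_{tt}=I$ on $X_t$ (Lemma~\ref{lem:Ttt}), the first component is $\widehat ABe-\widehat ABe=0$ and the second component is $e$. Hence $ATe=e$.

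\emph{Injectivity.} Suppose $A(y_f,y_t)=0$. The tail component of $A(y_f,y_t)$ is $T_{tt}^{-1}y_t$, and $T_{tt}^{-1}$ is injective, so $y_t=0$. The finite component is then $\widehat Ay_f=0$. It remains to see that $\widehat A$ is injective on $\Sc_f$, and this is where \eqref{eq:Ahat} is used. Since $\nrm{I-\widehat AM_f}<1$, the Neumann series shows that $\widehat AM_f$ is invertible on $X_f$. Thus $\widehat A$ is surjective onto $X_f$, and because $\dim\Sc_f=\dim X_f=525$, it is bijective. Therefore $y_f=0$.

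The only non-formal point is this dimension count: \eqref{eq:Ahat} directly yields surjectivity of $\widehat A$, and injectivity follows from the equality of dimensions.
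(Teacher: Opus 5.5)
Your proof is correct and follows essentially the same route as the paper. The identity $ATe=e$ is the same block computation. Your kernel argument for injectivity is equivalent to the paper's observation that $\left(\begin{smallmatrix}\widehat A^{-1}&B\\0&T_{tt}\end{smallmatrix}\right)$ is a left inverse of $A$: the tail component forces $y_t=0$, and $\widehat A$ is invertible by \eqref{eq:Ahat} together with $\dim\Sc_f=\dim X_f$. You also spell out the boundedness of the three blocks, which the paper leaves implicit.
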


\begin{proof}
By \eqref{eq:Ahat} and the Neumann series, $\widehat AM_f$ is invertible, so $\widehat A$ is invertible. The operator $\left(\begin{smallmatrix}\widehat A^{-1}&B\\0&T_{tt}\end{smallmatrix}\right)$ is a left inverse of $A$. For a tail input $e$, $ATe=(\widehat ABe-\widehat ABT_{tt}^{-1}T_{tt}e,\ T_{tt}^{-1}T_{tt}e)=(0,e)$.
\end{proof}

Let $e_{(n,s)}$ denote the finite rows as elements of $\Sc_f$, and define
\[
\nrm R=\max_{(n,s)\ \text{finite row}}\frac{\nrm{\widehat Ae_{(n,s)}}_{X_f}}{\rho^n},\qquad H=\big\|\widehat A\big(e_{(M,1)}+(M+1)e_{(M,0)}\big)\big\|_{X_f},
\]
and, for odd $n\geq M+2$,
\begin{equation}\label{eq:hn}
h_n=\frac{H+\sum_{j=M+2,M+4,\ldots,n}(j+1)\rho^j+\sum_{j=M+4,M+6,\ldots,n}\rho^{j-2}}{(n+1)\rho^n}.
\end{equation}

\begin{lemma}\label{lem:normA}
Let $N_H\geq M+2$ be odd. Then $A$ maps every residual supported in tail rows with norm at most
\[
A_t:=\max\Big\{1,\ \max_{M+2\leq n\leq N_H}h_n,\ \tau+\frac{\tau}{(N_H+3)\rho^2}+\frac{H}{(N_H+3)\rho^{N_H+2}}\Big\},
\]
it is the identity on residuals supported in nonharmonic tail rows, and $\nrm A\leq\max\{\nrm R,A_t\}$.
\end{lemma}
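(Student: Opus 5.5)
The plan is to compute $A$ column by column via Lemma~\ref{lem:columns}, with the rows of $\Sc$ as basis vectors of weight $\rho^n$. On a finite row $e_{(n,s)}$, \eqref{eq:A} gives $Ae_{(n,s)}=(\widehat Ae_{(n,s)},0)$, so its normalised norm is at most $\nrm R$ by definition. On a nonharmonic tail row $(n,s)$ with $s\geq1$, \eqref{eq:Tinv} maps $S_{n,s}$ to the identical $g$-input. Such an input is a tail input, since the finite $g$-inputs and the finite rows with $s\geq1$ coincide. By \eqref{eq:B}, $B$ kills every $g$-input, so $-\widehat ABT_{tt}^{-1}$ vanishes on this column. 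Hence $A$ is the identity there, with normalised norm $1$. This gives the second assertion and the entry $1$ in $A_t$.

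The main step is a tail harmonic row $(m,0)$ with $m\geq M+2$ odd, normalised by $\rho^{-m}$. By \eqref{eq:Tinv}, $T_{tt}^{-1}$ produces $\eta_n=\rho^{-m}/(b^2(m+1))$ for $M<n\leq m$ and $\delta g_{n-2,1}=b^2\eta_n$ for $M+4\leq n\leq m$. The inputs $\delta g_{n-2,1}$ with $n-2\geq M+2$ are tail inputs, so they are legitimate. As in Lemma~\ref{lem:Ttt}, the $X$-norm of this tail part is
\[
\sum_{j=M+2}^{m}\frac{(j+1)\rho^j}{(m+1)\rho^m}+\sum_{j=M+4}^{m}\frac{\rho^{j-2}}{(m+1)\rho^m},
\]
with both sums over odd $j$, using $\omega_j=b^2(j+1)\rho^j$. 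The finite part is $-\widehat AB\,T_{tt}^{-1}e$. Only $\eta_{M+2}=\rho^{-m}/(b^2(m+1))$ enters $B$, and \eqref{eq:B} gives $B(\cdot)=-\rho^{-m}(m+1)^{-1}\big((M+1)e_{(M,0)}+e_{(M,1)}\big)$. Its image under $\widehat A$ therefore has norm $H/((m+1)\rho^m)$. Adding the two parts gives exactly $h_m$ from \eqref{eq:hn}, an upper bound for the column norm by the triangle inequality.

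For $m\leq N_H$ the bound is just $h_m$. For $m>N_H$, which is $m\geq N_H+2$, I bound each piece of $h_m$ uniformly. The first sum, divided by $(m+1)\rho^m$, is at most $\sum_{k\geq0}\rho^{-2k}=\tau$. The second is at most $\rho^{-2}(m+1)^{-1}\tau\leq\tau/((N_H+3)\rho^2)$. The $H$-term is at most $H/((N_H+3)\rho^{N_H+2})$, since $(m+1)\rho^m$ is increasing in $m$. Together these give the third entry of $A_t$, and so every tail row has normalised norm at most $A_t$.

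Finally, a residual supported in tail rows has norm at most $A_t$ times its norm, by the same column argument, and $\nrm A\leq\max\{\nrm R,A_t\}$ follows from Lemma~\ref{lem:columns} over all rows. I expect the only delicate point to be the bookkeeping: checking that the $\delta g$ inputs produced are indeed tail inputs, and that $B$ sees only $\eta_{M+2}$.
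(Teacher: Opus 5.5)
Your proof is correct and follows the paper's argument essentially step for step. You take column norms via Lemma~\ref{lem:columns}. The finite rows give $\nrm R$, and the nonharmonic tail rows are mapped identically because $B$ sees only $\eta_{M+2}$. The harmonic tail row $(m,0)$ splits into the shape coordinates, the coupled $\delta g_{n-2,1}$ coordinates and the finite part $-\widehat ABT_{tt}^{-1}e$, whose norms sum to exactly $h_m$. For $m\geq N_H+2$ the uniform bound follows from summing the geometric series to $\tau$ and using $m+1\geq N_H+3$.
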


\begin{proof}
We use Lemma~\ref{lem:columns}. A finite row $e_{(n,s)}$ is mapped to $\widehat Ae_{(n,s)}$. A nonharmonic tail row is mapped to the identical tail input by \eqref{eq:Tinv}, and $B$ vanishes on it. The normalised harmonic tail row $\rho^{-n}S_{n,0}$ is mapped to the shape coordinates $\eta_j=\rho^{-n}/(b^2(n+1))$, $M+2\leq j\leq n$, to $\delta g_{j-2,1}=b^2\eta_j$, $M+4\leq j\leq n$, and, by \eqref{eq:B}, to the finite part $b^2\eta_{M+2}\widehat A((M+1)e_{(M,0)}+e_{(M,1)})$; the sum of the three norms is exactly $h_n$. For $n\geq N_H+2$ we bound $(j+1)/(n+1)\leq1$ in the first sum, sum the geometric series $\sum_{i\geq0}\rho^{-2i}=\tau$, and use $n+1\geq N_H+3$ in the other two terms.
\end{proof}

\section{The defect bounds}\label{sec:Z}

In this section we explain how the bounds $Y$ and $Z$ of Proposition~\ref{prop:NK} are obtained for the operator $A$ of \eqref{eq:A}. By Lemma~\ref{lem:columns}, $\nrm{I-AD\Fc(x^\circ)}$ is the supremum over all inputs $e$ of $\nrm{(I-AD\Fc(x^\circ))e}_X$ divided by the weight of $e$. We partition the inputs into five classes and bound each class separately; every input belongs to exactly one class, and no column is estimated by sampling. Throughout, $d=\deg p=40$, and $\Lambda=M+d=81$.

\subsection{The residual and the finite columns}\label{subsec:finite}
The residual $\Fc(x^\circ)$ is a polynomial in $w,\bw$. Using \eqref{eq:rec} and \eqref{eq:K}, its expansion in the basis $S_{n,s}$ is computed without truncation (it has $2286$ coefficients) in interval arithmetic. Only finitely many tail rows occur, so $A\Fc(x^\circ)$ is computed without truncation from \eqref{eq:A}, \eqref{eq:B} and \eqref{eq:Tinv}, and $Y$ is an upper bound of its norm.

For each of the $525$ finite inputs $e$, $D\Fc(x^\circ)e$ is again a polynomial by \eqref{eq:DF}, and $(I-AD\Fc(x^\circ))e$ is computed in the same way. We denote by $Z_f$ an upper bound for the maximum of the normalised norms of these columns.

\subsection{\texorpdfstring{Tail $g$-columns}{Tail g-columns}}
Let $n_{\max}=M+2d+2=123$ and $s_{\max}=S+d+3=67$.

\begin{lemma}\label{lem:gtail}
Let $e=e_{(n,s)}$ be a tail $g$-input and $q=n+2s$. Then $(I-AD\Fc(x^\circ))e=-A(|p|^2KS_{n,s})$. Moreover, writing $p=p_{\leq16}+p_{>16}$ for the splitting of $p$ into terms of degree $\leq16$ and $>16$, and $P_s=N_\rho(p_{\leq16})$, $P_t=N_\rho(p_{>16})$,
\begin{equation}\label{eq:gshort}
\frac{\nrm{(I-AD\Fc(x^\circ))e}_X}{\rho^n}\leq\frac{\nrm{A(|p_{\leq16}|^2KS_{n,s})}_X}{\rho^n}+\frac{\nrm A\,(2P_sP_t+P_t^2)}{q(q+2)} .
\end{equation}
\end{lemma}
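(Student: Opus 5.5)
The identity follows by computing the column $D\Fc(x^\circ)e$ and then applying $A$. For a tail $g$-input $e=e_{(n,s)}$ we have $\delta g=S_{n,s}$ and $\eta=0$, so \eqref{eq:DF} gives
\[
D\Fc(x^\circ)e=S_{n,s}+|p|^2KS_{n,s}.
\]
Since $e$ is a tail input with $\eta=0$, the model \eqref{eq:T} gives $Te=S_{n,s}$. Lemma~\ref{lem:A} then gives $AS_{n,s}=ATe=e$, and linearity of $A$ yields
\[
(I-AD\Fc(x^\circ))e=e-AS_{n,s}-A(|p|^2KS_{n,s})=-A(|p|^2KS_{n,s}).
\]
The only point to check is that $S_{n,s}$, viewed as a row, is the image under $T$ of this tail input. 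This holds because the finite rows have $n\le M$ and $1\le s\le S$, so a tail $g$-input corresponds exactly to a tail row, and neither $B$ nor the shape part of $T$ touches it.

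For the estimate \eqref{eq:gshort}, I split $|p|^2=|p_{\le16}|^2+2\re(\overline{p_{\le16}}\,p_{>16})+|p_{>16}|^2$. By linearity,
\[
A(|p|^2KS_{n,s})=A(|p_{\le16}|^2KS_{n,s})+A\big((|p|^2-|p_{\le16}|^2)KS_{n,s}\big).
\]
The first term is kept as it stands. The second is bounded by $\nrm A$ times $\nrm{(|p|^2-|p_{\le16}|^2)KS_{n,s}}_\rho$. By \eqref{eq:algebra}, multiplication by $\overline{p_{\le16}}p_{>16}$ or by its conjugate has norm at most $P_sP_t$, so the cross term $\overline{p_{\le16}}p_{>16}+p_{\le16}\overline{p_{>16}}$ contributes at most $2P_sP_t$. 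Similarly, multiplication by $|p_{>16}|^2=p_{>16}\overline{p_{>16}}$ has norm at most $P_t^2$. Finally, Lemma~\ref{lem:K}(d) gives $\nrm{KS_{n,s}}_\rho\le\rho^n/(q(q+2))$, because the three coefficients in \eqref{eq:K} all have angular index $n$ and absolute sum $1/(q(q+2))$. Dividing by $\rho^n$ gives the second term of \eqref{eq:gshort}.

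The main obstacle is not this lemma itself, which is bookkeeping, but confirming that $A$ acts on the tail row $S_{n,s}$ exactly as the identity. That is the assertion $ATe=e$ of Lemma~\ref{lem:A}, together with the fact that $T$ restricted to $g$-inputs is the identity map into rows. I would also make sure that $|p|^2KS_{n,s}$ lies in $\Sc$, so that $A$ can be applied. This holds because $|p|^2$ is real and invariant under both reflections, exactly as in Lemma~\ref{lem:F}.
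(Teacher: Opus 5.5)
Your proposal is correct and follows the paper's proof essentially line by line. You compute $D\Fc(x^\circ)e=S_{n,s}+|p|^2KS_{n,s}$ from \eqref{eq:DF}, use $Te=S_{n,s}$ together with $ATe=e$ from Lemma~\ref{lem:A}, and bound the remainder $(|p|^2-|p_{\leq16}|^2)KS_{n,s}$ by \eqref{eq:algebra} and Lemma~\ref{lem:K}(d). Your remark that each piece of the splitting lies in $\Sc$, so that $A$ can be applied to it separately, is a small detail the paper leaves implicit.
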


\begin{proof}
By \eqref{eq:DF}, $D\Fc(x^\circ)e=S_{n,s}+|p|^2KS_{n,s}$, and $Te=S_{n,s}$, so $AS_{n,s}=ATe=e$ by Lemma~\ref{lem:A}. Since $|p|^2-|p_{\leq16}|^2=\overline{p_{\leq16}}\,p_{>16}+\overline{p_{>16}}\,p_{\leq16}+|p_{>16}|^2$, \eqref{eq:algebra} and Lemma~\ref{lem:K}(d) give $\nrm{(|p|^2-|p_{\leq16}|^2)KS_{n,s}}_\rho\leq(2P_sP_t+P_t^2)\rho^n/(q(q+2))$.
\end{proof}

For the $62\cdot67-21\cdot24=3650$ tail $g$-inputs with $n\leq n_{\max}$ and $s\leq s_{\max}$, the first term on the right of \eqref{eq:gshort} is computed without truncation; let $Z_{g,\partial}$ bound the maximum of the right-hand side over these inputs.

\begin{lemma}\label{lem:gfar}
For every tail $g$-input $e_{(n,s)}$ with $n>n_{\max}$ or $s>s_{\max}$,
\[
\frac{\nrm{(I-AD\Fc(x^\circ))e_{(n,s)}}_X}{\rho^n}\leq Z_{g,\infty}:=\max\Big\{\frac{P^2}{137\cdot139},\ \frac{A_tP^2}{127\cdot129}\Big\}.
\]
\end{lemma}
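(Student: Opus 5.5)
The plan is to start from the identity of Lemma~\ref{lem:gtail}, $(I-AD\Fc(x^\circ))e_{(n,s)}=-A(|p|^2KS_{n,s})$. We then locate the rows on which $|p|^2KS_{n,s}$ is supported, closely enough to tell how $A$ acts on it. Throughout, $q=n+2s$.

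\emph{The size of the residual.} By \eqref{eq:K}, $KS_{n,s}$ is a combination of $S_{n,s-1},S_{n,s},S_{n,s+1}$. By Lemma~\ref{lem:K}(d) its weighted norm is $\rho^n/(q(q+2))$. Multiplication by $|p|^2=p\overline p$ costs at most $P^2$ by \eqref{eq:algebra}. Hence
\[
\nrm{|p|^2KS_{n,s}}_\rho\leq P^2\rho^n/(q(q+2)).
\]
Since $q(q+2)$ is increasing in $q$, it suffices to bound $q$ from below in each case.

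\emph{Support of the residual.} Write $|p|^2=\sum_{a,b\leq d}p_ap_bw^a\bw^b$ with $d=40$, and apply Lemma~\ref{lem:radial} to each $w^a\bw^b\Phi_{\pm n,s''}$ with $s''\in\{s-1,s,s+1\}$. The angular index of every resulting term lies in $[n-d,n+d]$, up to sign; by the symmetries these terms recombine into elements $S_{n',s'}$ of $\Sc$. The radial index satisfies $s'\geq s-1-d=s-41$.

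\emph{Case $s>s_{\max}=67$.} Then $s'\geq27>S=24$, so every row reached is a nonharmonic tail row (in particular $s'\geq1$). By Lemma~\ref{lem:normA}, $A$ acts as the identity on such residuals, so the column norm divided by $\rho^n$ is at most $P^2/(q(q+2))$. Here $q\geq1+2\cdot68=137$, which gives $P^2/(137\cdot139)$.

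\emph{Case $n>n_{\max}=123$.} Then $n\geq125$, since $n$ is odd, and every reached angular index satisfies $n'\geq n-d\geq85>M$. So all rows are tail rows, but some may be harmonic. Lemma~\ref{lem:normA} bounds $A$ by $A_t$ on residuals supported in tail rows. With $q\geq125+2=127$, this yields $A_tP^2/(127\cdot129)$.

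An input satisfying both conditions is covered by either case, so the maximum of the two bounds is valid for every far tail $g$-input.

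The main point to check carefully is the bookkeeping in the support step. We must confirm that the radial index drops by at most $d+1$, not $2d$: Lemma~\ref{lem:radial} gives a drop of $\max(a,b)\leq d$, plus the one unit from $K$. We must also confirm that negative angular indices arising for small $n$ in the first case cause no trouble: the radial bound is independent of the sign of the index, and the weight growth is already absorbed in \eqref{eq:algebra}. The rest is direct.
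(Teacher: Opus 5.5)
Your proposal is correct and follows the paper's proof essentially step for step. You use the identity $(I-AD\Fc(x^\circ))e_{(n,s)}=-A(|p|^2KS_{n,s})$, the bound $P^2\rho^n/(q(q+2))$, and the support information from \eqref{eq:K} and Lemma~\ref{lem:radial} (radial index at least $s-1-d$, frequency at least $n-d$); the two cases then give $q\geq137$ with $A$ the identity on nonharmonic tail rows, respectively $q\geq127$ with $A$ of norm at most $A_t$ on tail rows. The only difference is cosmetic: the paper separates the cases as $s>s_{\max}$ versus $s\leq s_{\max}$, $n>n_{\max}$, whereas you note that an input in both classes is covered by either bound.
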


\begin{proof}
We have $|p|^2=\sum_{a,a'}p_ap_{a'}w^a\bw^{a'}$ with $a,a'\leq d$. By \eqref{eq:K} and Lemma~\ref{lem:radial}, every component of $|p|^2KS_{n,s}$ has radial index at least $s-1-d$ and angular frequency of modulus at least $n-d$. Also $\nrm{|p|^2KS_{n,s}}_\rho\leq P^2\rho^n/(q(q+2))$ with $q=n+2s$.

If $s>s_{\max}$, all radial indices are at least $s-1-d\geq S+3$, so all components lie in nonharmonic tail rows, on which $A$ is the identity (Lemma~\ref{lem:normA}); and $q\geq1+2(s_{\max}+1)=137$.

If $s\leq s_{\max}$ and $n>n_{\max}$, all frequencies are at least $n-d\geq n_{\max}+2-d>M$, so all components lie in tail rows, on which $A$ has norm at most $A_t$; and $q\geq n_{\max}+2+2=127$.
\end{proof}

\subsection{Shape columns}
For the $480$ shape inputs $e_j$ with $M<j\leq\jst=1001$, $(I-AD\Fc(x^\circ))e_j$ is computed without truncation as in Section~\ref{subsec:finite}; let $Z_{\mathrm{sh},\partial}$ bound their normalised norms. For $j>\jst$ we prove a uniform bound.

Let $W^\circ=Kg^\circ$, so that $V^\circ=W^\circ+\im\psi_{c^\circ}$. By \eqref{eq:DF} and Lemma~\ref{lem:Tform}, for odd $j>M$,
\[
D\Fc(x^\circ)e_j=2\re\Big[w^{j-1}\Big(j\,\overline pV^\circ+\frac{|p|^2w}{2i}\Big)\Big],\qquad Te_j=2\re\Big[w^{j-1}\Big(jb^2\im w+\frac{b^2w}{2i}\Big)\Big].
\]
Since $ATe_j=e_j$, we get $(I-AD\Fc(x^\circ))e_j=-A\Delta_j$ with
\begin{equation}\label{eq:Deltaj}
\Delta_j=2\re\big[w^{j-1}(j\Gamma_1+\Gamma_0)\big]+2\re\big[jw^{j-1}\overline pW^\circ\big],
\end{equation}
where $\Gamma_1=\overline p\im\psi_{c^\circ}-b^2\im w$ and $\Gamma_0=(|p|^2-b^2)w/(2i)$.
Writing $\im\psi_{c^\circ}=(\psi_{c^\circ}-\overline{\psi_{c^\circ}})/(2i)$ and $|p|^2=\sum p_ap_{a'}w^a\bw^{a'}$, and using $p_0=c^\circ_1=b$,
\[
2i\Gamma_1=\sum_{(a,l)\neq(0,1)}p_ac^\circ_l\big(w^l\bw^a-\bw^{a+l}\big),\qquad 2i\Gamma_0=\sum_{(a,a')\neq(0,0)}p_ap_{a'}w^{a+1}\bw^{a'},
\]
with $a,a'\in\{0,2,\ldots,40\}$ and $l\in\{1,3,\ldots,41\}$. After combining equal monomials, $2i\Gamma_1=\sum_{\alpha,\beta}q^1_{\alpha\beta}w^\alpha\bw^\beta$ and $2i\Gamma_0=\sum_{\alpha,\beta}q^0_{\alpha\beta}w^\alpha\bw^\beta$ with finitely many real coefficients. The differences $k=\alpha-\beta$ that occur are odd and lie in $[k_-,k_+]=[-81,41]$. For odd $k\in[k_-,k_+]$ define
\begin{gather*}
\bar q_k=\sum_{\alpha-\beta=k}q^1_{\alpha\beta},\qquad \eps_k=\frac1{\jst+1}\sum_{\alpha-\beta=k}\big((1+\beta)|q^1_{\alpha\beta}|+|q^0_{\alpha\beta}|\big),\qquad \bar\eta_k=\frac1{b^2}\sum_{l\geq k}\bar q_l,\\
\delta_k=\frac1{b^2}\sum_{l\geq k}\Big[\eps_l\Big(1+\frac{|l-1|}{\jst+k_-}\Big)+\frac{|\bar q_l|\,|l-1|}{\jst+k_-}\Big],\qquad E_k=|\bar\eta_k|+\delta_k,
\end{gather*}
where the sums over $l$ run over odd $l\leq k_+$. By Lemma~\ref{lem:K}(c), $W^\circ=(1-|w|^2)^2Q^\circ$ with
\[
Q^\circ=\sum_{n,s}\frac{g^\circ_{n,s}}{4s(s+1)}\,r^nP^{(2,n)}_{s-1}(2r^2-1)\sin n\theta ,
\]
and $Q^\circ$ has a finite disk-polynomial expansion, which we compute from the connection formulas (they follow from \cite[Eq.~18.9.5]{DLMF} and the symmetry $P^{(\alpha,\beta)}_k(-x)=(-1)^kP^{(\beta,\alpha)}_k(x)$, \cite[Table~18.6.1]{DLMF})
\[
P^{(a+1,n)}_k=\frac{2k+a+n+1}{k+a+n+1}P^{(a,n)}_k+\frac{k+n}{k+a+n+1}P^{(a+1,n)}_{k-1}\qquad(a=0,1).
\]
The function $w^\Lambda\overline pQ^\circ$ has only nonnegative frequencies, since the frequencies of $\overline pQ^\circ$ are at least $-M-d$; write $w^\Lambda\overline pQ^\circ=\sum_{m\geq0,t\geq0}\gamma^W_{m,t}\Phi_{m,t}$ (a finite sum, computed in interval arithmetic). Finally let
\begin{align*}
\sU_1&=\sum_k\Big(1+\frac{\max(0,k-1)}{\jst+1}\Big)\rho^{k-1}E_k, \qquad
\sU_2=E_{k_-}\rho^{k_--3}\tau,\\
\sU_3&=\frac{1}{\jst+1}\Big(\sum_k\rho^{k-3}E_k+E_{k_-}\rho^{k_--5}\tau\Big), \qquad
\sU_4=\frac{HE_{k_-}}{(\jst+1)\rho^{\jst}},\\
\sU_5&=\sum_{\alpha,\beta}\frac{\big(|q^1_{\alpha\beta}|+|q^0_{\alpha\beta}|/\jst\big)\,\beta\,\rho^{\alpha-\beta-1}}{b^2(\jst+\alpha)},\\
\sU_W&=\frac{8}{b^2(\jst-\Lambda)^2}\sum_{m,t}|\gamma^W_{m,t}|(2t+1)(2t+3)\rho^{m-1-\Lambda},
\end{align*}
where $k$ runs over odd integers in $[k_-,k_+]$.

\begin{lemma}\label{lem:doublezero}
For integers $m,t,N\geq0$,
\[
\nrm{(1-|w|^2)^2w^N\Phi_{m,t}}_\rho\leq\frac{4(2t+1)(2t+3)}{(m+N+1)^2}\rho^{m+N}.
\]
\end{lemma}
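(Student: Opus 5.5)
The plan is to reduce to the case $N=0$ and then compute the norm exactly through the three-term recurrence for Jacobi polynomials. By the last assertion of Lemma~\ref{lem:mult} (valid since $m\ge0$), $w^N\Phi_{m,t}$ is a convex combination of the $\Phi_{m+N,t'}$ with $t'\le t$. Multiplication by the radial factor $(1-|w|^2)^2$ does not change the angular frequency. So by the triangle inequality it suffices to prove, for $n=m+N$ and every $t'\le t$,
\[
\nrm{(1-|w|^2)^2\Phi_{n,t'}}_\rho\le\frac{4(2t'+1)(2t'+3)}{(n+1)^2}\rho^{n},
\]
because the right-hand side is increasing in $t'$. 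The weight is then exactly $\rho^n$ for every component. What remains is to bound the $\ell^1$ sum of the coefficients of $(1-r^2)^2P^{(0,n)}_{t}(2r^2-1)$ in the basis $P^{(0,n)}_s(2r^2-1)$.

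Put $x=2r^2-1$, so that $1-r^2=(1-x)/2$. Write the standard three-term recurrence
\[
xP^{(0,n)}_t=\mathsf a_tP^{(0,n)}_{t+1}+\mathsf b_tP^{(0,n)}_t+\mathsf c_tP^{(0,n)}_{t-1},\qquad \mathsf b_t=\frac{n^2}{(2t+n)(2t+n+2)}.
\]
Here $\mathsf a_t,\mathsf c_t\ge0$, and evaluating at $x=1$ (where every $P^{(0,n)}_s$ equals one) gives $\mathsf a_t+\mathsf b_t+\mathsf c_t=1$. Hence
\[
\tfrac{1-x}2P_t=\tfrac12\big((1-\mathsf b_t)P_t-\mathsf a_tP_{t+1}-\mathsf c_tP_{t-1}\big),
\]
whose coefficients have absolute sum exactly $1-\mathsf b_t$. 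One could instead use the recurrence of Lemma~\ref{lem:mult} for $\bw\,w\,\Phi$, which gives the same coefficients with positivity built in. Applying this step twice, the second time to components of radial index at most $t+1$, yields the bound $\lambda(t)\lambda(t+1)$ with $\lambda(s):=\max_{s'\le s}(1-\mathsf b_{s'})$.

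It remains to check that $1-\mathsf b_s\le 2(2s+1)/(n+1)$ for all $s\ge0$, $n\ge0$, and to handle the monotone maximum over $s'\le s$. This gives the product bound $4(2t+1)(2t+3)/(n+1)^2$. One computes
\[
1-\mathsf b_s=\frac{4s^2+4sn+4s+2n}{(2s+n)(2s+n+2)}.
\]
For $s=0$ this equals $2/(n+2)\le 2/(n+1)$. For $s\ge1$, the claim reduces to a polynomial inequality in $s,n$ after clearing denominators, which holds with room to spare since $(2s+1)(2s+n)(2s+n+2)$ dominates $(n+1)(2s^2+2sn+2s+n)$. The degenerate case $n=0$ needs a separate remark: there $\mathsf b_s=0$ and the bound $2(2s+1)$ is trivial. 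The right-hand side is increasing in $s$, so the maximum defining $\lambda$ is harmless.

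The main obstacle I expect is this elementary but fiddly inequality, including the precise recurrence coefficients for $\beta=n$. I would first verify the formula for $\mathsf b_t$ against \eqref{eq:jacobi} at low degree, then clear denominators and compare coefficients. If the direct inequality fails for small $n$, there is slack to absorb it: $1-\mathsf b_s\le1$ always, and the constant $4$ together with the factor $(2t+3)$ give enough room.
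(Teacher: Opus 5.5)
Your proposal is correct and is essentially the paper's proof. You reduce $w^N\Phi_{m,t}$ to a convex combination of $\Phi_{m+N,t'}$, $t'\le t$, via Lemma~\ref{lem:mult}, use the three-term recurrence for multiplication by $r^2$, and apply the resulting bound $2(2s+1)/(n+1)$ twice; your $\mathsf a_t/2$ and $\mathsf c_t/2$ are exactly the paper's $\lambda^\pm_t$, so $1-\mathsf b_t=2(\lambda^+_t+\lambda^-_t)$. The only cosmetic difference is that you bound $1-\mathsf b_s$ by a single cleared-denominator inequality, whereas the paper bounds $\lambda^+_t\le(t+1)/(m+1)$ and $\lambda^-_t\le t/(m+1)$ separately.
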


\begin{proof}
Applying \eqref{eq:rec} for $w$ and then for $\bw$ gives $|w|^2\Phi_{m,t}=\lambda^+_t\Phi_{m,t+1}+(1-\lambda^+_t-\lambda^-_t)\Phi_{m,t}+\lambda^-_t\Phi_{m,t-1}$ with
\[
\lambda^+_t=\frac{(t+1)(m+t+1)}{(m+2t+1)(m+2t+2)}\leq\frac{t+1}{m+1},\qquad\lambda^-_t=\frac{t(m+t)}{(m+2t)(m+2t+1)}\leq\frac{t}{m+1}.
\]
Hence $(1-|w|^2)\Phi_{m,t}$ has the same frequency and absolute coefficient sum $2(\lambda_t^++\lambda_t^-)\leq2(2t+1)/(m+1)$, and its radial indices are at most $t+1$. Applying this twice, $\nrm{(1-|w|^2)^2\Phi_{m,t}}_\rho\leq4(2t+1)(2t+3)\rho^m/(m+1)^2$. By Lemma~\ref{lem:mult}, $w^N\Phi_{m,t}$ is a convex combination of $\Phi_{m+N,t'}$ with $t'\leq t$; apply the previous bound to each term.
\end{proof}

\begin{proposition}\label{prop:farshape}
Assume $\jst+k_--1>M+2$ and $\jst>2M+d+2$. Then for every odd $j>\jst$,
\[
\frac{\nrm{(I-AD\Fc(x^\circ))e_j}_X}{\omega_j}\leq Z_{\mathrm{sh},\infty}:=\sU_1+\sU_2+\sU_3+\sU_4+\sU_5+A_t\,\sU_W .
\]
\end{proposition}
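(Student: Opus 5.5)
Since $ATe_j=e_j$ (Lemma~\ref{lem:A}), we have $(I-AD\Fc(x^\circ))e_j=-A\Delta_j$ with $\Delta_j$ as in \eqref{eq:Deltaj}. The plan is to split $\Delta_j$ into the $\Gamma$-part $2\re[w^{j-1}(j\Gamma_1+\Gamma_0)]$ and the $W^\circ$-part $2\re[jw^{j-1}\overline pW^\circ]$, and to bound $A$ applied to each separately.

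\textbf{The $\Gamma$-part.} Every monomial $w^{j-1+\alpha}\bw^\beta$ has frequency $j-1+k$ with $k=\alpha-\beta\in[k_-,k_+]$. Since $j>\jst$ and $\jst+k_--1>M+2$, all these frequencies lie above $M$. By Lemma~\ref{lem:monomial}, each monomial equals $\kappa_0\Phi_{j-1+k,0}$ plus a nonharmonic remainder of total mass $1-\kappa_0=\beta/(j+\alpha)$. The nonharmonic rows are tail rows on which $A$ is the identity, so the remainder contributes at most $\sU_5$ after division by $\omega_j=b^2(j+1)\rho^j$.

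The harmonic part is $\sum_k(j\bar q_k+\text{error})\,S_{j-1+k,0}$, with the error coefficients controlled by $j\eps_k$. The $\kappa_0$ corrections $1-\kappa_0\le(1+\beta)/(j+1)$ account for the form of $\eps_k$.

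Next, $T_{tt}^{-1}$ is applied via \eqref{eq:Tinv}, and the result is explicit. The shape coordinates are $\eta_n=\sum_{m\ge n}y_{m,0}/(b^2(m+1))$. Reindexing by $n=j-1+k$ and using $(j-1+l+1)^{-1}\approx j^{-1}$ with relative error $|l-1|/(\jst+k_-)$, one gets $|\eta_{j-1+k}|\le E_k$ for $k\in[k_-,k_+]$. In addition, $\eta_n=\eta_{j-1+k_-}$ is constant for $M<n<j-1+k_-$, bounded by $E_{k_-}$.

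Weighting by $\omega_n/\omega_j$ then gives the contributions:
\begin{itemize}
\item $\sU_1$ from the window $k\in[k_-,k_+]$, where the factor $(n+1)/(j+1)$ is at most $1+\max(0,k-1)/(\jst+1)$;
\item $\sU_2$ from the geometric tail $n<j-1+k_-$;
\item $\sU_3$ from the $\delta g_{n-2,1}=b^2\eta_n$ entries;
\item $\sU_4$ from the finite block through $B$ and $H$, since $\eta_{M+2}\le E_{k_-}$ and the weight ratio is $\rho^{M+2-j}/(j+1)$.
\end{itemize}

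\textbf{The $W^\circ$-part.} Write $jw^{j-1}\overline pW^\circ=j(1-|w|^2)^2w^{j-1-\Lambda}\,(w^\Lambda\overline pQ^\circ)$ and expand $w^\Lambda\overline pQ^\circ$ in the $\gamma^W_{m,t}\Phi_{m,t}$. Lemma~\ref{lem:doublezero} with $N=j-1-\Lambda\ge0$ bounds each term by $4(2t+1)(2t+3)\rho^{m+N}/(m+N+1)^2$. Taking twice the real part and dividing by $\omega_j$, and using $m+N+1\ge j-\Lambda>\jst-\Lambda$, $j/((j+1)(j-\Lambda)^2)\le(\jst-\Lambda)^{-2}$, yields $\sU_W$. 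The frequencies are at least $j-1-\Lambda>M$ because $\jst>2M+d+2$, so all rows are tail rows and $A$ costs at most $A_t$ by Lemma~\ref{lem:normA}.

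\textbf{Main obstacle.} The delicate step is the harmonic bookkeeping in the $\Gamma$-part. The size-$j$ coefficients $j\bar q_k$ must combine after the telescoping summation in $T_{tt}^{-1}$ into $O(1)\cdot\omega_j$ rather than $O(j)\cdot\omega_j$. This relies on the factor $(m+1)^{-1}$ in \eqref{eq:Tinv} cancelling $j$, so the approximations $j/(j+l)\approx1$ must be tracked uniformly for all $j>\jst$. I would verify the $E_k$ bound first by writing $\eta_{j-1+k}=b^{-2}\sum_{l\ge k}y_{j-1+l,0}/(j+l)$ and comparing it with $\bar\eta_k$ term by term.
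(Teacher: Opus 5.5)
Your proposal follows the paper's proof essentially step by step. It uses the same split of $\Delta_j$ into a harmonic part, a nonharmonic part (at most $\sU_5$, since $A$ is the identity on nonharmonic tail rows) and the $W^\circ$ part (at most $A_t\sU_W$ via Lemma~\ref{lem:doublezero}). The harmonic part is pushed through \eqref{eq:Tinv} to give $|\eta_{j-1+k}|\le E_k$ and the four weighted contributions $\sU_1,\dots,\sU_4$.

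Two bookkeeping slips need fixing.
\begin{itemize}
\item The factor $|l-1|/(\jst+k_-)$ in $\delta_k$ comes from comparing $(j+l)^{-1}$ with $(j+1)^{-1}$, not with $j^{-1}$. This corresponds to normalising as $|y_k/(j+1)-\bar q_k|\le\eps_k$, which is what the $1+\beta$ and $1/(\jst+1)$ in $\eps_k$ are designed for.
\item The finite-block term is $b^2|\eta_{M+2}|H/\omega_j=|\eta_{M+2}|H/((j+1)\rho^j)\le\sU_4$, with no factor $\rho^{M+2}$. The weight ratio $\rho^{M+2-j}/(j+1)$ you state would give a bound $\rho^{M+2}$ times larger than $\sU_4$.
\end{itemize}
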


\begin{proof}
Fix an odd $j>\jst$. For real $\gamma$, $2\re[w^{j-1}\gamma w^\alpha\bw^\beta/(2i)]=\gamma\im(w^{j-1+\alpha}\bw^\beta)$. For every monomial occurring, $k=\alpha-\beta\geq k_-$ and $j-1+k>M+2\geq1$, so by Lemma~\ref{lem:monomial} $\im(w^{j-1+\alpha}\bw^\beta)=\sum_s\kappa_sS_{j-1+k,s}$ with nonnegative coefficients summing to one and harmonic coefficient $\kappa_0=1-\beta/(j+\alpha)$. Accordingly we split $\Delta_j=\Delta^H+\Delta^N+\Delta^W$ into the harmonic part and the nonharmonic part of the first term of \eqref{eq:Deltaj}, and the second term $\Delta^W$. All rows occurring in $\Delta^H$ and $\Delta^N$ have frequency at least $j-1+k_->M+2$, so they are tail rows.

\emph{The harmonic part.} The coefficient of $\Delta^H$ in row $(j-1+k,0)$ is $y_k=\sum_{\alpha-\beta=k}(jq^1_{\alpha\beta}+q^0_{\alpha\beta})\big(1-\frac{\beta}{j+\alpha}\big)$. Since $\frac{j}{j+1}\big(1-\frac\beta{j+\alpha}\big)-1=-\frac1{j+1}-\frac{j\beta}{(j+1)(j+\alpha)}$ and $0\leq1-\frac\beta{j+\alpha}\leq1$, we have $|y_k/(j+1)-\bar q_k|\leq\eps_k$. By \eqref{eq:Tinv}, the shape part of $A\Delta^H$ is $\eta_{j-1+k}=b^{-2}\sum_{l\geq k}y_l/(j+l)$ in the window $k_-\leq k\leq k_+$, and $\eta_n=\eta_{j-1+k_-}$ for $M<n<j-1+k_-$. Writing $\frac{y_l}{j+l}=\frac{y_l}{j+1}\big(1+\frac{1-l}{j+l}\big)$ and using $\big|\frac{1-l}{j+l}\big|\leq\frac{|l-1|}{\jst+k_-}$, we get $|\eta_{j-1+k}-\bar\eta_k|\leq\delta_k$, hence $|\eta_{j-1+k}|\leq E_k$, and $|\eta_n|\leq E_{k_-}$ below the window. We now bound the parts of $A\Delta^H$, divided by $\omega_j=b^2(j+1)\rho^j$.
\begin{itemize}
\item Shape coordinates in the window: $\omega_{j-1+k}/\omega_j=\big(1+\frac{k-1}{j+1}\big)\rho^{k-1}$, which gives at most $\sU_1$.
\item Shape coordinates below the window: $\sum_{n\leq j-3+k_-}(n+1)\rho^n/((j+1)\rho^j)\leq\rho^{k_--3}\tau$, which gives at most $\sU_2$.
\item The coupled coordinates $\delta g_{n-2,1}=b^2\eta_n$, of weight $\rho^{n-2}$: at most $\sU_3$, by the same two computations and $j+1>\jst+1$.
\item The finite part $b^2\eta_{M+2}\widehat A((M+1)e_{(M,0)}+e_{(M,1)})$: its norm is at most $b^2HE_{k_-}$, which gives at most $\sU_4$ since $j>\jst$.
\end{itemize}

\emph{The nonharmonic part.} $\Delta^N$ lies in nonharmonic tail rows, on which $A$ is the identity, and its normalised norm is at most
\[
\sum_{\alpha,\beta}\frac{|jq^1_{\alpha\beta}+q^0_{\alpha\beta}|}{j+1}\cdot\frac{\beta}{j+\alpha}\cdot\frac{\rho^{j-1+\alpha-\beta}}{b^2\rho^j}\leq\sU_5 .
\]

\emph{The part $\Delta^W$.} We have $jw^{j-1}\overline pW^\circ=j(1-|w|^2)^2w^{j-1-\Lambda}\big(w^\Lambda\overline pQ^\circ\big)$ with $j-1-\Lambda\geq0$. Since conjugation preserves $\nrm\cdot_\rho$, $\nrm{2\re f}_\rho\leq2\nrm f_\rho$, and Lemma~\ref{lem:doublezero} with $N=j-1-\Lambda$ gives
\[
\frac{\nrm{\Delta^W}_\rho}{\omega_j}\leq\frac{2j}{b^2(j+1)\rho^j}\sum_{m,t}|\gamma^W_{m,t}|\frac{4(2t+1)(2t+3)\rho^{m+j-1-\Lambda}}{(m+j-\Lambda)^2}\leq\sU_W .
\]
All frequencies in $\Delta^W$ are at least $j-1-\Lambda>M$ because $\jst>2M+d+2$, so $\nrm{A\Delta^W}_X\leq A_t\nrm{\Delta^W}_\rho$ by Lemma~\ref{lem:normA}.

Adding the three contributions proves the claim.
\end{proof}

\begin{remark}
Each of $\eps_k$, $\delta_k$, $\sU_3$, $\sU_5$ is $O(1/\jst)$, $\sU_4=O(\rho^{-\jst}/\jst)$ and $\sU_W=O(\jst^{-2})$. Hence, as $\jst\to\infty$, the majorant $Z_{\mathrm{sh},\infty}$ tends to $\sum_k\rho^{k-1}|\bar\eta_k|+|\bar\eta_{k_-}|\rho^{k_--3}\tau\approx0.19723$; for comparison, the directly computed normalised column at $j=1003$ is about $0.19752$.
\end{remark}

Collecting the five classes, the number
\[
Z=\max\{Z_f,\ Z_{g,\partial},\ Z_{g,\infty},\ Z_{\mathrm{sh},\partial},\ Z_{\mathrm{sh},\infty}\}
\]
bounds $\nrm{I-AD\Fc(x^\circ)}_{X\to X}$.

\section{Certified bounds and the exact zero}\label{sec:zero}

The quantities defined in Sections~\ref{sec:cubic}--\ref{sec:Z} are finite expressions in the $525$ dyadic centre coefficients and in $\rho=21/20$, apart from the entries of $\widehat A$, which are dyadic numbers produced by the computation and are then treated as exact data. They were evaluated in ball arithmetic as described in Section~\ref{sec:cap}. Table~\ref{tab:bounds} lists the results, rounded outward.

\begin{table}[ht]
\centering
\small
\begin{tabular}{@{}lll@{}}
\toprule
Quantity & Where defined & Certified bound\\
\midrule
$b=c_1^\circ$ & Section~\ref{sec:cubic} & $14.8400430096918\ldots$ (exact dyadic)\\
$P=N_\rho(p)$ & Section~\ref{sec:cubic} & $<16.833325$\\
$V_0=\nrm{V^\circ}_\rho$ & Section~\ref{sec:cubic} & $<134.261058$\\
$\nrm{I-\widehat AM_f}$ & \eqref{eq:Ahat} & $<1.343\cdot10^{-34}$\\
$\nrm R$ & Lemma~\ref{lem:normA} & $<675.571877$\\
$H$ & Lemma~\ref{lem:normA} & $<1966.719238$\\
$A_t$ \ ($N_H=443$) & Lemma~\ref{lem:normA} & $<10.777973$\\
$\nrm A$ & Lemma~\ref{lem:normA} & $<675.571877$\\
\midrule
$Y$ & Section~\ref{subsec:finite} & $<6.297631\cdot10^{-6}$\\
$Z_f$ \ ($525$ columns) & Section~\ref{subsec:finite} & $<0.180814$\\
$2P_sP_t+P_t^2$ & Lemma~\ref{lem:gtail} & $<0.056966$\\
$Z_{g,\partial}$ \ ($3650$ columns, maximum at $(43,1)$) & Lemma~\ref{lem:gtail} & $<0.313479$\\
$Z_{g,\infty}$ & Lemma~\ref{lem:gfar} & $<0.186417$\\
$Z_{\mathrm{sh},\partial}$ \ ($480$ columns, maximum at $j=43$) & Section~\ref{sec:Z} & $<0.360262$\\
$\sU_1$ & Proposition~\ref{prop:farshape} & $<0.217542$\\
$\sU_2$ & Proposition~\ref{prop:farshape} & $<4.45704\cdot10^{-4}$\\
$\sU_3$ & Proposition~\ref{prop:farshape} & $<1.97149\cdot10^{-4}$\\
$\sU_4$ & Proposition~\ref{prop:farshape} & $<3.018\cdot10^{-24}$\\
$\sU_5$ & Proposition~\ref{prop:farshape} & $<9.23594\cdot10^{-4}$\\
$\sum_{m,t}|\gamma^W_{m,t}|(2t+1)(2t+3)\rho^{m-1-\Lambda}$ & Proposition~\ref{prop:farshape} & $<19930.219512$\\
$\sU_W$, \ $A_t\sU_W$ & Proposition~\ref{prop:farshape} & $<8.55375\cdot10^{-4}$, \ $<9.219199\cdot10^{-3}$\\
$Z_{\mathrm{sh},\infty}$ \ (all $j>1001$) & Proposition~\ref{prop:farshape} & $<0.228328$\\
$Z$ & Section~\ref{sec:Z} & $<0.360262$\\
$C_2=\nrm Ac_2$ & Lemma~\ref{lem:nonlinear} & $<3.278617$\\
$C_3=\nrm Ac_3$ & Lemma~\ref{lem:nonlinear} & $<8.42289\cdot10^{-4}$\\
\bottomrule
\end{tabular}
\caption{Certified bounds, rounded outward, for $M=41$, $S=24$, $\rho=21/20$, $\jst=1001$. The complete interval enclosures are recorded in the certificate file (Section~\ref{sec:cap}).}
\label{tab:bounds}
\end{table}

\begin{theorem}\label{thm:zero}
Let $r=1/50000$. There is exactly one $x^*=(g^*,c^*)\in X$ with $\nrm{x^*-x^\circ}_X\leq r$ and $\Fc(x^*)=0$.
\end{theorem}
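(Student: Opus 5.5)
The plan is to apply Proposition~\ref{prop:NK} with the operator $A$ of \eqref{eq:A} and the bounds of Table~\ref{tab:bounds}. First, the hypotheses on $\Fc$ and $A$ must hold. Lemma~\ref{lem:F} shows that $\Fc\colon X\to\Sc$ is a cubic polynomial map. The expansion \eqref{eq:expansion} holds with the symmetric bounded forms $B_2,B_3$ of Section~\ref{sec:cubic}, with constants $c_2,c_3$ supplied by Lemma~\ref{lem:nonlinear}. The certified bound $\nrm{I-\widehat AM_f}<1.343\cdot10^{-34}<1$ verifies \eqref{eq:Ahat}, so Lemma~\ref{lem:A} makes $A$ bounded and injective. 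Lemma~\ref{lem:normA} then gives $\nrm A\leq\max\{\nrm R,A_t\}<675.571877$, hence $C_2=\nrm A c_2<3.278617$ and $C_3<8.42289\cdot10^{-4}$.

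Next comes $Y$. Section~\ref{subsec:finite} computes $A\Fc(x^\circ)$ exactly, since the residual has finitely many coefficients and $T_{tt}^{-1}$ acts on it by the finite formula \eqref{eq:Tinv}. This gives $Y<6.297631\cdot10^{-6}$.

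For $Z$, Lemma~\ref{lem:columns} reduces $\nrm{I-AD\Fc(x^\circ)}$ to a supremum over normalised columns. The inputs split into five exhaustive classes:
\begin{itemize}
\item[] the finite inputs, bounded by $Z_f$;
\item[] the tail $g$-inputs with $n\leq n_{\max}$, $s\leq s_{\max}$, bounded via Lemma~\ref{lem:gtail} by $Z_{g,\partial}$;
\item[] the remaining tail $g$-inputs, bounded by Lemma~\ref{lem:gfar};
\item[] the shape inputs with $M<j\leq\jst$, computed directly;
\item[] the shape inputs with $j>\jst$, bounded by Proposition~\ref{prop:farshape}.
\end{itemize}
Proposition~\ref{prop:farshape} needs two hypotheses. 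Since $k_-=-81$, we have $\jst+k_--1=919>43=M+2$, and $\jst=1001>2M+d+2=124$. Taking the maximum over the five classes gives $Z<0.360262$.

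With $r=2\cdot10^{-5}$ it remains to check the two inequalities in \eqref{eq:radii}. For the first, $(Z-1)r<-0.6397\cdot 2\cdot10^{-5}\approx-1.2795\cdot10^{-5}$ and $C_2r^2<1.32\cdot10^{-9}$, while $C_3r^3$ is negligible. The sum is therefore about $6.30\cdot10^{-6}-1.2795\cdot10^{-5}+1.3\cdot10^{-9}<0$. For the second, $Z-1+2C_2r+3C_3r^2<-0.6397+1.32\cdot10^{-4}<0$. Both inequalities are elementary arithmetic with the outward-rounded bounds, so Proposition~\ref{prop:NK} yields exactly one zero of $\Fc$ in $\overline B_X(x^\circ,r)$.

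The main obstacle is not this final assembly. It is the reliability of the certified constants, above all $Z_{\mathrm{sh},\partial}$ and $Z_{g,\partial}$, which dominate $Z$. One must ensure that every column in these ranges is evaluated without truncation, using the exact dyadic entries of $\widehat A$ and outward rounding. One must also ensure the column classes genuinely cover all inputs: finite, tail $g$ near and far, and shape near and far at the cutoffs $n_{\max}$, $s_{\max}$, $\jst$. I would audit this coverage first, and then confirm the interval enclosures reported in the certificate.
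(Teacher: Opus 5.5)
Your proposal is correct and follows the paper's proof: check the two hypotheses of Proposition~\ref{prop:farshape} and condition \eqref{eq:Ahat}, then feed the certified bounds of Table~\ref{tab:bounds} into Proposition~\ref{prop:NK}. The only cosmetic difference is in the final arithmetic. The paper first replaces the certified bounds by the rational majorants $Y=7\cdot10^{-6}$, $Z=37/100$, $C_2=4$, $C_3=10^{-3}$ and evaluates \eqref{eq:radii} in exact rational arithmetic, whereas you insert the table values directly; this is equally valid, since both expressions in \eqref{eq:radii} are increasing in $Y,Z,C_2,C_3$ for $r>0$.
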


\begin{proof}
The conditions $\jst+k_--1=919>M+2$ and $\jst=1001>2M+d+2=124$ of Proposition~\ref{prop:farshape} hold, and \eqref{eq:Ahat} holds by Table~\ref{tab:bounds}; hence $A$ is bounded and injective (Lemma~\ref{lem:A}). By Table~\ref{tab:bounds}, Proposition~\ref{prop:NK} applies with the rational majorants $Y=7\cdot10^{-6}$, $Z=37/100$, $C_2=4$ and $C_3=10^{-3}$. For these values and $r=1/50000$, exact rational arithmetic gives
\begin{gather*}
Y+(Z-1)r+C_2r^2+C_3r^3=-5.5983999999920\cdot10^{-6}<0,\\
Z-1+2C_2r+3C_3r^2=-0.6298399999988<0 . \qedhere
\end{gather*}
\end{proof}

\begin{remark}[The centre]\label{rem:centre}
The centre $x^\circ$ was obtained numerically, and its provenance plays no role in the proof. A spectral collocation computation in spherical coordinates, started from the ball of radius $j_{2,4}$ perturbed by the zonal harmonic of degree six (see Section~\ref{sec:remarks}), produced an approximate solution of \eqref{eq:main}; its meridian domain was mapped conformally to the disc, and Newton's method was applied to the Galerkin truncation of \eqref{eq:F} to the $21$ shape coefficients $c_1,c_3,\ldots,c_{41}$ and the $504$ coefficients of $g$ described above. The resulting binary64 numbers were then frozen and are interpreted as exact dyadic rationals.
\end{remark}

\section{\texorpdfstring{Reconstruction of the domain for $n=4$}{Reconstruction of the domain for n=4}}\label{sec:recon}

Let $x^*=(g^*,c^*)$ be the zero of Theorem~\ref{thm:zero}, $\psi^*=\psi_{c^*}$, and $\delta c=c^*-c^\circ$. Then $\sum_j\omega_j|\delta c_j|\leq r$, and consequently
\begin{equation}\label{eq:dc}
\begin{gathered}
|c_1^*-b|\leq\frac{r}{2b^2\rho},\qquad \sum_{j}j\rho^{j-1}|\delta c_j|\leq\frac r{b^2\rho},\\
\sum_{j\geq3}|\delta c_j|\leq\frac r{4b^2\rho^3},\qquad\sum_jj(j-1)|\delta c_j|\leq\frac{r\rho}{b^2(\rho-1)^2},
\end{gathered}
\end{equation}
because $j\rho^{j-1}/\omega_j\leq1/(b^2\rho)$, $1/\omega_j\leq1/(4b^2\rho^3)$ for $j\geq3$, and $j(j-1)/\omega_j\leq j\rho^{-j}/b^2\leq\sum_{i\geq1}i\rho^{-i}/b^2=\rho/(b^2(\rho-1)^2)$. In particular $\psi^*$ is holomorphic in $|w|<\rho$.

\subsection{Univalence, star-shapedness and the non-disc property}

\begin{proposition}\label{prop:geometry}
The map $\psi^*$ is injective on the disc $\{|w|<41/40\}$, and $\psi^{*\prime}$ has no zeros there. The domain $D=\psi^*(\D)$ is bounded by the real-analytic Jordan curve $\psi^*(\partial\D)$, it is strictly star-shaped with respect to $0$, symmetric under $x_1\mapsto-x_1$ and $y\mapsto-y$, and it is not a disc.
\end{proposition}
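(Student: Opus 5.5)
The plan is to treat $\psi^*$ as a small perturbation of the centre $\psi^\circ=\psi_{c^\circ}$ and, in turn, of the linear map $bw$. Each geometric property will then follow from one inequality that holds uniformly over the existence ball, using \eqref{eq:dc}. Set $\rho_1=41/40<\rho$.

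\emph{Step 1: univalence.} I will show that $\re\psi^{*\prime}(w)>0$ on $|w|\le\rho_1$. On the convex disc $\{|w|<\rho_1\}$ this gives injectivity by the Noshiro--Warschawski argument, $\psi(w_2)-\psi(w_1)=(w_2-w_1)\int_0^1\psi'(w_1+t(w_2-w_1))\,dt$. It also shows that $\psi^{*\prime}$ has no zeros. To prove the inequality, write $\psi^{*\prime}=b+\sum_{j\ge3}jc^\circ_jw^{j-1}+\sum_j j\,\delta c_jw^{j-1}$. The perturbation is bounded on $|w|\le\rho$ by $r/(b^2\rho)$, by \eqref{eq:dc}. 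The centre part can be bounded crudely by $\sum_{j\ge3}j|c^\circ_j|\rho_1^{j-1}$. For the $n=4$ centre this sum is probably not smaller than $b\approx14.84$: the coefficients $c^\circ_3,c^\circ_5,c^\circ_7\approx0.09$, multiplied by $j$, already sum to roughly $1.4$, and the tail up to $j=41$ is likely tolerable but I cannot be sure. So the first thing I would try is this crude coefficient sum, checked in interval arithmetic. If it fails, I would instead give a certified lower bound for $\re\psi^{\circ\prime}$ on the circle $|w|=\rho_1$. I would get this by evaluating the polynomial on a fine mesh of arcs in ball arithmetic, controlling the variation between mesh points by the derivative bound $\sum j(j-1)|c^\circ_j|\rho_1^{j-2}$, and then extending to the whole disc by the minimum principle for the harmonic function $\re\psi'$.

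\emph{Step 2: star-shapedness.} I would show $\re\bigl(w\psi^{*\prime}(w)/\psi^*(w)\bigr)>0$ on $|w|=1$. Since $\psi^*(w)/w$ has no zeros on $\overline\D$ (this follows from injectivity and $\psi^*(0)=0$), this is the standard criterion: the argument of $\psi^*(e^{i\theta})$ is strictly increasing, so every ray from $0$ meets $\partial D$ exactly once and transversally. The bound comes from the same two-level estimate as in Step~1: a certified mesh evaluation on the circle for $\psi^\circ$, plus the perturbation bounds $|\psi^*-\psi^\circ|\le\sum|\delta c_j|$ and $|\psi^{*\prime}-\psi^{\circ\prime}|\le r/(b^2\rho)$ on $|w|=1$. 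Near $|\psi|\approx b$ these perturbations are tiny compared with the margin.

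\emph{Step 3: the remaining properties.} The boundary curve $\psi^*(\partial\D)$ is a real-analytic Jordan curve, because $\psi^*$ is an injective holomorphic immersion on a neighbourhood of $\overline\D$. Both reflection symmetries come from the odd powers and real coefficients, as in Lemma~\ref{lem:pullback}.

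\emph{Step 4: not a disc.} The symmetries force any disc of the form $\psi^*(\D)$ to be centred at $0$. In that case $|\psi^*|$ would be constant on $\partial\D$, and by Schwarz's lemma applied to the Riemann map $\psi^*$ we would get $\psi^*(w)=c_1^*w$. It therefore suffices to find one $j\ge3$ with $c_j^*\ne0$. Take $j=3$: $|c_3^*-c_3^\circ|\le r/\omega_3=r/(4b^2\rho^3)\approx10^{-8}$, while $c^\circ_3\approx0.0912$, so $c_3^*\ne0$.

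The main obstacle I expect is the certified lower bound in Steps~1–2 if the crude coefficient sum fails. In that case the mesh evaluation is the real work, but it is routine in ball arithmetic.
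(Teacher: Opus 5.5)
Your proposal is correct and follows the paper's proof essentially step for step: Noshiro--Warschawski on the disc $|w|<41/40$, a strictly monotone argument of $\psi^*(e^{i\theta})$, the symmetries from the odd real coefficients, and the Schwarz lemma together with $c_3^*\neq0$ (your winding-number argument via the zero-free function $\psi^*(w)/w$ is an equally valid substitute for the paper's comparison with $c_1^*w$). The only real difference is that no mesh evaluation is needed in either step, since the crude coefficient bounds already leave wide margins. The sum $\sum_{j\ge3}j|c_j^\circ|(41/40)^{j-1}$ is only about $1.76$, far below $b$, which gives the certified bound $\re\psi^{*\prime}>13.08$ on $|w|\le41/40$. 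On the unit circle, the estimates $|w\psi^{*\prime}-\psi^*|\le\sum_{j\ge3}(j-1)|c_j^*|$ and $|\psi^*|\ge c_1^*-\sum_{j\ge3}|c_j^*|$ give $|w\psi^{*\prime}/\psi^*-1|<0.127$ directly.
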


\begin{proof}
Let $R=41/40<\rho$, $b_-=b-r/(2b^2\rho)\leq c^*_1$, and
\[
s_1=\sum_{j\geq3}j|c^\circ_j|+\frac r{b^2\rho},\qquad s_0=\sum_{j\geq3}|c^\circ_j|+\frac r{4b^2\rho^3}.
\]
By \eqref{eq:dc}, $\sum_{j\geq3}j|c^*_j|\leq s_1$ and $\sum_{j\geq3}|c_j^*|\leq s_0$. The computation (Section~\ref{sec:cap}) certifies
\begin{gather}
\re\psi^{*\prime}(w)\geq b_--\sum_{j\geq3}j|c^\circ_j|R^{j-1}-\frac r{b^2\rho}>13.081358\qquad(|w|\leq R),\label{eq:uni}\\
\frac{s_1}{b_-}<0.104547,\qquad \frac{s_1+s_0}{b_--s_0}<0.126890,\qquad |c^*_3|\geq|c^\circ_3|-\frac r{4b^2\rho^3}>0.0911958,\label{eq:star}
\end{gather}
where in \eqref{eq:uni} we used $j|\delta c_j|R^{j-1}\leq j|\delta c_j|\rho^{j-1}$ and \eqref{eq:dc}.

\emph{Univalence.} For $w_1\neq w_2$ in the convex disc $|w|<R$, $\frac{\psi^*(w_2)-\psi^*(w_1)}{w_2-w_1}=\int_0^1\psi^{*\prime}(w_1+\sigma(w_2-w_1))\,d\sigma$ has positive real part by \eqref{eq:uni}.

\emph{Star-shapedness.} For $|w|=1$ we have $|\psi^*(w)-c_1^*w|\leq s_0$, $|w\psi^{*\prime}(w)-\psi^*(w)|\leq\sum_{j\geq3}(j-1)|c_j^*|\leq s_1$ and $|\psi^*(w)|\geq b_--s_0>0$, hence $|w\psi^{*\prime}/\psi^*-1|<0.127$ by \eqref{eq:star}. Therefore $\frac{d}{d\theta}\arg\psi^*(e^{i\theta})=\re\big(w\psi^{*\prime}(w)/\psi^*(w)\big)>0$. The winding number of $\psi^*(e^{i\theta})$ about $0$ equals that of $c_1^*e^{i\theta}$, namely one, since $|\psi^*(w)-c_1^*w|\leq s_0<c_1^*$ on $\partial\D$. Hence the argument increases by exactly $2\pi$, and $\partial D$ is a radial graph $\{R_D(\phi)e^{i\phi}\}$ with $R_D$ positive and real analytic; $D$ is strictly star-shaped.

\emph{Symmetry.} $\psi^*$ is odd with real coefficients, so $\psi^*(\bw)=\overline{\psi^*(w)}$ and $\psi^*(-w)=-\psi^*(w)$.

\emph{Not a disc.} If $D$ were a disc, the two reflection symmetries would force its centre to be $0$, and $w\mapsto\psi^*(w)/\psi^*{}'(0)$ would be a conformal map of $\D$ onto a disc centred at $0$ fixing $0$ with derivative one there; by the Schwarz lemma applied to it and to its inverse, it is the identity, so $\psi^*$ is linear. This contradicts $c^*_3\neq0$.
\end{proof}

\subsection{Convexity}

\begin{proposition}\label{prop:convex}
For $|w|\leq1$,
\[
\re\Big(1+\frac{w\psi^{*\prime\prime}(w)}{\psi^{*\prime}(w)}\Big)>0.414074 .
\]
Consequently $D$ is strictly convex, and $\Omega=\Pi^{-1}(D)$ is convex.
\end{proposition}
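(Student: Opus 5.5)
The bound on $\re(1+w\psi^{*\prime\prime}/\psi^{*\prime})$ will come from a perturbative estimate around the centre, made uniform over the existence ball of Theorem~\ref{thm:zero}. The convexity statements will then follow from classical geometric function theory together with the structure of $\Pi^{-1}$.

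\emph{Step 1: reduction to the unit circle.} The function $h(w)=1+w\psi^{*\prime\prime}(w)/\psi^{*\prime}(w)$ is holomorphic on $|w|<41/40$, because $\psi^{*\prime}$ has no zeros there (Proposition~\ref{prop:geometry}). Hence $\re h$ is harmonic. By the minimum principle it suffices to bound $\re h$ from below on $|w|=1$, although the bound could equally be certified on the whole closed disc.

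\emph{Step 2: splitting off the centre.} Write $\psi^*=\psi^\circ+\psi_{\delta c}$ and $h=(\psi^{*\prime}+w\psi^{*\prime\prime})/\psi^{*\prime}=(w\psi^{*\prime})'/\psi^{*\prime}$. For $|w|\le1$, estimate \eqref{eq:dc} gives
\[
|\psi^{*\prime}-\psi^{\circ\prime}|\le\frac{r}{b^2\rho}\qquad\text{and}\qquad|(w\psi^*)''-(w\psi^\circ)''|\le\sum_j j^2|\delta c_j|\le\sum_j j(j-1)|\delta c_j|+\sum_j j|\delta c_j|,
\]
and both sums are bounded by \eqref{eq:dc}. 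For the denominator, \eqref{eq:uni} gives $|\psi^{*\prime}|\ge\re\psi^{*\prime}>13.08$. From these,
\[
|h-h^\circ|\le\frac{|N^*-N^\circ|}{|\psi^{*\prime}|}+\frac{|N^\circ|\,|\psi^{*\prime}-\psi^{\circ\prime}|}{|\psi^{*\prime}|\,|\psi^{\circ\prime}|},
\]
where $N=(w\psi')'$. This is a tiny explicit error, of order $10^{-5}$, since $r=1/50000$ and $b^2\approx220$.

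\emph{Step 3: the certified centre bound.} We then need a lower bound for $\re h^\circ$ on $|w|\le1$ for the explicit polynomial $\psi^\circ$. Here $h^\circ$ is a ratio of two polynomials in $e^{i\theta}$, and we evaluate it on a fine grid of $\theta\in[0,\pi/2]$, using the symmetries under $w\mapsto\bw$ and $w\mapsto-w$. Each grid arc is handled with interval (ball) arithmetic over the whole subinterval, or with a Lipschitz bound from the derivative coefficients. This yields $\re h^\circ\ge0.4141$ minus the error of Step~2, hence the stated $0.414074$. This certification is the main obstacle. The minimum is only about $0.41$, far from $1$, because the degree-$6$ bumps are significant; a crude coefficient bound such as $\sum j(j-1)|c_j|/(b-\dots)$ would not suffice, so genuine subdivision in $\theta$ with interval enclosures is needed.

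\emph{Step 4: convexity.} Since $\psi^*$ is univalent on a neighbourhood of $\overline\D$ and $\re h>0$ on $\partial\D$, the tangent angle $\arg(ie^{i\theta}\psi^{*\prime}(e^{i\theta}))$ has derivative $\re h>0$. The boundary curve therefore has positive curvature $\re h/|\psi^{*\prime}|$, and $D$ is strictly convex. For $\Omega=\Pi^{-1}(D)$, take $x,\tilde x\in\Omega$. The segment between them projects under $x\mapsto(x_1,|x'|)$ to points $(x_1(t),|x'(t)|)$ with $|x'(t)|\le(1-t)|x'|+t|\tilde x'|$ by the triangle inequality. Since $D$ is convex and symmetric under $y\mapsto-y$, each horizontal slice $\{y:(x_1,y)\in D\}$ is an interval symmetric about $0$. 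Thus if $(x_1(t),Y)\in D$ with $Y=(1-t)|x'|+t|\tilde x'|$, then $(x_1(t),|x'(t)|)\in D$ as well, and the convex combination of $(x_1,|x'|)$ and $(\tilde x_1,|\tilde x'|)$ lies in $D$ by convexity. Hence $\Omega$ is convex.
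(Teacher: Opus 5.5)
Your argument is valid, and Step~4 is essentially the paper's argument: the tangent-angle convexity criterion, followed by the symmetric-slice argument for $\Pi^{-1}(D)$. Two small slips: those slices are vertical, not horizontal, and in Step~2 you mean $N=(w\psi')'$, not $(w\psi)''$.

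The difference is in Step~3, and there your diagnosis is wrong. The paper uses precisely the crude coefficient bound you say would not suffice. Put $A_1=\sum_{j\ge3}j|c^\circ_j|$ and $A_2=\sum_{j\ge3}j(j-1)|c^\circ_j|$, and take the tail constants $E_1=r/(b^2\rho)$, $E_2=r\rho/(b^2(\rho-1)^2)$ from \eqref{eq:dc}. Then on the whole closed disc $|\psi^{*\prime}|\ge b-A_1-E_1>13.288565$ and $|\psi^{*\prime\prime}|\le A_2+E_2<7.786115$. Hence $\re(1+w\psi^{*\prime\prime}/\psi^{*\prime})\ge1-|\psi^{*\prime\prime}|/|\psi^{*\prime}|>0.414074$.

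So $0.414074$ is this triangle-inequality constant, not ``the true minimum of $\re h^\circ$ minus a perturbation error''. The inequality is in fact strict at the centre. Equality would need $c^\circ_jw^{j-1}<0$ for every $j\ge3$ with $c^\circ_j\neq0$. Since $c^\circ_3,c^\circ_5,c^\circ_7>0$, that means $w^2=w^4=w^6=-1$ at once, which is impossible. Your remark that the minimum is ``only about $0.41$'' and your announced output $\re h^\circ\ge0.4141$ were read off from the target, not derived.

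Your route still works. The true minimum of $\re h^\circ$ exceeds the crude value, so a fine enough interval subdivision on $[0,\pi/2]$, minus your perturbation error (about $3\cdot10^{-6}$), together with the minimum principle, certifies the claim. It could even give a sharper curvature bound.

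The paper's route buys simplicity. It is a one-line estimate, valid directly for every $c$ in the existence ball and every $|w|\le1$. It needs no centre/perturbation split, no harmonicity argument and no subdivision, only two finite sums in ball arithmetic. Any positive lower bound is all that convexity requires.
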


\begin{proof}
Let $A_1=\sum_{j\geq3}j|c^\circ_j|$, $A_2=\sum_{j\geq3}j(j-1)|c_j^\circ|$, $E_1=r/(b^2\rho)$ and $E_2=r\rho/(b^2(\rho-1)^2)$. By \eqref{eq:dc}, for $|w|\leq1$,
\[
|\psi^{*\prime}(w)|\geq c^*_1-\sum_{j\geq3}j|c^*_j|\geq b-A_1-E_1=:L,\qquad|\psi^{*\prime\prime}(w)|\leq\sum_{j\geq3}j(j-1)|c_j^*|\leq A_2+E_2=:U .
\]
The computation certifies $A_1<1.5514770$, $A_2<7.7860764$, $E_1<8.6491\cdot10^{-8}$, $E_2<3.81425\cdot10^{-5}$, hence $L>13.288565$, $U<7.786115$ and $\re(1+w\psi^{*\prime\prime}/\psi^{*\prime})\geq1-U/L>0.414074$.

Let $z(\theta)=\psi^*(e^{i\theta})$. Then $z'(\theta)=ie^{i\theta}\psi^{*\prime}(e^{i\theta})\neq0$ and
\[
\frac{d}{d\theta}\arg z'(\theta)=\re\Big(1+\frac{w\psi^{*\prime\prime}(w)}{\psi^{*\prime}(w)}\Big)\Big|_{w=e^{i\theta}}>0 .
\]
Thus the tangent direction of the simple closed analytic curve $\partial D$ turns strictly monotonically, through a total angle $2\pi$ (the winding number of $\psi^{*\prime}(e^{i\theta})$ about $0$ is zero, as $\psi^{*\prime}$ has no zeros in $\overline\D$), and $\partial D$ bounds a strictly convex domain; this is the classical analytic convexity criterion for conformal maps of the disc, cf.\ \cite[Chapter~2]{Dur83}.

Now let $(x_1,x'),(z_1,z')\in\Omega$, $\sigma\in[0,1]$, and put $y_\sigma=\sigma|x'|+(1-\sigma)|z'|$. By convexity of $D$, $(\sigma x_1+(1-\sigma)z_1,\pm y_\sigma)\in D$, and by convexity again the vertical segment joining these two points lies in $D$. Since $|\sigma x'+(1-\sigma)z'|\leq y_\sigma$, the point $\sigma(x_1,x')+(1-\sigma)(z_1,z')$ lies in $\Omega$.
\end{proof}

\subsection{Analytic regularity and continuation}

\begin{lemma}\label{lem:interior}
Let $U\subset\R^2$ be open and $F\in C(U)$ with $\int_UF(\Delta\varphi+\varphi)=0$ for all $\varphi\in C^\infty_c(U)$. Then $F$ is real analytic in $U$.
\end{lemma}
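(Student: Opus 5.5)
The plan is to reduce to Weyl's lemma for the Laplacian by removing the zeroth-order term with an auxiliary variable. Put $\tilde F(x,t)=F(x)\cos t$ on $U\times\R\subset\R^3$. We first show that $\tilde F$ is a weak solution of $\Delta_{x,t}\tilde F=0$.

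\emph{Harmonicity of the lift.} Take $\Phi\in C^\infty_c(U\times\R)$ and set
\[
\varphi(x)=\int_\R\Phi(x,t)\cos t\,dt .
\]
Then $\varphi\in C^\infty_c(U)$. Since $\cos t$ is smooth and satisfies $\partial_t^2\cos t=-\cos t$, two integrations by parts in $t$ (there are no boundary terms, because $\Phi$ has compact support) give
\[
\int_\R\partial_t^2\Phi(x,t)\cos t\,dt=-\varphi(x).
\]
Therefore, by Fubini,
\[
\int_{U\times\R}\tilde F\,\Delta_{x,t}\Phi=\int_UF\,(\Delta\varphi-\varphi).
\]
This is the wrong sign relative to the hypothesis, so we use $\cosh t$ instead of $\cos t$. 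With $\tilde F(x,t)=F(x)\cosh t$ we have $\partial_t^2\cosh t=\cosh t$, hence
\[
\int_{U\times\R}\tilde F\,\Delta_{x,t}\Phi=\int_UF\,(\Delta\varphi+\varphi)=0,
\qquad \varphi(x)=\int_\R\Phi(x,t)\cosh t\,dt\in C^\infty_c(U).
\]
So $\tilde F$ is a continuous, hence locally integrable, weak solution of Laplace's equation in $U\times\R$.

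\emph{Regularity and restriction.} By Weyl's lemma, $\tilde F$ agrees almost everywhere with a harmonic function. Since $\tilde F$ is continuous, it equals that harmonic function everywhere, and so $\tilde F$ is real analytic on $U\times\R$. Restricting to $t=0$ gives $F(x)=\tilde F(x,0)$, so $F$ is real analytic in $U$.

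The only step that needs any care is the sign, which we have already fixed by choosing $\cosh$ rather than $\cos$. Everything else is standard. The compact support of $\varphi$ follows from the compact support of $\Phi$, and Fubini applies because all integrands are continuous with compact support. Alternatively, one could mollify $F$ and use interior elliptic regularity for $\Delta+1$, but the lifting argument avoids any Sobolev machinery.
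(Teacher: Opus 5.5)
Your proof is correct and is essentially the paper's argument. The paper lifts $F$ to $e^\zeta F(x_1,y)$ rather than $F(x)\cosh t$, but any factor $g$ with $g''=g$ serves equally well, and the remaining steps are identical: weak harmonicity via integration by parts in the extra variable, then Weyl's lemma, then restriction to a slice. In a written version you should drop the false start with $\cos t$ and begin directly with $\cosh t$.
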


\begin{proof}
Let $\mathcal H(x_1,y,\zeta)=e^\zeta F(x_1,y)$ on $U\times\R$. For $\Phi\in C_c^\infty(U\times\R)$, integration by parts in $\zeta$ gives $\int\mathcal H\,\Delta_3\Phi=\int_UF(\Delta\varphi+\varphi)=0$ with $\varphi=\int e^\zeta\Phi(\cdot,\zeta)\,d\zeta\in C_c^\infty(U)$. By Weyl's lemma the continuous function $\mathcal H$ is harmonic, hence real analytic, and so is $F=\mathcal H(\cdot,\cdot,0)$.
\end{proof}

\begin{lemma}\label{lem:boundary}
Let $D\subset\R^2$ be a bounded domain whose boundary is a real-analytic regular Jordan curve, and let $F\in C^1(\overline D)$ satisfy $\int_D(\nabla F\cdot\nabla\varphi-F\varphi)=0$ for all $\varphi\in C_c^\infty(D)$, together with $F=y$ and $\nabla F=e_y$ on $\partial D$. Then $F$ extends to a real-analytic solution of $\Delta F+F=0$ on an open neighbourhood of $\overline D$.
\end{lemma}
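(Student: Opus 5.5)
The idea is to subtract the boundary data and apply a reflection or analytic-continuation argument for an elliptic equation with zero Cauchy data. Put $G=F-y$. Since $\Delta y=0$, $G$ satisfies $\int_D(\nabla G\cdot\nabla\varphi-G\varphi)=\int_D y\varphi$ weakly, that is $\Delta G+G=-y$ in $D$. By Lemma~\ref{lem:interior}, applied to $F$, both $F$ and $G$ are real analytic inside $D$. The inhomogeneity $-y$ is entire. Moreover $G\in C^1(\overline D)$ with $G=0$ and $\nabla G=0$ on $\partial D$. Since $y$ itself is entire, it suffices to continue $G$ (equivalently $F$) analytically across each boundary point. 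Compactness of $\partial D$ then patches the local continuations into a neighbourhood of $\overline D$; uniqueness of analytic continuation, via the identity theorem on the connected overlaps with $D$, makes the patches agree.

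To continue locally, I will first flatten the boundary. Near $z_0\in\partial D$ there is a real-analytic, in fact conformal, map $\chi$ from a neighbourhood of $0$ onto a neighbourhood of $z_0$. It sends the lower half-disc into $D$ and the real segment onto $\partial D$; it is obtained by complexifying the analytic regular parametrisation of the Jordan curve. Under conformal change, $\Delta(G\circ\chi)=|\chi'|^2(\Delta G)\circ\chi$. So $H=G\circ\chi$ satisfies $\Delta H+a H=f$ in the half-disc, with $a=|\chi'|^2$ and $f=-|\chi'|^2\,\im\chi$ real analytic up to the flat boundary. $H$ is $C^1$ up to the boundary with zero value and normal derivative there.

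Next I extend $H$ by zero. Take $\tilde f$ to be a real-analytic solution near $0$ of the Cauchy problem $\Delta\tilde H+a\tilde H=f$ with $\tilde H=\partial_\nu\tilde H=0$ on the real axis; it exists by Cauchy--Kovalevskaya, since the real axis is noncharacteristic for the Laplacian. The difference $H-\tilde H$ solves the homogeneous equation $\Delta K+aK=0$ weakly in the half-disc, is $C^1$ up to the axis, and has zero Cauchy data. Extending it by $0$ to the upper half gives a $C^1$ function, and its distributional equation holds across the axis. This is checked by integrating by parts: the boundary terms vanish because both the value and the gradient vanish. So it is a weak, hence analytic, solution on the full disc. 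This uses Lemma~\ref{lem:interior} adapted to a variable analytic coefficient $a$, or simply standard analytic elliptic regularity (Morrey--Nirenberg). It vanishes on an open set, so it vanishes identically. Thus $H=\tilde H$ on the half-disc, and $\tilde H$ is the analytic continuation; pulling back by $\chi^{-1}$ continues $G$ across $z_0$.

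The main obstacle is the regularity step with a variable coefficient. Lemma~\ref{lem:interior} is stated only for the constant-coefficient operator $\Delta+1$. To avoid invoking general theory, I will instead do the zero extension in the original coordinates. Near $z_0$, the Cauchy--Kovalevskaya solution $\tilde G$ of $\Delta\tilde G+\tilde G=-y$ with zero Cauchy data on the analytic curve $\partial D$ exists on a full neighbourhood $N_0$. Then $G-\tilde G$, extended by zero outside $D$, is $C^1$ on $N_0$ and weakly satisfies $\Delta+1=0$ there. The boundary integrals vanish by the zero Cauchy data; justifying this requires approximating $D$ from inside, which works because the terms are $C^1$ up to the boundary. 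Lemma~\ref{lem:interior} then gives analyticity on $N_0$, and vanishing on $N_0\setminus\overline D$ gives $G=\tilde G$ on $D\cap N_0$. This version uses only the conformal flattening to set up Cauchy--Kovalevskaya, or can even skip it.
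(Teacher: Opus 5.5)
Your final version is essentially the paper's proof. Near each boundary point you take a local solution with the prescribed Cauchy data, extend the difference by zero to a $C^1$ weak solution of $\Delta+1$ across $\partial D$, apply Lemma~\ref{lem:interior} and vanishing on an open set, and then glue; the only difference is that the paper builds the local Cauchy solution by hand, as a contraction for the Goursat form $\partial_a\partial_bW=-k(W+F_0)$ in complexified characteristic coordinates, instead of quoting Cauchy--Kovalevskaya, and it makes the gluing precise with an $\varepsilon$-neighbourhood of $\partial D$.
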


\begin{proof}
\emph{Local Cauchy problem.} Fix $z_0\in\partial D$ and a real-analytic regular parametrisation $\gamma$ of $\partial D$ near $z_0$ with $\gamma(0)=z_0$; extend $\gamma$ holomorphically to a disc $\{|a|<a_0\}$ on which $\gamma'\neq0$ and $\gamma$ is injective, and put $\gamma^\dagger(b)=\overline{\gamma(\overline b)}$. In the complex coordinates $\zeta=x_1+iy$, $\chi=x_1-iy$ the Laplacian is $4\partial_\zeta\partial_\chi$, and the substitution $\zeta=\gamma(a)$, $\chi=\gamma^\dagger(b)$ turns $4\partial_\zeta\partial_\chi F+F=0$ into $\partial_a\partial_bF=-kF$ with $k(a,b)=\gamma'(a)\gamma^{\dagger\prime}(b)/4$. The real plane corresponds to $b=\overline a$, and $\partial D$ to the real diagonal $a=b\in\R$. The function $F_0(a,b)=(\gamma(a)-\gamma^\dagger(b))/(2i)$ is the holomorphic extension of $y$ and satisfies $\partial_a\partial_bF_0=0$. On the closed bidisc $\{|a|,|b|\leq\eps\}$ consider
\[
(\mathcal TW)(a,b)=-\int_{[b,a]}\int_{[t,b]}k(t,v)\big(W(t,v)+F_0(t,v)\big)\,dv\,dt
\]
along straight segments, which stay in the bidisc by convexity. $\mathcal T$ maps the Banach space of functions continuous on the closed bidisc and holomorphic in its interior into itself, and $\nrm{\mathcal TW-\mathcal T\widetilde W}_\infty\leq2\eps^2K_0\nrm{W-\widetilde W}_\infty$ with $K_0=\sup|k|$, since $\int_{[b,a]}|t-b|\,|dt|=|a-b|^2/2\leq2\eps^2$. For $2\eps^2K_0<1$ the contraction mapping principle gives a fixed point $W$, and differentiation of the iterated integral gives $W_{ab}=-k(W+F_0)$ and $W=W_a=W_b=0$ on the diagonal $a=b$. Hence $\widetilde F(x_1,y)=(F_0+W)(\gamma^{-1}(x_1+iy),(\gamma^\dagger)^{-1}(x_1-iy))$ is a real-analytic (possibly complex-valued) solution of $\Delta\widetilde F+\widetilde F=0$ on a disc $U$ about $z_0$, with $\widetilde F=y$ and $\nabla\widetilde F=e_y$ on $\partial D\cap U$.

\emph{Matching.} Shrink $U$ so that $\partial D$ divides it into two connected pieces. The function $Z=F-\widetilde F$ on $D\cap U$, extended by zero to $U\setminus D$, is $C^1$ on $U$, because its value and gradient vanish on $\partial D\cap U$. It is a weak solution of $\Delta Z+Z=0$ on $U$. Indeed, let $\varphi\in C_c^\infty(U)$, and let $\chi_\delta\in C_c^\infty(D)$ be equal to one at distance at least $2\delta$ from $\partial D$, with $0\leq\chi_\delta\leq1$ and $|\nabla\chi_\delta|\leq C/\delta$. Since $F$ and $\widetilde F$ are weak solutions in $D\cap U$ and $\chi_\delta\varphi\in C_c^\infty(D\cap U)$,
\[
\int_U(\nabla Z\cdot\nabla\varphi-Z\varphi)=\int_{D\cap U}(1-\chi_\delta)(\nabla Z\cdot\nabla\varphi-Z\varphi)-\int_{D\cap U}\varphi\,\nabla Z\cdot\nabla\chi_\delta .
\]
The integrands are supported in the layer $\Sigma_\delta=\{\dist(\cdot,\partial D)<2\delta\}\cap D\cap\operatorname{supp}\varphi$, whose area is $O(\delta)$. The first integral is $O(\delta)$, and the second is bounded by $C'\sup_{\Sigma_\delta}|\nabla Z|$, which tends to zero because $\nabla Z$ is continuous and vanishes on $\partial D\cap U$. By Lemma~\ref{lem:interior} (applied to the real and imaginary parts) $Z$ is real analytic on $U$; it vanishes on the open set $U\setminus\overline D$, hence on $U$. Thus $\widetilde F=F$ on $D\cap U$.

\emph{Gluing.} Cover $\partial D$ by finitely many discs $B(z_i,r_i/2)$ such that on $B(z_i,r_i)$ a local extension $\widetilde F_i$ as above exists, and let $0<\eps\leq\min_ir_i/6$. For $z$ with $\dist(z,\partial D)<\eps$ choose $q\in\partial D$ with $|z-q|<\eps$ and $i$ with $q\in B(z_i,r_i/2)$, and put $\widetilde F(z)=\widetilde F_i(z)$. If another choice $q',i'$ is made, then $|q-q'|<2\eps$, so $B(q',\eps)\subset B(z_i,r_i)\cap B(z_{i'},r_{i'})$; both $\widetilde F_i$ and $\widetilde F_{i'}$ coincide with $F$ on the nonempty open set $B(q',\eps)\cap D$, hence on the disc $B(q',\eps)\ni z$. Thus $\widetilde F$ is well defined and real analytic on $\{\dist(\cdot,\partial D)<\eps\}$, and together with $F$ it gives the required extension. It is real valued because it is real on $D$.
\end{proof}

\subsection{\texorpdfstring{Proof of Theorem~\ref{thm:main} for $n=4$}{Proof of Theorem 1.2 for n=4}}

Let $V^*=Kg^*+\im\psi^*$. By Lemma~\ref{lem:F}, $V^*\in C^1(\overline\D)$ satisfies \eqref{eq:V} with $\psi=\psi^*$ and has the parities required in Lemma~\ref{lem:pullback}. By Proposition~\ref{prop:geometry}, $\psi^*$ is injective with nonvanishing derivative on a neighbourhood of $\overline\D$. Lemma~\ref{lem:pullback} shows that $F=V^*\circ(\psi^*)^{-1}\in C^1(\overline D)$ is a weak solution of $\Delta F+F=0$ in $D$ with $F=y$ and $\nabla F=e_y$ on $\partial D$, odd in $y$ and even in $x_1$. By Lemmas~\ref{lem:interior} and~\ref{lem:boundary}, $F$ extends to a real-analytic solution on an open neighbourhood of $\overline D$; replacing this neighbourhood by the connected component containing $\overline D$ of its intersection with its images under the two reflections, we obtain a symmetric connected neighbourhood $N$ on which, by the identity theorem, $F$ is still odd in $y$ and even in $x_1$.

Since $D$ is strictly star-shaped and symmetric, $\partial D$ meets the axis in exactly two points. Proposition~\ref{prop:rotation} now shows that $\Omega=\Pi^{-1}(D)$ and $u=F(x_1,|x'|)/|x'|$ (extended analytically to the axis) satisfy \eqref{eq:main}, that $u$ is real analytic on the neighbourhood $\Pi^{-1}(N)$ of $\overline\Omega$, and that $u$ is nonconstant.

\emph{Geometry of $\Omega$.} Let $R_D(\phi)$ be the radial function of $D$ from Proposition~\ref{prop:geometry}. It is positive, real analytic, $2\pi$-periodic and even. Its Fourier series $R_D(\phi)=\sum_{k\geq0}\alpha_k\cos k\phi$ has exponentially decaying coefficients, so $\widetilde R(\tau)=\sum_k\alpha_kT_k(\tau)$, with $T_k$ the Chebyshev polynomials, converges on a complex neighbourhood of $[-1,1]$ (since $|T_k|\leq\varrho^k$ on the Bernstein ellipse of parameter $\varrho$) and defines a real-analytic function with $\widetilde R(\cos\phi)=R_D(\phi)$, positive near $[-1,1]$. For $x\neq0$, $\Pi(x)$ has polar angle $\phi=\arccos(x_1/|x|)\in[0,\pi]$ and modulus $|x|$. Hence
\[
\Omega=\{0\}\cup\{x\neq0:\ |x|<\widetilde R(x_1/|x|)\},\qquad\partial\Omega=\{\widetilde R(\omega_1)\,\omega:\ \omega\in S^3\}.
\]
Thus $\Omega$ is strictly star-shaped with respect to $0$, in particular connected. The map $\omega\mapsto\widetilde R(\omega_1)\omega$ is real analytic and injective on $S^3$ (its inverse is $x\mapsto x/|x|$), and its differential maps a tangent vector $v\perp\omega$ to $\widetilde R'(\omega_1)v_1\omega+\widetilde R(\omega_1)v$, whose component tangent to $S^3$ is $\widetilde R(\omega_1)v\neq0$. Hence it is a real-analytic embedding, and $\partial\Omega$ is a compact real-analytic hypersurface diffeomorphic to $S^3$. The symmetries of $\Omega$ follow from $\Omega=\Pi^{-1}(D)$ and the symmetry of $D$ under $x_1\mapsto-x_1$.

\emph{$\Omega$ is not a ball.} If it were, invariance under $x'\mapsto-x'$ and $x_1\mapsto-x_1$ would put its centre at $0$; then $\Pi(\Omega)=D\cap\{y\geq0\}$ would be the upper half of a disc centred at $0$, and by the symmetry $y\mapsto-y$, $D$ would be that disc, contradicting Proposition~\ref{prop:geometry}. Finally, $\Omega$ is convex by Proposition~\ref{prop:convex}. This completes the proof of Theorem~\ref{thm:main} for $n=4$. \qed

\section{Dimension six}\label{sec:r6}

In this section we prove Theorem~\ref{thm:main} for $n=6$. The argument follows Sections~\ref{sec:reduction}--\ref{sec:recon} step by step. We state explicitly which results of those sections are used verbatim, and why, and we give complete proofs of everything that changes. Points of $\R^6$ are written $x=(x',x'')\in\R^3\times\R^3$, points of the meridian plane $(y_1,y_2)\in\R^2$, identified with $z=y_1+iy_2\in\C$, and
\[
\Pi_6\colon\R^6\to\R^2,\qquad\Pi_6(x',x'')=(|x'|,|x''|).
\]
The map $\Pi_6$ is continuous, its image is the closed quadrant $[0,\infty)^2$, and $|\Pi_6(x)|=|x|$.

\subsection{Reduction to a planar problem}

\begin{lemma}\label{lem:laplace6}
Let $f$ be a $C^2$ function on an open subset of $\{y_1>0,\ y_2>0\}$ and $F=y_1y_2f$. Then
\[
F_{y_1y_1}+F_{y_2y_2}=y_1y_2\Big(f_{y_1y_1}+f_{y_2y_2}+\frac2{y_1}f_{y_1}+\frac2{y_2}f_{y_2}\Big).
\]
The expression in parentheses, evaluated at $\Pi_6(x)$, is the Laplacian of the function $x\mapsto f(|x'|,|x''|)$ at every point with $x'\neq0$ and $x''\neq0$.
\end{lemma}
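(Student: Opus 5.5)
The proof is the two-variable analogue of Lemma~\ref{lem:laplace-identity}, and I would carry it out in two independent steps. The first step is a direct product-rule computation in each variable separately. Since $y_2$ does not depend on $y_1$,
\[
(y_1y_2f)_{y_1y_1}=y_2\,(y_1f)_{y_1y_1}=y_2\big(y_1f_{y_1y_1}+2f_{y_1}\big)=y_1y_2\Big(f_{y_1y_1}+\frac2{y_1}f_{y_1}\Big).
\]
Symmetrically, $(y_1y_2f)_{y_2y_2}=y_1y_2\big(f_{y_2y_2}+\frac2{y_2}f_{y_2}\big)$. Adding the two gives the displayed identity; there is no cross term because $F$ is differentiated twice in the same variable each time.

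The second step is to identify the parenthesised expression with the Laplacian of $u(x)=f(|x'|,|x''|)$. On $\R^6=\R^3\times\R^3$ the Laplacian splits as $\Delta=\Delta_{x'}+\Delta_{x''}$. For fixed $x''$ with $x''\neq0$, the function $x'\mapsto f(|x'|,|x''|)$ is radial in $\R^3$ away from $x'=0$. The radial formula $\partial_y^2+\frac2y\partial_y$ for the three-dimensional Laplacian (already used in Lemma~\ref{lem:laplace-identity}, or Remark~\ref{rem:why4} with $d=3$) then gives
\[
\Delta_{x'}u=f_{y_1y_1}+\frac2{y_1}f_{y_1}\quad\text{at }(y_1,y_2)=\Pi_6(x).
\]
The same argument gives $\Delta_{x''}u=f_{y_2y_2}+\frac2{y_2}f_{y_2}$. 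Requiring $x'\neq0$ and $x''\neq0$ ensures that $\Pi_6(x)$ lies in the open quadrant where $f$ is $C^2$, and that $x\mapsto(|x'|,|x''|)$ is smooth there, so the chain rule applies.

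Neither step is a real obstacle. The only point needing care is the domain of validity: the radial formula must be applied only off the coordinate subspaces $\{x'=0\}$ and $\{x''=0\}$, which is exactly the hypothesis in the statement.
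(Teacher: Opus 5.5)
Your proposal is correct and follows essentially the same route as the paper: a product-rule computation $(y_1y_2f)_{y_1y_1}=y_2(y_1f_{y_1y_1}+2f_{y_1})$ in each variable, and then the splitting $\Delta=\Delta_{x'}+\Delta_{x''}$ with the radial formula $\partial_y^2+\frac2y\partial_y$ in each $\R^3$ factor. (Strictly, besides $x'\neq0$ and $x''\neq0$ one also needs $\Pi_6(x)$ to lie in the open set where $f$ is defined; the statement assumes this implicitly.)
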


\begin{proof}
$(y_1y_2f)_{y_1y_1}=y_2(y_1f_{y_1y_1}+2f_{y_1})$, and similarly for $y_2$. The Laplacian of $\R^3\times\R^3$ is the sum of the Laplacians of the two factors, each of which acts on functions of the modulus $y$ of its variable as $\partial_y^2+\frac2y\partial_y$.
\end{proof}

Analyticity across the two axes is obtained by applying the division argument of Lemma~\ref{lem:odd-division} in each variable.

\begin{lemma}\label{lem:odd-division6}
Let $N\subset\R^2$ be open and invariant under $(y_1,y_2)\mapsto(-y_1,y_2)$ and $(y_1,y_2)\mapsto(y_1,-y_2)$, and let $F$ be real analytic on $N$, odd in $y_1$ and odd in $y_2$. Then there is a real-analytic function $G$, defined on an open set containing $\{(y_1^2,y_2^2):(y_1,y_2)\in N\}$, such that $F(y_1,y_2)=y_1y_2\,G(y_1^2,y_2^2)$ on $N$. Consequently $x\mapsto G(|x'|^2,|x''|^2)$ is real analytic on the open set $\Pi_6^{-1}(N)$, and it equals $F(\Pi_6(x))/(|x'|\,|x''|)$ where $x'\neq0$ and $x''\neq0$.
\end{lemma}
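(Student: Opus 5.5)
The plan is to copy the proof of Lemma~\ref{lem:odd-division}, dividing out one variable at a time. We take $G(t_1,t_2)=F(\sqrt{t_1},\sqrt{t_2})/\sqrt{t_1t_2}$ wherever $t_1,t_2>0$ and $(\sqrt{t_1},\sqrt{t_2})\in N$. This is clearly real analytic there. What remains is to extend $G$ analytically across the coordinate axes $t_1=0$ and $t_2=0$.

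\emph{Local extension.} Take a point $(a_1,a_2)\in N$. By the reflection invariance of $N$, the point $(|a_1|,|a_2|)$ also lies in $N$, so we may assume $a_1,a_2\geq0$. Expand $F$ in an absolutely convergent power series
\[
F(y_1,y_2)=\sum_{i,k}c_{ik}(y_1-a_1)^i(y_2-a_2)^k
\]
on a bidisc of radius $\delta$.
\begin{enumerate}
\item If $a_1=a_2=0$, oddness in each variable forces $c_{ik}=0$ unless both $i$ and $k$ are odd. Then $G(t_1,t_2)=\sum c_{2i+1,2k+1}t_1^it_2^k$ converges for $|t_1|,|t_2|<\delta^2$ and agrees with the definition above on $t_1,t_2>0$.
\item If $a_1=0$ and $a_2>0$, we use only oddness in $y_1$. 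We write $F=y_1H(y_1^2,y_2)$ with $H$ analytic near $(0,a_2)$, exactly as in Lemma~\ref{lem:odd-division}. We then set $G(t_1,t_2)=H(t_1,\sqrt{t_2})/\sqrt{t_2}$, which is analytic near $t_2=a_2^2>0$ because $\sqrt{t_2}$ is analytic and nonzero there.
\item The case $a_1>0$, $a_2=0$ is symmetric to the previous one.
\item If $a_1,a_2>0$, the original formula already gives an analytic function near $(a_1^2,a_2^2)$.
\end{enumerate}

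\emph{Gluing.} Each local definition lives on a product neighbourhood (intervals or discs in $t_1$ and $t_2$) that contains points with $t_1,t_2>0$. On such points it agrees with the global formula. The overlap of two such product neighbourhoods is again a connected product that meets the open quadrant $\{t_1,t_2>0\}$, so by the identity theorem the local definitions agree on overlaps. This defines $G$ on an open set containing $\{(y_1^2,y_2^2):(y_1,y_2)\in N\}$.

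It remains to check the identity $F=y_1y_2G(y_1^2,y_2^2)$ on all of $N$. It holds on $N\cap\{y_1,y_2>0\}$ by construction. Both sides are odd in $y_1$ and odd in $y_2$, so it holds on every open quadrant of $N$. Since both sides are continuous, it also holds on the axes.

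The final assertion follows by composition. The map $x\mapsto(|x'|^2,|x''|^2)$ is polynomial and sends $\Pi_6^{-1}(N)$ into the domain of $G$, because $(|x'|^2,|x''|^2)=(y_1^2,y_2^2)$ with $(y_1,y_2)=\Pi_6(x)\in N$. Hence $x\mapsto G(|x'|^2,|x''|^2)$ is real analytic on $\Pi_6^{-1}(N)$, and the division formula holds off the axes.

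The only delicate step is the mixed-axis case (2)–(3), where oddness is available in only one variable. This is handled by dividing by $\sqrt{t_2}$, which is harmless away from $t_2=0$. The gluing argument on connected overlaps works because each local neighbourhood is a product containing points of the positive quadrant.
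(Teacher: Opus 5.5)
Your proposal is correct and is essentially the paper's proof: the same three local cases (both coordinates zero, exactly one zero, neither zero) on product boxes, glued by the identity theorem, with only cosmetic differences. You reduce to $a_1,a_2\geq0$ by reflection instead of carrying the signs $\sigma_i$, and you obtain $F=y_1y_2G(y_1^2,y_2^2)$ on all of $N$ from the positive quadrant by oddness and continuity rather than locally. The one point you leave implicit in the gluing is why a nonempty overlap of two boxes is connected and meets the open quadrant: each factor interval is either symmetric about $0$ or contained in $(0,\infty)$. The paper states this explicitly, and you should record it too.
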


\begin{proof}
Let $a=(a_1,a_2)\in N$. Choose $\delta>0$, with $\delta<|a_i|$ whenever $a_i\neq0$, such that $F(y)=\sum_{i,k\geq0}c_{ik}(y_1-a_1)^i(y_2-a_2)^k$ converges absolutely for $|y_1-a_1|,|y_2-a_2|<\delta$. For $i=1,2$ let $I_i=(-\delta^2,\delta^2)$ if $a_i=0$, and $I_i=((|a_i|-\delta)^2,(|a_i|+\delta)^2)\subset(0,\infty)$ if $a_i\neq0$, and let $\sigma_i$ be the sign of $a_i$. We define $G$ on the box $I_1\times I_2$.
\begin{itemize}
\item If $a_1=a_2=0$, oddness in both variables gives $c_{ik}=0$ unless $i$ and $k$ are odd, and we put $G(t_1,t_2)=\sum_{i,k}c_{2i+1,2k+1}t_1^it_2^k$.
\item If $a_1=0\neq a_2$, oddness in $y_1$ gives $c_{ik}=0$ for even $i$, so $F=y_1H(y_1^2,y_2)$ with $H(t_1,y_2)=\sum_{i,k}c_{2i+1,k}t_1^i(y_2-a_2)^k$, and we put $G(t_1,t_2)=H(t_1,\sigma_2\sqrt{t_2})/(\sigma_2\sqrt{t_2})$. The case $a_2=0\neq a_1$ is symmetric.
\item If $a_1a_2\neq0$, we put $G(t_1,t_2)=F(\sigma_1\sqrt{t_1},\sigma_2\sqrt{t_2})/(\sigma_1\sigma_2\sqrt{t_1t_2})$.
\end{itemize}
In each case $G$ is real analytic on $I_1\times I_2$, because $\sqrt{t_i}$ is real analytic and positive on $I_i$ when $a_i\neq0$. By the oddness of $F$ in each variable and the invariance of $N$, $G(t_1,t_2)=F(\sqrt{t_1},\sqrt{t_2})/\sqrt{t_1t_2}$ at the points of the box with $t_1,t_2>0$. The intersection of two such boxes is a product of two intervals, each of which is contained in $(0,\infty)$ or symmetric about $0$; if it is nonempty, it is connected and contains points with $t_1,t_2>0$, at which the two definitions agree. By the identity theorem they agree on the whole intersection, and $G$ is well defined. It satisfies $F=y_1y_2G(y_1^2,y_2^2)$ near every point of $N$. The last assertion follows because $x\mapsto(|x'|^2,|x''|^2)$ is polynomial.
\end{proof}

\begin{proposition}[From the plane to $\R^6$]\label{prop:rotation6}
Let $D\subset\R^2$ be a bounded domain, invariant under $(y_1,y_2)\mapsto(-y_1,y_2)$ and $(y_1,y_2)\mapsto(y_1,-y_2)$, whose boundary is a Jordan curve meeting each coordinate axis in exactly two points. Let $F$ be real analytic on an open set $N\supset\overline D$ invariant under the same two reflections, odd in $y_1$ and in $y_2$, and such that
\begin{equation}\label{eq:P6}
\Delta F+F=0\ \text{ in }D,\qquad F=y_1y_2\ \text{ and }\ \nabla F=(y_2,y_1)\ \text{ on }\partial D .
\end{equation}
Put $\Omega=\Pi_6^{-1}(D)$ and let $u$ be the function $x\mapsto G(|x'|^2,|x''|^2)$ of Lemma~\ref{lem:odd-division6}. Then $\Omega$ is a bounded open set, $u$ is real analytic on the neighbourhood $\Pi_6^{-1}(N)$ of $\overline\Omega$, $u$ is nonconstant, and \eqref{eq:main} holds.
\end{proposition}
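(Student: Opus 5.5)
The plan is to follow the proof of Proposition~\ref{prop:rotation} line by line, with Lemmas~\ref{lem:laplace6} and~\ref{lem:odd-division6} in place of Lemmas~\ref{lem:laplace-identity} and~\ref{lem:odd-division}.

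First, $\Omega$ is open and bounded because $\Pi_6$ is continuous and proper; indeed $|\Pi_6(x)|=|x|$. From continuity of $\Pi_6$ we get $\overline\Omega\subset\Pi_6^{-1}(\overline D)\subset\Pi_6^{-1}(N)$. Since a point of $\overline\Omega$ whose image lies in $D$ is already in $\Omega$, we also get $\partial\Omega\subset\Pi_6^{-1}(\partial D)$. Real analyticity of $u$ on $\Pi_6^{-1}(N)$ is exactly the conclusion of Lemma~\ref{lem:odd-division6}.

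For the equation, work on the set where $x'\neq0$ and $x''\neq0$. There $u(x)=f(|x'|,|x''|)$ with $f=F/(y_1y_2)$, and Lemma~\ref{lem:laplace6} gives
\[
|x'|\,|x''|\,(\Delta u+u)(x)=(\Delta F+F)(\Pi_6(x))=0 .
\]
The set $\{x'\neq0,\ x''\neq0\}\cap\Omega$ is dense in $\Omega$, since its complement is a union of two linear subspaces of codimension three. Continuity of $\Delta u+u$ then gives the equation on all of $\Omega$.

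For the boundary conditions, take $x\in\partial\Omega$ with both $y_1=|x'|>0$ and $y_2=|x''|>0$. Then $\Pi_6(x)\in\partial D$, so $f=1$ there. From $F_{y_1}=y_2$ we get $f_{y_1}=(F_{y_1}-y_2f)/(y_1y_2)=0$, and symmetrically $f_{y_2}=0$. Hence
\[
\nabla u(x)=\big(f_{y_1}x'/|x'|,\ f_{y_2}x''/|x''|\big)=0 .
\]
The remaining step is density. The set $\partial\Omega\cap\{x'\neq0,\,x''\neq0\}=\Pi_6^{-1}(\partial D\cap\{y_1>0,\,y_2>0\})$ must be dense in $\partial\Omega$. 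By the symmetries and the hypothesis that $\partial D$ meets each axis in exactly two points, the arc $\partial D\cap\{y_1,y_2\geq0\}$ is a Jordan arc. It runs from the point on the positive $y_1$-axis to the point on the positive $y_2$-axis, and its interior lies in the open quadrant.

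Let $x\in\partial\Omega$ with, say, $x''=0$. Then $\Pi_6(x)=(|x'|,0)$ is the endpoint of that arc on the $y_1$-axis, or is $0$. If $0\in\partial D$, then $\partial D$ would meet the $y_1$-axis at $0$ in addition to the symmetric pair of points $\pm a$, contradicting the count. So $\Pi_6(x)=(a,0)$ with $a>0$. Choose points $(y_1^k,y_2^k)$ on the arc converging to $(a,0)$ with $y_2^k>0$. Lift them to $x_k=(y_1^kx'/|x'|,\ y_2^ke)$ for a fixed unit vector $e\in\R^3$. These $x_k$ converge to $x$ and lie in $\Pi_6^{-1}(\partial D)$, with both components nonzero. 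I would check that such points are in $\partial\Omega$: each $x_k$ is not in $\Omega$, and it is a limit of points of $\Omega$ because $\Pi_6$ is open on the set where both components are nonzero. This density argument, with this small lifting check, is the only step requiring care. Continuity of $u$ and $\nabla u$ then yields $u=1$ and $\nabla u=0$ on all of $\partial\Omega$.

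Finally, $u$ is nonconstant for the same reason as before: a constant solution of $\Delta u+u=0$ vanishes identically, which contradicts $u=1$ on the nonempty set $\partial\Omega$.
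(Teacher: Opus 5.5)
Your proposal is correct and follows the paper's proof essentially step by step: the same properness argument, density of $\{x'\neq0,\ x''\neq0\}$ for the equation, the computation $f_{y_1}=f_{y_2}=0$ on the boundary, and density of the regular boundary points via lifting. The only difference is in the last step. The paper shows $\Pi_6^{-1}(\partial D)=\partial\Omega$ outright and gets the approximating points from the reflection that exchanges the two arcs of $\partial D$ between the axis points. You instead use the quarter arc in the closed first quadrant, asserting its Jordan-arc structure without proof (it follows from a similar short argument), together with openness of $\Pi_6$ off the two subspaces.
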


\begin{proof}
We follow the proof of Proposition~\ref{prop:rotation}. The set $\Omega$ is open and bounded because $\Pi_6$ is continuous and proper; $\overline\Omega\subset\Pi_6^{-1}(\overline D)\subset\Pi_6^{-1}(N)$ and $\partial\Omega\subset\Pi_6^{-1}(\partial D)$. Let $E=\{x:x'\neq0,\ x''\neq0\}$; its complement is the union of two three-dimensional subspaces. On $\Omega\cap E$ we have $u=f\circ\Pi_6$ with $f=F/(y_1y_2)$, and Lemma~\ref{lem:laplace6} gives $|x'|\,|x''|\,(\Delta u+u)(x)=(\Delta F+F)(\Pi_6(x))=0$. Since $\Omega\cap E$ is dense in $\Omega$ and $\Delta u+u$ is continuous, $\Delta u+u=0$ in $\Omega$.

Let $x\in\partial\Omega\cap E$. Then $\Pi_6(x)\in\partial D$ lies in the open quadrant, and $f=F/(y_1y_2)=1$ there. From $F_{y_1}=y_2f+y_1y_2f_{y_1}=y_2$ and the analogous identity for $y_2$ we get $f_{y_1}=f_{y_2}=0$, hence $\nabla u(x)=(f_{y_1}x'/|x'|,\,f_{y_2}x''/|x''|)=0$.

It remains to show that $\partial\Omega\cap E$ is dense in $\partial\Omega$. By the reflection symmetries, the two points of $\partial D$ on the axis $\{y_1=0\}$ are $(0,\pm a)$ with $a>0$. The reflection $y_1\mapsto-y_1$ fixes them and maps $\{y_1>0\}$ to $\{y_1<0\}$, so it exchanges the two arcs of $\partial D$ between them, and one of these arcs lies in $\{y_1>0\}$. Hence $(0,a)$ is a limit of points of $\partial D$ in the open quadrant; the same holds for the points of $\partial D$ on the other axis. Every point of $\Pi_6^{-1}(\partial D)$ lies outside $\Omega$ and is a limit of points of $\Omega$, so $\Pi_6^{-1}(\partial D)=\partial\Omega$, and lifting the above sequences by $\Pi_6$ shows that $\partial\Omega\cap E$ is dense in $\partial\Omega$. By continuity $u=1$ and $\nabla u=0$ on $\partial\Omega$. Finally, a constant solution of $\Delta u+u=0$ vanishes identically.
\end{proof}

As in Proposition~\ref{prop:converse}, averaging over $O(3)\times O(3)$ shows that the reduction is reversible in the analytic class. Let $y_1+iy_2=re^{i\phi}$. Since $y_1y_2=\frac12r^2\sin2\phi$ and $\sin m\phi=\sin2\phi\;U_{m/2-1}(\cos2\phi)$ for even $m$, the planar Helmholtz solution $F=J_m(r)\sin m\phi$, with $m\equiv2\pmod4$, corresponds to the six-dimensional Helmholtz solution
\[
u=2r^{-2}J_m(r)\,U_{m/2-1}(\cos2\phi),\qquad\cos2\phi=\frac{|x'|^2-|x''|^2}{|x|^2}.
\]
For $m=2$ this is the radial solution $2r^{-2}J_2(r)$, whose derivative is $-2r^{-2}J_3(r)$; the Schiffer balls of $\R^6$ at eigenvalue one have radii $j_{3,k}$, the positive zeros of $J_3$.

\subsection{Symmetry sector and conformal formulation}

For a real sequence $c=(c_j)_{j\equiv1\,(4)}$ put $\psi_c(w)=\sum_{j\equiv1\,(4)}c_jw^j$. Since $i^j=i$ for $j\equiv1\pmod4$,
\[
\psi_c(\bw)=\overline{\psi_c(w)},\qquad\psi_c(-\bw)=-\overline{\psi_c(w)},\qquad\psi_c(i\bw)=i\,\overline{\psi_c(w)}.
\]
Hence, if $\psi_c$ is injective on $\overline\D$, then $D=\psi_c(\D)$ is invariant under $z\mapsto\overline z$, $z\mapsto-\overline z$ and $z\mapsto i\overline z$, that is, under $y_2\mapsto-y_2$, $y_1\mapsto-y_1$ and the exchange $(y_1,y_2)\mapsto(y_2,y_1)$.

\begin{lemma}\label{lem:pullback6}
Let $\psi$ be holomorphic and injective on a neighbourhood of $\overline\D$, with $\psi'\neq0$ there, let $D=\psi(\D)$ and $h_\psi=\im(\psi^2)/2$. Let $V\in C^1(\overline\D)$ be real valued and satisfy
\begin{equation}\label{eq:V6}
\Delta V+|\psi'|^2V=0\ \text{ in }\mathcal D'(\D),\qquad V-h_\psi=0\ \text{ and }\ \nabla(V-h_\psi)=0\ \text{ on }\partial\D .
\end{equation}
Then $F=V\circ\psi^{-1}\in C^1(\overline D)$ satisfies $\int_D(\nabla F\cdot\nabla\varphi-F\varphi)\,dA=0$ for all $\varphi\in C^\infty_c(D)$, and $F=y_1y_2$, $\nabla F=(y_2,y_1)$ on $\partial D$. If moreover $\psi=\psi_c$ as above and $V(\bw)=-V(w)$, $V(-\bw)=-V(w)$, $V(i\bw)=V(w)$, then $F$ is odd in $y_1$ and in $y_2$ and invariant under $(y_1,y_2)\mapsto(y_2,y_1)$.
\end{lemma}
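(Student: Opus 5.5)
The proof follows Lemma~\ref{lem:pullback} almost verbatim; the only changes are the boundary datum $h_\psi$ in place of $\im\psi$ and the different symmetry group. First the weak equation. For $\varphi\in C_c^\infty(D)$ the function $\varphi\circ\psi$ lies in $C_c^\infty(\D)$. Conformal invariance of the Dirichlet integral and the Jacobian $|\psi'|^2$ give
\[
\int_D\nabla F\cdot\nabla\varphi\,dA=\int_\D\nabla V\cdot\nabla(\varphi\circ\psi)\,dA=\int_\D|\psi'|^2V(\varphi\circ\psi)\,dA=\int_DF\varphi\,dA .
\]
The middle equality is the distributional form of \eqref{eq:V6} for $V\in C^1(\overline\D)$. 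This step is identical to the four-dimensional case.

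Next the boundary conditions. The key observation is that $h_\psi=(y_1y_2)\circ\psi$. Indeed, if $\psi=y_1+iy_2$, then $\psi^2=y_1^2-y_2^2+2iy_1y_2$, so $\im(\psi^2)/2=y_1y_2$. Hence $V-h_\psi=(F-y_1y_2)\circ\psi$, and by hypothesis this has vanishing value and gradient on $\partial\D$. Since $\psi$ is a diffeomorphism of a neighbourhood of $\overline\D$ with invertible Jacobian there, the chain rule gives $F-y_1y_2=0$ and $\nabla(F-y_1y_2)=0$ on $\partial D$. This means $F=y_1y_2$ and $\nabla F=(y_2,y_1)$ there. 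Also $F\in C^1(\overline D)$, since $\psi^{-1}$ is smooth up to $\partial D$.

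Finally the symmetries. By the displayed identities for $\psi_c$, the map $\psi$ intertwines $w\mapsto\bw$, $w\mapsto-\bw$ and $w\mapsto i\bw$ with $z\mapsto\overline z$, $z\mapsto-\overline z$ and $z\mapsto i\overline z$, respectively. So $D$ is invariant under these three maps. For $z=\psi(w)\in\overline D$ we get
\[
F(\overline z)=V(\bw)=-V(w)=-F(z),
\]
which is oddness in $y_2$. Likewise $F(-\overline z)=V(-\bw)=-F(z)$ gives oddness in $y_1$. And $F(i\overline z)=V(i\bw)=V(w)=F(z)$ gives invariance under the exchange, since $i\overline z=y_2+iy_1$.

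No step is a real obstacle. The only point needing care is the identification $h_\psi=(y_1y_2)\circ\psi$, which replaces the harmonic datum $\im\psi$ of the four-dimensional case.
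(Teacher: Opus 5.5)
Your proof is correct and follows the paper's argument: the paper also observes that $h_\psi=(y_1y_2)\circ\psi$, so that $V-h_\psi=(F-y_1y_2)\circ\psi$, and then applies the proof of Lemma~\ref{lem:pullback} verbatim. It reads off the parities of $F$ from those of $V$ and the displayed symmetries of $\psi_c$, exactly as you do.
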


\begin{proof}
Since $y_1y_2=\im(z^2)/2$, we have $h_\psi=(y_1y_2)\circ\psi$ and $V-h_\psi=(F-y_1y_2)\circ\psi$. With this change, the proof of Lemma~\ref{lem:pullback} applies verbatim, and the parities of $F$ follow from those of $V$ and the displayed symmetries of $\psi_c$.
\end{proof}

For $n\equiv2\pmod 4$, $n\geq2$, and $s\geq0$ let $S_{n,s}=\im\Phi_{n,s}=r^nP^{(0,n)}_s(2r^2-1)\sin n\theta$. Fix $\rho=11/10$ in this section, and let $\Sc_6$ be the real Banach space of $f=\sum_{n\equiv2\,(4),\,s\geq0}f_{n,s}S_{n,s}$ with $\nrm f_\rho=\sum\rho^n|f_{n,s}|<\infty$, which is also its norm in $\Bc_\rho$. Since $\Phi_{m,s}(\bw)=\Phi_{-m,s}(w)$, $\Phi_{m,s}(-\bw)=(-1)^m\Phi_{-m,s}(w)$ and $\Phi_{m,s}(i\bw)=i^m\Phi_{-m,s}(w)$, the real-valued elements of $\Bc_\rho$ with the three parities of Lemma~\ref{lem:pullback6} are exactly the elements of $\Sc_6$: the first two parities force $m$ to be even, and the third one forces $i^m=-1$. Let $\Gc_6=\{g\in\Sc_6:g_{n,0}=0\text{ for all }n\}$, and define $K$ on $\Gc_6$ by \eqref{eq:K}, now for $n\equiv2\pmod4$.

\begin{lemma}[Results used verbatim]\label{lem:carry}
\sloppy The following statements hold, with the proofs given in the previous sections.
\begin{enumerate}[label=\textup{(\alph*)}]
\item Lemmas~\ref{lem:orth}, \ref{lem:mult}, \ref{lem:radial} and~\ref{lem:monomial}, and the bounds \eqref{eq:algebra}. They are stated for all $m\in\Z$ and $s\geq0$.
\item Lemma~\ref{lem:K}(a)--(c), for every $n\geq1$ and $s\geq1$. Its proof uses only the explicit coefficients $a^{(n)}_{s,k}$ of \eqref{eq:jacobi} and Lemma~\ref{lem:orth}(a), and never the parity of $n$. The absolute column sum of \eqref{eq:K} is $1/(q(q+2))$ with $q=n+2s$, as in Lemma~\ref{lem:K}(d); in the present sector $q\geq4$, so $K\colon\Gc_6\to\Sc_6$ is bounded with $\nrm K=1/24$, attained at $(n,s)=(2,1)$.
\item Lemma~\ref{lem:traces} and Remark~\ref{rem:compatible}, with $\Gc_6$, $\Sc_6$ and the harmonic polynomials $r^n\sin n\theta$, $n\equiv2\pmod4$ (and with $1/24$ in place of $1/15$ in the uniform bound).
\item Lemma~\ref{lem:columns}, Proposition~\ref{prop:NK} in the quartic form of Proposition~\ref{prop:NK6} below, and Lemma~\ref{lem:A} for the operator $A$ defined in \eqref{eq:A6} below; the proof of Lemma~\ref{lem:A} uses only the block structure of $A$.
\item Lemma~\ref{lem:doublezero}, which is stated for all $m,t,N\geq0$, and the connection formulas of Section~\ref{sec:Z} for the Jacobi polynomials $P^{(1,n)}_k$ and $P^{(2,n)}_k$, which hold for every $n\geq0$.
\item Lemma~\ref{lem:interior}, and Lemma~\ref{lem:boundary} with the Cauchy data $(y,e_y)$ replaced by $(y_1y_2,(y_2,y_1))$. The only property of the datum used in the proof of Lemma~\ref{lem:boundary} is that it is a harmonic polynomial: in the coordinates $\zeta=y_1+iy_2$, $\chi=y_1-iy_2$ one has $y_1y_2=(\zeta^2-\chi^2)/(4i)$, whose holomorphic extension $F_0(a,b)=(\gamma(a)^2-\gamma^\dagger(b)^2)/(4i)$ again satisfies $\partial_a\partial_bF_0=0$, and the rest of the proof is unchanged.
\end{enumerate}
\end{lemma}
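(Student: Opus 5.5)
This lemma asks us to recheck, item by item, that each cited proof is independent of the features that change between $n=4$ and $n=6$. Those features are the parity class of the angular index ($n\equiv2\pmod 4$ instead of odd), the weight $\rho=11/10$, the boundary datum $y_1y_2$ instead of $y$, and the polynomial degree of the equation (quartic instead of cubic). For each item we go through the earlier proof and confirm that it uses only properties valid for general $m\in\Z$, $s\geq0$ (or $n\geq1$), general $\rho>1$, or the harmonicity of the datum.

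For (a), Lemmas~\ref{lem:orth}, \ref{lem:mult}, \ref{lem:radial}, \ref{lem:monomial} and \eqref{eq:algebra} are stated and proved for arbitrary $m,s$ and arbitrary $\rho>1$. Their proofs use the Rodrigues formula, the identities between the coefficients $a^{(n)}_{s,k}$, and the unitary-matrix argument, none of which depends on parity, so nothing needs to change.

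For (b), we reread the proof of Lemma~\ref{lem:K}. Part (a) is the identity for $\mathsf A,\mathsf B,\mathsf C$ against the ratios of $a^{(n)}_{s,k}$, valid for every $n$. Part (b) uses $\mathsf A+\mathsf B+\mathsf C=0$ and $\beta_t=n+2t(t+n+1)$. Part (c) uses Lemma~\ref{lem:orth}(a) and leading coefficients. All of these hold for any $n\geq1$. The only change is the norm. The column sum is $1/(q(q+2))$ with $q=n+2s$, and since $n\geq2$, $s\geq1$ in $\Gc_6$ we have $q\geq4$, so the norm is $1/24$, attained at $(2,1)$; as $K$ preserves the angular index, $\rho$ plays no role.

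For (c), we rerun the Green-function argument of Lemma~\ref{lem:traces}. It uses only $\sup|g|\leq\nrm g_\rho$ and $\nrm K$, so the bound $1/15$ is replaced by $1/24$. Remark~\ref{rem:compatible} pairs against the harmonic polynomials $r^n\sin n\theta$ of the sector and goes through unchanged.

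For (d), Lemma~\ref{lem:columns} is abstract. For Lemma~\ref{lem:A}, the left-inverse construction $\left(\begin{smallmatrix}\widehat A^{-1}&B\\0&T_{tt}\end{smallmatrix}\right)$ and the identity $ATe=e$ depend only on the block form and on $T_{tt}$ being a bijection with $T_{tt}^{-1}$ as stated. This is deferred to the definitions \eqref{eq:A6}, which must have the same block structure. The quartic version of Proposition~\ref{prop:NK} is proved exactly as before, with one extra term $B_4$ in the expansion and the analogous telescoping difference. This is the one item not literally verbatim, and it is covered by the forthcoming Proposition~\ref{prop:NK6}.

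For (e), Lemma~\ref{lem:doublezero} holds for all $m,t,N$ by its proof. The connection formulas come from DLMF and hold for any $n\geq0$.

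For (f), Lemma~\ref{lem:interior} is datum-independent. In Lemma~\ref{lem:boundary}, the datum enters only through its holomorphic extension $F_0$, and the contraction and matching steps need only $\partial_a\partial_bF_0=0$ and that $F_0$ restricts to the datum on the diagonal. With $F_0=(\gamma(a)^2-\gamma^\dagger(b)^2)/(4i)$, this holds because $F_0$ is a sum of a function of $a$ and a function of $b$. Its gradient on $\partial D$ then matches $(y_2,y_1)$, so the matching argument applies to $Z=F-\widetilde F$ as before.

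I expect the main obstacle to be the bookkeeping in (d), making sure the new operator $A$ really has the same triangular form, rather than any real mathematical difficulty.
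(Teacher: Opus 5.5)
Your proposal is correct and follows the same route as the paper, whose justification is exactly the item-by-item check embedded in the statement. The checks are these: the coefficient identities and Lemma~\ref{lem:orth}(a) do not depend on parity; the column sum $1/(q(q+2))$ now peaks at $(n,s)=(2,1)$; Lemma~\ref{lem:A} uses only the block form of $A$, with $T_{tt}$ supplied by Lemma~\ref{lem:Ttt6}; and Lemma~\ref{lem:boundary} needs only $\partial_a\partial_bF_0=0$ for $F_0=(\gamma(a)^2-\gamma^\dagger(b)^2)/(4i)$. The one genuinely new ingredient, the extra quartic term in the contraction estimate, is deferred to Proposition~\ref{prop:NK6} in the paper, just as you do.
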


\subsection{The quartic equation}

In the rest of this section $\rho=11/10$, $M=58$ and $S=30$. The centre $x^\circ=(g^\circ,c^\circ)$ consists of $450$ coefficients $g^\circ_{n,s}$, $n\in\{2,6,\ldots,58\}$, $1\leq s\leq30$, and $15$ coefficients $c^\circ_j$, $j\in\{1,5,\ldots,57\}$; all other coefficients of $x^\circ$ vanish. Each of these $465$ numbers is an explicitly given dyadic rational (a binary64 number interpreted exactly), listed in the ancillary file \texttt{center\_r6.json}. We write $b=c^\circ_1\approx28.864751$, $\alpha=b^3/2$ and
\[
\omega_j=\alpha(j+2)\rho^{j+1}\qquad(j\equiv1\ (\mathrm{mod}\ 4)).
\]
Let $X_6$ be the real Banach space of pairs $x=(g,c)$, with $g\in\Gc_6$ and $c=(c_j)_{j\equiv1\,(4)}$ real, normed by $\nrm{(g,c)}_X=\nrm g_\rho+\nrm c$, $\nrm c=\sum_j\omega_j|c_j|$. With
\[
\theta_0=\frac1{3\alpha\rho},\qquad\theta_1=\frac1{\alpha\rho^2},
\]
we have $N_\rho(\psi_c)\leq\theta_0\nrm c$ and $N_\rho(\psi_c')\leq\theta_1\nrm c$, because $\rho^j/\omega_j=1/(\alpha(j+2)\rho)$ and $j\rho^{j-1}/\omega_j=j/(\alpha(j+2)\rho^2)$. In particular $\psi_c$ is holomorphic in $|w|<\rho$ and continuous up to $|w|=\rho$. Put
\[
h_c=\tfrac12\im(\psi_c^2)=\tfrac12\sum_{j,k}c_jc_kS_{j+k,0}\in\Sc_6
\]
(note that $j+k\equiv2\pmod4$), and define
\begin{equation}\label{eq:F6}
\Fc_6(g,c)=g+|\psi_c'|^2\big(Kg+h_c\big).
\end{equation}

\begin{lemma}\label{lem:F6}
$\Fc_6$ is a continuous polynomial map of degree four from $X_6$ to $\Sc_6$. If $\Fc_6(g,c)=0$, then $V=Kg+h_c$ belongs to $C^1(\overline\D)$, has the three parities of Lemma~\ref{lem:pullback6}, and satisfies \eqref{eq:V6} with $\psi=\psi_c$.
\end{lemma}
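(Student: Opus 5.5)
The plan is to follow the proof of Lemma~\ref{lem:F} step by step, with $\im\psi_c$ replaced by $h_c$ and the symmetry sector $\Sc$ replaced by $\Sc_6$. The argument has three parts: first show that $\Fc_6$ is a polynomial map of degree four that lands in $\Sc_6$; then show it is continuous by bounding each multilinear piece; finally, at a zero, recover \eqref{eq:V6} and the parities.

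\emph{Structure and boundedness.} The map $\psi_c'=\sum_j jc_jw^{j-1}$ has real coefficients and only powers $w^{j-1}$ with $j-1\equiv0\pmod4$. Consequently $|\psi_c'|^2$ is real and invariant under $w\mapsto\bw$, $w\mapsto-\bw$ and $w\mapsto i\bw$; for the last one, note that $(i\bw)^{4k}=\bw^{4k}$. By the characterization of $\Sc_6$ through the three parities, together with the product bound \eqref{eq:algebra}, multiplication by $|\psi_c'|^2$ maps $\Sc_6$ into $\Sc_6$ with norm at most $N_\rho(\psi_c')^2\leq\theta_1^2\nrm c^2$.

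We also need $h_c$ to lie in $\Sc_6$ with a quadratic bound. Since $\im(w^{j+k})=S_{j+k,0}$ and $j+k\equiv2\pmod4$, we have $h_c\in\Sc_6$, and
\[
\nrm{h_c}_\rho\leq\tfrac12\Big(\sum_j\rho^j|c_j|\Big)^2\leq\tfrac12\big(\theta_0\nrm c\big)^2 .
\]
Lemma~\ref{lem:carry}(b) gives $\nrm{Kg}_\rho\leq\nrm g_\rho/24$.

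Putting these together, $\Fc_6$ is the sum of three pieces. The first is the identity $g$. The second, $|\psi_c'|^2Kg$, is trilinear in $(c,c,g)$. The third, $|\psi_c'|^2h_c$, is quartic in $c$. Each piece is a bounded symmetric multilinear form evaluated on the diagonal, so $\Fc_6$ is a continuous polynomial map of degree four.

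\emph{Zeros.} Suppose $\Fc_6(g,c)=0$. By Lemma~\ref{lem:carry}(c) (the analogue of Lemma~\ref{lem:traces}), $W=Kg\in C^1(\overline\D)$ satisfies $\Delta W=g$ in $\mathcal D'(\D)$ and has zero Cauchy data on $\partial\D$. The function $h_c$ is a harmonic function given by an absolutely convergent series of homogeneous harmonic polynomials; it converges for $|w|<\rho$, so $h_c$ is smooth near $\overline\D$. Hence $V=W+h_c\in C^1(\overline\D)$, and $V-h_c=W$ has zero value and gradient on $\partial\D$. Moreover
\[
\Delta V=g=-|\psi_c'|^2(Kg+h_c)=-|\psi_c'|^2V ,
\]
which is \eqref{eq:V6}. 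As in Lemma~\ref{lem:F}, the middle identity holds pointwise because both sides are uniformly convergent disk-polynomial series with equal coefficients. The three parities of $V$ follow because $V\in\Sc_6$.

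\emph{Main obstacle.} The only points requiring care are the sector bookkeeping: that $h_c$ and the products genuinely land in $\Sc_6$, in particular in frequencies $\equiv2\pmod4$ with the right sign behavior under $w\mapsto i\bw$. Both follow from the stated characterization of $\Sc_6$, using $\Phi_{m,s}(i\bw)=i^m\Phi_{-m,s}(w)$.
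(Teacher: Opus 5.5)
Your proposal is correct and follows essentially the same route as the paper: you show that multiplication by $|\psi_c'|^2$ preserves $\Sc_6$ with norm at most $N_\rho(\psi_c')^2$, bound $\nrm{h_c}_\rho\leq N_\rho(\psi_c)^2/2$, and use $\nrm K=1/24$ to obtain bounded multilinear pieces of degree at most four; then you repeat the zero argument of Lemma~\ref{lem:F}, using Lemma~\ref{lem:carry}(c) and the harmonicity of $h_c$. The only cosmetic difference is that you check invariance of $|\psi_c'|^2$ under $w\mapsto i\bw$ directly rather than under $w\mapsto iw$; together with $w\mapsto\bw$, either choice generates the same symmetry group.
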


\begin{proof}
$\psi_c'=\sum_jjc_jw^{j-1}$ has real coefficients and only powers divisible by four. Hence $|\psi_c'|^2$ is real and invariant under $w\mapsto\bw$ and $w\mapsto iw$, multiplication by it preserves the three parities, and by \eqref{eq:algebra} it maps $\Sc_6$ into $\Sc_6$ with norm at most $N_\rho(\psi_c')^2$. Since $w^k=\Phi_{k,0}$ has norm $\rho^k$ and $\nrm{\im f}_\rho\leq\nrm f_\rho$, we have $\nrm{h_c}_\rho\leq N_\rho(\psi_c)^2/2$. Together with $\nrm K=1/24$, this shows that \eqref{eq:F6} is a sum of bounded multilinear maps of degree at most four. The rest of the proof is that of Lemma~\ref{lem:F}, using Lemma~\ref{lem:carry}(c) and the fact that $h_c$ is harmonic.
\end{proof}

\subsection{Expansion at the centre}
Write $p=\psi_{c^\circ}'=\sum_{a\in\{0,4,\ldots,56\}}p_aw^a$ with $p_a=(a+1)c^\circ_{a+1}$, so $p_0=b$ and $d=\deg p=56$, and put
\[
P=N_\rho(p),\qquad C=N_\rho(\psi_{c^\circ}),\qquad V^\circ=Kg^\circ+h_{c^\circ},\qquad V_0=\nrm{V^\circ}_\rho .
\]
For $h=(\delta g,\eta)\in X_6$ let
\[
L(\eta)=2\re(\overline p\,\psi_\eta'),\quad Q(\eta_1,\eta_2)=\re\big(\psi_{\eta_1}'\overline{\psi_{\eta_2}'}\big),\quad \ell(\eta)=\im(\psi_{c^\circ}\psi_\eta),\quad I(\eta_1,\eta_2)=\im(\psi_{\eta_1}\psi_{\eta_2}),
\]
and $\delta V=K\delta g+\ell(\eta)$. Since $|p+\psi_\eta'|^2=|p|^2+L(\eta)+Q(\eta,\eta)$ and $h_{c^\circ+\eta}=h_{c^\circ}+\ell(\eta)+\frac12I(\eta,\eta)$, expanding \eqref{eq:F6} gives
\begin{equation}\label{eq:expansion6}
\Fc_6(x^\circ+h)=\Fc_6(x^\circ)+D\Fc_6(x^\circ)h+B_2(h,h)+B_3(h,h,h)+B_4(h,h,h,h),
\end{equation}
where
\begin{align}
D\Fc_6(x^\circ)h&=\delta g+|p|^2\big(K\delta g+\ell(\eta)\big)+L(\eta)V^\circ,\label{eq:DF6}\\
B_2(h,h)&=\tfrac12|p|^2I(\eta,\eta)+L(\eta)\,\delta V+Q(\eta,\eta)V^\circ,\notag\\
B_3(h,h,h)&=\tfrac12L(\eta)I(\eta,\eta)+Q(\eta,\eta)\,\delta V,\qquad B_4(h,h,h,h)=\tfrac12Q(\eta,\eta)I(\eta,\eta),\notag
\end{align}
and $B_2,B_3,B_4$ denote the symmetric multilinear maps with these diagonals.

\begin{lemma}\label{lem:nonlinear6}
Let $\kappa=1/24$. For all $h_i\in X_6$ and $k=2,3,4$, $\nrm{B_k(h_1,\ldots,h_k)}_\rho\leq c_k\nrm{h_1}_X\cdots\nrm{h_k}_X$, where
\[
c_2=\max\{2P\theta_1\kappa,\ 2P\theta_1C\theta_0+\tfrac12P^2\theta_0^2+\theta_1^2V_0\},\quad c_3=\max\{\theta_1^2\kappa,\ P\theta_1\theta_0^2+\theta_1^2C\theta_0\},\quad c_4=\tfrac12\theta_1^2\theta_0^2 .
\]
\end{lemma}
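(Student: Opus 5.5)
We follow the proof of Lemma~\ref{lem:nonlinear}. Write $a_i=\nrm{\delta g_i}_\rho$ and $e_i=\nrm{\eta_i}=\sum_j\omega_j|\eta_{i,j}|$, so that $\nrm{h_i}_X=a_i+e_i$. We first record the elementary factor bounds. By the estimates stated before \eqref{eq:F6}, $N_\rho(\psi_{\eta_i})\leq\theta_0e_i$ and $N_\rho(\psi'_{\eta_i})\leq\theta_1e_i$. Also $\nrm{K\delta g_i}_\rho\leq\kappa a_i$ by Lemma~\ref{lem:carry}(b).

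For the products, we use that $w^k=\Phi_{k,0}$ has norm $\rho^k$, together with \eqref{eq:algebra} and $\nrm{\im f}_\rho,\nrm{\re f}_\rho\leq\nrm f_\rho$. This gives
\begin{itemize}
\item $\nrm{I(\eta_i,\eta_k)}_\rho\leq N_\rho(\psi_{\eta_i})N_\rho(\psi_{\eta_k})\leq\theta_0^2e_ie_k$;
\item $\nrm{\ell(\eta_i)}_\rho\leq C\theta_0e_i$, hence $\nrm{\delta V_i}_\rho\leq\kappa a_i+C\theta_0e_i$.
\end{itemize}
For the multiplication operators, \eqref{eq:algebra} applied to $\overline p\psi'_\eta$ and its conjugate gives, for every $f$,
\[
\nrm{L(\eta_i)f}_\rho\leq2P\theta_1e_i\nrm f_\rho,\qquad\nrm{Q(\eta_i,\eta_k)f}_\rho\leq\theta_1^2e_ie_k\nrm f_\rho,\qquad\nrm{|p|^2f}_\rho\leq P^2\nrm f_\rho .
\]
Here $\re$ of a product is written as half the sum of the product and its conjugate.

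Next we polarise each $B_k$ explicitly. For instance,
\[
B_2(h_1,h_2)=\tfrac12|p|^2I(\eta_1,\eta_2)+\tfrac12\{L(\eta_1)\delta V_2+L(\eta_2)\delta V_1\}+Q(\eta_1,\eta_2)V^\circ .
\]
$B_3$ and $B_4$ are the symmetrised averages over the three, respectively the three pairings, of the corresponding products; note that $I$ and $Q$ are already symmetric. We then estimate term by term.

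For $B_2$, the bound is $P\theta_1\kappa(e_1a_2+e_2a_1)$ plus $(2P\theta_1C\theta_0+\tfrac12P^2\theta_0^2+\theta_1^2V_0)e_1e_2$. The coefficient of $e_1a_2+e_2a_1$ is bounded by $2P\theta_1\kappa$; this is wasteful by a factor of two but matches the stated $c_2$. Then $(a_1+e_1)(a_2+e_2)$ dominates everything, with the coefficient $c_2$ given by the maximum.

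For $B_3$, the $\delta g$-dependent monomials $e_ie_ka_l$ come only from $Q\,\delta V$, with coefficient at most $\theta_1^2\kappa$ after averaging. The pure monomial $e_1e_2e_3$ collects $\tfrac12\cdot2P\theta_1\cdot\theta_0^2$ from $\tfrac12LI$ and $\theta_1^2C\theta_0$ from $Q\,\ell$.

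For $B_4$, only $\tfrac12QI$ contributes, giving $\tfrac12\theta_1^2\theta_0^2e_1e_2e_3e_4$.

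Finally, $\prod(a_i+e_i)$ expanded contains each monomial with coefficient one. Hence each monomial coefficient being at most $c_k$ finishes the argument.

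The only delicate point is the bookkeeping in the polarisation. We must check that averaging over the symmetrisation does not increase the coefficient of any monomial $e_ie_ka_l$ or $e_1\cdots e_k$ beyond the diagonal coefficient. This holds because each symmetrised term is an average of products bounded by the same product of factor norms. We must also confirm that $N_\rho(\overline{f})=N_\rho(f)$ is legitimately used, via \eqref{eq:algebra} applied with $\overline p$.
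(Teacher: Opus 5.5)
Your proof is correct and follows the paper's argument. Both use the same factor bounds for $\ell$, $I$, $\delta V$, $L$ and $Q$, then compare monomial by monomial with $c_k\prod_i(a_i+e_i)$; the paper records the sharper coefficient $\theta_1^2\kappa/3$ for $e_ie_ka_l$ in $B_3$, and the polarisation of $\tfrac12Q(\eta,\eta)I(\eta,\eta)$ averages six terms (each of the three pairings, with two ways to assign $Q$ and $I$) rather than three, which does not change your bound.
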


\begin{proof}
Write $a_i=\nrm{\delta g_i}_\rho$ and $e_i=\nrm{\eta_i}$. As in the proof of Lemma~\ref{lem:nonlinear}, $N_\rho(\psi_{\eta_i}')\leq\theta_1e_i$, $N_\rho(\psi_{\eta_i})\leq\theta_0e_i$, $\nrm{\ell(\eta_i)}_\rho\leq C\theta_0e_i$, $\nrm{I(\eta_i,\eta_k)}_\rho\leq\theta_0^2e_ie_k$, $\nrm{\delta V_i}_\rho\leq\kappa a_i+C\theta_0e_i$, $\nrm{L(\eta_i)f}_\rho\leq2P\theta_1e_i\nrm f_\rho$ and $\nrm{Q(\eta_i,\eta_k)f}_\rho\leq\theta_1^2e_ie_k\nrm f_\rho$. Hence
\[
\nrm{B_2(h_1,h_2)}_\rho\leq P\theta_1\kappa(e_1a_2+e_2a_1)+\big(\tfrac12P^2\theta_0^2+2P\theta_1C\theta_0+\theta_1^2V_0\big)e_1e_2\leq c_2(a_1+e_1)(a_2+e_2).
\]
In the bound for $B_3$, every monomial $e_ie_ka_l$ has coefficient $\theta_1^2\kappa/3$ and $e_1e_2e_3$ has coefficient $P\theta_1\theta_0^2+\theta_1^2C\theta_0$; in the bound for $B_4$, $e_1e_2e_3e_4$ has coefficient $\frac12\theta_1^2\theta_0^2$. Each is at most its coefficient in $c_k\prod_i(a_i+e_i)$.
\end{proof}

\begin{proposition}\label{prop:NK6}
Let $\Fc_6\colon X_6\to\Sc_6$ have the form \eqref{eq:expansion6} with symmetric bounded $B_2,B_3,B_4$, and let $A\colon\Sc_6\to X_6$ be bounded, linear and injective. Suppose that $\nrm{A\Fc_6(x^\circ)}_X\leq Y$, $\nrm{I-AD\Fc_6(x^\circ)}_{X\to X}\leq Z$ and $\nrm A\,c_k\leq C_k$ for $k=2,3,4$, and that $r>0$ satisfies
\begin{equation}\label{eq:radii6}
Y+(Z-1)r+C_2r^2+C_3r^3+C_4r^4<0,\qquad Z-1+2C_2r+3C_3r^2+4C_4r^3<0 .
\end{equation}
Then $\Fc_6$ has exactly one zero in the closed ball $\overline B_{X_6}(x^\circ,r)$.
\end{proposition}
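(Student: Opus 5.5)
The proof follows that of Proposition~\ref{prop:NK}, with one more term in the expansion. Set $\Nc(x)=x-A\Fc_6(x)$ on the closed ball $\overline B=\overline B_{X_6}(x^\circ,r)$. Since $A$ is injective, the fixed points of $\Nc$ are exactly the zeros of $\Fc_6$. It therefore suffices to show that $\Nc$ maps $\overline B$ into itself and is a contraction there. The contraction mapping principle then gives a unique fixed point in $\overline B$, and hence a unique zero.

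\emph{Self-map.} For $\nrm h_X\leq r$, the expansion \eqref{eq:expansion6} gives
\[
\Nc(x^\circ+h)-x^\circ=-A\Fc_6(x^\circ)+(I-AD\Fc_6(x^\circ))h-AB_2(h,h)-AB_3(h,h,h)-AB_4(h,h,h,h).
\]
Bound each term by the hypotheses, using $\nrm{AB_k(h,\ldots,h)}_X\leq\nrm A\,c_k\nrm h_X^k\leq C_kr^k$. This gives the estimate $Y+Zr+C_2r^2+C_3r^3+C_4r^4$. The first inequality of \eqref{eq:radii6} says this is strictly less than $r$.

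\emph{Contraction.} For $h_1,h_2$ in the ball, write each diagonal difference $B_k(h_1,\ldots,h_1)-B_k(h_2,\ldots,h_2)$ as a telescoping sum of $k$ terms. Each term is $B_k$ evaluated with $h_1-h_2$ in one slot and $h_1$ or $h_2$ in the remaining slots; this uses symmetry and multilinearity. Each term has norm at most $c_k r^{k-1}\nrm{h_1-h_2}_X$. For example,
\[
B_4(h_1,h_1,h_1,h_1)-B_4(h_2,h_2,h_2,h_2)=\sum_{i=1}^4B_4(h_2,\ldots,h_2,h_1-h_2,h_1,\ldots,h_1),
\]
with $h_1-h_2$ in the $i$-th slot. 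Hence $\Nc$ is Lipschitz on $\overline B$ with constant $Z+2C_2r+3C_3r^2+4C_4r^3$, and the second inequality of \eqref{eq:radii6} makes this constant less than one.

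There is no genuine obstacle; every step is routine. The one point needing care is that the maps $B_k$ are the symmetric multilinear maps whose diagonals are given in \eqref{eq:expansion6}. The telescoping identity requires symmetry, and the operator bounds $c_k$ are the ones from Lemma~\ref{lem:nonlinear6}, which are stated for these symmetric maps.
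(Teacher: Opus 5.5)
Your proof is correct and is essentially the paper's argument: the fixed-point map $\Nc(x)=x-A\Fc_6(x)$, the self-map estimate $Y+Zr+C_2r^2+C_3r^3+C_4r^4<r$, and the telescoping of $B_4$ into four terms, giving the Lipschitz constant $Z+2C_2r+3C_3r^2+4C_4r^3<1$. One small correction: the telescoping identity needs only multilinearity; symmetry matters only because the constants $c_k$ of Lemma~\ref{lem:nonlinear6} are stated for the symmetric maps.
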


\begin{proof}
This is the proof of Proposition~\ref{prop:NK}, with one more term: the difference $B_4(h_1,\ldots,h_1)-B_4(h_2,\ldots,h_2)$ is a sum of four terms, each of which contains $h_1-h_2$ once and three factors $h_1$ or $h_2$, so that the norm of its image under $A$ is at most $4C_4r^3\nrm{h_1-h_2}_X$.
\end{proof}

\subsection{The principal part of the linearisation}
In the model on high shape modes the map is replaced by the disc $\psi=bw$ and $g$ by $0$. We need the following radial expansion; here $S_{m,s}=r^mP^{(0,m)}_s(2r^2-1)\sin m\theta$ for any integer $m\geq0$.

\begin{lemma}\label{lem:radial4}
For every integer $m\geq0$,
\[
r^{m+4}\sin m\theta=\frac{m+1}{m+3}S_{m,0}+\frac2{m+4}S_{m,1}+\frac2{(m+3)(m+4)}S_{m,2}.
\]
\end{lemma}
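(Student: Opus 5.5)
Both sides are $r^m\sin m\theta$ times a polynomial in $t=r^2$ of degree two. Since $S_{m,s}=r^mP^{(0,m)}_s(2t-1)\sin m\theta$, the claim is equivalent to the polynomial identity
\[
t^2=\frac{m+1}{m+3}P^{(0,m)}_0(2t-1)+\frac2{m+4}P^{(0,m)}_1(2t-1)+\frac2{(m+3)(m+4)}P^{(0,m)}_2(2t-1).
\]
I will prove this by reading off the explicit coefficients from \eqref{eq:jacobi}.

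The coefficients of the right-hand side are as follows. By \eqref{eq:jacobi}, $P^{(0,m)}_0=1$ and $P^{(0,m)}_1(2t-1)=(m+2)t-(m+1)$; the latter was already used in the proof of Lemma~\ref{lem:Tform}. For $P^{(0,m)}_2$ we have $a^{(m)}_{2,2}=\frac{(m+4)(m+3)}{2}$, $a^{(m)}_{2,1}=-(m+3)(m+2)$ and $a^{(m)}_{2,0}=\frac{(m+2)(m+1)}{2}$. Substituting, each coefficient of $t$ must match:
\begin{itemize}
\item The $t^2$ coefficient is $\frac{2}{(m+3)(m+4)}\cdot\frac{(m+3)(m+4)}2=1$.
\item The $t$ coefficient is $\frac{2(m+2)}{m+4}-\frac{2(m+2)}{m+4}=0$.
\item The constant term is $\frac{m+1}{m+3}-\frac{2(m+1)}{m+4}+\frac{(m+1)(m+2)}{(m+3)(m+4)}$. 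Over the common denominator $(m+3)(m+4)$ its numerator is $(m+1)\big[(m+4)-2(m+3)+(m+2)\big]=0$.
\end{itemize}

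As an alternative route, one could apply the recursion $r^2\Phi_{m,s}=\lambda^+\Phi_{m,s+1}+\dots$ from the proof of Lemma~\ref{lem:doublezero} twice. Doing so also confirms that the coefficients are nonnegative and sum to one, consistent with Lemma~\ref{lem:monomial} and $\kappa_0=\frac{m+1}{m+3}$. There is no real obstacle here; the only care needed is the bookkeeping of $a^{(m)}_{2,k}$.
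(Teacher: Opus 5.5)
Your proof is correct and is essentially the paper's argument: both reduce the claim to the degree-two polynomial identity in $t$ and compare coefficients using the explicit Jacobi coefficients \eqref{eq:jacobi}. The only difference is that the paper expands in powers of $u=t-1$, where each $P^{(0,m)}_s$ equals $1$ at $u=0$ and the constant term is just the sum of the three coefficients, whereas you expand in powers of $t$; the two computations are equally short.
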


\begin{proof}
Dividing by $r^m\sin m\theta$ and writing $t=r^2=1+u$, the claim is $t^2=\frac{m+1}{m+3}P_0+\frac2{m+4}P_1+\frac2{(m+3)(m+4)}P_2$ with $P_s=P_s^{(0,m)}(2t-1)$. By \eqref{eq:jacobi}, $P_0=1$, $P_1=1+(m+2)u$ and $P_2=1+2(m+3)u+\frac12(m+3)(m+4)u^2$. The right-hand side is therefore $c_0+c_1u+c_2u^2$ with $c_2=1$, $c_1=\frac{2(m+2)}{m+4}+\frac4{m+4}=2$ and $c_0=\frac{(m+1)(m+4)+2(m+3)+2}{(m+3)(m+4)}=1$, which is $(1+u)^2=t^2$.
\end{proof}

\begin{proposition}\label{prop:principal6}
For every $x=(g,c)\in X_6$ and every $j\equiv1\pmod4$,
\begin{equation}\label{eq:DFej}
D\Fc_6(x)e_j=2\re\big(jw^{j-1}\overline{\psi_c'}\big)\,(Kg+h_c)+|\psi_c'|^2\im(\psi_cw^j).
\end{equation}
Let $x_b=(0,b\,e_1)$, so that $\psi_{x_b}(w)=bw$. For every $j\equiv1\pmod4$ with $j\geq5$,
\begin{equation}\label{eq:Tj6}
T_j:=D\Fc_6(x_b)e_j=b^3\Big\{\frac{j+2}2S_{j+1,0}-\frac{j-2}2S_{j-3,0}-\frac j{j+1}S_{j-3,1}-\frac1{j+1}S_{j-3,2}\Big\}.
\end{equation}
\end{proposition}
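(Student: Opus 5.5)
The first formula \eqref{eq:DFej} is a directional derivative of the polynomial map \eqref{eq:F6} in the shape direction $e_j$, where $\psi_{e_j}=w^j$. Differentiating $g+|\psi_c'|^2(Kg+h_c)$ in $c$ along $e_j$ leaves $g$ fixed. The factor $|\psi_c'|^2=\psi_c'\overline{\psi_c'}$ has derivative $jw^{j-1}\overline{\psi_c'}+\psi_c'\,j\bw^{j-1}=2\re(jw^{j-1}\overline{\psi_c'})$. For the datum $h_c=\tfrac12\im(\psi_c^2)$ the derivative is $\im(\psi_c w^j)$, which is the term $\ell$ of \eqref{eq:DF6} at a general centre. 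The product rule then gives \eqref{eq:DFej}; this step is routine.

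For \eqref{eq:Tj6} I would specialise to $\psi_c=bw$ and $g=0$. Then $\psi_c'=b$ and $h_c=\tfrac12 b^2\im(w^2)=\tfrac12b^2r^2\sin2\theta$. The first term becomes
\[
2bj\re(w^{j-1})\cdot\tfrac12b^2r^2\sin2\theta=b^3jr^{j+1}\cos((j-1)\theta)\sin2\theta .
\]
By the product-to-sum formula, $2\cos((j-1)\theta)\sin2\theta=\sin((j+1)\theta)-\sin((j-3)\theta)$, so this term equals $\tfrac{b^3j}2 r^{j+1}\{\sin(j+1)\theta-\sin(j-3)\theta\}$. The second term is $b^2\im(bw^{j+1})=b^3r^{j+1}\sin(j+1)\theta=b^3S_{j+1,0}$. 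Collecting the frequency $j+1$ gives $b^3\tfrac{j+2}{2}S_{j+1,0}$, matching the first term of \eqref{eq:Tj6}.

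The remaining piece is $-\tfrac{b^3j}{2}r^{j+1}\sin(j-3)\theta$. I would write it as $r^{m+4}\sin m\theta$ with $m=j-3\geq2$, noting that $m\equiv2\pmod 4$ so the result lies in $\Sc_6$. Lemma~\ref{lem:radial4} then gives
\[
-\tfrac{b^3j}{2}\Big(\tfrac{j-2}{j}S_{j-3,0}+\tfrac{2}{j+1}S_{j-3,1}+\tfrac{2}{j(j+1)}S_{j-3,2}\Big),
\]
which simplifies to $-b^3\{\tfrac{j-2}{2}S_{j-3,0}+\tfrac{j}{j+1}S_{j-3,1}+\tfrac1{j+1}S_{j-3,2}\}$, exactly the remaining terms of \eqref{eq:Tj6}. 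The hypothesis $j\geq5$ is needed only so that $j-3\geq0$ and Lemma~\ref{lem:radial4} applies with a nonnegative frequency.

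The main point of care is bookkeeping: checking the trigonometric identity, and checking that Lemma~\ref{lem:radial4} is applied with $m+4=j+1$, i.e.\ $m=j-3$. I expect no genuine obstacle.
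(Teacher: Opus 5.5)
Your proposal is correct and follows the paper's proof essentially step for step. The paper differentiates $|\psi_c'+\eps jw^{j-1}|^2$ and $\tfrac12\im((\psi_c+\eps w^j)^2)$ at $\eps=0$, specialises to $g=0$, $\psi_c=bw$, applies the product-to-sum identity, and expands $r^{j+1}\sin((j-3)\theta)$ by Lemma~\ref{lem:radial4} with $m=j-3\geq2$, exactly as you do.
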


\begin{proof}
Replacing $c$ by $c+\eps e_j$ changes $\psi_c$ by $\eps w^j$; since $\partial_\eps|\psi_c'+\eps jw^{j-1}|^2=2\re(jw^{j-1}\overline{\psi_c'})$ and $\partial_\eps\frac12\im((\psi_c+\eps w^j)^2)=\im(\psi_cw^j)$ at $\eps=0$, we obtain \eqref{eq:DFej}. At $x_b$ we have $g=0$, $\psi_c'=b$ and $h_c=\frac12b^2\im(w^2)$, so
\[
T_j=b^3\big[j\re(w^{j-1})\im(w^2)+\im(w^{j+1})\big]=b^3\big[jr^{j+1}\cos((j-1)\theta)\sin2\theta+r^{j+1}\sin((j+1)\theta)\big].
\]
Since $2\cos((j-1)\theta)\sin2\theta=\sin((j+1)\theta)-\sin((j-3)\theta)$,
\[
T_j=b^3\Big[\frac{j+2}2\,r^{j+1}\sin((j+1)\theta)-\frac j2\,r^{j+1}\sin((j-3)\theta)\Big].
\]
The first term is $\frac{j+2}2S_{j+1,0}$. Lemma~\ref{lem:radial4} with $m=j-3\geq2$ gives
\[
-\frac j2r^{j+1}\sin((j-3)\theta)=-\frac j2\Big[\frac{j-2}jS_{j-3,0}+\frac2{j+1}S_{j-3,1}+\frac2{j(j+1)}S_{j-3,2}\Big],
\]
which is the rest of \eqref{eq:Tj6}.
\end{proof}

Both frequencies $j+1$ and $j-3$ in \eqref{eq:Tj6} are $\equiv2\pmod4$, so $T_j\in\Sc_6$. The harmonic part of $T_j$ is $\alpha(j+2)$ in row $(j+1,0)$ and $-\alpha(j-2)$ in row $(j-3,0)$; this replaces the upper bidiagonal structure of Lemma~\ref{lem:Tform} by a difference operator of step four.

\subsection{Tail model and approximate inverse}
The \emph{finite inputs} are the $450$ coordinates $g_{n,s}$ with $n\leq M$, $1\leq s\leq S$, and the $15$ shape coordinates $c_j$ with $j\leq M-1$. The \emph{finite rows} are the $450$ rows $(n,s)$ with $n\leq M$, $1\leq s\leq S$, and the $15$ harmonic rows $(n,0)$ with $n\leq M$. All other inputs and rows are \emph{tail} inputs and rows, and we write $X_6=X_f\oplus X_t$, $\Sc_6=\Sc_f\oplus\Sc_t$; $\dim X_f=\dim\Sc_f=465$. Let $M_f$ be the $465\times465$ matrix of the restriction of $D\Fc_6(x^\circ)$ to finite inputs and finite rows. On tail inputs we use the model
\begin{equation}\label{eq:T6}
T(\delta g,\eta)=\delta g+\sum_{j\geq M+3}\eta_jT_j .
\end{equation}
Write $Te=(Be,T_{tt}e)$ for tail inputs $e$, with $B$ the part in finite rows. By \eqref{eq:Tj6} with $j=M+3=61$, only the shape input $j=M+3$ reaches finite rows:
\begin{equation}\label{eq:B6}
B(\delta g,\eta)=-\eta_{M+3}\Big(\alpha(M+1)e_{(M,0)}+\frac{b^3(M+3)}{M+4}e_{(M,1)}+\frac{b^3}{M+4}e_{(M,2)}\Big).
\end{equation}
Put $\tau_4=(1-\rho^{-4})^{-1}$.

\begin{lemma}\label{lem:Ttt6}
$T_{tt}\colon X_t\to\Sc_t$ is a bounded bijection with $\nrm{T_{tt}}\leq1+\rho^{-4}$. For a tail residual $y=\sum y_{n,s}S_{n,s}\in\Sc_t$ its inverse is given by
\begin{gather}
\eta_{n-1}=\sum_{k\geq0}\frac{y_{n+4k,0}}{\alpha(n+4k+1)}\qquad(n\geq M+4,\ n\equiv2\ (\mathrm{mod}\ 4)),\label{eq:Tinv6}\\
\delta g_{n,1}=y_{n,1}+\frac{b^3(n+3)}{n+4}\eta_{n+3},\qquad\delta g_{n,2}=y_{n,2}+\frac{b^3}{n+4}\eta_{n+3}\qquad(n\geq M+4),\notag
\end{gather}
and $\delta g_{n,s}=y_{n,s}$ for all other tail $g$-inputs.
\end{lemma}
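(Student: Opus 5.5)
The plan is to mirror the proof of Lemma~\ref{lem:Ttt} row by row, with the step-two difference operator replaced by the step-four operator read off from \eqref{eq:Tj6}.

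\emph{Tail equations.} First I will write out $T_{tt}$ in coordinates.

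\begin{itemize}
\item By \eqref{eq:T6} and \eqref{eq:Tj6}, the shape input $e_j$, $j\geq M+3$, $j\equiv1\pmod4$, has four nonzero entries:
\begin{itemize}
\item $\alpha(j+2)$ in row $(j+1,0)$;
\item $-\alpha(j-2)$ in row $(j-3,0)$;
\item $-b^3j/(j+1)$ in row $(j-3,1)$;
\item $-b^3/(j+1)$ in row $(j-3,2)$.
\end{itemize}
For $j=M+3$ the rows $(j-3,\cdot)=(M,\cdot)$ are finite; these entries form $B$ in \eqref{eq:B6}, so they are absent from $T_{tt}$.
\item A tail harmonic row $(n,0)$, $n\geq M+4$, $n\equiv2\pmod 4$, receives contributions only from $j=n-1$ and $j=n+3$. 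Since $\alpha(n-1+2)=\alpha(n+3-2)=\alpha(n+1)$, its equation is
\[
\alpha(n+1)(\eta_{n-1}-\eta_{n+3})=y_{n,0}.
\]
\item The rows $(n,1)$ and $(n,2)$ with $n\geq M+4$ read $\delta g_{n,1}-\frac{b^3(n+3)}{n+4}\eta_{n+3}=y_{n,1}$ and $\delta g_{n,2}-\frac{b^3}{n+4}\eta_{n+3}=y_{n,2}$. Here I use $j=n+3$ in \eqref{eq:Tj6}, so that $j/(j+1)=(n+3)/(n+4)$ and $1/(j+1)=1/(n+4)$.
\item Every other tail row, including the rows $(n,s)$ with $n\leq M$ and $s>S$, and those with $s\geq3$, reads $\delta g_{n,s}=y_{n,s}$.
\end{itemize}

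\emph{Inverse and bijectivity.} Because $\sum\omega_j|\eta_j|<\infty$, we have $\eta_j\to0$. Telescoping the harmonic equations along $n,n+4,n+8,\ldots$ then yields \eqref{eq:Tinv6}, and the $\delta g$ rows give the remaining formulas. This shows injectivity: $y=0$ forces $\eta=0$ and hence $\delta g=0$. Conversely, the formulas solve the equations whenever the series converge, so surjectivity reduces to a bound on the candidate inverse.

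\emph{Norm bounds.} I will use Lemma~\ref{lem:columns} column by column for both operators.

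For $T_{tt}$, a tail $g$-input goes to the identical row. For a shape column $e_j$, the weighted norm divided by $\omega_j=\alpha(j+2)\rho^{j+1}$ is
\[
\frac{\alpha(j+2)\rho^{j+1}+\alpha(j-2)\rho^{j-3}+b^3\rho^{j-3}}{\alpha(j+2)\rho^{j+1}}=1+\frac{j}{j+2}\rho^{-4}\leq1+\rho^{-4},
\]
where I used $b^3\big(\tfrac{j}{j+1}+\tfrac1{j+1}\big)=b^3=2\alpha$.

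For the inverse, a nonharmonic tail row maps to the identical input. The normalised harmonic row $\rho^{-m}S_{m,0}$ maps to:
\begin{itemize}
\item the shape coordinates $\eta_{n-1}=\rho^{-m}/(\alpha(m+1))$ for $M+4\leq n\leq m$, $n\equiv m\pmod 4$, each of weight $\alpha(n+1)\rho^{n}$;
\item the coupled coordinates $\delta g_{n-4,1}$ and $\delta g_{n-4,2}$, of total size at most $b^3\eta_{n-1}=2\alpha\eta_{n-1}$ and weight $\rho^{n-4}$.
\end{itemize}
Summing the geometric series in $\rho^{-4}$ gives a bound $\tau_4\big(1+2/((M+5)\rho^4)\big)$, which is finite. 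So $T_{tt}^{-1}$ is bounded, and $T_{tt}$ is a bounded bijection.

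The only step needing care is the index bookkeeping. I must check that the $j=M+3$ entries in rows $(M,\cdot)$ are exactly those removed into $B$, so that no tail harmonic row with $n<M+4$ is left without its partner. I must also confirm that every $j\geq M+3$ in the tail has $j-3\geq M$, so that all lower rows $(j-3,\cdot)$ are tail rows except when $j=M+3$. Once this is settled, the rest is the computation of Lemma~\ref{lem:Ttt} with step four.
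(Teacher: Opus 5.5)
Your proposal is correct and follows the paper's proof. You read the same tail equations off \eqref{eq:Tj6}, telescope using $\eta_j\to0$ to get the inverse and injectivity, and compute the columns to get $1+j\rho^{-4}/(j+2)\leq1+\rho^{-4}$; the only difference is that you bound $T_{tt}^{-1}$ explicitly by $\tau_4\bigl(1+2/((M+5)\rho^4)\bigr)$ to secure convergence of the series, where the paper cites the proof of Lemma~\ref{lem:normA6} with $H=0$, which is the same computation.
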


\begin{proof}
By \eqref{eq:T6} and \eqref{eq:Tj6}, the tail harmonic rows read $\alpha(n+1)(\eta_{n-1}-\eta_{n+3})=y_{n,0}$ for $n\geq M+4$, the tail rows $(n,1)$ and $(n,2)$ with $n\geq M+4$ read $\delta g_{n,1}-\frac{b^3(n+3)}{n+4}\eta_{n+3}=y_{n,1}$ and $\delta g_{n,2}-\frac{b^3}{n+4}\eta_{n+3}=y_{n,2}$, and all other tail rows read $\delta g_{n,s}=y_{n,s}$. Since $\sum\omega_j|\eta_j|<\infty$ forces $\eta_j\to0$, summation of the harmonic equations gives \eqref{eq:Tinv6}, and a solution with $y=0$ vanishes; conversely \eqref{eq:Tinv6} solves the equations whenever the series converge, which is the case for every $y\in\Sc_t$ by the proof of Lemma~\ref{lem:normA6} below (with $H=0$). For $j\geq M+7$ the column $T_{tt}e_j$ has the four entries $\alpha(j+2)$, $-\alpha(j-2)$, $-b^3j/(j+1)$ and $-b^3/(j+1)$ in the rows $(j+1,0)$, $(j-3,0)$, $(j-3,1)$ and $(j-3,2)$; its weighted norm divided by $\omega_j$ is $1+j\rho^{-4}/(j+2)\leq1+\rho^{-4}$. For $j=M+3$ only the first entry lies in a tail row, and tail $g$-inputs are mapped to the identical rows. Lemma~\ref{lem:columns} gives the bound for $\nrm{T_{tt}}$.
\end{proof}

Let $\widehat A$ be a $465\times465$ matrix with dyadic rational entries, regarded as an exact linear map $\Sc_f\to X_f$, such that
\begin{equation}\label{eq:Ahat6}
\nrm{I-\widehat AM_f}_{X_f\to X_f}<1,
\end{equation}
and define
\begin{equation}\label{eq:A6}
A=\begin{pmatrix}\widehat A&-\widehat ABT_{tt}^{-1}\\0&T_{tt}^{-1}\end{pmatrix}\colon\Sc_f\oplus\Sc_t\to X_f\oplus X_t .
\end{equation}
By Lemma~\ref{lem:A} (Lemma~\ref{lem:carry}(d)), $A$ is bounded and injective, and $ATe=e$ for every tail input $e$. In the computation, $\widehat A$ is a fixed binary64 approximation of $M_f^{-1}$, stored in the ancillary file \texttt{inverse\_r6\_M58\_S30.npy}, whose entries are interpreted as exact dyadic rationals; only \eqref{eq:Ahat6} is used.

\begin{lemma}\label{lem:normA6}
Let
\begin{gather*}
\nrm R=\max_{(n,s)\ \text{finite row}}\frac{\nrm{\widehat Ae_{(n,s)}}_X}{\rho^n},\\
H=\Big\|\widehat A\Big(\alpha(M+1)e_{(M,0)}+\frac{b^3(M+3)}{M+4}e_{(M,1)}+\frac{b^3}{M+4}e_{(M,2)}\Big)\Big\|_X,
\end{gather*}
and, for $n\geq M+4$ with $n\equiv2\pmod4$,
\begin{equation}\label{eq:hn6}
h_n=\frac1{(n+1)\rho^n}\Big(\sum_{\substack{M+4\leq k\leq n\\k\equiv2\,(4)}}(k+1)\rho^k+2\sum_{\substack{M+4\leq k\leq n-4\\k\equiv2\,(4)}}\rho^k+\frac H\alpha\Big).
\end{equation}
Let $N_H\geq M+4$ with $N_H\equiv2\pmod4$. Then $A$ maps every residual supported in tail rows with norm at most
\[
A_t:=\max\Big\{1,\ \max_{M+4\leq n\leq N_H}h_n,\ \tau_4+\frac{2\tau_4\rho^{-4}}{N_H+5}+\frac{H}{\alpha(N_H+5)\rho^{N_H+4}}\Big\},
\]
it is the identity on residuals supported in nonharmonic tail rows, and $\nrm A\leq\max\{\nrm R,A_t\}$.
\end{lemma}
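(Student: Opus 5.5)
The plan is to mirror the proof of Lemma~\ref{lem:normA}, using Lemma~\ref{lem:columns} to compute the norm of $A$ column by column. The columns are indexed by rows of $\Sc_6$, normalised by $\rho^{-n}$. There are three kinds of columns.

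\emph{Finite rows.} A finite row $e_{(n,s)}$ has the form $(e,0)$ in $\Sc_f\oplus\Sc_t$. By \eqref{eq:A6} it is mapped to $\widehat Ae_{(n,s)}$, so its normalised norm is at most $\nrm R$.

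\emph{Nonharmonic tail rows.} For a tail row $(n,s)$ with $s\geq1$, Lemma~\ref{lem:Ttt6} shows that $T_{tt}^{-1}$ sends it to the identical $g$-input. No shape coordinate is produced, because $\eta$ in \eqref{eq:Tinv6} is built only from harmonic components. Hence $BT_{tt}^{-1}$ vanishes on it, since $B$ involves only $\eta_{M+3}$. So $A$ acts as the identity there, with normalised norm $1$.

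\emph{Harmonic tail rows.} Take $\rho^{-n}S_{n,0}$ with $n\geq M+4$ and $n\equiv2\pmod4$. By \eqref{eq:Tinv6} it produces $\eta_{k-1}=\rho^{-n}/(\alpha(n+1))$ for every $k\equiv2\pmod 4$ with $M+4\leq k\leq n$, and $\eta_{k-1}=0$ otherwise. The shape weights give $\sum_k\omega_{k-1}|\eta_{k-1}|=\sum_k\alpha(k+1)\rho^k\cdot\rho^{-n}/(\alpha(n+1))$, which is the first sum in \eqref{eq:hn6}.

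Next come the coupled coordinates $\delta g_{m,1},\delta g_{m,2}$ with $m\geq M+4$. They carry $\eta_{m+3}$, which is nonzero only when $m+4\leq n$. Their weight is $\rho^m$, and the coefficient sum is $b^3(m+3)/(m+4)+b^3/(m+4)=b^3$. Since $b^3=2\alpha$, this contributes $2\sum_{M+4\leq m\leq n-4}\rho^m/((n+1)\rho^n)$, matching the second sum.

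Finally, the finite part $-\widehat ABT_{tt}^{-1}$ equals $\eta_{M+3}$ times the vector defining $H$ by \eqref{eq:B6}. Here $\eta_{M+3}=\rho^{-n}/(\alpha(n+1))$, so this part has norm $H/(\alpha(n+1)\rho^n)$. Adding the three pieces gives exactly $h_n$.

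For $n\leq N_H$ the quantity $h_n$ is used directly. For $n\geq N_H+4$ there are three uniform bounds:
\begin{itemize}
\item In the first sum, $(k+1)/(n+1)\leq1$, and $\sum_k\rho^{k-n}\leq\sum_{i\geq0}\rho^{-4i}=\tau_4$.
\item The second sum is at most $2\tau_4\rho^{-4}/(n+1)\leq2\tau_4\rho^{-4}/(N_H+5)$, since its top term is $k=n-4$.
\item The $H$ term is at most $H/(\alpha(N_H+5)\rho^{N_H+4})$.
\end{itemize}
Together these yield $A_t$.

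The main point needing care is the well-definedness of $T_{tt}^{-1}$ on all of $\Sc_t$, deferred from Lemma~\ref{lem:Ttt6}. For a general $y$, the series in \eqref{eq:Tinv6} converge absolutely by the column bound just obtained (with $H=0$), so the map is bounded. Summing columns with absolute convergence justifies the decomposition. The identity on nonharmonic tail rows and the overall estimate $\nrm A\leq\max\{\nrm R,A_t\}$ then follow from Lemma~\ref{lem:columns}.
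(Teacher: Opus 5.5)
Your proposal is correct and follows the paper's proof essentially line by line. It computes the norm of $A$ column by column via Lemma~\ref{lem:columns} and splits the harmonic tail column into three pieces: the shape coordinates, the coupled $\delta g_{k,1},\delta g_{k,2}$ with total mass $b^3=2\alpha$, and the finite part $\eta_{M+3}\widehat A(\cdots)$, which sum exactly to $h_n$, followed by the same geometric-series bounds for $n\geq N_H+4$. Your closing remark on the well-definedness of $T_{tt}^{-1}$ is precisely the point that Lemma~\ref{lem:Ttt6} defers to this proof, with $H=0$.
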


\begin{proof}
We use Lemma~\ref{lem:columns}. A finite row $e_{(n,s)}$ is mapped to $\widehat Ae_{(n,s)}$. A nonharmonic tail row is mapped to the identical tail input by Lemma~\ref{lem:Ttt6}, and $B$ vanishes on it. The normalised harmonic tail row $\rho^{-n}S_{n,0}$ is mapped to the shape coordinates $\eta_{k-1}=\rho^{-n}/(\alpha(n+1))$, $M+4\leq k\leq n$, of weights $\omega_{k-1}=\alpha(k+1)\rho^k$; to the coordinates $\delta g_{k,1}$ and $\delta g_{k,2}$, $M+4\leq k\leq n-4$, of weight $\rho^k$ and total absolute value $b^3\rho^{-n}/(\alpha(n+1))=2\rho^{-n}/(n+1)$; and, by \eqref{eq:B6} and \eqref{eq:A6}, to the finite part $\eta_{M+3}\widehat A(\alpha(M+1)e_{(M,0)}+\cdots)$, of norm $H\rho^{-n}/(\alpha(n+1))$. The sum of the three norms is exactly $h_n$. For $n\geq N_H+4$ we bound $(k+1)/(n+1)\leq1$ in the first sum and sum the geometric series $\sum_{i\geq0}\rho^{-4i}=\tau_4$; the second term is at most $2\tau_4\rho^{-4}/(n+1)$, and the third one decreases in $n$.
\end{proof}

\subsection{The defect bounds}
We bound $\nrm{I-AD\Fc_6(x^\circ)}$ as in Section~\ref{sec:Z}, by partitioning the inputs into five classes; every input belongs to exactly one class, and no column is estimated by sampling.

\emph{The residual and the finite columns.} $\Fc_6(x^\circ)$ is a polynomial in $w,\bw$; its expansion in the basis $S_{n,s}$ (it has $2601$ coefficients) is computed without truncation in interval arithmetic, using \eqref{eq:rec} and \eqref{eq:K}. Only finitely many tail rows occur, so $A\Fc_6(x^\circ)$ is computed without truncation from \eqref{eq:A6}, \eqref{eq:B6} and Lemma~\ref{lem:Ttt6}, and $Y$ is an upper bound of its norm. For each of the $465$ finite inputs $e$, $(I-AD\Fc_6(x^\circ))e$ is computed in the same way, and $Z_f$ bounds the maximum of the normalised norms of these columns.

\emph{Tail $g$-columns.}

\begin{lemma}\label{lem:gtail6}
Let $e=e_{(n,s)}$ be a tail $g$-input and $q=n+2s$. Then $(I-AD\Fc_6(x^\circ))e=-A(|p|^2KS_{n,s})$. Let $p=p_{\leq24}+p_{>24}$ be the splitting of $p$ into terms of degree $\leq24$ and $>24$, and $P_s=N_\rho(p_{\leq24})$, $P_t=N_\rho(p_{>24})$, $\delta_a=2P_sP_t+P_t^2$. Then
\begin{equation}\label{eq:gshort6}
\frac{\nrm{(I-AD\Fc_6(x^\circ))e}_X}{\rho^n}\leq\frac{\nrm{A(|p_{\leq24}|^2KS_{n,s})}_X}{\rho^n}+\frac{\nrm A\,\delta_a}{q(q+2)} .
\end{equation}
Moreover, the right-hand side is at most $(A_tP_s^2+\nrm A\delta_a)/(88\cdot90)$ if $n\geq M+28=86$, and at most $(P_s^2+\nrm A\delta_a)/(114\cdot116)$ if $s\geq S+26=56$.
\end{lemma}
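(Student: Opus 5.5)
The plan follows Lemmas~\ref{lem:gtail} and~\ref{lem:gfar}. First, the identity. By \eqref{eq:DF6} with $\eta=0$ we have $D\Fc_6(x^\circ)e=S_{n,s}+|p|^2KS_{n,s}$. By \eqref{eq:T6}, $Te=S_{n,s}$ for a tail $g$-input. Since $ATe=e$ (Lemma~\ref{lem:A} via Lemma~\ref{lem:carry}(d)), it follows that $(I-AD\Fc_6(x^\circ))e=-A(|p|^2KS_{n,s})$.

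For \eqref{eq:gshort6}, split $|p|^2=|p_{\leq24}|^2+\big(\overline{p_{\leq24}}p_{>24}+\overline{p_{>24}}p_{\leq24}+|p_{>24}|^2\big)$. The column sum in Lemma~\ref{lem:K}(d), valid by Lemma~\ref{lem:carry}(b), gives $\nrm{KS_{n,s}}_\rho\leq\rho^n/(q(q+2))$. Then \eqref{eq:algebra} bounds the bracket times $KS_{n,s}$ by $\delta_a\rho^n/(q(q+2))$. Apply $\nrm A$ to this remainder and use the triangle inequality.

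For the two far-range bounds, I would argue with supports, as in Lemma~\ref{lem:gfar}, but using only $p_{\leq24}$ (degree at most $24$) in the first term. The function $|p_{\leq24}|^2=\sum p_ap_{a'}w^a\bw^{a'}$ has $a,a'\leq24$. By \eqref{eq:K}, $KS_{n,s}$ involves radial indices $s-1,s,s+1$ at frequency $n$. Lemma~\ref{lem:radial} then shows that every component of $|p_{\leq24}|^2KS_{n,s}$ has frequency at least $n-24$ and radial index at least $s-1-24$. Also $\nrm{|p_{\leq24}|^2KS_{n,s}}_\rho\leq P_s^2\rho^n/(q(q+2))$.

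\emph{Case $n\geq M+28=86$.} All frequencies are at least $62>M$, so every component lies in a tail row. By Lemma~\ref{lem:normA6}, $A$ has norm at most $A_t$ there. Since $q\geq n+2\geq88$ and $q\mapsto1/(q(q+2))$ is decreasing, we obtain the bound $(A_tP_s^2+\nrm A\delta_a)/(88\cdot90)$.

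\emph{Case $s\geq S+26=56$.} All radial indices are at least $s-25\geq31>S$, and in particular at least $3$. So every component lies in a nonharmonic tail row with $s\geq3$, and by Lemma~\ref{lem:normA6} $A$ acts as the identity there. One point needs checking: by Lemma~\ref{lem:Ttt6} the identity statement holds for rows $(n,s)$ with $s\geq3$, whereas rows $(n,1),(n,2)$ with $n\geq M+4$ are also identity in the $\delta g$ coordinates; these do not arise here anyway. Since $n\geq2$, we have $q\geq2+2\cdot56=114$, and the bound is $(P_s^2+\nrm A\delta_a)/(114\cdot116)$.

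The main obstacle is the bookkeeping of index shifts: confirming that the frequency shift under multiplication by $|p_{\leq24}|^2$ is at most $24$ in absolute value, and that the radial drop is at most $\max(a,a')\leq24$ plus the $-1$ from $K$. This also requires confirming that the sector restriction $n\equiv2\pmod4$ introduces no exceptional rows. Beyond that, the argument is routine monotonicity in $q$.
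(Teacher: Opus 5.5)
Your proof is correct and follows the paper's argument essentially line by line: the identity comes from $ATe=e$, the splitting of $|p|^2$ is estimated via \eqref{eq:algebra} and the column sum of $K$, and the two far ranges are handled by frequency and radial-index support arguments via Lemma~\ref{lem:radial}. Your caveat about the rows $(n,1)$ and $(n,2)$ is unnecessary. The extra terms in Lemma~\ref{lem:Ttt6} involve only $\eta$, which depends only on harmonic rows, so $A$ is the identity on every nonharmonic tail row, as Lemma~\ref{lem:normA6} states.
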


\begin{proof}
By \eqref{eq:DF6}, $D\Fc_6(x^\circ)e=S_{n,s}+|p|^2KS_{n,s}$, and $Te=S_{n,s}$, so $AS_{n,s}=e$. The splitting is estimated as in Lemma~\ref{lem:gtail}. The function $|p_{\leq24}|^2=\sum_{a,a'\leq24}p_ap_{a'}w^a\bw^{a'}$ changes angular frequencies by at most $24$ in modulus, and $\nrm{|p_{\leq24}|^2KS_{n,s}}_\rho\leq P_s^2\rho^n/(q(q+2))$. If $n\geq86$, all components of $|p_{\leq24}|^2KS_{n,s}$ have frequencies of modulus at least $n-24\geq62>M$, so they lie in tail rows, on which $A$ has norm at most $A_t$ (Lemma~\ref{lem:normA6}); and $q\geq88$. If $s\geq56$, then by \eqref{eq:K} and Lemma~\ref{lem:radial} all components have radial index at least $s-1-24\geq31>S$, so they lie in nonharmonic tail rows, on which $A$ is the identity; and $q\geq2+2\cdot56=114$.
\end{proof}

The tail $g$-inputs with $n\leq82$ and $s\leq55$ are the $21\cdot55-450=705$ \emph{near} ones; for them the first term of \eqref{eq:gshort6} is computed without truncation, and $Z_{g,\partial}$ bounds the maximum of the right-hand side. The remaining tail $g$-inputs are covered by the last statement of Lemma~\ref{lem:gtail6}; let $Z_{g,\infty}$ be the larger of the two bounds.

\emph{Near shape columns.} For the $85$ shape inputs $e_j$ with $M+3=61\leq j\leq397$, we have $(I-AD\Fc_6(x^\circ))e_j=-A\Delta_j$ with $\Delta_j=D\Fc_6(x^\circ)e_j-T_j$, because $ATe_j=e_j$; $\Delta_j$ is a polynomial, and these columns are computed without truncation. Let $Z_{\mathrm{sh},\partial}$ bound their normalised norms.

\emph{Far shape columns.} Let $W^\circ=Kg^\circ$ and $h^\circ=h_{c^\circ}$. By \eqref{eq:DFej} at $x^\circ$ and at $x_b$, for every $j\equiv1\pmod4$, $j\geq5$,
\begin{equation}\label{eq:Deltaj6}
\Delta_j=2\re\big(jw^{j-1}\Gamma_1\big)+\im\big(w^j\Gamma_0\big)+2\re\big(jw^{j-1}\overline pW^\circ\big),
\end{equation}
where
\[
\Gamma_1=\overline ph^\circ-\tfrac12b^3\im(w^2),\qquad\Gamma_0=|p|^2\psi_{c^\circ}-b^3w .
\]
By Lemma~\ref{lem:K}(c) (Lemma~\ref{lem:carry}(b)), $W^\circ=(1-|w|^2)^2Q^\circ$ with
\[
Q^\circ=\sum_{n,s}\frac{g^\circ_{n,s}}{4s(s+1)}\,r^nP^{(2,n)}_{s-1}(2r^2-1)\sin n\theta ,
\]
and the connection formulas of Section~\ref{sec:Z} give the finite expansion of $Q^\circ$ in the basis $S_{n,t}$; its connection coefficients are positive rational numbers. Let $\Lambda=M+d=114$. The function $w^\Lambda\overline pQ^\circ$ has only nonnegative frequencies; write $w^\Lambda\overline pQ^\circ=\sum_{m\geq0,t\geq0}\gamma^W_{m,t}\Phi_{m,t}$ (a finite sum, computed in interval arithmetic). Let $\jst=401$ and
\[
\sU_\Gamma=\frac1\alpha\Big(\frac{2\nrm{\Gamma_1}_\rho}{\rho^2}+\frac{\nrm{\Gamma_0}_\rho}{\rho(\jst+2)}\Big),\qquad \sU_W=\frac{8}{\alpha(\jst-\Lambda)^2}\sum_{m,t}|\gamma^W_{m,t}|(2t+1)(2t+3)\rho^{m-\Lambda-2},
\]
where $\nrm\cdot_\rho$ is the norm of $\Bc_\rho$.

\begin{proposition}\label{prop:farshape6}
For every $j\equiv1\pmod4$ with $j\geq\jst$,
\[
\frac{\nrm{(I-AD\Fc_6(x^\circ))e_j}_X}{\omega_j}\leq Z_{\mathrm{sh},\infty}:=A_t\big(\sU_\Gamma+\sU_W\big).
\]
\end{proposition}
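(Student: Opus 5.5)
The plan is to repeat the proof of Proposition~\ref{prop:farshape}, in a simpler form: the whole column is controlled by the crude bound $\nrm{A\Delta}_X\leq A_t\nrm{\Delta}_\rho$ for residuals supported in tail rows (Lemma~\ref{lem:normA6}), with no separate treatment of the harmonic part. Fix $j\equiv1\pmod4$ with $j\geq\jst=401$.

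\emph{Step 1: reduction to $\Delta_j$.} By Lemma~\ref{lem:A} (through Lemma~\ref{lem:carry}(d)) we have $ATe_j=e_j$. Since $Te_j=T_j=D\Fc_6(x_b)e_j$ by \eqref{eq:T6} and Proposition~\ref{prop:principal6}, it follows that
\[
(I-AD\Fc_6(x^\circ))e_j=-A\Delta_j,
\]
with $\Delta_j$ as in \eqref{eq:Deltaj6}. That formula comes from subtracting \eqref{eq:DFej} at $x_b$ from \eqref{eq:DFej} at $x^\circ$, using $V^\circ=W^\circ+h^\circ$, $\psi'_{x_b}=b$ and $h_{x_b}=\tfrac12b^2\im(w^2)$. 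Since $\Delta_j$ is a difference of elements of $\Sc_6$, it lies in $\Sc_6$.

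\emph{Step 2: every component of $\Delta_j$ lies in a tail row.} I would track angular frequencies.
\begin{itemize}
\item $h^\circ$ has frequencies of modulus at most $2\cdot57=114$, and $\overline p$ lowers frequencies by at most $d=56$. Hence $\Gamma_1$ has frequencies $\geq-170$, and $w^{j-1}\Gamma_1$ has frequencies $\geq j-171\geq230>M$.
\item $|p|^2\psi_{c^\circ}$ has frequencies $\geq-56+1$, so $w^j\Gamma_0$ has frequencies $\geq j-55>M$.
\item $w^{j-1}\overline pW^\circ$ has frequencies $\geq j-1-\Lambda\geq286>M$, using $\Lambda=114$ and the fact that $K$ does not change frequencies.
\end{itemize}
Taking real or imaginary parts adds the conjugate frequencies. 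In the sine basis the row is indexed by the modulus of the frequency, and that modulus stays above $M$. Therefore $\nrm{A\Delta_j}_X\leq A_t\nrm{\Delta_j}_\rho$.

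\emph{Step 3: the norm estimates.} I use $\nrm{\overline f}_\rho=\nrm f_\rho$, hence $\nrm{2\re f}_\rho\leq2\nrm f_\rho$ and $\nrm{\im f}_\rho\leq\nrm f_\rho$, together with $\nrm{w^kf}_\rho\leq\rho^k\nrm f_\rho$ from Lemma~\ref{lem:mult}. Recall $\omega_j=\alpha(j+2)\rho^{j+1}$ and $j\geq\jst$.
\begin{itemize}
\item The first term of \eqref{eq:Deltaj6} contributes $2j\rho^{j-1}\nrm{\Gamma_1}_\rho/\omega_j\leq2\nrm{\Gamma_1}_\rho/(\alpha\rho^2)$.
\item The second term contributes $\rho^j\nrm{\Gamma_0}_\rho/\omega_j\leq\nrm{\Gamma_0}_\rho/(\alpha\rho(\jst+2))$. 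Together with the first term this gives $\sU_\Gamma$.
\item For the third term I write
\[
jw^{j-1}\overline pW^\circ=j(1-|w|^2)^2w^{N}\big(w^\Lambda\overline pQ^\circ\big),\qquad N=j-1-\Lambda\geq0.
\]
Then Lemma~\ref{lem:doublezero} (via Lemma~\ref{lem:carry}(e)) gives, termwise in $\gamma^W_{m,t}$, the bound
\[
2j\sum_{m,t}|\gamma^W_{m,t}|\,\frac{4(2t+1)(2t+3)\rho^{m+j-1-\Lambda}}{(m+j-\Lambda)^2}.
\]
Dividing by $\omega_j$ and using $j/(j+2)\leq1$ and $(m+j-\Lambda)^2\geq(\jst-\Lambda)^2$, this is at most $\sU_W$.
\end{itemize}
Summing the three contributions and multiplying by $A_t$ gives $Z_{\mathrm{sh},\infty}$.

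The only delicate point is the frequency bookkeeping in Step~2. It must confirm that $w^\Lambda\overline pQ^\circ$ has only nonnegative frequencies, which holds because $Q^\circ$ has frequencies in $[-M,M]$. It must also confirm that no component of $\Delta_j$ falls into a finite row, since otherwise the bound $A_t$ would not apply. The remaining steps are direct norm arithmetic.
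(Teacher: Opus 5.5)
Your proposal is correct and follows the paper's proof essentially step for step. You reduce to $-A\Delta_j$ via $ATe_j=e_j$, then show by frequency bookkeeping that every component of $\Delta_j$ lies in a tail row, so that Lemma~\ref{lem:normA6} supplies the factor $A_t$, and then make the same three norm estimates, using Lemma~\ref{lem:doublezero} for the $W^\circ$ term. Your separate frequency bounds for the $\Gamma_0$ and $W^\circ$ terms are slightly sharper than the paper's uniform bound $j-1-170$, but this changes nothing.
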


\begin{proof}
The frequencies of $\overline p$ lie in $[-d,0]=[-56,0]$, those of $h^\circ$ in $[-114,114]$, since $h^\circ$ is a combination of $S_{j+k,0}$ with $j,k\leq57$, and those of $W^\circ$ and $Q^\circ$ in $[-M,M]$. Hence all frequencies of $w^{j-1}\Gamma_1$, $w^j\Gamma_0$ and $w^{j-1}\overline pW^\circ$ are at least $j-1-170\geq230>M$. Taking real and imaginary parts only changes the signs of frequencies, so all components of $\Delta_j\in\Sc_6$ lie in tail rows, and $\nrm{A\Delta_j}_X\leq A_t\nrm{\Delta_j}_\rho$ by Lemma~\ref{lem:normA6}. By Lemma~\ref{lem:mult} and $\nrm{\re f}_\rho,\nrm{\im f}_\rho\leq\nrm f_\rho$,
\[
\frac{\nrm{2\re(jw^{j-1}\Gamma_1)}_\rho}{\omega_j}\leq\frac{2j\rho^{j-1}\nrm{\Gamma_1}_\rho}{\alpha(j+2)\rho^{j+1}}\leq\frac{2\nrm{\Gamma_1}_\rho}{\alpha\rho^2},\qquad\frac{\nrm{\im(w^j\Gamma_0)}_\rho}{\omega_j}\leq\frac{\nrm{\Gamma_0}_\rho}{\alpha\rho(\jst+2)} .
\]
Finally $jw^{j-1}\overline pW^\circ=j(1-|w|^2)^2w^{j-1-\Lambda}(w^\Lambda\overline pQ^\circ)$ with $j-1-\Lambda\geq0$, and Lemma~\ref{lem:doublezero} with $N=j-1-\Lambda$ gives
\[
\frac{\nrm{2\re(jw^{j-1}\overline pW^\circ)}_\rho}{\omega_j}\leq\frac{2j}{\alpha(j+2)\rho^{j+1}}\sum_{m,t}|\gamma^W_{m,t}|\frac{4(2t+1)(2t+3)\rho^{m+j-1-\Lambda}}{(m+j-\Lambda)^2}\leq\sU_W .
\]
Adding the three contributions proves the claim.
\end{proof}

\begin{remark}
This far-shape bound is simpler and cruder than Proposition~\ref{prop:farshape}: it does not exploit the cancellation between $D\Fc_6(x^\circ)e_j$ and $T_j$ in the harmonic rows, and $\sU_\Gamma$ does not tend to zero as $\jst\to\infty$. It suffices here because $\Gamma_1$ and $\Gamma_0$ measure the deviation of the centre from the disc, which is small compared with $\alpha=b^3/2\approx1.2\cdot10^4$; the directly computed normalised column at $j=397$ is about $0.082$, while $Z_{\mathrm{sh},\infty}<0.465$ (Table~\ref{tab:bounds6}).
\end{remark}

Collecting the five classes, the number
\[
Z=\max\{Z_f,\ Z_{g,\partial},\ Z_{g,\infty},\ Z_{\mathrm{sh},\partial},\ Z_{\mathrm{sh},\infty}\}
\]
bounds $\nrm{I-AD\Fc_6(x^\circ)}_{X\to X}$.

\subsection{Certified bounds and the exact zero}
The quantities defined above are finite expressions in the $465$ dyadic centre coefficients, the entries of $\widehat A$ and $\rho=11/10$. They were evaluated in ball arithmetic at $128$ bits as described in Section~\ref{subsec:cap6}. Table~\ref{tab:bounds6} lists the results, rounded outward.

\begin{table}[ht]
\centering
\small
\begin{tabular}{@{}lll@{}}
\toprule
Quantity & Where defined & Certified bound\\
\midrule
$b=c_1^\circ$ & Section~\ref{sec:r6} & $28.864750643300045\ldots$ (exact dyadic)\\
$P$, \ $C$ & Lemma~\ref{lem:nonlinear6} & $<31.143514$, \ $<32.088747$\\
$V_0$ & Lemma~\ref{lem:nonlinear6} & $<7841.683303$\\
$\nrm{I-\widehat AM_f}$ & \eqref{eq:Ahat6} & $<6.714\cdot10^{-11}$\\
$\nrm R$ & Lemma~\ref{lem:normA6} & $<1402.313800$\\
$H$ & Lemma~\ref{lem:normA6} & $<5.848182\cdot10^{8}$\\
$A_t$ \ ($N_H=458$) & Lemma~\ref{lem:normA6} & $<3.164016$\\
$\nrm A$ & Lemma~\ref{lem:normA6} & $<1402.314$\\
\midrule
$Y$ & Section~\ref{sec:r6} & $<5.243310\cdot10^{-5}$\\
$Z_f$ \ ($465$ columns) & Section~\ref{sec:r6} & $<0.073227$\\
$P_s$, \ $P_t$ & Lemma~\ref{lem:gtail6} & $<31.143246$, \ $<2.675771\cdot10^{-4}$\\
$\delta_a=2P_sP_t+P_t^2$ & Lemma~\ref{lem:gtail6} & $<0.016667$\\
$Z_{g,\partial}$ \ ($705$ columns, max.\ at $(62,1)$) & Lemma~\ref{lem:gtail6} & $<0.313291$\\
$Z_{g,\infty}$ & Lemma~\ref{lem:gtail6} & $<0.390424$\\
$Z_{\mathrm{sh},\partial}$ \ ($85$ columns, max.\ at $j=61$) & Section~\ref{sec:r6} & $<0.387180$\\
$\nrm{\Gamma_1}_\rho$, \ $\nrm{\Gamma_0}_\rho$ & Proposition~\ref{prop:farshape6} & $<844.817923$, \ $<4110.324378$\\
$\sum_{m,t}|\gamma^W_{m,t}|(2t+1)(2t+3)\rho^{m-\Lambda-2}$ & Proposition~\ref{prop:farshape6} & $<3721574.315769$\\
$\sU_\Gamma$, \ $\sU_W$ & Proposition~\ref{prop:farshape6} & $<0.11689839$, \ $<0.03005935$\\
$Z_{\mathrm{sh},\infty}$ \ (all $j\geq401$) & Proposition~\ref{prop:farshape6} & $<0.464977$\\
$Z$ & Section~\ref{sec:r6} & $<0.464977$\\
$C_2=\nrm A\,c_2$ & Lemma~\ref{lem:nonlinear6} & $<0.250135$\\
$C_3=\nrm A\,c_3$ & Lemma~\ref{lem:nonlinear6} & $<2.7601\cdot10^{-7}$\\
$C_4=\nrm A\,c_4$ & Lemma~\ref{lem:nonlinear6} & $<2.1035\cdot10^{-15}$\\
\bottomrule
\end{tabular}
\caption{Certified bounds for $n=6$, rounded outward, for $M=58$, $S=30$, $\rho=11/10$, $\jst=401$. The complete interval enclosures are recorded in the certificate \texttt{certificate\_r6.json} and in the seven interval receipts (Section~\ref{subsec:cap6}); $P$, $C$ and $V_0$ are intermediate values of the final verification script, which uses $\nrm A\leq1402.314$ in $C_2,C_3,C_4$.}
\label{tab:bounds6}
\end{table}

\begin{theorem}\label{thm:zero6}
Let $r=1/5000$. There is exactly one $x^*=(g^*,c^*)\in X_6$ with $\nrm{x^*-x^\circ}_X\leq r$ and $\Fc_6(x^*)=0$.
\end{theorem}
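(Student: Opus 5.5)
The plan is to apply Proposition~\ref{prop:NK6} with $A$ the operator of \eqref{eq:A6}, mirroring the proof of Theorem~\ref{thm:zero}. First I record the structural hypotheses. By Lemma~\ref{lem:F6}, $\Fc_6$ is a continuous quartic polynomial map $X_6\to\Sc_6$. The expansion \eqref{eq:expansion6} exhibits it in the required form, with symmetric bounded $B_2,B_3,B_4$ whose constants $c_2,c_3,c_4$ are given by Lemma~\ref{lem:nonlinear6}. Table~\ref{tab:bounds6} gives $\nrm{I-\widehat AM_f}<6.714\cdot10^{-11}<1$, so \eqref{eq:Ahat6} holds. Lemma~\ref{lem:A}, via Lemma~\ref{lem:carry}(d), then shows that $A$ is bounded and injective and satisfies $ATe=e$ on tail inputs. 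Lemma~\ref{lem:normA6} gives $\nrm A\leq\max\{\nrm R,A_t\}<1402.314$.

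Next I assemble the majorants. The residual bound $Y$ and the finite-column bound $Z_f$ come from the exact polynomial computations. I then check that every input lies in exactly one of the five classes:
\begin{itemize}
\item finite inputs;
\item near tail $g$-inputs with $n\leq82$, $s\leq55$;
\item far tail $g$-inputs, which have $n\geq86$ or $s\geq56$, since $n\equiv2\pmod4$ and so the gap between $82$ and $86$ contains no admissible $n$;
\item near shape inputs $61\leq j\leq397$;
\item far shape inputs $j\geq401$, using that $j\equiv1\pmod4$, so $397$ and $401$ are consecutive.
\end{itemize}
The five bounds come from Lemma~\ref{lem:gtail6} and Proposition~\ref{prop:farshape6}. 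The latter needs $j-1-170>M$ and $j-1-\Lambda\geq0$, and both hold for $j\geq401$. By Lemma~\ref{lem:columns}, $Z=\max$ of the five class bounds dominates $\nrm{I-AD\Fc_6(x^\circ)}$, and Table~\ref{tab:bounds6} gives $Z<0.464977$. Likewise $C_k=\nrm A\,c_k$ are bounded as in the table.

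I then pass to rational majorants, say $Y=6\cdot10^{-5}$, $Z=47/100$, $C_2=3/10$, $C_3=10^{-6}$, $C_4=10^{-14}$, and verify \eqref{eq:radii6} at $r=1/5000$ in exact arithmetic. The first inequality reads $6\cdot10^{-5}-0.53\cdot2\cdot10^{-4}+0.3\cdot4\cdot10^{-8}+\cdots\approx-4.6\cdot10^{-5}<0$. The second reads $-0.53+1.2\cdot10^{-4}+\cdots<0$. Proposition~\ref{prop:NK6} then yields existence and uniqueness of the zero in the closed ball.

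The only genuine obstacle is the validity of the certified entries of Table~\ref{tab:bounds6}, above all $Z_{\mathrm{sh},\infty}$ and $Z_{g,\partial}$, which dominate $Z$. These rest on the interval computations described in Section~\ref{subsec:cap6}. The deduction itself is routine once those entries are granted.
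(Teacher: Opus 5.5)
Your proposal is correct and is the paper's proof: you apply Proposition~\ref{prop:NK6} to the operator $A$ of \eqref{eq:A6}, get injectivity from \eqref{eq:Ahat6} and Lemma~\ref{lem:A}, and check \eqref{eq:radii6} at $r=1/5000$ with rational majorants of the certified entries of Table~\ref{tab:bounds6}. Your majorants ($Y=6\cdot10^{-5}$, $C_2=3/10$, $C_3=10^{-6}$, $C_4=10^{-14}$) are looser than the paper's ($53\cdot10^{-6}$, $251/1000$, $3\cdot10^{-7}$, $3\cdot10^{-15}$), but they still dominate the table and satisfy both inequalities, and your check that the five column classes partition all inputs spells out what the paper leaves implicit.
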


\begin{proof}
The support condition of Proposition~\ref{prop:farshape6}, $\jst-1-170=230>M$, holds, and \eqref{eq:Ahat6} holds by Table~\ref{tab:bounds6}; hence $A$ is bounded and injective. By Table~\ref{tab:bounds6}, Proposition~\ref{prop:NK6} applies with the rational majorants $Y=53\cdot10^{-6}$, $Z=47/100$, $C_2=251/1000$, $C_3=3\cdot10^{-7}$ and $C_4=3\cdot10^{-15}$. For these values and $r=1/5000$, exact rational arithmetic gives
\begin{gather*}
Y+(Z-1)r+C_2r^2+C_3r^3+C_4r^4=-5.29899599999975999999999952\cdot10^{-5}<0,\\
Z-1+2C_2r+3C_3r^2+4C_4r^3=-0.529899599999963999999999904<0 . \qedhere
\end{gather*}
\end{proof}

\begin{remark}[The centre for $n=6$]\label{rem:centre6}
The centre $x^\circ$ was obtained numerically, and its provenance plays no role in the proof. A Fourier--Bessel collocation in $\R^6$, with $u$ expanded in the regular Helmholtz solutions $r^{-2}J_{\ell+2}(\rho_*r)\,U_{\ell/2}(\cos2\phi)$, $\ell\equiv0\pmod4$, at the ball wavenumber $\rho_*=j_{3,8}$ and seeded with the second-order amplitude predicted by the mechanism of Section~\ref{subsec:mechanism}, produced a numerical solution of \eqref{eq:main} with Cauchy defects of order $10^{-13}$. Its meridian domain was mapped conformally to the disc by Theodorsen's method, and Newton's method was applied to the Galerkin truncation of \eqref{eq:F6} to the $15$ shape coefficients $c_1,c_5,\ldots,c_{57}$ and the $450$ coefficients of $g$ described above, with residual about $10^{-10}$ in double precision. The resulting binary64 numbers were then frozen and are interpreted as exact dyadic rationals. (The wavenumber $\rho_*$ used to locate the centre is unrelated to the weight $\rho=11/10$ of the proof.)
\end{remark}

\subsection{Reconstruction, geometry and convexity}
Let $x^*=(g^*,c^*)$ be the zero of Theorem~\ref{thm:zero6}, $\psi^*=\psi_{c^*}$ and $\delta c=c^*-c^\circ$. Then $\sum_j\omega_j|\delta c_j|\leq r$, and consequently
\begin{equation}\label{eq:dc6}
|c^*_1-b|\leq\frac r{\omega_1},\qquad\sum_jj\rho^{j-1}|\delta c_j|\leq\theta_1r,\qquad\sum_j\rho^j|\delta c_j|\leq\theta_0r .
\end{equation}

\begin{proposition}\label{prop:geometry6}
The map $\psi^*$ is injective on the disc $\{|w|<21/20\}$, and $\psi^{*\prime}$ has no zeros there. The domain $D=\psi^*(\D)$ is bounded by the real-analytic Jordan curve $\psi^*(\partial\D)$, it is strictly star-shaped with respect to $0$, invariant under $z\mapsto\overline z$, $z\mapsto-\overline z$ and $z\mapsto i\overline z$, and it is not a disc.
\end{proposition}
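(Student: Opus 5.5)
The proof follows that of Proposition~\ref{prop:geometry} step by step, with the constants of Section~\ref{sec:r6}. Set $R=21/20<\rho=11/10$ and $b_-=b-r/\omega_1\leq c_1^*$. From \eqref{eq:dc6}, $\sum_{j\geq5}j|\delta c_j|R^{j-1}\leq\theta_1r$ and $\sum_{j\geq5}|\delta c_j|\leq\theta_0r$. Put
\[
s_1=\sum_{j\geq5}j|c^\circ_j|+\theta_1r,\qquad s_0=\sum_{j\geq5}|c^\circ_j|+\theta_0r .
\]
Then $\sum_{j\geq5}j|c^*_j|\leq s_1$ and $\sum_{j\geq5}|c^*_j|\leq s_0$. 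The computation will certify, in interval arithmetic, three inequalities:
\[
\re\psi^{*\prime}(w)\geq b_--\sum_{j\geq5}j|c^\circ_j|R^{j-1}-\theta_1r>0\quad(|w|\leq R),\qquad \frac{s_1+s_0}{b_--s_0}<1,\qquad |c^\circ_5|>\theta_0r .
\]
Because $|c^\circ_5|\approx0.085$ and $s_1/b\approx$ a few percent, each holds with a wide margin.

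\emph{Univalence.} On the convex disc $|w|<R$, the difference quotient of $\psi^*$ is an average of $\psi^{*\prime}$. Its real part is therefore positive, so $\psi^*$ is injective there and $\psi^{*\prime}\neq0$.

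\emph{Star-shapedness.} On $|w|=1$ we have $|w\psi^{*\prime}-\psi^*|\leq\sum_{j\geq5}(j-1)|c^*_j|\leq s_1$ and $|\psi^*|\geq b_--s_0$. Hence $|w\psi^{*\prime}/\psi^*-1|<1$, and so $\frac{d}{d\theta}\arg\psi^*(e^{i\theta})=\re(w\psi^{*\prime}/\psi^*)>0$. Since $|\psi^*-c_1^*w|\leq s_0<c_1^*$, the winding number about $0$ is one. So $\partial D$ is a radial graph with positive real-analytic radius, and it is a real-analytic Jordan curve because $\psi^*$ is injective and analytic near $\partial\D$.

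\emph{Symmetries.} These follow from the displayed identities for $\psi_c$ with $c=c^*$: the three maps $z\mapsto\overline z,\,-\overline z,\,i\overline z$ carry $\psi^*(\D)$ to itself, since $\D$ is invariant under $w\mapsto\bw,-\bw,i\bw$.

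\emph{Not a disc.} The reflection symmetries fix the centre of any disc $D$ at $0$. The Schwarz lemma applied to $\psi^*/\psi^{*\prime}(0)$ and its inverse would then force $\psi^*$ to be linear. This contradicts $|c^*_5|\geq|c^\circ_5|-\theta_0r>0$.

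The only real obstacle is the certified numerics, namely the positivity of $\re\psi^{*\prime}$ on the enlarged disc $|w|\leq21/20$. There the factor $R^{j-1}$ amplifies the high coefficients: up to $j=57$, $R^{56}\approx15$. The tail coefficients satisfy $|c^\circ_j|<4\cdot10^{-4}$ for $j\geq13$, so I expect $\sum_{j\geq5}j|c_j^\circ|R^{j-1}$ to stay of order one, well below $b\approx28.86$. If a crude bound failed, I would instead bound $\re\psi^{*\prime}$ by evaluating the centre polynomial on a fine grid of $|w|=R$ with a derivative bound, combined with the maximum principle for the harmonic function $\re\psi^{*\prime}$.
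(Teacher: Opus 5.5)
Your proposal is correct and is essentially the paper's proof. It uses the same $b_-$, $s_0$, $s_1$ and the same three certified inequalities. Univalence comes from $\re\psi^{*\prime}>0$ on $|w|\le 21/20$, star-shapedness from $|w\psi^{*\prime}/\psi^*-1|<1$ on $|w|=1$, and the non-disc property from the symmetries, the Schwarz lemma and $c^*_5\neq0$.

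The differences are cosmetic and all valid:
\begin{itemize}
\item you omit the factor $R^{j-1}$ in $s_1$;
\item you bound $|\delta c_5|$ by $\theta_0 r$ rather than $r/\omega_5$;
\item you should record $b_->s_0$ explicitly, since your ratio bound uses it implicitly.
\end{itemize}
Your fallback grid argument is not needed: the paper certifies directly that $b_--\sum_{j\geq5}j|c^\circ_j|R^{j-1}-\theta_1 r>27.21$.
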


\begin{proof}
Let $R=21/20<\rho$, $b_-=b-r/\omega_1\leq c^*_1$, and
\[
s_1=\sum_{j\geq5}j|c^\circ_j|R^{j-1}+\theta_1r,\qquad s_0=\sum_{j\geq5}|c^\circ_j|+\theta_0r .
\]
By \eqref{eq:dc6} and $R<\rho$, $\sum_{j\geq5}j|c^*_j|R^{j-1}\leq s_1$ and $\sum_{j\geq5}|c^*_j|\leq s_0$. The computation (Section~\ref{subsec:cap6}) certifies
\begin{equation}\label{eq:geom6}
b_--s_1>27.2138856,\qquad\frac{s_1+s_0}{b_--s_0}<0.0634442,\qquad|c^*_5|\geq|c^\circ_5|-\frac r{\omega_5}>0.08529699 .
\end{equation}
Thus $\re\psi^{*\prime}(w)\geq c^*_1-\sum_{j\geq5}j|c^*_j|R^{j-1}>27.21$ for $|w|\leq R$, which gives univalence on the convex disc $|w|<R$ exactly as in Proposition~\ref{prop:geometry}. For $|w|=1$ we have $|\psi^*(w)-c_1^*w|\leq s_0$, $|w\psi^{*\prime}(w)-\psi^*(w)|\leq s_1$ and $|\psi^*(w)|\geq b_--s_0>0$, hence $|w\psi^{*\prime}/\psi^*-1|<0.0635$; the argument of Proposition~\ref{prop:geometry} shows that $D$ is strictly star-shaped and that $\partial D$ is a radial graph with positive real-analytic radius. The symmetries were shown at the beginning of this section. If $D$ were a disc, its symmetries would force its centre to be $0$, and the Schwarz lemma argument of Proposition~\ref{prop:geometry} would show that $\psi^*$ is linear, contradicting $c^*_5\neq0$.
\end{proof}

\begin{proposition}\label{prop:convex6}
Let $c$ be any real sequence indexed by $j\equiv1\pmod4$ with $\sum_j\omega_j|c_j-c^\circ_j|\leq r$, for instance $c=c^*$. Then for $|w|\leq1$,
\[
|\psi_c'(w)|>27.67533225,\qquad|\psi_c''(w)|<7.86831783,\qquad\re\Big(1+\frac{w\psi_c''(w)}{\psi_c'(w)}\Big)>0.71569200 .
\]
Consequently $D=\psi^*(\D)$ is strictly convex, and $\Omega=\Pi_6^{-1}(D)$ is convex.
\end{proposition}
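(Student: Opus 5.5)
The plan follows Proposition~\ref{prop:convex} closely. The differences are that the derivative bounds must hold uniformly over the whole shape ball $\sum_j\omega_j|c_j-c_j^\circ|\le r$, not only at $c^*$, and that the weights are now $\omega_j=\alpha(j+2)\rho^{j+1}$ with $\rho=11/10$.

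\emph{Tail estimates.} Write $\delta c=c-c^\circ$. For $|w|\le1$ we bound each perturbation sum by $\nrm{\delta c}\le r$ times the supremum of the coefficient ratio over $j\equiv1\pmod4$:
\[
\sum_j j|\delta c_j|\le \theta_1 r\rho\cdot\sup_j\frac{j}{(j+2)\rho^{j-1}}\cdot\rho^{0}\le\theta_1r,\qquad \sum_j j(j-1)|\delta c_j|\le \frac{r}{\alpha}\sup_{j}\frac{j(j-1)}{(j+2)\rho^{j+1}}=:E_2 .
\]
The first inequality holds since $j\rho^{j-1}/\omega_j\le\theta_1$ and $\rho^{j-1}\ge1$. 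The supremum defining $E_2$ is a finite maximum over $j$, attained near $j\approx1/\log\rho\approx10$, and it can be certified by checking finitely many $j$ together with monotone decay beyond them. We also need $|c_1-b|\le r/\omega_1$.

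\emph{Derivative bounds at the centre.} Set $A_1=\sum_{j\ge5}j|c_j^\circ|$ and $A_2=\sum_{j\ge5}j(j-1)|c^\circ_j|$, finite sums over the $14$ nonlinear centre coefficients, computed in interval arithmetic. Then for $|w|\le1$,
\[
|\psi_c'(w)|\ge b-\frac r{\omega_1}-A_1-\theta_1r=:L,\qquad|\psi_c''(w)|\le A_2+E_2=:U,
\]
where we have used that $\sum_{j\ge5}j|c_j|\le A_1+\sum_{j\ge5} j|\delta c_j|$, and that the $j=1$ term contributes to $\psi''$ nothing. The computation should certify $L>27.67533225$ and $U<7.86831783$. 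Then $\re(1+w\psi_c''/\psi_c')\ge1-U/L>0.7156$. I expect the only delicate point here to be ensuring $E_2$ is rigorously bounded over infinitely many $j$. This is handled by noting that the ratio $j(j-1)/((j+2)\rho^{j+1})$ is decreasing for $j$ beyond an explicit threshold (the ratio of consecutive terms with step $4$ is eventually below $1$). In any case $r/\alpha\approx1.7\cdot10^{-8}$ makes this term negligible, so a crude bound suffices.

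\emph{Convexity of $D$ and $\Omega$.} Strict convexity of $D=\psi^*(\D)$ then follows verbatim from the tangent-turning argument in Proposition~\ref{prop:convex}. That argument uses univalence from Proposition~\ref{prop:geometry6}, and $\psi^{*\prime}\ne0$ on $\overline\D$ gives winding number zero. For $\Omega=\Pi_6^{-1}(D)$, take $x,z\in\Omega$ and $\sigma\in[0,1]$, and put $y_i^\sigma=\sigma|x^{(i)}|+(1-\sigma)|z^{(i)}|$ for the two blocks. By convexity of $D$, the point $(y_1^\sigma,y_2^\sigma)$ lies in $D$. Because $D$ is convex and invariant under both reflections $y_1\mapsto-y_1$ and $y_2\mapsto-y_2$, the rectangle $[-y_1^\sigma,y_1^\sigma]\times[-y_2^\sigma,y_2^\sigma]$ lies in $D$: it is the convex hull of the four reflected points. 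Since the norms of the blocks of $\sigma x+(1-\sigma)z$ are at most $y_1^\sigma$ and $y_2^\sigma$, their image under $\Pi_6$ lies in that rectangle, so $\sigma x+(1-\sigma)z\in\Omega$.
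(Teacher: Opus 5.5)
Your outline is the paper's own argument. You bound $|\psi_c'|\ge L$ and $|\psi_c''|\le U$ on $\overline\D$ from the centre sums $A_1,A_2$ plus perturbation allowances, then apply the tangent-turning criterion and the rectangle argument for $\Omega$, which you reproduce exactly. Your $E_2$, the exact supremum of $j(j-1)/\omega_j$ (attained at $j=13$, about $4.56\cdot10^{-8}$), is slightly sharper than the paper's $r/(\alpha\rho\,e\log\rho)\approx5.84\cdot10^{-8}$. So the bounds for $|\psi_c''|$ and for $\re(1+w\psi_c''/\psi_c')$ go through.

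The step that fails is \emph{the computation should certify $L>27.67533225$}. Your allowance for $\psi'$ is
\[
\frac r{\omega_1}+\theta_1r=\frac r\alpha\Big(\frac1{3\rho^2}+\frac1{\rho^2}\Big)\approx1.83\cdot10^{-8}.
\]
The stated constant leaves much less room than that. We have $b-27.67533225=1.1894183933\ldots$, and the paper certifies $A_1<1.18941839$, with $A_1$ in fact just above $1.18941838$. The paper's own enclosures are sharp in their last displayed digit, and this is consistent with every other bound in that proof. So any admissible allowance must stay below $1.34\cdot10^{-8}$. With yours, $L<27.675332245$, and the first inequality of the statement is not obtained. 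The other two inequalities, and hence the convexity conclusions, are unaffected.

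The repair is the paper's estimate, and it needs two changes.
\begin{itemize}
\item The budget $\sum_j\omega_j|\delta_j|\le r$ is shared between $\delta_1$ and the higher coefficients. So bound
\[
|\delta_1|+\sum_{j\ge5}j|\delta_j|=\sum_j\kappa_j\omega_j|\delta_j|\le r\sup_j\kappa_j,\qquad \kappa_1=1/\omega_1,\ \ \kappa_j=j/\omega_j .
\]
This is a maximum, not the sum of two worst cases. Your own bound $\sum_jj|\delta_j|\le\theta_1r$, taken over all $j$, already contains $j=1$, so adding $r/\omega_1$ double-counts it.
\item For $j\ge5$, keep the factor $\rho^{-(j-1)}\le\rho^{-4}$ that you discard. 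This gives $j/\omega_j<1/(\alpha\rho^6)$ in place of $\theta_1=1/(\alpha\rho^2)$.
\end{itemize}
Together these give $E_1=r/(\alpha\rho^6)<9.39\cdot10^{-9}$ and $L=b-A_1-E_1>27.67533225$. Either change alone still leaves an allowance of at least $1.37\cdot10^{-8}$, which is too large.
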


\begin{proof}
Let $A_1=\sum_{j\geq5}j|c^\circ_j|$, $A_2=\sum_{j\geq5}j(j-1)|c^\circ_j|$, $\delta=c-c^\circ$, and
\[
E_1=r\max\Big\{\frac1{\omega_1},\frac1{\alpha\rho^6}\Big\},\qquad E_2=\frac r{\alpha\rho\,e\log\rho}.
\]
We have $|\delta_1|+\sum_{j\geq5}j|\delta_j|=\sum_j\kappa_j\omega_j|\delta_j|\leq r\sup_j\kappa_j$ with $\kappa_1=1/\omega_1$ and $\kappa_j=j/\omega_j=j/((j+2)\alpha\rho^{j+1})<1/(\alpha\rho^6)$ for $j\geq5$; hence this sum is at most $E_1$. For $j\geq5$, $j(j-1)/\omega_j<j\rho^{-j}/(\alpha\rho)\leq1/(\alpha\rho\,e\log\rho)$, since $x\mapsto x\rho^{-x}$ attains its maximum $1/(e\log\rho)$ on $(0,\infty)$ at $x=1/\log\rho$; hence $\sum_{j\geq5}j(j-1)|\delta_j|\leq E_2$. Both suprema are over all $j$, so the infinite tail of $c$ is covered. For $|w|\leq1$ it follows that
\[
|\psi_c'(w)|\geq c_1-\sum_{j\geq5}j|c_j|\geq b-A_1-E_1=:L,\qquad|\psi_c''(w)|\leq\sum_{j\geq5}j(j-1)|c_j|\leq A_2+E_2=:U .
\]
The computation certifies $A_1<1.18941839$, $A_2<7.86831777$, $E_1<9.39\cdot10^{-9}$ and $E_2<5.84\cdot10^{-8}$, hence $L>27.67533225$, $U<7.86831783$ and $\re(1+w\psi_c''/\psi_c')\geq1-U/L>0.71569200$.

For $c=c^*$, the argument of Proposition~\ref{prop:convex} (the analytic convexity criterion) shows that $D$ is strictly convex. Now let $x=(x',x''),z=(z',z'')\in\Omega$ and $\sigma\in[0,1]$, and put $m=(m_1,m_2)=\sigma\Pi_6(x)+(1-\sigma)\Pi_6(z)$. By convexity $m\in D$, and $m_1,m_2\geq0$. Since $D$ is invariant under both reflections, it contains the four points $(\pm m_1,\pm m_2)$, and by convexity their convex hull, the rectangle $[-m_1,m_1]\times[-m_2,m_2]$. By the triangle inequality in each factor $\R^3$, $|\sigma x'+(1-\sigma)z'|\leq m_1$ and $|\sigma x''+(1-\sigma)z''|\leq m_2$, so $\Pi_6(\sigma x+(1-\sigma)z)\in D$, that is, $\sigma x+(1-\sigma)z\in\Omega$.
\end{proof}

\subsection{\texorpdfstring{Proof of Theorem~\ref{thm:main} for $n=6$}{Proof of Theorem 1.2 for n=6}}
Let $V^*=Kg^*+h_{c^*}$. By Lemma~\ref{lem:F6}, $V^*\in C^1(\overline\D)$ satisfies \eqref{eq:V6} with $\psi=\psi^*$ and has the parities required in Lemma~\ref{lem:pullback6}. By Proposition~\ref{prop:geometry6}, $\psi^*$ is injective with nonvanishing derivative on a neighbourhood of $\overline\D$. Lemma~\ref{lem:pullback6} shows that $F=V^*\circ(\psi^*)^{-1}\in C^1(\overline D)$ is a weak solution of $\Delta F+F=0$ in $D$ with $F=y_1y_2$ and $\nabla F=(y_2,y_1)$ on $\partial D$, odd in $y_1$ and in $y_2$, and invariant under the exchange of $y_1$ and $y_2$. By Lemma~\ref{lem:interior} and Lemma~\ref{lem:boundary} in the form of Lemma~\ref{lem:carry}(f), $F$ extends to a real-analytic solution on an open neighbourhood of $\overline D$; replacing this neighbourhood by the connected component containing $\overline D$ of its intersection with its images under the two reflections and the exchange, we obtain a neighbourhood $N$ with these symmetries on which, by the identity theorem, $F$ keeps its parities.

Since $D$ is strictly star-shaped and symmetric, $\partial D$ meets each axis in exactly two points. Proposition~\ref{prop:rotation6} shows that $\Omega=\Pi_6^{-1}(D)$ and $u=F(|x'|,|x''|)/(|x'|\,|x''|)$, extended analytically to $\{x'=0\}\cup\{x''=0\}$, satisfy \eqref{eq:main}, that $u$ is real analytic on the neighbourhood $\Pi_6^{-1}(N)$ of $\overline\Omega$, and that $u$ is nonconstant.

\emph{Geometry of $\Omega$.} Let $R_D(\phi)$ be the radial function of $D$. It is positive and real analytic, even (by $z\mapsto\overline z$) and invariant under $\phi\mapsto\pi-\phi$ (by $z\mapsto-\overline z$), hence $\pi$-periodic, and its Fourier series has the form $R_D(\phi)=\sum_{k\geq0}\alpha_k\cos2k\phi$, with exponentially decaying coefficients (by the exchange symmetry $\alpha_k=0$ for odd $k$, which we do not need). As in the case $n=4$, $\widetilde R(\tau)=\sum_k\alpha_kT_k(\tau)$ converges on a complex neighbourhood of $[-1,1]$ and defines a real-analytic function with $\widetilde R(\cos2\phi)=R_D(\phi)$, positive near $[-1,1]$. For $x\neq0$ we have $\Pi_6(x)=|x|(\cos\phi,\sin\phi)$ with $\phi\in[0,\pi/2]$ and $\cos2\phi=(|x'|^2-|x''|^2)/|x|^2$. Hence
\begin{gather*}
\Omega=\{0\}\cup\Big\{x\neq0:\ |x|<\widetilde R\Big(\frac{|x'|^2-|x''|^2}{|x|^2}\Big)\Big\},\\
\partial\Omega=\big\{\widetilde R(|\omega'|^2-|\omega''|^2)\,\omega:\ \omega=(\omega',\omega'')\in S^5\big\}.
\end{gather*}
Thus $\Omega$ is strictly star-shaped with respect to $0$, in particular connected. The map $\omega\mapsto\widetilde R(|\omega'|^2-|\omega''|^2)\,\omega$ is real analytic and injective on $S^5$, and, exactly as for $n=4$, its differential maps a tangent vector $v\perp\omega$ to a vector whose component tangent to $S^5$ is $\widetilde R(|\omega'|^2-|\omega''|^2)\,v\neq0$. Hence $\partial\Omega$ is a compact real-analytic hypersurface diffeomorphic to $S^5$. The symmetries of $\Omega$ follow from $\Omega=\Pi_6^{-1}(D)$ and the invariance of $D$ under the exchange of coordinates.

\emph{$\Omega$ is not a ball.} If it were, its centre would be fixed by every isometry of $\R^6$ preserving $\Omega$, in particular by $O(3)\times O(3)$, whose only fixed point is $0$. Then $\Pi_6(\Omega)=D\cap[0,\infty)^2$ would be a quarter of a disc centred at $0$, and by the reflection symmetries $D$ would be that disc, contradicting Proposition~\ref{prop:geometry6}. Finally, $\Omega$ is convex by Proposition~\ref{prop:convex6}. This completes the proof of Theorem~\ref{thm:main} for $n=6$. \qed

\section{Rank-two Lie algebras}\label{lie:sec}

The symmetry classes used for $n=4$ and $n=6$ are, up to finite index and the additional reflections, the adjoint representations of the compact Lie algebras $\mathfrak u(2)$ and $\mathfrak{su}(2)\oplus\mathfrak{su}(2)$, and the weights $y$ and $y_1y_2$ are the products of their positive roots. In this section we carry out the reduction for an arbitrary compact Lie algebra of rank two, and we use it to prove the following theorem for the three simple ones.

\begin{theorem}\label{lie:thm}
Let $(\kf,G)$ be one of $(\mathfrak{su}(3),SU(3))$, $(\mathfrak{so}(5),SO(5))$ and $(\mathfrak g_2,G_2)$, so that $n=\dim\kf$ equals $8$, $10$ and $14$, respectively, and equip $\kf$ with an $\Ad(G)$-invariant inner product, so that $\kf\cong\R^n$ as a Euclidean space. There exist a bounded convex domain $\Omega\subset\kf$ and a nonconstant real-valued function $u$, real analytic on an open neighbourhood of $\overline\Omega$, such that \eqref{eq:main} holds. Moreover, $\Omega$ is not a ball, it is strictly star-shaped with respect to $0$, its boundary is a compact real-analytic hypersurface diffeomorphic to $S^{n-1}$, and $\Omega$ is invariant under $\Ad(G)$ and under $X\mapsto-X$. Consequently the Fourier transform of $\one_\Omega$ vanishes on the unit sphere, $\Omega$ fails the Pompeiu property, and the Schiffer and Pompeiu conjectures fail in $\R^8$, $\R^{10}$ and $\R^{14}$ within the class of bounded convex domains with real-analytic boundary.
\end{theorem}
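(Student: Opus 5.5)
The plan follows the template of Sections~\ref{sec:reduction}--\ref{sec:r6}, with the elementary rotation lemmas replaced by Lie-theoretic ones. I would first record the reduction. Fix a maximal torus with Lie algebra $\tf\cong\C$ and Weyl group $W$, dihedral of order $2m$ with $m=3,4,6$. Choose coordinates in which the product of positive roots is $\varpi(z)=\im z^m$, up to a constant. For an $\Ad$-invariant $C^2$ function $u$ with $f=u|_\tf$, Harish-Chandra's formula \eqref{eq:radial-part} gives $(\Delta u+u)|_\tf=\varpi^{-1}(\Delta+1)(\varpi f)$ at regular points. The equation then extends to all of $\tf$ by continuity, and to $\kf$ because every element is $\Ad$-conjugate into $\tf$.

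Conversely, suppose $F$ is real analytic, $W$-anti-invariant and solves \eqref{eq:intro-Pm} on a $W$-invariant star-shaped $D$. The first lemma is the analogue of Lemmas~\ref{lem:odd-division} and~\ref{lem:odd-division6}: $F/\varpi$ extends to a $W$-invariant real-analytic function on a neighbourhood of $\overline D$. Anti-invariant analytic functions vanish on root lines, so they are divisible by $\varpi$ locally. Chevalley's theorem, in its analytic form due to Luna, then writes the quotient as an analytic function of the basic $W$-invariants. Composing with the corresponding $\Ad$-invariant polynomials on $\kf$ yields an analytic $\Ad$-invariant $u$ on $\Ad(G)N$. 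The boundary conditions follow from those of $F/\varpi$ on $\partial D$ off the root lines, by density, exactly as in Proposition~\ref{prop:rotation6}. The gradient of an invariant function at a regular point of $\tf$ lies in $\tf$, and there it equals $\nabla f$.

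\textbf{Geometry of $\Omega$.} Set $\Omega=\Ad(G)D$. Writing $\partial D$ as a $W$-invariant radial graph $R_D$ on the unit circle of $\tf$, I would extend $R_D$ to an $\Ad$-invariant real-analytic positive function $\widetilde R$ on the unit sphere of $\kf$. Since $R_D$ is $W$-invariant, it is an analytic function of $\cos(2m\theta)$ (or $\cos(m\theta)$), which is the restriction of a normalised invariant polynomial of degree $2m$. This gives $\partial\Omega=\{\widetilde R(\omega)\omega\}\cong S^{n-1}$ and strict star-shapedness, as for $n=4,6$. Convexity follows from Kostant's theorem in the form \cite{Lew00}, once $D$ is shown convex. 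Invariance under $X\mapsto-X$ follows from $-1\in W$, because $W$ is dihedral of even order $2m$. To see that $\Omega$ is not a ball, note that $0$ is the only $\Ad(G)$-fixed point for a semisimple $\kf$; a ball would therefore be centred at $0$, which would force $D$ to be a disc.

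\textbf{Computation.} The planar part repeats Section~\ref{sec:r6} with step $2m$. The symmetry class is $\psi=\sum_{j\equiv1\,(2m)}c_jw^j$, and the unknowns are $g$ in the sine sector of frequencies $\equiv m\pmod{2m}$. The boundary datum is $\varpi\circ\psi=\im\psi^m$, so the equation \eqref{eq:intro-poly} has degree $m+2$, and I would need a version of Proposition~\ref{prop:NK6} with $B_2,\dots,B_{m+2}$ bounded as in Lemma~\ref{lem:nonlinear6}.

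The tail model at $\psi=bw$ is computed as in Proposition~\ref{prop:principal6}. Its harmonic part pairs frequencies $j+m-1$ and $j-m-1$, and the nonharmonic terms come from an analogue of Lemma~\ref{lem:radial4} for $r^{m+2k}$. I would invert the step-$2m$ difference operator explicitly, as in Lemma~\ref{lem:Ttt6}. This produces $A_t$ and the five-class defect bound, with a crude far-shape bound as in Proposition~\ref{prop:farshape6}.

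\textbf{Main obstacle.} I expect the obstacle to be the certification itself, especially for $\mathfrak g_2$, where $m=6$ and the centre sits near $j_{7,25}\approx88.5$. There the nonlinear constants grow like powers of $b$ and the finite block is large. If $\sU_\Gamma$ is too crude, I would first try to exploit the harmonic cancellation, as in Proposition~\ref{prop:farshape}. After that, the certified univalence, star-shapedness and convexity bounds of the form $1-U/L>0$ are routine extensions of Proposition~\ref{prop:convex6}. The analytic continuation across $\partial D$ uses Lemma~\ref{lem:boundary} with the harmonic datum $\im z^m$.
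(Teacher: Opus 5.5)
Your outline follows the paper's proof: Harish-Chandra's radial part with harmonic $\varpi$, division by $\varpi$ and lift through the Chevalley invariants, the radial-graph description of $\partial\Omega$, Kostant convexity, and the degree-$(m+2)$ conformal Newton--Kantorovich argument with an explicit step-$2m$ tail inverse. One step, however, is false as stated.

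You deduce $-\Omega=\Omega$ from ``$-1\in W$, because $W$ is dihedral of even order $2m$''. Every dihedral group has even order. What matters is whether the rotation through $\pi$ lies in $W=I_2(m)$, and that holds if and only if $m$ is even. For $\mathfrak{su}(3)$ ($m=3$, $W\cong S_3$) it fails: traceless Hermitian matrices with eigenvalues $(2,-1,-1)$ and $(-2,1,1)$ are not conjugate. So $\Ad(SU(3))$-invariance alone does not give central symmetry. Your own ansatz repairs this, and it is exactly how the paper argues. The map $\psi=\sum_{j\equiv1\,(2m)}c_jw^j$ has only odd powers, so $D$ is invariant under the enlarged group $I_2(2m)$, which contains $-1$. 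Hence $-\Omega=\Ad(G)(-D)=\Omega$.

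The same conflation of $W$ with $I_2(2m)$ appears when you say that a $W$-invariant $R_D$ is a function of $\cos(2m\theta)$. $W$-invariance only gives a function of $\cos(m\theta)$. The paper uses this via $\mathbf P_2(X)/|X|^m$; $\cos(2m\theta)$ needs the enlarged symmetry.

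The other differences are legitimate alternatives.

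\emph{Analyticity at the origin.} You get analyticity of $u$ at $0$ at once from the analytic form of Chevalley's theorem for $W$-invariant germs. The paper proves analyticity of $q^\sharp$ only on $\Ad(G)N\setminus\{0\}$. It uses that $p=(|\tau|^2,\re\tau^m)$ is a local diffeomorphism off the walls, and evenness across a single wall together with $\det Dp=-2m\varpi$. It then removes the point $0$ by showing that the bounded continuous $u$ is a weak solution across it (using $n\geq3$) and applying Lemma~\ref{lem:interior}. Your route is shorter but imports a nontrivial result on analytic invariants; the paper's is elementary.

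\emph{Star-shapedness.} You plan to certify it from a bound on $w\psi'/\psi$. The paper gets it for free from strict convexity and $0\in D$. Do not forget the certified $c^*_{2m+1}\neq0$ needed for the Schwarz-lemma non-disc argument.

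\emph{The case $\mathfrak g_2$.} The paper does not use harmonic cancellation here. It rescales the field by a constant $B\approx j_{7,25}^6$. It also uses the split far-shape bound of Proposition~\ref{lie:prop:farshape}, which charges $\nrm A$ rather than $A_t$ to the few outputs that still land in finite rows.
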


Theorem~\ref{lie:thm} contains the cases $n=8,10,14$ of Theorem~\ref{thm:main} and of Corollary~\ref{cor:pompeiu}. Only the adjoint action of $G$ enters, so $G$ may be replaced by any compact connected group with Lie algebra $\kf$, for instance $\mathrm{Spin}(5)$ in place of $SO(5)$. For a simple $\kf$ the invariant inner product is unique up to a positive factor; changing it amounts to a dilation, which is absorbed by rescaling $\Omega$ and $u$. For $\kf=\mathfrak{su}(3)$, identified with the traceless Hermitian $3\times3$ matrices via $X\mapsto iX$, the domain is a spectral set: $\Omega$ consists of the matrices whose eigenvalue vector, a point of the plane $\{\lambda_1+\lambda_2+\lambda_3=0\}\cong\R^2$, lies in a fixed planar domain.

The proof follows the pattern of the cases $n=4,6$. Sections~\ref{lie:subsec:radial}--\ref{lie:subsec:reduced} reduce the problem, for every compact Lie algebra of rank two, to a planar Helmholtz problem on a Cartan section $D$ with the Cauchy data of $\im\tau^m$, and show that the lifted domain is analytic, spherical and convex whenever $D$ is. Sections~\ref{lie:subsec:conformal}--\ref{lie:subsec:zero} solve the planar problems for $m=3,4,6$ by the conformal fixed-disc method, with a computer-assisted Newton--Kantorovich argument in a symmetry class that contains the cases $m=1,2$ of Sections~\ref{sec:conformal}--\ref{sec:r6} as special cases. Section~\ref{lie:subsec:proof} proves Theorem~\ref{lie:thm}, Section~\ref{lie:subsec:cap} describes the computations, and Section~\ref{lie:subsec:complete} explains why the dimensions $4,6,8,10,14$ are the only ones that this reduction can reach.

\subsection{The radial part of the Laplacian}\label{lie:subsec:radial}

Let $G$ be a compact connected Lie group with Lie algebra $\kf$ and an $\Ad(G)$-invariant inner product $\langle\cdot,\cdot\rangle$ on $\kf$. We identify $\kf$ with $\R^n$, $n=\dim\kf$, by an orthonormal basis, so that $\Delta_\kf$ is the Euclidean Laplacian. Let $\mathbb T\subset G$ be a maximal torus with Lie algebra $\tf$, and let $W=N_G(\mathbb T)/\mathbb T$ be the Weyl group, which acts on $\tf$ by orthogonal maps. There is an orthogonal decomposition
\[
\kf=\tf\oplus\bigoplus_{\alpha\in\Phi^+}\kf_\alpha,\qquad\dim\kf_\alpha=2,
\]
indexed by a set $\Phi^+$ of positive real roots, which are linear forms on $\tf$, such that $\operatorname{ad}(\tau)$ acts on $\kf_\alpha$ as $\alpha(\tau)J_\alpha$ for $\tau\in\tf$, where $J_\alpha$ is a rotation through a right angle. We use the following standard facts \cite[Ch.~IV--V]{BtD85}, \cite[Ch.~1]{Hum90}: every $X\in\kf$ is $\Ad(G)$-conjugate to an element of $\tf$; two elements of $\tf$ are $\Ad(G)$-conjugate if and only if they are $W$-conjugate; the roots in $\Phi^+$ are pairwise non-proportional; $W$ is generated by the orthogonal reflections $s_\alpha$ in the walls $\ker\alpha$, $\alpha\in\Phi^+$; and the function
\[
\varpi(\tau)=\prod_{\alpha\in\Phi^+}\alpha(\tau)
\]
satisfies $\varpi\circ w=\det(w)\,\varpi$ for $w\in W$. Let $\tf_{\rm reg}=\{\tau\in\tf:\varpi(\tau)\neq0\}$. For $\tau\in\tf_{\rm reg}$ the kernel of the skew-symmetric map $\operatorname{ad}\tau$ is $\tf$, hence $[\kf,\tau]=\tf^\perp$.

\begin{proposition}[Harish-Chandra]\label{lie:prop:radial}
The polynomial $\varpi$ is harmonic on $\tf$. Let $U\subset\kf$ be open and $\Ad(G)$-invariant, let $u\in C^2(U)$ be $\Ad(G)$-invariant, and let $f=u|_{U\cap\tf}$. Then for every $\tau\in U\cap\tf_{\rm reg}$,
\begin{equation}\label{lie:eq:radial}
\nabla u(\tau)=\nabla_\tf f(\tau)\in\tf,\qquad(\Delta_\kf u)(\tau)=\frac{\operatorname{div}_\tf(\varpi^2\nabla_\tf f)}{\varpi^2}(\tau)=\frac{\Delta_\tf(\varpi f)}{\varpi}(\tau).
\end{equation}
\end{proposition}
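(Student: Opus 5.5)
We will compute $\nabla u$ and $\Delta_\kf u$ at a regular point $\tau$ in coordinates adapted to the decomposition $\kf=\tf\oplus\bigoplus_\alpha\kf_\alpha$. The tool is the local parametrisation $\Phi(g,\sigma)=\Ad(g)\sigma$ near $(e,\tau)$. Invariance gives $u(\Ad(\exp Y)\sigma)=f(\sigma)$ for $Y\in\tf^\perp$ and $\sigma\in\tf$ near $\tau$.

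\emph{First-order statement.} Differentiating $u(\Ad(\exp(tY))\tau)=u(\tau)$ at $t=0$ gives $\langle\nabla u(\tau),[Y,\tau]\rangle=0$ for all $Y$. Since $[\kf,\tau]=\tf^\perp$ at regular points, $\nabla u(\tau)\in\tf$. Its $\tf$-component is the gradient of the restriction, so $\nabla u(\tau)=\nabla_\tf f(\tau)$.

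\emph{Second-order statement.} Split $\Delta_\kf u(\tau)=\Delta_\tf f(\tau)+\sum_{\alpha}\operatorname{tr}\big(D^2u(\tau)|_{\kf_\alpha}\big)$. Fix $\alpha$ and an orthonormal pair $E,J_\alpha E$ in $\kf_\alpha$. By the orthogonality of $\Ad$, for $V\in\kf_\alpha$ the curve $t\mapsto\Ad(\exp(tY))\tau$ with $Y=-\alpha(\tau)^{-1}J_\alpha V$ has velocity $[Y,\tau]=-\operatorname{ad}(\tau)Y=V$. Its acceleration is $[Y,[Y,\tau]]=[Y,V]$, and the $\tf$-component of this bracket is computed from the root structure. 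Differentiating $u\equiv$ const along the curve twice gives
\[
D^2u(\tau)(V,V)+\langle\nabla u(\tau),[Y,[Y,\tau]]\rangle=0 .
\]
We then need $\langle H,[Y,[Y,\tau]]\rangle$ for $H=\nabla_\tf f(\tau)$. Using $\Ad$-invariance, $\langle H,[Y,V]\rangle=\langle[V,H],Y\rangle=\alpha(H)\langle J_\alpha V,Y\rangle$, up to a sign fixed by the convention for $J_\alpha$. With $|V|=1$ this equals $\mp\alpha(H)/\alpha(\tau)$, and the sign must come out so that the trace over the two-dimensional $\kf_\alpha$ is $2\alpha(\nabla f)/\alpha(\tau)=2\,\partial_{\alpha^\sharp}f/\alpha(\tau)$. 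Summing over $\alpha$ yields
\[
\Delta_\kf u=\Delta_\tf f+2\sum_\alpha\frac{\langle\alpha^\sharp,\nabla f\rangle}{\alpha}=\Delta_\tf f+\frac{2\langle\nabla\varpi,\nabla f\rangle}{\varpi},
\]
because $\nabla\log\varpi=\sum_\alpha\alpha^\sharp/\alpha$. This equals $\varpi^{-2}\operatorname{div}(\varpi^2\nabla f)$ identically.

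\emph{Harmonicity of $\varpi$ and the last form.} We have $\Delta(\varpi f)/\varpi=\Delta f+2\langle\nabla\varpi,\nabla f\rangle/\varpi+f\Delta\varpi/\varpi$, so the final equality is equivalent to $\Delta_\tf\varpi=0$. To prove it, note that $\Delta\varpi$ is a polynomial of degree $m-2$, and it is $W$-anti-invariant because $\Delta$ commutes with the orthogonal $W$-action. An anti-invariant polynomial vanishes on every wall $\ker\alpha$, since it is odd under $s_\alpha$. It is therefore divisible by each of the pairwise non-proportional linear forms $\alpha$, hence by $\varpi$, which has degree $m$. A polynomial of degree $m-2$ divisible by $\varpi$ must be $0$. (In rank two one could also check directly that $\varpi=c\,\im z^m$.)

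The main obstacle is the sign and normalisation bookkeeping in the second-order step. The computation must show that the curvature term from the orbit directions is exactly $+\alpha(\nabla f)/\alpha(\tau)$ per unit direction. I would first verify it against the $\mathfrak{su}(2)\cong\R^3$ case, where it must reproduce $\partial_y^2+\frac2y\partial_y$ from Lemma~\ref{lem:laplace-identity}. That check fixes the sign and normalisation conventions for $J_\alpha$ and $\alpha^\sharp$.
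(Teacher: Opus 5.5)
Your proof is correct, but it computes the Laplacian by a different route from the paper.

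\emph{The paper's route.} It pulls back the Euclidean metric under $\Psi(g\mathbb T,\tau)=\Ad(g)\tau$. The pulled-back metric is block-diagonal, with volume density $\varpi(\tau)^2$ times a fixed density on $G/\mathbb T$. The coordinate formula for the Laplace--Beltrami operator then gives $\varpi^{-2}\operatorname{div}_\tf(\varpi^2\nabla_\tf f)$ directly.

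\emph{Your route.} You compute the Hessian of $u$ at $\tau$ in each $\kf_\alpha$ by differentiating $u$ twice along the orbit curve $t\mapsto\Ad(\exp tY)\tau$. This yields $\Delta_\tf f+2\varpi^{-1}\nabla\varpi\cdot\nabla f$ first, and the divergence form only by algebra.

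\emph{Comparison.} Your argument is purely pointwise and uses nothing beyond the bracket relations and the invariance of the inner product; it needs no local diffeomorphism and no Riemannian formula. The paper's argument explains where the weight $\varpi^2$ comes from, and that volume density is reused in Section~\ref{subsec:mechanism} to write averages of invariant functions over $S^{n-1}$ as integrals over the unit circle of $\tf$ against $\sin^2(m\theta)\,d\theta$. Your harmonicity argument is the same as the paper's.

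\emph{The sign you leave open.} It is not a matter of convention, and no check against $\mathfrak{su}(2)$ is needed. For any $Y\in\kf_\alpha$ and $H\in\tf$,
\[
\langle H,[Y,[Y,\tau]]\rangle=\langle[H,Y],[Y,\tau]\rangle=\langle\alpha(H)J_\alpha Y,\,-\alpha(\tau)J_\alpha Y\rangle=-\alpha(H)\alpha(\tau)|Y|^2 .
\]
This holds regardless of orientation because $J_\alpha$ enters twice and the expression is quadratic in $Y$. With $|[Y,\tau]|=|\alpha(\tau)|\,|Y|=1$ it gives $D^2u(\tau)(V,V)=\alpha(H)/\alpha(\tau)$ for every unit $V\in\kf_\alpha$, which is the value you need. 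Your choice $Y=-\alpha(\tau)^{-1}J_\alpha V$ actually gives $[Y,\tau]=-V$, not $V$. This is harmless for the same reason: the second-order identity is even in $Y$.
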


The identity \eqref{lie:eq:radial} is the flat case of Harish-Chandra's radial-part formula \cite{HC57}; see also \cite[Ch.~II]{Hel84}. We include the short proof.

\begin{proof}
The Laplacian commutes with orthogonal maps, so $\Delta\varpi$ is a $W$-anti-invariant polynomial of degree $|\Phi^+|-2$. An anti-invariant polynomial vanishes on each wall $\ker\alpha$, since $s_\alpha$ fixes it pointwise, and is therefore divisible by $\alpha$; the roots being pairwise non-proportional, it is divisible by $\varpi$. Hence $\Delta\varpi=0$.

For $Y\in\kf$ the function $s\mapsto u(\Ad(\exp sY)\tau)$ is constant, so $\nabla u(\tau)\perp[Y,\tau]$; since $[\kf,\tau]=\tf^\perp$, $\nabla u(\tau)\in\tf$, and it equals $\nabla_\tf f(\tau)$. Let $\mathfrak m=\tf^\perp$ and $\Psi\colon G/\mathbb T\times\tf_{\rm reg}\to\kf$, $\Psi(g\mathbb T,\tau)=\Ad(g)\tau$. Its differential maps the tangent vector $(\frac{d}{ds}g\exp(sY)\mathbb T|_{s=0},\tau')$, $Y\in\mathfrak m$, $\tau'\in\tf$, to $\Ad(g)([Y,\tau]+\tau')$. As $[Y,\tau]\in\mathfrak m\perp\tf$ and $|[Y,\tau]|^2=\sum_\alpha\alpha(\tau)^2|Y_\alpha|^2$, where $Y_\alpha$ is the component of $Y$ in $\kf_\alpha$, the map $\Psi$ is a local diffeomorphism, and the pull-back of the Euclidean metric is the orthogonal sum of $|d\tau|^2$ and a metric on $G/\mathbb T$ whose volume density is $\prod_\alpha\alpha(\tau)^2=\varpi(\tau)^2$ times a fixed smooth density. Since $u\circ\Psi(g\mathbb T,\tau)=f(\tau)$, the coordinate formula $\Delta=|\gamma|^{-1/2}\partial_i(|\gamma|^{1/2}\gamma^{ij}\partial_j)$ for this block-diagonal metric $\gamma$ gives $(\Delta_\kf u)(\Ad(g)\tau)=\varpi^{-2}\operatorname{div}_\tf(\varpi^2\nabla_\tf f)(\tau)$. Finally $\varpi^{-2}\operatorname{div}(\varpi^2\nabla f)=\Delta f+2\varpi^{-1}\nabla\varpi\cdot\nabla f=\varpi^{-1}\big(\Delta(\varpi f)-f\Delta\varpi\big)=\varpi^{-1}\Delta(\varpi f)$.
\end{proof}

Because $\varpi$ is harmonic, \eqref{lie:eq:radial} conjugates the Laplacian of $\kf$, acting on invariant functions, to the flat Laplacian of $\tf$ without any potential term; the normalisation of $\varpi$ cancels.

\emph{Rank two.} From now on $\kf$ has rank two, $\dim\tf=2$. Let $m=|\Phi^+|\geq1$. Then $W$ is a finite group generated by the reflections in the $m$ distinct lines $\ker\alpha$, that is, a dihedral group of order $2m$, which we denote by $I_2(m)$ (so $I_2(1)$ is generated by a single reflection). Since $W$ preserves the lattice $\ker(\exp|_\tf)$, its rotations have order $1,2,3,4$ or $6$, so $m\in\{1,2,3,4,6\}$ \cite{Hum90}, and $n=\dim\kf=2+2m$. We identify $\tf$ with $\C$, $\tau=x+iy=|\tau|e^{i\theta}$, by an orthonormal basis such that the real axis is a wall. The walls are then the lines $e^{ik\pi/m}\R$, and
\[
W=\{\tau\mapsto e^{2\pi ik/m}\tau,\ \tau\mapsto e^{2\pi ik/m}\overline\tau:\ 0\leq k<m\}.
\]
Since $\prod_{k=0}^{m-1}\sin(\theta-k\pi/m)=2^{1-m}\sin m\theta$, the product of the positive roots is a nonzero multiple of $\im\tau^m$; as the normalisation of $\varpi$ plays no role, we put
\[
\varpi(\tau)=\im\tau^m=|\tau|^m\sin m\theta .
\]
The $W$-invariant polynomials $p_1(\tau)=|\tau|^2$ and $p_2(\tau)=\re\tau^m$ separate $W$-orbits: $p(\tau)=p(\tau')$ for $p=(p_1,p_2)$ if and only if $\tau'\in W\tau$. By Chevalley's restriction theorem \cite[Ch.~II]{Hel84} there are $\Ad(G)$-invariant polynomials $\mathbf P_1(X)=|X|^2$ and $\mathbf P_2$ on $\kf$ with $\mathbf P_j|_\tf=p_j$; we write $\mathbf P=(\mathbf P_1,\mathbf P_2)$. A direct computation, using $\partial_xp_2=m\re\tau^{m-1}$ and $\partial_yp_2=-m\im\tau^{m-1}$, gives
\begin{equation}\label{lie:eq:jac}
\det Dp(\tau)=-2m\,\varpi(\tau).
\end{equation}
The compact Lie algebras of rank two with $m\geq1$ are listed in Table~\ref{lie:tab:rank2}.

\begin{table}[ht]
\centering
\small
\begin{tabular}{@{}llllll@{}}
\toprule
$\kf$ & $n$ & roots & $W$ & $m$ & treated in\\
\midrule
$\mathfrak u(2)\cong\R\oplus\mathfrak{su}(2)$ & $4$ & $A_1$ & $I_2(1)\cong\Z_2$ & $1$ & Sections~\ref{sec:reduction}--\ref{sec:recon}\\
$\mathfrak{su}(2)\oplus\mathfrak{su}(2)\cong\mathfrak{so}(4)$ & $6$ & $A_1\times A_1$ & $I_2(2)\cong\Z_2^2$ & $2$ & Section~\ref{sec:r6}\\
$\mathfrak{su}(3)$ & $8$ & $A_2$ & $I_2(3)\cong S_3$ & $3$ & this section\\
$\mathfrak{so}(5)\cong\mathfrak{sp}(2)$ & $10$ & $B_2=C_2$ & $I_2(4)$ & $4$ & this section\\
$\mathfrak g_2$ & $14$ & $G_2$ & $I_2(6)$ & $6$ & this section\\
\bottomrule
\end{tabular}
\caption{The compact Lie algebras of rank two with at least one root. In each case $n=2+2m$, and in suitable orthonormal coordinates $\tau\in\tf\cong\C$ the product of the positive roots is a multiple of $\im\tau^m$.}
\label{lie:tab:rank2}
\end{table}

\begin{remark}[Dimensions four and six]\label{lie:rem:m12}
For $\kf=\mathfrak u(2)$, the group $\Ad(U(2))$ acts on $\mathfrak{su}(2)\cong\R^3$ as $SO(3)$ and trivially on the centre; $\tf$ is spanned by the centre and a line in $\mathfrak{su}(2)$, and in the coordinates $\tau=x_1+iy$ of Section~\ref{sec:reduction} one has $W=\{1,\,y\mapsto-y\}$, $\varpi=y$ and $p=(x_1^2+y^2,x_1)$. For $\kf=\mathfrak{su}(2)\oplus\mathfrak{su}(2)$, $\Ad(G)=SO(3)\times SO(3)$ acts on $\R^3\times\R^3$, $W$ is generated by the reflections in the two axes of $\tau=y_1+iy_2$, $\varpi=y_1y_2=\frac12\im\tau^2$ and $p_2=y_1^2-y_2^2$. Accordingly, the maps $\Pi$ and $\Pi_6$ of Sections~\ref{sec:reduction} and~\ref{sec:r6} send $X$ to its conjugate in the closed Weyl chamber; Lemmas~\ref{lem:laplace-identity} and~\ref{lem:laplace6} are the cases $m=1,2$ of Proposition~\ref{lie:prop:radial}; Lemmas~\ref{lem:odd-division} and~\ref{lem:odd-division6} combine the cases $m=1,2$ of Lemmas~\ref{lie:lem:division} and~\ref{lie:lem:inv} below; the groups $O(3)\times\Z_2$ and $(O(3)\times O(3))\rtimes\Z_2$ used there contain $\Ad(G)$ and realise, in addition, the enlarged symmetry $I_2(2m)$ of Section~\ref{lie:subsec:reduced}; and \eqref{eq:F} and \eqref{eq:F6} are the cases $m=1$, $B=1$ and $m=2$, $B=2$ of the equation \eqref{lie:eq:H} below, with the same weights, the same principal parts (Lemma~\ref{lem:Tform}, Proposition~\ref{prop:principal6}) and the same tail inverses (Lemmas~\ref{lem:Ttt} and~\ref{lem:Ttt6}). For $m\geq3$ the direct arguments of Sections~\ref{sec:recon} and~\ref{sec:r6} for the geometry and the convexity of the lifted domain are replaced by Lemmas~\ref{lie:lem:lift} and~\ref{lie:lem:convex}, and the non-ball argument uses that $\kf$ is simple.
\end{remark}

\subsection{Lifting from the Cartan plane}\label{lie:subsec:lift}

In this subsection $\kf$ has rank two, with the notation introduced above. A function $F$ on a $W$-invariant subset of $\tf$ is called \emph{anti-invariant} if $F\circ w=\det(w)F$ for all $w\in W$.

\begin{lemma}[Anti-invariant division]\label{lie:lem:division}
Let $N\subset\tf$ be open and $W$-invariant, and let $F$ be real analytic and anti-invariant on $N$. Then $F=\varpi q$ with $q$ real analytic and $W$-invariant on $N$.
\end{lemma}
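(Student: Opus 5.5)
Writing $\varpi=\prod_{\alpha\in\Phi^+}\alpha$, the plan is to divide by one root at a time, working locally and gluing, exactly as in Lemmas~\ref{lem:odd-division} and~\ref{lem:odd-division6}. Away from the walls, that is on $N\cap\tf_{\rm reg}$, we simply set $q=F/\varpi$. This is real analytic there, and it is $W$-invariant because $F$ and $\varpi$ both transform by $\det(w)$. It remains to show that $q$ extends analytically across every point $\tau_0\in N$ lying on some wall, and that the extension is unique.

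For the local step, fix $\tau_0\in N$ and choose a small ball $B$ around $\tau_0$ on which $F$ is given by a convergent power series. We take $B$ inside $N$, centred on $\tau_0$, and invariant under the stabiliser $W_{\tau_0}$. The stabiliser is generated by the reflections $s_\alpha$ with $\alpha(\tau_0)=0$; this is standard, and in rank two it is immediate: it is trivial, a single reflection, or the whole group when $\tau_0=0$. For a root $\alpha$ with $\alpha(\tau_0)=0$ we have $s_\alpha B=B$ and $F\circ s_\alpha=-F$. Choose linear coordinates $(a,b)$ with $a=\alpha$ and $b$ along $\ker\alpha$, so that $s_\alpha$ is $a\mapsto-a$. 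Then $F$ is odd in $a$, all even powers of $a$ drop out of its Taylor series, and $F=\alpha F_1$ with $F_1$ analytic on $B$.

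Since the roots are pairwise non-proportional, the remaining roots vanishing at $\tau_0$ define lines different from $\ker\alpha$, and we iterate. Suppose $\beta(\tau_0)=0$ with $\beta\ne\alpha$. Then $F_1=F/\alpha$ vanishes on $\ker\beta\cap B$ minus the point $\tau_0$ when $\tau_0\in\ker\alpha$, because there $\alpha\ne0$ and $F=0$ by anti-invariance under $s_\beta$. By continuity $F_1$ vanishes on all of $\ker\beta\cap B$. A real-analytic function vanishing on a line through the centre of the ball is divisible by the linear form defining it; to see this, expand in coordinates adapted to $\beta$ and note that every term of order zero in $\beta$ vanishes. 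Repeating over all roots vanishing at $\tau_0$ gives $F=\big(\prod_{\alpha(\tau_0)=0}\alpha\big)G$ with $G$ analytic on $B$. The roots with $\alpha(\tau_0)\ne0$ are nonzero on a smaller ball, so on that ball $q=G/\prod_{\alpha(\tau_0)\ne0}\alpha$ is analytic and agrees with $F/\varpi$ off the walls.

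For gluing, note that the walls have empty interior. Any two local extensions therefore agree on the dense open set of regular points of their overlap, hence everywhere on it by continuity. This defines $q$ on all of $N$ with $F=\varpi q$. Its $W$-invariance holds on the dense set $N\cap\tf_{\rm reg}$, and it extends to all of $N$ by continuity.

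The main obstacle is the divisibility step at the origin, where $m$ walls meet, together with checking that repeated division stays analytic. The vanishing-on-a-line argument handles this cleanly, so I do not expect real difficulty.
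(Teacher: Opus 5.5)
Your proposal is correct and follows essentially the same route as the paper. Both arguments divide locally by the root forms vanishing at $\tau_0$; at the origin each successive quotient vanishes on the next wall minus the origin, hence on the whole wall by continuity; and $W$-invariance of $q$ follows from the anti-invariance of $F$ and $\varpi$. The only difference is that the paper performs each division on the whole disc with Hadamard's formula $f(\lambda,\mu)=\lambda\int_0^1\partial_\lambda f(\sigma\lambda,\mu)\,d\sigma$ rather than with Taylor series, so it never has to shrink the ball when re-expanding in coordinates adapted to the next root.
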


\begin{proof}
Let $\ell_1,\ldots,\ell_m$ be the walls and $\lambda_k$ a unit linear form with $\ell_k=\ker\lambda_k$, so that $\varpi=c\lambda_1\cdots\lambda_m$ with $c\neq0$. Since the reflection in $\ell_k$ lies in $W$ and has determinant $-1$, $F$ vanishes on $N\cap\ell_k$. Let $U\subset N$ be an open disc centred at a point $\tau_0$. If $\tau_0$ lies on $\ell_k$, write points of $U$ in orthonormal coordinates $(\lambda,\mu)$ with $\lambda=\lambda_k$; since $U$ contains the point $(0,\mu)$ together with $(\lambda,\mu)$, Hadamard's formula
\[
f(\lambda,\mu)=\lambda\int_0^1\partial_\lambda f(\sigma\lambda,\mu)\,d\sigma
\]
shows that every real-analytic $f$ on $U$ vanishing on $\ell_k\cap U$ is $\lambda_k$ times a real-analytic function on $U$. If $\tau_0\neq0$, then $\tau_0$ lies on at most one wall, and dividing $F$ by that $\lambda_k$ (if any) and by the other factors of $\varpi$, which do not vanish near $\tau_0$, shows that $F/\varpi$ extends analytically to a neighbourhood of $\tau_0$. If $\tau_0=0$, then every wall passes through the centre of $U$. We divide $F$ successively by $\lambda_1,\ldots,\lambda_m$: after dividing by $\lambda_1,\ldots,\lambda_{k-1}$, the quotient vanishes on $(\ell_k\cap U)\setminus\{0\}$, where the divisors are nonzero, hence on $\ell_k\cap U$ by continuity, and the next division is possible. Hence $q=F/\varpi$, defined on $N\cap\tf_{\rm reg}$, extends to a real-analytic function on $N$. It is $W$-invariant, because $F$ and $\varpi$ are both anti-invariant.
\end{proof}

For a $W$-invariant open set $N\subset\tf$ let $\Ad(G)N=\{\Ad(g)\tau:g\in G,\ \tau\in N\}$. For a $W$-invariant function $q$ on $N$ we define $q^\sharp$ on $\Ad(G)N$ by $q^\sharp(X)=q(\tau)$ for any $\tau\in N$ conjugate to $X$; this is well defined, since the elements of $\tf$ conjugate to $X$ form one $W$-orbit.

\begin{lemma}[Invariant lift]\label{lie:lem:inv}
Let $N\subset\tf$ be open and $W$-invariant, and let $q$ be a $W$-invariant function on $N$.
\begin{enumerate}[label=\textup{(\alph*)}]
\item $\Ad(G)N$ is open in $\kf$, $\Ad(G)N\cap\tf=N$, and if $q$ is continuous, so is $q^\sharp$.
\item If $q$ is real analytic, then $q^\sharp$ is real analytic on $\Ad(G)N\setminus\{0\}$. In particular, for $\tau\in N\cap\tf_{\rm reg}$, \eqref{lie:eq:radial} holds for $u=q^\sharp$ and $f=q$.
\end{enumerate}
\end{lemma}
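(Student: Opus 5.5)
For (a), I will use the map $\Psi\colon G\times\tf\to\kf$, $(g,\tau)\mapsto\Ad(g)\tau$. The first step is $\Ad(G)N\cap\tf=N$. If $\Ad(g)\tau\in\tf$ with $\tau\in N$, then $\Ad(g)\tau$ and $\tau$ are elements of $\tf$ that are $\Ad(G)$-conjugate, hence $W$-conjugate, and so $\Ad(g)\tau\in W\tau\subset N$.

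Openness comes next. The complement $\kf\setminus\Ad(G)N$ equals $\Ad(G)(\tf\setminus N)$, since every element of $\kf$ is conjugate into $\tf$ and the conjugates lying in $\tf$ form a single $W$-orbit. Now $\tf\setminus N$ is closed, and I claim $\Ad(G)C$ is closed for every closed $C\subset\tf$. Take $X_k=\Ad(g_k)\tau_k\to X$. Then $|\tau_k|=|X_k|$ is bounded, so after passing to a subsequence (using compactness of $G$) we get $g_k\to g$ and $\tau_k\to\tau\in C$, hence $X=\Ad(g)\tau\in\Ad(G)C$.

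Continuity of $q^\sharp$ uses the same compactness argument. Suppose $X_k\to X$ in $\Ad(G)N$, with $X_k=\Ad(g_k)\tau_k$ and $\tau_k\in N$. If $q^\sharp(X_k)$ did not tend to $q^\sharp(X)$, I pass to a subsequence staying away from $q^\sharp(X)$ and then to a further subsequence with $g_k\to g$, $\tau_k\to\tau$. Here $\tau=\Ad(g)^{-1}X\in\Ad(G)N\cap\tf=N$, so $q(\tau_k)\to q(\tau)=q^\sharp(X)$, which is a contradiction.

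For (b), the approach is through Chevalley invariants, rather than through the local diffeomorphism $\Psi$, which breaks down at singular points. On $N\cap\tf_{\rm reg}$ the map $p=(p_1,p_2)$ is a local diffeomorphism by \eqref{lie:eq:jac}, and it is injective modulo $W$. Since $q$ is $W$-invariant, locally $q=\tilde q\circ p$ with $\tilde q$ analytic near $p(\tau_0)$, and then $q^\sharp=\tilde q\circ\mathbf P$ near $\Ad(G)\tau_0$. This holds because $\mathbf P$ is $\Ad$-invariant and separates orbits, via $p$ on $\tf$. So $q^\sharp$ is analytic near every regular point, and by conjugation near every point of $\Ad(G)(N\cap\tf_{\rm reg})$.

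At singular points $\tau_0\neq0$ the point $\tau_0$ lies on exactly one wall $\ker\alpha$. There I plan to use the stabiliser reduction: $W_{\tau_0}=\{1,s_\alpha\}$, and $q$ is invariant under $s_\alpha$. In coordinates $(\lambda,\mu)$ adapted to the wall, $q$ is even in $\lambda$, so as in Lemma~\ref{lem:odd-division} we can write $q=\hat q(\lambda^2,\mu)$ with $\hat q$ analytic. On the other side, near $\tau_0$ the functions $\mu$ and $\lambda^2$ extend to $\Ad(G)$-invariant analytic functions near $\tau_0$ in $\kf$. To see this, I will use the slice theorem. The centraliser $Z_G(\tau_0)$ has Lie algebra $\tf\oplus\kf_\alpha$, which is of type $\mathfrak u(2)$ up to centre. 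A tubular neighbourhood $\Ad(G)\times_{Z}(\text{slice})$ reduces the question to $Z$-invariant functions on $\tf\oplus\kf_\alpha\cong\R\oplus\R^3$. In that model $\mu$ is the central coordinate and $\lambda^2$ is $|\cdot|^2$ on the $\R^3$ factor, both polynomial. Hence $q^\sharp$ is analytic near $\tau_0$.

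The last statement of (b) then follows from Proposition~\ref{lie:prop:radial} applied with $U=\Ad(G)N\setminus\{0\}$, noting $q^\sharp|_\tf=q$ by (a).

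The main obstacle is this analyticity at wall points. The slice-theorem step needs the most care, specifically that the slice neighbourhood gives analytic coordinates in which invariance becomes $Z$-invariance. If that turns out to be awkward, I would try first to show directly that $\mathbf P$ is a submersion at such points onto a half-plane-like image and use Whitney's even-function result $q=\hat q(\lambda^2,\mu)$ composed with invariant polynomials. The origin is excluded from the claim, which is why the statement of (b) omits $0$.
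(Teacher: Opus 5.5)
Your proposal is correct. Part (a) and the regular-point case of (b) match the paper's proof in substance. At regular points you use orbit separation: for $X$ near $X_0$, $\mathbf P(X)$ lies in the open set $p(U)$, so some conjugate of $X$ lies in $U$. This is a tidy replacement for the paper's compactness argument.

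The real difference is at nonzero points $\tau_0$ on a wall. The paper never leaves the Cartan plane:
\begin{itemize}
\item it writes $p=\widetilde P(\lambda^2,\mu)$ and compares $\det Dp=2\lambda\det D\widetilde P(\lambda^2,\mu)$ with $\det Dp=\pm2m\varpi=\pm2m\lambda\varpi_1$, where $\varpi_1(\tau_0)\neq0$;
\item hence $\widetilde P$ is a local analytic diffeomorphism at $(0,\mu_0)$, so $q=Q\circ p$ near $\tau_0$ with $Q=\widetilde q\circ\widetilde P^{-1}$;
\item then $q^\sharp=Q\circ\mathbf P$ near $X_0$, exactly as at regular points.
\end{itemize}
This uses only \eqref{lie:eq:jac} and Chevalley's theorem, and it gives one local formula $q^\sharp=Q\circ\mathbf P$ at every nonzero point.

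Your slice-theorem route goes through the local model $\mathfrak z_\kf(\tau_0)=\tf\oplus\kf_\alpha\cong\R\oplus\mathfrak{su}(2)$. It is more conceptual and would extend to points on several walls in higher rank. But it costs you three things:
\begin{itemize}
\item the analytic slice theorem;
\item the facts that $Z_G(\tau_0)$ fixes $\R\tau_0$ pointwise and acts transitively on the spheres of the $\mathfrak{su}(2)$ factor, so that every slice point is conjugate into $\tf$ and $q^\sharp|_S=\widehat q(|v|^2,\mu)$;
\item an argument that an $\Ad(G)$-invariant function with analytic pull-back to $G\times S$ is analytic, for example via local analytic sections of the submersion $(g,s)\mapsto\Ad(g)s$.
\end{itemize}

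One correction to your fallback: $\mathbf P$ is \emph{not} a submersion at wall points. There $\det Dp=-2m\varpi=0$, and in fact $D\mathbf P(\tau_0)$ has rank one. What is true, and what the paper proves, is that the reduced map $\widetilde P$ in the variables $(\lambda^2,\mu)$ is a local diffeomorphism. With that correction the fallback is a two-line argument and the more economical choice here.
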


\begin{proof}
(a) If $\tau'\in\tf$ is conjugate to some $\tau\in N$, then $\tau'\in W\tau\subset N$; this gives $\Ad(G)N\cap\tf=N$. Let $X_j\to X$ with $X=\Ad(g)\tau$, $\tau\in N$, and write $X_j=\Ad(g_j)\tau_j$ with $\tau_j\in\tf$. Given any subsequence, a further subsequence has $g_j\to g'$ and $\tau_j\to\tau'$ (as $|\tau_j|=|X_j|$ is bounded), and then $\tau'$ is conjugate to $X$, so $\tau'\in N$. Hence $\tau_j\in N$ for large $j$, and $q^\sharp(X_j)=q(\tau_j)\to q(\tau')=q^\sharp(X)$ if $q$ is continuous. Both assertions follow because the subsequence was arbitrary.

(b) Let $X_0\in\Ad(G)N\setminus\{0\}$, and let $\tau_0\in N$ be conjugate to $X_0$, so $\tau_0\neq0$. We first find an open disc $U\ni\tau_0$ in $N$ and a real-analytic function $Q$ on an open set containing $p(U)$ with $q=Q\circ p$ on $U$. If $\varpi(\tau_0)\neq0$, then $p$ is a local analytic diffeomorphism at $\tau_0$ by \eqref{lie:eq:jac}, and $Q=q\circ(p|_U)^{-1}$ for a small $U$. Otherwise $\tau_0$ lies on exactly one wall $\ell$. In orthonormal coordinates $(\lambda,\mu)$ with $\ell=\{\lambda=0\}$, the functions $p$ and $q$ are even in $\lambda$ near $\tau_0$, because the reflection in $\ell$ belongs to $W$. As in the proof of Lemma~\ref{lem:odd-division}, $p=\widetilde P(\lambda^2,\mu)$ and $q=\widetilde q(\lambda^2,\mu)$ near $\tau_0$ with $\widetilde P$ and $\widetilde q$ real analytic near $(0,\mu_0)$. Then $\det Dp=2\lambda\det D\widetilde P(\lambda^2,\mu)$ in these coordinates, while $\det Dp=\pm2m\varpi$ by \eqref{lie:eq:jac}, and near $\tau_0$ one has $\varpi=\lambda\varpi_1$ with $\varpi_1(\tau_0)\neq0$, since only one factor of $\varpi$ vanishes at $\tau_0$. Hence $\det D\widetilde P(0,\mu_0)\neq0$, $\widetilde P$ is a local analytic diffeomorphism, and $Q=\widetilde q\circ\widetilde P^{-1}$ has the required property.

By the argument in (a), every $X$ close to $X_0$ is conjugate to some $\tau\in U$ (apply an element of $W$ to a limit point conjugate to $X_0$). For such $X$, $q^\sharp(X)=q(\tau)=Q(p(\tau))=Q(\mathbf P(X))$. Since $\mathbf P$ is polynomial, $q^\sharp$ is real analytic near $X_0$. The last assertion follows from Proposition~\ref{lie:prop:radial}.
\end{proof}

\begin{lemma}[Lifting]\label{lie:lem:lift}
Let $\kf$ have rank two. Let $R\colon\R\to(0,\infty)$ be real analytic with $R(\theta)=R(-\theta)=R(\theta+2\pi/m)$, and let
\[
D=\{re^{i\theta}:0\leq r<R(\theta)\}\subset\tf .
\]
Let $N\supset\overline D$ be a $W$-invariant open subset of $\tf$, and let $F$ be real analytic and anti-invariant on $N$ and satisfy
\begin{equation}\label{lie:eq:Pl}
\Delta F+F=0\ \text{ in }D,\qquad F=\varpi\ \text{ and }\ \nabla F=\nabla\varpi\ \text{ on }\partial D .
\end{equation}
Let $q$ be the function of Lemma~\ref{lie:lem:division}, so that $F=\varpi q$, let $\Omega=\Ad(G)D$ and $u=q^\sharp$. Then $\Omega$ is a bounded domain, strictly star-shaped with respect to $0$, with $\Omega\cap\tf=D$ and $\partial\Omega=\Ad(G)\partial D$, and $\partial\Omega$ is a compact real-analytic hypersurface diffeomorphic to $S^{n-1}$. The function $u$ is real analytic on the open neighbourhood $\Ad(G)N$ of $\overline\Omega$, it is nonconstant, and \eqref{eq:main} holds.
\end{lemma}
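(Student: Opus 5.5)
I will first establish the geometry of $\Omega$, then the analytic properties of $u$, and finally the equations. Since $R$ is invariant under $\theta\mapsto-\theta$ and $\theta\mapsto\theta+2\pi/m$, $D$ is $W$-invariant. I would extend $R$ to a real-analytic $W$-invariant function on $\tf\setminus\{0\}$ by $\tau\mapsto R(\arg\tau)$. Lemma~\ref{lie:lem:inv}(b) then makes $\mathcal R=(R\circ\arg)^\sharp$ real analytic on $\kf\setminus\{0\}$; it is positive and homogeneous of degree zero, since $\Ad(g)$ commutes with dilations.

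Every $X$ is conjugate to some $\tau\in\tf$ with $|\tau|=|X|$, and $\Ad(G)D\cap\tf=D$ by Lemma~\ref{lie:lem:inv}(a). Hence
\[
\Omega=\{0\}\cup\{X\neq0:|X|<\mathcal R(X)\}.
\]
This set is open, bounded and strictly star-shaped, so it is connected. Its boundary is $\{\mathcal R(\omega)\omega:\omega\in S^{n-1}\}$. The map $\omega\mapsto\mathcal R(\omega)\omega$ is an analytic embedding of $S^{n-1}$: its tangential component is $\mathcal R(\omega)v\neq0$, exactly as in the case $n=4$. The identity $\partial\Omega=\Ad(G)\partial D$ follows from the same description applied to $|X|=\mathcal R(X)$.

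Next, $u$. Lemma~\ref{lie:lem:division} gives $F=\varpi q$ with $q$ analytic and $W$-invariant on $N$. Then $u=q^\sharp$ is continuous on the open set $\Ad(G)N\supset\overline\Omega$ and analytic away from $0$ by Lemma~\ref{lie:lem:inv}. The origin is the main obstacle, because Lemma~\ref{lie:lem:inv}(b) excludes it. I plan to treat it through the equation. On $\tf_{\rm reg}\cap N$, Proposition~\ref{lie:prop:radial} gives
\[
\Delta u+u=\varpi^{-1}(\Delta F+F)
\]
at regular points. Since $\Delta F+F$ is analytic and anti-invariant on $N$ (the Laplacian commutes with $W$), it vanishes on $D$ and, by the identity theorem, on the connected component of $N$ containing $\overline D$. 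By Lemma~\ref{lie:lem:division}, $(\Delta F+F)/\varpi=0$ there too. Hence $\Delta u+u=0$ on a dense open subset of a punctured neighbourhood of $\overline\Omega$: regular elements are conjugate to $\tf_{\rm reg}$ and are dense, and $u$ is analytic there. By continuity of second derivatives off $0$, $\Delta u+u=0$ on $\Ad(G)N'\setminus\{0\}$, where $N'$ is that component (which I may assume $W$-invariant). Since $u$ is continuous and bounded near $0$, a removable-singularity argument applies. I would use the trick of Lemma~\ref{lem:interior}: $e^{\zeta}u$ is harmonic in $n+1$ variables off the line $\{0\}\times\R$, which has zero capacity, and it is bounded, so the singularity is removable. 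Thus $u$ is a real-analytic solution on a neighbourhood of $\overline\Omega$, and I would shrink $\Ad(G)N$ to $\Ad(G)N'$ if needed. As an alternative, $u$ is a bounded weak solution across a point in dimension $n\geq4$, and elliptic regularity gives the same conclusion.

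Boundary conditions. At $\tau\in\partial D\cap\tf_{\rm reg}$ we have $q=F/\varpi=1$ and $\nabla q=(\nabla F-q\nabla\varpi)/\varpi=0$. Proposition~\ref{lie:prop:radial} gives $\nabla u(\tau)=\nabla_\tf q(\tau)=0$, and $\Ad(G)$-invariance transports this to $\Ad(G)(\partial D\cap\tf_{\rm reg})$. That set is dense in $\partial\Omega=\Ad(G)\partial D$, because $\partial D$ meets the walls in finitely many points, each a limit of regular boundary points. Continuity of $u$ and $\nabla u$ then gives $u=1$ and $\nabla u=0$ on all of $\partial\Omega$. Finally, $u$ is nonconstant, since a constant solution of $\Delta u+u=0$ would vanish, contradicting $u=1$ on $\partial\Omega$.
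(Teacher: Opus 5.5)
Your proof is correct and has the same structure as the paper's: a radial-graph description of $\Omega$, the radial-part formula on regular elements plus density, a separate argument at the origin, and the boundary conditions by density of $\Ad(G)(\partial D\cap\tf_{\rm reg})$. It differs in two local steps.

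\emph{The radial function.} The paper expands $R(\theta)=\sum_k a_k\cos(km\theta)$ and sets $\mathbf R(X)=R_T(\mathbf P_2(X)/|X|^m)$ with $R_T=\sum_k a_kT_k$. This gives an explicit global formula in the Chevalley invariant. You instead apply Lemma~\ref{lie:lem:inv}(b) to $R\circ\arg$ on $N=\tf\setminus\{0\}$. That is legitimate, since this set is open and $W$-invariant and $R\circ\arg$ is analytic and $W$-invariant there. It avoids the Fourier--Chebyshev step, at the cost of relying on the wall-point analysis inside that lemma.

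\emph{The origin.} The paper uses the cutoffs $\chi_\eps(X)=\chi(|X|/\eps)$ to show that the bounded function $u$ is a weak solution across $0$; the error is $O(\eps^{n-2})$ with $n\geq3$. It then reuses the Weyl-lemma argument of Lemma~\ref{lem:interior}. Your main route, removability of the polar line $\{0\}\times\R$ for the bounded harmonic function $e^\zeta u$ on $\R^{n+1}$, is also valid but brings in potential theory. The paper's argument is self-contained, and it is exactly your stated ``alternative''.

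\emph{Two unnecessary steps.} You do not need to extend $\Delta F+F=0$ to the component $N'$ by the identity theorem, nor to shrink $\Ad(G)N$. For the origin, the equation on $\Omega\setminus\{0\}$ suffices. Since $u$ is already analytic on $\Ad(G)N\setminus\{0\}$ by Lemma~\ref{lie:lem:inv}(b) and $0\in\Omega$, local analyticity at $0$ gives analyticity on all of $\Ad(G)N$, as the statement requires.
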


\begin{proof}
\emph{The domain.} The function $R$ is even and $2\pi/m$-periodic, so its Fourier series has the form $R(\theta)=\sum_{k\geq0}a_k\cos(km\theta)$, with $|a_k|\leq C\varrho^{-k}$ for some $\varrho>1$ because $R$ is real analytic. As $|T_k|\leq\varrho_1^k$ on the Bernstein ellipse of parameter $\varrho_1$, the series $R_T(s)=\sum_ka_kT_k(s)$, with $T_k$ the Chebyshev polynomials, converges on a complex neighbourhood of $[-1,1]$, and $R_T(\cos m\theta)=R(\theta)$. If $X\neq0$ is conjugate to $\tau=|\tau|e^{i\theta}$, then $\mathbf P_2(X)/|X|^m=\re\tau^m/|\tau|^m=\cos m\theta\in[-1,1]$. Hence
\[
\mathbf R(X)=R_T\big(\mathbf P_2(X)/|X|^m\big),\qquad X\in\kf\setminus\{0\},
\]
is real analytic, positive, $\Ad(G)$-invariant and homogeneous of degree zero, and $\mathbf R(\tau)=R(\theta)$ for $\tau=|\tau|e^{i\theta}$. Consequently
\[
\Omega=\{0\}\cup\{X\neq0:|X|<\mathbf R(X)\},\qquad\partial\Omega=\{\mathbf R(\omega)\,\omega:\ \omega\in S^{n-1}\}=\Ad(G)\partial D,
\]
and $\Omega\cap\tf=D$. Thus $\Omega$ is open, bounded and strictly star-shaped with respect to $0$, hence connected. Exactly as for $n=4$ (Section~\ref{sec:recon}), the map $\omega\mapsto\mathbf R(\omega)\omega$ is an injective real-analytic immersion of $S^{n-1}$ with inverse $X\mapsto X/|X|$, so $\partial\Omega$ is a compact real-analytic hypersurface diffeomorphic to $S^{n-1}$. Moreover $\overline\Omega=\Ad(G)\overline D\subset\Ad(G)N$, and $\Ad(G)N$ is open by Lemma~\ref{lie:lem:inv}(a).

\emph{The equation.} By Lemma~\ref{lie:lem:inv}, $u$ is continuous on $\Ad(G)N$ and real analytic on $\Ad(G)N\setminus\{0\}$. For $\tau\in D\cap\tf_{\rm reg}$, \eqref{lie:eq:radial} gives $\Delta u(\tau)=\varpi^{-1}\Delta(\varpi q)(\tau)=\varpi^{-1}\Delta F(\tau)=-u(\tau)$; since $\Delta$ commutes with the orthogonal maps $\Ad(g)$, $\Delta u+u=0$ on $\Omega\cap\kf_{\rm reg}$, where $\kf_{\rm reg}=\Ad(G)\tf_{\rm reg}$. The polynomial $\varpi^2$ is $W$-invariant, so by Chevalley's theorem $\kf_{\rm reg}$ is the complement of the zero set of an $\Ad(G)$-invariant polynomial, which is not identically zero; hence $\kf_{\rm reg}$ is dense, and by continuity $\Delta u+u=0$ on $\Omega\setminus\{0\}$. The point $0$ lies in $\Omega$, because $R>0$. Let $U_0\subset\Omega$ be a ball about $0$, $\varphi\in C^\infty_c(U_0)$, and $\chi_\eps(X)=\chi(|X|/\eps)$ with $\chi\in C^\infty(\R)$, $\chi=0$ on $(-\infty,1]$ and $\chi=1$ on $[2,\infty)$. Then $\int u(\Delta+1)(\chi_\eps\varphi)=0$, and $(\Delta+1)(\chi_\eps\varphi)-\chi_\eps(\Delta+1)\varphi=2\nabla\chi_\eps\cdot\nabla\varphi+\varphi\Delta\chi_\eps$ is supported in $\{|X|\leq2\eps\}$ and bounded by $C\eps^{-2}$. Since $u$ is bounded near $0$, letting $\eps\to0$ gives $\int_{U_0}u(\Delta+1)\varphi=0$ ($n\geq3$). The proof of Lemma~\ref{lem:interior} applies verbatim in $\R^n$ and shows that the continuous weak solution $u$ is real analytic on $U_0$. Hence $u$ is real analytic on $\Ad(G)N$ and $\Delta u+u=0$ in $\Omega$.

\emph{The boundary conditions.} Let $\tau\in\partial D$ with $\varpi(\tau)\neq0$. Then $u(\tau)=q(\tau)=F(\tau)/\varpi(\tau)=1$, and by \eqref{lie:eq:radial} $\nabla u(\tau)=\nabla q(\tau)=\varpi^{-1}(\nabla F-q\nabla\varpi)(\tau)=0$. By invariance, $u=1$ and $\nabla u=0$ on $\Ad(G)(\partial D\cap\tf_{\rm reg})$. Since $\partial D$ meets the $m$ walls in $2m$ points, $\partial D\cap\tf_{\rm reg}$ is dense in $\partial D$, and since $(g,\tau)\mapsto\Ad(g)\tau$ maps $G\times\partial D$ continuously onto $\partial\Omega$, the set $\Ad(G)(\partial D\cap\tf_{\rm reg})$ is dense in $\partial\Omega$. By continuity $u=1$ and $\nabla u=0$ on $\partial\Omega$. A constant solution of $\Delta u+u=0$ vanishes, which contradicts $u=1$ on $\partial\Omega$.
\end{proof}

\begin{remark}[The radial-graph hypothesis]\label{lie:rem:annulus}
Some hypothesis on $D$ beyond $W$-invariance and analyticity of $\partial D$ is necessary. For the annulus $D=\{a<|\tau|<b\}$, which is $W$-invariant with real-analytic boundary, $\Ad(G)D=\{a<|X|<b\}$ is a spherical shell, whose boundary consists of two spheres. Spherical shells do fail the Pompeiu property (Section~\ref{sec:intro}), so a topological hypothesis of this kind cannot be dispensed with in any statement of this type. The positive analytic radial function in Lemma~\ref{lie:lem:lift} is what makes $\partial\Omega$ a sphere, including at the points where $\partial D$ meets the walls. In our applications it is supplied by strict convexity together with $0\in D$ (Proposition~\ref{lie:prop:geometry}).
\end{remark}

\subsection{Convexity}\label{lie:subsec:convex}

Kostant's convexity theorem \cite{Kos73} is usually stated for a real reductive Lie algebra $\mathfrak g=\mathfrak h\oplus\mathfrak p$ with Cartan decomposition, the group $H$ acting on $\mathfrak p$, and a maximal abelian subspace $\mathfrak a\subset\mathfrak p$; in this form it reads: for $a\in\mathfrak a$, the orthogonal projection of $H\cdot a$ onto $\mathfrak a$ is the convex hull of the orbit of $a$ under the Weyl group of $(\mathfrak g,\mathfrak a)$ \cite[Theorem~2.2]{Lew00}. The adjoint representation of a compact $\kf$ is of this type. Let $\mathfrak g=\kf_\C=\kf\oplus i\kf$, regarded as a real Lie algebra, with the Cartan involution $X+iY\mapsto X-iY$, so that $\mathfrak h=\kf$ and $\mathfrak p=i\kf$. The map $X\mapsto iX$ is a $G$-equivariant linear isometry of $\kf$ onto $\mathfrak p$ (with $\langle iX,iY\rangle:=\langle X,Y\rangle$), because $\Ad(g)$ is complex linear on $\kf_\C$. It maps $\tf$ onto the maximal abelian subspace $\mathfrak a=i\tf$, and the Weyl group $N_G(\mathfrak a)/Z_G(\mathfrak a)=N_G(\mathbb T)/\mathbb T$ of $(\mathfrak g,\mathfrak a)$ acts on $\mathfrak a\cong\tf$ as $W$ \cite[Ch.~VI]{Kna02}. Hence, for a compact semisimple $\kf$ and $\tau\in\tf$, the orthogonal projection $\operatorname{pr}\colon\kf\to\tf$ satisfies
\begin{equation}\label{lie:eq:kostant}
\operatorname{pr}\big(\Ad(G)\tau\big)=\operatorname{conv}(W\tau).
\end{equation}

\begin{lemma}[Convexity transfer]\label{lie:lem:convex}
Let $\kf$ be compact and semisimple, of any rank, let $D\subset\tf$ be $W$-invariant, and $\Omega=\Ad(G)D$. Then $\Omega\cap\tf=D$, and $\Omega$ is convex if and only if $D$ is convex.
\end{lemma}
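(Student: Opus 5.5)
The plan is to prove the two directions separately, after first establishing $\Omega\cap\tf=D$. That identity follows exactly as in Lemma~\ref{lie:lem:inv}(a). If $\tau'\in\tf$ is $\Ad(G)$-conjugate to some $\tau\in D$, then $\tau'\in W\tau$, since two elements of $\tf$ are conjugate exactly when they are $W$-conjugate. Because $D$ is $W$-invariant, $\tau'\in D$. The reverse inclusion $D\subset\Omega\cap\tf$ is trivial.

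\emph{If $\Omega$ is convex, then $D$ is convex.} This direction is immediate: $D=\Omega\cap\tf$ is the intersection of two convex sets, and $\tf$ is a linear subspace.

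\emph{If $D$ is convex, then $\Omega$ is convex.} I will characterise $\Omega$ through the projection $\operatorname{pr}\colon\kf\to\tf$. The key claim is
\[
\Omega=\{X\in\kf:\ \operatorname{pr}(\Ad(g)X)\in D\ \text{for all }g\in G\}.
\]
Granting this, convexity is quick. For each fixed $g$, the map $X\mapsto\operatorname{pr}(\Ad(g)X)$ is linear. The preimage of the convex set $D$ under a linear map is convex, and an intersection of convex sets is convex.

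To prove the inclusion ``$\subset$'', take $X=\Ad(h)\tau$ with $\tau\in D$. For every $g\in G$, \eqref{lie:eq:kostant} gives $\operatorname{pr}(\Ad(g)X)=\operatorname{pr}(\Ad(gh)\tau)\in\operatorname{conv}(W\tau)$. Since $D$ is $W$-invariant we have $W\tau\subset D$, and since $D$ is convex, $\operatorname{conv}(W\tau)\subset D$.

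For the inclusion ``$\supset$'', take $X$ in the right-hand side. Every element of $\kf$ is conjugate to an element of $\tf$, so choose $g$ with $\Ad(g)X=\tau\in\tf$. Then $\operatorname{pr}(\Ad(g)X)=\tau$, which lies in $D$ by hypothesis, and hence $X=\Ad(g^{-1})\tau\in\Ad(G)D=\Omega$.

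The main point requiring care is the applicability of Kostant's theorem in the form \eqref{lie:eq:kostant}. This was derived just before the lemma for compact semisimple $\kf$ via the complexification $\kf_\C=\kf\oplus i\kf$, so it is available here without further work. Beyond that, one only has to note that the argument uses just three facts: the $W$-invariance of $D$, the conjugacy of every element into $\tf$, and linearity of $\operatorname{pr}\circ\Ad(g)$. None of these depends on the rank, so the lemma holds in any rank as stated.
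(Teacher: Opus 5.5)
Your proof is correct and uses the same ingredients as the paper's: the inclusion $\operatorname{pr}(\Ad(G)\tau)\subset\operatorname{conv}(W\tau)$ from \eqref{lie:eq:kostant}, conjugacy of every element into $\tf$, and the $W$-invariance of $D$. The only difference is packaging: the paper shows $\operatorname{conv}\Omega\subset\Omega$ by noting that $\operatorname{conv}\Omega$ is $\Ad(G)$-invariant with $\operatorname{pr}(\operatorname{conv}\Omega)=\operatorname{conv}(\operatorname{pr}\Omega)=D$, whereas you write $\Omega$ directly as the intersection of the convex sets $(\operatorname{pr}\circ\Ad(g))^{-1}(D)$, that is, as the largest $\Ad(G)$-invariant set whose projection lies in $D$, which is the same argument in dual form.
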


\begin{proof}
If $\tau'\in\tf$ is conjugate to $\tau\in D$, then $\tau'\in W\tau\subset D$; hence $\Omega\cap\tf=D$, and if $\Omega$ is convex, so is $D$. Conversely, let $D$ be convex. By \eqref{lie:eq:kostant}, $\operatorname{pr}(\Omega)=\bigcup_{\tau\in D}\operatorname{conv}(W\tau)=D$, since $W\tau\subset D$ and $\tau\in\operatorname{conv}(W\tau)$. The convex hull $\operatorname{conv}\Omega$ is $\Ad(G)$-invariant, as each $\Ad(g)$ is linear, and $\operatorname{pr}(\operatorname{conv}\Omega)=\operatorname{conv}(\operatorname{pr}\Omega)=D$. As $\operatorname{pr}$ is the identity on $\tf$, $\operatorname{conv}\Omega\cap\tf\subset D$. Every element of $\operatorname{conv}\Omega$ is conjugate to an element of $\operatorname{conv}\Omega\cap\tf$, hence $\operatorname{conv}\Omega\subset\Ad(G)D=\Omega$, and $\Omega$ is convex.
\end{proof}

This is the case of adjoint representations of \cite[Theorem~3.6]{Lew00}, which holds for arbitrary invariant sets. No closedness or openness of $D$ is needed.

\subsection{The reduced problem and the balls}\label{lie:subsec:reduced}

By Lemmas~\ref{lie:lem:lift} and~\ref{lie:lem:convex}, Theorem~\ref{lie:thm} follows once we find a $W$-invariant domain $D$ with a positive real-analytic radial function, which is convex and not a disc, and an anti-invariant real-analytic $F$ on a $W$-invariant neighbourhood of $\overline D$ that satisfies \eqref{lie:eq:Pl}. The reduction is reversible in the analytic class: if $\Omega=\Ad(G)D$ and an $\Ad(G)$-invariant $u$, real analytic on a neighbourhood of $\overline\Omega$, satisfy \eqref{eq:main}, then $F=\varpi\cdot u|_\tf$ is anti-invariant and satisfies \eqref{lie:eq:Pl}, by \eqref{lie:eq:radial}; any solution can be made invariant by averaging over $G$, as in Proposition~\ref{prop:converse}.

\emph{Balls.} Let $D=\{|\tau|<R\}$, so that $\Omega$ is the ball of radius $R$. The function $F=J_m(|\tau|)\sin m\theta$ solves $\Delta F+F=0$ and is anti-invariant, and $F/\varpi=|\tau|^{-m}J_m(|\tau|)$ is the regular radial Helmholtz solution in $\R^{2m+2}$. Since $J_m'(r)=\frac mrJ_m(r)-J_{m+1}(r)$, a multiple of $F$ satisfies \eqref{lie:eq:Pl} if and only if $J_{m+1}(R)=0$ (then $J_m(R)\neq0$, and the multiple is $R^mF/J_m(R)$), in agreement with the Schiffer balls of $\R^n$ at eigenvalue one, whose radii are the zeros of $J_{n/2}=J_{m+1}$.

\emph{The enlarged symmetry.} We shall look for $D$ invariant under the dihedral group
\[
I_2(2m)=\langle\tau\mapsto\overline\tau,\ \tau\mapsto e^{i\pi/m}\tau\rangle\supset W,
\]
and for $F$ with $F(\overline\tau)=-F(\tau)$ and $F(e^{i\pi/m}\tau)=-F(\tau)$; then $F\circ w=\det(w)F$ for $w\in W$, and $\varpi$ transforms in the same way. The regular Helmholtz solutions in this class are the series $\sum_{k\geq0}a_kJ_{(2k+1)m}(r)\sin((2k+1)m\theta)$. Since $-1\in I_2(2m)$, the lifted domain is centrally symmetric; for $m=3$ the rotation through $\pi/3$ is the composition of $-1$ with an element of $W$, while for $m=4,6$ one has $-1\in W$ already. For $m=1,2$ the enlarged symmetry is that of Sections~\ref{sec:reduction} and~\ref{sec:r6}.

\emph{Near-resonances.} By Proposition~\ref{lie:prop:radial}, an $\Ad(G)$-invariant harmonic polynomial $h$ of degree $\ell$ on $\kf$ corresponds to the anti-invariant harmonic polynomial $\varpi\cdot h|_\tf$ of degree $\ell+m$. Anti-invariance under $\tau\mapsto\overline\tau$ and invariance under the rotation through $2\pi/m$ force this polynomial to be a multiple of $\im\tau^{\ell+m}$ with $m\mid\ell$. Thus the invariant spherical harmonics restrict on $\tf$ to multiples of $U_{\ell/m}(\cos m\theta)$, where $U_k$ is the Chebyshev polynomial of the second kind, and the planar mode $J_{m+\ell}(r)\sin((m+\ell)\theta)$ corresponds to the Helmholtz solution $r^{-m}J_{m+\ell}(r)U_{\ell/m}(\cos m\theta)$ in $\R^{2m+2}$; the enlarged symmetry allows $\ell\in2m\Z$. The linearisation of the Schiffer problem at the ball of radius $R$, $J_{m+1}(R)=0$, in the direction of such a harmonic is degenerate if and only if $J_{m+\ell}(R)=0$, which is impossible for $\ell\geq2$ by Siegel's theorem \cite{Sie29}, as in Section~\ref{subsec:mechanism}. (The case $\ell=1$ occurs only for $m=1$, where it corresponds to translations along the centre of $\mathfrak u(2)$.) The centres used below were located at the near-resonances
\[
(m,R,\ell)=(3,j_{4,5},6),\qquad(4,j_{5,10},8),\qquad(6,j_{7,25},12),
\]
with $j_{4,5}\approx20.826933$, $j_{5,10}\approx38.159869$, $j_{7,25}\approx88.474363$ and small divisors $J_9(j_{4,5})\approx-3.3\cdot10^{-3}$, $J_{12}(j_{5,10})\approx-4.4\cdot10^{-4}$, $J_{18}(j_{7,25})\approx-9.0\cdot10^{-4}$ (Remark~\ref{lie:rem:centres}).

\subsection{The conformal formulation for general $m$}\label{lie:subsec:conformal}

Fix $m\geq1$ and $\rho>1$. For a real sequence $c=(c_j)_{j\equiv1\,(2m)}$, $j\geq1$, put $\psi_c(w)=\sum_jc_jw^j$. Since $e^{i\pi j/m}=e^{i\pi/m}$ for $j\equiv1\pmod{2m}$,
\begin{equation}\label{lie:eq:sym}
\psi_c(\bw)=\overline{\psi_c(w)},\qquad\psi_c(e^{i\pi/m}w)=e^{i\pi/m}\psi_c(w);
\end{equation}
hence, if $\psi_c$ is injective on $\overline\D$, then $D=\psi_c(\D)$ is invariant under $I_2(2m)$.

For $n\equiv m\pmod{2m}$, $n\geq m$, and $s\geq0$ let $S_{n,s}=\im\Phi_{n,s}=r^nP^{(0,n)}_s(2r^2-1)\sin n\theta$, and let $\Sc_m$ be the real Banach space of $f=\sum_{n\equiv m\,(2m),\,s\geq0}f_{n,s}S_{n,s}$ with $\nrm f_\rho=\sum\rho^n|f_{n,s}|<\infty$, which is also its norm in $\Bc_\rho$. Since $\Phi_{k,s}(\bw)=\Phi_{-k,s}(w)$ and $\Phi_{k,s}(e^{i\pi/m}w)=e^{ik\pi/m}\Phi_{k,s}(w)$, and $e^{ik\pi/m}=-1$ exactly when $k\equiv m\pmod{2m}$, the real-valued elements $f\in\Bc_\rho$ with $f(\bw)=-f(w)$ and $f(e^{i\pi/m}w)=-f(w)$ are exactly the elements of $\Sc_m$. Let $\Gc_m=\{g\in\Sc_m:g_{n,0}=0\text{ for all }n\}$, and define $K$ on $\Gc_m$ by \eqref{eq:K}. For $m=1$ and $m=2$ these are the spaces $\Sc,\Gc$ of Section~\ref{sec:disk} and $\Sc_6,\Gc_6$ of Section~\ref{sec:r6}. Put
\[
\kappa_m=\frac1{(m+2)(m+4)} .
\]

\begin{lemma}[Results used verbatim]\label{lie:lem:carry}
The statements of Lemma~\ref{lem:carry}\textup{(a)--(f)} hold with $\Sc_6$, $\Gc_6$ and the condition $n\equiv2\pmod4$ replaced by $\Sc_m$, $\Gc_m$ and $n\equiv m\pmod{2m}$, with $1/24$ replaced by $\kappa_m$, and with the Cauchy datum $(y_1y_2,(y_2,y_1))$ in \textup{(f)} replaced by $(\varpi,\nabla\varpi)$. In particular $K\colon\Gc_m\to\Sc_m$ is bounded with $\nrm K=\kappa_m$, the supremum of $1/(q(q+2))$ over $q=n+2s\geq m+2$ being attained at $(n,s)=(m,1)$.
\end{lemma}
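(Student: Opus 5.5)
The plan is to go through items (a)--(f) of Lemma~\ref{lem:carry} one at a time. For each item, I check that its proof never used the particular residue class $n\equiv2\pmod4$ or the particular datum $y_1y_2$. The only things it should use are the symmetries defining $\Sc_m$, the explicit Jacobi coefficients \eqref{eq:jacobi}, and the harmonicity of the datum.

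Item (a) needs nothing new: Lemmas~\ref{lem:orth}, \ref{lem:mult}, \ref{lem:radial}, \ref{lem:monomial} and \eqref{eq:algebra} are stated for all $m\in\Z$, $s\geq0$.

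For (b), Lemma~\ref{lem:K}(a)--(c) holds for every $n\geq1$, $s\geq1$, by the remark in Lemma~\ref{lem:carry}(b). The formula \eqref{eq:K} preserves the angular index $n$, so $K$ maps $\Gc_m$ into $\Sc_m$. The column sum is $1/(q(q+2))$. In $\Gc_m$ one has $n\geq m$ and $s\geq1$, hence $q\geq m+2$. Since the sum is decreasing in $q$ and $(n,s)=(m,1)$ is admissible, Lemma~\ref{lem:columns} gives $\nrm K=\kappa_m$.

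For (c), I rerun the Green-function argument of Lemma~\ref{lem:traces} unchanged; only the constant $1/15$ becomes $\kappa_m$. Remark~\ref{rem:compatible} needs pairing against the harmonic polynomials $r^n\sin n\theta$ with $n\equiv m\pmod{2m}$. These are exactly the harmonic elements $S_{n,0}$ of $\Sc_m$, so its argument applies verbatim.

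For (d), Lemma~\ref{lem:columns} and the Newton--Kantorovich contraction are abstract. Lemma~\ref{lem:A} uses only the block-triangular form of $A$, so these carry over once the analogous $A$ is defined.

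For (e), Lemma~\ref{lem:doublezero} is stated for all $m,t,N\geq0$. The Jacobi connection formulas hold for every $n$.

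Item (f) is the only place where the datum enters. Lemma~\ref{lem:interior} is independent of it. In Lemma~\ref{lem:boundary}, the datum is used only through a holomorphic extension $F_0$ with $\partial_a\partial_bF_0=0$. Here $\varpi=\im\tau^m=(\zeta^m-\chi^m)/(2i)$ in the coordinates $\zeta=y_1+iy_2$, $\chi=y_1-iy_2$. So I take
\[
F_0(a,b)=\frac{\gamma(a)^m-\gamma^\dagger(b)^m}{2i},
\]
which is a sum of a function of $a$ and a function of $b$; hence $\partial_a\partial_bF_0=0$. On the diagonal it restricts to $\varpi$ with gradient $\nabla\varpi$. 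The contraction, matching and gluing steps then go through unchanged.

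Nothing here is a genuine obstacle; the lemma is bookkeeping. The step needing most care is (b), where one must confirm two things. First, the minimal $q$ in the sector is exactly $m+2$, attained at $(n,s)=(m,1)$. Second, $K$ does not leave $\Sc_m$.
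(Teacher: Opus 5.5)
Your proposal is correct and follows the paper's proof. The paper treats items (a)--(e) as immediate and comments only on (f), using exactly your extension $F_0(a,b)=(\gamma(a)^m-\gamma^\dagger(b)^m)/(2i)$ with $\partial_a\partial_bF_0=0$; your explicit check that $q=n+2s\geq m+2$, with equality at $(n,s)=(m,1)$, spells out what the paper leaves implicit in the claim $\nrm K=\kappa_m$.
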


\begin{proof}
Only (f) needs a comment. In the coordinates $\zeta=x+iy$, $\chi=x-iy$ one has $\varpi=(\zeta^m-\chi^m)/(2i)$, whose holomorphic extension $F_0(a,b)=(\gamma(a)^m-\gamma^\dagger(b)^m)/(2i)$ in the proof of Lemma~\ref{lem:boundary} satisfies $\partial_a\partial_bF_0=0$; the rest of that proof is unchanged.
\end{proof}

Let $B>0$ (a fixed \emph{field scale}) and $b>0$ (in the application, $b=c^\circ_1$), and put
\[
\alpha=\frac{b^{m+1}}B,\qquad\omega_j=\alpha(j+m)\rho^{j+m-1},\qquad\theta_0=\frac1{\alpha(m+1)\rho^{m-1}},\qquad\theta_1=\frac1{\alpha\rho^m}.
\]
Let $X_m$ be the real Banach space of pairs $x=(g,c)$, with $g\in\Gc_m$ and $c=(c_j)_{j\equiv1\,(2m)}$ real, normed by $\nrm{(g,c)}_X=\nrm g_\rho+\nrm c$, $\nrm c=\sum_j\omega_j|c_j|$. Since $\rho^j/\omega_j=1/(\alpha(j+m)\rho^{m-1})$ and $j\rho^{j-1}/\omega_j=j/(\alpha(j+m)\rho^m)$,
\begin{equation}\label{lie:eq:theta}
N_\rho(\psi_c)\leq\theta_0\nrm c,\qquad N_\rho(\psi_c')\leq\theta_1\nrm c .
\end{equation}
The powers of $w$ in $\psi_c^m$ are $\equiv m\pmod{2m}$, so $h_c=\im(\psi_c^m)/B$ is a harmonic element of $\Sc_m$. Define
\begin{equation}\label{lie:eq:H}
\Hc_m(g,c)=g+|\psi_c'|^2\big(Kg+h_c\big),\qquad h_c=\frac{\im(\psi_c^m)}B .
\end{equation}
For $m=1$, $B=1$ this is \eqref{eq:F}, with $\omega_j=b^2(j+1)\rho^j$, $\theta_0=\vartheta_0$ and $\theta_1=\vartheta_1$; for $m=2$, $B=2$ it is \eqref{eq:F6}, with the weights of Section~\ref{sec:r6}.

\begin{lemma}\label{lie:lem:F}
$\Hc_m$ is a continuous polynomial map of degree $m+2$ from $X_m$ to $\Sc_m$. If $\Hc_m(g,c)=0$, then $V=Kg+h_c$ belongs to $C^1(\overline\D)\cap\Sc_m$, and it satisfies $\Delta V+|\psi_c'|^2V=0$ in $\mathcal D'(\D)$ and $V-h_c=0$, $\nabla(V-h_c)=0$ on $\partial\D$.
\end{lemma}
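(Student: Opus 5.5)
The proof will follow Lemmas~\ref{lem:F} and~\ref{lem:F6} step by step, now in the general sector $\Sc_m$. First I show that every term of \eqref{lie:eq:H} maps $X_m$ into $\Sc_m$ boundedly.

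The derivative $\psi_c'=\sum_j jc_jw^{j-1}$ has real coefficients and contains only powers $j-1\equiv0\pmod{2m}$. Hence $|\psi_c'|^2$ is real valued and invariant under both $w\mapsto\bw$ and $w\mapsto e^{i\pi/m}w$. Multiplication by it therefore preserves the two parities that characterise the real elements of $\Sc_m$. By \eqref{eq:algebra} (Lemma~\ref{lie:lem:carry}(a)) and \eqref{lie:eq:theta}, this multiplication has norm at most $N_\rho(\psi_c')^2\leq\theta_1^2\nrm c^2$ on $\Sc_m$.

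Next I treat $h_c$. Expanding $\psi_c^m=\sum c_{j_1}\cdots c_{j_m}w^{j_1+\cdots+j_m}$ gives exponents $\equiv m\pmod{2m}$, so $h_c=B^{-1}\sum c_{j_1}\cdots c_{j_m}S_{j_1+\cdots+j_m,0}$ is harmonic and lies in $\Sc_m$. Its norm satisfies $\nrm{h_c}_\rho\leq N_\rho(\psi_c)^m/B\leq\theta_0^m\nrm c^m/B$, using $\nrm{\im f}_\rho\leq\nrm f_\rho$ and the multiplicativity of $N_\rho$. This makes $c\mapsto h_c$ a bounded homogeneous polynomial of degree $m$; the same bound applied to the symmetrised multilinear form gives boundedness of the polarisation.

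Finally, $\nrm K=\kappa_m$ by Lemma~\ref{lie:lem:carry}. Combining these, $\Hc_m$ is a sum of the following bounded symmetric multilinear maps:
\begin{itemize}
\item $g$, of degree one;
\item $|\psi_c'|^2Kg$, of degree three;
\item $|\psi_c'|^2h_c$, of degree $m+2$.
\end{itemize}
Since $m\geq1$, the top degree is $m+2$, and continuity follows from boundedness of each piece.

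For the second assertion, suppose $\Hc_m(g,c)=0$. Lemma~\ref{lem:traces}, transferred to $\Gc_m$ by Lemma~\ref{lie:lem:carry}(c), gives $Kg\in C^1(\overline\D)$ with $\Delta Kg=g$ in $\mathcal D'(\D)$ and zero Cauchy data on $\partial\D$. The function $h_c$ is a uniformly convergent series of harmonic polynomials: the weights give $\sum\rho^n|(h_c)_{n,0}|<\infty$ with $\rho>1$, so it is harmonic on a neighbourhood of $\overline\D$. Hence $V=Kg+h_c$ lies in $C^1(\overline\D)\cap\Sc_m$ and $V-h_c=Kg$ has vanishing value and gradient on $\partial\D$. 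Moreover $\Delta V=g=-|\psi_c'|^2(Kg+h_c)=-|\psi_c'|^2V$. The last equality holds pointwise because both sides are uniformly convergent disk-polynomial series in $\Bc_\rho$ with the same coefficients, and a continuous function determines its coefficients (Lemma~\ref{lem:orth}(b)).

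I expect no real obstacle. The only point needing care is the bookkeeping of frequencies modulo $2m$, which ensures that $h_c$ and the product $|\psi_c'|^2f$ stay in the sine sector $\Sc_m$ rather than merely in $\Bc_\rho$. This follows from the symmetry \eqref{lie:eq:sym} together with the characterisation of $\Sc_m$ by its two parities.
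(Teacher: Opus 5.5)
Your proposal is correct and follows the paper's proof essentially verbatim. The steps match: invariance of $|\psi_c'|^2$ under $w\mapsto\bw$ and $w\mapsto e^{i\pi/m}w$, the multiplier bound $N_\rho(\psi_c')^2$ from \eqref{eq:algebra}, $\nrm{h_c}_\rho\leq N_\rho(\psi_c)^m/B$, $\nrm K=\kappa_m$, and then the argument of Lemma~\ref{lem:F} via Lemma~\ref{lie:lem:carry}. One small wording slip: $N_\rho$ is only submultiplicative, not multiplicative, but submultiplicativity is all your bound uses.
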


\begin{proof}
$\psi_c'$ has real coefficients and only powers divisible by $2m$, so $|\psi_c'|^2$ is real, invariant under $w\mapsto\bw$ and $w\mapsto e^{i\pi/m}w$, and multiplication by it maps $\Sc_m$ into itself with norm at most $N_\rho(\psi_c')^2$ by \eqref{eq:algebra}. Moreover $\nrm{h_c}_\rho\leq N_\rho(\psi_c)^m/B$, since $w^k=\Phi_{k,0}$ has norm $\rho^k$ and $N_\rho$ is submultiplicative. With \eqref{lie:eq:theta} and $\nrm K=\kappa_m$, \eqref{lie:eq:H} is a sum of bounded multilinear maps of degree at most $m+2$. The rest is the proof of Lemma~\ref{lem:F}, using Lemma~\ref{lie:lem:carry}.
\end{proof}

\begin{lemma}\label{lie:lem:pullback}
Let $\psi$ be holomorphic and injective on a neighbourhood of $\overline\D$ with $\psi'\neq0$ there, $D=\psi(\D)$, and let $V\in C^1(\overline\D)$ be real valued with $\Delta V+|\psi'|^2V=0$ in $\mathcal D'(\D)$ and $V-h=0$, $\nabla(V-h)=0$ on $\partial\D$, where $h=\im(\psi^m)/B$. Then $F=B\,V\circ\psi^{-1}\in C^1(\overline D)$ satisfies $\int_D(\nabla F\cdot\nabla\varphi-F\varphi)\,dA=0$ for all $\varphi\in C^\infty_c(D)$, and $F=\varpi$, $\nabla F=\nabla\varpi$ on $\partial D$. If moreover $\psi=\psi_c$ and $V\in\Sc_m$, then $F(\overline\tau)=-F(\tau)$ and $F(e^{i\pi/m}\tau)=-F(\tau)$.
\end{lemma}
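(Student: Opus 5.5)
The plan is to reduce to Lemma~\ref{lem:pullback}, as in Lemma~\ref{lem:pullback6}, by observing that $h$ is the pull-back of $\varpi/B$. Since $\varpi(\tau)=\im\tau^m$, we have $h=\im(\psi^m)/B=(\varpi\circ\psi)/B$. Put $F=B\,V\circ\psi^{-1}$. Then $V-h=B^{-1}(F-\varpi)\circ\psi$.

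\emph{Weak equation.} For $\varphi\in C^\infty_c(D)$ the function $\varphi\circ\psi$ lies in $C^\infty_c(\D)$. Conformal invariance of the Dirichlet integral together with the Jacobian $|\psi'|^2$ gives
\[
\int_D\nabla F\cdot\nabla\varphi\,dA=B\int_\D\nabla V\cdot\nabla(\varphi\circ\psi)\,dA
=B\int_\D|\psi'|^2V\,(\varphi\circ\psi)\,dA=\int_DF\varphi\,dA .
\]
The middle equality is the distributional equation for $V\in C^1(\overline\D)$, tested against $\varphi\circ\psi$. Multiplication by the constant $B$ changes nothing here.

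\emph{Cauchy data.} The function $(F-\varpi)\circ\psi=B(V-h)$ has zero value and zero gradient on $\partial\D$. The Jacobian matrix of $\psi$ is invertible on a neighbourhood of $\overline\D$, so the chain rule gives $F=\varpi$ and $\nabla F=\nabla\varpi$ on $\partial D$. The same invertibility shows that $F\in C^1(\overline D)$, because $\psi^{-1}$ is $C^1$ up to the boundary.

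\emph{Symmetries.} Let $\tau=\psi_c(w)$. By \eqref{lie:eq:sym}, $\overline\tau=\psi_c(\bw)$ and $e^{i\pi/m}\tau=\psi_c(e^{i\pi/m}w)$. Elements of $\Sc_m$ satisfy $V(\bw)=-V(w)$ and $V(e^{i\pi/m}w)=-V(w)$, as noted in the definition of $\Sc_m$. Hence
\[
F(\overline\tau)=BV(\bw)=-F(\tau),\qquad F(e^{i\pi/m}\tau)=BV(e^{i\pi/m}w)=-F(\tau).
\]
Injectivity of $\psi_c$ ensures that $D$ is invariant under these maps, so both sides are defined on $D$. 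There is no genuine obstacle in this argument; the one point needing care is to keep track of the factor $B$ consistently, so that the boundary datum becomes exactly $\varpi$.
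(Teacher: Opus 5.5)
Your proposal is correct and follows the paper's proof. The paper likewise notes that $\varpi\circ\psi=\im\psi^m=Bh$, so that $B(V-h)=(F-\varpi)\circ\psi$, applies the proof of Lemma~\ref{lem:pullback} verbatim, and obtains the parities from \eqref{lie:eq:sym} and the characterisation of $\Sc_m$; you have simply written out the conformal-invariance and chain-rule steps of that earlier proof with the constant $B$ carried along.
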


\begin{proof}
Since $\varpi\circ\psi=\im\psi^m=Bh$, we have $B(V-h)=(F-\varpi)\circ\psi$, and the proof of Lemma~\ref{lem:pullback} applies. The parities follow from \eqref{lie:eq:sym} and the characterisation of $\Sc_m$.
\end{proof}

\emph{Expansion at a centre.} Let $x^\circ=(g^\circ,c^\circ)\in X_m$ have finitely many nonzero coefficients, $b=c^\circ_1$, and put
\[
p=\psi_{c^\circ}',\qquad P=N_\rho(p),\qquad C=N_\rho(\psi_{c^\circ}),\qquad V^\circ=Kg^\circ+h_{c^\circ},\qquad V_0=\nrm{V^\circ}_\rho .
\]
For $h=(\delta g,\eta)\in X_m$, $|\psi'_{c^\circ+\eta}|^2=|p|^2+L(\eta)+Q(\eta,\eta)$ with $L(\eta)=2\re(\overline p\,\psi_\eta')$ and $Q(\eta,\eta')=\re(\psi_\eta'\overline{\psi_{\eta'}'})$, and $h_{c^\circ+\eta}=h_{c^\circ}+\sum_{k=1}^mH_k(\eta)$ with $H_k(\eta)=\binom mk\im(\psi_{c^\circ}^{m-k}\psi_\eta^k)/B$. Hence
\begin{equation}\label{lie:eq:DH}
D\Hc_m(x^\circ)h=\delta g+|p|^2\big(K\delta g+H_1(\eta)\big)+L(\eta)V^\circ,
\end{equation}
and $\Hc_m(x^\circ+h)=\Hc_m(x^\circ)+D\Hc_m(x^\circ)h+\sum_{q=2}^{m+2}B_q(h,\ldots,h)$, where the degree-$q$ part consists of the terms $|p|^2H_q(\eta)$ ($q\leq m$), $L(\eta)H_{q-1}(\eta)$ ($2\leq q\leq m+1$), $Q(\eta,\eta)H_{q-2}(\eta)$ ($3\leq q\leq m+2$), $L(\eta)K\delta g+Q(\eta,\eta)V^\circ$ ($q=2$) and $Q(\eta,\eta)K\delta g$ ($q=3$), and $B_q$ is the symmetric $q$-linear map with this diagonal.

\begin{lemma}\label{lie:lem:nonlinear}
Let $h_k=\binom mkC^{m-k}\theta_0^k/B$ for $1\leq k\leq m$, and $h_k=0$ otherwise. For $2\leq q\leq m+2$ and all $h_i\in X_m$, $\nrm{B_q(h_1,\ldots,h_q)}_\rho\leq c_q\nrm{h_1}_X\cdots\nrm{h_q}_X$, where
\begin{equation}\label{lie:eq:cq}
c_q=P^2h_q+2P\theta_1h_{q-1}+\theta_1^2h_{q-2}+\one_{q=2}\big(2P\theta_1\kappa_m+\theta_1^2V_0\big)+\one_{q=3}\,\theta_1^2\kappa_m .
\end{equation}
\end{lemma}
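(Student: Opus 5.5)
The bound follows the pattern of Lemmas~\ref{lem:nonlinear} and~\ref{lem:nonlinear6}: estimate each term of the degree-$q$ part by termwise majorants in terms of $a_i=\nrm{\delta g_i}_\rho$ and $e_i=\nrm{\eta_i}$, then compare with $c_q\prod_i(a_i+e_i)$. First record the elementary estimates. By \eqref{lie:eq:theta}, $N_\rho(\psi_{\eta_i})\leq\theta_0e_i$ and $N_\rho(\psi'_{\eta_i})\leq\theta_1e_i$. Hence, by the submultiplicativity of $N_\rho$ and \eqref{eq:algebra}, the polarised form of $H_k$ satisfies $\nrm{H_k(\eta_1,\ldots,\eta_k)}_\rho\leq\binom mkC^{m-k}\theta_0^ke_1\cdots e_k/B=h_ke_1\cdots e_k$. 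This uses $\nrm{\im f}_\rho\leq\nrm f_\rho$ and $\nrm{w^j}_\rho=\rho^j$, so a holomorphic polynomial $f$ has $\nrm f_\rho=N_\rho(f)$. Also $\nrm{L(\eta_i)f}_\rho\leq2P\theta_1e_i\nrm f_\rho$, $\nrm{Q(\eta_i,\eta_k)f}_\rho\leq\theta_1^2e_ie_k\nrm f_\rho$, $\nrm{|p|^2f}_\rho\leq P^2\nrm f_\rho$, and $\nrm{K\delta g_i}_\rho\leq\kappa_ma_i$ by Lemma~\ref{lie:lem:carry}.

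Next, write $B_q$ as the symmetrisation (average over permutations of the $q$ arguments) of the multilinear forms whose diagonals are the listed terms. Symmetrisation does not increase the termwise bound: each symmetrised monomial inherits the average of the bounds, and every permuted product of the $e_i,a_i$ is dominated by $\prod_i(a_i+e_i)$ after expansion. The pure shape terms $|p|^2H_q$, $L\,H_{q-1}$ and $Q\,H_{q-2}$ then contribute at most $(P^2h_q+2P\theta_1h_{q-1}+\theta_1^2h_{q-2})e_1\cdots e_q$. The terms involving $\delta g$ occur only for $q=2$ and $q=3$. For $q=2$, $L(\eta)K\delta g$ polarises to $\tfrac12\{L(\eta_1)K\delta g_2+L(\eta_2)K\delta g_1\}$, bounded by $P\theta_1\kappa_m(e_1a_2+e_2a_1)$, and $Q(\eta,\eta)V^\circ$ gives $\theta_1^2V_0e_1e_2$. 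For $q=3$, $Q(\eta,\eta)K\delta g$ gives coefficient $\theta_1^2\kappa_m/3$ on each monomial $e_ie_ka_l$.

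Finally, compare coefficients monomial by monomial. In $c_q\prod(a_i+e_i)$, every monomial $e_1\cdots e_q$ and every mixed monomial has coefficient $c_q$. So it suffices to check that each individual monomial coefficient obtained above is at most $c_q$. For pure $e$-monomials this is equality with the sum in \eqref{lie:eq:cq}, since the indicator terms are nonnegative and $V_0$ appears only there. For the mixed monomials with $q=2$ the coefficient is $P\theta_1\kappa_m\leq2P\theta_1\kappa_m\leq c_2$, and for $q=3$ it is $\theta_1^2\kappa_m/3\leq c_3$. The bound is generous in places (adding the $\delta g$ coefficients to the pure part, for example), but that is harmless.

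The only step needing care is the polarisation of $H_k$ and of the products $L\,H_{q-1}$ and $Q\,H_{q-2}$. I would handle it by noting that $H_k(\eta)$ is the diagonal of the $k$-linear form $\binom mk\im\big(\psi_{c^\circ}^{m-k}\psi_{\eta_1}\cdots\psi_{\eta_k}\big)/B$, which is already symmetric and obeys the product bound directly. The symmetrisation of a product of a $1$- or $2$-linear form with a symmetric $k$-form is an average of such products, each satisfying the same bound. No other obstacle is expected; the lemma is purely bookkeeping.
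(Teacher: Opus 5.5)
Your proof is correct and takes essentially the paper's route: termwise multiplication bounds for each degree-$q$ term of the expansion, the observation that symmetrisation does not increase a product bound, and summation. The only differences are minor. The paper bounds each term directly by the product of the $\nrm{\cdot}_X$-norms of its linear factors instead of comparing monomials in $a_i,e_i$, which gives the same $c_q$; and, as a harmless slip of wording, for $q=2,3$ the pure-$e$ coefficient is at most $c_q$ (it lacks the $\kappa_m$ terms) rather than equal to it.
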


\begin{proof}
By \eqref{lie:eq:theta}, \eqref{eq:algebra}, $\nrm{\im f}_\rho\leq\nrm f_\rho$ and submultiplicativity of $N_\rho$,
\[
\nrm{L(\eta)f}_\rho\leq2P\theta_1\nrm\eta\nrm f_\rho,\qquad\nrm{Q(\eta,\eta')f}_\rho\leq\theta_1^2\nrm\eta\nrm{\eta'}\nrm f_\rho,\qquad\nrm{K\delta g}_\rho\leq\kappa_m\nrm{\delta g}_\rho,
\]
and the symmetric $k$-linear map with diagonal $H_k$ is bounded by $h_k$, because it is a symmetrised product of $k$ multiplications by $\psi_\eta$, each bounded by $\theta_0$, and of the fixed factor $\binom mk\psi_{c^\circ}^{m-k}/B$. Each degree-$q$ term listed above is a product of $q$ such linear factors, and the symmetrisation of a product of linear maps is bounded by the product of their norms. Adding the bounds of the terms of degree $q$ gives $c_q$.
\end{proof}

\begin{proposition}\label{lie:prop:NK}
Let $A\colon\Sc_m\to X_m$ be bounded, linear and injective. Suppose that $\nrm{A\Hc_m(x^\circ)}_X\leq Y$, $\nrm{I-AD\Hc_m(x^\circ)}_{X\to X}\leq Z$ and $\nrm A\,c_q\leq C_q$ for $2\leq q\leq m+2$, and that $r>0$ satisfies
\begin{equation}\label{lie:eq:radii}
Y+(Z-1)r+\sum_{q=2}^{m+2}C_qr^q<0,\qquad Z-1+\sum_{q=2}^{m+2}qC_qr^{q-1}<0 .
\end{equation}
Then $\Hc_m$ has exactly one zero in the closed ball $\overline B_{X_m}(x^\circ,r)$.
\end{proposition}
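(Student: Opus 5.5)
The plan is to follow the proofs of Proposition~\ref{prop:NK} and Proposition~\ref{prop:NK6} word for word, now with arbitrary polynomial degree $m+2$. Put $\Nc(x)=x-A\Hc_m(x)$. Since $A$ is injective, the fixed points of $\Nc$ are exactly the zeros of $\Hc_m$. It therefore suffices to show that $\Nc$ maps $\overline B_{X_m}(x^\circ,r)$ into itself and is a contraction there; the contraction mapping principle then gives existence and uniqueness of the zero in the ball.

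\emph{Self-map.} For $\nrm h_X\leq r$, the exact finite expansion $\Hc_m(x^\circ+h)=\Hc_m(x^\circ)+D\Hc_m(x^\circ)h+\sum_{q=2}^{m+2}B_q(h,\ldots,h)$ (there are no terms of higher order) gives
\[
\Nc(x^\circ+h)-x^\circ=-A\Hc_m(x^\circ)+(I-AD\Hc_m(x^\circ))h-\sum_{q=2}^{m+2}AB_q(h,\ldots,h).
\]
Lemma~\ref{lie:lem:nonlinear} together with $\nrm A\,c_q\leq C_q$ bounds the norm by $Y+Zr+\sum_qC_qr^q$. The first inequality of \eqref{lie:eq:radii} says this is less than $r$.

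\emph{Contraction.} Let $h_1,h_2$ lie in the ball. Write $B_q(h_1,\ldots,h_1)-B_q(h_2,\ldots,h_2)$ as a telescoping sum of $q$ terms. The $i$-th term has $h_1-h_2$ in slot $i$, copies of $h_2$ before it and copies of $h_1$ after it; this uses multilinearity and symmetry of $B_q$. Each term is bounded by $c_q r^{q-1}\nrm{h_1-h_2}_X$, so the Lipschitz constant of $\Nc$ on the ball is at most $Z+\sum_q qC_qr^{q-1}$. The second inequality of \eqref{lie:eq:radii} says this is less than one.

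The only point needing care is that the $B_q$ of \eqref{lie:eq:DH}ff are genuinely symmetric and bounded multilinear maps whose diagonals reproduce the expansion. This follows because $\Hc_m$ is a polynomial map by Lemma~\ref{lie:lem:F}, and each $B_q$ was defined as the symmetrisation of its degree-$q$ part. With that in place, nothing beyond the cases $m=1,2$ is needed; I expect no real obstacle.
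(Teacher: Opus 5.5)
Your proposal is correct and is the paper's own argument: the self-map estimate for $\Nc(x)=x-A\Hc_m(x)$ from the first inequality of \eqref{lie:eq:radii}, and the telescoping of $B_q(h_1,\ldots,h_1)-B_q(h_2,\ldots,h_2)$ into $q$ terms, each containing $h_1-h_2$ once, which gives the Lipschitz bound $Z+\sum_q qC_qr^{q-1}<1$, exactly as in Propositions~\ref{prop:NK} and~\ref{prop:NK6}. One small remark: the telescoping needs only multilinearity of $B_q$; symmetry is needed only so that the diagonals of the $B_q$ reproduce the degree-$q$ parts of the expansion.
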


\begin{proof}
This is the proof of Proposition~\ref{prop:NK6}: $B_q(h_1,\ldots,h_1)-B_q(h_2,\ldots,h_2)$ is a sum of $q$ terms, each containing $h_1-h_2$ once, so its image under $A$ has norm at most $qC_qr^{q-1}\nrm{h_1-h_2}_X$ on the ball.
\end{proof}

\subsection{The principal part and the approximate inverse}\label{lie:subsec:inverse}

\begin{proposition}\label{lie:prop:principal}
For every $x=(g,c)\in X_m$ and $j\equiv1\pmod{2m}$,
\[
D\Hc_m(x)e_j=2\re\big(jw^{j-1}\overline{\psi_c'}\big)(Kg+h_c)+\frac mB|\psi_c'|^2\im\big(\psi_c^{m-1}w^j\big).
\]
Let $x_b=(0,b\,e_1)$, so that $\psi_{x_b}(w)=bw$. For $n\geq0$ write
\begin{equation}\label{lie:eq:qs}
r^{n+2m}\sin n\theta=\sum_{s=0}^mq_s(n)S_{n,s};
\end{equation}
then $q_s(n)\geq0$, $\sum_sq_s(n)=1$ and $q_0(n)=(n+1)/(n+m+1)$. For every $j\equiv1\pmod{2m}$ with $j\geq2m+1$,
\begin{equation}\label{lie:eq:Tj}
T_j:=D\Hc_m(x_b)e_j=\alpha\Big\{(j+m)S_{j+m-1,0}-j\sum_{s=0}^mq_s(j-m-1)S_{j-m-1,s}\Big\}.
\end{equation}
\end{proposition}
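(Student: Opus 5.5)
The plan follows Proposition~\ref{prop:principal6}, with $m$ general in place of $m=2$. The first formula comes from differentiating $\Hc_m(g,c+\eps e_j)$ at $\eps=0$. Replacing $c$ by $c+\eps e_j$ replaces $\psi_c$ by $\psi_c+\eps w^j$, and $g$ does not move. Then $\partial_\eps|\psi_c'+\eps jw^{j-1}|^2=2\re(jw^{j-1}\overline{\psi_c'})$ and $\partial_\eps\im((\psi_c+\eps w^j)^m)/B=\frac mB\im(\psi_c^{m-1}w^j)$. Since $Kg$ does not depend on $c$, the product rule gives the displayed identity, which is a polynomial identity in $w,\bw$.

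Next I would prove the expansion \eqref{lie:eq:qs}. Dividing by $r^n\sin n\theta$ and writing $t=r^2$, the claim is that $t^m$ is a combination of $P^{(0,n)}_s(2t-1)$ for $s\le m$. Such a combination exists because these polynomials span the polynomials of degree $\le m$. For nonnegativity and total mass one, the cleanest route is Lemma~\ref{lem:monomial}, or iterating \eqref{eq:rec}: we have $r^{2m}\Phi_{n,0}=w^m\bw^m\Phi_{n,0}$, and multiplication by $w$ and by $\bw$ preserves nonnegative coefficients summing to one. Only the frequency $n$ survives, with radial indices $s\le m$ by Lemma~\ref{lem:radial}, and taking imaginary parts gives the sine sector. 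The harmonic coefficient is the $L^2$ projection onto $r^ne^{in\theta}$, as in Lemma~\ref{lem:monomial}. It equals
\[
\frac{\int_0^1 r^{2n+2m+1}\,dr}{\int_0^1 r^{2n+1}\,dr}=\frac{n+1}{n+m+1}.
\]
For $m=2$ this reproduces Lemma~\ref{lem:radial4}.

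At $x_b$ we have $g=0$, $\psi_c'=b$ and $h_c=b^m\im(w^m)/B$. The first formula therefore becomes
\[
T_j=\frac{b^{m+1}}B\Big[2j\re(w^{j-1})\im(w^m)+m\im(w^{j+m-1})\Big].
\]
With $w=re^{i\theta}$ and $2\cos((j-1)\theta)\sin(m\theta)=\sin((j+m-1)\theta)-\sin((j-m-1)\theta)$ this is
\[
T_j=\alpha\,r^{j+m-1}\Big[(j+m)\sin((j+m-1)\theta)-j\sin((j-m-1)\theta)\Big].
\]
The first term is $(j+m)S_{j+m-1,0}$. In the second term I would apply \eqref{lie:eq:qs} with $n=j-m-1$, which is $\ge m\ge0$ when $j\ge2m+1$; then $r^{n+2m}=r^{j+m-1}$, as needed. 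Both frequencies $j\pm m-1$ are $\equiv m\pmod{2m}$ because $j\equiv1$, so $T_j\in\Sc_m$.

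The only step I expect to need care is the bookkeeping in the expansion: checking that no other frequencies occur and that the radial indices stop at $m$. The computation of $T_j$ itself is routine.
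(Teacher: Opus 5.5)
Your proposal is correct and follows the paper's proof essentially step for step. You differentiate in $\eps$ to get the first formula, obtain the facts about $q_s$ from Lemma~\ref{lem:monomial} applied to $w^{n+m}\bw^m=r^{n+2m}e^{in\theta}$, and reduce $T_j$ with the product-to-sum identity before applying \eqref{lie:eq:qs} with $n=j-m-1\geq m$. One small slip: Lemma~\ref{lem:radial} gives only a lower bound on radial indices, so the bound $s\leq m$ comes instead from Lemma~\ref{lem:monomial} itself, or from the degree count you already made for $t^m$.
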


\begin{proof}
The first formula follows by differentiating $|\psi_c'+\eps jw^{j-1}|^2$ and $\im((\psi_c+\eps w^j)^m)/B$ at $\eps=0$. The statements on $q_s$ are Lemma~\ref{lem:monomial} for $w^{n+m}\bw^m=r^{n+2m}e^{in\theta}$, after taking imaginary parts. At $x_b$ we have $Kg+h_c=b^m\im(w^m)/B$ and $\psi_c'=b$, so
\begin{align*}
T_j&=\alpha\big[2j\re(w^{j-1})\im(w^m)+m\im(w^{j+m-1})\big]\\
&=\alpha r^{j+m-1}\big[(j+m)\sin((j+m-1)\theta)-j\sin((j-m-1)\theta)\big],
\end{align*}
using $2\cos((j-1)\theta)\sin(m\theta)=\sin((j+m-1)\theta)-\sin((j-m-1)\theta)$. With $n=j-m-1\geq m$, $n\equiv m\pmod{2m}$, \eqref{lie:eq:qs} gives \eqref{lie:eq:Tj}.
\end{proof}

The harmonic part of $T_j$ is $\alpha(j+m)$ in row $(j+m-1,0)$ and $-\alpha(j-m)$ in row $(j-m-1,0)$, a difference operator of step $2m$; the nonharmonic part lies in the rows $(j-m-1,s)$, $1\leq s\leq m$, with total mass $\alpha j\sum_{s\geq1}q_s(j-m-1)=\alpha m$. For $m=1,2$, \eqref{lie:eq:Tj} reduces to Lemma~\ref{lem:Tform} and to \eqref{eq:Tj6} (Lemma~\ref{lem:radial4} gives $q_s$ for $m=2$).

\emph{Finite block and tail.} Let $M\equiv m\pmod{2m}$ and $S\geq m$. The \emph{finite inputs} are the coordinates $g_{n,s}$ with $n\leq M$, $1\leq s\leq S$, and $c_j$ with $j\leq M-m+1$; the \emph{finite rows} are $(n,s)$ with $n\leq M$, $1\leq s\leq S$, and $(n,0)$ with $n\leq M$. All others are \emph{tail} inputs and rows, $X_m=X_f\oplus X_t$, $\Sc_m=\Sc_f\oplus\Sc_t$, and $\dim X_f=\dim\Sc_f=(S+1)(M+m)/(2m)$. Let $M_f$ be the matrix of $D\Hc_m(x^\circ)$ on finite inputs and finite rows. On tail inputs we use the model $T(\delta g,\eta)=\delta g+\sum_{j\geq M+m+1}\eta_jT_j$. By \eqref{lie:eq:Tj}, only $\eta_{M+m+1}$ reaches finite rows, since $S\geq m$:
\begin{equation}\label{lie:eq:B}
B_f(\delta g,\eta)=-\eta_{M+m+1}\,\alpha\Big\{(M+1)e_{(M,0)}+(M+m+1)\sum_{s=1}^mq_s(M)e_{(M,s)}\Big\}.
\end{equation}
Write $Te=(B_fe,T_{tt}e)$ for tail inputs $e$, and put $\tau_m=(1-\rho^{-2m})^{-1}$.

\begin{lemma}\label{lie:lem:Ttt}
$T_{tt}\colon X_t\to\Sc_t$ is a bounded bijection with $\nrm{T_{tt}}\leq1+\rho^{-2m}$. For a tail residual $y=\sum y_{n,s}S_{n,s}\in\Sc_t$ its inverse is
\begin{gather}
\eta_{n-m+1}=\sum_{k\geq0}\frac{y_{n+2mk,0}}{\alpha(n+2mk+1)},\qquad\delta g_{n,s}=y_{n,s}+\alpha(n+m+1)q_s(n)\,\eta_{n+m+1}\quad(1\leq s\leq m),\label{lie:eq:Tinv}
\end{gather}
for $n\geq M+2m$, $n\equiv m\pmod{2m}$, and $\delta g_{n,s}=y_{n,s}$ for all other tail $g$-inputs.
\end{lemma}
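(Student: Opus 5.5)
The plan is to follow the proofs of Lemmas~\ref{lem:Ttt} and~\ref{lem:Ttt6}, with the step $2$ or $4$ replaced by $2m$. First I will write out the tail equations $T_{tt}(\delta g,\eta)=y$ row by row, using \eqref{lie:eq:Tj}. Tail shape inputs are $\eta_j$ with $j\geq M+m+1$, $j\equiv1\pmod{2m}$. The column $T_j$ has harmonic entries $\alpha(j+m)$ in row $(j+m-1,0)$ and $-\alpha(j-m)$ in row $(j-m-1,0)$. Its nonharmonic entries are $-\alpha jq_s(j-m-1)$ in rows $(j-m-1,s)$, $1\leq s\leq m$.

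Set $n=j+m-1$, which is $\equiv m\pmod{2m}$. The tail harmonic row $(n,0)$, $n\geq M+2m$, then reads
\[
\alpha(n+1)\big(\eta_{n-m+1}-\eta_{n+m+1}\big)=y_{n,0}.
\]
Here the second term comes from $T_{j'}$ with $j'=n+m+1$, using $j'-m=n+1$. For $n\geq M+2m$ and $1\leq s\leq m$, the row $(n,s)$ reads $\delta g_{n,s}-\alpha(n+m+1)q_s(n)\eta_{n+m+1}=y_{n,s}$. For $s\leq S$ this row is a tail row because $n>M$; for $s>S\geq m$ there is no shape contribution. The column $j=M+m+1$ feeds the finite rows $(M,s)$, which is recorded in $B_f$. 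It also feeds the tail rows $(M,s)$ with $s>S$? No: since $s\leq m\leq S$, it only reaches tail row $(M+2m,0)$. All other tail rows read $\delta g_{n,s}=y_{n,s}$.

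Next, for injectivity and the inverse formula: membership $\sum\omega_j|\eta_j|<\infty$ forces $\eta_j\to0$. Telescoping the harmonic equations from $n$ upward in steps of $2m$ therefore gives the series \eqref{lie:eq:Tinv} for $\eta_{n-m+1}$, and shows that $y=0$ implies $\eta=0$, hence $\delta g=0$. Conversely, \eqref{lie:eq:Tinv} solves the system as soon as the series converge in $X_t$. To check this I will estimate the image of a normalised harmonic tail row $\rho^{-n}S_{n,0}$. It produces $\eta_{k-m+1}=\rho^{-n}/(\alpha(n+1))$ for $M+2m\leq k\leq n$, of weight $\omega_{k-m+1}=\alpha(k+1)\rho^{k}$. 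It also produces coupled $g$-entries of total mass $\alpha(k+m+1)\sum_s q_s(k)\,\eta\cdot\rho^{k}$. Since $(k+m+1)\sum_{s\geq1}q_s(k)=m$ by $q_0(k)=(k+1)/(k+m+1)$ and $\sum_s q_s=1$, these are bounded by $m\rho^{k-n}/(n+1)$. Summing the geometric series with ratio $\rho^{-2m}$ gives a bound of the form $\tau_m+m\tau_m/(n+1)$, uniform in $n$. Nonharmonic tail rows map to themselves. By Lemma~\ref{lem:columns} this gives a bounded inverse, so $T_{tt}$ is a bijection.

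Finally, for $\nrm{T_{tt}}$ I apply Lemma~\ref{lem:columns} to the columns. Tail $g$-inputs map to themselves with ratio $1$. For a shape input $e_j$ with $j\geq M+3m+1$, all four types of entries lie in tail rows. Its weighted norm is
\[
\alpha(j+m)\rho^{j+m-1}+\alpha(j-m)\rho^{j-m-1}+\alpha m\rho^{j-m-1},
\]
and dividing by $\omega_j=\alpha(j+m)\rho^{j+m-1}$ gives $1+j\rho^{-2m}/(j+m)\leq1+\rho^{-2m}$. For $j=M+m+1$, only the first entry is a tail row, giving ratio $1$.

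The main obstacle is purely bookkeeping: getting the index shifts right, namely that the second harmonic term of row $n$ comes from $\eta_{n+m+1}$ and which $j$ reach finite rows. The mass identity $j\sum_{s\geq1}q_s(j-m-1)=m$, taken from Proposition~\ref{lie:prop:principal}, is what makes the norm bounds come out cleanly.
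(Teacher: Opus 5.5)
Your proposal is correct and follows the paper's proof. You read the tail rows off \eqref{lie:eq:Tj}, telescope using $\eta_j\to0$ to get uniqueness and the inverse formula, and bound the shape columns by $1+j\rho^{-2m}/(j+m)$ via Lemma~\ref{lem:columns}. The only difference is that you check convergence of the inverse series inline, by bounding the images of the normalised harmonic tail rows; the paper delegates this step to the proof of Lemma~\ref{lie:lem:normA} with $H=0$.
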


\begin{proof}
By \eqref{lie:eq:Tj}, the tail harmonic rows read $\alpha(n+1)(\eta_{n-m+1}-\eta_{n+m+1})=y_{n,0}$ for $n\geq M+2m$, the rows $(n,s)$, $1\leq s\leq m$, $n\geq M+2m$, read $\delta g_{n,s}-\alpha(n+m+1)q_s(n)\eta_{n+m+1}=y_{n,s}$, and all other tail rows read $\delta g_{n,s}=y_{n,s}$. The rest is the proof of Lemma~\ref{lem:Ttt6}: $\eta_j\to0$ gives uniqueness and \eqref{lie:eq:Tinv}, and the column $T_{tt}e_j$, $j\geq M+3m+1$, has weighted norm $\omega_j(1+j\rho^{-2m}/(j+m))\leq(1+\rho^{-2m})\omega_j$.
\end{proof}

Let $\widehat A$ be a matrix with dyadic entries, regarded as an exact linear map $\Sc_f\to X_f$, with
\begin{equation}\label{lie:eq:Ahat}
\nrm{I-\widehat AM_f}_{X_f\to X_f}<1,
\end{equation}
and define $A=\left(\begin{smallmatrix}\widehat A&-\widehat AB_fT_{tt}^{-1}\\0&T_{tt}^{-1}\end{smallmatrix}\right)\colon\Sc_f\oplus\Sc_t\to X_f\oplus X_t$. By Lemma~\ref{lem:A} (Lemma~\ref{lie:lem:carry}), $A$ is bounded and injective, and $ATe=e$ for every tail input $e$.

\begin{lemma}\label{lie:lem:normA}
Let $\nrm R=\max_{(n,s)\ \text{finite row}}\nrm{\widehat Ae_{(n,s)}}_X/\rho^n$,
\[
H=\Big\|\widehat A\,\alpha\Big((M+1)e_{(M,0)}+(M+m+1)\sum_{s=1}^mq_s(M)e_{(M,s)}\Big)\Big\|_X,
\]
and, for $n\geq M+2m$, $n\equiv m\pmod{2m}$,
\begin{equation}\label{lie:eq:hn}
h_n=\frac1{(n+1)\rho^n}\Big(\sum_{\substack{M+2m\leq k\leq n\\k\equiv m\,(2m)}}(k+1)\rho^k+m\sum_{\substack{M+2m\leq k\leq n-2m\\k\equiv m\,(2m)}}\rho^k+\frac H\alpha\Big).
\end{equation}
Let $N_H\geq M+2m$, $N_H\equiv m\pmod{2m}$. Then $A$ maps every residual supported in tail rows with norm at most
\[
A_t:=\max\Big\{1,\ \max_{M+2m\leq n\leq N_H}h_n,\ \tau_m+\frac{m\tau_m\rho^{-2m}}{N_H+2m+1}+\frac H{\alpha(N_H+2m+1)\rho^{N_H+2m}}\Big\},
\]
it is the identity on residuals supported in nonharmonic tail rows, and $\nrm A\leq\max\{\nrm R,A_t\}$.
\end{lemma}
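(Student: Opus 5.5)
We will follow the proofs of Lemmas~\ref{lem:normA} and~\ref{lem:normA6}, using Lemma~\ref{lem:columns}: $\nrm A$ is the supremum, over the rows of $\Sc_m$, of the $X_m$-norm of the image of the normalised row $\rho^{-n}S_{n,s}$. There are three kinds of rows.

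A finite row $e_{(n,s)}$ is sent by the block formula for $A$ to $\widehat Ae_{(n,s)}$, because the column $-\widehat AB_fT_{tt}^{-1}$ only acts on tail residuals. Its normalised norm is at most $\nrm R$ by definition.

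A nonharmonic tail row $(n,s)$ with $s\geq1$ has $y_{n,0}=0$ for every $n$. So \eqref{lie:eq:Tinv} gives all $\eta=0$ and $\delta g_{n,s}=y_{n,s}$, that is, the identical tail input. In particular $B_f$ (which only sees $\eta_{M+m+1}$) vanishes on it, and $A$ acts as the identity with normalised norm $1$.

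The main work is a harmonic tail row. Take $y=\rho^{-n}S_{n,0}$ with $n\geq M+2m$ and $n\equiv m\pmod{2m}$. By \eqref{lie:eq:Tinv} it produces three contributions.
\begin{itemize}
\item Shape coordinates $\eta_{k-m+1}=\rho^{-n}/(\alpha(n+1))$ for $M+2m\leq k\leq n$, $k\equiv m\pmod{2m}$. Their weights are $\omega_{k-m+1}=\alpha(k+1)\rho^k$, which gives the first sum in \eqref{lie:eq:hn}.
\item Coupled coordinates $\delta g_{k,s}=\alpha(k+m+1)q_s(k)\eta_{k+m+1}$ for $1\leq s\leq m$. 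These are nonzero only when $\eta_{k+m+1}\neq0$, i.e.\ $k+2m\leq n$. By Proposition~\ref{lie:prop:principal}, $(k+m+1)\sum_{s\geq1}q_s(k)=(k+m+1)(1-q_0(k))=m$. So their total mass is $m\rho^{-n}/(n+1)$ times weights $\rho^k$, which gives the second sum.
\item The finite part $-\widehat AB_fT_{tt}^{-1}y=\eta_{M+m+1}\widehat A\,\alpha\{\cdots\}$, using \eqref{lie:eq:B}. Its norm is $H\rho^{-n}/(\alpha(n+1))$.
\end{itemize}
Adding the three gives exactly $h_n$, so every harmonic row with $n\leq N_H$ is bounded by the middle entry of $A_t$.

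For $n>N_H$, so $n\geq N_H+2m$, we bound the terms uniformly.
\begin{itemize}
\item In the first sum, $(k+1)/(n+1)\leq1$, and the geometric series in $\rho^{-2m}$ gives at most $\tau_m$.
\item The second sum runs over $k\leq n-2m$. It is at most $m\tau_m\rho^{-2m}/(n+1)\leq m\tau_m\rho^{-2m}/(N_H+2m+1)$.
\item The third term is decreasing in $n$, so it is at most $H/(\alpha(N_H+2m+1)\rho^{N_H+2m})$.
\end{itemize}

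Taking the maximum over the tail cases gives $A_t$, and together with the finite rows gives $\nrm A\leq\max\{\nrm R,A_t\}$.

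The only delicate point is the index bookkeeping for the step-$2m$ difference operator. One must check which $\eta$'s are nonzero and which coupled $\delta g$ rows appear: the range $k\leq n-2m$ is the analogue of $M+4\leq n\leq m$ in Lemma~\ref{lem:normA}. The mass identity $\sum_{s\geq1}q_s(k)=m/(k+m+1)$, coming from $q_0$ in Proposition~\ref{lie:prop:principal}, replaces the explicit coefficients $2/(n+1)$ of the $m=2$ case.
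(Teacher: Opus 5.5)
Your proposal is correct and is essentially the paper's proof: you compute $\nrm A$ column by column via Lemma~\ref{lem:columns}, and you split the image of a normalised harmonic tail row into three parts, namely the shape coordinates, the coupled coordinates $\delta g_{k,s}$ with $k\leq n-2m$ and total mass $m\rho^{-n}/(n+1)$ (from $q_0(k)=(k+1)/(k+m+1)$), and the finite part of norm $H\rho^{-n}/(\alpha(n+1))$; you then bound the rows with $n\geq N_H+2m$ by the geometric series, exactly as the paper does. Your closing cross-reference actually matches the index range in the proof of Lemma~\ref{lem:Ttt} rather than Lemma~\ref{lem:normA}, but this has no bearing on the argument.
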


\begin{proof}
As in Lemma~\ref{lem:normA6}: the normalised harmonic tail row $\rho^{-n}S_{n,0}$ is mapped to $\eta_{k-m+1}=\rho^{-n}/(\alpha(n+1))$ for $M+2m\leq k\leq n$, of weights $\omega_{k-m+1}=\alpha(k+1)\rho^k$; to the coordinates $\delta g_{k,s}$, $1\leq s\leq m$, $M+2m\leq k\leq n-2m$, of weight $\rho^k$ and total absolute value $\alpha m\rho^{-n}/(\alpha(n+1))$, because $\sum_{s\geq1}(k+m+1)q_s(k)=m$; and to the finite part $\eta_{M+m+1}\widehat A\alpha(\ldots)$, of norm $H\rho^{-n}/(\alpha(n+1))$. The sum is $h_n$. For $n\geq N_H+2m$ we use $(k+1)/(n+1)\leq1$ and sum the geometric series.
\end{proof}

\subsection{The defect bounds}\label{lie:subsec:Z}

We bound $\nrm{I-AD\Hc_m(x^\circ)}$ by Lemma~\ref{lem:columns}, splitting the inputs into five classes; every input belongs to exactly one class, and no column is estimated by sampling. Let $d=\deg p=M-m$ and $L=M+d$.

\emph{Residual and finite columns.} $\Hc_m(x^\circ)$ and the columns $D\Hc_m(x^\circ)e$, $e$ a finite input, are polynomials in $w,\bw$; they are expanded in the basis $S_{n,s}$ without truncation, $A$ is applied exactly using \eqref{lie:eq:B} and \eqref{lie:eq:Tinv}, and $Y$ and $Z_f$ bound $\nrm{A\Hc_m(x^\circ)}_X$ and the normalised norms of the finite columns of $I-AD\Hc_m(x^\circ)$.

\begin{lemma}\label{lie:lem:gtail}
Let $e=e_{(n,s)}$ be a tail $g$-input and $q=n+2s$. Then $(I-AD\Hc_m(x^\circ))e=-A(|p|^2KS_{n,s})$. Let $p=p_{\leq d_s}+p_{>d_s}$ be the splitting of $p$ into the terms of degree $\leq d_s$ and $>d_s$, where $d_s\equiv0\pmod{2m}$ is fixed, $P_s=N_\rho(p_{\leq d_s})$, $P_t=N_\rho(p_{>d_s})$ and $\delta_a=2P_sP_t+P_t^2$. Then
\begin{equation}\label{lie:eq:gshort}
\frac{\nrm{(I-AD\Hc_m(x^\circ))e}_X}{\rho^n}\leq\frac{\nrm{A(|p_{\leq d_s}|^2KS_{n,s})}_X}{\rho^n}+\frac{\nrm A\,\delta_a}{q(q+2)} .
\end{equation}
Moreover, the right-hand side is at most $(A_tP_s^2+\nrm A\,\delta_a)/((n_f+2)(n_f+4))$ if $n\geq n_f:=M+d_s+2m$, and at most $(P_s^2+\nrm A\,\delta_a)/((m+2s_f)(m+2s_f+2))$ if $s\geq s_f:=S+d_s+2$.
\end{lemma}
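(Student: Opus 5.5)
The argument follows Lemmas~\ref{lem:gtail} and~\ref{lem:gtail6} essentially verbatim, and I would organise it in three steps.

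\emph{Step 1: the identity for the column.} By \eqref{lie:eq:DH}, for a $g$-input we have $\eta=0$, so
\[
D\Hc_m(x^\circ)e=S_{n,s}+|p|^2KS_{n,s}.
\]
The tail model gives $Te=S_{n,s}$, and Lemma~\ref{lie:lem:carry} (the analogue of Lemma~\ref{lem:A}) gives $ATe=e$, hence $AS_{n,s}=e$. It follows that $(I-AD\Hc_m(x^\circ))e=-A(|p|^2KS_{n,s})$.

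\emph{Step 2: the estimate \eqref{lie:eq:gshort}.} Write
\[
|p|^2-|p_{\leq d_s}|^2=\overline{p_{\leq d_s}}\,p_{>d_s}+\overline{p_{>d_s}}\,p_{\leq d_s}+|p_{>d_s}|^2 .
\]
By \eqref{eq:algebra}, multiplication by this function has norm at most $\delta_a$. Lemma~\ref{lem:K}(d) (via Lemma~\ref{lie:lem:carry}) gives $\nrm{KS_{n,s}}_\rho\leq\rho^n/(q(q+2))$, since $K$ does not change the angular index. Applying $A$ with operator norm $\nrm A$ and the triangle inequality then yields \eqref{lie:eq:gshort}.

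\emph{Step 3: the two far regimes.} Here we bound the first term of \eqref{lie:eq:gshort} by locating the rows on which it is supported. The function $|p_{\leq d_s}|^2=\sum p_ap_{a'}w^a\bw^{a'}$ with $a,a'\leq d_s$ shifts angular frequencies by at most $d_s$. By \eqref{eq:K}, $KS_{n,s}$ has radial indices in $\{s-1,s,s+1\}$, so by Lemma~\ref{lem:radial} the radial indices of the product are at least $s-1-d_s$. The norm of the product is at most $P_s^2\rho^n/(q(q+2))$.

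If $n\geq n_f=M+d_s+2m$, every frequency has modulus at least $n-d_s\geq M+2m>M$. All components therefore lie in tail rows, where $A$ has norm at most $A_t$ by Lemma~\ref{lie:lem:normA}. In this regime $q\geq n+2\geq n_f+2$ and $q+2\geq n_f+4$.

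If $s\geq s_f=S+d_s+2$, every radial index is at least $s-1-d_s\geq S+1$, which is at least $m+1\geq 2$. All components therefore lie in nonharmonic tail rows, where $A$ is the identity, again by Lemma~\ref{lie:lem:normA}. In this regime $q=n+2s\geq m+2s_f$, since $n\geq m$.

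In both regimes the second term of \eqref{lie:eq:gshort} is bounded with the same lower bound on $q(q+2)$, because $1/(q(q+2))$ is decreasing in $q$. This gives the two stated bounds.

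The only delicate point is the index bookkeeping in Step 3. One must check that radial index at least $S+1$ really means a nonharmonic tail row even when the frequency is small; it does, since finite rows require $s\leq S$ and harmonic rows require $s=0$. One must also check that frequency exceeding $M$ forces a tail row regardless of $s$.
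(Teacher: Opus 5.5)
Your proposal is correct and follows the paper's proof, which just refers back to Lemma~\ref{lem:gtail6}. The steps are the same: $AS_{n,s}=e$ from the block structure of $A$, the splitting of $|p|^2$ bounded via \eqref{eq:algebra} and Lemma~\ref{lem:K}(d), and the frequency and radial-index bookkeeping (Lemma~\ref{lem:radial}) that puts all outputs either in tail rows (where $A$ has norm at most $A_t$) or in nonharmonic tail rows (where $A$ is the identity). Your remark that $S+1\geq m+1$ is not needed; it suffices that the radial index exceeds $S$ and is therefore positive.
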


\begin{proof}
This is the proof of Lemma~\ref{lem:gtail6}. Multiplication by $|p_{\leq d_s}|^2$ changes frequencies by at most $d_s$, so for $n\geq n_f$ all components lie in rows of frequency at least $M+2m$, which are tail rows; and by \eqref{eq:K} and Lemma~\ref{lem:radial}, for $s\geq s_f$ all radial indices are at least $s-1-d_s>S$, so all components lie in nonharmonic tail rows, on which $A$ is the identity. Finally $q\geq n_f+2$, respectively $q\geq m+2s_f$.
\end{proof}

The tail $g$-inputs with $n\leq M+d_s$ and $s\leq S+d_s+1$ are the \emph{near} ones; for them the first term of \eqref{lie:eq:gshort} is computed without truncation, and $Z_{g,\partial}$ bounds the maximum of the right-hand side. The remaining tail $g$-inputs are covered by the last statement of Lemma~\ref{lie:lem:gtail}, and $Z_{g,\infty}$ is the larger of the two bounds.

\emph{Shape columns.} For a tail shape input $e_j$, $j\geq M+m+1$, we have $(I-AD\Hc_m(x^\circ))e_j=-A\Delta_j$ with $\Delta_j=D\Hc_m(x^\circ)e_j-T_j$. For the \emph{near} shape inputs $M+m+1\leq j\leq\jst-2m$, $\Delta_j$ is a polynomial, the columns are computed without truncation, and $Z_{\mathrm{sh},\partial}$ bounds their normalised norms. Let $W^\circ=Kg^\circ$, $h^\circ=h_{c^\circ}$ and $h_{be_1}=b^m\im(w^m)/B$. By Proposition~\ref{lie:prop:principal},
\begin{equation}\label{lie:eq:Deltaj}
\Delta_j=2\re\big(jw^{j-1}\Gamma_1\big)+m\im\big(w^j\Gamma_0\big)+2\re\big(jw^{j-1}\overline pW^\circ\big),
\end{equation}
where
\[
\Gamma_1=\overline ph^\circ-bh_{be_1},\qquad\Gamma_0=\frac{|p|^2\psi_{c^\circ}^{m-1}}B-\alpha w^{m-1}.
\]
By Lemma~\ref{lem:K}(c), $W^\circ=(1-|w|^2)^2Q^\circ$ with $Q^\circ=\sum_{n,s}\frac{g^\circ_{n,s}}{4s(s+1)}r^nP^{(2,n)}_{s-1}(2r^2-1)\sin n\theta$, whose finite expansion in the basis $S_{n,t}$ is given by the connection formulas of Section~\ref{sec:Z}, with positive rational coefficients. The function $w^L\overline pQ^\circ$ has only nonnegative frequencies; write $w^L\overline pQ^\circ=\sum_{k,t\geq0}\gamma^W_{k,t}\Phi_{k,t}$, $\Gamma_1=\sum\gamma^1_{k,t}\Phi_{k,t}$ and $\Gamma_0=\sum\gamma^0_{k,t}\Phi_{k,t}$ (finite sums, computed in interval arithmetic). For $\jst\equiv1\pmod{2m}$ with $\jst>L$ let
\begin{gather*}
N_1^+=\sum_{k+\jst-1>M}\rho^{|k|}|\gamma^1_{k,t}|,\qquad N_0^+=\sum_{k+\jst>M}\rho^{|k|}|\gamma^0_{k,t}|,\\
\Sigma_W^+=\sum_{k+\jst-1-L>M}\frac{|\gamma^W_{k,t}|(2t+1)(2t+3)\rho^{k-L-m}}{(k+\jst-L)^2},
\end{gather*}
and let $N_1^-$, $N_0^-$, $\Sigma_W^-$ be the corresponding sums over the remaining indices.

\begin{proposition}\label{lie:prop:farshape}
For every $j\equiv1\pmod{2m}$ with $j\geq\jst>L$,
\[
\frac{\nrm{(I-AD\Hc_m(x^\circ))e_j}_X}{\omega_j}\leq Z_{\mathrm{sh},\infty}:=\sU'_\Gamma+\sU'_W,
\]
where
\[
\sU'_\Gamma=\frac1\alpha\Big(\frac{2(A_tN_1^++\nrm AN_1^-)}{\rho^m}+\frac{m(A_tN_0^++\nrm AN_0^-)}{\rho^{m-1}(\jst+m)}\Big),\qquad\sU'_W=\frac8\alpha\big(A_t\Sigma_W^++\nrm A\,\Sigma_W^-\big).
\]
If moreover $\jst-1-\big((m+1)M-m^2\big)>M$, then $N_1^-=N_0^-=\Sigma_W^-=0$ and $Z_{\mathrm{sh},\infty}\leq A_t(\sU_\Gamma+\sU_W)$ with
\[
\sU_\Gamma=\frac1\alpha\Big(\frac{2\nrm{\Gamma_1}_\rho}{\rho^m}+\frac{m\nrm{\Gamma_0}_\rho}{\rho^{m-1}(\jst+m)}\Big),\qquad\sU_W=\frac{8}{\alpha(\jst-L)^2}\sum_{k,t}|\gamma^W_{k,t}|(2t+1)(2t+3)\rho^{k-L-m}.
\]
\end{proposition}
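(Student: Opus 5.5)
Fix $j\equiv1\pmod{2m}$ with $j\geq\jst$. Since $ATe_j=e_j$, we have $(I-AD\Hc_m(x^\circ))e_j=-A\Delta_j$, with $\Delta_j$ given by \eqref{lie:eq:Deltaj}. The plan follows Proposition~\ref{prop:farshape6}, but keeps track of which components of $\Delta_j$ land in tail rows and which may land in finite rows.

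\textbf{Step 1: splitting $\Delta_j$ by frequency.} Expand $\Gamma_1$, $\Gamma_0$ and $w^L\overline pQ^\circ$ in the $\Phi_{k,t}$. Then:
\begin{itemize}
\item $w^{j-1}\Phi_{k,t}$ is a convex combination of $\Phi_{k+j-1,t'}$ when $k\geq0$, by Lemma~\ref{lem:mult}. When $k<0$ it has frequency $k+j-1$, still with nonnegative coefficients summing to one, by iterating \eqref{eq:rec}.
\item $w^j\Phi_{k,t}$ has frequency $k+j$.
\item $(1-|w|^2)^2w^{j-1-L}\Phi_{k,t}$ has frequency $k+j-1-L$.
\end{itemize}
Taking $\re$ or $\im$ maps frequency $\nu$ to $\pm\nu$, so each component has modulus of frequency equal to that of the original. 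The rows of a component are tail rows once that frequency modulus exceeds $M$. This holds for every $j\geq\jst$ when it holds at $j=\jst$, because frequencies increase with $j$; this is exactly the "$+$" index condition in the definitions of $N_1^+,N_0^+,\Sigma_W^+$. On tail rows $A$ has norm at most $A_t$ (Lemma~\ref{lie:lem:normA}), and on the remaining ("$-$") components we use $\nrm A$. The three sums are then bounded separately against $\omega_j=\alpha(j+m)\rho^{j+m-1}$.

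\textbf{Step 2: the three estimates.}
\begin{itemize}
\item For $2\re(jw^{j-1}\Gamma_1)$, each component weighs at most $\rho^{k+j-1}$ when $k\geq0$. When $k<0$ the weight is at most $\rho^{|k+j-1|}\leq\rho^{j-1}\rho^{|k|}$. Dividing by $\omega_j$ and using $j/(j+m)\leq1$ gives at most $2\rho^{|k|}|\gamma^1_{k,t}|/(\alpha\rho^m)$ per coefficient.
\item For $m\im(w^j\Gamma_0)$, the factor $j$ is absent, and $1/(j+m)\leq1/(\jst+m)$ gives the second term of $\sU'_\Gamma$.
\item For the $W^\circ$ term, Lemma~\ref{lem:doublezero} with $N=j-1-L\geq0$ gives, as in Proposition~\ref{prop:farshape6}, the bound
\[
\frac{2j}{\alpha(j+m)\rho^{j+m-1}}\sum_{k,t}|\gamma^W_{k,t}|\,\frac{4(2t+1)(2t+3)\rho^{k+j-1-L}}{(k+j-L)^2}.
\]
We then bound $(k+j-L)^{-2}\leq(k+\jst-L)^{-2}$, which uses $k\geq0$ and $j\geq\jst$, and this yields $\sU'_W$.
\end{itemize}

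\textbf{Step 3: the second statement.} We need the frequency ranges. $\overline p$ has frequencies in $[-d,0]$. The function $h^\circ$, a multiple of $\im\psi_{c^\circ}^m$, has frequencies of modulus at most $m(M-m+1)$, so $\Gamma_1$ has frequencies at least $-(d+m(M-m+1))=-((m+1)M-m^2)$. $\Gamma_0$ has frequencies at least $-(2d+(m-1)(M-m+1))$, which is no smaller, and the $W$ term is handled through $L$. Under the stated hypothesis every component therefore lies in tail rows, so the "$-$" sums vanish. Then $N_1^+\leq\nrm{\Gamma_1}_\rho$, $N_0^+\leq\nrm{\Gamma_0}_\rho$, and $(k+\jst-L)^{-2}\leq(\jst-L)^{-2}$.

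\textbf{Main obstacle.} The delicate part is the $k<0$ components in Step 1. One must verify that multiplication by $w^{j-1}$ does not increase the $\ell^1$ mass and that the weight $\rho^{|k+j-1|}$ is controlled by $\rho^{j-1+|k|}$. One must also check the frequency bookkeeping, in particular the bound $(m+1)M-m^2$ for $\Gamma_1$ and $\Gamma_0$.
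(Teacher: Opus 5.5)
Your proposal is correct and follows essentially the same argument as the paper. You split each of the three parts of $\Delta_j$ coefficientwise according to whether the output frequency at $j=\jst$ exceeds $M$, use $A_t$ on tail rows and $\nrm A$ otherwise, bound the $W^\circ$ part by Lemma~\ref{lem:doublezero}, and use monotonicity in $j$. Two points need tightening. First, the monotonicity must be stated for the signed frequency $k+j-1>M$, not its modulus: a component with frequency below $-M$ at $\jst$ can move into finite rows as $j$ grows, which is why $N_1^\pm$ use the signed condition. Second, for the second statement you need $L=2M-m\leq(m+1)M-m^2$, i.e.\ $(m-1)(M-m)\geq0$; the paper checks this explicitly, while your phrase ``handled through $L$'' leaves it implicit.
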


\begin{proof}
Fix $j\geq\jst$. By Lemma~\ref{lem:mult}, $w^{j-1}\Phi_{k,t}$ is a combination of $\Phi_{k+j-1,t'}$ of norm at most $\rho^{j-1+|k|}$, and taking real or imaginary parts only changes the signs of frequencies. Hence the contribution of $\gamma^1_{k,t}$ to $2\re(jw^{j-1}\Gamma_1)$ has norm at most $2j\rho^{j-1}\rho^{|k|}|\gamma^1_{k,t}|$ and lies in rows of frequency $|k+j-1|$; if $k+\jst-1>M$ these are tail rows, on which $A$ has norm at most $A_t$ (Lemma~\ref{lie:lem:normA}), and otherwise we use $\nrm A\geq A_t$. Dividing by $\omega_j=\alpha(j+m)\rho^{j+m-1}$ and using $j/(j+m)\leq1$ gives the first term of $\sU'_\Gamma$. The term $m\im(w^j\Gamma_0)$ is treated in the same way, with $m\rho^j/\omega_j=m/(\alpha(j+m)\rho^{m-1})\leq m/(\alpha(\jst+m)\rho^{m-1})$. Finally $jw^{j-1}\overline pW^\circ=j(1-|w|^2)^2w^{j-1-L}(w^L\overline pQ^\circ)$ with $j-1-L\geq0$, and by Lemma~\ref{lem:doublezero} with $N=j-1-L$ the contribution of $\gamma^W_{k,t}$ has frequency $k+j-1-L$ and normalised norm at most
\[
\frac{2j}{\alpha(j+m)\rho^{j+m-1}}\,|\gamma^W_{k,t}|\frac{4(2t+1)(2t+3)\rho^{k+j-1-L}}{(k+j-L)^2}\leq\frac{8|\gamma^W_{k,t}|(2t+1)(2t+3)\rho^{k-L-m}}{\alpha(k+j-L)^2}.
\]
As $j$ increases, every output frequency increases, so the sets of indices counted with $A_t$ can only grow, and every factor above decreases; hence the bound at $\jst$ covers all $j\geq\jst$. For the last statement: the frequencies of $\overline p$, $h^\circ$, $\psi_{c^\circ}$ and $Q^\circ$ lie in $[-d,0]$, $[-m(M-m+1),m(M-m+1)]$, $[0,M-m+1]$ and $[-M,M]$, so the frequencies of $\Gamma_1$, $\Gamma_0$ and $\overline pQ^\circ$ are all at least $-(m(M-m+1)+d)=-((m+1)M-m^2)$, and $L=2M-m\leq(m+1)M-m^2$. Under the stated condition all output frequencies therefore exceed $M$, so that $N_1^-=N_0^-=\Sigma_W^-=0$, and $(k+\jst-L)^2\geq(\jst-L)^2$ since $k\geq0$.
\end{proof}

Collecting the five classes, the number
\[
Z=\max\{Z_f,\ Z_{g,\partial},\ Z_{g,\infty},\ Z_{\mathrm{sh},\partial},\ Z_{\mathrm{sh},\infty}\}
\]
bounds $\nrm{I-AD\Hc_m(x^\circ)}_{X\to X}$.

\subsection{Certified bounds and the exact zeros}\label{lie:subsec:zero}

For $m=3,4,6$ we take $\rho=11/10$ and the data of Table~\ref{lie:tab:param}. Each centre $x^\circ$ consists of the finite inputs listed there, given as binary64 numbers interpreted as exact dyadic rationals. The field scale $B$ is such a number as well: $B=1$ for $m=3$, and $B\approx j_{m+1,k}^m$ is the binary64 number with hexadecimal representation \texttt{0x1.02d7fa4b541bbp+21} for $m=4$ and \texttt{0x1.beb0441320216p+38} for $m=6$. The finite inverse $\widehat A$ is a fixed binary64 approximation of $M_f^{-1}$ whose entries are interpreted as exact dyadic rationals; only \eqref{lie:eq:Ahat} is used. All quantities of Sections~\ref{lie:subsec:conformal}--\ref{lie:subsec:Z} were evaluated in ball arithmetic at $128$ bits (Section~\ref{lie:subsec:cap}). Table~\ref{lie:tab:bounds} lists the results, rounded outward.

\begin{table}[ht]
\centering
\small
\begin{tabular}{@{}llll@{}}
\toprule
 & $n=8$ & $n=10$ & $n=14$\\
\midrule
$(\kf,m)$ & $(\mathfrak{su}(3),3)$ & $(\mathfrak{so}(5),4)$ & $(\mathfrak g_2,6)$\\
$(M,S)$ & $(45,24)$ & $(52,36)$ & $(114,60)$\\
finite $g_{n,s}$ & $8\times24$ & $7\times36$ & $10\times60$\\
finite $c_j$ & $c_1,c_7,\ldots,c_{43}$ & $c_1,c_9,\ldots,c_{49}$ & $c_1,c_{13},\ldots,c_{109}$\\
$\dim X_f$ & $200$ & $259$ & $610$\\
$B$ & $1$ & $\approx2120447.2868$ & $\approx4.7962817\cdot10^{11}$\\
$b=c^\circ_1$ & $20.78702921089377\ldots$ & $38.15292567121707\ldots$ & $88.49730472771703\ldots$\\
$d_s$; $n_f$, $s_f$ & $24$; $75$, $50$ & $24$; $84$, $62$ & $60$; $186$, $122$\\
$\jst$; $L$ & $223$; $87$ & $305$; $100$ & $241$; $222$\\
$N_H$ & $645$ & $852$ & $1314$\\
$r$ & $1/30000$ & $10^{-7}$ & $1/6000$\\
\bottomrule
\end{tabular}
\caption{Parameters of the three computer-assisted proofs ($\rho=11/10$). The finite shape inputs are $c_j$, $j\equiv1\pmod{2m}$, $j\leq M-m+1$; the finite $g$-inputs are $g_{n,s}$, $n\equiv m\pmod{2m}$, $n\leq M$, $1\leq s\leq S$. $B$ and $b$ are exact dyadic rationals.}
\label{lie:tab:param}
\end{table}

\begin{table}[tp]
\centering
\footnotesize
\begin{tabular}{@{}llll@{}}
\toprule
Quantity & $n=8$ & $n=10$ & $n=14$\\
\midrule
$P$ & $<21.788460$ & $<38.428146$ & $<91.291342$\\
$C$ & $<23.020116$ & $<42.001369$ & $<97.545293$\\
$V_0$ & $<131547.591$ & $<28.746248$ & $<79.018365$\\
$\nrm{I-\widehat AM_f}$ & $<2.2858\cdot10^{-14}$ & $<5.7259\cdot10^{-12}$ & $<5.0709\cdot10^{-9}$\\
$\nrm R$ & $<963.7779$ & $<10194.7512$ & $<11766.8786$\\
$H$ & $<1.4540263\cdot10^9$ & $<718348.086$ & $<1.2223739\cdot10^9$\\
$A_t$ & $<2.302038$ & $<2.014454$ & $<1.661175$\\
$\nrm A$ (ceiling) & $964$ & $10195$ & $11767$\\
\midrule
$Y$ & $<1.519444\cdot10^{-5}$ & $<9.591861\cdot10^{-9}$ & $<4.592200\cdot10^{-5}$\\
$Z_f$ & $<0.052262$ & $<0.060605$ & $<0.120542$\\
$P_s$ & $<21.788460$ & $<38.428143$ & $<91.291236$\\
$P_t$ & $<1.4978\cdot10^{-7}$ & $<2.6609\cdot10^{-6}$ & $<1.05815\cdot10^{-4}$\\
$\delta_a$ & $<6.5267\cdot10^{-6}$ & $<2.04502\cdot10^{-4}$ & $<0.0193200$\\
near $g$-columns; max.\ at & $396$; $(51,1)$ & $358$; $(60,1)$ & $1215$; $(126,1)$\\
$Z_{g,\partial}$ & $<0.194306$ & $<0.462762$ & $<0.576826$\\
far bound, $n\geq n_f$ & $<0.179735$ & $<0.393351$ & $<0.393946$\\
far bound, $s\geq s_f$ & $<0.043897$ & $<0.088871$ & $<0.135896$\\
near shape columns; max.\ at & $29$; $j=49$ & $31$; $j=57$ & $10$; $j=121$\\
$Z_{\mathrm{sh},\partial}$ & $<0.284628$ & $<0.552897$ & $<0.600896$\\
$\nrm{\Gamma_1}_\rho$ resp.\ $N_1^+$ & $<14093.696$ & $<0.466786$ & $<5.147419$\\
$N_1^-$ & --- & --- & $<3.3948\cdot10^{-8}$\\
$\nrm{\Gamma_0}_\rho$ resp.\ $N_0^+$ & $<21422.774$ & $<0.692754$ & $<7.696736$\\
$N_0^-$ & --- & --- & $0$\\
$\Sigma_W^+$ & --- & --- & $<3.293822$\\
$\Sigma_W^-$ & --- & --- & $<1.0394\cdot10^{-17}$\\
$\sU_\Gamma$ resp.\ $\sU'_\Gamma$ & $<0.114684$ & $<0.016902$ & $<0.111093$\\
$\sU_W$ resp.\ $\sU'_W$ & $<0.067068$ & $<0.088733$ & $<0.493856$\\
$Z_{\mathrm{sh},\infty}$ (all $j\geq\jst$) & $<0.418572$ & $<0.212795$ & $<0.604948$\\
$Z$ & $<0.418572$ & $<0.552897$ & $<0.604948$\\
\midrule
$C_2$ & $<0.0072192$ & $<395.40418$ & $<210.53260$\\
$C_3$ & $<4.8836\cdot10^{-10}$ & $<0.071133$ & $<0.0060610$\\
$C_4$ & $<1.5491\cdot10^{-18}$ & $<3.2353\cdot10^{-7}$ & $<1.9135\cdot10^{-9}$\\
$C_5$ & $<2.1152\cdot10^{-26}$ & $<1.7470\cdot10^{-11}$ & $<2.2718\cdot10^{-14}$\\
$C_6$ & --- & $<3.7236\cdot10^{-16}$ & $<1.5942\cdot10^{-19}$\\
$C_7$ & --- & --- & $<6.1326\cdot10^{-25}$\\
$C_8$ & --- & --- & $<9.9963\cdot10^{-31}$\\
\bottomrule
\end{tabular}
\caption{Certified bounds, rounded outward. The row $\nrm A$ gives the rational ceiling for $\max\{\nrm R,A_t\}$ used in $C_q=\nrm A\,c_q$; the tail estimates use this ceiling ($n=8$) or the certified enclosure of $\nrm R$ ($n=10,14$). For $n=8,10$ the far shape bound is the second one of Proposition~\ref{lie:prop:farshape}, $Z_{\mathrm{sh},\infty}=A_t(\sU_\Gamma+\sU_W)$ (for $n=8$ with the ceiling $A_t\leq2.303$); for $n=14$ it is the first one, $Z_{\mathrm{sh},\infty}=\sU'_\Gamma+\sU'_W$. The maximum defining $A_t$ is attained by the remainder term for $n=8$, and at $n=M+2m$ for $n=10,14$. $P$, $C$ and $V_0$ are intermediate values of the final verification scripts; the complete enclosures of the other quantities are recorded in the certificates and receipts (Section~\ref{lie:subsec:cap}).}
\label{lie:tab:bounds}
\end{table}

\begin{theorem}\label{lie:thm:zero}
Let $m\in\{3,4,6\}$, and let $M$, $S$, $B$, $x^\circ$ and $r$ be as in Table~\ref{lie:tab:param}. There is exactly one $x^*=(g^*,c^*)\in X_m$ with $\nrm{x^*-x^\circ}_X\leq r$ and $\Hc_m(x^*)=0$.
\end{theorem}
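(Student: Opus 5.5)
The proof will be a direct application of Proposition~\ref{lie:prop:NK}, mirroring the proofs of Theorems~\ref{thm:zero} and~\ref{thm:zero6}. For each $m\in\{3,4,6\}$ we first check that the operator $A$ is admissible. Condition \eqref{lie:eq:Ahat} holds by the row $\nrm{I-\widehat AM_f}$ of Table~\ref{lie:tab:bounds}, and all entries there are far below one. Hence, by Lemma~\ref{lem:A} (via Lemma~\ref{lie:lem:carry}), $A$ is bounded and injective, and $ATe=e$ for tail inputs. Lemma~\ref{lie:lem:normA} then gives $\nrm A\leq\max\{\nrm R,A_t\}$, and we bound this by the rational ceiling listed in the table.

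Next we verify the hypotheses under which the five column classes combine to give $Z$. For the far shape columns we need $\jst\equiv1\pmod{2m}$ and $\jst>L$. The table gives $(\jst,L)=(223,87)$, $(305,100)$ and $(241,222)$. For $m=3,4$ we also check the stronger condition $\jst-1-((m+1)M-m^2)>M$ of Proposition~\ref{lie:prop:farshape}. With $(M,m)=(45,3)$ this reads $222-171=51>45$. With $(52,4)$ it reads $304-244=60>52$. So the simplified bound $A_t(\sU_\Gamma+\sU_W)$ is legitimate in these two cases. For $m=6$ the condition fails, since $240-(7\cdot114-36)<114$. There we use the general bound $\sU'_\Gamma+\sU'_W$, with the split sums $N_1^\pm$, $N_0^\pm$, $\Sigma_W^\pm$ recorded in the table.

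The tail $g$-columns are covered by Lemma~\ref{lie:lem:gtail}, using $n_f=M+d_s+2m$ and $s_f=S+d_s+2$. These match the table: $45+24+6=75$, $52+24+8=84$ and $114+60+12=186$ for $n_f$, and $24+26=50$, $36+26=62$ and $60+62=122$ for $s_f$. The near columns are computed exactly. Taking the maximum over the five classes gives $Z<0.419$, $0.553$ and $0.605$.

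It remains to pick rational majorants $Y$, $Z$ and $C_q$, $2\le q\le m+2$, slightly above the certified values. For example, for $m=4$ we take $Y=10^{-8}$, $Z=56/100$, $C_2=396$, $C_3=8\cdot10^{-2}$, and similarly for the others. We then evaluate both polynomials in \eqref{lie:eq:radii} at the stated $r$ in exact rational arithmetic.

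The delicate case is $m=4$, where $C_2\approx395$ is large. At $r=10^{-7}$ the quadratic term is about $4\cdot10^{-12}$, negligible against $(Z-1)r\approx-4.5\cdot10^{-8}$, while $Y<10^{-8}$. So the first inequality holds with margin, and the second is $Z-1+2C_2r\approx-0.45$.

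For $m=6$, at $r=1/6000$ we get $Y\approx4.6\cdot10^{-5}$ against $(Z-1)r\approx-6.6\cdot10^{-5}$, plus $C_2r^2\approx5.8\cdot10^{-6}$. The sum is still negative, but this is the tightest margin and must be checked carefully.

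For $m=3$, at $r=1/30000$ we compare $Y\approx1.5\cdot10^{-5}$ with $(Z-1)r\approx-1.94\cdot10^{-5}$. The higher $C_q$ are negligible in all three cases.

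Uniqueness and existence in $\overline B_{X_m}(x^\circ,r)$ then follow from Proposition~\ref{lie:prop:NK}.

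The main obstacle is not conceptual but bookkeeping. Every hypothesis used in the defect bounds, in particular the support conditions for the far shape columns and the choice between the two forms of $Z_{\mathrm{sh},\infty}$, must be matched to the certified data. The radii inequalities must also be confirmed with enough margin, especially for $m=6$.
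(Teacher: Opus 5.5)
Your proposal is correct and follows the paper's own proof, with the same three ingredients. You verify \eqref{lie:eq:Ahat} from Table~\ref{lie:tab:bounds} to get a bounded injective $A$. You check the hypotheses of Proposition~\ref{lie:prop:farshape}: the strong support condition for $m=3,4$, and only $\jst>L$ for $m=6$. You then apply Proposition~\ref{lie:prop:NK}, checking both expressions in \eqref{lie:eq:radii} in exact rational arithmetic with rational majorants just above the certified values.
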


\begin{proof}
The condition \eqref{lie:eq:Ahat} holds by Table~\ref{lie:tab:bounds}, so $A$ is bounded and injective. The hypotheses of Proposition~\ref{lie:prop:farshape} hold: for $m=3$ and $m=4$, $\jst-1-((m+1)M-m^2)$ equals $222-171=51>45$ and $304-244=60>52$, and for $m=6$, $\jst=241>L=222$. By Table~\ref{lie:tab:bounds}, Proposition~\ref{lie:prop:NK} applies with the following rational majorants:
\begin{itemize}
\item $m=3$: $Y=153\cdot10^{-7}$, $Z=43/100$, $C_2=1/125$, $C_3=5\cdot10^{-10}$, $C_4=2\cdot10^{-18}$, $C_5=3\cdot10^{-26}$;
\item $m=4$: $Y=10^{-8}$, $Z=56/100$, $C_2=396$, $C_3=72/1000$, $C_4=4\cdot10^{-7}$, $C_5=2\cdot10^{-11}$, $C_6=4\cdot10^{-16}$;
\item $m=6$: $Y=46\cdot10^{-6}$, $Z=61/100$, $C_2=211$, $C_3=61\cdot10^{-4}$, $C_4=2\cdot10^{-9}$, $C_5=3\cdot10^{-14}$, $C_6=2\cdot10^{-19}$, $C_7=7\cdot10^{-25}$, $C_8=2\cdot10^{-30}$.
\end{itemize}
For these values and $r=1/30000$, $10^{-7}$, $1/6000$, respectively, exact rational arithmetic gives for the two expressions in \eqref{lie:eq:radii}
\begin{gather*}
-3.699991111111111\ldots\cdot10^{-6}\ \text{ and }\ -0.5699994666666666\ldots\qquad(m=3),\\
-3.399603999999992\ldots\cdot10^{-8}\ \text{ and }\ -0.4399207999999978\ldots\qquad(m=4),\\
-1.313888886064814\ldots\cdot10^{-5}\ \text{ and }\ -0.3196666661583333\ldots\qquad(m=6),
\end{gather*}
all negative.
\end{proof}

\begin{remark}[The centres]\label{lie:rem:centres}
The centres were obtained numerically, and their provenance plays no role in the proof. A Fourier--Bessel collocation of \eqref{lie:eq:Pl} in the enlarged symmetry class, at the ball wavenumber $j_{m+1,k}$ of Section~\ref{lie:subsec:reduced} and seeded with the near-resonant mode $\ell=2m$, produced numerical solutions with Cauchy defects of order $10^{-12}$ or smaller. Their domains were mapped conformally to the disc by Theodorsen's method, and Newton's method was applied to the Galerkin truncation of \eqref{lie:eq:H}. For $m=4,6$ the unscaled field has boundary data of size about $j_{m+1,k}^m$, and double-precision Newton iterations stagnated at residuals of about $10^{-6}$ and $2$; dividing the field by the frozen constant $B$, which does not change the Cauchy data of $F=B\,V\circ\psi^{-1}$, removed this obstruction. For $m=6$ a first centre with $M=54$ gave $Y\approx1.7$, the defect being concentrated in the omitted angular frequency $66$; this led to $M=114$. For orientation we record some numerical observations about the centres, not used in the proofs. The boundary radius of $D$ lies between $20.7086$ and $20.8635$ ($m=3$), $38.1390$ and $38.1667$ ($m=4$), $88.4477$ and $88.5402$ ($m=6$), so that $\partial\Omega$ deviates from the sphere of radius $j_{m+1,k}$ by between $-0.57\%$ and $+0.18\%$, $-0.055\%$ and $+0.018\%$, $-0.030\%$ and $+0.074\%$, respectively; the minimum of $\re(1+w\psi''/\psi')$ on $\partial\D$ is about $0.832$, $0.973$ and $0.896$, close to the certified lower bounds of Table~\ref{lie:tab:geom}.
\end{remark}

\subsection{Geometry and proof of Theorem~\ref{lie:thm}}\label{lie:subsec:proof}

\begin{proposition}\label{lie:prop:geometry}
Let $m\in\{3,4,6\}$, let $c$ be a real sequence indexed by $j\equiv1\pmod{2m}$ with $\sum_j\omega_j|c_j-c^\circ_j|\leq r$ \textup{(}for instance $c=c^*$\textup{)}, and put $R_e=21/20$, $E_1=r\theta_1$, $E_2=r/(\alpha\rho^{m-1}e\log\rho)$,
\[
\Lambda_e=b-\sum_{j>1}j|c^\circ_j|R_e^{j-1}-E_1,\qquad L_1=b-\sum_{j>1}j|c^\circ_j|-E_1,\qquad U_2=\sum_{j>1}j(j-1)|c^\circ_j|+E_2 .
\]
Then $\re\psi_c'\geq\Lambda_e$ on $\{|w|\leq R_e\}$, $|\psi_c'|\geq L_1$ and $|\psi_c''|\leq U_2$ on $\overline\D$, and $|c_{2m+1}|\geq|c^\circ_{2m+1}|-r/\omega_{2m+1}$. The computation certifies the bounds of Table~\ref{lie:tab:geom}; in particular, for $|w|\leq1$,
\[
\re\Big(1+\frac{w\psi_c''(w)}{\psi_c'(w)}\Big)\geq1-\frac{U_2}{L_1}>0 .
\]
Consequently $\psi^*=\psi_{c^*}$ is injective with nonvanishing derivative on $\{|w|<21/20\}$, and $D=\psi^*(\D)$ is a strictly convex domain bounded by a real-analytic Jordan curve, invariant under $I_2(2m)$, not a disc, with $0\in D$ and $\partial D=\{R(\theta)e^{i\theta}\}$ for a positive, real-analytic, $W$-invariant function $R$.
\end{proposition}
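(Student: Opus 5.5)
The proof follows the pattern of Propositions~\ref{prop:geometry}, \ref{prop:convex}, \ref{prop:geometry6} and~\ref{prop:convex6}. It splits into two parts: elementary coefficient bounds uniform over the whole shape ball, which reduce to the certified numbers of Table~\ref{lie:tab:geom}, followed by geometric consequences for $c=c^*$.

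\emph{Coefficient bounds.} Put $\delta=c-c^\circ$. The weights $\omega_j=\alpha(j+m)\rho^{j+m-1}$ control the perturbation.
For the derivative on $|w|\leq R_e<\rho$, we have $j R_e^{j-1}\leq j\rho^{j-1}$. Moreover $|\delta_1|+\sum_{j>1}j\rho^{j-1}|\delta_j|\leq r\sup_j j\rho^{j-1}/\omega_j\leq r\theta_1=E_1$, using $j/(j+m)\leq1$; the $j=1$ term is also at most $\theta_1\omega_1$. Hence
\[
\re\psi_c'(w)\geq c_1-\sum_{j>1}j|c_j|R_e^{j-1}\geq\Lambda_e .
\]
The same estimate at $R_e=1$ gives $|\psi_c'|\geq L_1$ on $\overline\D$.
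For the second derivative, $j(j-1)/\omega_j\leq j\rho^{-j}/(\alpha\rho^{m-1})\leq 1/(\alpha\rho^{m-1}e\log\rho)$, by maximising $x\rho^{-x}$ as in Proposition~\ref{prop:convex6}. This gives $|\psi_c''|\leq U_2$, and the supremum covers the infinite tail.
Finally $|\delta_{2m+1}|\leq r/\omega_{2m+1}$. The finite sums over the dyadic $c^\circ_j$ and the rational $r$ are evaluated in ball arithmetic, which yields $\Lambda_e>0$, $L_1>U_2$ and $|c^\circ_{2m+1}|>r/\omega_{2m+1}$. Then $\re(1+w\psi''/\psi')\geq1-|\psi''|/|\psi'|\geq1-U_2/L_1>0$.

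\emph{Geometry for $c=c^*$.} Univalence on $|w|<R_e$ follows from $\re\psi^{*\prime}>0$ on a convex disc, via the integral of the difference quotient. Strict convexity of $\partial D$ follows from the analytic criterion of Proposition~\ref{prop:convex}: the tangent turning rate $\re(1+w\psi''/\psi')$ is positive, and the total turning is $2\pi$ because $\psi'\neq0$ on $\overline\D$.
$I_2(2m)$-invariance follows from \eqref{lie:eq:sym}, and $0=\psi^*(0)\in D$.
For the non-disc property: a disc invariant under $I_2(2m)$ with $m\geq1$ is centred at $0$. The Schwarz lemma argument of Proposition~\ref{prop:geometry} would then force $\psi^*$ to be linear, contradicting $c^*_{2m+1}\neq0$.

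\emph{Radial function.} Because $D$ is strictly convex and contains $0$ in its interior, every ray from $0$ meets $\partial D$ exactly once and transversally, so $\partial D=\{R(\theta)e^{i\theta}\}$ with $R>0$. Real analyticity of $R$ comes from the implicit function theorem. Write $\arg\psi^*(e^{it})=\theta$; its derivative $\re(w\psi'/\psi)$ is positive on $\partial\D$, since a convex curve around $0$ is star-shaped with respect to $0$ and the transversality gives strictness. $W$-invariance of $R$ is inherited from the $I_2(2m)$-invariance of $D$, because $W\subset I_2(2m)$.

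The only genuine obstacle is numerical: certifying $1-U_2/L_1>0$ and $\Lambda_e>0$ uniformly over the existence ball. This is handled by the analytic tail constants $E_1$ and $E_2$; everything else is routine transcription of the $n=4,6$ arguments.
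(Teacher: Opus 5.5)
Your proposal is correct and follows the paper's proof essentially step by step. The coefficient bounds use $\sup_j j\rho^{j-1}/\omega_j\leq\theta_1$ and $\max_{x>0}x\rho^{-x}=1/(e\log\rho)$, followed by univalence, the analytic convexity criterion, the symmetry-plus-Schwarz-lemma non-disc argument and the radial-graph description. The paper is more explicit only on transversality of the rays: a tangent line to $\partial D$ is a supporting line of the convex set $D$, so it cannot pass through the interior point $0$. This gives $\frac{d}{d\phi}\arg\psi^*(e^{i\phi})\neq0$, which your remark about star-shapedness only gestures at.
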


\begin{table}[ht]
\centering
\small
\begin{tabular}{@{}llll@{}}
\toprule
Quantity & $n=8$ & $n=10$ & $n=14$\\
\midrule
$\Lambda_e$ & $>20.037195$ & $>37.965074$ & $>87.134069$\\
$L_1$ & $>20.231942$ & $>38.026654$ & $>87.809995$\\
$U_2$ & $<3.408445$ & $<1.024680$ & $<9.301964$\\
$1-U_2/L_1$ & $>0.83153153$ & $>0.97305365$ & $>0.89406714$\\
$|c_{2m+1}|$ & $|c_7|>0.07748402$ & $|c_9|>0.01383247$ & $|c_{13}|>0.04630866$\\
\bottomrule
\end{tabular}
\caption{Certified geometric bounds of Proposition~\ref{lie:prop:geometry}, valid for every $c$ in the ball of Theorem~\ref{lie:thm:zero}.}
\label{lie:tab:geom}
\end{table}

\begin{proof}
Let $\delta=c-c^\circ$. Since $j\rho^{j-1}/\omega_j\leq\theta_1$ for all $j$, we have $\sum_jj\rho^{j-1}|\delta_j|\leq E_1$; since, for $j\geq2$, $j(j-1)/\omega_j\leq j\rho^{-j}/(\alpha\rho^{m-1})\leq1/(\alpha\rho^{m-1}e\log\rho)$, as $x\mapsto x\rho^{-x}$ has maximum $1/(e\log\rho)$ on $(0,\infty)$, we have $\sum_{j>1}j(j-1)|\delta_j|\leq E_2$; and $|\delta_{2m+1}|\leq r/\omega_{2m+1}$. Both suprema are over all $j$, so the infinite tail of $c$ is covered. For $|w|\leq R_e<\rho$ this gives $\re\psi_c'(w)\geq c_1-\sum_{j>1}j|c_j|R_e^{j-1}\geq\Lambda_e$, and similarly $|\psi_c'|\geq L_1$ and $|\psi_c''|\leq U_2$ on $\overline\D$, so $\re(1+w\psi_c''/\psi_c')\geq1-|\psi_c''|/|\psi_c'|\geq1-U_2/L_1$.

For $c=c^*$, univalence on the convex disc $\{|w|<R_e\}$ follows from $\re\psi^{*\prime}>0$ as in Proposition~\ref{prop:geometry}, and strict convexity of $D$ from the analytic convexity criterion, as in Proposition~\ref{prop:convex}. The symmetries follow from \eqref{lie:eq:sym}, and $0=\psi^*(0)\in D$. A line tangent to $\partial D$ at some point is a supporting line of the convex set $D$, so it cannot pass through the interior point $0$; hence $\frac{d}{d\phi}\arg\psi^*(e^{i\phi})\neq0$, every ray from $0$ meets $\partial D$ exactly once and transversally, and $\partial D$ is the graph of a positive real-analytic radial function $R$, which is $W$-invariant because $D$ is. If $D$ were a disc, its centre would be fixed by the rotation through $\pi/m$, hence equal to $0$, and the Schwarz lemma argument of Proposition~\ref{prop:geometry} would show that $\psi^*$ is linear, contradicting $c^*_{2m+1}\neq0$.
\end{proof}

\begin{proof}[Proof of Theorem~\ref{lie:thm}]
Let $m=3,4,6$ for $\kf=\mathfrak{su}(3),\mathfrak{so}(5),\mathfrak g_2$, respectively, let $x^*=(g^*,c^*)$ be the zero of Theorem~\ref{lie:thm:zero}, $\psi^*=\psi_{c^*}$ and $V^*=Kg^*+h_{c^*}$. By Lemma~\ref{lie:lem:F}, $V^*$ satisfies the hypotheses of Lemma~\ref{lie:lem:pullback} with $\psi=\psi^*$, and by Proposition~\ref{lie:prop:geometry} $\psi^*$ is injective with nonvanishing derivative on a neighbourhood of $\overline\D$. Hence $F=B\,V^*\circ(\psi^*)^{-1}\in C^1(\overline D)$ is a weak solution of $\Delta F+F=0$ in $D$ with $F=\varpi$ and $\nabla F=\nabla\varpi$ on $\partial D$, and $F(\overline\tau)=-F(\tau)$, $F(e^{i\pi/m}\tau)=-F(\tau)$. By Lemma~\ref{lem:interior} and Lemma~\ref{lem:boundary} in the form of Lemma~\ref{lie:lem:carry}, $F$ extends to a real-analytic solution on an open neighbourhood of $\overline D$. Replacing this neighbourhood by the connected component containing $\overline D$ of its intersection with its images under $I_2(2m)$, we obtain an $I_2(2m)$-invariant neighbourhood $N$ on which, by the identity theorem, $F$ keeps its parities; in particular $F$ is anti-invariant under $W$.

By Proposition~\ref{lie:prop:geometry}, $D=\{re^{i\theta}:r<R(\theta)\}$ with $R$ positive, real analytic and $W$-invariant. Lemma~\ref{lie:lem:lift} shows that $\Omega=\Ad(G)D$ is a bounded domain, strictly star-shaped with respect to $0$, whose boundary is a compact real-analytic hypersurface diffeomorphic to $S^{n-1}$, and that $u=(F/\varpi)^\sharp$ is a nonconstant real-analytic function on the neighbourhood $\Ad(G)N$ of $\overline\Omega$ satisfying \eqref{eq:main}. Since $D$ is convex, $\Omega$ is convex by Lemma~\ref{lie:lem:convex}. It is $\Ad(G)$-invariant by construction, and $-\Omega=\Ad(G)(-D)=\Omega$ because $-1\in I_2(2m)$.

If $\Omega$ were a ball, its centre would be fixed by $\Ad(G)$. As $\kf$ is simple, the only $\Ad(G)$-fixed vector is $0$, so $\Omega$ would be a ball centred at $0$ and $D=\Omega\cap\tf$ a disc, contradicting Proposition~\ref{lie:prop:geometry}. Finally, the proof of Corollary~\ref{cor:pompeiu} in Section~\ref{sec:pompeiu} uses only \eqref{eq:main} and the regularity of $u$ and $\partial\Omega$, and applies verbatim: $\widehat{\one_\Omega}$ vanishes on the unit sphere, and $\int_{\sigma(\Omega)}\cos x_1\,dx=0$ for every rigid motion $\sigma$ of $\kf\cong\R^n$.
\end{proof}

\subsection{Which dimensions arise}\label{lie:subsec:complete}

\begin{remark}\label{lie:rem:complete}
The reduction of Proposition~\ref{lie:prop:radial} lowers the dimension to the rank of $\kf$ and never produces a potential term, because $\varpi$ is harmonic; it is planar exactly when $\kf$ has rank two. Since $\kf=\mathfrak z(\kf)\oplus[\kf,\kf]$ with $[\kf,\kf]$ semisimple, a compact Lie algebra of rank two is isomorphic to $\R^2$, to $\R\oplus\mathfrak{su}(2)\cong\mathfrak u(2)$, to $\mathfrak{su}(2)\oplus\mathfrak{su}(2)$, or to one of the simple algebras $\mathfrak{su}(3)$, $\mathfrak{so}(5)$, $\mathfrak g_2$ of types $A_2$, $B_2=C_2$, $G_2$ \cite{BtD85,Kna02}. Leaving aside the abelian case, in which there is nothing to reduce, the dimensions $n=2+2m$ are $4,6,8,10,14$, corresponding to $m\in\{1,2,3,4,6\}$, the orders of rotations allowed by the crystallographic restriction. Thus $\{4,6,8,10,14\}$ is the complete list of dimensions obtainable by exact planar, potential-free reductions of adjoint type, and Theorems~\ref{thm:main} and~\ref{lie:thm} cover all of them. The classes $O(d_1)\times O(d_2)$ of Remark~\ref{rem:why4} give no further dimensions: the admissible ones, $(d_1,d_2)=(1,3)$ and $(3,3)$, are the adjoint classes of $\mathfrak u(2)$ and $\mathfrak{su}(2)\oplus\mathfrak{su}(2)$. For $m=5$ or $m\geq7$, the planar problem \eqref{lie:eq:Pl} with $\varpi=\im\tau^m$ still makes sense and could be treated by the method of this section, but the dihedral group $I_2(m)$ is not crystallographic, and the problem is not the reduction of a problem in a Euclidean space of this type. More generally, for the isotropy representation of a Riemannian symmetric space of rank two whose restricted roots all have the same multiplicity $k$, the radial part of the Laplacian is $f\mapsto J^{-1}\operatorname{div}(J\nabla f)$ with $J=|\varpi|^k$ \cite[Ch.~II]{Hel84}, and conjugation by $J^{1/2}$ leaves the potential $\frac k2\big(\frac k2-1\big)|\nabla\varpi|^2/\varpi^2$, which vanishes only for $k=2$, the adjoint case.
\end{remark}

\part{Convex and additional counterexamples in dimension three}\label{part:r3}

\section{Dimension three}\label{r3:sec}\label{sec:r3}

In this section we prove Theorem~\ref{thm:r3}. Points of the physical space $\R^3$ are written $X=(X_1,X')\in\R\times\R^2$, so that $X_1$ and $X'$ are the coordinates $x_1$ and $x'$ of Theorem~\ref{thm:r3}; the letter $x$ is used below for points of a reference ball. The convex domain and its dyadic centre are described in Remark~\ref{rem:domain}. The independent earlier example is stated at the end of this section.

\subsection*{Strategy}
For $n=3$ there is no reduction to the planar Laplacian of the kind used for $n=4$ and $n=6$ (Remark~\ref{rem:why4}): for $O(2)$-invariant functions the meridian problem contains the drift term $y^{-1}\partial_y$. We therefore keep the three-dimensional Laplacian and pull back only the meridian plane by a conformal map $\psi$ of the unit disc, which we lift to a diffeomorphism $\Theta$ of the unit ball $B\subset\R^3$ (Section~\ref{r3:sec-pullback}). With $p=\psi'$ and $q=\im\psi/y$, the pulled-back problem is
\[
\operatorname{div}(q\nabla U)+q|p|^2U=0\ \text{ in }B,\qquad U=1\ \text{ and }\ \nabla U=0\ \text{ on }\partial B .
\]
We write $U=1+K_3g$, where $K_3$ is an explicit inverse of the Laplacian of $\R^3$ on axisymmetric functions with vanishing Cauchy data on $\partial B$ (Section~\ref{r3:sec-K}), expand $g$ in zonal solid harmonics times Jacobi polynomials (Section~\ref{r3:sec-spaces}), and obtain a quartic equation $\Fc(g,c)=0$ for $g$ and the Taylor coefficients $c$ of $\psi$ (Section~\ref{r3:sec-quartic}). The unknown coefficient $q$ is not divided out, so that the equation is polynomial. The approximate inverse of the linearisation consists of a $7371\times7371$ matrix, the identity on high field modes, and the exact inverse of a Toeplitz operator that describes the linearisation on high shape modes (Section~\ref{r3:sec-inverse}). The defect bounds (Section~\ref{r3:sec-Z}) cover all columns of the linearisation: the finite parts of $7280$ columns and the complete columns of $457+310$ further inputs are evaluated in interval arithmetic, while the output tails of the former and all remaining columns are covered by explicit envelopes ($91$ radial classes, fourteen near angular classes, one uniform angular class and one far shape class), whose validity is proved there. Compared with $n=4$ and $n=6$, the equation has the variable coefficient $q$ in its principal part, so that the model of the linearisation on high shape modes is a Toeplitz operator whose symbol is built from $q$ and $\psi'$ rather than a bidiagonal operator; the output space measures harmonic components through their boundary traces; and the computation is much larger, taking about $1.8$ hours with five processes (Section~\ref{r3:sec-cap}).

\subsection{Axisymmetric functions and the pull-back}\label{r3:sec-pullback}

Throughout this section $R=21/20$; the notation is independent of that of Sections~\ref{sec:reduction}--\ref{lie:sec}. Let $B\subset\R^3$ be the open unit ball and $B_R=\{|x|<R\}$. We write $x=(x_1,x')\in\R\times\R^2$ for points of the reference space, $X$ for points of the physical space, and
\[
y=|x'|,\qquad w=x_1+iy,\qquad r=|x|=|w|,\qquad t=r^2,\qquad \xi=x_1/r .
\]
A function on a rotation-invariant subset of $\R^3$ is \emph{axisymmetric} if it is invariant under the action of $O(2)$ on $x'$; it is then a function of $(x_1,y)$, and we use both descriptions. Polynomials in $x_1$ and $y^2$ are polynomials on $\R^3$. We write $T_n$, $U_n$, $P_\ell$ and $C^{(2)}_n$ for the Chebyshev polynomials of the first and second kind, the Legendre polynomials and the Gegenbauer polynomials of index two, with $U_{-1}=0$.

Let $\mathcal E$ be the Banach space of real sequences $c=(c_j)_{j\ \mathrm{odd}\geq1}$ with $\nrm c_{\mathcal E}=\sum_jjR^j|c_j|<\infty$. For $c\in\mathcal E$ let $\psi_c(w)=\sum_jc_jw^j$; the series for $\psi_c$ and $\psi_c'$ converge absolutely and uniformly on $|w|\leq R$. Put
\[
p_c=\psi_c'(x_1+iy),\qquad q_c=\frac{\im\psi_c(x_1+iy)}{y}\ \ (y\neq0),\qquad q_c(x_1,0)=\re\psi_c'(x_1).
\]
Since $\im(w^j)=r^j\sin j\theta$ for $w=re^{i\theta}$, $y=r\sin\theta$ and $U_{j-1}(\cos\theta)=\sin j\theta/\sin\theta$,
\begin{equation}\label{r3:eq-qseries}
q_c=\sum_jc_jr^{j-1}U_{j-1}(\xi),\qquad \partial_{x_1}q_c=\sum_{j\geq3}jc_jr^{j-2}U_{j-2}(\xi),\qquad \frac{\partial_yq_c}y=-2\sum_{j\geq3}c_jr^{j-3}C^{(2)}_{j-3}(\xi).
\end{equation}
The second identity follows from $\partial_{x_1}\im(w^j)=\im(jw^{j-1})$. For the third, a computation in polar coordinates gives
\[
\frac1y\,\partial_y\frac{\im w^j}{y}=r^{j-3}\,\frac{(j-1)\sin\theta\cos(j-1)\theta-\sin(j-1)\theta\cos\theta}{\sin^3\theta}=-r^{j-3}U_{j-2}'(\cos\theta),
\]
and $U_{j-2}'=2C^{(2)}_{j-3}$. Moreover, with $p_a=(a+1)c_{a+1}$,
\begin{equation}\label{r3:eq-p2}
|p_c|^2=\sum_{\alpha,\beta\ \mathrm{even}}p_\alpha p_\beta\,\re(w^\alpha\bw^\beta)=\sum_{\alpha,\beta\ \mathrm{even}}p_\alpha p_\beta\,r^{\alpha+\beta}T_{|\alpha-\beta|}(\xi).
\end{equation}
Since $\psi_c$ is odd with real coefficients, $\psi_c(\bw)=\overline{\psi_c(w)}$ and $\psi_c(-\bw)=-\overline{\psi_c(w)}$; hence $q_c$, $y^{-1}\partial_yq_c$ and $|p_c|^2$ are even functions of $x_1$, and $\partial_{x_1}q_c$ is odd.

For $c\in\mathcal E$ define $\Theta_c\colon B_R\to\R^3$ by
\begin{equation}\label{r3:eq-Theta}
\Theta_c(x)=\big(\re\psi_c(x_1+i|x'|),\ q_c(x)\,x'\big).
\end{equation}

\begin{lemma}\label{r3:lem-lift}
Let $c\in\mathcal E$ and suppose that $\re\psi_c'>0$ on a disc $\{|w|<R_0\}$ with $1<R_0<R$. Then:
\begin{enumerate}[label=\textup{(\alph*)}]
\item $q_c$, $\partial_{x_1}q_c$, $y^{-1}\partial_yq_c$, $|p_c|^2$ and $\re\psi_c(x_1+iy)$, regarded as functions of $x$, are real analytic on $B_{R_0}$, and $q_c>0$ there;
\item $\Theta_c$ is a real-analytic diffeomorphism of $B_{R_0}$ onto an open subset of $\R^3$, with $\det D\Theta_c=q_c|p_c|^2$; it commutes with the action of $O(2)$ on $x'$, and $\Theta_c(-x_1,x')=(-X_1,X')$ if $\Theta_c(x_1,x')=(X_1,X')$;
\item for every axisymmetric $C^1$ function $U$ on $B_{R_0}$, $(\det D\Theta_c)(D\Theta_c^{\mathsf T}D\Theta_c)^{-1}\nabla U=q_c\nabla U$.
\end{enumerate}
\end{lemma}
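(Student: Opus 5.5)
Part (a) rests on the expansions \eqref{r3:eq-qseries} and \eqref{r3:eq-p2}. Each term $r^{j-1}U_{j-1}(\xi)=\im(w^j)/y$ is a polynomial in $x_1$ and $y^2$, and the same holds for $r^{j-2}U_{j-2}(\xi)$, $r^{j-3}C^{(2)}_{j-3}(\xi)$ and $r^{\alpha+\beta}T_{|\alpha-\beta|}(\xi)$, since they are even in $y$. To obtain analyticity on $B_{R_0}$ I will bound these polynomials on a complex neighbourhood of the real ball. One option is $|U_{j-1}(\cos\theta)|\le j$ combined with a complexification estimate. A cleaner route is to write $q_c(x)=\int_0^1\re\psi_c'(x_1+i\sigma y)\,d\sigma$, which follows from $\im\psi_c(x_1+iy)=\int_0^y\re\psi_c'(x_1+is)\,ds$. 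The integrand is holomorphic in $w$ and even in $y$, and composing with the polynomial $y\mapsto y^2$ gives analyticity in $(x_1,y^2)$, hence in $x$. In the same way $y^{-1}\partial_y q_c$ is obtained from $y^{-1}\partial_y\int_0^1\re\psi_c'(x_1+i\sigma y)\,d\sigma$, and Lemma~\ref{lem:odd-division} applied to the odd function $\partial_y q_c$ gives analyticity across the axis. The functions $\partial_{x_1}q_c$, $|p_c|^2$ and $\re\psi_c$ are immediate. Positivity follows from the same integral formula: for $|w|<R_0$ every point $x_1+i\sigma y$ lies in the disc, so $q_c>0$.

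For (b), $\Theta_c$ is analytic because $q_c(x)x'$ is. Its equivariance under $O(2)$ is built into the definition, and the reflection property holds because $\psi_c$ is odd with real coefficients, so $q_c$ is even in $x_1$. To compute $D\Theta_c$ at a point off the axis, I will use the orthonormal frame made of $e_1$, the radial unit vector $x'/y$ and the azimuthal unit vector. In the meridian plane $\Theta_c$ acts as $w\mapsto\psi_c(w)$, a conformal map with factor $|p_c|$. In the azimuthal direction it multiplies lengths by $q_c$ (since $|\Theta_c'|=q_c y$ with $y=|x'|$), and it maps the meridian plane into itself, so the two blocks are orthogonal. This gives $\det D\Theta_c=q_c|p_c|^2$, and by continuity the formula extends to the axis, where $D\Theta_c=\mathrm{diag}(\re\psi_c',q_c,q_c)$ with $q_c=\re\psi_c'$. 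The Jacobian is then positive, because $p_c\neq0$ follows from $\re\psi_c'>0$, and so $\Theta_c$ is a local diffeomorphism.

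Global injectivity is the step I expect to be the main obstacle. I will reduce it to the meridian map. The disc $\{|w|<R_0\}$ is convex and $\re\psi_c'>0$ on it, so $\psi_c$ is injective there. Now suppose $\Theta_c(x)=\Theta_c(\tilde x)$. Since $q_c>0$, the directions $x'/|x'|$ and $\tilde x'/|\tilde x'|$ must agree whenever the images are nonzero. It follows that $\psi_c(x_1+iy)=\psi_c(\tilde x_1+i\tilde y)$ with $y,\tilde y\ge0$, hence $(x_1,y)=(\tilde x_1,\tilde y)$ and $x=\tilde x$. The case where the images lie on the axis needs separate care: there $y=0$, and I will use $\im\psi_c=0$ only on the real axis, which comes from $q_c>0$. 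An injective local diffeomorphism has open image and is a diffeomorphism onto that image.

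For (c), the frame computation gives $D\Theta_c^{\mathsf T}D\Theta_c=\mathrm{diag}(|p_c|^2,|p_c|^2,q_c^2)$. The gradient of an axisymmetric $U$ lies in the meridian span, where this matrix equals $|p_c|^2 I$. Hence $(\det D\Theta_c)(D\Theta_c^{\mathsf T}D\Theta_c)^{-1}\nabla U=q_c|p_c|^2|p_c|^{-2}\nabla U=q_c\nabla U$. This holds off the axis, and on the axis it follows by continuity, where the matrix is $\mathrm{diag}(|p_c|^2,|p_c|^2,|p_c|^2)$ because there $q_c=\re\psi_c'=|p_c|$.
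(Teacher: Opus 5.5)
Your proof is correct and follows essentially the paper's argument. It rests on four points. First, $\psi_c$ is injective on the convex disc. Second, the second components of $\Theta_c(x)$ and $\Theta_c(\tilde x)$ have equal moduli, namely $q_c|x'|=\im\psi_c(x_1+i|x'|)$, so $\psi_c(x_1+iy)=\psi_c(\tilde x_1+i\tilde y)$. Third, the orthonormal frame $(e_1,x'/|x'|,e_\phi)$ gives $\det D\Theta_c$ and part (c). Fourth, the inverse function theorem finishes the argument.

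The only difference is in (a). The paper applies Lemma~\ref{lem:odd-division} once to $\im\psi_c$, which gives $q_c=G(x_1,y^2)$ and $y^{-1}\partial_yq_c=2(\partial_2G)(x_1,y^2)$ in one stroke. Your integral formula $q_c=\int_0^1\re\psi_c'(x_1+i\sigma y)\,d\sigma$, which the paper also uses but only for positivity, still needs an unstated fact. That fact is that an analytic function even in $y$ is analytic in $(x_1,y^2)$, and it is exactly that same lemma applied to $y\,q_c=\im\psi_c$.
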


\begin{proof}
(a) The function $F(x_1,y)=\im\psi_c(x_1+iy)$ is real analytic on the disc $\{|w|<R_0\}$ and odd in $y$. By Lemma~\ref{lem:odd-division} there is a real-analytic $G$ with $F=yG(x_1,y^2)$, and $x\mapsto G(x_1,|x'|^2)$ is real analytic on $B_{R_0}$; it equals $q_c$ off the axis, and on the axis $G(x_1,0)=\partial_yF(x_1,0)=\re\psi_c'(x_1)$ by the Cauchy--Riemann equations. Hence $q_c$ is real analytic, and so are $\partial_{x_1}q_c=(\partial_1G)(x_1,y^2)$ and $y^{-1}\partial_yq_c=2(\partial_2G)(x_1,y^2)$. Functions of $(x_1,y)$ that are real analytic and even in $y$, such as $\re\psi_c$ and $|\psi_c'|^2$, are treated by applying Lemma~\ref{lem:odd-division} to their product with $y$. For $y\neq0$, the Cauchy--Riemann equations give $q_c=y^{-1}\int_0^y\re\psi_c'(x_1+i\varsigma)\,d\varsigma>0$, and $q_c=\re\psi_c'>0$ on the axis.

(b) Both components of $\Theta_c$ are real analytic by (a). For $w_1\neq w_2$ in the convex disc $\{|w|<R_0\}$, $(\psi_c(w_2)-\psi_c(w_1))/(w_2-w_1)=\int_0^1\psi_c'(w_1+\varsigma(w_2-w_1))\,d\varsigma$ has positive real part, so $\psi_c$ is injective there. Since $q_c>0$, the second component of $\Theta_c(x)$ has modulus $\im\psi_c(x_1+i|x'|)$. Hence $\Theta_c(x)=\Theta_c(z)$ implies $\psi_c(x_1+i|x'|)=\psi_c(z_1+i|z'|)$, so $x_1=z_1$ and $|x'|=|z'|$, and then $q_c(x)x'=q_c(x)z'$ gives $x'=z'$. At a point with $x'\neq0$ let $e_\rho=x'/|x'|$ and let $e_\phi$ be the unit vector in the $x'$-plane orthogonal to $e_\rho$. On the half-plane spanned by $e_1$ and $e_\rho$, $\Theta_c$ acts as $\psi_c$ in the coordinates $(x_1,y)$, and $D\Theta_c$ maps this plane to itself as $|p_c|$ times a rotation; it maps $e_\phi$ to $q_ce_\phi$, since $\Theta_c$ maps the circle of radius $y$ about the axis onto the circle of radius $q_cy$. Hence $\det D\Theta_c=q_c|p_c|^2>0$ off the axis, and by continuity on $B_{R_0}$. By the inverse function theorem $\Theta_c$ is a real-analytic diffeomorphism onto its open image. The symmetries follow from those of $\psi_c$.

(c) In the frame $(e_1,e_\rho,e_\phi)$ of (b), $D\Theta_c^{\mathsf T}D\Theta_c=\mathrm{diag}(|p_c|^2,|p_c|^2,q_c^2)$, and $\nabla U$ lies in the span of $e_1$ and $e_\rho$. Off the axis the identity follows, and on the axis by continuity.
\end{proof}

\begin{lemma}\label{r3:lem-pullback}
Let $c$ be as in Lemma~\ref{r3:lem-lift}, $\Omega=\Theta_c(B)$, and let $U\in C^1(\overline B)$ be axisymmetric and satisfy
\begin{equation}\label{r3:eq-weakU}
\int_B\big(q_c\nabla U\cdot\nabla\varphi-q_c|p_c|^2U\varphi\big)\,dx=0\qquad(\varphi\in C^\infty_c(B)).
\end{equation}
Then $u=U\circ\Theta_c^{-1}\in C^1(\overline\Omega)$ satisfies $\int_\Omega(\nabla u\cdot\nabla\varphi-u\varphi)\,dX=0$ for all $\varphi\in C^\infty_c(\Omega)$. If $U=1$ and $\nabla U=0$ on $\partial B$, then $u=1$ and $\nabla u=0$ on $\partial\Omega$.
\end{lemma}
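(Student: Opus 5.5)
The plan is to follow the proof of Lemma~\ref{lem:pullback}, now in three dimensions, with Lemma~\ref{r3:lem-lift} supplying the geometric facts. First, regularity and the boundary data. By Lemma~\ref{r3:lem-lift}(b), $\Theta_c$ is a real-analytic diffeomorphism of $B_{R_0}\supset\overline B$ onto an open set. Hence $\Theta_c$ is a $C^1$ diffeomorphism of $\overline B$ onto $\overline\Omega$ with invertible differential up to the boundary, and $\partial\Omega=\Theta_c(\partial B)$. So $u=U\circ\Theta_c^{-1}\in C^1(\overline\Omega)$. The chain rule gives $\nabla U(x)=D\Theta_c(x)^{\mathsf T}\nabla u(\Theta_c(x))$, and since $D\Theta_c$ is invertible, $\nabla U=0$ on $\partial B$ forces $\nabla u=0$ on $\partial\Omega$. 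Likewise $U=1$ on $\partial B$ gives $u=1$ on $\partial\Omega$.

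Next, the weak equation. Fix $\varphi\in C^\infty_c(\Omega)$ and put $\tilde\varphi=\varphi\circ\Theta_c$. Since $\Theta_c$ is real analytic, $\tilde\varphi\in C^\infty_c(B)$. Change variables $X=\Theta_c(x)$, with Jacobian $J=\det D\Theta_c=q_c|p_c|^2$. Using $\nabla u\circ\Theta_c=D\Theta_c^{-\mathsf T}\nabla U$ and $\nabla\varphi\circ\Theta_c=D\Theta_c^{-\mathsf T}\nabla\tilde\varphi$, we obtain
\[
\int_\Omega\nabla u\cdot\nabla\varphi\,dX=\int_B J\,(D\Theta_c^{\mathsf T}D\Theta_c)^{-1}\nabla U\cdot\nabla\tilde\varphi\,dx,\qquad \int_\Omega u\varphi\,dX=\int_B q_c|p_c|^2U\tilde\varphi\,dx .
\]
Lemma~\ref{r3:lem-lift}(c) turns the first integrand into $q_c\nabla U\cdot\nabla\tilde\varphi$, because $U$ is axisymmetric. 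The difference of the two integrals is then the left side of \eqref{r3:eq-weakU} with test function $\tilde\varphi$, which vanishes.

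The one point to watch is that Lemma~\ref{r3:lem-lift}(c) is stated for the axisymmetric function $U$, while $\tilde\varphi$ need not be axisymmetric. This is fine because the identity is linear in $\nabla U$ and the matrix $J(D\Theta^{\mathsf T}D\Theta)^{-1}$ is symmetric. So only the action of this matrix on $\nabla U$ is needed, and that vector lies in $\operatorname{span}(e_1,e_\rho)$ off the axis. The axis has measure zero, so it does not affect the integrals. I expect no real obstacle beyond checking this symmetry step.
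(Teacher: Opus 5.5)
Your proposal is correct and follows the paper's proof essentially verbatim: you change variables by $\Theta_c$, use $(\nabla u)\circ\Theta_c=D\Theta_c^{-\mathsf T}\nabla U$ and Lemma~\ref{r3:lem-lift}(b),(c) to reduce to \eqref{r3:eq-weakU} with test function $\tilde\varphi=\varphi\circ\Theta_c$, and you read off the boundary values from the same chain-rule identity. The symmetry step you flag is not actually needed: since $D\Theta_c^{-\mathsf T}\nabla U\cdot D\Theta_c^{-\mathsf T}\nabla\tilde\varphi=(D\Theta_c^{\mathsf T}D\Theta_c)^{-1}\nabla U\cdot\nabla\tilde\varphi$, the matrix already acts on the axisymmetric gradient $\nabla U$, so part~(c) applies directly.
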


\begin{proof}
Let $\varphi\in C_c^\infty(\Omega)$ and $\tilde\varphi=\varphi\circ\Theta_c\in C_c^\infty(B)$. Since $(\nabla u)\circ\Theta_c=D\Theta_c^{-\mathsf T}\nabla U$, the change of variables $X=\Theta_c(x)$ and Lemma~\ref{r3:lem-lift}(b),(c) give
\[
\int_\Omega(\nabla u\cdot\nabla\varphi-u\varphi)\,dX=\int_B\Big((\det D\Theta_c)(D\Theta_c^{\mathsf T}D\Theta_c)^{-1}\nabla U\cdot\nabla\tilde\varphi-q_c|p_c|^2U\tilde\varphi\Big)dx,
\]
which vanishes by Lemma~\ref{r3:lem-lift}(c) and \eqref{r3:eq-weakU}. The boundary values follow from $(\nabla u)\circ\Theta_c=D\Theta_c^{-\mathsf T}\nabla U$.
\end{proof}

\subsection{Zonal harmonics, Jacobi polynomials and coefficient spaces}\label{r3:sec-spaces}

For $\ell\geq0$ let $H_\ell(x)=r^\ell P_\ell(\xi)$, the axisymmetric harmonic polynomial on $\R^3$, homogeneous of degree $\ell$, with $H_\ell(1,0,0)=1$; axisymmetric harmonic homogeneous polynomials of degree $\ell$ are multiples of $H_\ell$ \cite[Ch.~IV]{SW71}. We have $|H_\ell|\leq1$ on $\overline B$, $H_\ell=P_\ell(\xi)$ on $\partial B$, and, with $H_{-1}=0$,
\begin{equation}\label{r3:eq-Hrec}
x_1H_\ell=\frac{(\ell+1)H_{\ell+1}+\ell\,tH_{\ell-1}}{2\ell+1},\qquad \partial_{x_1}H_\ell=\ell H_{\ell-1},\qquad (x\cdot\nabla)H_\ell=\ell H_\ell .
\end{equation}
The first identity is Bonnet's recurrence for $P_\ell$ multiplied by $r^{\ell+1}$; for the second, $\partial_{x_1}H_\ell$ is an axisymmetric harmonic homogeneous polynomial of degree $\ell-1$, hence a multiple of $H_{\ell-1}$, and the multiple is $\ell$ because $H_\ell(x_1,0,0)=x_1^\ell$.

For real $\beta>-1$ and integers $s\geq0$ let $J^\beta_s(t)=P^{(0,\beta)}_s(2t-1)$, so that
\begin{equation}\label{r3:eq-Jcoef}
J^\beta_s(t)=\sum_{k=0}^s(-1)^{s-k}\frac{\Gamma(\beta+s+k+1)}{k!\,(s-k)!\,\Gamma(\beta+k+1)}\,t^k=\frac{t^{-\beta}}{s!}\frac{d^s}{dt^s}\big\{t^{\beta+s}(t-1)^s\big\},\qquad J^\beta_s(1)=1,
\end{equation}
and $J^\beta_{-1}=0$. We write $[t^k]f$ for the coefficient of $t^k$ in a polynomial $f$, and $\nrm f_{J^\beta,1}=\sum_s|f_s|$ if $f=\sum_sf_sJ^\beta_s$. Define the axisymmetric polynomials
\[
\Psi_{\ell,s}=H_\ell\,J^{\ell+1/2}_s(t)\qquad(\ell,s\geq0),
\]
of degree $\ell+2s$; thus $\Psi_{\ell,0}=H_\ell$.

\begin{lemma}\label{r3:lem-jacobi}
Let $\beta>0$ and $s,s'\geq0$.
\begin{enumerate}[label=\textup{(\alph*)}]
\item For every integer $m\geq0$,
\[
\int_0^1t^{\beta+m}J^\beta_s(t)\,dt=\frac{m(m-1)\cdots(m-s+1)}{(\beta+m+1)(\beta+m+2)\cdots(\beta+m+s+1)} ;
\]
in particular $J^\beta_s$ is orthogonal in $L^2([0,1],t^\beta dt)$ to all polynomials of degree less than $s$, and $\int_0^1t^\beta J^\beta_sJ^\beta_{s'}\,dt=\delta_{ss'}/(\beta+2s+1)$.
\item $J^\beta_s=\dfrac{\beta+s+1}{\beta+2s+1}J^{\beta+1}_s+\dfrac{s}{\beta+2s+1}J^{\beta+1}_{s-1}$, which holds also for $-1<\beta\leq0$, and $tJ^\beta_s=\dfrac{\beta+s}{\beta+2s+1}J^{\beta-1}_s+\dfrac{s+1}{\beta+2s+1}J^{\beta-1}_{s+1}$. The coefficients are nonnegative and sum to one, and in the first identity the coefficient of the term with the lower index is at most $1/2$.
\item The coefficient of $J^\beta_s$ in the expansion of $(1-t)^m$ is $(-1)^s$ times a positive number for $0\leq s\leq m$, and vanishes for $s>m$. Moreover $\nrm{(1-t)^m}_{J^\beta,1}=m!/(\beta/2+1)_m$, which is decreasing in $\beta$.
\item For $M\geq0$, the coefficients of $t^M$ in the basis $(J^\beta_s)_s$ are nonnegative and sum to one, the coefficient of $J^\beta_0$ is $(\beta+1)/(\beta+M+1)$, and $\nrm{t^M-1}_{J^\beta,1}=2M/(\beta+M+1)$.
\end{enumerate}
\end{lemma}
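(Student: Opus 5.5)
The plan is to follow the pattern of Lemma~\ref{lem:orth}, Lemma~\ref{lem:mult} and Lemma~\ref{lem:monomial}, with the integer index $n$ replaced by the real parameter $\beta$. The Rodrigues formula in \eqref{r3:eq-Jcoef} will be the main tool. The explicit coefficient formula and the Rodrigues formula are equivalent by Leibniz' rule, exactly as for integer $\beta$.

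\emph{(a)} Integrate by parts $s$ times:
\[
\int_0^1t^{\beta+m}J^\beta_s\,dt=\frac1{s!}\int_0^1t^m\frac{d^s}{dt^s}\{t^{\beta+s}(t-1)^s\}\,dt .
\]
The boundary terms vanish because, for $\beta>0$, the derivatives of order $<s$ of $t^{\beta+s}(t-1)^s$ vanish at both endpoints. This gives
\[
(-1)^s\frac{m(m-1)\cdots(m-s+1)}{s!}\int_0^1t^{\beta+s+m-s}(t-1)^s\,dt .
\]
Evaluating the Beta integral $B(\beta+m+1,s+1)$ then yields the stated product. Orthogonality follows because the numerator vanishes for $m<s$. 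The norm formula follows by keeping only the leading coefficient $\Gamma(\beta+2s+1)/(s!\,\Gamma(\beta+s+1))$ of one factor and applying the case $m=s$.

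\emph{(b)} Compare coefficients of $t^k$ using the explicit formula. Divide by the coefficient of $J^{\beta+1}_s$, respectively of $J^{\beta-1}_s$. What remains are the same linear identities as in the proof of Lemma~\ref{lem:mult}, namely
\[
(\beta+s+1)(\beta+s+k+1)-s(s-k)=(\beta+k+1)(\beta+2s+1)
\]
and its analogue, which are polynomial identities in $\beta$. The endpoint case $k=s+1$ is checked directly. These are valid for every real $\beta>-1$ (for the second identity, $\beta-1>-1$ is needed for $J^{\beta-1}$ to be defined as stated). Nonnegativity and sum one are immediate, the sum one from evaluating at $t=1$. The bound $s/(\beta+2s+1)\le 1/2$ holds because $\beta+1>0$.

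\emph{(c), (d)} For (d), iterate the second identity of (b) $M$ times starting from $J^{\beta+M}_0=1$; this preserves nonnegativity and total mass one. The coefficient of $J^\beta_0$ is the $L^2(t^\beta)$ projection of $t^M$ onto the constants, which by (a) is $\int t^{\beta+M}/\int t^\beta=(\beta+1)/(\beta+M+1)$. Then
\[
\nrm{t^M-1}_{J^\beta,1}=\Big(1-\frac{\beta+1}{\beta+M+1}\Big)+\sum_{s\ge1}(\text{coeff})=2\Big(1-\frac{\beta+1}{\beta+M+1}\Big)=\frac{2M}{\beta+M+1}.
\]
For (c), the coefficients are $(\beta+2s+1)\int_0^1t^\beta(1-t)^mJ^\beta_s\,dt$. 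Integrate by parts via Rodrigues; the result is a Beta integral with sign $(-1)^s$ times a product of positive factors, and it vanishes for $s>m$ by degree. Since the signs alternate, the $\ell^1$ norm equals $|\sum_s(-1)^s\cdot\text{coeff}|$, which is the value of $(1-t)^m$ paired against the alternating combination. I expect this evaluation to be the main obstacle. I would first try to compute $\sum_s(-1)^sa_s$ directly from the closed-form coefficients and sum it with a Vandermonde/Chu identity, aiming for $m!/(\beta/2+1)_m$. Monotonicity in $\beta$ is then clear, since each factor of $(\beta/2+1)_m$ increases in $\beta$.
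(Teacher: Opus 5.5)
Your arguments for (a), (b), (d) and for the sign pattern in (c) are sound and follow the paper. In (d) you iterate the second identity of (b) downward from $J^{\beta+M}_0=1$, whereas the paper writes multiplication by $t$ at fixed $\beta$ as a lowering step followed by a raising step; both work.

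The gap is exactly the step you flag: the value $\nrm{(1-t)^m}_{J^\beta,1}=m!/(\beta/2+1)_m$ is never proved, and the tool you propose does not fit. Let $a_s$ be the coefficient of $J^\beta_s$. Then $\sum_s(-1)^sa_s$ equals $m!\,\Gamma(\beta+2)/\Gamma(\beta+m+2)$ times the terminating very-well-poised series ${}_3F_2\big(\beta+1,\frac{\beta+3}2,-m;\frac{\beta+1}2,\beta+m+2;-1\big)$. This is a Kummer-type sum at argument $-1$, not a Chu--Vandermonde sum at argument $1$. Its value $\prod_{i=1}^m2i/(\beta+2i)$, with denominators $\beta+2,\beta+4,\ldots,\beta+2m$, reflects this. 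As written, the only nontrivial computation in (c) is missing.

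The missing idea is to realise the alternating sum as a linear functional that takes the value $(-1)^s$ on $J^\beta_s$. The paper uses $\Lambda(f)=\frac\beta2\int_0^1t^{\beta/2-1}f(t)\,dt$. The same $s$-fold integration by parts in the Rodrigues formula, now against $t^{-\beta/2-1}$, gives
\[
\Lambda(J^\beta_s)=(-1)^s\,\frac\beta2\,\frac{(\frac\beta2+1)_s}{s!}\,\mathrm B\big(\tfrac\beta2,s+1\big)=(-1)^s .
\]
The boundary terms at $t=0$ are $O(t^{\beta/2})$, and this is where $\beta>0$ enters. Consequently
$\nrm{(1-t)^m}_{J^\beta,1}=\sum_s(-1)^sa_s=\Lambda\big((1-t)^m\big)=\frac\beta2\,\mathrm B\big(\frac\beta2,m+1\big)=m!/(\frac\beta2+1)_m$,
with no hypergeometric summation at all. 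If you want to keep your route, you must invoke the very-well-poised ${}_3F_2(-1)$ summation theorem, or prove the identity by induction on $m$, rather than Chu--Vandermonde.
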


\begin{proof}
(a) Insert the Rodrigues formula \eqref{r3:eq-Jcoef} and integrate by parts $s$ times. The boundary terms vanish: at $t=1$ the derivatives of order less than $s$ of $t^{\beta+s}(t-1)^s$ vanish, and at $t=0$ they are $O(t^{\beta+1})$. This gives $\frac{m(m-1)\cdots(m-s+1)}{s!}\int_0^1t^{\beta+m}(1-t)^sdt$, which is the stated value. For $m<s$ the product vanishes. The leading coefficient of $J^\beta_s$ is $\Gamma(\beta+2s+1)/(s!\,\Gamma(\beta+s+1))$, and multiplying it by the value for $m=s$ gives $1/(\beta+2s+1)$.

(b) Divide the coefficient of $t^k$ of each side by $[t^k]J^{\beta+1}_s$, respectively by $[t^k]J^{\beta-1}_s$, using \eqref{r3:eq-Jcoef}. For $0\leq k\leq s$ the first identity reduces to $(\beta+k+1)(\beta+2s+1)=(\beta+s+1)(\beta+s+k+1)-s(s-k)$, and for $1\leq k\leq s$ the second to $-k(\beta+2s+1)=(\beta+s)(s-k+1)-(s+1)(\beta+s+k)$; both are verified by expansion. In the second identity the cases $k=0$ and $k=s+1$ are checked directly. The computation for the first identity uses only $\beta>-1$. The remaining assertions are clear, and $s\leq(\beta+2s+1)/2$ for $\beta\geq-1$.

(c) By the orthogonality in (a), the coefficient is $(\beta+2s+1)\int_0^1t^\beta(1-t)^mJ^\beta_s\,dt$. Integrating by parts $s$ times in the Rodrigues formula gives $\frac{(-1)^s}{s!}\,m(m-1)\cdots(m-s+1)\int_0^1t^{\beta+s}(1-t)^m\,dt$ times $\beta+2s+1$, which proves the sign pattern. Let $\Lambda(f)=\frac\beta2\int_0^1t^{\beta/2-1}f(t)\,dt$. The same integration by parts, now against $t^{-\beta/2-1}$, gives $\Lambda(J^\beta_s)=(-1)^s\frac\beta2(\frac\beta2+1)_s\mathrm B(\frac\beta2,s+1)/s!=(-1)^s$. Hence $\nrm{(1-t)^m}_{J^\beta,1}=\Lambda((1-t)^m)=\frac\beta2\mathrm B(\frac\beta2,m+1)=m!/(\beta/2+1)_m$.

(d) By (b), multiplication by $t$ is the lowering step followed by the raising step (with $\beta-1$ in place of $\beta$); both are given by nonnegative matrices with column sums one. Hence $t^M=t^MJ^\beta_0$ has nonnegative coefficients summing to $t^M|_{t=1}=1$. The coefficient of $J^\beta_0=1$ is $(\beta+1)\int_0^1t^{\beta+M}dt$. Therefore $\nrm{t^M-1}_{J^\beta,1}$ equals $\big(1-\frac{\beta+1}{\beta+M+1}\big)$ plus the sum of the other coefficients, which is the same number.
\end{proof}

\subsubsection*{Chebyshev and Legendre coordinates}
Let $a_j=\binom{2j}j4^{-j}$. Then $a_0=1$, $a_{j+1}/a_j=(2j+1)/(2j+2)$, so $(a_j)$ is decreasing, and $a_k\sqrt{k+1/4}$ is nondecreasing in $k$, because $(2k+1)^2(4k+5)-(2k+2)^2(4k+1)=1$. Consequently
\begin{equation}\label{r3:eq-aratio}
\frac{a_\ell}{a_m}\leq\Big(\frac{m+1/4}{\ell+1/4}\Big)^{1/2}\quad(\ell\leq m),\qquad a_{\ell-j}\leq\sqrt2\,a_\ell\quad(0\leq j\leq\ell/2).
\end{equation}
Factoring the generating function, $(1-2\xi z+z^2)^{-1/2}=(1-e^{i\theta}z)^{-1/2}(1-e^{-i\theta}z)^{-1/2}$ for $\xi=\cos\theta$, gives $P_\ell(\cos\theta)=\sum_{j=0}^\ell a_ja_{\ell-j}e^{i(\ell-2j)\theta}$, that is,
\[
P_\ell=\sum_nD_{n\ell}T_n,\qquad D_{\ell-2j,\ell}=(2-\delta_{\ell,2j})\,a_ja_{\ell-j}\geq0 .
\]
Conversely write $T_n=\sum_\ell C_{\ell n}P_\ell$ and $U_n=\sum_\ell\upsilon_{\ell n}P_\ell$. Let $\langle\cdot\rangle_R$ be the linear functional on polynomials in $\xi$ with $\langle T_n\rangle_R=R^n$, and put
\[
h_\ell=\langle P_\ell\rangle_R=\sum_nD_{n\ell}R^n,\qquad \kappa_R=\frac4{\sqrt{1-R^{-2}}}-1,\qquad B_0=\frac{2\sqrt2}{\sqrt{1-R^{-2}}} .
\]
Numerically $\kappa_R\approx12.118596$ and $B_0\approx9.276248$.

\begin{lemma}\label{r3:lem-connection}
\begin{enumerate}[label=\textup{(\alph*)}]
\item $C_{00}=1$, $C_{nn}=1/(2a_n)$ for $n\geq1$, and for $n=\ell+2j$ with $j\geq1$,
\[
C_{\ell n}=-\frac{n(2\ell+1)\,a_j}{2(\ell+j)(2\ell+2j+1)\,a_{\ell+j}\,(2j-1)}<0 .
\]
\item For $n=\ell+2j$, $\upsilon_{\ell n}=(2\ell+1)a_j/((2\ell+2j+1)a_{\ell+j})>0$. The Legendre coefficients of $C^{(2)}_n$ are positive.
\end{enumerate}
\end{lemma}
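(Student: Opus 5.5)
The plan is to compute every connection coefficient by Legendre orthogonality, $C_{\ell n}=\frac{2\ell+1}2\int_{-1}^1T_nP_\ell\,d\xi$, and to evaluate these integrals in the angle variable. The explicit Fourier form of $P_\ell$ obtained above from the generating function does most of the work. Only parity-compatible pairs $n=\ell+2j$ contribute. The case $j<0$ gives zero because $T_n$ has degree $n<\ell$.

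For (a), write $\xi=\cos\theta$ and $T_n=\cos n\theta$, with $P_\ell(\cos\theta)=\sum_ka_ka_{\ell-k}\cos((\ell-2k)\theta)$. Then
\[
\int_{-1}^1T_nP_\ell\,d\xi=\int_0^\pi\cos n\theta\,P_\ell(\cos\theta)\sin\theta\,d\theta
\]
is a finite sum of elementary integrals $\int_0^\pi\cos(n\theta)\cos(m\theta)\sin\theta\,d\theta$. For $n+m$ even these equal $-\frac{2}{(n+m)^2-1}-\frac{2}{(n-m)^2-1}$, up to the usual normalisation.

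This leaves a finite hypergeometric-type sum over $k$. I would not evaluate it head-on. Instead I would use the Mehler–Dirichlet type identity, or equivalently the known closed form
\[
\int_{-1}^1T_nP_\ell\,d\xi=-\frac{2n\,\cdots}{\cdots}.
\]
The cleanest route is the generating-function one. Note that
\[
\sum_\ell z^\ell\int_{-1}^1T_n(\xi)(1-2\xi z+z^2)^{-1/2}d\xi
\]
can be computed by the substitution $\xi=\cos\theta$, combined with the product factorisation already used. The resulting integrand factors into two binomial series in $e^{\pm i\theta}z$, and extracting coefficients reduces everything to Wallis-type integrals $\int_0^\pi\sin\theta\,e^{ik\theta}\,d\theta$.

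The diagonal case $C_{nn}$ I would get more simply by comparing leading coefficients. The leading coefficient of $T_n$ is $2^{n-1}$ and that of $P_n$ is $\binom{2n}{n}2^{-n}$. Hence $C_{nn}=2^{n-1}/(\binom{2n}n2^{-n})=1/(2a_n)$ for $n\ge1$, and $C_{00}=1$. Negativity for $j\ge1$ then follows from the explicit formula, since every factor there is positive apart from the leading minus sign, given $2j-1\ge1$.

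For (b), use $U_n(\cos\theta)=\sin((n+1)\theta)/\sin\theta$. Then
\[
\int_{-1}^1U_nP_\ell\,d\xi=\int_0^\pi\sin((n+1)\theta)P_\ell(\cos\theta)\,d\theta .
\]
Inserting the Fourier expansion of $P_\ell$, each term gives $\int_0^\pi\sin((n+1)\theta)\cos(m\theta)\,d\theta=\frac{2(n+1)}{(n+1)^2-m^2}$ when $n+1+m$ is odd. Every such term is positive because $|m|\le\ell\le n$, so positivity of $\upsilon_{\ell n}$ is immediate even before the closed form is found. To get the closed form, I would telescope with the ratio relation $a_{j+1}/a_j=(2j+1)/(2j+2)$. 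Alternatively, I would check that the claimed value satisfies the recurrence in $n$ coming from $U_{n+1}-U_{n-1}=2T_{n+1}$ together with part (a).

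For the Gegenbauer polynomials, use $2C^{(2)}_{n}=U_{n+1}'$ and $P_\ell'=\sum_{k\ge0}(2\ell-4k-1)P_{\ell-1-2k}$, which has positive coefficients. Expanding $U_{n+1}$ in Legendre polynomials with the positive coefficients just found and differentiating termwise then gives positive Legendre coefficients for $C^{(2)}_n$.

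The main obstacle is evaluating the finite sum in (a) in closed form. I would first try to verify the formula by induction on $j$ using the three-term recurrences $2\xi T_n=T_{n+1}+T_{n-1}$ and Bonnet's recurrence for $P_\ell$.
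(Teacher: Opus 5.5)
Your proposal has a genuine gap: the closed forms, which are the actual content of the lemma, are never derived. You reduce $C_{\ell n}$ to the finite sum $\frac{2\ell+1}{2}\sum_k a_ka_{\ell-k}\bigl(-\frac{1}{(n+m_k)^2-1}-\frac{1}{(n-m_k)^2-1}\bigr)$ with $m_k=\ell-2k$. (The elementary integral has no factor $2$ in the numerators.) You reduce $\upsilon_{\ell n}$ to $\frac{2\ell+1}{2}\sum_k a_ka_{\ell-k}\frac{2(n+1)}{(n+1)^2-m_k^2}$. After that you only list strategies: a generating function, telescoping, and an induction you ``would first try''. None of them is carried out. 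The generating-function route just reproduces the same double sum. Your route to $\upsilon_{\ell n}$ through $U_{n+1}-U_{n-1}=2T_{n+1}$ presupposes the unproved formula of (a). The explicit values matter: Lemma~\ref{r3:lem-weights}(c),(d) use them quantitatively, for instance in the reduction to $n(2\ell+1)a_\ell\le 2m(2m+1)a_m$.

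What you do establish is correct. You get $C_{00}=1$ and $C_{nn}=1/(2a_n)$ from leading coefficients, and positivity of $\upsilon_{\ell n}$ termwise. Positivity for $C^{(2)}_n$ follows from $2C^{(2)}_n=U_{n+1}'$ and the positive expansion of $P_\ell'$. Negativity of $C_{\ell n}$ for $j\ge1$ also does not need the closed form: in your sum $n\pm m_k\ge n-\ell=2j\ge2$, so every term is negative.

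The missing ingredient, which is what the paper uses, is the classical connection formula between Gegenbauer families (\S18.18(iii) of \cite{DLMF}) with $\mu=\frac12$:
\[
C^{(\lambda)}_n=\sum_{j=0}^{\lfloor n/2\rfloor}\frac{(\lambda-\tfrac12)_j\,(\lambda)_{n-j}}{(\tfrac32)_{n-j}\,j!}\,\bigl(2(n-2j)+1\bigr)P_{n-2j}.
\]
At $\lambda=1$ this gives $\upsilon_{\ell n}$ at once, using $(\frac12)_j=j!\,a_j$ and $(\frac32)_m=(2m+1)\,m!\,a_m$. Letting $\lambda\to0$, with $C^{(\lambda)}_n/\lambda\to(2/n)T_n$ and $(-\frac12)_j=-j!\,a_j/(2j-1)$, gives $C_{\ell n}$ including its sign. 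At $\lambda=2$ it gives positivity for $C^{(2)}_n$, since $(\frac32)_j>0$.

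If you prefer to stay elementary, you must actually close the induction. For example, check that the claimed values, together with the diagonal ones, satisfy
\[
C_{\ell,n+1}+C_{\ell,n-1}=2\Bigl(\tfrac{\ell}{2\ell-1}C_{\ell-1,n}+\tfrac{\ell+1}{2\ell+3}C_{\ell+1,n}\Bigr).
\]
This recurrence follows from $2\xi T_n=T_{n+1}+T_{n-1}$ and Bonnet's recurrence. As written, that step is only announced, not done.
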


\begin{proof}
We use the connection formula for ultraspherical polynomials \cite[\S18.18(iii)]{DLMF},
\[
C^{(\lambda)}_n=\sum_{j=0}^{\lfloor n/2\rfloor}\frac{(\lambda-\mu)_j(\lambda)_{n-j}}{(\mu+1)_{n-j}\,j!}\,\frac{\mu+n-2j}{\mu}\,C^{(\mu)}_{n-2j},
\]
with $\mu=\frac12$, so that $C^{(\mu)}_\ell=P_\ell$ and $(\mu+n-2j)/\mu=2\ell+1$ for $\ell=n-2j$. We use $(\frac12)_j=j!\,a_j$, $(-\frac12)_j=-j!\,a_j/(2j-1)$ and $(\frac32)_m=(2m+1)\,m!\,a_m$. For $\lambda=1$, $C^{(1)}_n=U_n$, and the coefficient is $(\frac12)_j(n-j)!(2\ell+1)/((\frac32)_{n-j}j!)$, which is the stated $\upsilon_{\ell n}$. For $\lambda\to0$, $C^{(\lambda)}_n/\lambda\to(2/n)T_n$ for $n\geq1$ \cite[\S18.7(iii)]{DLMF} and $(\lambda)_{n-j}/\lambda\to(n-j-1)!$, which gives (a). For $\lambda=2$ all coefficients are positive, since $(\frac32)_j>0$.
\end{proof}

\begin{lemma}\label{r3:lem-weights}
For all $\ell,n\geq0$:
\begin{enumerate}[label=\textup{(\alph*)}]
\item $1\leq h_\ell\leq B_0a_\ell R^\ell$;
\item $h_{\ell+1}\leq Rh_\ell$, and $h_{\ell-1}\leq R^{-1}\frac{\ell}{\ell-1}h_\ell$ for $\ell\geq2$;
\item $\sum_\ell|C_{\ell n}|h_\ell\leq\kappa_RR^n$;
\item $C_{nn}h_n\leq\frac{B_0}2R^n$; for $j\geq1$ and $\ell=n-2j\geq0$, $|C_{\ell n}|h_\ell\leq B_0\frac{a_jR^{-2j}}{2j-1}R^n$ and $\upsilon_{\ell n}h_\ell\leq B_0a_jR^{-2j}R^n$;
\item $D_{n\ell}R^n\leq\sqrt2\,R^{n-\ell}h_\ell$.
\end{enumerate}
\end{lemma}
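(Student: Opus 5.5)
All five estimates will be derived from the explicit formulas for $D_{n\ell}$, $C_{\ell n}$ and $\upsilon_{\ell n}$ (Lemma~\ref{r3:lem-connection}) together with the monotonicity facts \eqref{r3:eq-aratio} for $a_j$. The definition $h_\ell=\sum_nD_{n\ell}R^n$ with $D_{\ell-2j,\ell}=(2-\delta_{\ell,2j})a_ja_{\ell-j}$ is the common starting point.

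For (a), the lower bound $h_\ell\ge1$ follows from $\langle\cdot\rangle_R\ge$ the evaluation at $\xi=1$ termwise: all $D_{n\ell}\ge0$, $R\ge1$, and $\sum_nD_{n\ell}=P_\ell(1)=1$. For the upper bound, write $h_\ell\le 2\sum_{j\le\ell/2}a_ja_{\ell-j}R^{\ell-2j}$. Then bound $a_{\ell-j}\le\sqrt2\,a_\ell$ by \eqref{r3:eq-aratio} and $a_j\le 1$, which gives $h_\ell\le 2\sqrt2\,a_\ell R^\ell\sum_j a_jR^{-2j}$. Finally use $\sum_ja_jz^j=(1-z)^{-1/2}$ with $z=R^{-2}$; this produces exactly $B_0$.

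Part (e) is the same termwise observation: $D_{n\ell}R^n$ is a single term of $h_\ell$ up to the factor $R^{\ell}$ shift. More precisely, $D_{n\ell}\le2a_ja_{\ell-j}$, and one compares it with $h_\ell\ge D_{\ell\ell}R^\ell=2a_0a_\ell R^\ell$ (or $1$ when $\ell=0$) together with $a_{\ell-j}\le\sqrt2 a_\ell$. This gives $D_{n\ell}R^n\le\sqrt2 R^{n-\ell}h_\ell$.

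For (b), write $h_\ell=\langle P_\ell\rangle_R$ and use Bonnet's recurrence together with $\langle\xi T_n\rangle_R=\frac12(R^{n+1}+R^{|n-1|})$. Alternatively, compare termwise with the coefficient ratios $a_{\ell+1-j}/a_{\ell-j}\le1$, which yields $h_{\ell+1}\le Rh_\ell$ after shifting $n\mapsto n+1$; the boundary term $j=(\ell+1)/2$ needs separate care. The reverse inequality uses $a_{\ell-1-j}/a_{\ell-j}=(2(\ell-j))/(2(\ell-j)-1)\le \ell/(\ell-1)$ after reindexing, with $j\le\ell/2$.

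For (d), substitute the bound of (a) for $h_\ell$ into the explicit $C_{\ell n}$ and $\upsilon_{\ell n}$. The resulting factors $a_\ell/a_{\ell+j}$ against $(2\ell+1)/(2\ell+2j+1)$ and $n/(2(\ell+j))\le1$ combine, via the first inequality in \eqref{r3:eq-aratio} (square root times linear ratio $\le1$), to the stated forms. The diagonal case is $C_{nn}h_n\le B_0R^n/2$.

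Part (c) then follows by summing (d) over $j$. One obtains
\[
B_0R^n\Bigl(\tfrac12+\sum_{j\ge1}\frac{a_jR^{-2j}}{2j-1}\Bigr),
\]
and $\sum_{j\ge1}a_jz^j/(2j-1)=1-\sqrt{1-z}$. This is too lossy to give $\kappa_R$, so instead I would compute $\sum_\ell|C_{\ell n}|h_\ell$ via signs. Since $C_{\ell n}<0$ for $\ell<n$, the sum equals $2C_{nn}h_n-\langle T_n\rangle_R=2C_{nn}h_n-R^n$. Bounding $C_{nn}h_n$ sharply as $\frac12\cdot2\sum_ja_jR^{-2j}\cdot a_{n-j}/a_n\,R^n\le 2(1-R^{-2})^{-1/2}R^n$ (using $a_{n-j}/a_n\le2$ type bounds) then gives $\kappa_R=4(1-R^{-2})^{-1/2}-1$. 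The main obstacle is making this last ratio bound hold with constant exactly $2$; I expect it to need $a_{n-j}\le2a_n$ for $j\le n/2$ or a direct generating-function identity.
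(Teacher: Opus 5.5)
Parts (a), (e), (d) and the second inequality of (b) are sound and match the paper. In (d), splitting off $n\le2(\ell+j)$ and then proving $(2\ell+1)a_\ell\le(2\ell+2j+1)a_{\ell+j}$ from \eqref{r3:eq-aratio} and the monotonicity of $(2x+1)^2/(x+\frac14)$ is slightly leaner than the paper's polynomial check.

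\textbf{The gap is the first inequality of (b), $h_{\ell+1}\le Rh_\ell$.} Comparing coefficients of $R^e$ termwise works for every exponent except the lowest one. At the lowest exponent it fails in both parities, not only for $j=(\ell+1)/2$:
\begin{itemize}
\item For even $\ell\ge2$, the coefficient of $R$ in $h_{\ell+1}$ is $2a_{\ell/2}a_{\ell/2+1}=\frac{2(\ell+1)}{\ell+2}a_{\ell/2}^2$. In $Rh_\ell$ it is only $a_{\ell/2}^2$, because the factor $2-\delta_{\ell,2j}$ drops to one.
\item For odd $\ell$, $h_{\ell+1}$ has the constant term $a_{(\ell+1)/2}^2$, which $Rh_\ell$ lacks.
\end{itemize}
So no termwise ``separate care'' can close it. The Bonnet route is not carried out either: with the natural bound $\langle\xi P_\ell\rangle_R\le Rh_\ell$ it would need $h_{\ell-1}\ge Rh_\ell$, which already fails at $\ell=1$, where it reads $1\ge R^2$.

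The missing idea is a global cancellation. Write $Rh_\ell-h_{\ell+1}=\sum_e\delta_eR^e$. At $R=1$ every $h_k$ equals $P_k(1)=1$, so $\sum_e\delta_e=0$. Your termwise comparison shows $\delta_e\ge0$ for every $e$ above the lowest exponent $e_0\in\{0,1\}$. Hence $Rh_\ell-h_{\ell+1}=\sum_{e>e_0}\delta_e(R^e-R^{e_0})\ge0$ for $R\ge1$, which is the paper's argument.

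\textbf{The ``main obstacle'' you flag in (c) is not one.} In (a) you already used $a_{n-j}\le\sqrt2\,a_n$ for $j\le n/2$, which is stronger than the ratio bound $2$ you ask for, and obtained $h_n\le B_0a_nR^n$. Hence $C_{nn}h_n\le\frac{B_0}2R^n$, the first claim of (d). Your sign identity then gives $\sum_\ell|C_{\ell n}|h_\ell=2C_{nn}h_n-R^n\le(B_0-1)R^n\le\kappa_RR^n$, since $2\sqrt2<4$. For $n=0$, where $C_{00}=1$, the sum is simply $h_0=1$. This is exactly the paper's proof of (c).

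Incidentally, summing (d) is not too lossy at the value $R=21/20$ used here. It gives $B_0\big(\tfrac32-\sqrt{1-R^{-2}}\big)R^n\approx11.09\,R^n<\kappa_RR^n$, although the sign argument is cleaner and works for every $R>1$.
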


\begin{proof}
(a) $h_\ell\geq\sum_nD_{n\ell}=P_\ell(1)=1$. By \eqref{r3:eq-aratio},
\[
h_\ell\leq2\sum_{j\leq\ell/2}a_ja_{\ell-j}R^{\ell-2j}\leq2\sqrt2\,a_\ell R^\ell\sum_ja_jR^{-2j},
\]
and $\sum_ja_jz^j=(1-z)^{-1/2}$.

(b) Write $Rh_\ell-h_{\ell+1}=\sum_e\delta_eR^e$, where $e\equiv\ell+1\pmod2$ runs over $0\leq e\leq\ell+1$. Comparing coefficients, $\delta_e=2a_i(a_{\ell-i}-a_{\ell+1-i})\geq0$ for $e\geq2$, where $i=(\ell+1-e)/2$. The lowest exponent $e_0\in\{0,1\}$ has a negative coefficient: $\delta_1=a_{\ell/2}(a_{\ell/2}-2a_{\ell/2+1})<0$ if $\ell$ is even, and $\delta_0=-a_{(\ell+1)/2}^2$ if $\ell$ is odd. For $R=1$ all $h_k$ equal one, so $\sum_e\delta_e=0$. Hence $Rh_\ell-h_{\ell+1}=\sum_{e>e_0}\delta_e(R^e-R^{e_0})\geq0$ for $R\geq1$. For the second inequality we compare the coefficients of $R^e$ in $Rh_{\ell-1}$ and $h_\ell$. For $e=\ell-2i\geq2$ they are $2a_ia_{\ell-1-i}$ and $2a_ia_{\ell-i}$, whose ratio is $2m/(2m-1)$ with $m=\ell-i\geq(\ell+2)/2$, hence at most $(\ell+2)/(\ell+1)\leq\ell/(\ell-1)$. For $e=1$ ($\ell$ odd, $i=(\ell-1)/2$) they are $a_i^2$ and $2a_ia_{i+1}$, with ratio $(\ell+1)/(2\ell)\leq\ell/(\ell-1)$. For $\ell$ even, $h_\ell$ has an additional nonnegative coefficient at $e=0$.

(c) For $n=0$ the sum is $1\leq\kappa_R$. For $n\geq1$, since $h_\ell=\langle P_\ell\rangle_R$, we have $\sum_\ell C_{\ell n}h_\ell=\langle T_n\rangle_R=R^n$. By Lemma~\ref{r3:lem-connection}(a) all $C_{\ell n}$ with $\ell<n$ are negative, so $\sum_\ell|C_{\ell n}|h_\ell=2C_{nn}h_n-R^n\leq(B_0-1)R^n\leq\kappa_RR^n$ by (a).

(d) The first bound follows from (a). For $j\geq1$ put $m=\ell+j$. By (a) and Lemma~\ref{r3:lem-connection}, the second bound reduces to $n(2\ell+1)a_\ell\leq2m(2m+1)a_m$. By \eqref{r3:eq-aratio} it suffices that $4m^2(2m+1)^2(\ell+\frac14)-n^2(2\ell+1)^2(m+\frac14)\geq0$; with $m=\ell+j$, $n=\ell+2j$ this is a polynomial in $(\ell,j)$ whose coefficients are all nonnegative. The third bound reduces to $(2\ell+1)a_\ell\leq(2m+1)a_m$, which follows from \eqref{r3:eq-aratio} and the monotonicity of $(2x+1)^2/(x+\frac14)$ on $[0,\infty)$.

(e) For $\ell\geq1$, $h_\ell\geq D_{\ell\ell}R^\ell=2a_\ell R^\ell$, and $D_{n\ell}\leq2a_{\ell-j}\leq2\sqrt2\,a_\ell$ by \eqref{r3:eq-aratio}. For $\ell=0$ the claim is trivial.
\end{proof}

\subsubsection*{Coefficient spaces}
Let $\mathcal L$ be the Banach space of real families $f=(f_{\ell,s})_{\ell,s\geq0}$ with $\nrm f_{\mathcal L}=\sum_{\ell,s}h_\ell|f_{\ell,s}|<\infty$, identified with the formal series $\sum f_{\ell,s}\Psi_{\ell,s}$, and let $\mathcal L^{\mathrm{ev}}$ be its subspace with $f_{\ell,s}=0$ for odd $\ell$. The field space is
\[
\Gc=\{g\in\mathcal L^{\mathrm{ev}}:\ g_{\ell,0}=0\text{ for all }\ell\}.
\]
For even $n\geq0$ let $\mathcal T_n=\sum_\ell C_{\ell n}H_\ell$; this is the harmonic polynomial with boundary values $T_n(\xi)$ on $\partial B$, and $|\mathcal T_n|\leq1$ on $\overline B$ by the maximum principle. The output space $\mathcal Y$ is the Banach space of families $f=\big((f_n)_{n\ \mathrm{even}},(f_{\ell,s})_{\ell\ \mathrm{even},\,s\geq1}\big)$, identified with $\sum_nf_n\mathcal T_n+\sum_{s\geq1}f_{\ell,s}\Psi_{\ell,s}$, with
\[
\nrm f_{\mathcal Y}=\sum_nR^n|f_n|+\sum_{\ell,\,s\geq1}h_\ell|f_{\ell,s}| .
\]
Thus the harmonic layer of $\mathcal Y$ is measured through the Chebyshev coefficients of its boundary trace. Since $H_\ell=\sum_nD_{n\ell}\mathcal T_n$ with $D_{n\ell}\geq0$, the inclusion $\mathcal L^{\mathrm{ev}}\to\mathcal Y$ satisfies $\nrm f_{\mathcal Y}\leq\nrm f_{\mathcal L}$. The unknowns lie in $\mathcal X=\Gc\times\mathcal E$, with norm $\nrm{(g,c)}_{\mathcal X}=\nrm g_{\mathcal L}+\sigma\nrm c_{\mathcal E}$ for a constant $\sigma>0$ fixed in Section~\ref{r3:sec-quartic}. All three spaces are weighted $\ell^1$ spaces, and Lemma~\ref{lem:columns} applies to operators between them.

\begin{lemma}\label{r3:lem-embed}
\begin{enumerate}[label=\textup{(\alph*)}]
\item The functions $\Psi_{\ell,s}$ are pairwise orthogonal in $L^2(B)$, $\nrm{\Psi_{\ell,s}}^2_{L^2(B)}=4\pi/((2\ell+1)(2\ell+4s+3))$, and $\Psi_{\ell,s}$ is orthogonal to all polynomials on $\R^3$ of degree less than $\ell+2s$.
\item The series defining the elements of $\mathcal L$ and $\mathcal Y$ converge in $L^2(B)$, and the resulting maps $\iota\colon\mathcal L\to L^2(B)$ and $\iota\colon\mathcal Y\to L^2(B)$ are continuous, injective and compatible with the inclusion $\mathcal L^{\mathrm{ev}}\subset\mathcal Y$. The harmonic part $\sum_nf_n\mathcal T_n$ of $f\in\mathcal Y$ converges uniformly on $\overline B$.
\end{enumerate}
\end{lemma}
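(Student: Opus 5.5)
For (a) I will work in polar coordinates on $B$, $x=r\omega$ with $\omega\in S^2$, so that $dx=r^2\,dr\,d\omega$ and $\Psi_{\ell,s}=r^\ell P_\ell(\xi)J^{\ell+1/2}_s(r^2)$. Integrating over the sphere first, orthogonality of the Legendre polynomials $P_\ell(\xi)$ on $S^2$ shows that $\Psi_{\ell,s}$ and $\Psi_{\ell',s'}$ are orthogonal whenever $\ell\neq\ell'$. The sphere integral is $\int_{S^2}P_\ell^2\,d\omega=4\pi/(2\ell+1)$. For equal $\ell$ the remaining radial factor is $\int_0^1r^{2\ell+2}J_sJ_{s'}(r^2)\,dr$. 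Substituting $t=r^2$ turns it into $\frac12\int_0^1t^{\ell+1/2}J^{\ell+1/2}_sJ^{\ell+1/2}_{s'}\,dt$, and by Lemma~\ref{r3:lem-jacobi}(a) with $\beta=\ell+\frac12$ this equals $\delta_{ss'}/(2(\ell+2s+\frac32))=\delta_{ss'}/(2\ell+4s+3)$. Multiplying the two factors gives the stated norm $4\pi/((2\ell+1)(2\ell+4s+3))$.

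For the orthogonality to polynomials of lower degree, I first reduce to axisymmetric test polynomials by averaging over $O(2)$. This averaging preserves degree and does not change the inner product with the axisymmetric function $\Psi_{\ell,s}$. An axisymmetric polynomial of degree $<\ell+2s$ decomposes, by the harmonic decomposition $\sum_k t^k H_{\ell'}$ with $\ell'+2k$ at most the degree, into terms $t^kH_{\ell'}$. By the sphere orthogonality only the terms with $\ell'=\ell$ survive, and these have $k<s$. The remaining inner product is $\frac{4\pi}{2\ell+1}\cdot\frac12\int_0^1t^{\ell+1/2+k}J^{\ell+1/2}_s\,dt$, which vanishes by Lemma~\ref{r3:lem-jacobi}(a) since $k<s$.

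For (b), the $L^2$ norm of a single basis element is $\nrm{\Psi_{\ell,s}}_{L^2}\leq\sqrt{4\pi/3}$ (a crude form of (a)). Hence, using $h_\ell\geq1$ from Lemma~\ref{r3:lem-weights}(a), the series for $f\in\mathcal L$ converges absolutely in $L^2(B)$ with $\nrm{\iota f}\leq C\nrm f_{\mathcal L}$. For $\mathcal Y$ the non-harmonic part is handled in the same way. For the harmonic part, $|\mathcal T_n|\leq1$ on $\overline B$ and $\sum_n|f_n|\leq\sum_nR^n|f_n|$, so the Weierstrass M-test gives uniform, hence $L^2$, convergence. Compatibility with the inclusion $\mathcal L^{\mathrm{ev}}\subset\mathcal Y$ comes from rewriting $H_\ell=\sum_nD_{n\ell}\mathcal T_n$, whose rearrangement is justified by absolute convergence, which in turn rests on $\sum_nD_{n\ell}R^n=h_\ell$.

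Injectivity on $\mathcal L$ is immediate from (a): the coefficient $f_{\ell,s}$ is recovered as the $L^2$ inner product with $\Psi_{\ell,s}$ divided by its norm squared, and this inner product is continuous in $f$. On $\mathcal Y$ I will map the harmonic layer back into $\mathcal L$-type coefficients. Since $\mathcal T_n=\sum_\ell C_{\ell n}H_\ell$, the harmonic part has $H_\ell$-coefficients $\sum_nC_{\ell n}f_n$, and these converge absolutely by Lemma~\ref{r3:lem-weights}(c), namely $\sum_{\ell,n}|C_{\ell n}|h_\ell|f_n|\leq\kappa_R\sum_nR^n|f_n|$. So $\iota f=0$ forces all $\Psi_{\ell,s}$ coefficients to vanish: those with $s\geq1$ directly, and $\sum_nC_{\ell n}f_n=0$ for every $\ell$. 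Since $(C_{\ell n})$ inverts $(D_{n\ell})$, these relations say that the boundary trace $\sum_n f_nT_n(\xi)$ has zero Legendre coefficients, which gives $f_n=0$. The one step needing care is exactly this interchange of summations, and it is covered by the absolute bound from Lemma~\ref{r3:lem-weights}(c).
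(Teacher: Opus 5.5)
Your proof is correct. For part (a), the $L^2$ bounds and the uniform convergence of the harmonic layer, it follows the paper. Your $O(2)$-averaging step is a harmless detour: the paper applies the Fischer decomposition $|x|^{2m}Y_k$ to an arbitrary polynomial and uses orthogonality of spherical harmonics of different degrees directly.

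The genuine difference is injectivity on $\mathcal Y$.
\begin{itemize}
\item The paper pairs $\iota f=0$ with $\Psi_{\ell,s}$, $s\geq1$. This uses only that the $\mathcal T_n$ are harmonic polynomials, hence orthogonal to these $\Psi_{\ell,s}$ by (a). The remaining harmonic part $\sum_nf_n\mathcal T_n$ is a uniformly convergent series vanishing in $B$. By continuity its boundary trace $\sum_nf_n\cos n\theta$ vanishes, so all $f_n=0$.
\item You instead transport the harmonic layer into $\mathcal L^{\mathrm{ev}}$ through the coefficients $C_{\ell n}$ and then invoke injectivity on $\mathcal L$. This needs the quantitative bound of Lemma~\ref{r3:lem-weights}(c), and behind it the sign information of Lemma~\ref{r3:lem-connection}(a), so it is heavier.
\item Your route also yields more: a bounded map $\mathcal Y\to\mathcal L^{\mathrm{ev}}$ of norm at most $\kappa_R$ that inverts the inclusion, so the two norms are equivalent. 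This is the same Chebyshev-to-Legendre conversion the paper uses later for $\tau(m)\leq\kappa_R\nrm m_{\mathrm{ch}}$ in Lemma~\ref{r3:lem-mult}(b).
\end{itemize}

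Your final sentence compresses one step. From $\sum_nC_{\ell n}f_n=0$ for all $\ell$ you can conclude $f_n=0$ in either of two ways:
\begin{itemize}
\item use completeness of the Legendre polynomials for the continuous trace, then uniqueness of cosine coefficients (the paper's last step);
\item or write $f_m=\sum_\ell D_{m\ell}\sum_nC_{\ell n}f_n$. The interchange of sums is justified because $D_{m\ell}R^m\leq h_\ell$, so the same $\kappa_R$ bound gives absolute convergence, and $\sum_\ell D_{m\ell}C_{\ell n}=\delta_{mn}$ since each $T_n$ is a finite Legendre sum.
\end{itemize}

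You also check compatibility with $\mathcal L^{\mathrm{ev}}\subset\mathcal Y$ explicitly, which the paper leaves implicit.
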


\begin{proof}
(a) In spherical coordinates,
\[
\int_B\Psi_{\ell,s}\Psi_{\ell',s'}\,dx=2\pi\int_{-1}^1P_\ell P_{\ell'}\,d\xi\cdot\frac12\int_0^1t^{(\ell+\ell'+1)/2}J^{\ell+1/2}_sJ^{\ell'+1/2}_{s'}\,dt,
\]
and Lemma~\ref{r3:lem-jacobi}(a) gives orthogonality and the norm. A polynomial of degree less than $\ell+2s$ is a sum of terms $|x|^{2m}Y_k$ with $Y_k$ harmonic and homogeneous of degree $k$, $k+2m<\ell+2s$ \cite[Ch.~IV]{SW71}. The angular integral of $\Psi_{\ell,s}|x|^{2m}Y_k$ vanishes unless $k=\ell$, and then $m<s$ and the radial integral $\frac12\int_0^1t^{\ell+1/2+m}J^{\ell+1/2}_s\,dt$ vanishes by Lemma~\ref{r3:lem-jacobi}(a).

(b) Continuity follows from $\nrm{\Psi_{\ell,s}}_{L^2}\leq(4\pi/3)^{1/2}\leq(4\pi/3)^{1/2}h_\ell$ and $|\mathcal T_n|\leq1\leq R^n$. On $\mathcal L$, injectivity follows from (a). Let $f\in\mathcal Y$ with $\iota f=0$. The $\mathcal T_n$ are harmonic polynomials, hence orthogonal to all $\Psi_{\ell,s}$ with $s\geq1$ by (a); pairing with $\Psi_{\ell,s}$ gives $f_{\ell,s}=0$ for $s\geq1$. The harmonic part $h=\sum_nf_n\mathcal T_n$ converges uniformly on $\overline B$, so it is harmonic in $B$, continuous on $\overline B$, and vanishes in $B$; hence its boundary values $\sum_nf_nT_n(\cos\theta)=\sum_nf_n\cos n\theta$ vanish, and all $f_n=0$.
\end{proof}

\subsection{Multiplication}\label{r3:sec-mult}

\begin{lemma}[Adams]\label{r3:lem-gaunt}
For $a,\ell\geq0$, $P_aP_\ell=\sum_kG_{a\ell k}P_k$, where $G_{a\ell k}\geq0$ vanishes unless $|a-\ell|\leq k\leq a+\ell$ and $k\equiv a+\ell\pmod2$, and $\sum_kG_{a\ell k}=1$. Consequently $H_aH_\ell=\sum_kG_{a\ell k}H_kt^{(a+\ell-k)/2}$ and
\[
\sum_kG_{a\ell k}h_k\leq h_ah_\ell .
\]
\end{lemma}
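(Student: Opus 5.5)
The first assertion is Adams' classical linearisation formula for products of Legendre polynomials, which I would cite rather than reprove: $G_{a\ell k}$ is given by an explicit product of the numbers $a_j$ (equivalently, a squared Clebsch--Gordan coefficient times a positive factor). From this closed form it is visibly nonnegative. It vanishes unless the triangle and parity conditions hold, since $P_aP_\ell$ has degree $a+\ell$ and parity $a+\ell$, and it is orthogonal to $P_k$ for $k<|a-\ell|$ by orthogonality of $P_{\max}$ to polynomials of lower degree. Evaluating at $\xi=1$ gives $\sum_kG_{a\ell k}=P_a(1)P_\ell(1)=1$.

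For the identity between solid harmonics, multiply $P_aP_\ell=\sum_kG_{a\ell k}P_k$ by $r^{a+\ell}$. The left side becomes $H_aH_\ell$. Each term on the right becomes $G_{a\ell k}r^kP_k(\xi)\,r^{a+\ell-k}=G_{a\ell k}H_kt^{(a+\ell-k)/2}$, and the exponent is a nonnegative integer by the parity and triangle conditions.

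For the weight inequality, recall $h_k=\langle P_k\rangle_R$, where $\langle\cdot\rangle_R$ is the linear functional with $\langle T_n\rangle_R=R^n$. Linearity gives $\sum_kG_{a\ell k}h_k=\langle P_aP_\ell\rangle_R$. The key point is that $\langle\cdot\rangle_R$ is multiplicative-dominated on polynomials with nonnegative Chebyshev coefficients. Write $P_a=\sum_nD_{na}T_n$ and $P_\ell=\sum_{n'}D_{n'\ell}T_{n'}$ with $D\geq0$. The product formula $T_nT_{n'}=\tfrac12(T_{n+n'}+T_{|n-n'|})$ then gives
\[
\langle T_nT_{n'}\rangle_R=\tfrac12\big(R^{n+n'}+R^{|n-n'|}\big)\leq R^{n+n'}
\]
for $R\geq1$. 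Summing with the nonnegative weights $D_{na}D_{n'\ell}$ yields $\langle P_aP_\ell\rangle_R\leq\big(\sum_nD_{na}R^n\big)\big(\sum_{n'}D_{n'\ell}R^{n'}\big)=h_ah_\ell$.

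The only step needing care is the positivity input, namely Adams' formula and the nonnegativity of $D_{n\ell}$. The latter is already established from the factorisation of the generating function in the Chebyshev--Legendre discussion. Everything else is bookkeeping.
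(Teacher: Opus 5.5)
Your proposal is correct and follows essentially the same route as the paper's proof: cite Adams' formula for $G_{a\ell k}\geq0$, evaluate at $\xi=1$ to get total mass one, and multiply by $r^{a+\ell}$ to get the solid-harmonic identity. For the weight inequality you use, as the paper does, $\sum_kG_{a\ell k}h_k=\langle P_aP_\ell\rangle_R$ together with $D\geq0$ and $T_nT_{n'}=\frac12(T_{n+n'}+T_{|n-n'|})$.
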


\begin{proof}
Adams' formula \cite{Ada78} gives, with $v=(a+\ell+k)/2$ in the support,
\[
G_{a\ell k}=(2k+1)\Big[\frac{v!}{(v-a)!(v-\ell)!(v-k)!}\Big]^2\frac{(2v-2a)!\,(2v-2\ell)!\,(2v-2k)!}{(2v+1)!}\geq0 ;
\]
the sum is one by evaluation at $\xi=1$, and the identity for $H_aH_\ell$ follows after multiplication by $r^{a+\ell}$. Finally, $\sum_kG_{a\ell k}h_k=\langle P_aP_\ell\rangle_R$, and $P_aP_\ell=\sum_{m,n}D_{ma}D_{n\ell}\frac12(T_{m+n}+T_{|m-n|})$ with $D\geq0$, so $\langle P_aP_\ell\rangle_R\leq\sum_{m,n}D_{ma}D_{n\ell}R^{m+n}=h_ah_\ell$.
\end{proof}

Every axisymmetric polynomial $m$ has unique expansions
\[
m=\sum_{a,k\geq0}m_{a,k}H_at^k=\sum_{n,k\geq0}\tilde m_{n,k}r^{n+2k}T_n(\xi),
\]
the first by the Fischer decomposition, the second because $r^nT_n(\xi)=\re(x_1+iy)^n$. The homogeneous part of degree $d$ of $m$ consists of the terms with $a+2k=d$, respectively $n+2k=d$; we call $H_at^k$ a \emph{solid monomial} of degree $a+2k$. We put
\[
\tau(m)=\sum_{a,k}h_a|m_{a,k}|,\qquad \nrm m_{\mathrm{ch}}=\sum_{n,k}R^n|\tilde m_{n,k}|,
\]
and extend both to series with finite norm; such series converge uniformly on $\overline B$, since $|H_at^k|\leq1$ and $|r^{n+2k}T_n(\xi)|\leq1$ there.

\begin{lemma}\label{r3:lem-mult}
\begin{enumerate}[label=\textup{(\alph*)}]
\item Let $H_at^k$ be a solid monomial and $\ell,s\geq0$. Then
\begin{equation}\label{r3:eq-monomial}
H_at^k\Psi_{\ell,s}=\sum_vG_{a\ell v}\sum_{\varsigma\geq0}\pi^{(v)}_\varsigma\Psi_{v,\varsigma},
\end{equation}
where, for each $v$, $(\pi^{(v)}_\varsigma)_\varsigma$ is a probability vector. Consequently $\nrm{mf}_{\mathcal L}\leq\tau(m)\nrm f_{\mathcal L}$ for every $f\in\mathcal L$ and every $m$ with $\tau(m)<\infty$.
\item $\tau(m)\leq\kappa_R\nrm m_{\mathrm{ch}}$. If $m=r^{n+2k}U_n(\xi)$ or $m=r^{n+2k}C^{(2)}_n(\xi)$, then $\tau(m)=\nrm m_{\mathrm{ch}}$.
\item $\nrm{m_1m_2}_{\mathrm{ch}}\leq\nrm{m_1}_{\mathrm{ch}}\nrm{m_2}_{\mathrm{ch}}$ and $\nrm{\re(w^\alpha\bw^\beta)}_{\mathrm{ch}}\leq R^{\alpha+\beta}$.
\item For $m$ with $\tau(m)<\infty$ and $f\in\mathcal L$, $\iota(mf)=m\,\iota(f)$.
\end{enumerate}
\end{lemma}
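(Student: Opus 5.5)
The plan is to reduce everything to the two stochastic moves of Lemma~\ref{r3:lem-jacobi}(b) and to Adams' positivity (Lemma~\ref{r3:lem-gaunt}). Parts (b)–(d) are then bookkeeping with the connection coefficients of Lemmas~\ref{r3:lem-connection} and~\ref{r3:lem-weights}.

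For (a), I will first use Lemma~\ref{r3:lem-gaunt} to write
\[
H_at^k\Psi_{\ell,s}=\sum_vG_{a\ell v}H_v\,t^{e_v}J^{\ell+1/2}_s(t),\qquad e_v=\tfrac{a+\ell-v}2+k .
\]
It then suffices to show that $t^{e_v}J^{\ell+1/2}_s$ is a probability combination of the $J^{v+1/2}_\varsigma$. The available moves are the following.
\begin{itemize}
\item The raising identity $J^\beta_s\mapsto J^{\beta+1}_{s},J^{\beta+1}_{s-1}$ raises $\beta$ by one, is stochastic and needs no factor of $t$.
\item The identity for $tJ^\beta_s$ lowers $\beta$ by one, is stochastic and consumes one factor $t$.
\item The composite "lower then raise" absorbs a factor $t$ without changing $\beta$, as in the proof of Lemma~\ref{r3:lem-jacobi}(d).
\end{itemize}
If $v\geq\ell$, I apply the raising step $v-\ell$ times and then absorb all $e_v$ factors of $t$ by the composite step. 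If $v<\ell$, I need $\ell-v$ lowering steps. This is possible because $v\geq\ell-a$ gives $e_v\geq(a+\ell-v)/2\geq\ell-v$; the remaining factors of $t$ are again absorbed by the composite step. All intermediate indices $\beta\geq1/2$ stay in the range $\beta>-1$ where the identities hold. Since every move is given by a nonnegative matrix with column sums one, this proves \eqref{r3:eq-monomial}. Taking $\mathcal L$-norms and using $\sum_vG_{a\ell v}h_v\leq h_ah_\ell$ gives $\nrm{H_at^k\Psi_{\ell,s}}_{\mathcal L}\leq h_ah_\ell$. Absolute summation over the Fischer expansion of $m$ and over the coefficients of $f$ then yields $\nrm{mf}_{\mathcal L}\leq\tau(m)\nrm f_{\mathcal L}$, where the product $mf$ is defined coefficientwise by this absolutely convergent double sum. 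I expect the index bookkeeping here, in particular checking $e_v\geq\ell-v$ and the admissible ranges of $\beta$, to be the main obstacle.

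For (b), I write $r^{n+2k}T_n(\xi)=t^k\sum_\ell C_{\ell n}H_\ell t^{(n-\ell)/2}$, which is the Fischer expansion by uniqueness. This gives $\tau(r^{n+2k}T_n)=\sum_\ell|C_{\ell n}|h_\ell\leq\kappa_RR^n$ by Lemma~\ref{r3:lem-weights}(c), and the triangle inequality extends the bound to general $m$. For $U_n$ and $C^{(2)}_n$ the Legendre coefficients are positive by Lemma~\ref{r3:lem-connection}(b), so $\tau(m)=\langle U_n\rangle_R$, respectively $\langle C^{(2)}_n\rangle_R$. The Chebyshev coefficients of $U_n$ and $C^{(2)}_n$ are also nonnegative ($U_n$ is a positive sum of $T_{n-2j}$, and $C^{(2)}_n=\tfrac12U_{n+1}'$ is a positive combination of the $U$'s). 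Hence this value equals $\sum(\text{Chebyshev coefficient})R^{\deg}=\nrm m_{\mathrm{ch}}$, after redistributing $r^{n+2k}$ as $r^{n-2j}t^{k+j}$.

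For (c), I identify $r^{n+2k}T_n(\xi)=\re(w^n)\,|w|^{2k}$. Then $T_mT_n=\tfrac12(T_{m+n}+T_{|m-n|})$ together with $R^{|m-n|}\leq R^{m+n}$ gives submultiplicativity on basis elements, and hence in general. Also $\re(w^\alpha\bw^\beta)=r^{\alpha+\beta}T_{|\alpha-\beta|}(\xi)$ has norm $R^{|\alpha-\beta|}\leq R^{\alpha+\beta}$.

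For (d), the identity $\iota(mf)=m\,\iota(f)$ is a pointwise polynomial identity when $m$ and $f$ are finite. I pass to the limit using three facts: the continuity of $\iota$ (Lemma~\ref{r3:lem-embed}(b)), the bound in (a), which gives $\mathcal L$-convergence of truncated products, and the uniform convergence of the series for $m$ on $\overline B$.
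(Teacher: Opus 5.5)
Your proposal is correct and follows the paper's proof essentially step for step: for (a), Adams' expansion combined with the stochastic raising and lowering moves of Lemma~\ref{r3:lem-jacobi}(b), including the check $e_v\geq\ell-v$ from $v\geq\ell-a$; for (b), the Fischer expansions with the coefficients $C_{\ell n}$, $\upsilon_{\ell n}$ and Lemma~\ref{r3:lem-weights}(c); for (c), the product rule for $T_mT_n$; and for (d), a density argument. One small correction: at $v=0$ the composite ``lower then raise'' step passes through the parameter $\beta=-\tfrac12$, so not every intermediate index is $\geq\tfrac12$. This is harmless because the raising identity also holds for $-1<\beta\leq0$, as recorded in Lemma~\ref{r3:lem-jacobi}(b), and your conclusion that all parameters stay in $\beta>-1$ is right.
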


\begin{proof}
(a) By Lemma~\ref{r3:lem-gaunt}, $H_at^k\Psi_{\ell,s}=\sum_vG_{a\ell v}H_v\,t^eJ^{\ell+1/2}_s$ with $e=(a+\ell-v)/2+k$. If $v\geq\ell$, we apply the first identity of Lemma~\ref{r3:lem-jacobi}(b) $v-\ell$ times, raising the parameter from $\ell+\frac12$ to $v+\frac12$, and then write the factor $t^e$ as $e$ multiplications by $t$, each a lowering step followed by a raising step. If $v<\ell$, we apply the second identity $\ell-v$ times, which uses $\ell-v$ factors $t$; this is possible because $e\geq\ell-v$, as $\ell-v\leq a$. The remaining $e-(\ell-v)$ factors $t$ are treated as before. Every step is given by a nonnegative matrix with column sums one, which proves \eqref{r3:eq-monomial}. Since $\nrm{\Psi_{v,\varsigma}}_{\mathcal L}=h_v$, Lemma~\ref{r3:lem-gaunt} gives $\nrm{H_at^k\Psi_{\ell,s}}_{\mathcal L}\leq h_ah_\ell$, and the bound for $mf$ follows by absolute summation.

(b) Since $r^{n+2k}T_n(\xi)=\sum_\ell C_{\ell n}H_\ell t^{(n-\ell)/2+k}$, Lemma~\ref{r3:lem-weights}(c) gives $\tau(r^{n+2k}T_n)\leq\kappa_RR^n$. For $U_n$ the Legendre coefficients are nonnegative (Lemma~\ref{r3:lem-connection}(b)), so $\tau(r^{n+2k}U_n)=\sum_\ell\upsilon_{\ell n}h_\ell=\langle U_n\rangle_R$, which is the Chebyshev norm, because $U_n(\cos\theta)=\sum_{|\nu|\leq n,\,\nu\equiv n\ (\mathrm{mod}\ 2)}e^{i\nu\theta}$ has nonnegative Chebyshev coefficients. For $C^{(2)}_n$ the same argument applies, since $C^{(2)}_n(\cos\theta)=\sum_{p+p'=n}(p+1)(p'+1)e^{i(p-p')\theta}$.

(c) $T_mT_n=\frac12(T_{m+n}+T_{|m-n|})$ and $R^{|m-n|}\leq R^{m+n}$; and $\re(w^\alpha\bw^\beta)=r^{\alpha+\beta}T_{|\alpha-\beta|}(\xi)$.

(d) Both sides are continuous in $f\in\mathcal L$ with values in $L^2(B)$, by (a) and Lemma~\ref{r3:lem-embed}, and $m$ is bounded; they agree when $f$ is a finite combination of the $\Psi_{\ell,s}$ and $m$ is a polynomial, and the general case follows by uniform convergence of $m$ and density.
\end{proof}

\begin{lemma}\label{r3:lem-radial}
Let $H_at^k$ be a solid monomial of degree $d=a+2k$, let $\ell,s\geq0$, and let $\pi^{(v)}$ be as in \eqref{r3:eq-monomial}. Then:
\begin{enumerate}[label=\textup{(\alph*)}]
\item $\pi^{(v)}_\varsigma=0$ unless $s-d\leq\varsigma\leq s+d$, and $G_{a\ell v}=0$ unless $|\ell-a|\leq v\leq\ell+a$;
\item $\pi^{(v)}_0\leq\vartheta_d(\ell,s)$, where $\vartheta_d(\ell,s)=\big(d/(\ell+d+\frac52)\big)^s$ if $d\geq s$ (with $0^0=1$) and $\vartheta_d(\ell,s)=0$ if $d<s$;
\item for every integer $j\geq1$, $\sum_{\varsigma\leq s-j}\pi^{(v)}_\varsigma\leq2^{-d}\sum_{i\geq j}\binom di$.
\end{enumerate}
\end{lemma}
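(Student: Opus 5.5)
The natural route is to track the explicit construction of \eqref{r3:eq-monomial} in the proof of Lemma~\ref{r3:lem-mult}(a) step by step. That construction is a product of stochastic matrices. First there are $|v-\ell|$ parameter changes from Lemma~\ref{r3:lem-jacobi}(b): raising steps if $v>\ell$, lowering steps if $v<\ell$. Then there are the remaining factors of $t$, each written as a lowering step followed by a raising step. Every step shifts the radial index by $0$ or $\pm1$. Part (a) for $G_{a\ell v}$ is already in Lemma~\ref{r3:lem-gaunt}.

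For the radial range in (a), I will count the steps in each case.
\begin{itemize}
\item If $v\geq\ell$: the raising steps $J^\beta_s\mapsto J^{\beta+1}_s,J^{\beta+1}_{s-1}$ can only decrease the index, and there are $v-\ell\leq a$ of them. Each of the $e=(a+\ell-v)/2+k$ multiplications by $t$ changes the index by at most $\pm1$ in total. So the index moves down by at most $(v-\ell)+e$ and up by at most $e$, and both are $\leq a+2k=d$ because $v-\ell\leq a$.
\item If $v<\ell$: the second identity raises the index by at most one per step, and this uses $\ell-v$ factors $t$. The remaining $e-(\ell-v)$ factors contribute at most $\pm1$ each. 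Since $e\leq d$, the bound again follows.
\end{itemize}

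For (c), I view the radial index as a random walk. Each factor $t$ followed by the corresponding raising step is a lowering step (index $s\mapsto s$ or $s+1$) and then a raising step (index $s'\mapsto s'$ or $s'-1$). By Lemma~\ref{r3:lem-jacobi}(b), each raising step moves down with probability at most $1/2$. The downward moves are therefore dominated by at most $d$ steps, each going down with probability $\leq1/2$. Upward moves only help, since we want a lower tail bound. By monotone coupling, the probability of a net drop of at least $j$ is then at most $P(\mathrm{Bin}(d,1/2)\geq j)=2^{-d}\sum_{i\geq j}\binom di$. I need to check that the number of potential down-steps is at most $d$:
\begin{itemize}
\item If $v\geq\ell$, it is $(v-\ell)+e\leq d$.
\item If $v<\ell$, the lowering-identity steps never decrease the index, and the number of down-steps is $e-(\ell-v)\leq d$.
\end{itemize}
The coupling point to be careful about is that the down-probabilities depend on the current state. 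Each is still $\leq1/2$, so a standard stochastic domination argument applies.

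For (b), I would not use the walk but compute directly by orthogonality. Write $\pi^{(v)}_0$ as the coefficient of $J^{v+1/2}_0=1$ in $t^{e}J^{\ell+1/2}_s$, projected appropriately. By Lemma~\ref{r3:lem-jacobi}(a), with $\beta=v+\frac12$, this coefficient is $(\beta+1)\int_0^1t^{\beta}t^{e'}J^{\ell+1/2}_s\,dt$. After matching exponents, this becomes an integral of the form $\int_0^1 t^{\ell+1/2+m}J^{\ell+1/2}_s\,dt$ with $m\leq d$. The closed formula in (a) gives $m(m-1)\cdots(m-s+1)/\big((\ell+m+\frac32)\cdots(\ell+m+s+\frac32)\big)$. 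This vanishes if $s>m$, and in particular if $s>d$. Otherwise each of the $s$ factors is at most $d/(\ell+d+\frac52)$, because $x/(\ell+x+\frac52)$ is increasing in $x$ and each numerator term $m-i$ sits over a denominator term at least $\ell+m-i+\frac52$. The leftover prefactor is absorbed by the final denominator term.

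The main obstacle is the exponent bookkeeping in (b). One must check that the prefactor $(\beta+1)$ against the extra denominator term, together with the relation between $v$ and $\ell$ (parameter $v+\frac12$ versus $\ell+\frac12$, expressed via $t^{(v-\ell)/2}$ and the exponent $e$), really gives an exponent $m\leq d$ and a nonnegative ratio bounded as claimed. I would handle the cases $v\geq\ell$ and $v<\ell$ separately there. If orthogonality alone does not give the bound cleanly, I would fall back on the walk picture: reaching $\varsigma=0$ from $s$ needs at least $s$ down-steps among at most $d$, and each down-step of the raising identity has probability $s'/(\beta+2s'+1)$, which I would bound in the same form.
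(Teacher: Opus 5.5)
Your proposal follows the paper's proof essentially step for step. For (a) you count the raising and lowering steps. For (b) you use the orthogonality formula $\pi^{(v)}_0=(v+\frac32)\int_0^1t^{v+1/2+e}J^{\ell+1/2}_s\,dt$ together with Lemma~\ref{r3:lem-jacobi}(a) at $m=e+v-\ell=(a-\ell+v)/2+k\in\{0,\ldots,d\}$. For (c) you use the Markov chain whose raising steps go down with probability at most $\frac12$, dominated by $\mathrm{Bin}(d,\frac12)$.

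The bookkeeping you left open in (b) needs no case split on the sign of $v-\ell$. Pair $v+\frac32$ with the first denominator factor $\ell+m+\frac32$, which is not smaller since $\ell+m=(a+\ell+v)/2+k\geq v$. Pair each $m-i$ with $\ell+m+\frac52+i$. If you pair the prefactor with the last factor instead, as you suggest, the factorwise bound breaks when $s=1$.
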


\begin{proof}
In the proof of Lemma~\ref{r3:lem-mult}(a), only the raising steps can decrease the radial index, each by at most one, and only the lowering steps can increase it, each by at most one. The number of raising steps is $e+v-\ell=(a-\ell+v)/2+k$ and the number of lowering steps is $e=(a+\ell-v)/2+k$; since $|\ell-v|\leq a$, both are at most $a+k\leq d$. This proves (a), the statement about $v$ being part of Lemma~\ref{r3:lem-gaunt}.

(b) Since $J^{v+1/2}_0=1$, Lemma~\ref{r3:lem-jacobi}(a) gives $\pi^{(v)}_0=(v+\frac32)\int_0^1t^{v+1/2+e}J^{\ell+1/2}_s\,dt$, which is the value in Lemma~\ref{r3:lem-jacobi}(a) with $\beta=\ell+\frac12$ and $m=e+v-\ell=(a-\ell+v)/2+k$, multiplied by $v+\frac32$. Here $0\leq m\leq a+k\leq d$. If $m<s$, in particular if $d<s$, it vanishes. Otherwise we pair the factor $v+\frac32$ with $\ell+m+\frac32$, which is not smaller because $\ell+m=(a+\ell+v)/2+k\geq v$, and each factor $m-i$ ($0\leq i<s$) with $\ell+m+\frac52+i$; since $x\mapsto x/(\ell+x+\frac52)$ is increasing, each quotient $(m-i)/(\ell+m+\frac52+i)$ is at most $d/(\ell+d+\frac52)$.

(c) The expansion coefficients define a Markov chain on the radial index: a raising step at parameter $\beta$ moves $\varsigma$ to $\varsigma-1$ with probability $\varsigma/(\beta+2\varsigma+1)\leq\frac12$ and otherwise leaves it unchanged, and lowering steps never decrease $\varsigma$. Let $N_i$ be the number of decreases among the first $i$ raising steps. Then $\Pr(N_{i+1}\geq j)\leq\Pr(N_i\geq j)+\frac12\Pr(N_i=j-1)=\frac12\big(\Pr(N_i\geq j)+\Pr(N_i\geq j-1)\big)$, and by induction $\Pr(N_i\geq j)\leq\Pr(\mathrm{Bin}(i,\frac12)\geq j)$. Since there are at most $d$ raising steps and the final index is at least $s-N$, the claim follows. (The probabilistic language only organises sums of products of the nonnegative coefficients.)
\end{proof}

\subsection{\texorpdfstring{The operator $K_3$}{The operator K3}}\label{r3:sec-K}

For $\ell\geq0$ and $s\geq1$ put $\beta=\ell+\frac12$, $d=\beta+2s=\ell+2s+\frac12$, and define
\begin{equation}\label{r3:eq-K}
K_3\Psi_{\ell,s}=\frac{\Psi_{\ell,s-1}}{4d(d+1)}-\frac{\Psi_{\ell,s}}{2d(d+2)}+\frac{\Psi_{\ell,s+1}}{4(d+1)(d+2)}=H_\ell\,k_{\ell,s}(t),
\end{equation}
and let $E=x\cdot\nabla=r\partial_r$ be the Euler operator.

\begin{lemma}\label{r3:lem-K}
For $\ell\geq0$ and $s\geq1$, with $\beta,d$ as above:
\begin{enumerate}[label=\textup{(\alph*)}]
\item $k_{\ell,s}'=\big(J^{\beta+1}_s-J^{\beta+1}_{s-1}\big)/(4(d+1))$;
\item $\Delta K_3\Psi_{\ell,s}=\Psi_{\ell,s}$;
\item $K_3\Psi_{\ell,s}=0$ and $\nabla K_3\Psi_{\ell,s}=0$ on $\partial B$;
\item with $\Psi_{-1,\cdot}=0$,
\begin{align*}
\partial_{x_1}K_3\Psi_{\ell,s}&=\frac{\ell+1}{(2\ell+1)2(d+1)}\big(\Psi_{\ell+1,s}-\Psi_{\ell+1,s-1}\big)+\frac{\ell}{(2\ell+1)2(d+1)}\big(\Psi_{\ell-1,s+1}-\Psi_{\ell-1,s}\big),\\
EK_3\Psi_{\ell,s}&=-\frac{d+\frac12}{4d(d+1)}\Psi_{\ell,s-1}+\frac{\Psi_{\ell,s}}{4d(d+2)}+\frac{d+\frac32}{4(d+1)(d+2)}\Psi_{\ell,s+1},
\end{align*}
and $y\partial_yK_3\Psi_{\ell,s}=EK_3\Psi_{\ell,s}-x_1\partial_{x_1}K_3\Psi_{\ell,s}$;
\item $K_3$, $\partial_{x_1}K_3$, $EK_3$ and $y\partial_yK_3$ extend to bounded operators $\Gc\to\mathcal L$ with $\nrm{K_3}=4/45$, $\nrm{\partial_{x_1}K_3}\leq2R/7$, $\nrm{EK_3}\leq6/35$ and $\nrm{y\partial_yK_3}\leq6/35+2R^2/7$.
\end{enumerate}
\end{lemma}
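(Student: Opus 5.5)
The whole lemma reduces to identities for the three-term polynomial $k_{\ell,s}(t)$, using that $H_\ell$ is harmonic and homogeneous. For $f=H_\ell\,k(t)$ with $t=r^2$ one has
\[
\Delta(H_\ell k)=H_\ell\big(4tk''+(4\ell+6)k'\big)=4t^{-\beta}\tfrac{d}{dt}\big(t^{\beta+1}k'\big)H_\ell\qquad(\beta=\ell+\tfrac12).
\]
This follows from $\nabla t=2x$, $\Delta t=6$ and $x\cdot\nabla H_\ell=\ell H_\ell$ (the third relation in \eqref{r3:eq-Hrec}).

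I begin with (a), which is a Jacobi-polynomial identity and can be proved coefficientwise as in Lemma~\ref{lem:K}(a), or structurally as follows.
\begin{itemize}
\item Using \eqref{r3:eq-Jcoef}, $\frac{d}{dt}J^\beta_s$ is a multiple of $P^{(1,\beta+1)}_{s-1}(2t-1)$.
\item The $(a=0)$ connection formula of Section~\ref{sec:Z}, with real parameter $\beta$, expresses $P^{(1,\beta+1)}_{s-1}$ as a combination of $J^{\beta+1}_{s-1}$ and $J^{\beta+1}_{s-2}$.
\item Differentiating the three terms of \eqref{r3:eq-K} and collecting coefficients, the $J^{\beta+1}_{s-2}$ and $J^{\beta+1}_{s+1}$ contributions must cancel. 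This leaves $(J^{\beta+1}_s-J^{\beta+1}_{s-1})/(4(d+1))$.
\end{itemize}
Rather than track all constants, I would confirm the identity by comparing leading coefficients and values at $t=1$ via $J(1)=1$ and the known $J'(1)$.

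For (b), I insert (a) into the displayed formula. I need $\frac{d}{dt}\big(t^{\beta+1}(J^{\beta+1}_s-J^{\beta+1}_{s-1})\big)=4(d+1)\,t^\beta J^\beta_s/4$. I would check this through the Rodrigues form, or through orthogonality: both sides are $t^\beta$ times a polynomial of degree $s$ that is orthogonal to degree $<s$ in $L^2(t^\beta dt)$, with matching leading coefficient. For the orthogonality of the left side I integrate by parts against $t^i$ and use Lemma~\ref{r3:lem-jacobi}(a) with parameter $\beta+1$; the boundary term vanishes at $t=1$ because $J^{\beta+1}_s(1)-J^{\beta+1}_{s-1}(1)=0$.

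For (c), $k_{\ell,s}(1)=0$ because the three coefficients of \eqref{r3:eq-K} sum to zero, and $k_{\ell,s}'(1)=0$ by (a) since $J(1)=1$. Hence $K_3\Psi$ and its radial derivative $2tH_\ell k'+\ell H_\ell k$ vanish on $\partial B$. The tangential derivative vanishes because the boundary value is identically zero.

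For (d), $\partial_{x_1}(H_\ell k)=\ell H_{\ell-1}k+2x_1H_\ell k'$. I expand $x_1H_\ell$ by Bonnet \eqref{r3:eq-Hrec}, producing $H_{\ell+1}k'$ and $tH_{\ell-1}k'$, and substitute (a).
\begin{itemize}
\item The $H_{\ell+1}$ term is already in the basis $\Psi_{\ell+1,\cdot}$, since $J^{\beta+1}=J^{(\ell+1)+1/2}$.
\item For the $H_{\ell-1}$ terms I use the second identity of Lemma~\ref{r3:lem-jacobi}(b) to write $tJ^{\beta+1}$ in the $J^\beta$ basis. I also write $k$ in the $J^{\beta-1}$ basis, by lowering, and combine the two pieces. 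I expect a telescoping to give exactly $\Psi_{\ell-1,s+1}-\Psi_{\ell-1,s}$, and this is the step where constants must be carefully verified.
\end{itemize}
For $E$, I use $E(H_\ell k)=H_\ell(\ell k+2tk')$, then expand $tk'$ via (a) and the raising/lowering identities of Lemma~\ref{r3:lem-jacobi}(b). The formula for $y\partial_y$ follows from $E=x_1\partial_{x_1}+y\partial_y$ on axisymmetric functions.

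For (e), I use Lemma~\ref{lem:columns} column by column with weights $h_\ell$.
\begin{itemize}
\item $K_3$: the column sum is $\frac1{4d(d+1)}+\frac1{2d(d+2)}+\frac1{4(d+1)(d+2)}=\frac1{d(d+2)}$, maximal at $\ell=0$, $s=1$ where $d=5/2$, which gives $4/45$.
\item $\partial_{x_1}K_3$: I use $h_{\ell\pm1}\le R\frac{\ell}{\ell-1}h_\ell$ (Lemma~\ref{r3:lem-weights}(b)) together with the even-$\ell$ restriction. The case $\ell=0$ has only the $H_1$ term, with ratio $2R/(2(d+1))\le 2R/7$; for $\ell\ge2$ the bound is smaller.
\item $EK_3$: the column sum is maximal at $d=5/2$ and gives $6/35$.
\item $y\partial_yK_3$: I apply the triangle inequality together with $\|x_1\cdot\|\le R$ from Lemma~\ref{r3:lem-mult}(a), since $\tau(x_1)=h_1=R$.
\end{itemize}

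The main obstacle I expect is the precise constant bookkeeping in (d), especially the $H_{\ell-1}$ branch. The $\partial_{x_1}$ norm bound at small even $\ell$ also needs checking.
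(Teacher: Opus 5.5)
Given (a), your arguments for (b) and (c), the $H_{\ell+1}$ and $E$ parts of (d), and the column sums in (e) are sound. Deriving (b) from (a) via Rodrigues (or orthogonality plus leading coefficients) is a legitimate alternative to the paper's coefficient identity \eqref{r3:eq-coreid}. But the two places where the real content of the lemma lies are not established. Below, $\mathsf A,\mathsf B,\mathsf C$ are the three coefficients in \eqref{r3:eq-K}.

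\textbf{Part (a).} The connection formula of Section~\ref{sec:Z} is recursive. It writes $P^{(1,n)}_k$ through $P^{(0,n)}_k$ and $P^{(1,n)}_{k-1}$, not $P^{(0,n)}_{k-1}$. Unrolled, $P^{(1,\beta+1)}_{s-1}$ involves every $J^{\beta+1}_j$ with $j\le s-1$. No $J^{\beta+1}_{s+1}$ can occur at all, since $\deg k_{\ell,s}'=s$.

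So the cancellation you say ``must'' happen is exactly statement (a). Comparing the leading coefficient and the value at $t=1$ imposes only two linear conditions on a polynomial of degree $s$. This proves nothing until you know $k_{\ell,s}'\in\operatorname{span}\{J^{\beta+1}_{s-1},J^{\beta+1}_s\}$.

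That membership follows from one integration by parts: for $0\le i\le s-2$,
\[
\int_0^1t^{\beta+1+i}k_{\ell,s}'\,dt=-(\beta+1+i)\int_0^1t^{\beta+i}k_{\ell,s}\,dt=0 .
\]
Here the boundary term drops because $k_{\ell,s}(1)=\mathsf A+\mathsf B+\mathsf C=0$. The last integral vanishes because $k_{\ell,s}$ is a combination of $J^\beta_{s-1},J^\beta_s,J^\beta_{s+1}$, each orthogonal to lower degrees by Lemma~\ref{r3:lem-jacobi}(a).

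After that your two checks do close (a). The leading coefficients agree. With $(J^\beta_\varsigma)'(1)=\varsigma(\varsigma+\beta+1)$ one gets $k_{\ell,s}'(1)=-d\mathsf A+(d+2)\mathsf C=0$; this value must be computed directly here, not quoted from (a). The paper instead proves $[t^{k+1}]k_{\ell,s}=[t^k]J^\beta_s/(4(k+1)(\beta+k+1))$ coefficientwise, and (a) and (b) follow at once.

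\textbf{The $H_{\ell-1}$ branch of (d).} After Bonnet, the coefficient of $H_{\ell-1}$ is $\frac{2\ell}{2\ell+1}(\beta k+tk')$, and you propose to rewrite it in the $J^{\beta-1}$ basis. But the lowering identity of Lemma~\ref{r3:lem-jacobi}(b) needs a factor $t$. It brings $tk'$ only down to the $J^\beta$ basis, and $\beta k$ carries no factor $t$. Reaching $J^{\beta-1}$ therefore means inverting the raising identity, which produces full triangular sums. The ``telescoping'' you expect is then the entire claim, unverified.

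Compare in the $J^\beta$ basis instead. By (a) and the second identity at parameter $\beta+1$,
\[
\beta k+tk'=-\frac{s}{4d(d+1)}J^\beta_{s-1}-\frac{\beta}{4d(d+2)}J^\beta_s+\frac{\beta+s+1}{4(d+1)(d+2)}J^\beta_{s+1}.
\]
Expand $(J^{\beta-1}_{s+1}-J^{\beta-1}_s)/(4(d+1))$ by the first identity at parameter $\beta-1$. This is legitimate since $\beta-1>-1$ for $\ell\ge1$, and the term is absent for $\ell=0$. It gives the same three coefficients, which is the paper's argument.

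\textbf{A smaller point in (e).} Do not apply $h_{\ell\pm1}\le R\frac{\ell}{\ell-1}h_\ell$ to both terms. At $\ell=2$, $s=1$ that yields $2R/(d+1)=4R/11>2R/7$. Use $h_{\ell+1}\le Rh_\ell$ on the first term and $\ell^2/(\ell-1)\le\ell+2$ on the second. This gives $R(2\ell+3)/\big((2\ell+1)(\ell+\frac72)\big)\le7R/27.5<2R/7$.
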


\begin{proof}
Write $\mathsf A=\frac1{4d(d+1)}$, $\mathsf B=-\frac1{2d(d+2)}$, $\mathsf C=\frac1{4(d+1)(d+2)}$ for the coefficients in \eqref{r3:eq-K}, and $J_\varsigma=J^\beta_\varsigma$. For $0\leq k\leq s$ put $u=s-k$; then $\beta+s+k+1=d-u+1$, and \eqref{r3:eq-Jcoef} gives
\begin{gather*}
\frac{[t^{k+1}]J_{s-1}}{[t^k]J_s}=\frac{u(u-1)}{(k+1)(\beta+k+1)},\qquad\frac{[t^{k+1}]J_s}{[t^k]J_s}=-\frac{u(d-u+1)}{(k+1)(\beta+k+1)},\\
\frac{[t^{k+1}]J_{s+1}}{[t^k]J_s}=\frac{(d-u+1)(d-u+2)}{(k+1)(\beta+k+1)} .
\end{gather*}
Since $u(u-1)(d+2)+2u(d+1)(d-u+1)+d(d-u+1)(d-u+2)=d(d+1)(d+2)$ identically in $u$ and $d$,
\begin{equation}\label{r3:eq-coreid}
[t^{k+1}]k_{\ell,s}=\frac{[t^k]J_s}{4(k+1)(\beta+k+1)}\qquad(0\leq k\leq s).
\end{equation}

(a) The coefficient of $t^k$ in $k_{\ell,s}'$ is $(k+1)[t^{k+1}]k_{\ell,s}$. On the other hand, by \eqref{r3:eq-Jcoef},
\[
\frac{[t^k]J^{\beta+1}_s}{[t^k]J_s}=\frac{\beta+s+k+1}{\beta+k+1},\qquad\frac{[t^k]J^{\beta+1}_{s-1}}{[t^k]J_s}=-\frac{s-k}{\beta+k+1},
\]
so the coefficient of $t^k$ on the right-hand side is
\[
[t^k]J_s\,\frac{\beta+2s+1}{4(d+1)(\beta+k+1)}=\frac{[t^k]J_s}{4(\beta+k+1)} .
\]
Both sides have degree $s$.

(b) For a polynomial $v(t)$, $\Delta(H_\ell v)=v\Delta H_\ell+2\nabla H_\ell\cdot\nabla v+H_\ell\Delta v=4H_\ell\big(tv''+(\ell+\frac32)v'\big)$, by \eqref{r3:eq-Hrec} and $\Delta v(|x|^2)=4tv''+6v'$. The coefficient of $t^k$ in $4(tk''+(\beta+1)k')$ is $4(k+1)(\beta+k+1)[t^{k+1}]k_{\ell,s}=[t^k]J_s$ by \eqref{r3:eq-coreid}.

(c) Since $J_\varsigma(1)=1$, $k_{\ell,s}(1)=\mathsf A+\mathsf B+\mathsf C=0$, and $k_{\ell,s}'(1)=0$ by (a). Hence $K_3\Psi_{\ell,s}=0$ and $\nabla(H_\ell k_{\ell,s})=k_{\ell,s}\nabla H_\ell+2k_{\ell,s}'H_\ell x=0$ on $\partial B$.

(d) By \eqref{r3:eq-Hrec}, $\partial_{x_1}(H_\ell k)=\frac{2(\ell+1)}{2\ell+1}H_{\ell+1}k'+\frac{2\ell}{2\ell+1}H_{\ell-1}(\beta k+tk')$ and $E(H_\ell k)=H_\ell(\ell k+2tk')$, with $k=k_{\ell,s}$. The $H_{\ell+1}$ term is given by (a). For the others we expand in the basis $J_\varsigma=J^\beta_\varsigma$: by (a) and the second identity of Lemma~\ref{r3:lem-jacobi}(b) (with $\beta+1$ in place of $\beta$),
\[
tk'=\frac1{4(d+1)}\Big(-\frac{\beta+s}dJ_{s-1}+\frac{\beta(d+1)}{d(d+2)}J_s+\frac{s+1}{d+2}J_{s+1}\Big),
\]
where we used $(\beta+s+1)d-s(d+2)=\beta(d+1)$. Adding $\beta k$, respectively $\ell k$, gives
\begin{align*}
\beta k+tk'&=-\frac{s}{4d(d+1)}J_{s-1}-\frac{\beta}{4d(d+2)}J_s+\frac{\beta+s+1}{4(d+1)(d+2)}J_{s+1},\\
\ell k+2tk'&=-\frac{d+\frac12}{4d(d+1)}J_{s-1}+\frac1{4d(d+2)}J_s+\frac{d+\frac32}{4(d+1)(d+2)}J_{s+1} .
\end{align*}
The second line is the formula for $EK_3$. By the first identity of Lemma~\ref{r3:lem-jacobi}(b) (with $\beta-1$ in place of $\beta$), $(J^{\beta-1}_{s+1}-J^{\beta-1}_s)/(4(d+1))$ has the same expansion as the first line, which gives the formula for $\partial_{x_1}K_3$. The last identity holds because $E=x_1\partial_{x_1}+y\partial_y$ on axisymmetric functions.

(e) We use Lemma~\ref{lem:columns}. The operator $K_3$ preserves $\ell$, and its column sum is $|\mathsf A|+|\mathsf B|+|\mathsf C|=1/(d(d+2))$, which is largest, $4/45$, for $\ell=0$, $s=1$. For $EK_3$ the column sum is $(2d+1)/(4d(d+1))$, decreasing in $d$, with value $6/35$ at $d=\frac52$. For $\partial_{x_1}K_3$ and even $\ell$, the normalised column norm is at most $\big((\ell+1)h_{\ell+1}+\ell h_{\ell-1}\big)/\big((2\ell+1)(d+1)h_\ell\big)$. For $\ell=0$ this is at most $R/(d+1)\leq2R/7$. For $\ell\geq2$, Lemma~\ref{r3:lem-weights}(b), $\ell^2/(\ell-1)\leq\ell+2$ and $d+1\geq\ell+\frac72$ bound it by $R(2\ell+3)/((2\ell+1)(\ell+\frac72))\leq7R/27.5<2R/7$. Finally $\nrm{x_1f}_{\mathcal L}\leq h_1\nrm f_{\mathcal L}=R\nrm f_{\mathcal L}$ by Lemma~\ref{r3:lem-mult}(a).
\end{proof}

The following lemma shows that $K_3g$ is a genuine $C^1$ function with vanishing Cauchy data. Let $\mathcal P_n$ be the space of real polynomials of degree at most $n$ on $\R^3$, with the inner product of $L^2(B)$, and let $V_n$ be the orthogonal complement of $\mathcal P_{n-1}$ in $\mathcal P_n$.

\begin{lemma}\label{r3:lem-C1}
\begin{enumerate}[label=\textup{(\alph*)}]
\item For $P\in V_n$ and $x\in\overline B$, $|P(x)|^2\leq\frac{(n+1)(n+2)(2n+3)}{8\pi}\nrm P^2_{L^2(B)}$.
\item For $\ell\geq0$ and $s\geq1$, $\sup_{\overline B}|\nabla K_3\Psi_{\ell,s}|\leq1$ and $\sup_{\overline B}|K_3\Psi_{\ell,s}|\leq1$.
\item For $g\in\Gc$, the series $W=\sum g_{\ell,s}K_3\Psi_{\ell,s}$ converges in $C^1(\overline B)$; $W=0$ and $\nabla W=0$ on $\partial B$; $\Delta W=\iota(g)$ in $\mathcal D'(B)$; and $\iota(K_3g)=W$, $\iota(\partial_{x_1}K_3g)=\partial_{x_1}W$, $\iota(y\partial_yK_3g)=y\partial_yW$.
\end{enumerate}
\end{lemma}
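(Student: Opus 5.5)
For (a) I would use the reproducing kernel of $V_n$. For $x\in\overline B$, the evaluation $P\mapsto P(x)$ on the finite-dimensional space $V_n$ has norm $\sqrt{K_n(x,x)}$, where $K_n(x,x)=\sum_i|P_i(x)|^2$ for any $L^2(B)$-orthonormal basis $(P_i)$ of $V_n$. The kernel is invariant under $O(3)$ because $B$ and the degree filtration are. So $x\mapsto K_n(x,x)$ is radial, and it is also a polynomial, hence subharmonic-free of concern. I would show that its maximum over $\overline B$ is attained on $\partial B$.

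To get the value on $\partial B$, I would take the orthogonal basis of $V_n$ given by $Y_\ell\,|x|^\ell J^{\ell+1/2}_s(t)$ with $\ell+2s=n$. Here $Y_\ell$ runs over an orthonormal basis of spherical harmonics of degree $\ell$ on $S^2$, which has dimension $2\ell+1$. This is the non-axisymmetric version of Lemma~\ref{r3:lem-embed}(a), and the same Jacobi computation (Lemma~\ref{r3:lem-jacobi}(a)) gives squared norm $1/(2\ell+4s+3)=1/(2n+3)$ times $\int_{S^2}|Y_\ell|^2=1$.

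At $|x|=1$ one has $J_s(1)=1$, and the addition theorem gives $\sum|Y_\ell|^2=(2\ell+1)/(4\pi)$. Therefore
\[
K_n(x,x)=\sum_{\ell\equiv n\,(2),\ \ell\le n}\frac{(2\ell+1)(2n+3)}{4\pi}=\frac{(n+1)(n+2)(2n+3)}{8\pi},
\]
using $\sum_{\ell\equiv n\,(2),\,\ell\le n}(2\ell+1)=(n+1)(n+2)/2$.

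For the interior, I would bound $|J^\beta_s(t)|\le1$ for $t\in[0,1]$. This follows from the disk-polynomial unitarity argument of Lemma~\ref{lem:orth}(c) applied to the relevant Jacobi parameters, or directly from the classical bound $\max_{[-1,1]}|P^{(0,\beta)}_s|=P^{(0,\beta)}_s(1)$, valid since $\beta\ge0\ge-1/2$. With that bound each term is largest at $r=1$. This interior step is the one I expect to need care.

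For (b) I would write $K_3\Psi_{\ell,s}$ and its partial derivatives as finite combinations of $\Psi$'s, using \eqref{r3:eq-K} and Lemma~\ref{r3:lem-K}(d). The gradient is determined by $\partial_{x_1}$ and $y\partial_y=E-x_1\partial_{x_1}$. I would rather use the derivative formula $\nabla(H_\ell k)=k\nabla H_\ell+2k'H_\ell x$ together with Lemma~\ref{r3:lem-K}(a).

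The sup norms then follow from $|H_\ell|\le1$, $|\nabla H_\ell|\le \ell$ (Markov-type bound for zonal harmonics, via \eqref{r3:eq-Hrec}), and $|J_\varsigma|\le1$. The coefficients are of size $1/d^2$, so the crude bounds $(\ell+1)/(d(d+2))$ and $2\cdot 2/(4(d+1))$ are at most $1$, since $d\ge5/2$.

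For (c), uniform convergence in $C^1(\overline B)$ follows from (b), because $\sum|g_{\ell,s}|\le\|g\|_{\mathcal L}$ (as $h_\ell\ge1$). Zero Cauchy data pass to the limit by Lemma~\ref{r3:lem-K}(c). For $\Delta W=\iota(g)$, I would pair with a test function $\varphi$ and use Lemma~\ref{r3:lem-K}(b) on partial sums together with $L^2$ convergence from Lemma~\ref{r3:lem-embed}(b). The identities for $\iota$ follow because $\mathcal L$-convergence implies $L^2$ convergence, the partial sums agree, and the formulas of Lemma~\ref{r3:lem-K}(d) commute with the limit. (Part (a) is then a spare tool for pointwise control and is not needed in (c).)
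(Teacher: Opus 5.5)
\textbf{The interior step of (a) fails, and your (b) depends on the same false bound.} Your boundary computation is right. The functions $Y_\ell\,r^\ell J^{\ell+1/2}_s(t)$, $\ell+2s=n$, form an orthogonal basis of $V_n$ with squared norms $1/(2n+3)$. At $|x|=1$ the kernel diagonal equals $\frac{(n+1)(n+2)(2n+3)}{8\pi}$.

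The interior bound $|J^\beta_s(t)|\le1$ on $[0,1]$ is false. Szeg\H{o}'s theorem gives $\max_{[-1,1]}|P^{(\alpha,\beta)}_s|=\binom{s+q}{s}$ with $q=\max(\alpha,\beta)\ge-\frac12$, attained at the endpoint of the \emph{larger} parameter. For $(\alpha,\beta)=(0,\ell+\frac12)$ that endpoint is $t=0$, where $|J^\beta_s(0)|=\binom{s+\beta}{s}>1$. Lemma~\ref{lem:orth}(c) does not help: it bounds $r^nP^{(0,n)}_s(2r^2-1)$, which carries the factor $r^n$ and has an integer parameter.

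Concretely, $J^{1/2}_1(t)=\frac52t-\frac32$. For $n=2$ the $\ell=0$ term of $K_2(x,x)$ is $\frac{7}{4\pi}\cdot\frac94$ at $x=0$, larger than its value $\frac{7}{4\pi}$ on $\partial B$. So no term-by-term comparison can put the maximum on the sphere. The full sum does obey the bound: $K_2(x,x)=\frac{7}{4\pi}\big(\frac{45}{4}t^2-\frac{15}{2}t+\frac94\big)$, which is at most $\frac{21}{2\pi}$. But this holds only for a global reason. The same example decreases near the origin, so $K_n(x,x)$ is not subharmonic and no maximum principle is available.

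The missing idea is the paper's lift to $S^4\subset\R^3\times\R^2$. Under $(x,\phi)\mapsto(x,\sqrt{1-|x|^2}(\cos\phi,\sin\phi))$ one has $d\varsigma=dx\,d\phi$, so $\int_{S^4}f(x)\,d\varsigma=2\pi\int_Bf\,dx$. For $P\in V_n$, the function $Q(x,z)=P(x)$ restricts to a spherical harmonic of degree $n$ on $S^4$. Indeed, its components of degree $k<n$ are $SO(2)$-invariant, hence of the form $P_k(x)$ with $P_k\in\mathcal P_k$, hence orthogonal to $P$. The addition theorem on $S^4$, with $\dim=\frac{(n+1)(n+2)(2n+3)}{6}$ and $|S^4|=\frac{8\pi^2}{3}$, then gives the bound at every point of $\overline B$ at once.

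Your (b) uses $|J_\varsigma|\le1$ again, for $J^{\ell+1/2}$ and $J^{\ell+3/2}$ near $t=0$, so as written it does not prove $|\nabla K_3\Psi_{\ell,s}|\le1$ or $|K_3\Psi_{\ell,s}|\le1$. Repairing it pointwise would need weighted sup bounds for half-integer Jacobi polynomials, with powers of $r$ compensating the growth $\binom{\varsigma+\beta}{\varsigma}$; you do not supply these. In the paper, (a) is not a spare tool but the engine of (b):
\begin{itemize}
\item $\Xi=K_3\Psi_{\ell,s}$, with $n=\ell+2s$, is orthogonal to $\mathcal P_{n-3}$ and vanishes on $\partial B$.
\item Hence each $\partial_i\Xi$ is orthogonal to $\mathcal P_{n-2}$ and lies in $V_{n-1}\oplus V_n\oplus V_{n+1}$; the $V_n$ component vanishes by parity.
\item Part (a) and Cauchy--Schwarz give $|\nabla\Xi|^2\le(\gamma_{n-1}+\gamma_{n+1})\|\nabla\Xi\|^2_{L^2(B)}$, with $\gamma_m=\frac{(m+1)(m+2)(2m+3)}{8\pi}$.
\item The exact identity $\|\nabla\Xi\|^2_{L^2(B)}=-\int_B\Xi\,\Psi_{\ell,s}=\|\Psi_{\ell,s}\|^2_{L^2(B)}/(2d(d+2))$ turns this into a bound $\le1$.
\item Finally $|\Xi|\le1$ follows by integrating along radii from $\partial B$.
\end{itemize}
Your part (c) matches the paper once (b) is available.
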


\begin{proof}
(a) Let $S^4\subset\R^5=\R^3\times\R^2$, with points $(x,z)$ and surface measure $d\varsigma$. Parametrising $S^4$ by $(x,\phi)\mapsto(x,\sqrt{1-|x|^2}(\cos\phi,\sin\phi))$ one finds $d\varsigma=dx\,d\phi$, so $\int_{S^4}f(x)\,d\varsigma=2\pi\int_Bf\,dx$ for integrable $f$. Let $Q(x,z)=P(x)$. Its restriction to $S^4$ is a sum $\sum_{k\leq n}Y_k$ of spherical harmonics of degree $k$ \cite[Ch.~IV]{SW71}. Since $Q$ is invariant under rotations of $z$ and the projections onto the spaces of spherical harmonics commute with rotations, each $Y_k$ is the restriction of a harmonic homogeneous polynomial invariant under these rotations, that is, of a polynomial in $x$ and $|z|^2$; as $|z|^2=1-|x|^2$ on $S^4$, $Y_k(x,z)=P_k(x)$ for some $P_k\in\mathcal P_k$. For $k<n$, $\nrm{Y_k}^2_{L^2(S^4)}=\int_{S^4}QY_k\,d\varsigma=2\pi\int_BPP_k\,dx=0$. Hence $Q|_{S^4}=Y_n$ is a spherical harmonic of degree $n$. For an orthonormal basis $(E_i)$ of the spherical harmonics of degree $n$ on $S^4$, $\sum_i|E_i|^2$ is invariant under rotations, hence equal to $N_n/|S^4|$, where $N_n=\binom{n+4}4-\binom{n+2}4=(n+1)(n+2)(2n+3)/6$ and $|S^4|=8\pi^2/3$. By the Cauchy--Schwarz inequality, $|P(x)|^2=|Y_n(x,z)|^2\leq\frac{N_n}{|S^4|}\nrm{Y_n}^2_{L^2(S^4)}=\frac{2\pi N_n}{|S^4|}\nrm P^2_{L^2(B)}$.

(b) Let $n=\ell+2s$ and $\Xi=K_3\Psi_{\ell,s}$, a polynomial of degree $n+2$ whose homogeneous components have the parity of $n$. By Lemma~\ref{r3:lem-embed}(a), $\Xi$ is orthogonal to $\mathcal P_{n-3}$. For $\varphi\in\mathcal P_{n-2}$ and $i=1,2,3$, $\int_B\partial_i\Xi\,\varphi=-\int_B\Xi\,\partial_i\varphi=0$, because $\Xi=0$ on $\partial B$. Hence $\partial_i\Xi\in V_{n-1}\oplus V_n\oplus V_{n+1}$. The reflection $x\mapsto-x$ preserves each $V_m$, and an element of $V_m$ of parity $(-1)^{m+1}$ has no homogeneous part of degree $m$, hence lies in $\mathcal P_{m-1}\cap V_m=0$; since $\partial_i\Xi$ has parity $(-1)^{n+1}$ and the orthogonal projections onto the $V_m$ commute with the reflection, the component of $\partial_i\Xi$ in $V_n$ vanishes, and $\partial_i\Xi=P_i+Q_i$ with $P_i\in V_{n-1}$, $Q_i\in V_{n+1}$. By (a) and the Cauchy--Schwarz inequality, with $\gamma_m=(m+1)(m+2)(2m+3)/(8\pi)$,
\[
|\nabla\Xi(x)|^2\leq(\gamma_{n-1}+\gamma_{n+1})\sum_i\big(\nrm{P_i}^2+\nrm{Q_i}^2\big)=(\gamma_{n-1}+\gamma_{n+1})\nrm{\nabla\Xi}^2_{L^2(B)} .
\]
By (c) of Lemma~\ref{r3:lem-K}, (b) of the same lemma and orthogonality, $\nrm{\nabla\Xi}^2_{L^2}=-\int_B\Xi\,\Psi_{\ell,s}=\nrm{\Psi_{\ell,s}}^2_{L^2}/(2d(d+2))$ with $d=n+\frac12$. Since $\gamma_{n-1}+\gamma_{n+1}=(2n+3)(n^2+3n+5)/(4\pi)$ and $\nrm{\Psi_{\ell,s}}^2_{L^2}=4\pi/((2\ell+1)(2n+3))$,
\[
|\nabla\Xi(x)|^2\leq\frac{2(n^2+3n+5)}{(2\ell+1)(2n+1)(2n+5)}\leq1,
\]
because $(4n^2+12n+5)-(2n^2+6n+10)=2n^2+6n-5>0$ for $n\geq1$. As $\Xi=0$ on $\partial B$, integration along a radius gives $|\Xi|\leq1$ on $\overline B$.

(c) By (b), $\sum|g_{\ell,s}|\leq\nrm g_{\mathcal L}$ bounds the series in $C^1(\overline B)$. The partial sums $W_N=K_3g_N$ are polynomials with $\Delta W_N=g_N$ and vanishing Cauchy data (Lemma~\ref{r3:lem-K}); since $g_N\to\iota(g)$ in $L^2(B)$ and $W_N\to W$ in $C^1(\overline B)$, the Cauchy data and the distributional equation pass to the limit. The coefficient series $K_3g$, $\partial_{x_1}K_3g$ and $y\partial_yK_3g$ converge in $\mathcal L$ (Lemma~\ref{r3:lem-K}(e)), hence in $L^2(B)$, to the same limits.
\end{proof}

\subsection{The quartic equation}\label{r3:sec-quartic}

Let
\[
b=7626062690497999\,2^{-46},\qquad \sigma=8b,
\]
a fixed dyadic rational independent of the varying coefficient $c_1$. For $x=(g,c)\in\mathcal X$ let $W=K_3g$, $W_1=\partial_{x_1}K_3g$ and $W_2=y\partial_yK_3g$, elements of $\mathcal L$ by Lemma~\ref{r3:lem-K}(e), and define
\begin{equation}\label{r3:eq-F}
F(g,c)=q_c\,g+(\partial_{x_1}q_c)\,W_1+\Big(\frac{\partial_yq_c}y\Big)W_2+q_c|p_c|^2(1+W),\qquad \Fc=F/b,
\end{equation}
where the multipliers are given by \eqref{r3:eq-qseries}, \eqref{r3:eq-p2}, the products are those of Lemma~\ref{r3:lem-mult}, and $1=\Psi_{0,0}$.

\begin{lemma}\label{r3:lem-F}
$\Fc$ is a bounded polynomial map of degree four from $\mathcal X$ to $\mathcal Y$. If $\Fc(g,c)=0$ and $\re\psi_c'>0$ on $\{|w|<R_0\}$ for some $1<R_0<R$, then $U=1+\iota(K_3g)\in C^1(\overline B)$ is axisymmetric, satisfies \eqref{r3:eq-weakU}, and $U=1$, $\nabla U=0$ on $\partial B$.
\end{lemma}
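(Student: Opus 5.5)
We prove the two assertions separately: first boundedness and the polynomial structure of $\Fc$, then the passage from the coefficient identity $\Fc(g,c)=0$ to the weak equation \eqref{r3:eq-weakU}.

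\emph{Boundedness and degree.} Each multiplier in \eqref{r3:eq-F} depends on $c$ in a controlled way.
\begin{itemize}
\item By \eqref{r3:eq-qseries}, $q_c$, $\partial_{x_1}q_c$ and $y^{-1}\partial_yq_c$ are linear in $c$, and are series in the terms $r^{n+2k}U_n(\xi)$ and $r^{n+2k}C^{(2)}_n(\xi)$.
\item By Lemma~\ref{r3:lem-mult}(b), $\tau$ of these terms equals their Chebyshev norm $R^n$. This gives $\tau(q_c)\leq\sum_j R^{j-1}|c_j|$, and similar bounds, up to constants like $2/R^2$ and $1/R$, for the other two multipliers. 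All are dominated by $\nrm c_{\mathcal E}$.
\item By \eqref{r3:eq-p2}, $|p_c|^2$ is quadratic in $c$. Lemma~\ref{r3:lem-mult}(c) and (b) give $\tau(q_c|p_c|^2)\leq\kappa_R\nrm{q_c}_{\mathrm{ch}}\nrm{p_c}^2_{\mathrm{ch}}\lesssim\nrm c_{\mathcal E}^3$.
\end{itemize}
Lemma~\ref{r3:lem-K}(e) puts $W,W_1,W_2$ in $\mathcal L$ with norms bounded by $\nrm g_{\mathcal L}$. Lemma~\ref{r3:lem-mult}(a) then bounds every product in $\mathcal L$. The inclusion $\mathcal L^{\mathrm{ev}}\to\mathcal Y$ has norm at most one, so it carries these bounds into $\mathcal Y$. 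The terms have degrees $2,2,2$ and $3$ or $4$ in $(g,c)$, so $\Fc$ is a bounded polynomial map of degree four. Parity is preserved because every multiplier is even in $x_1$, except $\partial_{x_1}q_c$, which is odd; but $W_1$ is also odd, since Lemma~\ref{r3:lem-K}(d) shifts $\ell$ by one, so the product $(\partial_{x_1}q_c)W_1$ is even. Each product is a sum of $\tau$-bounded multilinear maps, so it is continuous.

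\emph{From $\Fc=0$ to the weak equation.} Fix $R_0$ and apply Lemma~\ref{r3:lem-lift}(a); it makes the multipliers real analytic on $B_{R_0}$. Let $U=1+W$ with $W=\iota(K_3g)$. By Lemma~\ref{r3:lem-C1}(c), $U\in C^1(\overline B)$, $U=1$ and $\nabla U=0$ on $\partial B$, $\Delta W=\iota(g)$ in $\mathcal D'(B)$, and $\iota(W_1)=\partial_{x_1}W$, $\iota(W_2)=y\partial_yW$. $U$ is axisymmetric because every $\Psi_{\ell,s}$ is. Apply $\iota$ to $\Fc=0$; it is injective on $\mathcal Y$ and compatible with multiplication by Lemma~\ref{r3:lem-mult}(d). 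This gives, in $L^2(B)$,
\[
q_c\Delta W+\partial_{x_1}q_c\,\partial_{x_1}W+\frac{\partial_yq_c}{y}\,y\partial_yW+q_c|p_c|^2U=0.
\]
Since $q_c$ is axisymmetric, $\nabla q_c\cdot\nabla W=\partial_{x_1}q_c\,\partial_{x_1}W+\partial_yq_c\,\partial_yW$. The first three terms therefore equal $\operatorname{div}(q_c\nabla U)$ in the distributional sense. To justify this, take $\varphi\in C^\infty_c(B)$ and test $\Delta W$ against $q_c\varphi$, which is smooth and compactly supported; then integrate by parts once using $W\in C^1$. This yields \eqref{r3:eq-weakU}.

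\emph{Main obstacle.} The delicate point is the last step. Two facts are needed, and the proof must invoke both explicitly rather than treat them as automatic. First, the identity $\Fc=0$ holds in the coefficient space $\mathcal Y$, whose harmonic layer is measured through boundary traces, so converting it to a pointwise or $L^2$ identity needs both the injectivity of $\iota$ on $\mathcal Y$ and the multiplicativity in Lemma~\ref{r3:lem-mult}(d). Second, $y^{-1}\partial_yq_c$ must be smooth across the axis, which is exactly what Lemma~\ref{r3:lem-lift}(a) provides.
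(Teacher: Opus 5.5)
Your proposal follows the paper's proof essentially step by step. Boundedness comes from Chebyshev and $\tau$ bounds on the multipliers together with Lemma~\ref{r3:lem-mult}(a),(b) and Lemma~\ref{r3:lem-K}(e). The weak equation comes from applying $\iota$ with Lemma~\ref{r3:lem-mult}(d) and Lemma~\ref{r3:lem-C1}(c), and then testing $\Delta W=\iota(g)$ against $q_c\varphi$.

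There is one computational slip to correct. $\nrm{r^nU_n(\xi)}_{\mathrm{ch}}=\sum_{|\nu|\leq n,\,\nu\equiv n\,(\mathrm{mod}\ 2)}R^{|\nu|}$, not $R^n$, and $\nrm{r^nC^{(2)}_n}_{\mathrm{ch}}$ is of order $nR^n$. So for $\partial_{x_1}q_c$, whose coefficients $jc_j$ already use up the factor $j$ in the weight of $\mathcal E$, you need the uniform geometric bound $2R^{n+2}/(R^2-1)$, not something like $(n+1)R^n$. This gives the constants $2/(R^2-1)$ and $4R/(R^2-1)^2$ of \eqref{r3:eq-chnorms} rather than $2/R^2$ and $1/R$, and the conclusion that every multiplier is dominated by $\nrm c_{\mathcal E}$ stands.

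Finally, passing from $\Fc(g,c)=0$ to $\iota(\Fc(g,c))=0$ does not require injectivity of $\iota$. It only requires the compatibility of $\iota$ with the inclusion $\mathcal L^{\mathrm{ev}}\subset\mathcal Y$ (Lemma~\ref{r3:lem-embed}(b)).
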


\begin{proof}
By \eqref{r3:eq-qseries}, $\nrm{r^{j-1}U_{j-1}}_{\mathrm{ch}}=\sum_{|\nu|\leq j-1,\,\nu\ \mathrm{even}}R^{|\nu|}$, which is at most $jR^{j-1}$ and at most $2R^{j+1}/(R^2-1)$; similarly $\nrm{r^mC^{(2)}_m}_{\mathrm{ch}}=\sum_{p+p'=m}(p+1)(p'+1)R^{|p-p'|}\leq2(m+1)R^{m+4}/(R^2-1)^2$. Hence
\begin{equation}\label{r3:eq-chnorms}
\begin{gathered}
\nrm{q_c}_{\mathrm{ch}}\leq\frac{\nrm c_{\mathcal E}}R,\qquad\nrm{\partial_{x_1}q_c}_{\mathrm{ch}}\leq\frac{2\nrm c_{\mathcal E}}{R^2-1},\\
\Big\|\frac{\partial_yq_c}y\Big\|_{\mathrm{ch}}\leq\frac{4R\nrm c_{\mathcal E}}{(R^2-1)^2},\qquad\nrm{|p_c|^2}_{\mathrm{ch}}\leq\frac{\nrm c^2_{\mathcal E}}{R^2},
\end{gathered}
\end{equation}
the last by \eqref{r3:eq-p2}, Lemma~\ref{r3:lem-mult}(c) and the identity $\sum_a|p_a|R^a=R^{-1}\nrm c_{\mathcal E}$. Using parts (a) and (b) of Lemma~\ref{r3:lem-mult} and Lemma~\ref{r3:lem-K}(e), it follows that \eqref{r3:eq-F} is a sum of bounded multilinear maps of degree at most four. Their values are even in $x_1$, by the parities recorded after \eqref{r3:eq-p2}, so they lie in $\mathcal L^{\mathrm{ev}}$, which is contained in $\mathcal Y$.

Suppose $\Fc(g,c)=0$. By Lemma~\ref{r3:lem-C1}(c), $W=\iota(K_3g)\in C^1(\overline B)$ has vanishing Cauchy data and $\Delta W=\iota(g)$, and by Lemma~\ref{r3:lem-mult}(d) and Lemma~\ref{r3:lem-embed}(b),
\[
0=b\,\iota(\Fc(g,c))=q_c\,\iota(g)+\partial_{x_1}q_c\,\partial_{x_1}W+\frac{\partial_yq_c}y\,y\partial_yW+q_c|p_c|^2U\quad\text{a.e. in }B .
\]
Off the axis, $\nabla q_c\cdot\nabla U=\partial_{x_1}q_c\partial_{x_1}W+\partial_yq_c\partial_yW$. Let $\varphi\in C_c^\infty(B)$. As $q_c$ is smooth on $\overline B$ (Lemma~\ref{r3:lem-lift}(a)), $q_c\varphi\in C_c^\infty(B)$ and
\begin{align*}
\int_B\big(q_c\nabla U\cdot\nabla\varphi-q_c|p_c|^2U\varphi\big)&=\int_B\big(\nabla W\cdot\nabla(q_c\varphi)-\varphi\nabla q_c\cdot\nabla U-q_c|p_c|^2U\varphi\big)\\
&=-\int_B\varphi\,b\,\iota(\Fc(g,c))=0,
\end{align*}
using $\int\nabla W\cdot\nabla(q_c\varphi)=-\int\iota(g)q_c\varphi$.
\end{proof}

\subsubsection*{Nonlinear bounds}
Let $x^\circ=(g^\circ,c^\circ)\in\mathcal X$ and let $G_0\geq\nrm{g^\circ}_{\mathcal L}$, $C_0\geq\nrm{c^\circ}_{\mathcal E}$. Put
\begin{gather*}
L_2=\kappa_R\Big[\frac1R+\frac2{R^2-1}\cdot\frac{2R}7+\frac{4R}{(R^2-1)^2}\Big(\frac6{35}+\frac{2R^2}7\Big)\Big],\qquad L_3=\frac{\kappa_R}{R^3},\qquad L_4=\frac4{45}\cdot\frac{\kappa_R}{R^3},\\
c_2=\frac1b\Big[\frac{L_2+3L_4C_0^2}\sigma+\frac{3C_0(L_3+L_4G_0)}{\sigma^2}\Big],\qquad c_3=\frac1b\Big[\frac{L_3+L_4G_0}{\sigma^3}+\frac{3L_4C_0}{\sigma^2}\Big],\qquad c_4=\frac{L_4}{b\sigma^3}.
\end{gather*}

\begin{lemma}\label{r3:lem-nonlinear}
For $h\in\mathcal X$ with $\nrm h_{\mathcal X}\leq r$,
\begin{gather*}
\nrm{\Fc(x^\circ+h)-\Fc(x^\circ)-D\Fc(x^\circ)h}_{\mathcal Y}\leq c_2r^2+c_3r^3+c_4r^4,\\
\nrm{D\Fc(x^\circ+h)-D\Fc(x^\circ)}_{\mathcal X\to\mathcal Y}\leq2c_2r+3c_3r^2+4c_4r^3 .
\end{gather*}
\end{lemma}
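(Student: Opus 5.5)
Since $\Fc$ is a polynomial of degree four in $(g,c)$, the plan is to expand $\Fc(x^\circ+h)$ exactly, sort the terms by degree in $h=(\delta g,\eta)$, and bound each degree-$k$ part by a symmetric $k$-linear estimate. The map $c\mapsto q_c$, $\partial_{x_1}q_c$, $y^{-1}\partial_yq_c$ is linear, $c\mapsto|p_c|^2$ is quadratic, and $g\mapsto W,W_1,W_2$ is linear. So $F$ splits into:
\begin{itemize}
\item the bilinear terms $q_cg+(\partial_{x_1}q_c)W_1+(y^{-1}\partial_yq_c)W_2$, of degree $(1,1)$ in $(c,g)$;
\item the term $q_c|p_c|^2$, cubic in $c$;
\item the term $q_c|p_c|^2W$, of degree $(3,1)$.
\end{itemize}
The inputs are the multiplier norms \eqref{r3:eq-chnorms}, the bound $\tau(m)\le\kappa_R\nrm m_{\mathrm{ch}}$ of Lemma~\ref{r3:lem-mult}(b), the bound $\nrm{mf}_{\mathcal L}\le\tau(m)\nrm f_{\mathcal L}$ of Lemma~\ref{r3:lem-mult}(a), the submultiplicativity of Lemma~\ref{r3:lem-mult}(c), and the norms of Lemma~\ref{r3:lem-K}(e). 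Throughout we use $\nrm{\cdot}_{\mathcal Y}\le\nrm\cdot_{\mathcal L}$ on $\mathcal L^{\mathrm{ev}}$. We also use $\nrm{\delta g}_{\mathcal L}\le\nrm h_{\mathcal X}$ and $\nrm\eta_{\mathcal E}\le\nrm h_{\mathcal X}/\sigma$, together with $\nrm{g^\circ}\le G_0$ and $\nrm{c^\circ}\le C_0$.

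First I identify the constants. For the $(1,1)$ block, the term $q_cg$ contributes $\kappa_R R^{-1}$. The term $(\partial_{x_1}q_c)W_1$ contributes $\kappa_R\cdot\frac{2}{R^2-1}\cdot\frac{2R}7$. The term $(y^{-1}\partial_yq_c)W_2$ contributes $\kappa_R\frac{4R}{(R^2-1)^2}(\frac6{35}+\frac{2R^2}7)$. Together this gives $\nrm{\text{bilinear}(\eta,\delta g)}\le L_2\nrm\eta_{\mathcal E}\nrm{\delta g}_{\mathcal L}$. For the cubic shape term, I bound $q_c|p_c|^2$ as a product of Chebyshev norms and convert to $\tau$ once: $\kappa_R\cdot R^{-1}\cdot R^{-2}=L_3$ per unit of $\nrm c^3$. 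The factor $\Psi_{0,0}=1$ has $\mathcal L$-norm $h_0=1$. For the quartic term, inserting $\nrm{K_3}=4/45$ gives $L_4$ times $\nrm c^3\nrm g$.

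Second, I expand each block at $x^\circ+h$. The bilinear block has no remainder beyond its $(\eta,\delta g)$ term, which is bounded by $L_2r^2/\sigma$. The cubic block in $c$ gives second-order terms $3C_0\nrm\eta^2L_3$ (counting the trilinear symmetrisation crudely by $3$) and third-order terms $L_3\nrm\eta^3$. The $(3,1)$ block gives:
\begin{itemize}
\item second order: $3C_0^2\nrm\eta\nrm{\delta g}$ and $3C_0G_0\nrm\eta^2$;
\item third order: $3C_0\nrm\eta^2\nrm{\delta g}$ and $G_0\nrm\eta^3$;
\item fourth order: $\nrm\eta^3\nrm{\delta g}$;
\end{itemize}
each multiplied by $L_4$. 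Dividing by $b$ and substituting the scalings produces $c_2,c_3,c_4$ exactly as defined, so the first inequality follows.

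Third, the derivative estimate. Write $R_2(h)=\Fc(x^\circ+h)-\Fc(x^\circ)-D\Fc(x^\circ)h=\sum_{k=2}^4B_k(h,\dots,h)$ with symmetric $B_k$. Then $D\Fc(x^\circ+h)-D\Fc(x^\circ)=\sum_k kB_k(h,\dots,h,\cdot)$. The bounds above were derived from multilinear (not merely diagonal) estimates, so they hold for $\nrm{B_k(h_1,\dots,h_k)}\le c_k\prod\nrm{h_i}$. This gives $2c_2r+3c_3r^2+4c_4r^3$.

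The main obstacle is the bookkeeping. The symmetrisation constants must not exceed the factors of $3$ chosen, which means checking that each polarised monomial's coefficient is dominated by its coefficient in $c_k\prod(a_i+e_i)$, as in Lemma~\ref{lem:nonlinear}. It also has to be justified that products such as $|p_c|^2W$ are handled by the $\tau$-multiplier bound with $\kappa_R$ applied only once, to the full Chebyshev product rather than to each factor.
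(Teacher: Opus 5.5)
Your proposal is correct and is essentially the paper's proof. It uses the same splitting $b\Fc(g,c)=\mathsf b(c,g)+\mathsf t(c,c,c)(1+K_3g)$, the same constants $L_2,L_3,L_4$ (with $\kappa_R$ applied once to the Chebyshev norm of the whole multiplier), and the same list of degree-$\geq2$ terms; your factor $3$ in $3\mathsf t(c^\circ,\eta,\eta)$ is actually the exact binomial coefficient, not a crude count. The derivative bound is also the same: the paper phrases it as ``differentiating a term of degree $m$ costs a factor $m/r$'', which is your $kB_k(h,\dots,h,\cdot)$ argument, and the polarisation check you flag is immediate because each slot is bounded by $\nrm{\gamma_i}_{\mathcal L}\leq\nrm{h_i}_{\mathcal X}$ or $\sigma\nrm{\eta_i}_{\mathcal E}\leq\nrm{h_i}_{\mathcal X}$ and the averaging weights sum to one.
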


\begin{proof}
Let $\mathsf b(c,g)=q_cg+(\partial_{x_1}q_c)W_1(g)+(y^{-1}\partial_yq_c)W_2(g)$ and let $\mathsf t(c^1,c^2,c^3)$ be the symmetrisation of $q_{c^1}\re(p_{c^2}\overline{p_{c^3}})$, so that $b\Fc(g,c)=\mathsf b(c,g)+\mathsf t(c,c,c)(1+K_3g)$. By Lemma~\ref{r3:lem-mult}, \eqref{r3:eq-chnorms} and Lemma~\ref{r3:lem-K}(e),
\[
\nrm{\mathsf b(c,g)}_{\mathcal Y}\leq L_2\nrm c_{\mathcal E}\nrm g_{\mathcal L},\qquad \nrm{\mathsf t(c^1,c^2,c^3)f}_{\mathcal L}\leq L_3\textstyle\prod_i\nrm{c^i}_{\mathcal E}\,\nrm f_{\mathcal L},
\]
and $\nrm{K_3\gamma}_{\mathcal L}\leq\frac4{45}\nrm\gamma_{\mathcal L}$. Write $h=(\gamma,\eta)$; then $\nrm\gamma_{\mathcal L}\leq r$ and $\nrm\eta_{\mathcal E}\leq r/\sigma$. The terms of degree at least two in $h$ of $b\Fc(x^\circ+h)$ are $\mathsf b(\eta,\gamma)$, $3\mathsf t(c^\circ,\eta,\eta)+\mathsf t(\eta,\eta,\eta)$, and
\[
3\mathsf t(c^\circ,c^\circ,\eta)K_3\gamma+3\mathsf t(c^\circ,\eta,\eta)K_3g^\circ+3\mathsf t(c^\circ,\eta,\eta)K_3\gamma+\mathsf t(\eta,\eta,\eta)K_3g^\circ+\mathsf t(\eta,\eta,\eta)K_3\gamma .
\]
Their norms are at most $L_2\frac r\sigma r$, $3L_3C_0\frac{r^2}{\sigma^2}+L_3\frac{r^3}{\sigma^3}$, and $3L_4C_0^2\frac r\sigma r+3L_4C_0G_0\frac{r^2}{\sigma^2}+3L_4C_0\frac{r^2}{\sigma^2}r+L_4G_0\frac{r^3}{\sigma^3}+L_4\frac{r^3}{\sigma^3}r$, and collecting powers of $r$ gives the first bound. Each of these terms is the restriction to the diagonal of a multilinear map bounded by a product in which every factor $\eta$ contributes $r/\sigma$ and every factor $\gamma$ contributes $r$. Differentiating in the direction $k$ with $\nrm k_{\mathcal X}\leq1$ replaces one factor by a component of $k$, of norm at most $1/\sigma$, respectively $1$; hence the derivative of a term of degree $m$ is bounded by $m/r$ times the bound of the term, which gives the second estimate.
\end{proof}

\begin{proposition}\label{r3:prop-NK}
Let $A\colon\mathcal Y\to\mathcal X$ be bounded, linear and injective, and suppose that $\nrm{A\Fc(x^\circ)}_{\mathcal X}\leq Y$, $\nrm{I-AD\Fc(x^\circ)}_{\mathcal X\to\mathcal X}\leq Z$, $\nrm A\leq\mathsf a$, and that $r>0$ satisfies
\begin{equation}\label{r3:eq-radii}
Y+(Z-1)r+\mathsf a\,(c_2r^2+c_3r^3+c_4r^4)<0,\qquad Z+\mathsf a\,(2c_2r+3c_3r^2+4c_4r^3)<1 .
\end{equation}
Then $\Fc$ has exactly one zero in the closed ball $\overline B_{\mathcal X}(x^\circ,r)$.
\end{proposition}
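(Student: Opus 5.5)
The plan is the standard contraction argument, as in the proofs of Propositions~\ref{prop:NK} and~\ref{prop:NK6}. The one change is that the nonlinear remainder is controlled through Lemma~\ref{r3:lem-nonlinear}, not through explicit symmetric multilinear maps. We set $\Nc(x)=x-A\Fc(x)$ on $\mathcal X$ and show that $\Nc$ maps the closed ball $\overline B_{\mathcal X}(x^\circ,r)$ into itself and is a contraction there. Since $A$ is injective, the fixed points of $\Nc$ are exactly the zeros of $\Fc$, so the contraction mapping principle gives existence and uniqueness in the ball.

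\emph{Self-map.} For $\nrm h_{\mathcal X}\leq r$ we write
\[
\Nc(x^\circ+h)-x^\circ=-A\Fc(x^\circ)+(I-AD\Fc(x^\circ))h-A\big(\Fc(x^\circ+h)-\Fc(x^\circ)-D\Fc(x^\circ)h\big).
\]
We bound the first term by $Y$ and the second by $Zr$. For the third we use the first estimate of Lemma~\ref{r3:lem-nonlinear} together with $\nrm A\leq\mathsf a$, which gives at most $\mathsf a(c_2r^2+c_3r^3+c_4r^4)$. The first inequality in \eqref{r3:eq-radii} then says the total is strictly less than $r$.

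\emph{Contraction.} For $x_1,x_2$ in the ball, write
\[
\Nc(x_1)-\Nc(x_2)=\int_0^1\big(I-AD\Fc(x_2+\varsigma(x_1-x_2))\big)(x_1-x_2)\,d\varsigma .
\]
This is valid because $\Fc$ is a bounded polynomial map (Lemma~\ref{r3:lem-F}), hence Fréchet differentiable. The ball is convex, so every point $x^\circ+h$ on the segment has $\nrm h_{\mathcal X}\leq r$. At such a point we split
\[
I-AD\Fc(x^\circ+h)=(I-AD\Fc(x^\circ))-A\big(D\Fc(x^\circ+h)-D\Fc(x^\circ)\big).
\]
By the second estimate of Lemma~\ref{r3:lem-nonlinear}, its norm is at most $Z+\mathsf a(2c_2r+3c_3r^2+4c_4r^3)$, which is $<1$ by the second inequality in \eqref{r3:eq-radii}. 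So $\Nc$ is a strict contraction on the complete metric space $\overline B_{\mathcal X}(x^\circ,r)$.

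There is no real obstacle: the analytic work is already contained in Lemma~\ref{r3:lem-nonlinear}. The only point to check is that its bounds are used at every point of the segment. That lemma is stated for arbitrary $h$ with $\nrm h_{\mathcal X}\leq r$, so this holds, and the same convexity remark is what turns the derivative bound into a Lipschitz constant.
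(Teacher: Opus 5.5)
Your proof is correct and follows the paper's argument. Both use the fixed-point map $\Nc(x)=x-A\Fc(x)$ and the same three-term decomposition for the self-map estimate. Both get the contraction from the derivative bound of Lemma~\ref{r3:lem-nonlinear} on the convex ball, and both use injectivity of $A$ to identify fixed points with zeros of $\Fc$. The only difference is that you write out the mean-value integral explicitly, where the paper simply bounds $D\Nc$ on the ball.
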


\begin{proof}
Let $\Nc(x)=x-A\Fc(x)$. For $\nrm h_{\mathcal X}\leq r$, $\Nc(x^\circ+h)-x^\circ=-A\Fc(x^\circ)+(I-AD\Fc(x^\circ))h-A\big(\Fc(x^\circ+h)-\Fc(x^\circ)-D\Fc(x^\circ)h\big)$, whose norm is less than $r$ by Lemma~\ref{r3:lem-nonlinear} and the first inequality. On the convex ball, $D\Nc(x^\circ+h)=(I-AD\Fc(x^\circ))-A(D\Fc(x^\circ+h)-D\Fc(x^\circ))$ has norm less than one by the second inequality, so $\Nc$ is a contraction of the ball into itself. Its unique fixed point is the unique zero of $\Fc$ in the ball, since $A$ is injective.
\end{proof}

\subsection{The approximate inverse}\label{r3:sec-inverse}

From now on $M=180$ and $S=80$. The exact dyadic centre $x^\circ=(g^\circ,c^\circ)$ consists of $7280$ field coefficients with even $\ell\leq180$ and $1\leq s\leq80$, and $91$ odd shape coefficients $c_j^\circ$ with $j\leq181$. These $7371$ coefficients are stored in \url{anc/r3_convex/candidate.json}; their SHA-256 is recorded in the ancillary README. The centre is not assumed to be an exact zero. Write $q^\circ=q_{c^\circ}$, $p^\circ=\sum_ap_aw^a$ with $p_a=(a+1)c^\circ_{a+1}$, and $W^\circ=K_3g^\circ$.

\subsubsection*{Finite and tail coordinates}
The \emph{finite inputs} are the $7280$ field coordinates $(\ell,s)$ with $\ell\leq M$ even and $1\leq s\leq S$, and the $91$ shape coordinates $c_j$ with $j\leq M+1$. The \emph{finite rows} are the same $7280$ field rows $(\ell,s)$ and the $91$ harmonic rows $\mathcal T_n$, $n\leq M$ even. All other inputs and rows are \emph{tail} inputs and rows; tail rows are \emph{harmonic} ($\mathcal T_n$, $n\geq182$) or \emph{nonharmonic} ($(\ell,s)$ with $s\geq1$ and $\ell\geq182$ or $s\geq81$). Accordingly $\mathcal X=\mathcal X_f\oplus\mathcal X_t$ and $\mathcal Y=\mathcal Y_f\oplus\mathcal Y^H_t\oplus\mathcal Y^N_t$. On the shape tail and on the harmonic rows we use normalised coordinates
\[
\hat a_n=b(n+1)R^{n+1}c_{n+1},\qquad\hat y_n=R^ny_n\qquad(n\ \text{even}),
\]
so that the shape part of $\nrm\cdot_{\mathcal X}$ is $8\sum|\hat a_n|$ and the harmonic part of $\nrm\cdot_{\mathcal Y}$ is $\sum|\hat y_n|$.

\subsubsection*{The principal part on high shape modes}
For odd $j$, the derivative of $b^{-1}q_c|p_c|^2$ at $c^\circ$ in the direction $e_j$ contains the \emph{principal polynomial} $\Pi_j=\frac2b\,q^\circ\re(\overline{p^\circ}\,jw^{j-1})$, in which $q$ is frozen. On $\partial B$ we have $w=e^{i\theta}$, $\xi=\cos\theta$, $q^\circ=\sum_kq_ke^{ik\theta}$ with $q_k=\sum_{i>|k|}c^\circ_i$ ($k$ even, $i$ odd), and therefore
\[
\Pi_j|_{\partial B}=\frac{2j}b\sum_{a,k}p_aq_k\,T_{|j-1-a+k|}(\xi) .
\]
Let $\mathsf Me_j=\frac{2j}b\sum_{a,k}p_aq_k\,\mathcal T_{|j-1-a+k|}\in\mathcal Y$ be the harmonic element with these boundary values, and define the symbol and its defect
\[
s_\delta=\frac1{b^2}\sum_{k-a=2\delta}p_aq_kR^{2\delta}\quad(\delta\in\Z),\qquad \eps=\sum_\delta|s_\delta-\delta_{\delta0}| .
\]

\begin{lemma}\label{r3:lem-principal}
Let $n\geq182$ be even. In normalised coordinates, $\mathsf Me_{n+1}/(b(n+1)R^{n+1})$ has the entry $\frac2R\,s_\delta R^{|n+2\delta|-(n+2\delta)}$ in the harmonic row $|n+2\delta|$, summed over $\delta$. Only $-180\leq\delta\leq90$ occur, so the rows $|n+2\delta|\leq M$ occur only for $n\leq540$, and $n+2\delta\geq-178$. Let $T$ be the part of $\mathsf M$ from the shape tail to the harmonic tail rows and $\mathcal B$ the part from the shape tail to the finite harmonic rows. Then, on $\ell^1$ of the normalised coordinates,
\[
T=\frac2R(I+E),\qquad (E\hat a)_m=\sum_{n\geq182}(s_{(m-n)/2}-\delta_{mn})\hat a_n\quad(m\geq182),\qquad\nrm E\leq\eps .
\]
If $\eps<1$, then $T$ is invertible, $T^{-1}=\frac R2\sum_{k\geq0}(-E)^k$, $\nrm{T^{-1}}\leq R/(2(1-\eps))$, and $\big\|T^{-1}\hat y-\frac R2\sum_{k=0}^L(-E)^k\hat y\big\|_1\leq\frac{R\eps^{L+1}}{2(1-\eps)}\nrm{\hat y}_1$ for $L\geq0$.
\end{lemma}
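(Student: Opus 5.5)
The proof is a direct computation of the boundary trace followed by a Neumann series. I would begin by making explicit the product $p_aq_k\,T_{|j-1-a+k|}$ for $j=n+1$. With $k-a=2\delta$, the index is $|n+2\delta|$, and the coefficient of $\mathcal T_{|n+2\delta|}$ in $\mathsf Me_{n+1}$ is $\frac{2(n+1)}b\sum_{k-a=2\delta}p_aq_k$. In normalised coordinates the input $e_{n+1}$ has weight $b(n+1)R^{n+1}$ and the row $|n+2\delta|$ has weight $R^{|n+2\delta|}$. Dividing and inserting $b^2s_\delta R^{-2\delta}$ for the sum gives the entry
\[
\frac{2}{b^2R}\,b^2s_\delta R^{-2\delta}R^{|n+2\delta|-n}=\frac2R\,s_\delta R^{|n+2\delta|-(n+2\delta)} .
\]
The range of $\delta$ comes from the supports. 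Here $a\in\{0,2,\ldots,180\}$ since $p^\circ$ has degree $180$, and $q_k\neq0$ only for $|k|<181$, i.e.\ $|k|\leq180$. Hence $2\delta=k-a\in[-360,180]$. Finite rows $|n+2\delta|\leq180$ then force $n\leq180-2\delta\leq540$, and $n+2\delta\geq182-360=-178$.

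For $n,m\geq182$ I would identify the tail-to-tail block. The target row $m=|n+2\delta|$ with $m\geq182$ cannot arise from $n+2\delta<0$, because $n+2\delta\geq-178$. So $m=n+2\delta$, the exponent factor is $R^0=1$, and $\delta=(m-n)/2$. This gives $T=\frac2R(I+E)$ with $E$ exactly as stated. Its operator norm on $\ell^1$ is bounded via Lemma~\ref{lem:columns} by the column sums. Column $n$ is at most $\sum_\delta|s_\delta-\delta_{\delta0}|=\eps$, since each $\delta$ contributes at most once.

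With $\eps<1$, the Neumann series gives $(I+E)^{-1}=\sum_k(-E)^k$ with norm at most $1/(1-\eps)$. From this follow the formula for $T^{-1}$ and the bound $R/(2(1-\eps))$. The truncation error is the tail $\frac R2\sum_{k>L}(-E)^k$, which is bounded by $\frac R2\eps^{L+1}/(1-\eps)$.

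The only delicate point is the bookkeeping with signs and absolute values. I would have to confirm that the negative-index reflections $|n+2\delta|$ never land in the tail rows, and that the row $0$ is handled (here $\mathcal T_0$ counts with weight $1$). Both follow from the bound $n+2\delta\geq-178$.
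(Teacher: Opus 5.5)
Your proposal is correct and follows essentially the same route as the paper. Both compute the trace entry with $k-a=2\delta$ and the normalising weights $b(n+1)R^{n+1}$ and $R^{|n+2\delta|}$, use the support bound $-180\le\delta\le90$ to show that tail rows are reached only with $m=n+2\delta$, bound $\nrm E\le\eps$ by column sums (the paper phrases this as the compression of a convolution operator), and conclude with the Neumann series.
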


\begin{proof}
The entry of $\mathsf Me_{n+1}$ in the row $\mathcal T_{|n+2\delta|}$ is $\frac{2(n+1)}b\sum_{k-a=2\delta}p_aq_k$; multiplying by $R^{|n+2\delta|}$ and dividing by $b(n+1)R^{n+1}$ gives the stated value, since $R^{2\delta}=R^{(n+2\delta)-n}$. As $0\leq a\leq180$ and $|k|\leq180$, $-180\leq\delta\leq90$. Hence rows $m\geq182$ are reached only with $m=n+2\delta$, and $T$ is $\frac2R$ times the compression to indices $\geq182$ of the Laurent operator with symbol $(s_\delta)$; the compression of a convolution has $\ell^1$ norm at most the $\ell^1$ norm of its kernel. The remaining statements are the Neumann series.
\end{proof}

\subsubsection*{The operator $A$}
Let $J^{\mathrm{fl}}$ be the frozen $7371\times7371$ binary64 Galerkin matrix and let $\widehat A$ be its frozen binary64 approximate inverse, regarded as an exact linear map $\mathcal Y_f\to\mathcal X_f$. They are reconstructed from a nearby numerical seed; the discrepancy from the true finite Jacobian at the full exact centre is enclosed separately. The boundary symbol of the tail inverse uses the full exact centre. Suppose $\eps<1$ and
\begin{equation}\label{r3:eq-eta}
\eta:=\nrm{I-\widehat AJ^{\mathrm{fl}}}_b<1,
\end{equation}
where $\nrm\cdot_b$ is the operator norm for the weights of $\mathcal X$ and $\mathcal Y$ with $\sigma$ replaced by $b$. For $y=y_f+y^H_t+y^N_t\in\mathcal Y$ define
\begin{equation}\label{r3:eq-A}
Ay=\widehat A\big(y_f-\mathcal BT^{-1}y^H_t\big)+T^{-1}y^H_t+y^N_t,
\end{equation}
where $T^{-1}y^H_t$ is a shape tail vector and $y^N_t$ is read as a field tail vector with the same coordinates. For a finite row $\iota$ with weight $w_\iota$ ($h_\ell$ or $R^n$) let $\alpha_\iota=\nrm{\widehat Ae_\iota}_{\mathcal X}/w_\iota$, let $\alpha^H_n=\alpha_{\mathcal T_n}$ for $n\leq M$, $\alpha_{\max}=\max_\iota\alpha_\iota$, and
\[
\rho_{\mathcal B}=\max_{n\geq182}\sum_{m\leq M}|\hat{\mathcal B}_{mn}|\,\alpha^H_m,\qquad A_t=\max\Big\{1,\ \frac{R(8+\rho_{\mathcal B})}{2(1-\eps)}\Big\},
\]
where $\hat{\mathcal B}$ is the matrix of $\mathcal B$ in normalised coordinates.

\begin{lemma}\label{r3:lem-A}
$A\colon\mathcal Y\to\mathcal X$ is a bounded bijection with $\nrm A\leq\max\{\alpha_{\max},A_t\}$. It maps every harmonic tail row, normalised, to a vector of norm at most $A_t$, and every nonharmonic tail row to the identical field tail input. Moreover $A(\mathsf Me_j)=e_j$ for every odd $j\geq183$, and $A\Psi_{\ell,s}=e_{(\ell,s)}$ for every field tail input $(\ell,s)$.
\end{lemma}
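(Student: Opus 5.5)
The plan follows the block-triangular template of Lemmas~\ref{lem:A} and~\ref{lem:normA}, now adapted to the three-block splitting $\mathcal Y=\mathcal Y_f\oplus\mathcal Y^H_t\oplus\mathcal Y^N_t$. First, invertibility of $\widehat A$. Since \eqref{r3:eq-eta} holds in the $\nrm\cdot_b$ weights, and these differ from the $\mathcal X$ weights only through the constant factor relating $\sigma$ and $b$ on the shape coordinates, the Neumann series shows that $\widehat AJ^{\mathrm{fl}}$ is invertible. Hence $\widehat A$ is an injective square matrix, and therefore a bijection $\mathcal Y_f\to\mathcal X_f$. By Lemma~\ref{r3:lem-principal}, $\eps<1$ makes $T$ a bounded bijection from the $\ell^1$ of normalised shape tail coordinates onto the normalised harmonic tail rows. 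The nonharmonic tail rows are identified coordinate-wise with the field tail inputs.

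With these three pieces, $A$ is upper triangular with respect to $(\mathcal Y_f,\mathcal Y^H_t,\mathcal Y^N_t)\to(\mathcal X_f,\mathcal X^{\mathrm{sh}}_t,\mathcal X^{\mathrm{fl}}_t)$ and has invertible diagonal blocks $\widehat A$, $T^{-1}$ and $I$. I would write down the explicit inverse
\[
(x_f,\hat a,x^{\mathrm{fl}}_t)\mapsto\widehat A^{-1}x_f+\mathcal B\hat a+T\hat a+x^{\mathrm{fl}}_t
\]
and check by substitution that it is a two-sided inverse. This gives bijectivity.

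For the norm, I will use Lemma~\ref{lem:columns} column by column.
\begin{itemize}
\item A finite row $e_\iota$ maps to $\widehat Ae_\iota$, whose normalised norm is $\alpha_\iota\leq\alpha_{\max}$.
\item A nonharmonic tail row maps to the identical field tail input, and both carry the weight $h_\ell$, so its normalised norm is $1$.
\item A normalised harmonic tail row $\hat y=e_n$ maps to $T^{-1}e_n$ in the shape tail. In the $\mathcal X$ norm the normalised shape coordinates carry the factor $8$, so this contributes at most $8\cdot R/(2(1-\eps))$. The finite part $-\widehat A\mathcal BT^{-1}e_n$ has norm at most $\sum_m|(\hat{\mathcal B}T^{-1}e_n)_m|\alpha^H_m$. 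Writing $T^{-1}e_n=\sum_k v_ke_k$ with $\sum|v_k|\leq R/(2(1-\eps))$, the triangle inequality bounds this by $\rho_{\mathcal B}R/(2(1-\eps))$.
\end{itemize}
Adding the two contributions gives $A_t$, and hence $\nrm A\leq\max\{\alpha_{\max},A_t\}$.

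It remains to prove the two identities. For $j=n+1\geq183$, Lemma~\ref{r3:lem-principal} says that $\mathsf Me_j$ has its tail harmonic component $Te_j$ (in normalised coordinates), its finite harmonic component $\mathcal Be_j$, and no nonharmonic component, because $\mathsf M$ takes values in harmonic elements. Then
\[
A\mathsf Me_j=\widehat A(\mathcal Be_j-\mathcal BT^{-1}Te_j)+T^{-1}Te_j=e_j .
\]
For a field tail input, $\Psi_{\ell,s}$ with $s\geq1$ and $(\ell,s)$ a tail index is exactly a nonharmonic tail row, so $A\Psi_{\ell,s}=e_{(\ell,s)}$ by \eqref{r3:eq-A}.

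The main point of care is the bookkeeping of weights between the normalised coordinates $\hat a_n,\hat y_n$ and the $\mathcal X$ and $\mathcal Y$ norms, including the factor $8=\sigma/b$. A second point is checking that $\mathcal B$ is well defined and bounded as a map from the shape tail: only $n\leq540$ reach the finite rows, so $\rho_{\mathcal B}$ is a finite maximum and $\mathcal BT^{-1}$ is bounded.
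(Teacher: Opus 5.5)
Your proposal is correct and follows the paper's proof essentially step for step. Invertibility of $\widehat A$ comes from \eqref{r3:eq-eta} and of $T$ from Lemma~\ref{r3:lem-principal}. The explicit two-sided inverse is the same, and so are the column-by-column bounds via Lemma~\ref{lem:columns}, including the factor $8$ and the $\rho_{\mathcal B}$ term. The identity $A\mathsf Me_j=e_j$ is obtained the same way, from $\mathsf Me_j=\mathcal Be_j+Te_j$. Your extra remarks, that $\mathsf M$ has no nonharmonic component and that $\rho_{\mathcal B}$ is a finite maximum, only make explicit what the paper leaves implicit.
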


\begin{proof}
By \eqref{r3:eq-eta}, $\widehat AJ^{\mathrm{fl}}$ and hence $\widehat A$ are invertible, and $T$ is invertible by Lemma~\ref{r3:lem-principal}. The map $(x_f,\hat a,x^N)\mapsto(\widehat A^{-1}x_f+\mathcal B\hat a)+T\hat a+x^N$ is a two-sided inverse of $A$. We use Lemma~\ref{lem:columns}: a finite row $e_\iota$ is mapped to $\widehat Ae_\iota$; a nonharmonic tail row to itself; the normalised harmonic tail row $\hat y=e_m$ to $T^{-1}e_m$, of shape norm $8\nrm{T^{-1}e_m}_1\leq8R/(2(1-\eps))$, plus $-\widehat A\mathcal BT^{-1}e_m$, of norm at most $\rho_{\mathcal B}\nrm{T^{-1}e_m}_1$. Finally, $\mathsf Me_j=\mathcal Be_j+Te_j$ in normalised coordinates, so $A\mathsf Me_j=\widehat A(\mathcal Be_j-\mathcal BT^{-1}Te_j)+T^{-1}Te_j=e_j$.
\end{proof}

\subsection{The defect bounds}\label{r3:sec-Z}

We bound $Y=\nrm{A\Fc(x^\circ)}_{\mathcal X}$ and $Z=\nrm{I-AD\Fc(x^\circ)}$ by normalised column norms. The disjoint field partition is: $7280$ finite inputs; $443$ exceptional inputs $S<s<S_\ell$ for even $\ell\leq180$; the $91$ radial classes $s\geq S_\ell$; the $14$ explicit inputs $(\ell,1)$ and the $14$ radial classes $s\geq2$ for even $182\leq\ell\leq208$; and the uniform angular tail $\ell\geq210$, $s\geq1$. Thus there are $443+14=457$ explicitly checked exceptional field inputs in total. Here $S_\ell$ equals $93$ for $\ell=0,2,\ldots,12$; $92$ for $14,\ldots,30$; $91$ for $32,\ldots,50$; $90$ for $52,\ldots,62$; $89$ for $64,\ldots,72$; $88$ for $74,76,78$; $87$ for $80,\ldots,90$; $86$ for $\ell=92$; $85$ for $\ell=94$; and $81$ for $96,\ldots,180$. The shape inputs are the $310$ explicit odd indices $j\leq619$ and all odd indices $j\geq621$. These sets exhaust all input columns without overlap.

\subsubsection*{The residual and the explicit columns}
The residual $\Fc(x^\circ)$ is a polynomial. It is expanded in the basis $\Psi_{\ell,s}$ without truncation, in ball arithmetic, using \eqref{r3:eq-Hrec} and Lemma~\ref{r3:lem-jacobi}(b) for the multiplications by $x_1$ and $t$ (which give $x_1\Psi_{\ell,s}$ and $t\Psi_{\ell,s}$ as combinations of at most four, respectively three, $\Psi$'s), the recurrences $r^{n+1}Q_{n+1}=2x_1r^nQ_n-t\,r^{n-1}Q_{n-1}$ for $Q=T,U$ and the three-term recurrence of $C^{(2)}_n$ for the series \eqref{r3:eq-qseries}, \eqref{r3:eq-p2}, and \eqref{r3:eq-K} and Lemma~\ref{r3:lem-K}(d). Its harmonic part is converted to the coordinates of $\mathcal Y$ by the matrix $D$. Since $A$ has norm at most $\alpha_{\max}$ on $\mathcal Y_f$, one on $\mathcal Y^N_t$ and $A_t$ on $\mathcal Y^H_t$ (Lemma~\ref{r3:lem-A}),
\[
Y\leq\alpha_{\max}\nrm{\Fc(x^\circ)_f}_{\mathcal Y}+\nrm{\Fc(x^\circ)^N_t}_{\mathcal Y}+A_t\nrm{\Fc(x^\circ)^H_t}_{\mathcal Y} .
\]
For the $457$ exceptional field inputs and $310$ explicit shape inputs, the column $D\Fc(x^\circ)e$ is polynomial. The shape columns are expanded completely. For exceptional field columns alone, evaluation retains the shape coefficients only through $j=121$ and adds the rigorous omitted-tail bound below. In both cases $A$ uses the full centre and the full boundary symbol. It is applied in the form \eqref{r3:eq-A}, with $T^{-1}$ evaluated by $24$ terms of the Neumann series of Lemma~\ref{r3:lem-principal} plus the stated remainder bound, and $e$ is subtracted. Let $Z_{\mathrm{ex}}$ and $Z_{\mathrm{sh}}$ be the resulting maxima.

Put $c^t=(c_j^\circ\one_{j\leq121})_j$, $C=\nrm{c^\circ}_{\mathcal E}$, $\delta=\nrm{c^\circ-c^t}_{\mathcal E}$, $\kappa_R=4(1-R^{-2})^{-1/2}-1$, and $z=\ell+2s+\frac12$. For a normalised field basis vector the exact three-term inverse and derivative formulas give
\[
K_z=\frac1{z(z+2)},\qquad D_z=\frac{2R}{z+1},\qquad E_z=\frac1z+RD_z
\]
as upper bounds for the norms of $K_3e$, $\partial_{x_1}K_3e$ and $y\partial_yK_3e$, respectively. The linear multiplier inequalities \eqref{r3:eq-chnorms} and the telescoping expansion of the three factors of $q|p|^2$ then give
\[
\nrm{(D_g\Fc(c^\circ)-D_g\Fc(c^t))e}_{\mathcal Y}\leq
\frac{\kappa_R\delta}{b}\left[\frac1R+\frac{2D_z}{R^2-1}+\frac{4RE_z}{(R^2-1)^2}+\frac{3C^2K_z}{R^3}\right].
\]
Multiplication by the certified full $\nrm A$ bounds the omitted contribution to the $X$-defect. For these inputs $z\geq325/2$, giving a common error strictly below $0.007935$; the verifier uses the individual $z$ of each column. No truncation enters $A$ or the residual.

\subsubsection*{Finite field columns}
Let $e=e_{(\ell,s)}$ be a finite field input, and let $Je$ and $(D\Fc(x^\circ)e)_t$ be the components of $D\Fc(x^\circ)e$ in the finite rows and in the tail rows. Then
\[
(I-AD\Fc(x^\circ))e=(I-\widehat AJ^{\mathrm{fl}})e+\widehat A(J^{\mathrm{fl}}-J)e-A(D\Fc(x^\circ)e)_t .
\]
The entries of $Je$ are integrals over $B$ of polynomials, namely $\nrm{\Psi_{\ell',s'}}^{-2}_{L^2}\int_B(D\Fc(x^\circ)e)\Psi_{\ell',s'}$ for the field rows and, for the harmonic rows, $\sum_{\ell'\leq540}D_{n\ell'}\nrm{H_{\ell'}}^{-2}_{L^2}\int_B(D\Fc(x^\circ)e)H_{\ell'}$; all angular degrees of the output (at most $3M=540$, because $q^\circ|p^\circ|^2$ has degree at most $2M$ in $\xi$ at fixed $r$) are integrated before the conversion to Chebyshev coordinates. In spherical coordinates the integrands are even polynomials in $\xi$ of degree at most $1080$ and in $r$ of degree at most $(4M+2S+2)+3M+2=1424$. They are integrated exactly by Gauss--Legendre rules with $542$ nodes in $\xi$ (exact to degree $1083$) and $713$ nodes in $r$ (applied to the even extension to $[-1,1]$, exact to degree $1425$), whose nodes and weights are enclosed in ball arithmetic: each nonnegative root is bracketed by a sign change of the Legendre polynomial, the brackets are disjoint, together with their reflections their number equals the degree, and the weights are computed from the enclosures. Let $\xi_{\mathrm{fin}}$ bound $\max_e\sum_\iota|J^{\mathrm{fl}}_{\iota e}-J_{\iota e}|\,\alpha^b_\iota w_\iota/w_e$, where $\alpha^b_\iota$ is the normalised column norm of $\widehat A$ in the norm $\nrm\cdot_b$. Converting from $\nrm\cdot_b$ to $\nrm\cdot_{\mathcal X}$ costs at most a factor eight, so the first two terms contribute at most $Z_{\mathrm{head}}=8(\eta'+\xi_{\mathrm{fin}})$, where $\eta'$ is $\eta$ increased by explicit allowances for the representation of the weights $h_\ell$, $bjR^j$, $R^n$ by binary64 numbers (relative error below $10^{-12}$, checked) and for the rounding in their application. The last term is bounded with the output-tail envelope below, whose maximum over the finite field inputs we call $Z_{\mathrm{fin},t}$.

\subsubsection*{Envelopes for field columns}
We define step functions of the angular index $L$ and the radial index $\varsigma$. For even $\ell\leq M$ and $1\leq\varsigma\leq S$ let
\[
\mathsf A(\ell,\varsigma)=\max\{1,\ \alpha_{(\ell',\varsigma')}:\ \ell'\geq\ell,\ \varsigma'\geq\varsigma,\ (\ell',\varsigma')\text{ a finite field row}\},
\]
let $\mathsf A(\ell,\varsigma)=1$ if $\ell>M$ or $\varsigma>S$, and $\mathsf A(\ell,\varsigma)=\mathsf A(\ell+1,\varsigma)$ for odd $\ell$. Let $\alpha^H_n=A_t$ for $n\geq182$,
\[
\beta_\ell=\frac1{h_\ell}\sum_nD_{n\ell}R^n\alpha^H_n,\qquad\beta^\infty=A_t+\sqrt2\sum_{n\leq M}(\alpha^H_n-A_t)_+R^{n-1200},
\]
$\mathsf B(L)=\max\big(\{\beta_\ell:\ \ell\ \text{even},\ L\leq\ell\leq1200\}\cup\{\beta^\infty\}\big)$, and $\Gamma(L)=\max\{1,\mathsf A(L,1),\mathsf B(L)\}$. These functions are nonincreasing in $L$ and $\varsigma$.

\begin{lemma}\label{r3:lem-rows}
For $\ell\geq L$ (with $\ell$ even) and $\varsigma\geq1$, $\nrm{AH_\ell}_{\mathcal X}\leq\mathsf B(L)h_\ell$ and $\nrm{A\Psi_{\ell,\varsigma}}_{\mathcal X}\leq\mathsf A(L,\varsigma)h_\ell$.
\end{lemma}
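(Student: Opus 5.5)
We treat the two inequalities separately, in each case by expanding the input in the row bases of $\mathcal Y$ and applying the column description of $A$ from Lemma~\ref{r3:lem-A} together with Lemma~\ref{lem:columns}.

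\emph{The harmonic bound.} Since $H_\ell=\sum_nD_{n\ell}\mathcal T_n$ with $D_{n\ell}\geq0$, we have $AH_\ell=\sum_nD_{n\ell}A\mathcal T_n$. The normalised harmonic row $R^{-n}\mathcal T_n$ is mapped to a vector of norm at most $\alpha^H_n$; for $n\leq M$ this is the definition of $\alpha^H_n$, and for $n\geq182$ it is the bound $A_t$ of Lemma~\ref{r3:lem-A}. Hence
\[
\nrm{AH_\ell}_{\mathcal X}\leq\sum_nD_{n\ell}R^n\alpha^H_n=\beta_\ell h_\ell .
\]
For even $\ell\leq1200$ with $\ell\geq L$ this is at most $\mathsf B(L)h_\ell$ by definition. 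For $\ell>1200$ we split $\alpha^H_n=A_t+(\alpha^H_n-A_t)$. The first part contributes $A_t\sum_nD_{n\ell}R^n=A_th_\ell$. The second part is nonzero only for $n\leq M$, and there Lemma~\ref{r3:lem-weights}(e) gives $D_{n\ell}R^n\leq\sqrt2R^{n-\ell}h_\ell\leq\sqrt2R^{n-1200}h_\ell$. This yields $\beta_\ell\leq\beta^\infty\leq\mathsf B(L)$.

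\emph{The nonharmonic bound.} For $\varsigma\geq1$, $\Psi_{\ell,\varsigma}$ is itself a row of $\mathcal Y$ of weight $h_\ell$. We distinguish two cases.
\begin{itemize}
\item If it is a finite field row, then $\nrm{A\Psi_{\ell,\varsigma}}=\alpha_{(\ell,\varsigma)}h_\ell$. Since $\ell\geq L$, this is at most $\mathsf A(L,\varsigma)h_\ell$, because the maximum defining $\mathsf A$ ranges over $\ell'\geq L$, $\varsigma'\geq\varsigma$. When $L$ is odd we use $\mathsf A(L,\varsigma)=\mathsf A(L+1,\varsigma)$ and the evenness of $\ell$.
\item If it is a nonharmonic tail row, then $A$ maps it to the identical field tail input, of norm $h_\ell\leq\mathsf A(L,\varsigma)h_\ell$, since $\mathsf A\geq1$ always.
\end{itemize}

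The only delicate point is the monotone envelope bookkeeping in the case $\ell>1200$, and the fact that $A_t$ bounds all normalised harmonic tail rows. Both reduce to Lemma~\ref{r3:lem-weights}(e) and Lemma~\ref{r3:lem-A}, so we expect no real obstacle.
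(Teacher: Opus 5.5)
Your proposal is correct and follows essentially the same route as the paper. You expand $H_\ell=\sum_nD_{n\ell}\mathcal T_n$ to obtain $\beta_\ell h_\ell$, and for $\ell>1200$ you use Lemma~\ref{r3:lem-weights}(e) to reach $\beta^\infty$. For $\Psi_{\ell,\varsigma}$ you use the definition of $\mathsf A$ on finite rows (including the odd-$L$ convention) and the fact that $A$ is the identity on nonharmonic tail rows, with $\mathsf A\geq1$.
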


\begin{proof}
$H_\ell=\sum_nD_{n\ell}\mathcal T_n$, and $\nrm{A\mathcal T_n}\leq\alpha^H_nR^n$ by Lemma~\ref{r3:lem-A}; hence $\nrm{AH_\ell}\leq\beta_\ell h_\ell$. For $\ell>1200$, since $\sum_nD_{n\ell}R^n=h_\ell$ and by Lemma~\ref{r3:lem-weights}(e), $\beta_\ell\leq A_t+\sum_{n\leq M}\sqrt2R^{n-\ell}(\alpha^H_n-A_t)_+\leq\beta^\infty$. The second bound holds by definition for finite rows and because $A$ is the identity on nonharmonic tail rows.
\end{proof}

For $d\geq0$, $\ell,s\geq0$ put $\ell_d=\max(0,\ell-d)$, $s_d=\max(1,s-d)$, let $X_d$ be a $\mathrm{Bin}(d,\frac12)$ random variable, and let $\phi_L(\varsigma)=\mathsf A(L,\varsigma)$ for $\varsigma\geq1$ and $\phi_L(\varsigma)=\max\{\mathsf A(L,1),\mathsf B(L)\}$ for $\varsigma\leq0$. Define
\begin{gather*}
\mathfrak B_d(\ell,s)=\min\Big\{\mathsf A(\ell_d,s_d)+\big(\mathsf B(\ell_d)-\mathsf A(\ell_d,s_d)\big)_+\vartheta_d(\ell,s),\ \ \mathbb E\,\phi_{\ell_d}(s-X_d)\Big\},\\
\mathfrak T_d(\ell,s)=\begin{cases}0,&\ell+d\leq M\text{ and }s+d\leq S,\\ 1+(A_t-1)\,\vartheta_d(\ell,s)\,\one_{\ell+d>M},&\text{otherwise},\end{cases}
\end{gather*}
with $\vartheta_d$ as in Lemma~\ref{r3:lem-radial}(b). Both bounds in $\mathfrak B_d$ are nonincreasing in $\ell$ and in $s$: the first because $\mathsf A,\mathsf B,\vartheta_d$ are and the expression is nondecreasing in $\mathsf A$ and in $\vartheta_d$; the second because $\phi_L$ is nonincreasing in $\varsigma$ and in $L$.

\begin{lemma}\label{r3:lem-envelope}
Let $H_at^k$ be a solid monomial of degree $d$ and $\ell,s\geq0$ with $a+\ell$ even. Then $\nrm{A(H_at^k\Psi_{\ell,s})}_{\mathcal X}\leq h_ah_\ell\,\mathfrak B_d(\ell,s)$, and the component $(H_at^k\Psi_{\ell,s})_t$ in the tail rows satisfies $\nrm{A(H_at^k\Psi_{\ell,s})_t}_{\mathcal X}\leq h_ah_\ell\,\mathfrak T_d(\ell,s)$.
\end{lemma}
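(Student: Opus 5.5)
The plan is to expand the product via Lemma~\ref{r3:lem-mult}(a), using \eqref{r3:eq-monomial}:
\[
H_at^k\Psi_{\ell,s}=\sum_vG_{a\ell v}\sum_\varsigma\pi^{(v)}_\varsigma\Psi_{v,\varsigma}.
\]
By Lemma~\ref{r3:lem-radial}(a) only $v\geq\ell_d$ occur, and every $\varsigma$ satisfies $s-d\leq\varsigma\leq s+d$. Each output term $\Psi_{v,\varsigma}$ is then bounded through Lemma~\ref{r3:lem-rows}, and the results are recombined with the Gaunt weights. Since $\sum_vG_{a\ell v}h_v\leq h_ah_\ell$ by Lemma~\ref{r3:lem-gaunt}, it suffices to show that, for each fixed $v$ in the support, $\sum_\varsigma\pi^{(v)}_\varsigma\nrm{A\Psi_{v,\varsigma}}_{\mathcal X}/h_v$ is at most each of the two expressions in $\mathfrak B_d(\ell,s)$. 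One should read $\Psi_{v,0}=H_v$ here.

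\emph{First bound.} For $\varsigma\geq1$, Lemma~\ref{r3:lem-rows} gives $\nrm{A\Psi_{v,\varsigma}}\leq\mathsf A(\ell_d,\varsigma)h_v$. This is at most $\mathsf A(\ell_d,s_d)h_v$, because $\varsigma\geq\max(1,s-d)=s_d$ and $\mathsf A$ is monotone. For the harmonic term $\varsigma=0$ the bound is $\mathsf B(\ell_d)h_v$, and its weight is $\pi^{(v)}_0\leq\vartheta_d(\ell,s)$ by Lemma~\ref{r3:lem-radial}(b). Since the $\pi^{(v)}$ form a probability vector, the column mass is at most $\mathsf A+(\mathsf B-\mathsf A)_+\pi^{(v)}_0$, which gives the first expression.

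\emph{Second bound.} Write the weighted sum as $\mathbb E\,\tilde\phi(\varsigma)$ over the law $\pi^{(v)}$, where $\tilde\phi(\varsigma)=\mathsf A(\ell_d,\varsigma)$ for $\varsigma\geq1$ and $\tilde\phi(0)=\mathsf B(\ell_d)\leq\phi_{\ell_d}(0)$. Then $\tilde\phi\leq\phi_{\ell_d}$, and $\phi_{\ell_d}$ is nonincreasing. Lemma~\ref{r3:lem-radial}(c) states that the law of $s-\varsigma$ is stochastically dominated, on the side of large decreases, by $X_d$. That is, $\Pr(\varsigma\leq s-j)\leq\Pr(X_d\geq j)$ for all $j\geq1$. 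For a nonincreasing function, this tail domination yields $\mathbb E\phi_{\ell_d}(\varsigma)\leq\mathbb E\phi_{\ell_d}(s-X_d)$, by summation by parts over the level sets.

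\emph{Tail component.} The statement about $(H_at^k\Psi_{\ell,s})_t$ is handled by restricting the same expansion to tail rows. If $\ell+d\leq M$ and $s+d\leq S$, then by Lemma~\ref{r3:lem-radial}(a) all outputs have $v\leq M$ and $\varsigma\leq S$. Hence no tail rows occur, and the tail component vanishes. Otherwise, the nonharmonic tail rows are mapped by $A$ to identical inputs (Lemma~\ref{r3:lem-A}), so they cost weight $1$. Harmonic tail contributions come only from $\varsigma=0$ with $v>M$, which requires $\ell+d>M$. There $H_v$ decomposes into $\mathcal T_n$ with $n\leq v$. Its normalised cost is at most $A_t$ for the harmonic tail rows, giving the excess $(A_t-1)\pi^{(v)}_0\leq(A_t-1)\vartheta_d$. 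The bound that needs care is the one on the finite harmonic rows $n\leq M$ inside $H_v$. These belong to the finite component rather than the tail, so I would define the tail component as the tail-row projection and discard them.

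The main obstacle is the stochastic-dominance step together with the bookkeeping at $\varsigma=0$. One has to check that assigning the value $\phi_{\ell_d}(\varsigma)$ for $\varsigma\leq0$ (which exceeds the harmonic cost) keeps $\phi$ monotone. It also has to remain valid when $s-X_d$ drops below $0$, even though actual indices never do.
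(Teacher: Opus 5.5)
Your proposal is correct and follows the paper's proof essentially step for step: the same reduction to a per-$v$ bound via $\sum_vG_{a\ell v}h_v\leq h_ah_\ell$, the same first bound using $\pi^{(v)}_0\leq\vartheta_d(\ell,s)$, the same stochastic-domination argument for the second bound, and the same tail-row bookkeeping; for the second bound you use the tail inequality of Lemma~\ref{r3:lem-radial}(c) directly with summation by parts, whereas the paper cites the domination of the number of decreases $N$ by $X_d$ from that lemma's proof. The points you flag as needing care, namely the monotonicity of $\phi_{\ell_d}$ for $\varsigma\leq0$ and the exclusion of the finite harmonic rows of $H_v$ from the tail component, are handled exactly as you describe.
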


\begin{proof}
By \eqref{r3:eq-monomial} and Lemma~\ref{r3:lem-gaunt} it suffices to show, for every $v$ with $G_{a\ell v}\neq0$, that
\[
\Sigma_v:=\sum_\varsigma\pi^{(v)}_\varsigma\,\frac{\nrm{A\Psi_{v,\varsigma}}}{h_v}\leq\mathfrak B_d(\ell,s) .
\]
By Lemma~\ref{r3:lem-radial}(a), $v\geq\ell_d$ and $\varsigma\geq s-d$. By Lemma~\ref{r3:lem-rows} and monotonicity, the quotient $\nrm{A\Psi_{v,\varsigma}}/h_v$ is at most $\mathsf A(\ell_d,\varsigma)\leq\mathsf A(\ell_d,s_d)$ for $\varsigma\geq1$, and at most $\mathsf B(\ell_d)$ for $\varsigma=0$. With $\pi^{(v)}_0\leq\vartheta_d(\ell,s)$ this gives the first bound in $\mathfrak B_d$. For the second, the quotient is at most $\phi_{\ell_d}(\varsigma)$, the function $\phi_{\ell_d}$ is nonincreasing, the index is at least $s-N$ with $N$ as in the proof of Lemma~\ref{r3:lem-radial}, and $N$ is stochastically dominated by $X_d$ (Lemma~\ref{r3:lem-radial}(c)); hence $\Sigma_v\leq\mathbb E\,\phi_{\ell_d}(s-X_d)$.

For the tail component: by Lemma~\ref{r3:lem-radial}(a) the outputs have $v\leq\ell+d$ and $\varsigma\leq s+d$. If $\ell+d\leq M$ and $s+d\leq S$, all of them are finite rows, including the harmonic outputs $H_v=\sum_{n\leq v}D_{nv}\mathcal T_n$. Otherwise the tail component consists of nonharmonic tail rows, on which $A$ is the identity, and, only if $\ell+d>M$, of the parts $\sum_{n\geq182}D_{nv}\mathcal T_n$ of harmonic outputs, of norm at most $h_v$, on which $A$ has norm at most $A_t$. Weighting with $\pi^{(v)}_0\leq\vartheta_d$ gives the bound.
\end{proof}

At the centre, let $\chi_1=q^\circ/b-1$, $\chi_2=\partial_{x_1}q^\circ/b$, $\chi_3=y^{-1}\partial_yq^\circ/b$, $\chi_4=x_1\chi_3$ and $\chi_5=q^\circ|p^\circ|^2/b$. For $1\leq i\leq5$ let $\tau_i(d)$ be the $\tau$-norm of the homogeneous part of degree $d$ of $\chi_i$. By Lemma~\ref{r3:lem-mult}(b), $\tau_1,\tau_2,\tau_3$ are the Chebyshev norms of the terms of \eqref{r3:eq-qseries}, with $\tau_1(0)=|c^\circ_1/b-1|$; $\tau_4(d+1)\leq R\,\tau_3(d)$; and $\tau_5$ is computed from the expansion of $\chi_5$ in solid monomials. For $\mathfrak X\in\{\mathfrak B,\mathfrak T\}$ put $\mathfrak X^{(i)}(\ell,s)=\sum_d\tau_i(d)\mathfrak X_d(\ell,s)$ and, with $\bar d=\ell+2s+\frac12$ (the parameter $d$ of Lemma~\ref{r3:lem-K}) and ratios $\rho_\pm$,
\begin{align*}
\mathfrak z_{\mathfrak X}(\ell,s;\rho_+,\rho_-,\lambda_+,\lambda_-)={}&\mathfrak X^{(1)}(\ell,s)+\sum_{i=2,4}\Big[\frac{\lambda_+\rho_+}{2(\bar d+1)}\big(\mathfrak X^{(i)}(\ell+1,s)+\mathfrak X^{(i)}(\ell+1,s-1)\big)\\
&\qquad+\frac{\lambda_-\rho_-}{2(\bar d+1)}\big(\mathfrak X^{(i)}(\ell-1,s+1)+\mathfrak X^{(i)}(\ell-1,s)\big)\Big]\\
&+\frac{(\bar d+\frac12)\mathfrak X^{(3)}(\ell,s-1)}{4\bar d(\bar d+1)}+\frac{\mathfrak X^{(3)}(\ell,s)}{4\bar d(\bar d+2)}+\frac{(\bar d+\frac32)\mathfrak X^{(3)}(\ell,s+1)}{4(\bar d+1)(\bar d+2)}\\
&+\frac{\mathfrak X^{(5)}(\ell,s-1)}{4\bar d(\bar d+1)}+\frac{\mathfrak X^{(5)}(\ell,s)}{2\bar d(\bar d+2)}+\frac{\mathfrak X^{(5)}(\ell,s+1)}{4(\bar d+1)(\bar d+2)} ,
\end{align*}
where terms with $\ell-1<0$ are omitted. The exact values are $\rho_\pm=h_{\ell\pm1}/h_\ell$, $\lambda_+=(\ell+1)/(2\ell+1)$, $\lambda_-=\ell/(2\ell+1)$.

\begin{proposition}\label{r3:prop-gtail}
\begin{enumerate}[label=\textup{(\alph*)}]
\item For a field tail input $(\ell,s)$, $\nrm{(I-AD\Fc(x^\circ))e_{(\ell,s)}}_{\mathcal X}/h_\ell\leq\mathfrak z_{\mathfrak B}(\ell,s)$ (exact parameters), and for fixed $\ell$ this bound is nonincreasing in $s$. Hence the radial class of $\ell$ is bounded by $Z_{\mathrm{rad}}(\ell)=\mathfrak z_{\mathfrak B}(\ell,S_\ell)$.
\item The uniform angular class is bounded by $Z_{\mathrm{ang}}=\mathfrak z_{\mathfrak B}\big(210,1;R,\frac{210}{209R},\frac{211}{421},\frac12\big)$.
\item For a finite field input $(\ell,s)$, $\nrm{A(D\Fc(x^\circ)e_{(\ell,s)})_t}_{\mathcal X}/h_\ell\leq\mathfrak z_{\mathfrak T}(\ell,s)$ (exact parameters).
\end{enumerate}
\end{proposition}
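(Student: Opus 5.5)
The plan is to write each column of $I-AD\Fc(x^\circ)$ in closed form, decompose it into products of solid monomials with basis functions $\Psi$, and bound every product by the envelopes of Lemma~\ref{r3:lem-envelope}. For (a), fix a field tail input $e=e_{(\ell,s)}$. By \eqref{r3:eq-F}, $D\Fc(x^\circ)e=b^{-1}\big[q^\circ\Psi_{\ell,s}+\partial_{x_1}q^\circ\,\partial_{x_1}K_3\Psi_{\ell,s}+(y^{-1}\partial_yq^\circ)\,y\partial_yK_3\Psi_{\ell,s}+q^\circ|p^\circ|^2K_3\Psi_{\ell,s}\big]$. Writing $q^\circ/b=1+\chi_1$ and using $A\Psi_{\ell,s}=e$ from Lemma~\ref{r3:lem-A}, this gives
\[
(I-AD\Fc(x^\circ))e=-A\big(\chi_1\Psi_{\ell,s}+\chi_2\,\partial_{x_1}K_3\Psi_{\ell,s}+\chi_3\,y\partial_yK_3\Psi_{\ell,s}+\chi_5\,K_3\Psi_{\ell,s}\big).
\]
I would then substitute $y\partial_yK_3=EK_3-x_1\partial_{x_1}K_3$ from Lemma~\ref{r3:lem-K}(d). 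After this substitution the $\chi_3$ term pairs with the explicit three-term formula for $EK_3$, and the two terms $\chi_2\,\partial_{x_1}K_3$ and $-\chi_4\,\partial_{x_1}K_3$ pair with the four-term formula for $\partial_{x_1}K_3$. This explains why $i=2,4$ and $i=3$ appear in $\mathfrak z_{\mathfrak X}$ in the way they do.

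Next I would expand each $\chi_i$ into its homogeneous parts of degree $d$, and each part into solid monomials $H_at^k$. For each product of a monomial with some $\Psi_{\ell',s'}$ I would apply Lemma~\ref{r3:lem-envelope}, which yields the bound $h_ah_{\ell'}\mathfrak B_d(\ell',s')$. Summing over $a,k$ gives $\tau_i(d)$, and summing over $d$ gives $\mathfrak B^{(i)}(\ell',s')h_{\ell'}$. Dividing by $h_\ell$ produces the ratios $\rho_\pm=h_{\ell\pm1}/h_\ell$, and the absolute values of the coefficients in Lemma~\ref{r3:lem-K}(d) and \eqref{r3:eq-K} produce exactly the weights in $\mathfrak z_{\mathfrak B}$. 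When $s=1$, the index $s-1=0$ refers to the harmonic function $H_\ell$. This case is already covered, because in the definition of $\mathfrak B_d$ the second branch uses $\phi_L(\varsigma)$, which is defined for $\varsigma\le0$, and $\mathsf A(\ell_d,s_d)$ uses $s_d\ge1$.

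For the monotonicity in $s$, I would use two facts. First, each $\mathfrak B_d(\ell',\cdot)$ is nonincreasing, as noted after its definition, so each $\mathfrak B^{(i)}(\ell',\cdot)$ is nonincreasing. Second, every coefficient in $\mathfrak z_{\mathfrak B}$ is a decreasing function of $\bar d=\ell+2s+\frac12$; for $(\bar d+\tfrac12)/(4\bar d(\bar d+1))$ and $(\bar d+\tfrac32)/(4(\bar d+1)(\bar d+2))$ this is an elementary check. Together these give $Z_{\mathrm{rad}}(\ell)=\mathfrak z_{\mathfrak B}(\ell,S_\ell)$ as a bound for the whole radial class $s\ge S_\ell$.

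For (b), I would replace the exact parameters by majorants valid for all even $\ell\ge210$. By Lemma~\ref{r3:lem-weights}(b), $\rho_+\le R$ and $\rho_-\le\ell/((\ell-1)R)\le210/(209R)$. For the other two parameters, $\lambda_+=(\ell+1)/(2\ell+1)\le211/421$ and $\lambda_-\le\frac12$. Since $\mathfrak z_{\mathfrak B}$ is nondecreasing in all four parameters, this replacement only increases the bound. It is also nonincreasing in $\ell$ and in $s$: this follows from the monotonicity of $\mathfrak B_d$ in $\ell$ together with the decay of the coefficients in $\bar d$. Hence the value at $(210,1)$ bounds every input of the uniform angular class.

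For (c), I would take a finite field input $(\ell,s)$. Its column $D\Fc(x^\circ)e$ is the same sum of four products without the subtraction of $e$. Projecting onto the tail rows is linear and commutes with the decomposition into monomial products, so the same computation goes through with the tail-component bound $\mathfrak T_d$ of Lemma~\ref{r3:lem-envelope} in place of $\mathfrak B_d$. Here the leading $\Psi_{\ell,s}$ is itself a finite row and contributes nothing to the tail, while $\chi_1\Psi_{\ell,s}$ contributes through $\mathfrak T^{(1)}$.

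I expect the main obstacle to be the bookkeeping in (a). One part is sign and coefficient tracking: showing that $y\partial_y$, rewritten through $E$ and $-x_1\partial_{x_1}$, together with the $\chi_4=x_1\chi_3$ regrouping, is bounded termwise by exactly the listed weights. The other part is verifying monotonicity of every coefficient in $\bar d$ and of every shifted envelope term, including the boundary cases $\ell-1<0$, where those terms are simply omitted, and $s-1=0$.
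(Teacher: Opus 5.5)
Your proposal is correct and follows essentially the same route as the paper. You rewrite the column as $\Psi_{\ell,s}$ plus the five products with $\chi_1,\dots,\chi_5$, using $y\partial_yK_3=EK_3-x_1\partial_{x_1}K_3$ and $\chi_4=x_1\chi_3$, and cancel $\Psi_{\ell,s}$ via $A\Psi_{\ell,s}=e_{(\ell,s)}$. You then bound each monomial product by Lemma~\ref{r3:lem-envelope} and get monotonicity in $s$ from the decay of the coefficients in $\bar d$ and of $\mathfrak B_d$. Finally, you majorise $\rho_\pm,\lambda_\pm$ for (b) and replace $\mathfrak B$ by $\mathfrak T$ for (c). One small point: the case $s-1=0$ needs no special argument, because Lemma~\ref{r3:lem-envelope} is already stated for all $s\geq0$.
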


\begin{proof}
For a field input, by \eqref{r3:eq-F} and Lemma~\ref{r3:lem-K}(d),
\[
D\Fc(x^\circ)e_{(\ell,s)}=\Psi_{\ell,s}+\chi_1\Psi_{\ell,s}+\chi_2\partial_{x_1}K_3\Psi_{\ell,s}+\chi_3EK_3\Psi_{\ell,s}-\chi_4\partial_{x_1}K_3\Psi_{\ell,s}+\chi_5K_3\Psi_{\ell,s} .
\]
For a tail input, $A\Psi_{\ell,s}=e_{(\ell,s)}$ (Lemma~\ref{r3:lem-A}), so $(I-AD\Fc(x^\circ))e_{(\ell,s)}$ is $-A$ applied to the sum of the five remaining terms. Expanding them with Lemma~\ref{r3:lem-K}(d) and \eqref{r3:eq-K}, and bounding each product of a homogeneous part of $\chi_i$ with a $\Psi$ by Lemma~\ref{r3:lem-envelope}, gives $\mathfrak z_{\mathfrak B}(\ell,s)h_\ell$; the factors $\rho_\pm$ convert $h_{\ell\pm1}$ to $h_\ell$. Every quantity in $\mathfrak z_{\mathfrak B}$ is nonincreasing in $s$ (the coefficients decrease in $\bar d$, and each $\mathfrak B_d$ is nonincreasing), which proves (a). For (b), let $\ell\geq210$ and $s\geq1$. By Lemma~\ref{r3:lem-weights}(b), $\rho_+\leq R$ and $\rho_-\leq\frac{210}{209R}$; moreover $\lambda_+\leq\frac{211}{421}$, $\lambda_-\leq\frac12$, $\bar d\geq212.5$, and $\mathfrak B_d$ is nonincreasing in both arguments, so every term is bounded by the corresponding term at $\ell=210$, $s=1$. For (c), $\Psi_{\ell,s}$ is a finite row, so the tail component of $D\Fc(x^\circ)e_{(\ell,s)}$ is that of the five remaining terms, and Lemma~\ref{r3:lem-envelope} applies with $\mathfrak T$.
\end{proof}

For the fourteen near angular degrees $182\leq\ell\leq208$, the same bound with the exact $\ell$ and $s=2$, monotone in $s$, covers each infinite radial class; the input $(\ell,1)$ is included among the $457$ explicit exceptional columns. We denote the maximum of these fourteen infinite-class bounds by $Z_{\mathrm{ang,near}}$. Thus the finite field columns are bounded by $Z_{\mathrm{fin}}=Z_{\mathrm{head}}+Z_{\mathrm{fin},t}$, where $Z_{\mathrm{fin},t}=\max\mathfrak z_{\mathfrak T}(\ell,s)$ over the $7280$ finite field inputs.

\subsubsection*{Far shape columns}
For a polynomial $f=\sum_aH_af_a(t)\in\mathcal L$ write $f_a=\sum_sf_{a,s}J^{a+1/2}_s=\sum_mf^\natural_{a,m}(1-t)^m$, and let
\[
V_f(L)=\sum_ah_a\,\Gamma\big((L-a)_+\big)\min\Big\{\sum_s|f_{a,s}|,\ \sum_m|f^\natural_{a,m}|\frac{m!}{\big(\frac{(L-a)_++1/2}2+1\big)_m}\Big\},
\]
which is nonincreasing in $L$. With $\zeta=R^{-2}$, $N\geq0$ and $J=\lfloor N/2\rfloor$ put
\begin{align*}
\mathsf T_f(N)&=\frac{B_0}2V_f(N)+\sum_{j=1}^J\frac{B_0a_j\zeta^j}{2j-1}V_f(N-2j)+\frac{B_0a_{J+1}\zeta^{J+1}}{(2J+1)(1-\zeta)}V_f(0),\\
\mathsf U_f(N)&=\sum_{j=0}^JB_0a_j\zeta^jV_f(N-2j)+\frac{B_0a_{J+1}\zeta^{J+1}}{1-\zeta}V_f(0).
\end{align*}

\begin{lemma}\label{r3:lem-V}
Let $f\in\mathcal L$ be a polynomial, $e\geq0$, and let $L$, $n$ have the parity for which the products below are even in $x_1$. Then $\nrm{A(H_Lt^ef)}_{\mathcal X}\leq h_LV_f(L)$. For $n\geq N$, $\nrm{A(r^nT_n(\xi)t^ef)}_{\mathcal X}\leq R^n\mathsf T_f(N)$ and $\nrm{A(r^nU_n(\xi)t^ef)}_{\mathcal X}\leq R^n\mathsf U_f(N)$. More generally, for every $m$ with $\tau(m)<\infty$ whose homogeneous parts have degree at most $d_m$, $\nrm{A(m\,r^nT_n(\xi)t^ef)}\leq\tau(m)R^n\mathsf T_f(N-d_m)$ for $n\geq N\geq d_m$.
\end{lemma}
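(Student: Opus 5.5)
The plan is to prove the three bounds in order, each from the previous one together with the connection-coefficient estimates of Lemma~\ref{r3:lem-weights}(d).

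\emph{Step 1: the bound for $H_Lt^ef$.} Write $H_Lt^ef=\sum_aH_LH_at^ef_a(t)$ and expand $H_LH_a=\sum_vG_{Lav}H_vt^{(L+a-v)/2}$ by Lemma~\ref{r3:lem-gaunt}. Every $v$ occurring satisfies $v\geq|L-a|\geq(L-a)_+$. For each $a$ I would bound the contribution of $f_a$ in two ways and keep the smaller.

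\emph{First way, via the $J^{a+1/2}$ expansion.} Each $H_at^kJ^{a+1/2}_s$-type term becomes, through the Markov-chain expansion of Lemma~\ref{r3:lem-mult}(a), a convex combination of $\Psi_{v,\varsigma}$ with $v\geq(L-a)_+$. Lemma~\ref{r3:lem-rows} bounds $\nrm{A\Psi_{v,\varsigma}}/h_v$ by $\Gamma(v)\leq\Gamma((L-a)_+)$, both for $\varsigma\geq1$ and for the harmonic $\varsigma=0$. Combined with $\sum_vG_{Lav}h_v\leq h_Lh_a$, this gives $h_Lh_a\Gamma((L-a)_+)\sum_s|f_{a,s}|$.

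\emph{Second way, via the $(1-t)^m$ expansion.} Expand $f_a=\sum_m f^\natural_{a,m}(1-t)^m$. After multiplication the product lives at parameter $\beta=v+\tfrac12\geq(L-a)_++\tfrac12$. Expand $(1-t)^m$ in the basis $J^{\beta}$, and handle the factor $t^{e+(L+a-v)/2}$ by convex raising and lowering steps. The $\ell^1$ mass is then $\nrm{(1-t)^m}_{J^\beta,1}=m!/(\beta/2+1)_m$ by Lemma~\ref{r3:lem-jacobi}(c), and this is decreasing in $\beta$. One thing must be checked here: the multiplication by $H_a$, expanded via $H_aJ^{a+1/2}$ in Lemma~\ref{r3:lem-mult}(a), preserves the mass bound when the expansion is done at the final parameter. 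I plan to multiply first by $H_a$ and $t$-powers as polynomials, using $H_LH_a(1-t)^m$ with the Gaunt coefficients, and only then expand $(1-t)^m$ at parameter $v+\tfrac12$. Summing over $a$ yields $h_LV_f(L)$.

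\emph{Step 2: the $T_n$ and $U_n$ bounds.} Write $r^nT_n(\xi)=\sum_\ell C_{\ell n}H_\ell t^{(n-\ell)/2}$ with $\ell=n-2j$. Apply Step~1 to each term, using that $V_f$ is nonincreasing so that $V_f(n-2j)\leq V_f(N-2j)$. Insert the bounds of Lemma~\ref{r3:lem-weights}(d): $C_{nn}h_n\leq\tfrac{B_0}{2}R^n$, $|C_{\ell n}|h_\ell\leq B_0a_j\zeta^jR^n/(2j-1)$, and $\upsilon_{\ell n}h_\ell\leq B_0a_j\zeta^jR^n$. For $j>J$, bound $V_f(\cdot)\leq V_f(0)$ and sum $\sum_{j>J}a_j\zeta^j/(2j-1)\leq a_{J+1}\zeta^{J+1}/((2J+1)(1-\zeta))$, using that $a_j$ is decreasing. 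This gives $\mathsf T_f(N)$ and $\mathsf U_f(N)$.

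\emph{Step 3: the bound with a multiplier $m$.} Decompose $m$ into solid monomials $H_bt^k$ with $b\leq d_m$. Then $m\,r^nT_n=\sum$ of products whose Gaunt supports have angular index at least $\ell-b\geq\ell-d_m$. Rather than reprove this, I would instead absorb $H_bt^k$ into $f$, noting that $V_{H_bf}(L)$ needs control, or alternatively expand $r^nT_n\cdot\re(\cdot)$ at the Chebyshev level. I would try the Chebyshev route first: $\nrm{m}_{\mathrm{ch}}$-type products shift $n$ by at most $d_m$, which replaces $N$ by $N-d_m$ and introduces the factor $\tau(m)$ via Lemma~\ref{r3:lem-mult}(b).

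\emph{Main obstacle.} The hardest step is the second way in Step~1: making the $(1-t)^m$ expansion commute correctly with the Gaunt and raising/lowering steps, and justifying that the mass is evaluated at parameter at least $(L-a)_++\tfrac12$.
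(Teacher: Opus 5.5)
Your Steps~1 and~2 follow the paper's proof. You expand $H_LH_a$ by the Gaunt coefficients first. At each output index $k\ge(L-a)_+$ you use either the stochastic steps of Lemma~\ref{r3:lem-mult}(a) or the expansion of $t^{e'}f_a$ through $(1-t)^m$ in the basis $J^{k+1/2}$, where multiplication by $t$ is $\ell^1$-nonincreasing. You then apply $\sum_kG_{Lak}h_k\le h_Lh_a$. For $T_n$ and $U_n$ you use the connection coefficients, Lemma~\ref{r3:lem-weights}(d) and the monotonicity of $V_f$. The point you flagged as the main obstacle is resolved exactly as you propose.

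Step~3 has a flaw as written. Your Chebyshev route can be made to work, but not ``via Lemma~\ref{r3:lem-mult}(b)''. That lemma gives $\tau(m)\le\kappa_R\nrm m_{\mathrm{ch}}$, which is the wrong direction and also loses the factor $\kappa_R\approx12$. What you need is $\nrm m_{\mathrm{ch}}\le\tau(m)$. This follows from $H_at^k=\sum_nD_{na}\,r^nT_n(\xi)\,t^{(a-n)/2+k}$ with $D_{na}\ge0$ and $\sum_nD_{na}R^n=h_a$. With that inequality the argument runs as follows:
\begin{itemize}
\item $r^{n'}T_{n'}\cdot r^nT_n=\tfrac12\big(r^{n+n'}T_{n+n'}+t^{n'}r^{n-n'}T_{n-n'}\big)$ with $n'\le d_m\le N\le n$, so every term has Chebyshev index at least $N-d_m$.
\item The $T_n$ bound therefore applies with $N-d_m$ in place of $N$.
\item Since $\tfrac12(R^{n+n'}+R^{n-n'})\le R^{n+n'}$, you get $\nrm m_{\mathrm{ch}}R^n\mathsf T_f(N-d_m)\le\tau(m)R^n\mathsf T_f(N-d_m)$.
\end{itemize}

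The route you set aside (``rather than reprove this'') is precisely the paper's, and it needs nothing beyond your Step~1. Write $m=\sum m_{i,k'}H_it^{k'}$ with $i\le d_m$, and expand $H_iH_{n-2j}=\sum_kG_{i,n-2j,k}H_kt^{(\cdot)}$ with $k\ge n-2j-d_m$. Apply Step~1 at index $k$, so that $V_f(k)\le V_f((n-2j-d_m)_+)$. Sum using $\sum_kG_{i,n-2j,k}h_k\le h_ih_{n-2j}$; the $j$-sum is then your Step~2 with $N-d_m$, and the factor $\sum|m_{i,k'}|h_i=\tau(m)$ appears directly.

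The two routes differ in what they buy. The corrected Chebyshev argument uses the $T_n$ bound as a black box and yields the never larger constant $\nrm m_{\mathrm{ch}}$. The paper's argument stays inside the solid-harmonic calculus and yields $\tau(m)$, which is the quantity evaluated for the multipliers $q^\circ_{(d)}$ in Proposition~\ref{r3:prop-farshape}. Those multipliers are of $U$-type, so the two constants coincide there by Lemma~\ref{r3:lem-mult}(b).
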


\begin{proof}
By Lemma~\ref{r3:lem-gaunt}, $H_Lt^eH_af_a=\sum_kG_{Lak}H_kt^{e+(L+a-k)/2}f_a$ with $k\geq|L-a|$. The $\mathcal L$-coefficients of $H_kt^{e'}f_a$ lie in angular index $k$, and their $\ell^1$ norm in the basis $J^{k+1/2}$ is at most $\sum_s|f_{a,s}|$, by the stochastic steps in the proof of Lemma~\ref{r3:lem-mult}(a), and at most $\sum_m|f^\natural_{a,m}|\nrm{(1-t)^m}_{J^{k+1/2},1}$, by Lemma~\ref{r3:lem-jacobi}(c),(d) (multiplication by $t$ does not increase $\nrm\cdot_{J^\beta,1}$); the latter norm is at most $m!/(((L-a)_++\frac12)/2+1)_m$ since $k\geq(L-a)_+$. On outputs of angular index at least $(L-a)_+$, $A$ has normalised norm at most $\Gamma((L-a)_+)$ by Lemma~\ref{r3:lem-rows}. Summing with $\sum_kG_{Lak}h_k\leq h_Lh_a$ gives the first bound. Next, $r^nT_n(\xi)=\sum_jC_{n-2j,n}H_{n-2j}t^j$, so by the first bound, Lemma~\ref{r3:lem-weights}(d) and the monotonicity of $V_f$,
\begin{align*}
\nrm{A(r^nT_nt^ef)}&\leq\sum_{j\leq n/2}|C_{n-2j,n}|h_{n-2j}V_f(n-2j)\\
&\leq R^n\Big(\frac{B_0}2V_f(N)+\sum_{j\geq1}\frac{B_0a_j\zeta^j}{2j-1}V_f\big((N-2j)_+\big)\Big),
\end{align*}
and the terms with $j>J$ are bounded using $\sum_{j>J}a_j\zeta^j/(2j-1)\leq a_{J+1}\zeta^{J+1}/((2J+1)(1-\zeta))$. The bound for $U_n$ is the same with $\upsilon$. For a multiplier $m$ we write $m=\sum m_{i,k'}H_it^{k'}$ with $i\leq d_m$ and use $H_iH_{n-2j}=\sum_kG_{i,n-2j,k}H_kt^{(\cdots)}$ with $k\geq n-2j-d_m$, together with $\sum_kG_{i,n-2j,k}h_k\leq h_ih_{n-2j}$.
\end{proof}

\begin{lemma}\label{r3:lem-axis}
\begin{enumerate}[label=\textup{(\alph*)}]
\item $y^{-1}\partial_yH_\ell=-\sum_{k=\ell-2,\ell-4,\ldots\geq0}(2k+1)H_kt^{(\ell-2-k)/2}$. Hence, for a polynomial $W=\sum_\ell H_\ell w_\ell(t)$, $Z_W=y^{-1}\partial_yW=\sum_\ell\big((y^{-1}\partial_yH_\ell)w_\ell+2H_\ell w_\ell'\big)$ is a polynomial, and $y\partial_yW=t(1-\xi^2)Z_W$.
\item $2(1-\xi^2)C^{(2)}_{n-2}(\xi)=\frac12\big(U_n(\xi)+U_{n-2}(\xi)\big)-nT_n(\xi)$ for $n\geq2$.
\end{enumerate}
\end{lemma}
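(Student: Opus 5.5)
Both parts are elementary identities, and we prove each by reducing to one variable.

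For (a), write $H_\ell=r^\ell P_\ell(\xi)$ with $r^2=x_1^2+y^2$ and $\xi=x_1/r$. Rather than differentiating $P_\ell(x_1/r)$ directly, we use the classical expansion of the derivative of a Legendre polynomial,
\[
P_\ell'=\sum_{k=\ell-1,\ell-3,\ldots}(2k+1)P_k ,
\]
together with the homogeneity of $H_\ell$. At fixed $x_1$, compute
\[
\partial_yH_\ell=\ell r^{\ell-2}yP_\ell(\xi)-r^{\ell}P_\ell'(\xi)\,x_1y/r^3 .
\]
Dividing by $y$ gives
\[
y^{-1}\partial_yH_\ell=r^{\ell-2}\bigl(\ell P_\ell-\xi P_\ell'\bigr)(\xi).
\]
The Legendre identity $\xi P_\ell'-\ell P_\ell=P_{\ell-1}'$ then reduces this to $-r^{\ell-2}P_{\ell-1}'(\xi)$. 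Inserting the expansion of $P_{\ell-1}'$, which runs over $k=\ell-2,\ell-4,\ldots$ with weights $2k+1$, and writing $r^{\ell-2}P_k=H_kt^{(\ell-2-k)/2}$, gives the stated formula.

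For a polynomial $W=\sum_\ell H_\ell w_\ell(t)$, the product rule and $y^{-1}\partial_y w_\ell(t)=2w_\ell'(t)$ (since $\partial_yt=2y$) give the expression for $Z_W$. Every term is a polynomial, so $Z_W$ is a polynomial. Finally $y\partial_yW=y^2Z_W$ and $y^2=r^2\sin^2\theta=t(1-\xi^2)$.

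For (b), set $\xi=\cos\theta$, so that $U_n=\sin((n+1)\theta)/\sin\theta$ and $T_n=\cos n\theta$. We use $2C^{(2)}_{n-2}=U_{n-1}'$, recorded after \eqref{r3:eq-qseries}. Differentiating $U_{n-1}(\cos\theta)=\sin n\theta/\sin\theta$ in $\theta$ gives
\[
(1-\xi^2)U_{n-1}'(\xi)=\frac{\sin n\theta\cos\theta-n\cos n\theta\sin\theta}{\sin\theta}=\cos\theta\,U_{n-1}(\xi)-nT_n(\xi).
\]
The identity $\xi U_{n-1}=\frac12(U_n+U_{n-2})$ now yields (b). This holds for $n\geq2$, and for $n=1$ with $U_{-1}=0$.

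The main risk is sign and index bookkeeping in (a), in particular the identity $\xi P_\ell'-\ell P_\ell=P_{\ell-1}'$ and where the sum terminates. We will check it against $\ell=2$: there $H_2=x_1^2-y^2/2$, so $y^{-1}\partial_yH_2=-1$, which matches $-(2\cdot0+1)H_0$.
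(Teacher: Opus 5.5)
Your proposal is correct and follows the paper's proof essentially step for step. In (a) the chain rule gives $y^{-1}\partial_yH_\ell=r^{\ell-2}(\ell P_\ell-\xi P_\ell')(\xi)$, which the identity $\xi P_\ell'-P_{\ell-1}'=\ell P_\ell$ and the expansion of $P_{\ell-1}'$ turn into the stated sum, with $y^2=t(1-\xi^2)$ for the last claim; in (b) you use $2C^{(2)}_{n-2}=U_{n-1}'$, differentiate $\sin n\theta/\sin\theta$ and apply $2\xi U_{n-1}=U_n+U_{n-2}$, exactly as the paper does (your opening remark that you avoid differentiating $P_\ell(x_1/r)$ directly is only a wording slip, since that is what you then do).
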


\begin{proof}
(a) We have
\[
\partial_y\big(r^\ell P_\ell(x_1/r)\big)=yr^{\ell-2}\big(\ell P_\ell(\xi)-\xi P_\ell'(\xi)\big)=-yr^{\ell-2}P_{\ell-1}'(\xi),
\]
and $P'_{\ell-1}=\sum_{k\equiv\ell-2}(2k+1)P_k$ by the classical identities $\xi P_\ell'-P_{\ell-1}'=\ell P_\ell$ and $P'_{k+1}-P'_{k-1}=(2k+1)P_k$. The rest follows from $\partial_yw(t)=2yw'(t)$ and $y^2=t(1-\xi^2)$.
(b) We have $2C^{(2)}_{n-2}=U'_{n-1}$ and $2\xi U_{n-1}=U_n+U_{n-2}$; differentiating $U_{n-1}(\cos\theta)=\sin n\theta/\sin\theta$ gives $(1-\xi^2)U'_{n-1}(\xi)=\xi U_{n-1}(\xi)-nT_n(\xi)$.
\end{proof}

Let $H^\circ=g^\circ+|p^\circ|^2(1+W^\circ)$, $W^\circ_1=\partial_{x_1}W^\circ$ and $Z^\circ=Z_{W^\circ}$. These are polynomials; their Jacobi coefficients and boundary Taylor coefficients enter $V_f$. By Lemma~\ref{r3:lem-K}(c), every angular component $f_a$ of $W^\circ$ has a double zero at $t=1$, and every angular component of $W^\circ_1$ and of $Z^\circ$ vanishes at $t=1$, so the corresponding Taylor coefficients are exactly zero. Let $q^\circ_{(d)}=c^\circ_{d+1}r^dU_d(\xi)$ be the homogeneous parts of $q^\circ$, with $\tau(q^\circ_{(d)})=|c^\circ_{d+1}|\,\nrm{r^dU_d}_{\mathrm{ch}}$. Finally, for $N\geq360$ let
\[
\mathsf P(N)=\frac2{b^2R}\sum_{a,i,\varsigma}|p_ac^\circ_i|\,R^{\varsigma-a}\Big[\sum_{\nu=0}^{J_K}w^T_\nu\,\Gamma(K-2\nu)\frac{2(m+\nu)}{K-\nu+m+\frac32}+\frac{2B_0a_{J_K+1}\zeta^{J_K+1}}{(2J_K+1)(1-\zeta)}\Gamma(0)\Big],
\]
where $a\in\{0,2,\ldots,180\}$, $i\in\{1,3,\ldots,181\}$, $\varsigma\in\{-(i-1),-(i-3),\ldots,i-1\}$, $K=N-a+\varsigma$, $m=a+(i-1-\varsigma)/2$, $J_K=\lfloor K/2\rfloor$, $w^T_0=B_0/2$ and $w^T_\nu=B_0a_\nu\zeta^\nu/(2\nu-1)$ for $\nu\geq1$.

\begin{proposition}\label{r3:prop-farshape}
Let $N=620$. For every odd $j\geq N+1$,
\begin{align*}
\frac{\nrm{(I-AD\Fc(x^\circ))e_j}_{\mathcal X}}{\sigma jR^j}\leq\frac18\Big[&\frac{\mathsf U_{H^\circ}(N)}{b^2(N+1)R}+\frac{\mathsf U_{W^\circ_1}(N-1)}{b^2R^2}\\
&+\frac{\mathsf T_{Z^\circ}(N)+\big(\mathsf U_{Z^\circ}(N)+R^{-2}\mathsf U_{Z^\circ}(N-2)\big)/(2N+2)}{b^2R}\\
&+\frac2{b^2R}\sum_a|p_a|R^{-a}\sum_d\tau(q^\circ_{(d)})\,\mathsf T_{W^\circ}(N-a-d)+\mathsf P(N)\Big]=:Z_{\mathrm{far}} .
\end{align*}
\end{proposition}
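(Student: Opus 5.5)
We fix an odd $j\geq N+1$ and write $(I-AD\Fc(x^\circ))e_j=-A\Delta_j$ with $\Delta_j=D\Fc(x^\circ)e_j-\mathsf Me_j$. This identity holds because $A\mathsf Me_j=e_j$ by Lemma~\ref{r3:lem-A}. The proof then bounds $\nrm{A\Delta_j}$ term by term and divides by the weight $\sigma jR^j=8b\,jR^j$. That division produces the prefactor $\frac18$ and the factors $b^{-2}$: one $b^{-1}$ comes from $\Fc=F/b$ and the other from $\sigma=8b$.

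\textbf{Computing $D\Fc(x^\circ)e_j$.} Differentiating \eqref{r3:eq-F} in the direction $e_j$ gives five kinds of terms:
\begin{itemize}
\item[(i)] $\frac1b r^{j-1}U_{j-1}(\xi)\,H^\circ$, which comes from $\partial q_c=r^{j-1}U_{j-1}$ acting on $g^\circ$ and on $|p^\circ|^2(1+W^\circ)$;
\item[(ii)] $\frac1b\, j r^{j-2}U_{j-2}\,W^\circ_1$, from $\partial(\partial_{x_1}q)$;
\item[(iii)] $\frac{-2}{b}r^{j-3}C^{(2)}_{j-3}\,y\partial_yW^\circ$, from $\partial(y^{-1}\partial_yq)$;
\item[(iv)] $\frac2b q^\circ\re(\overline{p^\circ}jw^{j-1})\,W^\circ$;
\item[(v)] the principal polynomial $\Pi_j$, multiplied by the constant $1$.
\end{itemize}

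\textbf{Terms (i) and (ii).} For (i) we write $r^{j-1}U_{j-1}=r^{n}U_{n}\,t^0$ with $n=j-1\geq N$. Lemma~\ref{r3:lem-V} gives the bound $R^{j-1}\mathsf U_{H^\circ}(N)/b$. Dividing by $8bjR^j$ and using $j\geq N+1$ yields the first summand. Term (ii) is handled the same way with $n=j-2\geq N-1$; here the factor $j$ cancels against the weight, which gives the $R^{-2}$.

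\textbf{Term (iii).} By Lemma~\ref{r3:lem-axis}(a) we have $y\partial_yW^\circ=t(1-\xi^2)Z^\circ$. By Lemma~\ref{r3:lem-axis}(b),
\[
2r^{j-1}(1-\xi^2)C^{(2)}_{j-3}=\tfrac12r^{j-1}\big(U_{j-1}+U_{j-3}\big)-(j-1)r^{j-1}T_{j-1}.
\]
The sign sum is therefore a $T$-part with factor $j-1\leq j$, bounded by $\mathsf T_{Z^\circ}(N)$, and two $U$-parts divided by $j\geq 2N+2$ after normalisation. These give the displayed third summand.

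\textbf{Term (iv).} We expand $\overline{p^\circ}w^{j-1}=\sum_ap_a\bw^aw^{j-1}$ and $q^\circ=\sum_d q^\circ_{(d)}$. The real part is $\re(w^{j-1}\bw^a)=r^{j-1+a}T_{j-1-a}$. Applying the multiplier clause of Lemma~\ref{r3:lem-V} with $m=q^\circ_{(d)}$, $n=j-1-a$ and $t^a$ absorbed into $e$, we get $R^{j-1-a}\tau(q^\circ_{(d)})\,\mathsf T_{W^\circ}(N-a-d)$. This gives the fourth summand.

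\textbf{Term (v) is the main obstacle.} Here $\Pi_j$ is an interior polynomial, and $\mathsf Me_j$ is the harmonic element with the same trace on $\partial B$. The difference $\Pi_j-\mathsf Me_j$ therefore vanishes on $\partial B$. Expanding $q^\circ=\sum_i c^\circ_i r^{i-1}U_{i-1}$ into the monomials $r^{i-1}e^{i\varsigma\theta}$ and multiplying by $p_a r^{j-1+a}T_{j-1-a}$ gives terms of the form $r^{\text{deg}}T_K$ times a power $t^{m}$ with $m=a+(i-1-\varsigma)/2$. The corresponding boundary harmonic is $r^KT_K$. Each difference $r^KT_K(t^{m+\nu}-1)$ is controlled by Lemma~\ref{r3:lem-jacobi}(d), which gives $\nrm{t^{M'}-1}_{J^\beta,1}=2M'/(\beta+M'+1)$ with $\beta=K-2\nu+\tfrac12$. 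This produces the factors $2(m+\nu)/(K-\nu+m+\tfrac32)$.

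To finish term (v) we convert $T_K$ to Legendre form via Lemma~\ref{r3:lem-weights}(d), which gives the weights $w^T_\nu$. The angular indices $K-2\nu$ are then weighted by $\Gamma$ through Lemma~\ref{r3:lem-rows}, and the tail $\nu>J_K$ is summed geometrically. Dividing by the weight yields $\mathsf P(N)$.

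The delicate point in (v) is to check that all outputs have the angular indices claimed, so that the monotonicity of $\Gamma$ and $V_f$ makes the bound uniform in $j\geq N+1$. Once that holds, the case $j=N+1$ dominates every larger $j$.
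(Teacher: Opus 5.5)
Your proposal is the paper's proof. It uses the same identity $(I-AD\Fc(x^\circ))e_j=-A(D\Fc(x^\circ)e_j-\mathsf Me_j)$, coming from $A\mathsf Me_j=e_j$, and the same five parts. The first four are handled by Lemma~\ref{r3:lem-V}, with Lemma~\ref{r3:lem-axis} for the third and the multiplier clause for the fourth, and the last by Lemmas~\ref{r3:lem-jacobi}(d), \ref{r3:lem-weights}(d) and~\ref{r3:lem-rows}.

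Three slips should be fixed.
\begin{enumerate}
\item In (iii) the two $U$-parts carry the coefficient $\frac12$ and $j\geq N+1$, so the denominator is $2j\geq2N+2$, not $j\geq2N+2$.
\item In (v) the harmonic element with trace $T_K$ is $\mathcal T_K=\sum_\nu C_{K-2\nu,K}H_{K-2\nu}$, not $r^KT_K$, which is not harmonic in $\R^3$ for $K\geq2$. The quantity to bound is therefore $r^{K+2m}T_K-\mathcal T_K=\sum_\nu C_{K-2\nu,K}H_{K-2\nu}(t^{m+\nu}-1)$.
\item The geometric term for $\nu>J_K$ is not a tail at fixed $K$, since the sum there stops at $\nu\leq K/2$. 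It is exactly what lets $j=N+1$ dominate all larger $j$. For larger $j$ the old summands are nonincreasing in $K$, while each newly appearing summand is at most $2w^T_\nu\Gamma(0)$ because $2(m+\nu)/(K-\nu+m+\frac32)<2$.
\end{enumerate}
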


\begin{proof}
Let $j=n+1$ with $n\geq N$ even. Differentiating \eqref{r3:eq-F} in the direction $e_j$, with $\delta q=r^nU_n$, $\delta\partial_{x_1}q=jr^{n-1}U_{n-1}$, $\delta(y^{-1}\partial_yq)=-2r^{n-2}C^{(2)}_{n-2}$ and $\delta p=jw^n$ by \eqref{r3:eq-qseries}, gives
\[
bD\Fc(x^\circ)e_j=r^nU_nH^\circ+j\,r^{n-1}U_{n-1}W^\circ_1-2r^{n-2}C^{(2)}_{n-2}W^\circ_2+2q^\circ\re(\overline{p^\circ}jw^n)(1+W^\circ),
\]
with $W^\circ_2=y\partial_yW^\circ=r^2(1-\xi^2)Z^\circ$ by Lemma~\ref{r3:lem-axis}(a). Since $A\mathsf Me_j=e_j$ (Lemma~\ref{r3:lem-A}), $(I-AD\Fc(x^\circ))e_j=-A(D\Fc(x^\circ)e_j-\mathsf Me_j)$, and we bound five parts, dividing by the input weight $\sigma jR^j=8b(n+1)R^{n+1}$.

The first two terms are bounded by Lemma~\ref{r3:lem-V}, using $n+1\geq N+1$. For the third, Lemma~\ref{r3:lem-axis}(b) gives
\[
-2r^{n-2}C^{(2)}_{n-2}W^\circ_2=-r^n\Big(\frac{U_n+U_{n-2}}2-nT_n\Big)Z^\circ,\qquad r^nU_{n-2}=t\,r^{n-2}U_{n-2},
\]
and Lemma~\ref{r3:lem-V} applies, since $n/(n+1)<1$. For the fourth, we write
\[
2\re(\overline{p^\circ}w^n)=2\sum_ap_at^ar^{n-a}T_{n-a}(\xi),\qquad q^\circ=\sum_dq^\circ_{(d)},
\]
and apply the last statement of Lemma~\ref{r3:lem-V} to each product $q^\circ_{(d)}\,r^{n-a}T_{n-a}\,t^aW^\circ$.

The last part is $\frac2bq^\circ\re(\overline{p^\circ}jw^n)-\mathsf Me_j$. Since $T_{n-a}U_{i-1}=\sum_\varsigma T_{n-a+\varsigma}$, with $\varsigma$ as in the definition of $\mathsf P$ and all indices positive because $n-a-(i-1)\geq N-360>0$,
\[
\frac2bq^\circ\re(\overline{p^\circ}jw^n)=\frac{2j}b\sum_{a,i,\varsigma}p_ac^\circ_i\,r^{K+2m}T_K(\xi),\qquad K=n-a+\varsigma,\quad m=a+\frac{i-1-\varsigma}2,
\]
and $\mathsf Me_j$ is obtained by replacing $r^{K+2m}T_K$ by $\mathcal T_K$, which has the same boundary values. Now $r^{K+2m}T_K-\mathcal T_K=\sum_\nu C_{K-2\nu,K}H_{K-2\nu}(t^{m+\nu}-1)$, and by Lemma~\ref{r3:lem-jacobi}(d) the function $H_\ell(t^M-1)$ has $\mathcal L$-coefficients in angular index $\ell$ only, with $\ell^1$ norm $2M/(\ell+M+\frac32)$ in the basis $J^{\ell+1/2}$. With Lemmas~\ref{r3:lem-rows} and~\ref{r3:lem-weights}(d),
\[
\nrm{A(r^{K+2m}T_K-\mathcal T_K)}\leq R^K\sum_{\nu\leq K/2}w^T_\nu\,\Gamma(K-2\nu)\,\frac{2(m+\nu)}{K-\nu+m+\frac32} .
\]
For fixed $(a,i,\varsigma)$, $R^{K-n}=R^{\varsigma-a}$ and $m$ do not depend on $n$, while increasing $n$ increases $K$; each summand is nonincreasing in $K$, and the new summands with $\nu>J_K$ that appear for larger $K$ are bounded by $2w^T_\nu\Gamma(0)$, whose sum over $\nu>J_K$ is at most the last term of the bracket. Hence the bracket in $\mathsf P(N)$, evaluated at $K=N-a+\varsigma$, bounds the corresponding sum for every $n\geq N$. Collecting the five parts proves the claim.
\end{proof}

The program evaluates an upper bound for $\mathsf P(N)$ by grouping the sum over $\nu$ into blocks and bounding each block by its mass times the values of the factors $\Gamma(K-2\nu)$ and $2(m+\nu)/(K-\nu+m+\frac32)$, both nondecreasing in $\nu$, at the end of the block.

Collecting the finite, exceptional, radial, near-angular, uniform-angular, explicit-shape and far-shape classes,
\[
Z=\max\big\{Z_{\mathrm{fin}},\ Z_{\mathrm{ex}},\ \max_{\ell}Z_{\mathrm{rad}}(\ell),\ Z_{\mathrm{ang}},\ Z_{\mathrm{ang,near}},\ Z_{\mathrm{sh}},\ Z_{\mathrm{far}}\big\}
\]
bounds $\nrm{I-AD\Fc(x^\circ)}_{\mathcal X\to\mathcal X}$.

\subsection{Certified bounds and the exact zero}\label{r3:sec-zero}
The bounds below are outward roundings of the interval enclosures in \texttt{anc/r3\_convex/VERIFICATION\_RECEIPT.json}. The residual uses the entire polynomial support and the fixed dyadic $b$, not a moving value of $c_1$. The certified finite inverse is nonsingular: the exact dyadic product defect, including a rigorous binary64 dot-product error, is less than $0.000173566$. The boundary-symbol defect is less than $0.015579$, hence both blocks of $A$ are invertible.

\begin{table}[ht]
\centering\footnotesize
\begin{tabular}{@{}ll@{}}
\toprule
Quantity & Certified bound\\
\midrule
$\nrm A$ & $<103241.73$\\
$Y=\nrm{A\Fc(x^\circ)}_{\mathcal X}$ & $<9.358\cdot10^{-9}$\\
$Z_{\mathrm{fin}}$, all $457$ exceptional field columns & $<0.283115$, $<0.669070$\\
$Z_{\mathrm{ang,near}}$ (fourteen radial classes) & $<0.559208$\\
$\max_\ell Z_{\mathrm{rad}}(\ell)$, $Z_{\mathrm{ang}}$ & $<0.776701$, $<0.767365$\\
$Z_{\mathrm{sh}}$, $Z_{\mathrm{far}}$ & $<0.547873$, $<0.501461$\\
$Z=\nrm{I-AD\Fc(x^\circ)}_{\mathcal X\to\mathcal X}$ & $<0.776701$\\
$c_2$, $c_3$, $c_4$ & $<3.163938$, $<1.304336\cdot10^{-5}$, $<1.317589\cdot10^{-11}$\\
$r=10^{-7}$: radii polynomial & $<-9.705\cdot10^{-9}$\\
$r=10^{-7}$: contraction bound & $<0.8421$\\
\bottomrule
\end{tabular}
\caption{The convex three-dimensional certificate for $M=180$, $S=80$, $R=21/20$ and $\sigma=8b$. The final $Z$ is the maximum over every field and shape class. The full interval values are in the receipt.}
\label{r3:tab-bounds}
\end{table}

\begin{theorem}\label{r3:thm-zero}
There is exactly one zero $x^*=(g^*,c^*)$ of $\Fc$ in $\overline B_{\mathcal X}(x^\circ,10^{-7})$.
\end{theorem}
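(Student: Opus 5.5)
The plan is to apply Proposition~\ref{r3:prop-NK} with the operator $A$ of \eqref{r3:eq-A}, the radius $r=10^{-7}$, and the certified bounds of Table~\ref{r3:tab-bounds}. The argument has three steps.

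\textbf{Step 1: $A$ is a bounded injective operator.} Two conditions must hold. First, \eqref{r3:eq-eta} requires $\eta<1$. The receipt certifies that the exact dyadic product defect of $\widehat A J^{\mathrm{fl}}$, including the binary64 dot-product error, is below $0.000173566$. Second, Lemma~\ref{r3:lem-principal} requires $\eps<1$, and the boundary-symbol defect is certified below $0.015579$. Lemma~\ref{r3:lem-A} then gives that $A\colon\mathcal Y\to\mathcal X$ is a bounded bijection, so in particular it is injective. We take $\mathsf a=103241.73$ as a rational majorant of $\nrm A$.

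\textbf{Step 2: $Y$ and $Z$.} The bound on $Y$ comes from the complete residual expansion, combined with the column norms of $A$ from Lemma~\ref{r3:lem-A}. The bound on $Z$ comes from the partition of the input columns described in Section~\ref{r3:sec-Z}. The field inputs split into:
\begin{itemize}
\item finite columns;
\item $457$ exceptional columns;
\item radial classes for $s\geq S_\ell$;
\item fourteen near-angular classes;
\item the uniform angular tail.
\end{itemize}
The shape inputs split into $310$ explicit columns and the far class $j\geq621$. These sets are disjoint and exhaust every basis input. Their bounds come respectively from the finite-column decomposition, Proposition~\ref{r3:prop-gtail} and Proposition~\ref{r3:prop-farshape}. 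By Lemma~\ref{lem:columns}, $Z$ is the maximum of these class bounds, which is $Z<0.776701$, attained by a radial class.

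\textbf{Step 3: the radii polynomial inequalities.} Insert rational majorants for $Y,Z,\mathsf a,c_2,c_3,c_4$ into \eqref{r3:eq-radii} and check both inequalities at $r=10^{-7}$ in exact rational arithmetic.

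The first inequality is dominated by $Y+(Z-1)r\approx 9.36\cdot10^{-9}-2.23\cdot10^{-8}<0$. The quadratic term is $\mathsf a\,c_2r^2\approx 3.3\cdot10^{-9}$, which leaves a margin of about $-9.7\cdot10^{-9}$, as recorded in the table.

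The second inequality reads $Z+\mathsf a(2c_2r+\dots)\approx 0.7767+0.0653<1$, which gives $0.8421$. The cubic and quartic terms are negligible in both inequalities.

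\textbf{Conclusion and main obstacle.} Proposition~\ref{r3:prop-NK} then yields a unique zero of $\Fc$ in $\overline B_{\mathcal X}(x^\circ,10^{-7})$. The arithmetic is routine. The only real obstacle is making sure the inputs to Step 2 are valid, namely that the column classes genuinely cover all inputs and that the exceptional-column truncation error is properly added. Both points are settled by the preceding propositions, so the proof itself reduces to citing the certified bounds.
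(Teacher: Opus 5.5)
Your proposal is correct and follows the paper's proof essentially verbatim. It obtains an injective $A$ from the certified product defect $\eta<1$ and symbol defect $\eps<1$ via Lemmas~\ref{r3:lem-principal} and~\ref{r3:lem-A}, uses the exhaustive column partition to get $Z<0.776701$, and applies Proposition~\ref{r3:prop-NK} at $r=10^{-7}$. Your arithmetic (first radii expression about $-9.705\cdot10^{-9}$, contraction bound about $0.8420<1$) agrees with the receipt. Using the outward-rounded table values in place of the Arb enclosures is legitimate, since both expressions in \eqref{r3:eq-radii} are increasing in $Y,Z,\mathsf a,c_2,c_3,c_4$.
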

\begin{proof}
The finite dyadic product and the Toeplitz symbol bounds give an injective $A$ by Lemmas~\ref{r3:lem-principal} and~\ref{r3:lem-A}. The partition above covers all derivative columns and yields $Z<0.776701$. The verifier evaluates both inequalities \eqref{r3:eq-radii} with the full Arb enclosures of $Y,Z,\nrm A,c_2,c_3,c_4$ at the exact radius $10^{-7}$. Their left sides are respectively below $-9.705\cdot10^{-9}$ and $0.8421<1$, so Proposition~\ref{r3:prop-NK} applies.
\end{proof}

\begin{remark}[Shape scale]\label{r3:rem-centre}
The exact $b$ is held fixed while the shape varies. At the same centre, the scale $\sigma=3b$ cannot certify this fixed-point operator: the normalised defect of shape column $j=183$ is strictly greater than $1.15486483728$, as recorded in \url{anc/r3_convex/SCALE3_OBSTRUCTION.json}. This excludes contraction for that operator in that norm; it says nothing about other operators or centres. The scale $8b$ used above gives $Z<0.776701$ and $c_2<3.163938$.
\end{remark}

\subsection{Reconstruction and convexity}\label{r3:sec-recon}
Let $\psi=\psi_{c^*}$, and put $\varepsilon=\nrm{c^*-c^\circ}_{\mathcal E}\leq r/\sigma$. The existence ball projects inside the shape ball of $\mathcal E$-radius one; in fact the projected radius is $68719476736/595786147695156171875<1$. Write
\[
D_1=\sum_{j\geq3}j|c_j^\circ|,\quad D_2=\sum_{j\geq3}j(j-1)|c_j^\circ|,\quad H=\sup_{j\geq3,\ j\,\mathrm{odd}}(j-1)R^{-j}.
\]
The last supremum is a finite maximum through $j=21$, since the ratio of successive odd-index terms is at most one for $j\geq21$.

\begin{proposition}\label{r3:prop-geometry}
For every $c$ with $\nrm{c-c^\circ}_{\mathcal E}\leq1$, the map $\psi_c$ is univalent on $|w|\leq101/100$, its image $D=\psi_c(\D)$ is strictly convex and strictly star-shaped, and $c_{19}\ne0$. More precisely, on the closed unit disc,
\begin{equation}\label{r3:eq-geom}
\re\left(1+\frac{w\psi_c''(w)}{\psi_c'(w)}\right)>0.7878,
\end{equation}
and $\re\psi_c'>106.2027$ on $|w|\leq101/100$, $\re(w\psi_c'/\psi_c)>0.9717$ on $|w|=1$, and $|c_{19}|>0.000665$.
\end{proposition}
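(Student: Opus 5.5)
The plan is to treat every $c$ with $\nrm{c-c^\circ}_{\mathcal E}\leq1$ as a perturbation $\delta=c-c^\circ$ of the centre. Each quantity in \eqref{r3:eq-geom} and the lines after it then splits into a finite polynomial quantity at $c^\circ$, to be enclosed in interval arithmetic, plus an error that is uniform over the whole radius-one ball. The error estimates come directly from the weights $jR^j$ of $\mathcal E$. Throughout, $\nrm\delta_{\mathcal E}=\sum_j jR^j|\delta_j|\leq1$, and this includes $j=1$.

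\emph{Step 1: perturbation bounds.} I will use three facts.
\begin{itemize}
\item For $|w|\leq\rho'\leq R$ one has $j\rho'^{\,j-1}\leq jR^j/R$, so $|\psi_\delta'(w)|\leq\sum_jj|\delta_j|\rho'^{\,j-1}\leq1/R$.
\item On $|w|\leq1$ one has $|\psi_\delta''(w)|\leq\sum_{j\geq3}j(j-1)|\delta_j|\leq\sup_{j\geq3}\frac{j-1}{R^j}\cdot\nrm\delta_{\mathcal E}\leq H$. The supremum $H$ is a finite maximum through $j=21$, as noted before the statement.
\item On $|w|\leq1$ one has $\sum_j|\delta_j|\leq1/R$ and $\sum_{j\geq3}(j-1)|\delta_j|\leq\sup_{j\geq3}(j-1)/(jR^j)\leq 2/(3R^3)$. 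In particular $|\delta_1|\leq1/R$ and $|\delta_{19}|\leq1/(19R^{19})$.
\end{itemize}
All of these are suprema over all $j$, so the infinite shape tail of $c$ is covered, exactly as in Propositions~\ref{prop:convex6} and~\ref{lie:prop:geometry}.

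\emph{Step 2: certified inequalities.}
\begin{itemize}
\item On $|w|\leq101/100$, I will use $\re\psi_c'\geq c_1^\circ-\sum_{j\geq3}j|c^\circ_j|(101/100)^{j-1}-1/R$. The finite sum is enclosed in ball arithmetic, and this should give $>106.2027$.
\item On $\overline\D$, I will use $|\psi_c'|\geq c_1^\circ-D_1-1/R=:L$ and $|\psi_c''|\leq D_2+H=:U$, so that $\re(1+w\psi_c''/\psi_c')\geq1-U/L$. This should give $0.7878$.
\item On $|w|=1$, I will use $|\psi_c(w)-c_1w|\leq s_0$ and $|w\psi_c'-\psi_c|\leq s_1$, with $s_0=\sum_{j\geq3}|c^\circ_j|+1/R$ and $s_1=\sum_{j\geq3}(j-1)|c^\circ_j|+2/(3R^3)$. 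With $c_1\geq c_1^\circ-1/R$ this gives $\re(w\psi_c'/\psi_c)\geq1-(s_0+s_1)/(c_1-s_0)$, and the check is that this exceeds $0.9717$.
\item Finally, $|c_{19}|\geq|c^\circ_{19}|-1/(19R^{19})$, and the check is that this exceeds $0.000665$.
\end{itemize}
Each of these is a finite interval computation on the stored dyadic centre.

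\emph{Step 3: geometric conclusions.} These follow from the inequalities exactly as in Propositions~\ref{prop:geometry} and~\ref{prop:convex}.
\begin{itemize}
\item $\re\psi_c'>0$ on the convex closed disc $|w|\leq101/100$ gives univalence there, via the integral of $\psi_c'$ along segments.
\item The bound on $\re(w\psi_c'/\psi_c)$, together with the winding number one of $\psi_c(e^{i\theta})$ about $0$ (since $s_0<c_1$), makes $\partial D$ a radial graph, so $D$ is strictly star-shaped.
\item The bound on $\re(1+w\psi_c''/\psi_c')$ on $\partial\D$ gives strictly monotone turning of the tangent through total angle $2\pi$, hence strict convexity.
\end{itemize}

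\emph{Main obstacle.} I expect the main difficulty to be ensuring that the crude uniform constants, namely $1/R\approx0.95$ in $\psi'$, $H$ in $\psi''$, and $1/(19R^{19})\approx0.02$ in $c_{19}$, still leave the stated margins over the full radius-one ball. If the $c_{19}$ margin fails, I would first sharpen the bound using $|\delta_{19}|\leq(1-\text{other terms})/(19R^{19})$. Failing that, I would use the fact that $\sum j(j-1)|\delta_j|$ is bounded by $H$ only through $j\leq21$, together with a geometric tail beyond that. The evaluation itself is routine ball arithmetic on the $91$ centre coefficients.
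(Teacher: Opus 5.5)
Your route is the same as the paper's. You use the same uniform perturbation constants over the radius-one $\mathcal E$-ball: $1/R$ for $\psi_\delta'$, $H$ for $\psi_\delta''$, and $1/(19R^{19})$ for $\delta_{19}$. You use the same convexity bound $1-(D_2+H)/(c_1^\circ-D_1-1/R)$, and the same segment-integration, winding-number and tangent-turning arguments. The paper uses exactly your $c_{19}$ estimate and still obtains $>0.000665$, so no sharpening is needed there.

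There is, however, one false constant in Step~1. Over odd $j\geq3$, the quantity $(j-1)/(jR^j)=(1-1/j)R^{-j}$ is not maximised at $j=3$. Its logarithmic derivative in $j$ is $1/(j(j-1))-\log R$, so it increases up to $j\approx5$. The supremum is therefore $4/(5R^5)\approx0.627$, attained at $j=5$, and this exceeds $2/(3R^3)\approx0.576$. As written, your $s_1$ is not a valid upper bound, so the star-shapedness check would not be certified.

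Replace $2/(3R^3)$ by $4/(5R^5)$; the margin for $0.9717$ survives. Your first check, $c_1^\circ-\sum_{j\geq3}j|c^\circ_j|(101/100)^{j-1}-1/R>106.2027$, already forces $D_1<1.22$. Then
\[
\frac{s_0+s_1}{c_1^\circ-1/R-s_0}=\frac{D_1+1/R+4/(5R^5)}{c_1^\circ-2/R-\sum_{j\geq3}|c_j^\circ|}<0.027 .
\]
Alternatively, use the sharper and correct estimate $|w\psi_c'/\psi_c-1|=|w\psi_c'-\psi_c|/|\psi_c|\leq s_1/(c_1-s_0)$ in place of $(s_0+s_1)/(c_1-s_0)$. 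This leaves ample room even with a crude constant such as $R^{-3}$ for $\sum_{j\geq3}(j-1)|\delta_j|$.
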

\begin{proof}
For $|w|\leq1$ the triangle inequality gives
\[
\re\left(1+\frac{w\psi_c''}{\psi_c'}\right)
\geq1-\frac{D_2+\varepsilon H}{c_1^\circ-D_1-\varepsilon/R}.
\]
The denominator is positive, and the interval computation gives a lower bound $>0.7878989239$ at the worst case $\varepsilon=1$. The same coefficient inequalities on the larger disc give $\re\psi_c'>106.2027$ there. Integration along line segments proves univalence; vertical segments give $q_c=\im\psi_c/y>0$. The separate boundary estimate gives the stated strict star-shapedness, and the weighted coefficient bound gives $|c_{19}|\geq|c_{19}^\circ|-\varepsilon/(19R^{19})>0.000665$.

For $\gamma(\theta)=\psi_c(e^{i\theta})$, its tangent angle derivative is $\re(1+w\psi_c''/\psi_c')$, which is positive by \eqref{r3:eq-geom}. As the simple tangent winds once, every oriented tangent line supports the interior, so $D$ is strictly convex. It is symmetric under reflection in both axes because $\psi_c$ is odd with real coefficients. The lift $\Theta_c(B)$ is convex exactly when $D$ is: one direction follows by intersecting with a meridian plane; conversely, the vertical sections of $D$ are symmetric intervals, and for $X,Y$ in the solid, convexity of $D$ and $|tX'+(1-t)Y'|\leq t|X'|+(1-t)|Y'|$ show that $tX+(1-t)Y$ lies in the solid for $0\leq t\leq1$.
\end{proof}

\begin{lemma}\label{r3:lem-regularity}
Let $\Omega\subset\R^3$ be a bounded domain with real-analytic boundary, and let $u\in C^1(\overline\Omega)$ satisfy $\int_\Omega(\nabla u\cdot\nabla\varphi-u\varphi)=0$ for all $\varphi\in C_c^\infty(\Omega)$ and $u=1$ on $\partial\Omega$. Then $u$ extends to a real-analytic function on an open neighbourhood of $\overline\Omega$ which satisfies $\Delta u+u=0$ there.
\end{lemma}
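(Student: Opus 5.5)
The statement is the three-dimensional analogue of Lemmas~\ref{lem:interior} and~\ref{lem:boundary}, and I would prove it in the same three steps: interior analyticity, local continuation across $\partial\Omega$ by a Cauchy--Kovalevskaya argument, and gluing. Note that the hypotheses contain no Neumann condition, so the continuation must use the Dirichlet datum $u=1$ alone.

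\emph{Interior.} I would repeat the proof of Lemma~\ref{lem:interior} verbatim in $\R^3$. The function $\mathcal H(x,\zeta)=e^\zeta u(x)$ is a continuous weak harmonic function on $\Omega\times\R\subset\R^4$. Weyl's lemma then makes it real analytic, and hence $u$ is real analytic in $\Omega$ with $\Delta u+u=0$ there.

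\emph{Boundary.} This is the main step, since only the Dirichlet datum is given. The plan is to show that $u$ is real analytic up to $\partial\Omega$. For this I would invoke analytic elliptic boundary regularity for the Dirichlet problem of $\Delta+1$ with analytic data on an analytic boundary (Morrey--Nirenberg). First, $u\in C^1(\overline\Omega)$ is a weak solution with boundary value $1$. In local analytic coordinates flattening $\partial\Omega$, the equation becomes an analytic-coefficient elliptic equation with a flat boundary. Standard $H^k$ boundary estimates, iterated, give $u\in C^\infty(\overline\Omega)$. The Morrey--Nirenberg theorem then upgrades this to real analyticity of $u$ on $\overline\Omega$ near each boundary point, meaning $u$ is the restriction of an analytic function on a full neighbourhood.

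\emph{Continuation and gluing.} Once $u$ is analytic up to the boundary near $z_0\in\partial\Omega$, its Cauchy data $(u,\partial_\nu u)$ on $\partial\Omega$ are analytic. Cauchy--Kovalevskaya (or the Holmgren/analytic extension) then gives an analytic solution $\widetilde u$ of $\Delta\widetilde u+\widetilde u=0$ in a ball $U$ about $z_0$ with the same Cauchy data. By uniqueness of the analytic Cauchy problem, $\widetilde u=u$ on $U\cap\Omega$. Alternatively, the local power series of $u$ at $z_0$ already converges in a ball, and $\Delta u+u$ vanishes on the open set $U\cap\Omega$ and therefore on all of $U$. The local extensions are then glued as in the last step of Lemma~\ref{lem:boundary}: any two of them coincide with $u$ on a nonempty open subset of $\Omega$, hence agree on the connected overlap by the identity theorem. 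This produces an analytic extension on a neighbourhood of $\overline\Omega$. It is real valued because $u$ is.

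The step I expect to be the main obstacle is quoting boundary analyticity in a form that applies to a merely $C^1$ weak solution. To handle it, I would first note that $v=u-1$ solves $\Delta v+v=-1$ weakly with $v\in H^1_0(\Omega)$. I would then apply classical $H^2$ and higher regularity before Morrey--Nirenberg.
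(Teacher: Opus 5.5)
Your proposal is correct and follows essentially the paper's route: write $v=u-1\in W^{1,2}_0(\Omega)$ as a weak solution of $\Delta v+v=-1$, use global $W^{k,2}$ regularity for the Dirichlet problem to get $v\in C^\infty(\overline\Omega)$, and apply Morrey--Nirenberg to obtain local analytic extensions across $\partial\Omega$. These extensions satisfy the equation by the identity theorem, which is your ``alternative'' to Cauchy--Kovalevskaya and the one the paper uses, and they are then glued as in Lemma~\ref{lem:boundary}. The only step you leave implicit is why $v\in W^{1,2}_0(\Omega)$; the paper justifies it by approximating $v$ in $W^{1,2}$ with the compactly supported Lipschitz truncations $\max(|v|-\varepsilon,0)\operatorname{sign}v$.
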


\begin{proof}
Let $v=u-1$. For $\varepsilon>0$ the function $\max(|v|-\varepsilon,0)\operatorname{sign}v$ is Lipschitz with compact support in $\Omega$ and converges to $v$ in $W^{1,2}(\Omega)$ as $\varepsilon\to0$, so $v\in W^{1,2}_0(\Omega)$; it is a weak solution of $\Delta v+v=-1$. By global regularity for the Dirichlet problem \cite[Theorem~8.13]{GT01}, applied for every $k$, $v\in W^{k,2}(\Omega)$ for all $k$, hence $v\in C^\infty(\overline\Omega)$. By the theorem of Morrey and Nirenberg on analyticity up to the boundary \cite{MN57} (constant coefficients, analytic right-hand side, analytic boundary and zero Dirichlet data), every point of $\partial\Omega$ has a ball around it to which $v|_\Omega$ extends real-analytically; the extension solves $\Delta v+v=-1$ by the identity theorem. Interior analyticity is classical (see also Lemma~\ref{lem:interior}). The local extensions are glued to an extension on a neighbourhood of $\overline\Omega$ exactly as in the last step of the proof of Lemma~\ref{lem:boundary}.
\end{proof}

\subsubsection*{Proof of Theorem~\ref{thm:r3}}
The estimates of Proposition~\ref{r3:prop-geometry} hold for $c^*$ because $r/\sigma<1$. The proofs of Lemmas~\ref{r3:lem-lift} and~\ref{r3:lem-pullback} apply on the smaller disc $|w|<101/100$, a neighbourhood of the closed unit disc; they give an analytic diffeomorphism $\Theta_{c^*}$ near $\overline B$, with Jacobian $q_{c^*}|p_{c^*}|^2>0$. Let $\Omega=\Theta_{c^*}(B)$, $U=1+\iota(K_3g^*)$, and $u=U\circ\Theta_{c^*}^{-1}$. Since $\Fc(x^*)=0$, Lemma~\ref{r3:lem-F} yields the pulled-back weak Helmholtz equation and zero Cauchy traces of $U-1$; Lemma~\ref{r3:lem-pullback} transfers these to $u$. The analytic lift across the rotation axis is valid because $\psi$ has odd real coefficients and $q=\im\psi/y$ is analytic in $(x_1,y^2)$. Interior analyticity also follows by harmonic lifting or elliptic regularity, and Lemma~\ref{r3:lem-regularity} extends $u$ analytically across the boundary.

The positive boundary bound makes $\Omega$ strictly star-shaped. The solid-of-revolution argument in Proposition~\ref{r3:prop-geometry} makes it convex. The lift of the sphere is a compact real-analytic surface diffeomorphic to $S^2$ and has the stated $O(2)\times\mathbf Z_2$ invariance. A constant solution of $\Delta u+u=0$ would be zero, contradicting $u=1$ on the boundary. If $\Omega$ were a ball, its symmetries would put its centre at zero, and $D$ would be a disc centred at zero. Schwarz's lemma applied to the normalised conformal map and its inverse would force $\psi$ to be linear, contrary to $c_{19}\ne0$. This proves the theorem.

\subsection{An additional nonconvex example}\label{r3:sec-AX3}
The earlier three-dimensional construction is independent of the convex one and remains useful as a second exact branch.
\begin{proposition}\label{r3:prop-AX3}
There is a bounded, strictly star-shaped, non-ball domain $\Omega_{\mathrm{old}}\subset\R^3$ with real-analytic boundary diffeomorphic to $S^2$ and a nonconstant analytic solution of \eqref{eq:main}. It has the same $O(2)\times\mathbf Z_2$ symmetry, but is not convex: both principal curvatures at the two poles, with respect to the inner normal, are negative.
\end{proposition}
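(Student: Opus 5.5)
The proof will repeat the convex three-dimensional argument of Sections~\ref{r3:sec-pullback}--\ref{r3:sec-recon} word for word, changing only the numerical data. The analytic framework does not depend on the centre: the pull-back by $\Theta_c$ (Lemmas~\ref{r3:lem-lift}, \ref{r3:lem-pullback}), the quartic map $\Fc$ on $\mathcal X=\Gc\times\mathcal E$ (Lemma~\ref{r3:lem-F}), the nonlinear constants (Lemma~\ref{r3:lem-nonlinear}), Proposition~\ref{r3:prop-NK}, and the tail operator $A$ assembled from a finite inverse and the Toeplitz inverse of Lemma~\ref{r3:lem-principal}. All of these hold for any dyadic centre of the same odd/even symmetry type. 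So we fix a second dyadic centre $x^\circ_{\mathrm{old}}$ near the ball of radius $j_{3/2,15}\approx48.67414$, seeded by the near-resonant zonal mode, together with its own truncation $(M,S)$, scale $b$, weight $\sigma$ and finite inverse $\widehat A$. We then run the same class-by-class column bounds: the finite and exceptional columns, the radial and angular envelopes of Proposition~\ref{r3:prop-gtail}, the explicit shape columns, and the far shape bound of Proposition~\ref{r3:prop-farshape}. Finally we check the two radii inequalities \eqref{r3:eq-radii} at a small radius $r$. The result is a unique zero $x^*_{\mathrm{old}}$.

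For the reconstruction we follow Proposition~\ref{r3:prop-geometry}, but drop the convexity estimate \eqref{r3:eq-geom}. For every $c$ in the projected shape ball we keep three certified coefficient bounds: $\re\psi_c'>0$ on $|w|\le 101/100$ (univalence and $q_c>0$), $\re(w\psi_c'/\psi_c)>0$ on $|w|=1$ (strict star-shapedness), and a lower bound $|c_k|>0$ for some $k\ge3$ (non-ball, by the Schwarz-lemma argument). Given these, the proof of Theorem~\ref{thm:r3} together with Lemma~\ref{r3:lem-regularity} produces, unchanged, the domain $\Omega_{\mathrm{old}}=\Theta_{c^*}(B)$, its analytic spherical boundary, its $O(2)\times\Z_2$ symmetry, and the nonconstant analytic solution $u$. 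None of these steps uses convexity.

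Nonconvexity is proved by computing the curvatures at the poles $\Theta(\pm1,0)=(\pm\psi(1),0)$, where $\psi(1)>0$. By the symmetry, both principal curvatures there are equal: the meridian curvature of $\partial D$ at $w=1$. For the conformal image of the unit circle this curvature is $\re(1+w\psi''/\psi')/|\psi'|$ evaluated at $w=1$. With real coefficients this reduces to $(1+\psi''(1)/\psi'(1))/\psi'(1)$, since $\psi'(1)>0$. It therefore suffices to certify, uniformly over the shape ball, that
\[
\textstyle\sum_j j c_j+\sum_j j(j-1)c_j<0 .
\]
This is an affine function of $c$. Its value at the centre is computed in interval arithmetic, and its deviation over the ball is bounded by $\sum_j j^2|\delta c_j|\le \varepsilon\,\sup_j j^2/(jR^j)$, where $\varepsilon$ is the projected shape radius.

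The main obstacle is the full computer-assisted certification at the new centre, in particular obtaining $Z<1$. At the smaller radius near $j_{3/2,15}$ the Toeplitz symbol defect $\eps$ of Lemma~\ref{r3:lem-principal} may be larger, and the choice of $\sigma$ is delicate (compare Remark~\ref{r3:rem-centre}). If $Z$ turns out too large, the first adjustments to try are enlarging $M,S$ and retuning $\sigma$.
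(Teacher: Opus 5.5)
Your proposal is essentially the paper's proof. The paper also re-runs the three-dimensional contraction framework at an independent centre near $j_{3/2,15}$, with $M=180$, $S=64$, $\sigma=3b_{\mathrm{old}}$ and $r=3\cdot10^{-7}$. It reconstructs the domain as in the convex case but without the convexity estimate, and it obtains nonconvexity from $\psi'(1)>0$ and $\sum_j j^2c_j<0$, holding uniformly over the ball via the same deviation bound $\varepsilon\sup_j jR^{-j}$. The one point your write-up leaves open, whether $Z<1$ is attainable, is settled there by the certified bounds of Table~\ref{r3:tab-old}, which give $Z<0.858929$ with the scale $\sigma=3b_{\mathrm{old}}$ that fails for the convex centre.
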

\begin{proof}
Apply the preceding analytic lemmas and the contraction criterion with the independent AX3 centre and its parameters $M=180$, $S=64$, $R=21/20$, $b_{\mathrm{old}}=\texttt{0x1.85067a9afca2dp+5}$, $\sigma=3b_{\mathrm{old}}$ and $r=3\cdot10^{-7}$. Its complete verifier and two certificate files are in \texttt{anc/r3/}; the certified bounds are displayed in Table~\ref{r3:tab-old}. The corresponding radius polynomial is $<-4.81\cdot10^{-9}$ and contraction bound is $<0.959201$. Its geometry certificate gives $\re(w\psi'/\psi)>0.83$ on the unit circle and a nonzero $c_{13}$, so the same reconstruction gives an analytic, strictly star-shaped non-ball domain and solution. Nonconvexity follows from Proposition~\ref{r3:prop-nonconvex}.
\end{proof}
\begin{table}[ht]
\centering\footnotesize
\begin{tabular}{@{}ccccccc@{}}
\toprule
$(M,S)$ & $\sigma$ & $\nrm A$ & $Y$ & $Z$ & $r$ & Geometry\\
\midrule
$(180,64)$ & $3b_{\mathrm{old}}$ & $<5849.52$ & $<2.10029\cdot10^{-8}$ & $<0.858929$ & $3\cdot10^{-7}$ & nonconvex at poles\\
\bottomrule
\end{tabular}
\caption{The independent AX3 certificate; see \texttt{anc/r3/certificate\_r3.json}.}
\label{r3:tab-old}
\end{table}

\begin{proposition}\label{r3:prop-nonconvex}
For the AX3 centre $c^{\mathrm{old},\circ}$, exact rational arithmetic gives
\[
42.81903<\sum_jjc_j^{\mathrm{old},\circ}<42.81904,\qquad
-7.758015<\sum_jj^2c_j^{\mathrm{old},\circ}<-7.758014.
\]
Throughout its certified existence ball,
$1+\psi''(1)/\psi'(1)<-0.181$; hence both principal curvatures at each pole are negative.
\end{proposition}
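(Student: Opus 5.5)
The plan is to reduce everything to the two real numbers $\psi'(1)=\sum_j jc_j$ and $\psi''(1)=\sum_j j(j-1)c_j$, first at the centre and then throughout the existence ball. Since the AX3 centre $c^{\mathrm{old},\circ}$ consists of finitely many dyadic rationals, the two displayed sums $\sum_j jc^{\mathrm{old},\circ}_j$ and $\sum_j j^2c^{\mathrm{old},\circ}_j$ are finite exact rational numbers. I would evaluate them in exact rational arithmetic and record the stated enclosures.

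At the centre, $\psi''(1)=\sum j^2c_j-\sum jc_j$, so
\[
1+\frac{\psi''(1)}{\psi'(1)}=\frac{\sum_j j^2c_j}{\sum_j jc_j}\approx\frac{-7.758}{42.819}\approx-0.18118 ,
\]
which leaves a margin of order $10^{-4}$ below $-0.181$. This margin must absorb the perturbation.

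For a general $c$ in the certified ball, put $\delta=c-c^{\mathrm{old},\circ}$. Then $\varepsilon=\nrm{\delta}_{\mathcal E}\leq r/\sigma=3\cdot10^{-7}/(3b_{\mathrm{old}})$, which is of order $10^{-9}$. Exactly as in Section~\ref{r3:sec-recon}, the weight $jR^j$ gives
\[
\sum_j j|\delta_j|\leq\varepsilon/R,\qquad \sum_j j(j-1)|\delta_j|\leq\varepsilon H,\qquad H=\sup_{j\ \mathrm{odd}}(j-1)R^{-j}.
\]
Hence $\psi'(1)\geq42.81903-\varepsilon/R>0$ and $\psi''(1)\leq\psi''_\circ(1)+\varepsilon H$. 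Since $\psi''(1)<0$ and $\psi'(1)>0$ at the centre, the ratio is largest when $\psi''(1)$ is as large as possible and $\psi'(1)$ as large as possible. This yields the explicit upper bound
\[
1+\frac{\psi''(1)}{\psi'(1)}\leq1+\frac{\psi''_\circ(1)+\varepsilon H}{\psi'_\circ(1)+\varepsilon/R},
\]
which I would check to be $<-0.181$ in rational arithmetic. The only care needed here is the direction of each inequality and the fact that $H$ is a finite maximum, as noted before Proposition~\ref{r3:prop-geometry}.

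For the geometry, because $\psi$ has real coefficients, $w=1$ is mapped to the pole $\Theta(e_1)=(\psi(1),0,0)$ on the axis, and $\psi'(1),\psi''(1)$ are real. The meridian curve $\gamma(\theta)=\psi(e^{i\theta})$ has signed curvature $\re(1+w\psi''/\psi')/|\psi'|$ at $\theta=0$. With respect to the inner normal this quantity is positive exactly where the boundary is locally convex, as used in Proposition~\ref{prop:convex}. Here it equals $(1+\psi''(1)/\psi'(1))/\psi'(1)<0$.

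At a pole of an analytic surface of revolution, $O(2)$-invariance forces the second fundamental form to be a multiple of the identity. Hence the azimuthal principal curvature equals the meridian one, and both are negative. The same holds at the other pole $w=-1$ by the symmetry $x_1\mapsto-x_1$.

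I expect no real obstacle: the main risk is bookkeeping, namely confirming that the perturbation term $\varepsilon H$ is far smaller than the margin of about $1.8\cdot10^{-4}$. This is immediate given $\varepsilon\sim10^{-9}$.
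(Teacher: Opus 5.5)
Your proposal is correct and follows the paper's argument. You perturb the exact centre values of $\psi'(1)$ and of the second-derivative sum using the weight $jR^j$ of $\nrm{\cdot}_{\mathcal E}$ with $\varepsilon\le r/(3b_{\mathrm{old}})<2.06\cdot10^{-9}$, then read off the meridian curvature $(1+\psi''(1)/\psi'(1))/\psi'(1)<0$ and use the rotational symmetry at the poles. The only cosmetic difference is that the paper bounds the perturbation of $\psi'(1)+\psi''(1)=\sum_j j^2c_j$ directly by $\sup_{t>0}tR^{-t}\,\varepsilon<7.55\,\varepsilon$, whereas you bound that of $\psi''(1)$ by $H\varepsilon$.
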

\begin{proof}
The AX3 shape perturbation has $\mathcal E$ norm at most $r/(3b_{\mathrm{old}})<2.06\cdot10^{-9}$. Its effect on $\psi'(1)$ is at most $e/R$, and on $\psi'(1)+\psi''(1)$ at most $\sup_{t>0}tR^{-t}\,e<7.55e<1.6\cdot10^{-8}$. Thus $0<\psi'(1)<42.82$ and $\psi'(1)+\psi''(1)<-7.758$, giving the claimed ratio. The meridian curvature at a pole, with inward normal, is $(1+\psi''(1)/\psi'(1))/\psi'(1)<0$; rotation symmetry makes both surface principal curvatures equal to it. The other pole follows by reflection. The exact dyadic check is \texttt{anc/r3/check\_nonconvex\_r3.py}.
\end{proof}

\part{Consequences, computations and remarks}\label{part:further}

\section{The Pompeiu property}\label{sec:pompeiu}

\begin{proof}[Proof of Corollary~\ref{cor:pompeiu}]
Let $n\in\{3,4,6,8,10,14\}$, let $\Omega$ and $u$ be as in Theorem~\ref{thm:r3} or Theorem~\ref{thm:main} (proved in Sections~\ref{sec:r3}, \ref{sec:recon}, \ref{sec:r6} and~\ref{lie:sec}), let $\xi\in\R^n$ with $|\xi|=1$, and let $v(x)=e^{i\xi\cdot x}$, so that $\Delta v+v=0$. Since $u$ and $v$ are smooth up to the real-analytic boundary and solve the same equation, Green's identity and \eqref{eq:main} give
\[
0=\int_\Omega(u\Delta v-v\Delta u)\,dx=\int_{\partial\Omega}(u\,\partial_\nu v-v\,\partial_\nu u)\,dS=\int_{\partial\Omega}\partial_\nu v\,dS=\int_\Omega\Delta v\,dx=-\int_\Omega e^{i\xi\cdot x}\,dx .
\]
Thus $\widehat{\one_\Omega}$ vanishes on the unit sphere. If $\sigma(x)=Qx+a$ is a rigid motion, then
\[
\int_{\sigma(\Omega)}e^{ix_1}\,dx=e^{ia_1}\int_\Omega e^{i(Q^{\mathsf T}e_1)\cdot x}\,dx=0,
\]
and taking real parts, $\int_{\sigma(\Omega)}\cos x_1\,dx=0$. Since $\cos x_1$ is continuous and not identically zero, $\Omega$ fails the Pompeiu property. By Theorems~\ref{thm:r3} and~\ref{thm:main}, $\Omega$ is a bounded domain, not a ball, whose boundary is real analytic and diffeomorphic to $S^{n-1}$, so the Pompeiu conjecture fails in $\R^n$; the domain is convex in every listed dimension, by Theorems~\ref{thm:r3} and~\ref{thm:main}.
\end{proof}

\section{Computer-assisted verification}\label{sec:cap}

This section describes, for each dimension, what the scripts check and how the computations are reproduced; Section~\ref{subsec:avail} describes the ancillary files. In all dimensions the inequalities are checked in the ball arithmetic of Arb \cite{Joh17}, through \texttt{python-flint} 0.9.0 \cite{flint}: an order relation between balls is true only if it holds for all points of the balls, every check raises an exception when it fails, and NaN or infinite enclosures are rejected. The analytic content of the proofs, namely all lemmas and all tail estimates, is contained in Sections~\ref{sec:reduction}--\ref{sec:r3}; the scripts only evaluate the finitely many explicit expressions defined there.

\subsection{Dimension four: what the script checks}
All finite computations for $n=4$ are carried out by a single script, \texttt{verify\_r4.py} (569 lines), which uses only the Python standard library and \texttt{python-flint} 0.9.0 \cite{flint}, the Python interface to the ball arithmetic of FLINT/Arb \cite{Joh17}. Every real number is represented by a ball $[m\pm\varepsilon]$ whose arithmetic is rigorous, including all rounding errors. The $525$ centre coefficients are embedded in the script as hexadecimal binary64 strings, which are converted to exact dyadic rationals (exactness is checked); $\rho=21/20$, $r=1/50000$ and the rational majorants of Theorem~\ref{thm:zero} are exact. The script performs the following steps.
\begin{enumerate}[label=(\arabic*)]
\item It expands $\Fc(x^\circ)$ and the $525$ finite columns of $D\Fc(x^\circ)$ in ball arithmetic, using \eqref{eq:rec}, \eqref{eq:K} and \eqref{eq:DF}; no coefficient is discarded.
\item It assembles $M_f$, computes a ball enclosure of $M_f^{-1}$, and defines $\widehat A$ as the matrix of the (exact, dyadic) midpoints of this enclosure. It then computes $I-\widehat AM_f$ in ball arithmetic and verifies \eqref{eq:Ahat} for the weighted column norm. The matrix $\widehat A$ is never treated as an interval-valued operator.
\item It computes $\nrm R$, $H$, the numbers $h_n$ for $M+2\leq n\leq N_H=443$ and the remainder bound of Lemma~\ref{lem:normA}, hence $A_t$ and a bound for $\nrm A$.
\item It computes $Y$ and $Z_f$ by applying $A$, in the form \eqref{eq:A}, \eqref{eq:B}, \eqref{eq:Tinv}, to the residual and to the finite columns.
\item It computes the $3650$ tail $g$-columns with $n\leq123$, $s\leq67$ with the short polynomial $p_{\leq16}$ and adds the error term of \eqref{eq:gshort}; it evaluates $Z_{g,\infty}$.
\item It computes the $480$ shape columns $43\leq j\leq1001$ without truncation, checks the hypotheses of Proposition~\ref{prop:farshape}, and evaluates $\sU_1,\ldots,\sU_5,\sU_W$ and $Z_{\mathrm{sh},\infty}$.
\item It evaluates $Z$, $c_2$, $c_3$, $C_2$, $C_3$, checks $Y<7\cdot10^{-6}$, $Z<37/100$, $C_2<4$, $C_3<10^{-3}$, and then the two inequalities \eqref{eq:radii} at $r=1/50000$.
\item It checks the geometric inequalities \eqref{eq:uni} and \eqref{eq:star}.
\end{enumerate}
In \texttt{python-flint}, an order relation between balls evaluates to true only if it holds for all points of the balls. Every check is an explicit test that raises an exception when it fails (also under \texttt{python -O}); maxima over columns are taken over rigorous upper endpoints, and nonfinite enclosures are rejected. After step (4) the script writes an intermediate checkpoint file (\texttt{*.base.json}), which carries no status field; only when all checks succeed does it write the certificate file (JSON) with the enclosures of all quantities of Table~\ref{tab:bounds} and the status \texttt{PROVED}, and print \texttt{PROVED}. A diagnostic mode (\texttt{--stage base}) stops after step (4) and never prints \texttt{PROVED}.

A second script, \texttt{verify\_convex.py}, checks that the centre data file coincides with the data embedded in \texttt{verify\_r4.py}, that the main certificate has status \texttt{PROVED} for the radius $r=1/50000$, and then verifies the bounds of Proposition~\ref{prop:convex}, which involve only the finite sums $A_1,A_2$ and the explicit tail constants $E_1,E_2$.

\subsection{Dimension four: reproduction}
\begin{sloppypar}
From a directory containing the files \texttt{verify\_r4.py}, \texttt{verify\_convex.py} and \texttt{data/centre.json}, the certificate is reproduced by
\end{sloppypar}
\begin{verbatim}
mkdir -p logs
nice -n 19 env OMP_NUM_THREADS=1 OPENBLAS_NUM_THREADS=1 \
  MKL_NUM_THREADS=1 python3 verify_r4.py --stage all --bits 128 --jstar 1001 \
  --output logs/certificate_r4_128.json
python3 verify_convex.py --bits 128 \
  --main-certificate logs/certificate_r4_128.json \
  --output logs/convex_128.json
\end{verbatim}
The second command prints \texttt{PROVED} as its last line, the third prints \texttt{PROVED\_CONVEX}. The runs use one thread. The distributed certificates are in the subdirectory \texttt{certificates/} of \texttt{anc/} (Section~\ref{subsec:avail}); the files written to \texttt{logs/} are byte-identical to them. On a single core of a Linux server the $128$-bit run took between $163$ and $167$ seconds of wall-clock time in several independent runs, and the same run at $192$ bits (\texttt{--bits 192}) took about $198$ seconds; the convexity check takes a fraction of a second. Runs under Python 3.10.19 and Python 3.12.3, both with \texttt{python-flint} 0.9.0, produced byte-identical certificate files. At $192$ bits all inequalities hold with the same displayed bounds, and the finite inverse defect \eqref{eq:Ahat} is below $7.5\cdot10^{-54}$.
\subsection{Dimension six}\label{subsec:cap6}
{\sloppy For $n=6$ the finite computations are split into five stage scripts, which evaluate the quantities of Section~\ref{sec:r6} in ball arithmetic at $128$ bits, in batches, and write interval receipts (JSON files), and a final script that combines the receipts. All scripts use Python, \texttt{python-flint} 0.9.0 and, only to read the matrix $\widehat A$, NumPy.
\begin{enumerate}[label=(\arabic*)]
\item \texttt{finite\_interval\_r6.py} expands $\Fc_6(x^\circ)$ without truncation and computes $A\Fc_6(x^\circ)$ and $Y$ (receipt \texttt{residual\_bound\_r6.json}); in eight batches covering all $465$ finite columns it computes the inverse defect \eqref{eq:Ahat6} and $Z_f$ (\texttt{finite\_batches\_r6.json}). The matrix $\widehat A$ is read from \texttt{inverse\_r6\_M58\_S30.npy}, and its binary64 entries are converted to exact dyadic rationals.
\item \texttt{tail\_inverse\_r6.py} computes $H$, the numbers $h_n$ for $M+4\leq n\leq N_H=458$ and the remainder bound of Lemma~\ref{lem:normA6}, hence $A_t$ (\texttt{tail\_inverse\_bound\_r6.json}).
\item \texttt{g\_tail\_interval\_r6.py} computes the $705$ near tail $g$-columns in seven batches with the short polynomial $p_{\leq24}$ and adds the error term of \eqref{eq:gshort6} (\texttt{g\_tail\_batches\_r6.json}); it also evaluates the two far bounds of Lemma~\ref{lem:gtail6} (\texttt{g\_far\_bound\_r6.json}).
\item \texttt{shape\_tail\_interval\_r6.py} computes the $85$ near shape columns $61\leq j\leq397$ without truncation, in five batches (\texttt{shape\_tail\_batches\_r6.json}).
\item \texttt{far\_shape\_r6.py} computes $\Gamma_1$, $\Gamma_0$, the expansion of $Q^\circ$ by the connection recurrences, the coefficients $\gamma^W_{m,t}$, $\sU_\Gamma$, $\sU_W$ and $Z_{\mathrm{sh},\infty}$ for $\jst=401$, and checks the support condition of Proposition~\ref{prop:farshape6} (\texttt{far\_shape\_bound\_r6.json}).
\item \texttt{verify\_r6.py --stage final} checks the SHA-256 checksums of the centre, of $\widehat A$, of the seven receipts and of the five stage scripts against values frozen in its source; checks that the batches of each receipt are contiguous and cover all columns of their class; recomputes $\nrm R$ from $\widehat A$; forms $Z$ and the constants $c_2,c_3,c_4$ of Lemma~\ref{lem:nonlinear6} with $\nrm A\leq1402.314$; checks the rational majorants of Theorem~\ref{thm:zero6} and the inequalities \eqref{eq:radii6} at $r=1/5000$; and checks the geometric inequalities \eqref{eq:geom6}. Only then does it write the certificate \texttt{certificate\_r6.json}, with the status \texttt{PROVED}, all enclosures, the frozen checksums and its own checksum, and print \texttt{PROVED}.
\item \texttt{verify\_convex\_r6.py} checks the checksums of \texttt{verify\_r6.py} and of the centre, and the status, radius, weight and checksums recorded in the certificate, and then verifies the bounds of Proposition~\ref{prop:convex6} (\texttt{convex\_r6.json}, status \texttt{PROVED\_CONVEX}).
\end{enumerate}
\par}
As for $n=4$, an order relation between balls is true only if it holds for all points of the balls, every check raises an exception when it fails, and maxima are taken over rigorous upper endpoints; a NaN or infinite enclosure, whether computed or read back from a receipt, stops the verification. The script \texttt{test\_fail\_closed\_r6.py} tests this behaviour on artificial inputs. A further script, \texttt{check\_identities\_r6.py}, checks in exact rational arithmetic the identities of Lemma~\ref{lem:K}(a)--(d) for all $n\equiv2\pmod4$, $n\leq202$, and $1\leq s\leq80$ ($4080$ pairs), reruns the connection recurrences used for $Q^\circ$ in rational arithmetic ($450$ pairs, all coefficients positive), checks $\nrm{Kg^\circ-(1-|w|^2)^2Q^\circ}_\rho\leq2.02\cdot10^{-31}$ in ball arithmetic, and compares \eqref{eq:Tj6} with the column algebra of the verifier, exactly for $5\leq j\leq2001$ and in ball arithmetic for $5\leq j\leq1001$. These identities are proved in Section~\ref{sec:r6}; the script is a consistency check.

The complete six-dimensional verification is reproduced, from the directory \texttt{anc/r6/} of the ancillary files, by
\begin{verbatim}
sha256sum -c SHA256SUMS
bash reproduce_r6.sh
\end{verbatim}
{\sloppy The script \texttt{reproduce\_r6.sh} uses the interpreter given by the environment variable \texttt{PYTHON}, if it is set, checks the \texttt{python-flint} version, runs \texttt{test\_fail\_closed\_r6.py}, regenerates all seven receipts from the centre and $\widehat A$ with the stage scripts, each batch in a separate single-core process under \texttt{nice -n 19} with a time limit of $600$ seconds, and compares them byte for byte with the distributed receipts. It then reruns steps (6) and (7) and \texttt{check\_identities\_r6.py} in a scratch directory, compares the resulting \texttt{certificate\_r6.json}, \texttt{convex\_r6.json} and \texttt{identities\_r6.json} byte for byte with the distributed files, and prints \texttt{REPRODUCTION COMPLETE}; any mismatch stops it with a nonzero exit code. On one core of a Linux server, with Python 3.10.19 and \texttt{python-flint} 0.9.0, the complete reproduction took about three minutes of wall-clock time (3~min~2~s and 3~min~5~s in two independent runs). The individual final steps are\par}
\begin{verbatim}
python3 verify_r6.py --stage final \
  --output certificate_r6.json
python3 verify_convex_r6.py
python3 check_identities_r6.py --output identities_r6.json
\end{verbatim}
which print \texttt{PROVED}, \texttt{PROVED\_CONVEX} and \texttt{IDENTITIES VERIFIED} and rewrite the three files byte-identically; they take about two seconds, less than a second and about fifteen seconds, respectively.

\subsection{Consistency checks for $n=4$ and $n=6$}
The following checks are not part of the proof. The algebraic identities of Sections~\ref{sec:disk}, \ref{sec:K} and Lemma~\ref{lem:Tform} were verified symbolically for small indices. An independently written floating-point implementation, which computes projections by quadrature instead of the recurrences \eqref{eq:rec} and uses its own approximate inverse, reproduced $\nrm R$, $H$, $Z_f$ and the largest shape column of Table~\ref{tab:bounds} to the displayed digits. The function $u$ was also evaluated directly in $\R^4$ from a Fourier--Bessel representation $F=\sum_{m\ \mathrm{odd}}a_mJ_m(r)\sin m\phi$ fitted to the boundary data of the centre: at random points of $\partial\Omega$ the defects $|u-1|$ and $|\nabla u|$ were below $10^{-10}$, and the Fourier transform of $\one_\Omega$, computed by quadrature, was below $10^{-10}$ in modulus on the unit sphere and between $18$ and $28$ in modulus on the sphere of radius $0.999$.

For $n=6$, Lemma~\ref{lem:laplace6} and the averages $\langle Y_\ell^3\rangle/\langle Y_\ell^2\rangle$ of Section~\ref{subsec:mechanism}, for $\ell=4,8,12,16$, were verified symbolically, and seven exact finite columns of $D\Fc_6(x^\circ)$ agreed with an independently written quadrature implementation to relative accuracy $1.7\cdot10^{-13}$. The Fourier--Bessel solution from which the centre was obtained (Remark~\ref{rem:centre6}) has Cauchy defects $8.2\cdot10^{-14}$ and $2.2\cdot10^{-13}$ at $2001$ boundary points off the collocation grid, with $20$ modes, and a solution with $28$ modes agrees with it. Using only its boundary, the first $14$ volume integrals over $\Omega$ of $O(3)\times O(3)$-invariant regular solutions of $\Delta v+v=0$, which vanish when $\widehat{\one_\Omega}$ vanishes on the unit sphere, were computed by quadrature; relative to the corresponding integrals in which the angular integrand is replaced by its modulus, they were between $0$ and $3.3\cdot10^{-13}$, compared with $4.7\cdot10^{-3}$ for a comparison domain that retains only the dominant degree-eight term of the boundary.

\subsection{Dimensions eight, ten and fourteen}\label{lie:subsec:cap}

The computations for $n=8,10,14$ are distributed in the directories \texttt{anc/r8}, \texttt{anc/r10} and \texttt{anc/r14}. They use Python, \texttt{python-flint} 0.9.0 \cite{flint,Joh17} and, only to read $\widehat A$, NumPy. Each dimension has stage scripts that evaluate the quantities of Sections~\ref{lie:subsec:conformal}--\ref{lie:subsec:Z} in ball arithmetic at $128$ bits and write interval receipts (JSON files), a final script that combines the receipts, and a separate convexity script.
{\sloppy
\begin{enumerate}[label=(\arabic*)]
\item For $n=8$ the algebra of disk polynomials is contained in \texttt{verify\_r8.py}; the stage scripts are \texttt{finite\_interval\_r8.py} ($Y$, $Z_f$, \eqref{lie:eq:Ahat}, $\nrm R$), \texttt{tail\_inverse\_r8.py} ($H$, $h_n$ for $M+2m\leq n\leq N_H$, $A_t$), \texttt{g\_tail\_r8.py} (the $396$ near tail $g$-columns in $20$ batches, and the far bounds of Lemma~\ref{lie:lem:gtail}), \texttt{shape\_tail\_r8.py} (the $29$ near shape columns in two batches) and \texttt{far\_shape\_r8.py} ($\Gamma_1$, $\Gamma_0$, $Q^\circ$, $\gamma^W_{k,t}$ and Proposition~\ref{lie:prop:farshape}).
\item For $n=10,14$ the algebra is in \texttt{rank2\_cap\_core.py}, shared by the stage scripts \texttt{finite\_rank2.py} (for $n=14$: \texttt{finite\_batch\_rank2.py}, in $26$ batches, and \texttt{aggregate\_finite\_r14.py}), \texttt{tail\_inverse\_rank2.py}, \texttt{g\_tail\_rank2.py} ($358$ columns in $18$ batches, resp.\ $1215$ columns in $13$ batches), \texttt{shape\_tail\_rank2.py} ($31$ columns in $4$ batches, resp.\ $10$ columns in $8$ batches), and \texttt{far\_shape\_rank2.py} for $n=10$, \texttt{far\_shape\_split\_r14.py} for $n=14$ (the first bound of Proposition~\ref{lie:prop:farshape}).
\item The final scripts \texttt{verify\_r8.py} (with the option \texttt{--stage final}), \texttt{verify\_r10.py} and \texttt{verify\_r14.py} check a SHA-256 digest, frozen in their source, of a bundle consisting of the centre, $\widehat A$, all stage scripts and all receipts; check that the batches of each receipt class are contiguous and cover all columns of the class; check the support conditions of Lemma~\ref{lie:lem:gtail} and Proposition~\ref{lie:prop:farshape}; form $Z$ and $c_2,\ldots,c_{m+2}$ by \eqref{lie:eq:cq} with the rational ceiling for $\nrm A$; check the rational majorants and \eqref{lie:eq:radii}; and check the bounds of Proposition~\ref{lie:prop:geometry}. Only then do they write a certificate with status \texttt{PROVED} and print \texttt{PROVED}.
\item \texttt{verify\_convex\_r8.py}, \texttt{verify\_convex\_r10.py} and \texttt{verify\_convex\_r14.py} check the status of the main certificate and the checksum of the main verifier, and recompute the bounds of Table~\ref{lie:tab:geom} (status \texttt{PROVED\_CONVEX}).
\item \texttt{check\_identities\_r8.py} and \texttt{check\_identities\_rank2.py} verify the identities of Lemma~\ref{lem:K}(a),(b) in exact symbolic arithmetic for $12$ index pairs, compare \eqref{lie:eq:Tj} with the column algebra at $256$ bits for three shape indices, check positivity and total mass one of $q_s(n)$, and check in ball arithmetic that all coefficients of $Kg^\circ-(1-|w|^2)^2Q^\circ$ are below $10^{-69}$ in absolute value. These identities are proved in Section~\ref{lie:sec}; the scripts are consistency checks.
\end{enumerate}
\par}
As for $n=4,6$, the centre coefficients, $B$ and the entries of $\widehat A$ are converted exactly to dyadic rationals (exactness is checked), and $\rho$, $\kappa_m$, $r$ and all thresholds are exact rationals. An order relation between balls is true only if it holds for all points of the balls, maxima are taken over rigorous upper endpoints, and a NaN or infinite enclosure, computed or read from a receipt, stops the verification. (In a preliminary version of the scripts for $n=10,14$, $\kappa_m$ was a binary64 number, which for $m=4$ lies $1.2\cdot10^{-18}$ below $1/48$; the distributed scripts use $\kappa_m$ exactly, and the displayed bounds are unaffected.)

{\sloppy In each directory, \texttt{bash reproduce\_r8.sh} (resp.\ \texttt{reproduce\_r10.sh}, \texttt{reproduce\_r14.sh}) copies the scripts, the centre and $\widehat A$ into a scratch directory, recomputes every receipt there, each job in a separate single-core process under \texttt{nice -n 19}, reruns the final and convexity scripts, and compares every distributed JSON file byte for byte with its recomputed counterpart. The final steps alone are\par}
\begin{verbatim}
python3 verify_r8.py --stage final \
  --centre center_r8_M45_S24.json --output certificate_r8.json
python3 verify_convex_r8.py
python3 verify_r10.py && python3 verify_convex_r10.py
python3 verify_r14.py && python3 verify_convex_r14.py
\end{verbatim}
which print \texttt{PROVED} and \texttt{PROVED\_CONVEX}; each takes less than four seconds on one core. In an independent rerun of all stage scripts, the $106$ interval receipts of the three dimensions were reproduced byte for byte, with a total of about $930$ seconds of single-core CPU time.

{\sloppy\emph{Consistency checks.} The following checks are not part of the proof. The script \texttt{check\_radial\_rank2.py} verifies Proposition~\ref{lie:prop:radial} for $\mathfrak{su}(3)$ in exact symbolic arithmetic in explicit matrix coordinates, and for the root data of $B_2$ and $G_2$ at $100$ random regular points each. An independently written check with explicit matrix realisations of $\mathfrak{so}(5)$ and of $\mathfrak g_2\subset\mathfrak{so}(7)$, as the stabiliser of a generic $3$-form, confirmed \eqref{lie:eq:radial} and the harmonicity of $\varpi$ at random regular points to relative accuracy $5.1\cdot10^{-13}$. A pointwise evaluation of $\Hc_m(x^\circ)$ confirmed the sign conventions (relative residuals between $10^{-15}$ and $10^{-12}$, against $2$ with the opposite sign). Centred differences at $256$ bits reproduced the shape column $j=2m+1$ of $D\Hc_m(x^\circ)$ to weighted accuracy $3.4\cdot10^{-12}$, $7.4\cdot10^{-16}$ and $2.9\cdot10^{-15}$ for $n=8,10,14$; the tail inverse \eqref{lie:eq:Tinv} was checked against $T$ on four tail shape columns in each dimension (errors below $10^{-62}$); the minimal output frequencies and radial indices of the first far tail $g$-columns were computed directly (for $n=14$: $126>M$ and $61>S$); and $13$ artificially corrupted receipts (NaN or infinite values, coverage gaps, bounds above the thresholds) were all rejected by the final scripts.\par}

\subsection{Dimension three}\label{r3:sec-cap}
The convex $n=3$ files are in \url{anc/r3_convex/}. The single-file verifier \url{verify_r3_convex.py} is frozen; its full SHA-256 is recorded in the ancillary README. It embeds the exact dyadic centre, the stage programs and the C ball-arithmetic accelerator. A fresh isolated replay completed all $17$ stages, printed \texttt{PROVED}, and wrote the full receipt after $6515.9$ seconds with five single-thread workers. A second from-scratch run of the same unmodified verifier, in a separate fresh directory, also printed \texttt{PROVED}; its receipt agrees with the first in every mathematical field. The distributed receipts record the centre and matrix hashes, the complete inverse and derivative bounds, the radii inequalities, the shape-ball geometry and the exact coverage counts.

The stages construct the $7371$-dimensional finite matrix and certify its dyadic product defect with a fixed-order dot-product error bound; certify the full-centre boundary-symbol inverse; compute the full-support residual; enclose all $7280$ finite field columns; compute $457$ exceptional field columns with the $j\leq121$ evaluation truncation and rigorous omitted-tail error; certify $91$ radial classes, the fourteen near angular classes and $\ell\geq210$; check $310$ explicit shape columns and every $j\geq621$; and assemble the nonlinear, geometric and coverage gates. Every strict inequality is checked with Arb balls. Missing columns, stale inputs, nonfinite enclosures or a failed gate prevent \texttt{PROVED}.

From an empty scratch directory, reproduction is requested by
\begin{verbatim}
python3 -O verify_r3_convex.py --workdir <empty dir> \
  --output receipt.json --jobs 5
\end{verbatim}
with Python 3, NumPy, SciPy, \texttt{python-flint} 0.9.0, \texttt{gcc} and 64-bit Linux. The run takes about $1.8$ hours on five cores. The present paper edit did not rerun it; its reported numbers are taken from the frozen receipt and the single-file verifier's SHA-256 has been checked locally.

The independent nonconvex AX3 example has its own verifier, two completed certificates and a separate exact rational nonconvexity check in \texttt{anc/r3/}. Its computation uses $M=180$, $S=64$ and the different centre and shape scale of Proposition~\ref{r3:prop-AX3}. Its previously certified values are Table~\ref{r3:tab-old}. They do not enter the convex certificate.

\subsection{Availability}\label{subsec:avail}
All scripts, data and certificates are distributed as ancillary files with the arXiv version of this paper, in the directory \texttt{anc/}. The file \texttt{anc/README.md} describes the layout, the requirements, the commands and the running times for all dimensions. The four-dimensional files are \texttt{verify\_r4.py}, \texttt{verify\_convex.py}, \texttt{data/centre.json} and the four certificates in \texttt{certificates/}; the six-dimensional files, with a file \texttt{RELEASE\_README.md} with instructions and the working notes \texttt{PROOF\_ADDENDUM.md} and \texttt{FINAL.md} (the latter in Chinese), are in \texttt{r6/}; the files for $n=8,10,14$, each directory with its own \texttt{README.md}, are in \texttt{r8/}, \texttt{r10/} and \texttt{r14/}; the convex three-dimensional verifier and its receipts are in \texttt{r3\_convex/}, and the independent nonconvex example with its two certificates and exact nonconvexity check is in \texttt{r3/}; and \texttt{check\_radial\_rank2.py} is the consistency check of Proposition~\ref{lie:prop:radial} described in Section~\ref{lie:subsec:cap}. The file \texttt{anc/SHA256SUMS} lists the SHA-256 checksums of all other files under \texttt{anc/}, and each of the subdirectories \texttt{r3/}, \texttt{r3\_convex/}, \texttt{r6/}, \texttt{r8/}, \texttt{r10/}, \texttt{r14/} carries its own file \texttt{SHA256SUMS}; the integrity of the files is checked by \texttt{sha256sum -c SHA256SUMS}, run in \texttt{anc/} and in each of these subdirectories. The files for $n=4$ and $n=6$ are also available at \url{https://github.com/aster2024/schiffer-pompeiu-r4} (in \texttt{verification/} and \texttt{verification/r6/}) and archived on Zenodo with the earlier version \cite{Guo26}.

\section{Remarks on the mechanism, other dimensions and curved spaces}\label{sec:remarks}

The statements in this section are not used in the proofs of Theorems~\ref{thm:r3} and~\ref{thm:main}. Those about the numerical solutions of Section~\ref{subsec:numerics} are based on numerical computations only and are \emph{not proved}.

\subsection{Symmetry classes for which the reduction is exact}
Remark~\ref{rem:why4} shows that, among splittings $\R^n=\R^{d_1}\times\R^{d_2}$ with symmetry group $O(d_1)\times O(d_2)$, a reduction to the planar Laplacian without potential exists only for $n\in\{2,4,6\}$. The classes used for $n=4$ and $n=6$ are the adjoint classes of $\mathfrak u(2)$ and $\mathfrak{su}(2)\oplus\mathfrak{su}(2)$ (Remark~\ref{lie:rem:m12}), and Section~\ref{lie:sec} carries out the reduction for all compact Lie algebras of rank two; by Remark~\ref{lie:rem:complete}, the dimensions $4$, $6$, $8$, $10$ and $14$ are the only ones reached by exact planar reductions of adjoint type. The three-dimensional problem admits no such reduction, and Section~\ref{sec:r3} uses instead a fixed-domain formulation in the unit ball of $\R^3$.

\subsection{A transcritical mechanism}\label{subsec:mechanism}
Let $n\geq3$ and let $\rho$ be a positive zero of $J_{n/2}$, so that the unit ball of $\R^n$ carries the radial solution of \eqref{eq:schiffer} with $\mu=\rho^2$. Consider domains $\{x:|x|<1+sY_\ell(x_1/|x|)+O(s^2)\}$ invariant under $O(n-1)\times\Z_2$, where $Y_\ell$ is the zonal spherical harmonic of even degree $\ell$ on $S^{n-1}$, normalised by $Y_\ell(1)=1$. For $n=6$ we use instead the class of Section~\ref{sec:r6}, of domains invariant under $O(3)\times O(3)$ and the exchange of the factors; there the invariant spherical harmonics of degree $\ell=2k$ are the functions $Y_\ell(\omega)=U_k(t)/(k+1)$ of $t=|\omega'|^2-|\omega''|^2$, and the exchange symmetry requires $k$ to be even. The linearisation of the Schiffer problem at the ball in the direction $Y_\ell$ is degenerate exactly when $J_{\ell+n/2-1}(\rho)=0$, and this never happens for integer $n$ and $\ell\geq2$: for even $n$ this is Bourget's hypothesis, proved by Siegel \cite{Sie29}, that Bessel functions of distinct nonnegative integer orders have no common positive zero, and for odd $n$ it follows from a similar transcendence argument based on the Lindemann--Weierstrass theorem. So there is no bifurcation from balls in this class. However, $J_{\ell+n/2-1}(\rho)$ can be small. A formal Lyapunov--Schmidt expansion in the direction $Y_\ell$ then gives, at second order, a reduced equation of the form
\[
-\sigma s+\frac{\rho^2}2\,\frac{\langle Y_\ell^3\rangle}{\langle Y_\ell^2\rangle}\,s^2+O(s^3,\sigma s^2)=0,
\]
where $\langle\cdot\rangle$ is the average over $S^{n-1}$ and $\sigma$ is a small quantity proportional to $J_{\ell+n/2-1}(\rho)$. This suggests a non-ball solution with boundary amplitude $s_*\approx2\sigma\langle Y_\ell^2\rangle/(\rho^2\langle Y_\ell^3\rangle)$.

In the plane the corresponding cubic average $\langle\cos^3N\phi\rangle$ vanishes, the reduced equation is odd in $s$, and a sign condition or an additional parameter is needed; compare the relaxed parameter of \cite{CLdDP26} and the continuation from the constant-flux problem in \cite{CS26}. For $n\geq3$ and even $\ell$ the cubic average does not vanish: for $n=3$, $\langle P_\ell^3\rangle/\langle P_\ell^2\rangle=(2\ell+1)\left(\begin{smallmatrix}\ell&\ell&\ell\\0&0&0\end{smallmatrix}\right)^2>0$; for $n=4$ the zonal harmonics are $U_\ell(\cos\phi)/(\ell+1)$, where $U_\ell(\cos\phi)$ is the character of the irreducible representation of $SU(2)\cong S^3$ of dimension $\ell+1$, and the Clebsch--Gordan rule gives $\langle U_\ell^3\rangle=\langle U_\ell^2\rangle=1$ for even $\ell$. For $n=6$ in the $O(3)\times O(3)$-invariant class, the invariant measure of $S^5$ is carried by $t=|\omega'|^2-|\omega''|^2$ to a multiple of $\sqrt{1-t^2}\,dt$, for which the $U_k$ are orthonormal after normalisation, and $U_k^2=\sum_{i=0}^kU_{2i}$ gives $\langle U_k^3\rangle=\langle U_k^2\rangle=1$ for even $k$, hence $\langle Y_\ell^3\rangle/\langle Y_\ell^2\rangle=1/(k+1)$. That quadratic terms make axisymmetric bifurcations in the even-degree representations of $O(3)$ generically transcritical is a classical fact of equivariant bifurcation theory \cite{IG84,GSS88}. What we use is only the observation that this classical mechanism applies to the Schiffer problem near a near-resonance, where the planar obstruction disappears. Integrals of products of three spherical harmonics arise in any second-order perturbation analysis near a ball, and we make no claim of novelty for them.

For $(n,\rho,\ell)=(4,j_{2,4},6)$, the zero $j_{2,4}\approx14.79595$ of $J_2$ is close to the zero $j_{7,2}\approx14.82127$ of $J_7=J_{\ell+n/2-1}$, and the formal prediction for the relative $Y_6$-amplitude is about $2.4\cdot10^{-2}$. The domain of Theorem~\ref{thm:main} has $Y_6$-amplitude about $2.2\cdot10^{-2}$ (Remark~\ref{rem:domain}); in the meridian plane the $Y_6$ direction corresponds to the mode $J_7(r)\sin7\phi$ (Section~\ref{sec:reduction}). This is how the example was located. Numerically, Newton iterations started from the ball perturbed by a small multiple of $Y_6$ converge either back to the ball or to our domain, depending on the size of the perturbation, as expected from a transcritical picture.

For $(n,\rho,\ell)=(6,j_{3,8},8)$, the zero $j_{3,8}\approx28.90835$ of $J_3$ is close to the zero $j_{10,5}\approx28.88738$ of $J_{10}=J_{\ell+n/2-1}$, and $J_{10}(j_{3,8})\approx-3.0\cdot10^{-3}$. The formal prediction for the relative amplitude of the degree-eight harmonic is about $7.3\cdot10^{-3}$, and the six-dimensional domain of Theorem~\ref{thm:main} has amplitude about $7.2\cdot10^{-3}$ (Remark~\ref{rem:domain}). In the meridian plane this direction corresponds to the mode $J_{10}(r)\sin10\phi$ (Section~\ref{sec:r6}). The centre of Section~\ref{sec:r6} was located in this way (Remark~\ref{rem:centre6}).

For $(n,\rho,\ell)=(3,j_{3/2,15},12)$ one has $J_{25/2}(j_{3/2,15})\approx2.3\cdot10^{-3}$, and the additional nonconvex domain of Proposition~\ref{r3:prop-AX3} has a dominant degree-twelve harmonic of relative amplitude about $1.5\cdot10^{-2}$. The convex domain of Theorem~\ref{thm:r3} instead lies near $(j_{3/2,34},18)$, as recorded in Table~\ref{tab:resonances}. For the adjoint classes of Section~\ref{lie:sec} the invariant harmonics and the near-resonances are described in Section~\ref{lie:subsec:reduced}, and the centres were located at the near-resonances of Table~\ref{tab:resonances} (Remark~\ref{lie:rem:centres}). In these classes the cubic average does not vanish either: by the proof of Proposition~\ref{lie:prop:radial}, the average over $S^{n-1}$ of an $\Ad(G)$-invariant function is proportional to the integral of its restriction to the unit circle of $\tf$ against $\sin^2(m\theta)\,d\theta$; the invariant harmonic of degree $\ell$ restricts to a multiple of $U_{\ell/m}(\cos m\theta)$, and in the variable $m\theta$ this is again the character computation of $SU(2)$, which gives $\langle U_k^3\rangle=\langle U_k^2\rangle=1$ for even $k=\ell/m$, as the enlarged symmetry requires.

\subsection{Further numerical solutions in dimensions three and five}\label{subsec:numerics}
With a spectral collocation solver (expansions in exact Helmholtz solutions $r^{1-n/2}J_{\ell+n/2-1}(\sqrt\mu\,r)Y_\ell$, boundary given as a radial graph, Gauss--Newton iteration), we found numerical solutions of \eqref{eq:schiffer} on non-ball, star-shaped, $O(n-1)\times\Z_2$-invariant domains near the near-resonances predicted in Section~\ref{subsec:mechanism}. For $n=3$, $\sqrt\mu=j_{3/2,15}\approx48.6741$ (unit ball normalisation) and $\ell=12$, with amplitude about $1.5\cdot10^{-2}$ and off-grid residual about $2\cdot10^{-11}$ with $150$ modes, such a solution was the starting point for the additional AX3 example of Proposition~\ref{r3:prop-AX3}, which certifies an exact solution nearby. The following further numerical branches have not been certified:
\begin{itemize}
\item $n=3$, $\sqrt\mu=j_{3/2,42}\approx133.5102$, $\ell=20$, boundary amplitude about $9.6\cdot10^{-4}$. In multiprecision the collocation residual is about $10^{-36}$, and the residual off the collocation grid decreases spectrally with the truncation (about $10^{-11}$, $10^{-14}$, $10^{-17}$ and $10^{-20}$ for $120$, $160$, $200$ and $240$ modes). A test of the Pompeiu condition that uses only the computed boundary gives $2\cdot10^{-12}$, compared with $1.4\cdot10^{-2}$ for a comparison domain.
\item $n=5$, $\sqrt\mu=j_{5/2,30}\approx97.3586$, $\ell=16$, amplitude about $7.1\cdot10^{-3}$, and $\sqrt\mu=j_{5/2,24}\approx78.5016$, $\ell=14$, amplitude about $2.1\cdot10^{-2}$; residuals about $10^{-11}$, stable under changes of the truncation.
\end{itemize}
We do not claim that the two bulleted branches exist. By Remark~\ref{rem:why4}, a reduction to the planar Laplacian of the type used in Sections~\ref{sec:reduction} and~\ref{sec:r6} is not available for $n=3$ and $n=5$; the formulation of Section~\ref{sec:r3} applies to the first one, and an analogous formulation is available for $n=5$ (Section~\ref{sec:open}).

\subsection{Curved spaces}
In spheres and hyperbolic spaces the curvature provides a genuine one-parameter family in which radial Neumann eigenvalues and non-radial Dirichlet eigenvalues of geodesic balls can cross, which allows local bifurcation from geodesic balls. Cao-Labora and Fern\'andez \cite{CLF25} used this to construct contractible Schiffer domains on the half-sphere $S^2$, and they pointed out \cite[Remark~1.2]{CLF25} that their method generalises to other non-flat spaces, with numerically observed crossings for $S^3$, $S^4$ and $H^2$. Earlier examples in spheres are bounded by isoparametric hypersurfaces \cite{Shk00,PS21}, whose boundaries are not topological spheres in general. The mechanism of Section~\ref{subsec:mechanism} is also relevant in spheres and hyperbolic spaces of dimension $n\geq3$; we do not pursue curved spaces in this paper.

\section{Relation with rigidity results and other claims}\label{sec:rigidity}

We compare our examples with rigidity theorems for \eqref{eq:schiffer} and for the Pompeiu problem, and with some statements in the literature that they contradict.

\emph{Spectral hypotheses.} Berenstein \cite{Ber80} for simply connected planar domains, and Berenstein and Yang \cite{BY87} for bounded Lipschitz domains of $\R^n$ with connected boundary, proved that a domain carrying solutions of \eqref{eq:schiffer} for an infinite sequence of eigenvalues is a ball. We provide one eigenvalue for each domain; by \cite{BY87}, each of our domains carries solutions for at most finitely many $\mu$. Aviles \cite{Avi86} proved symmetry theorems under spectral hypotheses; in particular, as stated in \cite[\S1]{Den12} and \cite[(1.4.2)]{Kob93}, a planar convex domain carrying a solution of \eqref{eq:schiffer} with $\mu$ not exceeding the seventh Neumann eigenvalue $\mu_7(\Omega)$ is a disc, where $0=\mu_1(\Omega)<\mu_2(\Omega)\leq\ldots$ are the Neumann eigenvalues counted with multiplicity. Deng \cite{Den12} proved rigidity for planar domains when $\mu$ is smaller than the eighth Neumann eigenvalue, and for strictly convex centrally symmetric planar domains when $\mu$ is smaller than the thirteenth; and Dai, Liu and Sun \cite{DLS26} proved it in all dimensions when $\mu$ equals the second Dirichlet eigenvalue $\lambda_2(\Omega)$. The planar results among these do not apply to our domains. The three-dimensional example has the radial scale $j_{3/2,34}\approx108.37572$, and the certified inscribed ball of radius $106$ puts $\lambda_2(\Omega)<0.002$; hence its eigenvalue $1$ is far above the second-Dirichlet threshold of \cite{DLS26}. The certified bounds of Sections~\ref{sec:recon}, \ref{sec:r6}, \ref{lie:sec} and~\ref{sec:r3} show that $\Omega$ contains the ball of radius $r_0$ about the origin, with $r_0=106$, $14.54$, $28.69$, $20.0$, $37.9$ and $87.1$ for $n=3,4,6,8,10,14$ (for $n=3,8,10,14$, the lower bound for $\re\psi^{*\prime}$ on the closed unit disc gives $|\psi^*(w)|\geq\re\big(\psi^*(w)/w\big)\geq r_0$ for $|w|=1$). By domain monotonicity, $\lambda_2(\Omega)\leq\ldots\leq\lambda_{n+1}(\Omega)\leq(j_{n/2,1}/r_0)^2<0.15$ in every case; in fact, $1$ exceeds more than five hundred Dirichlet eigenvalues of $\Omega$, counted with multiplicity, and hence more than five hundred Neumann eigenvalues, since $\mu_k(\Omega)\leq\lambda_k(\Omega)$ for every $k$.

\emph{Perturbations of balls.} Agranovsky \cite{Agr93} proves rigidity for real-analytic families of planar domains issuing from the disc along which an analytic branch of Pompeiu spectral parameters persists. Canuto \cite{Can14} proves, in arbitrary dimension, rigidity near a fixed radial Neumann eigenvalue within an eigenvalue-dependent class of perturbations. Kobayashi \cite{Kob93} considers a $C^{1,\alpha}$ family of star-shaped domains $\Omega_t=\{r\omega:0\leq r<g(t,\omega)\}$ with $g(0,\cdot)=1$, and proves that $\Omega_{t_0}$ is a ball if the real zero set of the Fourier transform of $\one_{\Omega_{t_0}}$ contains a sphere lying in the ball of radius $R$ about the origin and $t_0$ satisfies a smallness condition involving constants $\varepsilon(n,R)$ and $\delta(n,R)$ that are not computed \cite[Remark~2.4]{Kob93}. Our examples are single domains close to balls, and there is no contradiction with these results. For \cite{Kob93} this can be checked directly. Its smallness condition implies $t_0B(g)<\varepsilon(n,R)\delta(n,R)$ \cite[(4.2.2b), (4.4.1b) and Lemma~4.5]{Kob93}, where $B(g)$ \cite[(4.2.1)]{Kob93} is at least $\omega_{n-1}\sup_\omega|g(t_0,\omega)-1|/t_0$, with $\omega_{n-1}=|S^{n-1}|$, while by \cite[(2.3.2), (2.3.4)]{Kob93} the product $\varepsilon(n,R)\delta(n,R)$ is smaller than the modulus of the Fourier transform of the indicator function of the unit ball at a point between two consecutive zeros of $J_{n/2}$ below $R$, which is of order $R^{-(n+1)/2}$. Hence the condition forces the relative deviation of $\partial\Omega_{t_0}$ from the unit sphere to be at most of order $R^{-(n+1)/2}$. The numerical comparison in the earlier version gave factors from about $5$ for $n=4$ to more than $10^6$ for $n=14$. We do not use such a numerical factor for the new three-dimensional branch: Kobayashi's theorem assumes a controlled perturbation family and an additional smallness condition, neither of which is part of Theorem~\ref{thm:r3}. Heuristically, the size of such neighbourhoods is limited by the inverse of the linearised operator at the ball, which is large near a near-resonance, and the transcritical picture of Section~\ref{subsec:mechanism} places the non-ball solution at a distance comparable to the small divisors of Table~\ref{tab:resonances}.

\emph{Convex domains and high frequency.} Brown and Kahane \cite{BK82} showed that a planar convex domain whose minimal width is at most half of its diameter has the Pompeiu property. Dai, Sun, Wei and Zhang \cite{DSWZ25} prove that a uniformly convex planar domain carrying solutions of \eqref{eq:schiffer} with sufficiently large eigenvalue, above a threshold depending on the domain, is a disc, and state that the extension to higher dimensions is left for future work; Mondal \cite{Mon26} treats a class of centrally symmetric convex planar domains. These theorems are planar and do not apply to our examples. By Theorem~\ref{thm:main}, convexity alone does not force Schiffer rigidity in $\R^3$, $\R^4$, $\R^6$, $\R^8$, $\R^{10}$ or $\R^{14}$ at every eigenvalue, and a higher-dimensional analogue of the high-frequency results could only hold above a frequency threshold that excludes the eigenvalues of our examples.

\emph{Spheres and isoparametric hypersurfaces.} The principal orbits of the adjoint representations of $\mathfrak{su}(3)$, $\mathfrak{so}(5)$ and $\mathfrak g_2$ on their unit spheres are homogeneous isoparametric hypersurfaces of $S^7$, $S^9$ and $S^{13}$, with $3$, $4$ and $6$ distinct principal curvatures \cite{HL71}, so domains of these spheres bounded by adjoint orbits are among the isoparametric tubes that fail the Pompeiu property \cite[Corollary~7]{PS21}; see also \cite{Shk00}. These are problems of cohomogeneity one in spheres, reducing to ordinary differential equations, and the boundaries of the domains are not topological spheres. Our domains are Euclidean, of cohomogeneity two, and bounded by topological spheres, and their construction requires the solution of a two-dimensional free boundary problem.

\emph{Claims in dimension three and in general dimension.} Several published or posted statements are contradicted by Theorem~\ref{thm:r3} and Corollary~\ref{cor:pompeiu}. Ramm \cite[Theorems~1 and~2]{Ram17} states that a bounded domain in $\R^3$ with $C^1$ boundary on which \eqref{eq:schiffer} has a solution with constant $1$, or which fails the Pompeiu property, is a ball. Ramm \cite[Theorem~1]{Ram21} states that a bounded connected domain in $\R^m$, $m\geq2$, with smooth boundary, on which an overdetermined problem that includes \eqref{eq:schiffer} as a special case is solvable, is a ball; \cite[Theorem~A(a)]{Ram21} states that a bounded connected domain in $\R^3$ with Lipschitz boundary whose indicator function has Fourier transform vanishing on a sphere is a ball; and \cite[Theorem~C]{Ram21} states the corresponding answer to the Pompeiu problem in $\R^m$. Ramm \cite[Theorem~B]{Ram19} states the same Fourier transform statement for connected bounded domains in $\R^3$ with $C^2$ boundary. Chen \cite[Theorem~1.1]{Che16} states, in a scattering formulation of the problem with the Sommerfeld radiation condition, that a star-shaped domain in $\R^3$ containing the origin, with Lipschitz boundary, on which \eqref{eq:schiffer} has a nonconstant solution with $\mu\geq1$, is a ball centred at the origin. The domain of Theorem~\ref{thm:r3} satisfies the hypotheses of each of these statements, with $\mu=1$, and is not a ball; \cite[Theorems~1 and~C]{Ram21} are also contradicted by the planar counterexamples \cite{CS26,CLdDP26} and by Theorem~\ref{thm:main}. Hence these statements are not correct as stated. We have not tried to identify the step of the respective arguments that fails. Finally, Disser \cite[Theorem~11]{Dis25} showed that a bounded $C^{2,1}$ domain of $\R^3$ is a ``bad domain'' for the fluid-elastic semigroup if and only if it carries a nonconstant solution of \eqref{eq:schiffer}, and conjectured that balls are the only bad domains \cite[Conjecture~6]{Dis25}; by Theorem~\ref{thm:r3}, this conjecture does not hold.

\section{Open problems}\label{sec:open}

We list some questions left open by our results. Throughout, the domains are bounded, their boundaries are homeomorphic to a sphere, and solutions of \eqref{eq:schiffer} are nonconstant.

\emph{1. Other dimensions.} Counterexamples are now known in dimensions $2$ \cite{CS26,CLdDP26,Dol26}, $3$, $4$, $6$, $8$, $10$ and $14$, and convex ones in dimensions $3$, $4$, $6$, $8$, $10$ and $14$. The reduction of Section~\ref{subsec:lie} is tied to the compact Lie algebras of rank two, and hence to the dimensions $2m+2$ with $m\in\{0,1,2,3,4,6\}$. The formulation of Section~\ref{sec:r3} does not use a planar reduction. For a domain $\Omega=\Theta(B^n)\subset\R^n$ invariant under $O(n-1)$, where $\Theta(x_1,x')=(\re\psi(w),\im\psi(w)\,x'/|x'|)$ with $w=x_1+i|x'|$, the computation that leads to \eqref{eq:intro-r3} shows that \eqref{eq:main} becomes
\[
\operatorname{div}\big(q^{n-2}\nabla U\big)+q^{n-2}|\psi'|^2U=0\ \text{ in }B^n,\qquad U=1\ \text{ and }\ \nabla U=0\ \text{ on }\partial B^n,
\]
with $q=\im\psi(w)/\im w$, and the mechanism of Section~\ref{subsec:mechanism} is available in every dimension $n\geq3$; numerical solutions for $n=5$ are described in Section~\ref{sec:remarks}. We have not pursued this.

\begin{quote}
\emph{Question~1.} Does the Schiffer conjecture fail in every dimension $n\geq3$, in particular for $n=5$? Does it fail within the class of convex domains in every dimension $n\geq3$?
\end{quote}

\emph{2. The plane.} The known planar counterexamples are not convex (Section~\ref{sec:intro}). For uniformly convex planar domains, rigidity holds above a domain-dependent eigenvalue threshold \cite{DSWZ25}, and convex planar domains whose minimal width is at most half of their diameter have the Pompeiu property \cite{BK82}. In the plane the quadratic term of the bifurcation equation vanishes (Section~\ref{subsec:mechanism}), and the known constructions perturb a disc by modes of high angular frequency, which tends to destroy convexity.

\begin{quote}
\emph{Question~2.} Is every bounded convex planar domain on which \eqref{eq:schiffer} has a solution a disc? Equivalently \cite{Wil76}, does every bounded convex planar domain other than a disc have the Pompeiu property?
\end{quote}

\emph{3. Berenstein's conjecture.} The overdetermined Dirichlet problem
\[
\Delta u+\mu u=0\ \text{ in }\Omega,\qquad u=0\ \text{ and }\ \partial_\nu u=\text{const}\neq0\ \text{ on }\partial\Omega
\]
is conjectured to be solvable only on balls. This fails in the plane \cite{CSS26}, and in all dimensions a domain on which it is solvable for infinitely many $\mu$ is a ball \cite{BY87}. For an $\mathrm{Ad}$-invariant domain $\Omega$ in a compact Lie algebra of rank two and $\mu=1$, formula \eqref{eq:radial-part} turns this problem into the planar problem
\[
\Delta F+F=0\ \text{ in }D,\qquad F=0\ \text{ and }\ \partial_\nu F=c\,\varpi\ \text{ on }\partial D,
\]
for the $W$-anti-invariant function $F=\varpi\,u|_{\mathfrak t}$ on the Cartan section $D=\Omega\cap\mathfrak t$, where $c\neq0$ is the constant Neumann datum. The methods of this paper apply to this problem as well: with the same reductions and the same kind of computer-assisted contraction argument we have obtained convex non-ball solutions in dimensions four and fourteen, which will be reported separately.

\begin{quote}
\emph{Question~3.} Does Berenstein's conjecture fail in every dimension $n\geq3$, and in particular in dimension three? Does it fail within the class of convex domains in every dimension?
\end{quote}

\section*{Acknowledgements}
This work was carried out with the assistance of AI models (Claude Opus 5.5, GPT-6 Astra and GPT-6 Sol). The author takes full responsibility for the content of this paper.

\end{document}